\documentclass[11pt,leqno]{book}

\usepackage{anysize}
\marginsize{3.5cm}{3.5cm}{3cm}{3cm}

\usepackage[]{hyperref}
\hypersetup{
    colorlinks=true,       
    linkcolor=red,          
    citecolor=blue,        
    filecolor=magenta,      
    urlcolor=cyan           
}

\usepackage{amsmath}
\usepackage{amsfonts,amssymb}
\usepackage{enumerate}
\usepackage{mathrsfs}
\usepackage{amsthm}
\setcounter{tocdepth}{1}
\usepackage{tikz}

\parskip=5pt
\parindent=0pt

\theoremstyle{plain}
\newtheorem{theo}{Theorem}[chapter]
\newtheorem*{theo*}{Theorem}
\newtheorem{prop}[theo]{Proposition}
\newtheorem{lemm}[theo]{Lemma}
\newtheorem{coro}[theo]{Corollary}
\newtheorem{assu}[theo]{Assumption}
\newtheorem{defi}[theo]{Definition}
\theoremstyle{definition}
\newtheorem{rema}[theo]{Remark}
\newtheorem{nota}[theo]{Notation}
\DeclareMathOperator{\cnx}{div}
\DeclareMathOperator{\cn}{div}
\DeclareMathOperator{\RE}{Re}

\DeclareMathOperator{\Hess}{Hess}
\DeclareMathOperator{\supp}{supp}
\DeclareMathOperator{\di}{d}

\DeclareSymbolFont{pletters}{OT1}{cmr}{m}{sl}
\DeclareMathSymbol{s}{\mathalpha}{pletters}{`s}
\def\ba{\begin{align}}
\def\bad{\begin{aligned}}
\def\be{\begin{equation}}
\def\ea{\end{align}}
\def\ead{\end{aligned}}
\def\ee{\end{equation}}
\def\e{\eqref}

\def\htilde{\tilde{h}}
\def\lDx#1{\langle D_x\rangle^{#1}\,}
\def\mU{\mathcal{U}}

\def\B{B }

\def\da{a}
\def\defn{\mathrel{:=}}
\def\deta{\eta}

\def\dzeta{\zeta}
\def\dB{B}
\def\dV{V}
\def\Deltax{\Delta}
\def\Deltayx{\Delta_{x,y}}

\def\eps{\varepsilon}
\def\eval{\arrowvert_{y=\eta}}

\def\la{\left\vert}
\def\lA{\left\Vert}
\def\le{\leq}
\def\les{\lesssim}
\def\leo{}

\def\ma{a}
\def\mez{\frac{1}{2}}
\def\partialx{\nabla}
\def\partialyx{\nabla_{x,y}}
\def\ra{\right\vert}
\def\rA{\right\Vert}

\def\tdm{\frac{3}{2}}
\def\tq{\frac{3}{4}}
\def\uq{\frac{1}{4}}

\def\xC{\mathbf{C}}
\def\xN{\mathbf{N}}
\def\xR{\mathbf{R}}

\def\K{\mathcal{K}}

\def\gain{\mu}

\def\zygmund#1{C^{{#1}}_*}
\def\holder#1{W^{{#1},\infty}}
\def\holdertdm{W^{r+\mez,\infty}}
\def\holdermez{\holder{\mez}}
\numberwithin{equation}{chapter}

\pagestyle{plain}

\title{Strichartz estimates and the Cauchy problem for the gravity water waves equations}
\author{
T. Alazard, 
N. Burq, 
C. Zuily}
\date{\empty}

\begin{document}
\frontmatter
\maketitle

\begin{center}{\large \textbf{Abstract}}
\end{center}
\vspace{5mm}
This paper is devoted to the proof of a well-posedness result for the gravity 
water waves
equations, in arbitrary dimension and in fluid domains with general bottoms, when 
the initial velocity field is not necessarily Lipschitz. Moreover, for two-dimensional waves, we can 
consider solutions such that the curvature of the initial free surface does not belong to $L^2$. 

The proof is entirely based on the Eulerian formulation of the water waves equations, using microlocal analysis to 
obtain sharp Sobolev and H\"older estimates. 
We first prove tame estimates 
in Sobolev spaces depending linearly on H\"older norms and then we use 
the dispersive 
properties of the water-waves system, namely Strichartz estimates, 
to control these H\"older norms.

\tableofcontents

\mainmatter

\chapter{Introduction}

In this paper we consider the free boundary problem describing the 
motion of water waves over 
an incompressible, irrotational fluid flow. 
We are interested in the study of the possible 
emergence 
of singularities and would like to 
understand which quantities govern the boundedness of the solutions. 

We shall work in the Eulerian coordinate system where the  unknowns are the velocity field 
$v$ and the free surface elevation $\eta$. Namely, consider a 
simply connected domain, $\Omega$, located between a fixed bottom $\Gamma$ and a free  unknown surface $\Sigma$, 
given as a graph
$$
\Sigma= \{ (x,y) \in \xR^{d}\times \xR\,;\, y = \eta(t,x)\}.
$$
In this framework,  water waves are described by a system of coupled equations: the incompressible Euler equation 
in the interior of the domain and a kinematic equation describing the deformations of the domain. 
Moreover, the velocity will be assumed to be irrotational.  Thus we are interested in the following system 
\begin{equation}\label{WW}
\left\{ \begin{aligned}
&\partial_t v+ v\cdot \nabla_{x,y} v +  \nabla_{x,y} (P +g y)=0 \text{ in } \Omega, \\[0.5ex]
&\text{div}\, _{x,y} v =0,~ \text{curl}\, _{x,y} v =0 \text{ in } \Omega,\\[0.5ex]
&v\cdot\nu=0 \text{ on } \Gamma,\\[0.5ex]
&\partial_t \eta = (1+ |\nabla_x \eta |^2 ) ^{1/2} \, v\cdot n\text{ on } \Sigma,\\[0.5ex]
&P=0 \text{ on } \Sigma, 
\end{aligned} \right. 
\end{equation} 
 augmented with   initial data $(\eta_0, v_0)$ at time $t=0.$ Here  $n$ (resp.\ $\nu$) is the outward unit normal to the free surface (resp.\ the bottom), 
 $P$ is the pressure and $g>0$ is the acceleration of gravity. 
 
The first equation is the usual Euler equation in presence 
of a gravity force directed along the $y$ coordinate, the third one is 
the solid wall boundary condition at the bottom, the fourth equation 
describes the movement of the interface under the action of the fluid 
and ensures that the fluid particles initially at the interface remain 
at the interface, while the last one expresses the continuity 
of the pressure through the interface (no tension surface).
 
In \cite{ABZ3} we proved that the Cauchy problem for the system~\eqref{WW} is well-posed 
under the minimal assumptions that insure that at time $t=0$, in terms of 
Sobolev embeddings, the initial velocity ${v}_0$ 
is Lipschitz up to the boundary (see also the improvement to velocities whose derivatives are in BMO by Hunter-Ifrim-Tataru~\cite{HIT}). This Lipschitz regularity threshold for the velocity appears to be very natural. However, it has been known for some time (see the work by Bahouri-Chemin~\cite{BaCh} and Tataru~\cite{TataruNS}) that taking benefit of dispersive effects, it is possible to go beyond this threshold on some quasilinear wave-type systems. The goal of this article is  to show that such an improvement is also possible on the water-waves system.   

To describe our main result, we need to introduce some notations.
For $(\eta, v)$ as above, denote by $ V = v_x\mid_{\Sigma}, B= v_y \mid_{\Sigma}$ the horizontal and vertical components of the velocity field at the interface.  Since $v$ is irrotational, and incompressible, we can write $v = \nabla_{x,y} \phi$, 
with $\Delta_{x,y}  \phi = 0$ in the domain. We set $\psi = \phi \mid_{\Sigma}$.  
We shall prove (see Section~\ref{sec.1.4} for a more complete statement)
\begin{theo*}
Let 
\begin{equation*}
s>1+\frac{d}{2}-\gain, \qquad \text{ with }\begin{cases} \gain=\frac 1 {24} & \text{ if } d=1,\\ \gain =\frac 1 {12} & \text{ if } d\geq 2, \end{cases}
\end{equation*}
and set $\mathcal{H}^s=  H^{s+\mez}\times H^{s+\mez}\times (H^{s})^{d}\times H^{s}$ 
where $H^\sigma=H^{\sigma}( \xR^d)$.

Then for any initial data $(\eta_0, v_0)$ such that $(\eta_0, \psi_0,V_0, B_0) \in \mathcal{H}^s $ and  satisfying  the Taylor sign condition, there exist  $T>0$ and  a  solution $(\eta,v)$ of the water-waves system~\eqref{WW} (unique in a suitable space) such that 
$(\eta, \psi, V, B) \in C^0((-T, T); \mathcal{H}^s)$.
\end{theo*}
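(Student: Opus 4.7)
The plan is to push the local well-posedness threshold below the Lipschitz barrier by combining tame paradifferential energy estimates with a gain coming from Strichartz estimates. The overarching scheme is a bootstrap: starting from data in $\mathcal{H}^s$ with $s>1+d/2-\gain$, produce smooth approximate solutions $(\eta_\eps,v_\eps)$ by regularizing the data, derive \emph{a priori} bounds on their $\mathcal{H}^s$ norm over a time interval whose length does not shrink as $\eps\to 0$, and pass to the limit.

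First I would reformulate \eqref{WW} in the Zakharov--Craig--Sulem variables $(\eta,\psi)$ and paralinearize the Dirichlet--Neumann operator and the kinematic/Bernoulli equations, following the strategy of earlier works. Introducing Alinhac's good unknown $U=\psi-T_B\eta$ and the Taylor coefficient $a$, one reduces the system, modulo tame remainders, to a scalar paradifferential equation of half-wave type
\be
(\partial_t+T_V\cdot\nabla+iT_\gamma)\Phi=F,\qquad \gamma\sim \sqrt{a}\,|\xi|^\mez,
\ee
for a quantity $\Phi$ essentially equivalent to $\eta+i|D_x|^\mez U$. A symbolic calculus argument, symmetrizing $T_\gamma$, yields the tame energy inequality
\be
\lA (\eta,\psi,V,B)\rA_{L^\infty_T\mathcal{H}^s}\les \lA(\eta_0,\psi_0,V_0,B_0)\rA_{\mathcal{H}^s}\exp\Bigl(C\int_0^T \mathcal{M}(t)\,dt\Bigr),
\ee
where $\mathcal{M}(t)$ involves only Hölder-type norms $\lA(\eta,V,B)(t)\rA_{\holder{r}}$ at some fixed subcritical index $r<1$. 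This is the step alluded to by the authors as being ``linear in Hölder norms''; it is essentially a refinement of the paradifferential symmetrization already used at the Lipschitz threshold.

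The main obstacle, and the heart of the argument, is to prove Strichartz estimates for the paradifferential half-wave equation with rough, time-dependent coefficients and to transfer them to $(\eta,\psi,V,B)$ so as to beat the Lipschitz threshold. I would work frequency by frequency via Littlewood--Paley, rescale each dyadic piece by the semiclassical parameter $h\sim 2^{-k/2}$ dictated by the dispersion relation $|\xi|^\mez$, and construct a semiclassical WKB/FBI parametrix on intervals of length $h^\sigma$ for an appropriate exponent $\sigma$. Controlling the Hamiltonian flow of the symbol $\sqrt{a}\,|\xi|^\mez$ with coefficients of merely Hölder regularity forces a careful balance between the smoothing cost of paradifferential regularization and the dispersive time window; optimization of this balance is what produces the precise gain $\gain=\tfrac{1}{24}$ in dimension one and $\gain=\tfrac{1}{12}$ in higher dimension. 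The conclusion is a bound of the form
\be
\lA(\eta,V,B)\rA_{L^p_T \holder{r}}\les \lA(\eta_0,\psi_0,V_0,B_0)\rA_{\mathcal{H}^s}+(\text{lower order}),
\ee
for appropriate $p<\infty$ and $r$ strictly above the Hölder index required by $\mathcal{M}(t)$.

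With the tame Sobolev estimate and the Strichartz estimate in hand, the bootstrap closes: plugging the Strichartz bound into the exponential integral and using $p<\infty$ to absorb short time factors, one controls $\lA\cdot\rA_{L^\infty_T\mathcal{H}^s}$ by the initial norm for $T$ small enough, uniformly in the regularization parameter $\eps$. Uniqueness is inherited from the previously known uniqueness result for more regular solutions, combined with a contraction estimate in a lower-norm space where the contraction gap is comfortably below the new threshold. Passing to the limit $\eps\to 0$ by standard compactness and continuity arguments for paradifferential flows then produces the solution $(\eta,\psi,V,B)\in C^0((-T,T);\mathcal{H}^s)$ claimed in the theorem.
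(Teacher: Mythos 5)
Your overall blueprint matches the paper's: reduce to a scalar paradifferential equation $(\partial_t+T_V\cdot\nabla+iT_\gamma)u=f$ with $\gamma\sim\sqrt{a\lambda}$ of order $\mez$, derive tame Sobolev estimates that depend \emph{linearly} on H\"older norms, prove a Strichartz inequality by semiclassical parametrix at scale $\tilde{h}=2^{-j/2}$, and close the bootstrap together with a contraction estimate in a lower-order norm. However, there are two substantive gaps in your description.

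\emph{First}, you claim the energy estimate involves H\"older norms of $(\eta,V,B)$ at a subcritical index $r<1$. This is wrong in a way that would trivialize the whole argument. The tame Sobolev estimate one actually needs (the analogue of Theorem~\ref{T1b}) controls $M_s(T)$ in terms of $Z_r(T)=\lA \eta\rA_{L^p_T W^{r+\mez,\infty}}+\lA (B,V)\rA_{L^p_T W^{r,\infty}}$ with $r>1$: the velocity must be Lipschitz (indeed $C^{1+}$) at almost every time for the paradifferential commutators and the parabolic analysis of the Dirichlet--Neumann operator to close. If $r<1$ sufficed, the Sobolev embedding $H^s\hookrightarrow W^{s-d/2,\infty}$ with $s>1+d/2-\gain$ would already give $L^\infty_T W^{1-\gain,\infty}$ control for free and Strichartz would be unnecessary. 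The whole point is that the initial velocity is only $W^{1-\gain,\infty}$, but the Strichartz estimate upgrades it to $L^2_T W^{r,\infty}$ with $r>1$ along the flow, which is what closes the energy estimate.

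\emph{Second}, your parametrix construction passes over the decisive obstruction: the equation contains a transport term $T_V\cdot\nabla$ of order $1$, while the dispersion comes from $T_\gamma$ of order $\mez$. The dispersive effect is \emph{sub-principal}. If you simply build an FIO parametrix for the full operator (which is what ``controlling the Hamiltonian flow of $\sqrt{a}\,|\xi|^\mez$'' suggests), the bicharacteristics are dominated by the transport, which does not disperse; the stationary phase argument degenerates and you lose the gain. The necessary extra step is to first straighten the vector field $\partial_t+S_{j\delta}(V)\cdot\nabla$ by solving the ODE $\dot{X}=S_{j\delta}(V)(t,X)$ and making the change of variables $v_h(t,y)=(\Delta_j u)(t,X(t,y))$ (a paracomposition in the sense of Alinhac). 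Only after this conjugation does the equation become $(\tilde h\partial_t+iP)w_{\tilde h}=\tilde h F$ with $P$ a genuine semiclassical operator of half-wave type, for which a WKB parametrix produces the dispersive decay $\tilde h^{-d/2}|t|^{-d/2}$ over intervals $|t|\leq \tilde h^\delta$, and the H\"older regularity of $V$ is exactly what limits the size of these intervals. Without this straightening step, the $|\xi|^\mez$ dispersion is buried beneath the transport and the argument does not go through.
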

\begin{rema}\label{R1}
\begin{itemize}
\item The Taylor sign condition expresses the fact that the pressure increases going from the air into the fluid domain. It reads
$$
\partial_y P \arrowvert_{\Sigma} \leq c <0.
$$
It is always satisfied when there is no bottom (see Wu~\cite{WuJAMS}) or for small perturbations of flat bottoms (see Lannes~\cite{LannesJAMS}). Notice that the water-waves system is ill-posed when this condition is not satisfied (see Ebin~\cite{Ebin}).
\item The  curvature  $\kappa_0$   of 
the initial free surface involves two derivatives of $\eta_0$. Hence, we have $\kappa_0 \in H^{s- \tdm}$ which, according to our assumption on $s,$ can be negative in dimension $1$. This shows that, when $d=1$, no control on the  $L^2(\xR)$-norm 
of the curvature of 
the initial free surface is to be assumed.
\item In any dimension, in view of Sobolev embeddings, our assumptions require that $V_0, B_0$ belong to the H\" older space $W^{1-\mu,\infty}$. Consequently, our result applies to initial data for which   the initial velocity field is not Lipschitz.
\end{itemize}
\end{rema}

To prove this result, we follow several steps.  
The first one is (roughly speaking) to reduce the water waves equations to a quasilinear wave 
type equation of the form 
\begin{equation}\label{eq.reduction}
(\partial_t + T_V\cdot \nabla_x + i T_c)u=f,
\end{equation}
where $T_V$ is the paramultiplication by $V$, $T_c$ is a paradifferential operator of order $\mez$ 
and almost self-adjoint (such that $T_c^*-T_c$ is of order $0$) and $f$ is a remainder term in the paradifferential reduction. Actually, this reduction is not new. It was already  performed in 
our previous work~\cite{ABZ3} and  was based on two facts: the  
  Craig-Sulem-Zakharov reduction to a system on the boundary, 
introducing the Dirichlet-Neumann operator and   (following Lannes~\cite{LannesJAMS} Ê
and \cite{ABZ1,AM}) the use of  paradifferential analysis
to study the Dirichlet-Neumann operator in non smooth domains.
The second major step in the proof consists in proving that the solutions of the water waves system enjoy dispersive estimates (Strichartz-type inequalities).
For the equations with surface tension in the special case of dimension~$1$, Strichartz estimates were proved by 
Christianson, Hur and Staffilani in 
\cite{CHS} for smooth enough data and in~\cite{ABZ2} 
for the low regularity solutions constructed in \cite{ABZ1}. 
In the present context, the main difficulty will  consist in proving 
these dispersive estimates for gravity waves at a  lower level of regularity 
than the threshold where we proved the existence of the solutions in \cite{ABZ3}. This will be done  by constructing parametrices on small time intervals tailored to the size of the frequencies considered (in the spirit of the works by Lebeau~\cite{LebeauSc}, Bahouri-Chemin~\cite{BaCh}, Tataru~\cite{TataruNS}, Staffilani-Tataru~\cite{StTa}, and  Burq-G\'erard-Tzvetkov~\cite{BGT1}).

The   important new points in the present article with respect to our previous analysis are the following.
\begin{itemize}
\item To go beyond the analysis previously developed in~\cite{ABZ3} (under the assumption $s> 1+ \frac d 2$), 
we need to develop much more technically involved approaches, in order to work with very rough functions and domains (most parts in our analysis extend to  $s> \frac 1 2 +\frac d 2$). We believe that these results on the Dirichlet-Neumann operator in very rough domains can be of independent interest (see also the work by Dahlberg-Kenig~\cite{DK} and Craig-Schanz-Sulem~\cite{CSS}).
\item The {\em a priori} estimates we prove involve $L^\infty_t(\mathcal{H}^s_x)$ norms (energy estimates) and $L^2_t(C^\sigma_x)$ norms (dispersive estimates). For this,  we need to estimate the non linear (and non local)  remainder terms given by the paradifferential calculus using these norms. The loss of integrability in time ($L^2_t$) for the H\"older norms forces us to track down the precise dependence of the constants in our analysis and prove  {\em tame} estimates depending linearly on H\"older norms.
\item  A simpler model operator describing our system is $(\partial_t + V \cdot \nabla_x) + i |D_x|^{\mez}$ (while in presence of surface tension, $|D_x|^{\mez} $ is replaced by $|D_x|^{\tdm}$).  Consequently, the dispersive properties exhibited on the water-waves system without surface tension are generated by the {\em lower order term} in the equation (the principal part, being a simple transport equation, induces no dispersive effects). To our knowledge, this is the only known example of such phenomenon. 
\item The main point in the proof of dispersive estimates is the construction of a parametrix for solutions to~\eqref{eq.reduction}. One of the difficulties in the approach will be to get rid of this low regularity transport term by means of a parachange of variables (see the work by Alinhac~\cite{Alipara}). Once this reduction is performed, we finally have to handle the low regularity in the parametrix construction. 
\end{itemize}
In the rest of this section we shall describe more precisely the problem and outline the  steps of the proof on the main result.
\section{Equations and assumptions on the fluid domain}\label{S:1.1}

We consider the incompressible Euler equation in a 
time dependent fluid domain $\Omega$  
contained in a fixed domain $\mathcal{O}$, located between a free surface 
and a fixed bottom. 
We consider the general case where the bottom is arbitrary which means that  
the only assumption we shall make on the bottom is that it is separated from the free surface 
by a ÓstripÓ of fixed length. 

Namely, we assume that,
$$
\Omega=\left\{ (t,x,y)\in I\times \mathcal{O} \, :\, y < \eta(t,x)\right\},
$$
where $I\subset \xR_t$ and $\mathcal{O}\subset \xR^d\times \xR$ is a given open connected set. The spatial 
coordinates are $x\in \xR^d$ (horizontal) and $y\in \xR$ (vertical) with $d\ge 1$. 
We assume that the free surface
$$
\Sigma=\left\{ (t,x,y)\in I\times \xR^d\times \xR \, :\, y = \eta(t,x)\right\},
$$ 
is separated from the bottom $\Gamma=\partial\Omega\setminus \Sigma$ 
by a curved strip. This means that we study the case where there exists $h>0$ such that, for 
any $t$ in $I$, 
\begin{equation}\label{n1}
\left\{ (x,y)\in \xR^d\times \xR \, :\, \eta(t,x)-h < y < \eta(t,x)\right\} \subset \mathcal{O}.
\end{equation}
\textbf{Examples.} 
\begin{enumerate} 
\item $\mathcal{O}=\xR^d\times \xR$ corresponds to the infinite depth case (
$\Gamma=\emptyset$); 
\item The finite depth case corresponds to $\mathcal{O}=\{(x,y)\in \xR^d\times \xR\,:\, y>b(x)\}$ for some continuous function $b$ such that 
$\eta(t,x)-h>b(x)$ for any time $t$ (then $\Gamma=\{y=b(x)\}$). 
Notice that no regularity assumption is required on~$b$.
\item  
See the picture below.
\end{enumerate}

\usetikzlibrary{fadings}
\usetikzlibrary{decorations}
\usepgflibrary{decorations.pathmorphing}

\tikzfading[name=fade out, inner color=transparent!0,
  outer color=transparent!100]

\begin{center}
\begin{tikzpicture}[scale=1,samples=100]
    
\clip (-5,-3) rectangle (5,2);

    \filldraw [white!80!black,draw=black]
plot [domain=-6:-1] ({\x},{1+exp(\x)}) --
plot [domain=-1:1] ({\x},{1+exp(-\x*\x)}) --  
plot [domain=1:6] ({\x},{1+exp(-\x)}) -- (6,0) -- (-6,0) ;
    \fill[color=white!80!black,draw=none] (-12,-6) rectangle (9,1);
    \fill[black]
    decorate [decoration={random steps,segment length=1pt,amplitude=1pt}] 
    {(-5,-2.25) -- (-3.5,-2.5)}
    decorate [decoration={random steps,segment length=1pt,amplitude=1pt}] 
    {(-3.5,-2.5) -- (-3,-4.25)}
    decorate [decoration={random steps,segment length=5pt,amplitude=4pt}] 
    {-- (-2.7,-2.25) -- (-1,-2.25)}
    decorate [decoration={random steps,segment length=2pt,amplitude=1pt}] 
    {-- (-1,-2.25) -- (5,-2.25)}
    -- (5,-3) -- (-5,-3) -- (-5,-2.25);
     \node at (0,-1) [above] {$\Omega(t)$};
\node at (3,1.2) [above] {$\Sigma(t)=\{y=\eta(t,x)\}$};
\node at (3,-2.2) [above] {$\Gamma$};
\end{tikzpicture}  
\end{center}

We consider a potential flow such that the velocity $v$ is given by $v=\nabla_{x,y}\phi$ 
for some   function $\phi\colon \Omega\rightarrow \xR$, 
such that 
$\Deltayx\phi=0$. 
The system~\eqref{WW} reads 
\begin{equation}\label{n2}
\left\{
\begin{aligned}
&\partial_{t}\phi+\frac{1}{2}\la \partialyx\phi\ra^2  +P+g y = 0 \quad\text{ in }\Omega,\\
&\partial_{t} \eta = \partial_{y}\phi -\partialx\eta\cdot\partialx \phi \quad \text{ on } \Sigma, \\
&P = 0\quad  \text{ on } \Sigma, \\
&\partial_\nu \phi=0 \quad \text{ on }\Gamma,
\end{aligned}
\right.
\end{equation}
where as above $g>0$ is acceleration due to gravity, $P$ is the pressure and 
$\nu$ denotes the normal vector to $\Gamma$ (whenever it exists; 
for general domains, one solves the boundary value problem by a variational argument, 
see \cite{ABZ1,ABZ3}). 

\section{Regularity thresholds for the water waves}\label{S:21}

A well-known property of smooth solutions is that their energy 
is conserved
$$
\frac{d}{dt} 
\Big\{\mez \int_{\Omega(t)} \la \nabla_{x,y}\phi (t,x,y)\ra^2 \, dxdy + \frac{g}{2}\int_{\xR^d} \eta(t,x)^2 \, dx \Big\} =0.
$$
However, we do not know if weak solutions exist at this level of regularity 
(even the meaning of the equations is not clear). 
This is the only known coercive quantity (see~\cite{BO}).

Another regularity threshold is given by the scaling invariance which holds 
in the infinite depth case (that is when $\mathcal{O}=\xR^d\times \xR$). 
If $\phi$ and $\eta$ are solutions of the gravity water waves equations, then $\phi_\lambda$ and $\eta_\lambda$ defined by
$$
\phi_\lambda(t,x,y)=\lambda^{-3/2} \phi (\sqrt{\lambda}t,\lambda x , \lambda y),\quad 
\eta_\lambda(t,x)=\lambda^{-1} \eta(\sqrt{\lambda} t,\lambda x),
$$
solve the same system of equations. The (homogeneous) H\"older spaces invariant by this scaling (the scaling critical spaces) correspond to $\eta_0$ Lipschitz and $\phi_0$ in  $ \dot{W}^{3/2, \infty}$   
(one can replace the H\"older spaces by 
other spaces having the same invariance by scalings). 

According to the scaling argument, 
one could expect that the problem exhibits some kind of ``ill-posedness'' for initial data such that the free surface is not Lipschitz. 
See e.g.\ \cite{CaARMA,CCT} for such ill-posedness 
results for semi-linear equations. However, the water waves equations are not semi-linear and it 
is not clear whether the scaling argument is the only relevant regularity threshold to determine 
the optimal regularity in the analysis of the Cauchy problem (we refer the reader to 
the discussion in Section~$1.1.2$ of the recent result by Klainerman-Rodnianski-Szeftel~\cite{KRS}). 
In particular, it remains an open problem to prove an ill-posedness result for the gravity water waves equations. 
We refer to the recent paper by Chen, Marzuola, Spirn and Wright~\cite{CMSW} for a related result in the presence of surface tension. 

Several additional criterions have appeared in the mathematical analysis 
of the water waves equations. The first results on the water waves equations required very smooth initial data. 
The literature on the subject is now well established, 
starting with the pioneering work of Nalimov~\cite{Nalimov} (see also 
Yosihara~\cite{Yosihara} and Craig~\cite{Craig1985}) who showed the unique solvability in 
Sobolev spaces under a smallness assumption. 
Wu proved that the Cauchy problem is well posed 
without smallness assumption~(\cite{WuJAMS,WuInvent}). Several extensions of this result were obtained 
by various methods and many authors. We shall quote only some recent results on the local Cauchy problem: \cite{ABZ3,CL,CS,LannesJAMS,LindbladAnnals,MR,SZ}. 

To ensure that the particles flow is well defined, it seems natural to assume that the 
gradient of the velocity is bounded (or at least in BMO, see the work by  Hunter, Ifrim and Tataru in~\cite{HIT}). We refer to blow-up criteria by 
Christodoulou and Lindblad~\cite{ChLi} or Wang and Zhang~\cite{WaZh}.  
Below we shall construct solutions such that the velocity is still in $L^2((-T,T);W^{1,\infty})$
even though it is initially only  in $W^{1- \mu, \infty}$.

Finally, notice that though the above continuation criterions are most naturally stated in H\"older spaces, the use of $L^2$-based Sobolev spaces seems unavoidable (recall from the appendix of \cite{ABZ4} that 
the Cauchy problem for the linearized equations is ill-posed on H\"older spaces, as it exhibits a loss of $d/4$ derivatives). So let us rewrite the previous discussion in this framework. Firstly, 
the critical space for $\eta_0$ (resp.\ the trace $\underline{v}_0$ of the velocity at the free surface) is $\dot{H}^{1+\frac{d}{2}}(\xR^d)$ (resp.\ $\dot{H}^{\mez+\frac{d}{2}}(\xR^d)$). We proved in \cite{ABZ3} that 
the Cauchy problem is well-posed 
for initial data $(\eta_0,\underline{v}_0)$ 
in $H^{\tdm+\frac{d}{2}+\eps}(\xR^d)\times H^{1+\frac{d}{2}+\eps}(\xR^d)$ with $\eps>0$. 
This corresponds to the requirement that the initial velocity field should be Lipschitz. In this paper we shall prove that the Cauchy problem is well posed for initial data $(\eta_0,\underline{v}_0)$ belonging to 
$H^{\tdm+\frac{d}{2}-\delta}(\xR^d)\times H^{1+\frac{d}{2}-\delta}(\xR^d)$ for  
$0<\delta<\mu $. One important conclusion is that, 
in dimension $d=1$, one can consider initial free surface whose curvature does not belong to $L^2$. 

\vspace{-5mm}

\section{Reformulation of the equations}

Following Zakharov~(\cite{Zakharov1968}) 
and Craig and Sulem 
(\cite{CrSu}) 
we reduce the water waves equations 
to a system on the free surface. To do so, notice that since the velocity potential 
$\phi$ is harmonic, it is fully determined by the knowledge of $\eta$ and the knowledge of its trace at the free surface, denoted by $\psi$. 
Then one uses the 
Dirichlet-Neumann operator which maps a function defined on the free surface to the normal derivative of its harmonic extension. Namely, if $\psi=\psi(t,x) \in\xR$ is defined by 
$$
\psi(t,x)=\phi(t,x,\eta(t,x)),
$$
and if the Dirichlet-Neumann operator is defined by 
\begin{align*}
(G(\eta) \psi)  (t,x)&=
\sqrt{1+|\partialx\eta|^2}\,
\partial _n \phi\arrowvert_{y=\eta(t,x)}\\
&=(\partial_y \phi)(t,x,\eta(t,x))-\partialx \eta (t,x)\cdot (\partialx \phi)(t,x,\eta(t,x)),
\end{align*}
then one obtains the following system for two unknowns $(\eta,\psi)$ of the variables $(t,x)$,
\begin{equation}\label{n10}
\left\{
\begin{aligned}
&\partial_{t}\eta-G(\eta)\psi=0,\\[1ex]
&\partial_{t}\psi+g \eta
+ \smash{\frac{1}{2}\la\partialx \psi\ra^2  -\frac{1}{2}
\frac{\bigl(\partialx  \eta\cdot\partialx \psi +G(\eta) \psi \bigr)^2}{1+|\partialx  \eta|^2}}
= 0.
\end{aligned}
\right.
\end{equation}

We refer to \cite{ABZ1,ABZ3} for a precise construction of $G(\eta)$ 
in a domain with a general bottom. We also mention that, 
for general domains, we proved in \cite{Bertinoro} that 
if a solution $(\eta,\psi)$ of System~\eqref{n10} belongs 
to $C^0([0,T];H^{s+\mez}(\xR^d)\times H^{s+\mez}(\xR^d))$ 
for some $T>0$ and $s>1/2+d/2$, then one can define a velocity potential $\phi$ and 
a pressure $P$ satisfying \eqref{n2}. 
Below we shall always consider solutions such that  $(\eta,\psi)$ belongs 
to $C^0([0,T];H^{s+\mez}(\xR^d)\times H^{s+\mez}(\xR^d))$ for some $s>1/2+d/2$ (which is the scaling index). It is thus sufficient to solve the 
Craig--Sulem--Zakharov formulation~\eqref{n10} of the water waves equations.

\section{Main result}
\label{sec.1.4}
We shall work with the horizontal and vertical traces of the velocity 
on the free boundary, namely
$$
B= (\partial_y \phi)\arrowvert_{y=\eta},\quad 
V = (\nabla_x \phi)\arrowvert_{y=\eta}.$$ 
They are given in terms of $\eta$ and $\psi$ by means of the formula 
\begin{equation}\label{defi:BV}
B= \frac{\partialx \eta \cdot\partialx \psi+ G(\eta)\psi}{1+|\partialx  \eta|^2},
\qquad
V=\partialx \psi -B \partialx\eta.
\end{equation}
Also, recall that the Taylor coefficient $\ma$ defined 
by
\be\label{n17}
a=-\partial_y P\arrowvert_{y=\eta}
\ee
can be defined in terms of 
$\eta,\psi$ only (see \cite{Bertinoro} or Definition~$1.5$ in \cite{ABZ3}). 

For $\rho= k + \sigma$ with $k\in\xN$ and $\sigma \in (0,1)$, recall that one denotes 
by~$W^{\rho,\infty}(\xR^d)$ 
the space of functions whose derivatives up to order~$k$ 
are bounded and uniformly H\"older continuous with exponent~$\sigma$. Hereafter, we always consider indexes $\rho\not\in\xN$. 

In our previous paper~\cite{ABZ3}, 
we proved that the Cauchy problem is well-posed in Sobolev spaces 
for initial data such that, for some $s>1+d/2$, 
$(\eta_0,\psi_0,V_0,B_0)$ belongs to 
$H^{s+\mez}\times H^{s+\mez}\times (H^{s})^{d}\times H^{s}$. 
Our main result in this paper is a well-posedness result which holds 
for some $s<1+d/2$. In addition, we shall prove 
Strichartz estimates. 
\begin{theo}\label{main}
Let $d\ge 1$ and consider two real numbers $s$ and $r$ satisfying
\begin{equation*}
s>1+\frac{d}{2}-\gain, \quad 1<r<s+\gain-\frac{d}{2}\quad 
 \text{where }\mu = \begin{cases} \frac 1 {24} \text{ if } d=1,\\ \frac 1 {12} \text{ if } d\geq 2. \end{cases}
\end{equation*}
Consider an initial data $(\eta_{0},\psi_{0})$ such that
\begin{enumerate}[(H1)]
\item \label{assu1}
$\eta_0\in H^{s+\mez}(\xR^d),\quad \psi_0\in H^{s+\mez}(\xR^d),\quad V_0\in H^{s}(\xR^d),\quad B_0\in H^{s}(\xR^d)$,
\item there exists $h>0$ such that condition \eqref{n1} holds initially for $t=0$,
\item (Taylor sign condition) there exists $c>0$ such that, for all $x\in \xR^d$, 
$\ma_0(x)\ge c$.
\end{enumerate}
Then there exists $T>0$ such that 
the Cauchy problem for \eqref{n10} 
with initial data  $(\eta_{0},\psi_{0})$ has a unique solution 
such that 
\begin{enumerate}
\item $\eta$ and $\psi$ belong to 
$C^0\big([0,T];H^{s+\mez}(\xR^d)\big)
\cap L^p\big([0,T];W^{r+\mez,\infty}(\xR^d)\big)$ where $p=4$ if $d=1$ and $p=2$ for $d\ge 2$,
\item $V$ and $B$ belong to 
$C^0\big([0,T];H^{s}(\xR^d)\big)\cap L^p\big([0,T];W^{r,\infty}(\xR^d)\big)$ with $p$ as above,
\item the condition \eqref{n1} holds for 
$0\le t\le T$, with $h$ replaced with $h/2$,
\item for all $0\le t\le T$ and for all $x\in \xR^d$, 
$\ma(t,x)\ge c/2$.
\end{enumerate}
\end{theo}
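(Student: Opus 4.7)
The plan is to combine three ingredients: a paradifferential reduction of the Craig--Sulem--Zakharov system~\eqref{n10} to a scalar dispersive equation of the form~\eqref{eq.reduction}; \emph{tame} Sobolev estimates whose dependence on H\"older norms is linear; and Strichartz-type inequalities obtained through a parametrix construction at each dyadic frequency. First I would carry out the paralinearization, extending the reduction of~\cite{ABZ3} to the rougher setting considered here: with a diagonalized ``good unknown'' $u$ built from $(\eta,\psi,V,B)$, the system~\eqref{n10} is cast into the form $(\partial_t + T_V\cdot\nabla_x + iT_c)u = f$ whose symbol $c(x,\xi)\sim \sqrt{\ma(x)|\xi|}$ encodes the gravity dispersion. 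The key technical point is the analysis of the Dirichlet--Neumann operator $G(\eta)$ when $\eta$ only lies in $H^{s+\mez}$ with $s>\mez+d/2$, achieved by a flattening change of variable combined with a refined paradifferential calculus in nonsmooth domains.

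Next, I would establish tame estimates for the remainder $f$ and the commutators generated by the reduction, of the schematic form
\be
 \|f\|_{H^s} \leq \mathcal{F}\!\left(\|(\eta,\psi,V,B)\|_{\mathcal{H}^s}\right)\left(1 + \|(\eta,V,B)\|_{W^{r+\mez,\infty}}\right),
\ee
with \emph{linear} dependence on the H\"older norm. This linearity is essential: one cannot close a Gronwall argument with merely $L^p_t$ control of the H\"older norm if the dependence were superlinear. Combined with a symmetrized energy estimate for~\eqref{eq.reduction} (relying on $T_c^*-T_c$ being of order $0$ plus a suitable weight absorbing subprincipal terms), this yields
\be
 \|u(t)\|_{H^s}^2 \leq \|u(0)\|_{H^s}^2 + \int_0^t \mathcal{F}(M(\tau))\left(1 + \|(\eta,V,B)(\tau)\|_{W^{r+\mez,\infty}}\right)\|u(\tau)\|_{H^s}^2\,d\tau,
\ee
where $M(\tau) = \|(\eta,\psi,V,B)(\tau)\|_{\mathcal{H}^s}$.

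The core new step is the proof of Strichartz estimates at this low regularity. I would first eliminate the rough transport $T_V\cdot\nabla_x$ via a parachange of variables \`a la Alinhac~\cite{Alipara}, reducing to $(\partial_t + iT_c)\tilde u = \tilde f$ in a straightened frame. Then, for each Littlewood--Paley block at frequency $2^j$, I would regularize $c$ at an appropriate scale and restrict to time intervals of length $2^{-j\alpha}$ with $\alpha\in(0,1)$. On each such interval a WKB construction -- solving the Hamilton--Jacobi equation for the phase and a transport equation for the amplitude -- provides a parametrix satisfying an $L^p_t L^\infty_x$ dispersive estimate with $p=4$ if $d=1$ and $p=2$ if $d\ge 2$. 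Summing over $j$ via a Littlewood--Paley square function bound and optimizing $\alpha$ against the Sobolev-to-H\"older margin yields
\be
 \|u\|_{L^p([0,T];W^{r+\mez,\infty})} \leq C(M)\left(\|u(0)\|_{H^{s+\mez}} + \|f\|_{L^1([0,T];H^{s+\mez})}\right),
\ee
for $1<r<s+\gain-d/2$. This quantitative optimization fixes $\gain=1/24$ when $d=1$ (where one exploits an $L^4_t$ Strichartz pair) and $\gain=1/12$ when $d\ge 2$.

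Finally, combining the two previous estimates closes a bootstrap on $N(T) = \|u\|_{L^\infty([0,T];H^s)} + \|u\|_{L^p([0,T];W^{r+\mez,\infty})}$ on a small time interval depending only on $N(0)$, $c$ and $h$. Applying this a priori bound to a sequence of smooth approximate solutions and passing to the limit -- weak compactness for existence, estimates on the difference of two solutions for uniqueness and strong time-continuity -- yields the theorem; propagation of the Taylor sign condition and of the strip assumption~\eqref{n1} on the (possibly shorter) interval $[0,T]$ follows from the time continuity of $\ma$ and $\eta$. The main obstacle is the parametrix construction: one must balance the regularization error of the rough symbol $c$ against the dispersive gain on small frequency-dependent time intervals, all while carefully controlling the Alinhac-type change of variables, and this delicate optimization is precisely what pins down the numerical values of $\gain$ in the statement.
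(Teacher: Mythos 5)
Your proposal is correct and follows essentially the same route as the paper: the paradifferential reduction to $(\partial_t+\mez(T_V\cdot\nabla+\nabla\cdot T_V)+iT_\gamma)u=f$ with $\gamma=\sqrt{a\lambda}$, tame Sobolev/H\"older estimates depending linearly on the $W^{r+\mez,\infty}$-norms, Strichartz estimates via Alinhac's parachange of variables and a frequency-localized parametrix on time intervals of length $h^{\delta/2}$, and finally a bootstrap on $M_s(T)+Z_r(T)$ combined with contraction estimates and passage to the limit from smooth approximations. The only ingredient you leave implicit is the low-regularity continuation criterion (the paper invokes de Poyferr\'e's blow-up criterion in terms of $M_s(T)+Z_r(T)$) needed to guarantee that the smooth approximate solutions survive on a time interval independent of the regularization parameter.
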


\section{Paradifferential reduction}

The proof relies on a 
paradifferential reduction of the water-waves equations, as in \cite{ABZ1,ABZ3,AM}. 
This is the property that the equations can be reduced to a very simple form
\begin{equation}\label{eqpara}
\Big(  \partial_t + \mez( T_{V } \cdot \nabla + \nabla \cdot T_{V }) +i T_{\gamma } \Big)u  =f 
\end{equation}
where $T_V$ is a paraproduct ($T_{V } \cdot \nabla u+ \nabla \cdot T_{V } u=2T_V\cdot\nabla u 
+T_{\cnx V} u$) and $T_\gamma$ is a paradifferential operator of order 
$\frac{1}{2}$ with symbol 
$$
\gamma=\sqrt{\ma \lambda}$$
where
$$
\lambda=\sqrt{\big(1+ \vert \nabla \eta\vert^2\big)\vert \xi \vert^2 
- \big(\xi \cdot \nabla \eta\big)^2}.
$$
Here $a$ is the Taylor coefficient (see \e{n17}) and 
$\lambda$ is the principal symbol of the Dirichlet-Neumann operator 
(see Appendix~\ref{sec:2} for the definition of paradifferential operators). When 
$d=1$, $\lambda$ simplifies to $\la \xi\ra$ so $T_{\gamma }u=T_{\sqrt{a}}\la D_x\ra^{\mez}u$.

To prove Theorem~\ref{main} 
we need tame estimates which complement the estimates already proved in \cite{ABZ3}. 
We shall study the Dirichlet-Neumann operator.

In the case without bottom ($\Gamma=\emptyset$), 
when $\eta$ is a smooth function, it is known that, modulo a smoothing operator, $G(\eta)$ 
is a pseudo-differential operator whose principal symbol is given by $\lambda$. 

Notice that~$\lambda$ is well-defined for any~$C^1$ function~$\eta$. 
In \cite{ABZ3} we proved several results which 
allow 
to compare~$G(\eta)$ to the paradifferential operator~$T_\lambda$ when~$\eta$ 
has limited regularity. 
In particular we proved that, for any $s>1+d/2$, 
\begin{equation}\label{n010}
\lA G(\eta)f-T_\lambda f\rA_{H^{s-\mez}({\mathbf{R}}^d)}
\le \mathcal{F} \bigl(\| \eta \|_{H^{s+\mez}({\mathbf{R}}^d)}\bigr)\lA f\rA_{H^{s}({\mathbf{R}}^d)}.
\end{equation}

When $\eta$ is a smooth function, 
one expects that $G(\eta)-T_\lambda$ is of order $0$ which means that 
$G(\eta)f-T_\lambda f$ has the same regularity as $f$ 
(this holds true whenever $\eta$ is much smoother than $f$). 
On the other hand, \e{n010} gives only 
that this difference ``is of order'' $1/2$ (it maps $H^s$ to $H^{s-1/2}$). 
This is because we allow $\eta$ to be only $1/2$-derivative more regular than $f$.  
This is tailored to the analysis of gravity waves since, 
for scaling reasons, it is natural to assume that $\eta$ is $1/2$-derivative 
more regular than the trace of the velocity on the free surface. 

We shall improve \eqref{n010} by proving that 
$G(\eta)-T_\lambda$ is of order $1/2$ assuming only that $s>3/4+d/2$ 
together with sharp 
H\"older regularity assumptions on both $\eta$ and $f$ (these H\"older assumptions are the ones 
that hold by Sobolev injections for $s>1+d/2$). 
Since H\"older norms are controlled only in some $L^p$ spaces in time (by Strichartz estimates), we need to precise 
the dependence of the constants. 
We shall prove in Chapter~\ref{C:2} 
the following result that we believe is of independent interest. 

\begin{theo}\label{T3}
Let~$d\ge 1$ and consider real numbers~$s,r$ such that
$$
s>\frac{3}{4}+\frac{d}{2},\qquad r>1.
$$
Consider~$\eta\in H^{s+\mez}({\mathbf{R}}^d)\cap \holdertdm({\mathbf{R}}^d)$ 
and~$f\in H^{s}({\mathbf{R}}^d)\cap \holder{r}({\mathbf{R}}^d)$, then 
$G(\eta)f$ belongs to $H^{s-\mez}({\mathbf{R}}^d)$ and
\begin{equation}\label{Hstrick0}
\lA G(\eta)f-T_\lambda f\rA_{H^{s-\mez}}\le 
\mathcal{F}\bigl( \| \eta \|_{H^{s+\mez}}+\lA f\rA_{H^s}\bigr)\left\{1+\| \eta \|_{\holdertdm}+\lA f\rA_{\holder{r}}\right\},
\end{equation}
for some non-decreasing function~$\mathcal{F}\colon \xR^+\rightarrow \xR^+$ 
depending only on~$s$ and~$r$.
\end{theo}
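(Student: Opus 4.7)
\textbf{Proof plan for Theorem~\ref{T3}.}

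The plan is to follow the paralinearization strategy used in \cite{ABZ3}, but to carry it out with enough bookkeeping that every remainder depends linearly on the H\"older norms $\|\eta\|_{\holdertdm}$ and $\|f\|_{\holder{r}}$, and only nonlinearly on the Sobolev norms $\|\eta\|_{H^{s+\mez}}$ and $\|f\|_{H^s}$. The first step is to straighten the free surface: introduce $(x,z)\mapsto (x,\rho(x,z))$ with $\rho(x,0)=\eta(x)$ and $\rho$ built from a Littlewood--Paley regularization of $\eta$ so that one has simultaneously $\|\nabla_{x,z}\rho\|_{H^{s}}\le \mathcal{F}(\|\eta\|_{H^{s+\mez}})$ and $\|\nabla_{x,z}\rho\|_{\holder{r-\mez}}\le \mathcal{F}(\|\eta\|_{\holdertdm})$. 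The harmonic extension $\phi$ pulls back to $\varphi(x,z)$ satisfying a divergence-form elliptic equation $L\varphi=0$ in $\{z<0\}$ with coefficients that are rational expressions in $\nabla_{x,z}\rho$, and one recovers $G(\eta)f$ as a linear combination of the traces of $\partial_z\varphi$ and $\nabla_x\varphi$ at $z=0$.

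Once in the flat strip, replace every coefficient of $L$ by a paraproduct operator, producing
$$
P\varphi := \bigl(\partial_z^2 + T_\alpha \Deltax + T_\beta\cdot\nabla_x\partial_z+T_\gamma \partial_z\bigr)\varphi = F,
$$
where $F=(P-L)\varphi$ is the Bony remainder. Here each coefficient is either $\alpha,\beta$ or $\gamma$, depending on $\nabla_{x,z}\rho$; using the paralinearization formula $F(u)=T_{F'(u)}u+R(u)$ with $\|R(u)\|_{H^{s+\rho-d/2}}\le \mathcal{F}(\|u\|_{H^s})(1+\|u\|_{\zygmund{\rho}})$ together with the tame paraproduct estimate
$$
\|ab-T_a b - T_b a\|_{H^{s-\mez}}\les \|a\|_{\zygmund{\rho}}\|b\|_{H^{s-\mez-\rho+d/2}}\;+\;\text{symmetric},
$$
one gets $\|F\|_{L^2_z H^{s-\mez}_x}\le \mathcal{F}(\|\eta\|_{H^{s+\mez}}+\|f\|_{H^s})\{1+\|\eta\|_{\holdertdm}+\|f\|_{\holder{r}}\}$. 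The hypothesis $s>\tq+d/2$ together with $r>1$ ensures that these remainders actually land in $H^{s-\mez}$ (one gains half a derivative off of $H^{s-\mez-\mez+d/2}$-type terms thanks to the Hölder factor). Next, factor $P$ as a product of two first-order paradifferential operators:
$$
P = (\partial_z - T_a)(\partial_z - T_A) + R_0,
$$
with $a,A$ symbols of order $1$ solving at principal order the Riccati system $a+A=iT_\beta\cdot\xi+T_\gamma$, $aA=T_\alpha|\xi|^2$; the \emph{outgoing} root $a$ has principal symbol $-\lambda$ restricted to $z=0$ (and $A$ the opposite), and lower-order symbols are obtained by a descending induction. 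The key fact is that $R_0$ is of order~$0$ with a tame bound of the same structure as $F$.

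Solve the factored equation in two steps: first the forward problem $(\partial_z-T_A)w=F+R_0\varphi$ with $w\to 0$ as $z\to -\infty$ (using the strip assumption \eqref{n1} and cutoff in $z$), then the backward problem $(\partial_z-T_a)\varphi=w$ with Dirichlet data $\varphi|_{z=0}=f$. The parabolic energy estimates for $\partial_z\pm T_a$ are tame because $\|T_a u\|_{H^\sigma}\les \|u\|_{H^{\sigma+1}}$ with the constant depending only on the $L^\infty$-norm of the principal symbol, and the subprincipal terms land in lower-order Hölder spaces controlled by $\|\eta\|_{\holdertdm}$. Reading the identity at $z=0$, one finds
$$
\partial_z\varphi|_{z=0} = T_a|_{z=0}\,f + w|_{z=0},
$$
and unwinding the change of variables converts $\partial_z\varphi|_{z=0}$ and $\nabla_x\varphi|_{z=0}$ into $G(\eta)f$ via the formula \eqref{defi:BV}, producing $G(\eta)f-T_\lambda f = $ the trace $w|_{z=0}$ plus tame paraproduct remainders from the change of variables; the principal symbol at $z=0$ is exactly $\lambda$ because $a|_{z=0}=-\lambda$ (the opposite sign convention is absorbed in the orientation of the normal).

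The expected main obstacle is the factorization/descending-symbol step: to achieve the linear dependence on Hölder norms, one must at each order solve transport/elliptic symbolic equations whose source terms combine Sobolev-regular pieces (from $\eta,f$) with H\"older-regular pieces (from $\nabla\eta$), and verify that the resulting symbols enjoy estimates of the form $\|c\|_{L^\infty_z\Gamma^{m}_{\rho}}\le \mathcal{F}(\|\eta\|_{H^{s+\mez}})(1+\|\eta\|_{\holdertdm})$ with $\rho>0$, so that the symbolic composition remainder in $R_0$ is of order $\le 0$ rather than $\le \mez$; this is where the improvement over estimate \eqref{n010} is produced, and where the condition $s>\tq+\frac{d}{2}$ (giving $s-\mez > \uq+\frac{d}{2}$, enough for Sobolev coefficients to act by paraproduct on $H^{s-\mez}$ inputs with a half-derivative gain against H\"older data) is essential.
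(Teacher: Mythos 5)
Your plan reproduces the architecture of the paper's proof (flattening, paralinearization, factorization into a forward and a backward paradifferential parabolic problem, trace at $z=0$), but it has a genuine gap and one step that would fail as stated. The gap: to estimate the Bony remainders tamely you need, besides H\"older bounds on the coefficients $\alpha,\beta,\gamma$, an $L^\infty$ bound on $\nabla_{x,z}v$ \emph{itself}. The terms $\gamma\,\partial_z v$ and $T_{\Delta v}\alpha+R(\alpha,\Delta v)$ are controlled in $H^{s-1}$ only through $\lA\nabla_{x,z}v\rA_{L^\infty}$ times Sobolev norms of the coefficients, and for $s<1+d/2$ the a priori bound $\nabla_{x,z}v\in C^0_zH^{s-1}_x$ does not embed into $L^\infty$. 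Your proposal never says where this control comes from. In the paper it is Proposition~\ref{p1H}: one splits $v=v_1+v_2$, where $v_2$ solves the backward parabolic equation with zero data and the already-controlled source $(\partial_z-T_A)v$ (so $\nabla_{x,z}v_2$ gains $\eps$ derivatives and lands in $L^\infty$ by Sobolev embedding), while $v_1$ solves the homogeneous equation with data $f$ and is handled by a parabolic maximal estimate in Zygmund spaces (Proposition~\ref{prop:maxH}); this is precisely where $\lA f\rA_{\holder{r}}$ enters linearly. Without this step the product estimates for the source terms cannot be closed below $s=1+d/2$.

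The step that fails: you ask for subprincipal symbols, produced by descending induction, making the factorization remainder $R_0=T_aT_A-T_\alpha\Delta$ of order $\le 0$. With symbols of order $1$ whose $x$-regularity is H\"older $\mez$ (which is what is used in \eqref{boundsaAbis}, and using higher H\"older regularity of $\eta$ in \emph{both} factors would destroy the linear dependence on $\| \eta \|_{\holdertdm}$), symbolic calculus gives $T_aT_A-T_{aA}$ of order $1+1-\mez=\tdm$, and no subprincipal correction can improve this since the obstruction is the regularity of the principal symbols themselves. Fortunately order $\tdm$ suffices: $F_3$ then lies in $L^2_zH^{s-1}$ — not $H^{s-\mez}$ as you claim for your remainder $F$; the missing half derivative is recovered by the parabolic smoothing $Y^{s-\mez}\supset L^2_zH^{s-1}$ in Proposition~\ref{prop:max} — and the paper uses only the two explicit roots \eqref{aA} with no descending expansion. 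Finally, the root whose trace reproduces $\lambda$ in $G(\eta)f=\bigl(\zeta_1\partial_zv-\zeta_2\cdot\nabla_x v\bigr)\arrowvert_{z=0}$ is $A$, the one with $\RE A\ge c\la\xi\ra$ defining the backward evolution; your assignment $\partial_z\varphi\arrowvert_{z=0}=T_af+w\arrowvert_{z=0}$ with the outgoing root $a$ would produce $-\lambda$, and this is not a matter of orientation of the normal but of which factor is applied last in $(\partial_z-T_a)(\partial_z-T_A)v$.
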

\begin{rema}
This estimate is tame in the following sense. In the context we will be mostly interested in ($s< 1+d/2$), for oscillating functions, we have ($r>1$)
$$
\lA u\Bigl(\frac{x}{\eps}\Bigr)\rA_{\holdertdm}\sim \Bigl(\frac{1}{\eps}\Bigr)^{r+\mez}
\gg \Bigl(\frac{1}{\eps}\Bigr)^{s+\mez-\frac{d}{2}}\sim \lA u\Bigl(\frac{x}{\eps}\Bigr)\rA_{H^{s+\mez}},
$$
and consequently, the estimate \eqref{Hstrick0} is {\em linear} with respect to the highest norm.
\end{rema}

\section{Strichartz estimates}
Using the previous paradifferential reduction, 
the key point is to obtain estimates in H\"older spaces coming 
from Strichartz ones. Most of the analysis 
is devoted to the proof of the following result.

\begin{theo}\label{T4}
Let $I= [0,T]$, $d\geq 1$. 
Let $\gain$ be such that 
$\gain<\frac{1}{24}$ if $d=1$ and $\gain<\frac{1}{12}$ if $d\ge 2$.

Let  $s\in\xR$ and $f \in L^\infty(I; H^s(\xR^d))$. 
Let $u\in C^0(I; H^s(\xR^d))$ be a solution of \eqref{eqpara}. 
Then one can find $k = k(d)$ such that
\begin{equation*}
\begin{aligned}
\Vert  u  \Vert_{L^p(I;  \zygmund{s-\frac{d}{2}+\gain}(\xR^d))}
\leq    \mathcal{F}\big(\Vert V\Vert_{E_0} +  \mathcal{N}_k(\gamma)\big) 
\Big\{  \Vert  f\Vert_{L^p(I; H^{s }(\xR^d))} 
+ \Vert  u \Vert_{C^0(I; H^s(\xR^d))}\Big\}
\end{aligned}
\end{equation*}
where $\zygmund{r}$ is the Zygmund space of order $r\in\xR$ (see Definition~$\ref{T:Zygmund}$), 
$p=4$ if $ d=1 $ and $p=2$ if $d \geq 2$, 
$E_0 = L^p(I; W^{1,\infty}(\xR^d))^d$ and 
$\mathcal{N}_k(\gamma) = \sum_{\vert \beta  \vert \leq k} \Vert D_\xi^\beta \gamma\Vert_{L^\infty(I\times \xR^d \times \mathcal{C})}$ 
with $\mathcal{C}=\{\frac{1}{10}\le\la\xi\ra\le10\}$.  
\end{theo}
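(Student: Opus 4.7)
My first step is to remove the low-regularity transport operator $\mez(T_V\cdot\partialx+\partialx\cdot T_V)$ in~\e{eqpara} by a paradifferential change of variables \`a la Alinhac. I introduce a family of diffeomorphisms $\chi(t,\cdot)$ of $\xR^d$ whose paragenerator is (a paraversion of) the rough vector field $V$, and set $\tilde u=u\circ\chi$. Conjugation transforms~\e{eqpara} into
$$(\partial_t+iT_{\tilde\gamma})\tilde u=\tilde f+R\tilde u,$$
where $\tilde\gamma$ is a new half-order symbol whose semi-norms $\mathcal{N}_{k}(\tilde\gamma)$ are controlled by $\mathcal{N}_k(\gamma)$ and $\lA V\rA_{E_0}$, and $R$ is a paradifferential remainder of non-positive order. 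Because $\chi$ is merely as regular as $V\in L^p(I;W^{1,\infty})$, the classical change of variables is unavailable; paracomposition is the correct substitute and in particular provides equivalence of Zygmund and Sobolev norms up to tame constants.

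\textbf{Dyadic decomposition and parametrix.} I Littlewood--Paley decompose $\tilde u=\sum_N\Delta_N\tilde u$ and, for each dyadic frequency $N\gg 1$, partition $[0,T]$ into subintervals $I_j^N$ of length $h_N=N^{-\alpha}$ for a parameter $\alpha>0$ to be fixed. On each $I_j^N$ I regularize $\tilde\gamma$ in $x$ at scale $N^{-\beta}$ to obtain a smooth symbol $\tilde\gamma_N$, and construct a WKB-type parametrix with kernel
$$K_N(t,s,x,y)=\int_{\xR^d} e^{i(\phi_N(t,s,x,\xi)-y\cdot\xi)}\,a_N(t,s,x,\xi)\,\psi(\xi/N)\,d\xi,$$
where $\phi_N$ solves the Hamilton--Jacobi equation $\partial_t\phi_N+\tilde\gamma_N(t,x,\partialx\phi_N)=0$ with $\phi_N\vert_{t=s}=x\cdot\xi$, and the amplitude $a_N$ is determined by the standard transport equations. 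The constraint $h_NN^{\mez}\ll 1$ guarantees bounded geometry of $\phi_N$ on $I_j^N$ and validity of the parametrix up to a controlled remainder.

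\textbf{Dispersive estimate, Strichartz, and summation.} Because the $\xi$-Hessian of $|\xi|^{\mez}$ is of order $|\xi|^{-\tdm}$, stationary phase applied to $K_N$ yields the frequency-localized dispersive decay
$$\lA K_N(t,s)\rA_{L^1\to L^\infty}\les N^{3d/4}|t-s|^{-d/2},\qquad t,s\in I_j^N.$$
A $TT^*$/Keel--Tao argument on each $I_j^N$ then produces the frequency-localized Strichartz bound in $L^p_tL^\infty_x$, endpoint $p=4$ in $d=1$ and $p=2$ in $d\ge 2$. Reassembling over the $\sim T/h_N$ subintervals loses a factor $N^{\alpha/p}$; comparing $\tilde\gamma$ with $\tilde\gamma_N$ and controlling the parametrix remainder produce an error of the form $N^{-\beta\rho+\ldots}$, where $\rho$ reflects the spatial H\"older regularity of the coefficients of $\tilde\gamma$. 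Optimizing $\alpha$ and $\beta$ against the $\mez$-order of the operator produces the gain $\mu$, with the announced thresholds $\mu<\frac{1}{24}$ in $d=1$ and $\mu<\frac{1}{12}$ in $d\ge 2$. Summation over $N$ uses the Zygmund characterization $\lA v\rA_{\zygmund{\rho}}\simeq\sup_N N^{\rho}\lA\Delta_N v\rA_{L^\infty}$.

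\textbf{Main obstacle.} The hardest part is the parametrix construction at this low regularity: one must solve Hamilton--Jacobi and the associated amplitude transport equations with a Hamiltonian of merely H\"older regularity in $x$, while keeping every constant tame and depending only on $\lA V\rA_{E_0}$ and $\mathcal{N}_k(\gamma)$ as demanded by the statement. The Alinhac paracomposition step, though classical in spirit, also requires careful paradifferential bookkeeping so as not to generate new losses (in particular to preserve the linear dependence of Theorem~\ref{T3} on the top H\"older norm when invoked inside the bootstrap).
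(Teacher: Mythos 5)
Your high-level strategy matches the paper's: dyadic decomposition, frequency-dependent regularization of the symbol, a WKB/FBI parametrix, stationary-phase dispersive estimate, $TT^*$ on short time intervals, and gluing. The difference that matters — and it is a genuine gap — is the \textbf{order} of the transport-removal step relative to the dyadic decomposition.

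You propose a single global paracomposition $\tilde u=u\circ\chi$ (à la Alinhac) \emph{before} Littlewood--Paley, with $\chi$ built from the raw vector field $V$. Since $V$ is merely $L^p_t(W^{1,\infty}_x)$, the flow $\chi(t,\cdot)$ is Lipschitz in $x$ but not $C^{1,\varepsilon}$ for any $\varepsilon>0$; Alinhac's paracomposition theorem then cannot produce a remainder of non-positive order when conjugating the first-order operator $\mez(T_V\cdot\nabla+\nabla\cdot T_V)$. You would be left with a residual transport term of order essentially $1$, which the subsequent parametrix (built only for the half-order operator) cannot absorb. The claim that the paracomposition gives ``equivalence of Zygmund and Sobolev norms'' also breaks for $s>1$ with a merely Lipschitz diffeomorphism. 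The paper avoids this trap by \emph{first} localizing to a dyadic block $\Delta_j u$, \emph{then} smoothing the vector field at the frequency-dependent scale $S_{j\delta}(V)$ (with $\delta=2/3$), and \emph{only then} making an ordinary change of variables $x=X(t;y,h)$ with $\dot X=S_{j\delta}(V)(t,X)$. The gain is visible in Proposition~\ref{estX}: $\lVert\partial_x^\alpha X\rVert_{L^\infty}\lesssim h^{-\delta(|\alpha|-1)}|t|^{\mez}$, i.e., the higher derivatives of the flow blow up at a controlled, frequency-dependent rate that is subsequently matched against the short time intervals of length $h^{\delta/2}$. This frequency-adapted smoothing of $V$ is structurally unavailable to a single global diffeomorphism and is precisely what makes the change of variables usable. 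Your residual errors $(T_V-T_{V_\delta})\cdot\nabla$ and $(T_\gamma-T_{\gamma_\delta})$ are then bounded as in \eqref{diffV} using $\delta=2/3$.

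A secondary remark: your stated constraint $h_N N^{1/2}\ll 1$ (giving $\alpha>1/2$) does not match what the optimization produces; the paper works on time intervals of length $h^{\delta/2}=h^{1/3}$, i.e.\ $\alpha=1/3$, and $\delta$ is tied to $\alpha$ through the relation $1-2\delta+\tfrac\delta2=0$ arising in the flow/amplitude estimates. Getting the exponents $\gain=\tfrac1{24}$ ($d=1$) and $\gain=\tfrac1{12}$ ($d\ge2$) requires that precise matching, not merely an unspecified ``optimization of $\alpha$ and $\beta$.''
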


This last theorem is proved in Chapter~\ref{C:4}. 
We conclude this introduction by explaining the strategy 
of its proof.

\subsubsection*{Linearized equation}

To explain our strategy, let us first consider as a simple model the linearized equation at $(\eta =0, \psi =0)$ when $d=2, g=1$, in the case without bottom. Then $G(0) = |D_x|$ and the linearized system reads
$$
\partial_t \eta- |D_x| \psi=0, \qquad \partial_t \psi + g \eta =0,$$ which, with $u= \eta + i|D_x| ^{1/2} \psi $
 can be written under the form
$$
\partial_t u +i \la D_x\ra^{1/2}u=0.
$$
Since the operators $e^{-it\la D_x\ra^{1/2}}$ are unitary on Sobolev spaces, 
the Sobolev embedding $H^{1+\eps}(\xR^2)\subset L^\infty(\xR^2)$ ($\eps>0$) implies that
$$
\lA e^{-it\la D_x\ra^{1/2}} u_0 \rA_{L^\infty_t (]0,1[; L^\infty( \xR^2_x))} \le 
C  \lA u_0 \rA_{H^{1 + \eps}(\xR^2_x)}.
$$
We shall recall the proof of the following Strichartz estimate
$$
\exists C>0,  \forall 2< p\leq + \infty, ~:\quad\lA e^{-it|D_x|^{1/2}} u_0 \rA_{L^p_t(]0,1[; L^{\frac {2p} {p-2}}( \xR^2_x))} 
\le C \lA u_0 \rA_{H^{\frac 3 {2p}}(\xR^2)},
$$
which (taking $p$ close to $2$) allows to gain almost $1/4$ derivative with respect to the Sobolev embedding. 

The strategy of the proof is classical (see Ginibre-Velo~\cite{GV} and Keel-Tao~\cite{KT}). Firstly, by using the Littlewood-Paley decomposition, 
one can reduce the analysis to the case where the spectrum of $u_0$ is in a dyadic shell. 
Namely, it is sufficient to prove that
$$
\big\Vert e^{-it|D_x|^{1/2}} \chi(h |D_x|) u_0 \big\Vert_{L^p_t(]0,1[; L^{\frac {2p} {p-2}})} 
\le C  h^{- \frac 3 {2p}} \lA u_0 \rA_{L^2(\xR^2)},
$$ 
where $C$ is uniform with respect to $h\in ]0,1[$ 
and $\chi\in C^\infty_0(\xR\setminus 0)$ equals $1$ on 
$[1,2]$. 

To prove that $T = e^{-it|D_x|^{1/2}} \chi(h |D_x|)$ is bounded from 
$L^2_x$ to $L^p_t(L^q_x)$ ($q= \frac{2p} {p-2}$) with norm bounded by 
$A\defn C  h^{- \frac 3 {2p}} $, it suffices to prove that 
the operator $TT^* $ is bounded from  
$L^{p'}_t(L^{q'}_x)$ to $L^p_t(L^q_x)$ with norm bounded by  
$A^2=C^2  h^{-\frac 3 p}$. Now, write
$$
TT^* f = e^{-it |D_x|^{1/2}} \int_0^1 e^{is|D_x|^{1/2}} f(s, \cdot) ds.
$$
Using the Hardy-Littlewood-Sobolev inequality, the desired estimate for 
$TT^*$ will be a consequence of the following dispersive estimate :
$$
\| \chi(h |D_x|)e^{-i(t-s) |D_x|^{1/2}}\chi(h |D_x|)\|_{L^1_x \rightarrow L^\infty_x} 
\le \frac {C}{ h^{\frac 3 2}|t-s|}\cdot
$$ 
The proof of this estimate is classical: we have
$$ \chi(h |D_x|)e^{-it|D_x|^{1/2}}\chi(h |D_x|) u = \frac 1 {(2\pi )^2} \int e^{-it |\xi|^{1/2} + i (x-y) \cdot \xi ) }\chi^2 ( h|\xi|) u(y) dy d\xi,$$
and the estimate follows after changing variables ($\eta = h\xi$) from the stationary phase inequality.

\subsubsection*{The nonlinear system}
We now consider the nonlinear equation \eqref{eqpara}, which reads
\begin{equation*}
\Big(  \partial_t + \mez( T_{V } \cdot \nabla + \nabla \cdot T_{V }) +i T_{\gamma } \Big)u  =f.
\end{equation*}
To apply the strategy recalled in the previous paragraph, the main 
difficulties  are the following:
 \begin{itemize}
\item this is a paradifferential equation with non constant coefficients,
\item the coefficients are not smooth. Indeed,  $V$ is in $L^\infty_t(C^1_x)$ and the symbol 
$\gamma=\gamma(t,x,\xi)$, of order $1/2$ in $\xi$, 
is only $L^\infty_t(C^{1/2}_x)$ in the space-time variables,
\item the dispersion is due to the operator 
$T_\gamma$ of order $1/2$, while the equation contains the 
term $T_V\cdot\nabla$ of order $1$.
\end{itemize}

The first step of the proof 
is classical in the context of quasi-linear wave equations (see the works by Lebeau~\cite{Lebeau92}, 
Smith~\cite{Smith}, Bahouri-Chemin~\cite{BaCh}, 
Tataru~\cite{TataruNS} and Blair~\cite{Blair}). 
It consists, after a dyadic decomposition at frequency $h^{-1}$, 
in regularizing the coefficients at scale $h^{-\delta}$, where $ \delta \in (0,1)$ is to be chosen properly. Using the Littlewood-Paley 
decomposition $u=\sum_{j\ge -1} \Delta_j u$, we can write
\begin{equation*}
\Bigl(\partial_t +\mez( S_{j-2}(V)\cdot \nabla + \nabla  \cdot  S_{j-2}(V)) +i T_\gamma\Big)\Delta_j u =f_j,
\end{equation*}
where $S_{j-2} u =\sum_{k=-1}^{j-3} \Delta_k u$ and $f_j$ is easily estimated. 
Then, for some $\delta\in ]0,1[$ (here $\delta = \frac{2}{3}$), one considers instead the 
equation with smoothed coefficients:
$$
\Big(\partial_t +\mez( S_{j\delta}(V)\cdot \nabla + \nabla  \cdot  S_{\delta j}(V)) +i T_{\gamma_\delta}
\Big)\Delta_j u 
= f_j+ g_{j\delta}, \qquad \gamma_\delta = \psi(2^{-j\delta} D_x)\gamma
$$
where
$$
g_{j\delta} = \Big(\mez \big\{ \big(S_{j\delta}(V)  - S_j(V)\big)\cdot\nabla 
 +\nabla \cdot \big(S_{j\delta}(V)  - S_j(V)\big)\big\} + i (T_\gamma -T_{\gamma_\delta})\Big) \Delta_ju.
$$

Since the dispersion is due to the sub-principal term, we chose to straighten the vector field $\partial_t + S_{j\delta}(V)\cdot \nabla$ 
by means of a parachange of variables (following Alinhac~\cite{Alipara}). 
To do so, we solve the system 
$\dot{X}(t) = S_{j\delta}(V)(t,X(t))$ with $X(0)=x$ 
to obtain a mapping $x\mapsto X(t,x)$ which is a small perturbation of the identity in 
small time, satisfying
$$
\Vert \frac{\partial X}{\partial x}(t,\cdot)- Id \Vert_{L^\infty(\xR^d)} \leq C(\Vert V \Vert_{C^1}) \vert t \vert ^\mez.
$$
However, as the vector field $V$ is only Lipschitz, 
we have only the following estimates for the higher order derivatives:
$$
\Vert (\partial^\alpha_x X)(t, \cdot) \Vert_{L^\infty(\xR^d)} \leq C_\alpha(\Vert V \Vert_{E_0})
h^{-{\delta}(\vert \alpha \vert -1)} \vert t \vert ^\mez, \quad \vert \alpha \vert \geq 2, \quad h=2^{-j}.
$$
So one controls $\partial^\alpha_x X$ only on small time intervals whose sizes depend of $h=2^{-j}$ and 
$\alpha$. This is one reason why we will prove a dispersive estimate only in short time intervals 
whose size is tailored to the frequency.

Then, one makes the change of variables
$$
v_h(t,y) = (\Delta_j  u)(t, X(t,y)), \quad h=2^{-j},
$$
to obtain an equation of the form
$$
\partial_t v_h + iA_h(t,y,D_y)v_h = g_h,
$$
for an explicit operator $A$ of order $1/2$. 
For convenience we reduce the equation to a semi classical form by changing variables
$$
z=h^{-\mez}y, \quad \tilde{h} = h^\mez,\quad w_{\tilde{h}}(t,z) = v_h(t, \tilde{h}y)
$$
and multiplying the equation by $\tilde{h}$. We get an equation of the form
$$
(\tilde{h}\partial_t  + i P(t,\tilde{h}z, \tilde{h}D_z,\tilde{h}))w_{\tilde{h}} 
= \tilde{h}F_{\tilde{h}}.
$$

Finally, we are able to write a parametrix for this 
reduced system, which 
allows to prove Strichartz estimates using the classical strategy  outlined above,
on a small time interval $|t| \leq \tilde{h}^\delta=h^{\frac{\delta}{2}}$. 
The key step here is to prove that, on such time intervals one has a parametrix of the form
$$
\mathcal{K} v(t,z) = (2 \pi \tilde{h})^{-d} \iint e^{\frac{i}{\tilde{h}}(\phi(t,z,\xi,\tilde{h}) -z'\cdot \xi)}b(t,z,\xi,\tilde{h}) v(z') dz' d\xi
$$
where $b$ is a symbol and $\phi$ a real-valued phase function, such that
$$
\phi\arrowvert_{t=0} = z\cdot \xi, \quad b\arrowvert_{t=0} = \chi(\xi), \quad \text{supp} \chi \subset \{\xi: \frac{1}{3} \leq \vert \xi \vert \leq 3\}.
$$
Using the  parametrix, the stationary phase estimate and coming back to the original  variable $z \rightarrow y= h^\mez z \rightarrow x= X(t,y)$ we  obtain a dispersive estimate 
(see Theorem~\ref{dispersive}). 
This gives a Strichartz estimate on a time interval of size $h^{\delta/2}$. 
Finally, splitting the time interval $[0,T]$ into $Th^{-\delta /2} $  time intervals of size $h^{\delta/2}$, and gluing together all these estimates, we obtain a Strichartz estimate with loss 
on the time interval $[0,T]$.

\chapter{Tame estimates for the Dirichlet-Neumann operator}\label{C:2}

In this chapter, we prove Theorem~\ref{T3} on 
the paralinearization of the Dirichlet-Neumann operator.

\section{Scheme of the analysis}\label{S:2.1}
We shall revisit the approach given in \cite{ABZ1,ABZ3} using tame estimates at each step. 
In this section, we recall the scheme of the analysis and indicate the points at which the argument must be adapted.

Hereafter, we consider a time-independent fluid domain $\Omega$ 
satisfying the assumptions given in Section~\ref{S:1.1}, which we recall here. 
We assume that
$$
\Omega=\left\{ (x,y)\in \mathcal{O} \, :\, y < \eta(x)\right\},
$$
for some Lipschitz function $\eta$ and a given open domain~$\mathcal{O}$. 
We denote by $\Sigma$ (resp.\ $\Gamma$) the free surface (resp.\ the bottom). They are defined by
$$
\Sigma=\{ (x,y)\in \xR^d\times \xR\,:\, y=\eta(x)\},\qquad \Gamma=\partial\Omega\setminus \Sigma.
$$
We assume that the domain $\mathcal{O}$ 
contains a fixed strip separating the free surface and the bottom. 
This implies that 
there exists~$h>0$ such that
\begin{equation}\label{hypt}
\left\{ (x,y)\in {\mathbf{R}}^d\times \xR \, :\, \eta(x)-h < y < \eta(t,x)\right\} \subset \Omega.
\end{equation}
We also assume that the domain~$\mathcal{O}$ (and hence the domain~$\Omega$) 
is connected.  Without loss of generality we assume below that $h>1$. 

The fact that the Dirichlet-Neumann operator $G(\eta)$ is well-defined in such domains 
is proved in \cite{ABZ1,ABZ3}. 

In the analysis of free boundary problems it is classical to begin 
by reducing the analysis to a domain 
with a fixed boundary. We 
flatten the free surface by using a diffeomorphism introduced in \cite{ABZ3} whose definition is here recalled. Set
\begin{equation}\label{lesomega}
\left\{
\begin{aligned}
\Omega_1 &= \{(x,y): x \in {\mathbf{R}}^d, \eta(x)-h<y<\eta(x)\},\\
\Omega_2 &= \{(x,y)\in \mathcal{O}: y\leq\eta(x)-h\},\\
\Omega &= \Omega_1 \cup \Omega_2,
\end{aligned}
\right.
 \end{equation}
and 
\begin{equation}\label{omega1}
\left\{
\begin{aligned}
&\widetilde{\Omega}_1= \{(x,z): x \in {\mathbf{R}}^d, z \in I\}, \quad I = (-1,0),\\
&\widetilde{\Omega}_2 = \{(x,z)\in \xR^d \times (-\infty, -1]: (x,z+1+\eta(x)-h)\in \Omega_2\},\\
&\widetilde{\Omega}= \widetilde{\Omega}_1  \cup \widetilde{\Omega}_2.  
\end{aligned}
\right.
\end{equation}
Guided by Lannes~(\cite{LannesJAMS}), 
we consider a Lipschitz diffeomorphism from $\widetilde{\Omega}_1$ to $\Omega_1$ 
of the form 
$(x,z) \mapsto (x, \rho(x,z))$ where 
the map $(x,z) \mapsto  \rho(x,z)$ from $\widetilde{\Omega}$ to $\xR$ is defined as follows
\begin{equation}\label{diffeo}
\left\{
\begin{aligned}
&\rho(x,z)=  (1+z)e^{\delta z\langle D_x \rangle }\eta(x) -z \big\{e^{-(1+ z)\delta\langle D_x \rangle }\eta(x) -h\big\}\quad \text{if } (x,z)\in \widetilde{\Omega}_1,\\
&\rho(x,z) = z+1+\eta(x)-h\quad \text{if } (x,z)\in \widetilde{\Omega}_2 
\end{aligned}
\right.
\end{equation}
for some small enough positive constant $\delta$.

\begin{lemm}\label{rho:diffeo}
Assume $\eta \in W^{1,\infty}(\xR^d)$.
\begin{enumerate}
\item There exists $C>0$ such that for every $(x,z) \in \widetilde{\Omega} $ we have
$$ \vert \nabla_{x } \rho (x,z)\vert  \leq C \Vert \eta \Vert_{ W^{1,\infty}(\xR^d)} .$$
\item There exists  $K>0$   such that,  if  $ \delta \Vert \eta \Vert_{ W^{1,\infty}(\xR^d)} \leq \frac{h}{2K}$
 we have
\begin{equation}\label{rhokappa}
  \quad   \text{min }\big(1,\frac{h}{2}\big) \leq \partial_z \rho(x,z) \leq \text{max }(1,  \frac{3h}{2}), \quad \forall (x,z) \in \widetilde{\Omega}.
\end{equation}
\item The map $(x,z) \mapsto (x, \rho(x,z))$ is a Lipschitz diffeomorphism from $\widetilde{\Omega}_1$ to $\Omega_1.$
\item 
Let $I=(-1,0)$ and $s$ be a real number. 
There exists $C>0$ such that for every $\eta \in H^{s+\mez}(\xR^d)$ we have
\begin{equation}\label{eq.rho1}
\bad
& \Vert\partial_z \rho -h \Vert_{C^0_z(I;H^{s-\mez}({\mathbf{R}}^d))\cap L^2_z(I;H^{s}({\mathbf{R}}^d))}
\leq C \sqrt{\delta}  \|\eta\|_{H^{s+ \mez}( {\mathbf{R}}^d)}, \\
&\|\nabla_x \rho \|_{C^0_z(I;H^{s-\mez}({\mathbf{R}}^d))\cap L^2_z(I;H^{s}({\mathbf{R}}^d))}   
\leq \frac{C}{\sqrt{\delta}}   \|\eta\|_{H^{s+ \mez}( {\mathbf{R}}^d)}.
\ead
\end{equation}
\item Assume that $\eta\in W^{r+\mez,\infty}(\xR^d)$ with $r>1/2$. Then, for any $r'$ in $[1/2,r]$, 
\begin{equation}\label{regofrho3}
\lA\nabla_{x,z}\rho\rA_{C^0([-1,0];\holder{r'-\mez})}+\lA\nabla_{x,z}\rho\rA_{L^2([-1,0];W^{r',\infty})}
\le C (1+\| \eta \|_{\holdertdm}). 
\end{equation}
\end{enumerate}
\end{lemm}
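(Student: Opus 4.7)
The plan is to reduce everything to the Fourier-multiplier structure of $\rho$. The essential input is that for $t \geq 0$ the semigroup $e^{-t\langle D_x\rangle}$ is convolution with a Poisson-type kernel whose $L^1$ norm is bounded uniformly in $t$; consequently this family is uniformly bounded on $L^\infty$, on the Sobolev spaces $H^\sigma$, and on Hölder/Zygmund spaces. All estimates for $\rho$ on $\widetilde{\Omega}_1$ should reduce to computations on these symbols, after which the affine extension on $\widetilde{\Omega}_2$ (where $\partial_z \rho = 1$ and $\nabla_x \rho = \nabla_x \eta$) is trivial.

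For Parts 1, 2, 3 I would proceed as follows. Part 1 is immediate: $\nabla_x$ commutes with the Fourier multipliers, so $\nabla_x\rho = (1+z)e^{\delta z\langle D_x\rangle}\nabla_x\eta - z e^{-(1+z)\delta\langle D_x\rangle}\nabla_x\eta$ and the uniform $L^\infty$ bound on the semigroups gives $\|\nabla_x\rho\|_{L^\infty} \leq C \|\eta\|_{W^{1,\infty}}$. For Part 2 I would differentiate in $z$ to obtain
\begin{equation*}
\partial_z\rho - h = \bigl[e^{\delta z\langle D_x\rangle}\eta - e^{-(1+z)\delta\langle D_x\rangle}\eta\bigr] + \delta \langle D_x\rangle\bigl[(1+z)e^{\delta z\langle D_x\rangle}\eta + z\, e^{-(1+z)\delta\langle D_x\rangle}\eta\bigr].
\end{equation*}
Taylor-expanding the resulting $\xi$-symbol in $a=\delta\langle\xi\rangle$ shows that it equals $2a(1+2z)+O(a^2)$ at low frequencies and is killed by the exponential factors $e^{\delta z a}$, $e^{-(1+z)a}$ at high frequencies; a careful accounting yields the pointwise bound $|\partial_z\rho - h| \leq C\delta \|\eta\|_{W^{1,\infty}}$. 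Then taking $K$ with $\delta \|\eta\|_{W^{1,\infty}} \leq h/(2K)$ brings $\partial_z\rho$ into $[\min(1,h/2),\max(1,3h/2)]$. Part 3 is then a direct consequence: strict positivity of $\partial_z\rho$ gives injectivity of $z\mapsto\rho(x,z)$, the boundary values $\rho(x,0)=\eta(x)$ and $\rho(x,-1)=\eta(x)-h$ yield surjectivity onto $\Omega_1$, and the $L^\infty$ bounds on $\nabla_{x,z}\rho$ give the Lipschitz property.

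For Parts 4 and 5 the strategy is Plancherel on the symbol expressions derived above. The key integral is $\int_{-1}^0 e^{2\delta z\langle\xi\rangle}\,dz \leq \min(1, 1/(2\delta\langle\xi\rangle))$, which via
\begin{equation*}
\int_{-1}^0 |(1+z)\delta\langle\xi\rangle\, e^{\delta z\langle\xi\rangle}|^2\,dz \leq C\delta\langle\xi\rangle
\end{equation*}
produces the $\sqrt\delta$ gain claimed for $\partial_z\rho-h$, giving $\|\partial_z\rho-h\|_{L^2_z H^s}\leq C\sqrt\delta\|\eta\|_{H^{s+\mez}}$. For $\nabla_x\rho$ the symbol lacks the $\delta\langle\xi\rangle$ prefactor, so the same $z$-integral produces an extra $1/(\delta\langle\xi\rangle)$, hence the dual $1/\sqrt\delta$ blow-up claimed in \eqref{eq.rho1}. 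The $C^0_z(H^{s-\mez})$ bounds are obtained by taking the uniform-in-$z$ symbol bound $|\delta\langle\xi\rangle e^{\delta z\langle\xi\rangle}|\leq \delta\langle\xi\rangle$. Part 5 uses the analogous Littlewood-Paley decomposition of $\eta$: on a block $\Delta_j\eta$ the frequencies $\langle\xi\rangle$ are $\sim 2^j$ and the same $z$-integrals yield Besov/Hölder norms with identical scaling in $\delta$.

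The main obstacle is the pointwise bound in Part 2 under the minimal assumption $\eta \in W^{1,\infty}$. A naive Littlewood-Paley estimate of $\|\delta\langle D_x\rangle e^{\delta z\langle D_x\rangle}\eta\|_{L^\infty}$ produces a $\log(1/(\delta|z|))$ factor near $z=0$; to recover the clean $C\delta\|\eta\|_{W^{1,\infty}}$ bound one must exploit the algebraic cancellation between the two terms $(1+z)e^{\delta z\langle D_x\rangle}\eta$ and $z\, e^{-(1+z)\delta\langle D_x\rangle}\eta$, whose combined $\xi$-symbol vanishes identically at $z=-\mez$ and is of size $O(|1+2z|)$ at each frequency. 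Correctly exploiting this cancellation—and similarly ensuring the absence of a logarithmic loss in the tame Sobolev and Hölder estimates—is where most of the technical work lies.
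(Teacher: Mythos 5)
Your overall strategy --- reducing everything to $L^\infty$, $H^\sigma$ and Zygmund bounds for the Fourier multipliers $e^{\delta z\langle D_x\rangle}$ and $\delta\langle D_x\rangle e^{\delta z\langle D_x\rangle}$ --- is the right toolbox and is exactly what the paper uses: for Parts 1--4 the paper simply cites \cite{ABZ3}, and for Part 5 it invokes the smoothing bound $\| |z|^{\mu'}\langle D_x\rangle^{\mu}e^{z\langle D_x\rangle}\|_{L^\infty\to L^\infty}\lesssim 1$ for $\mu'>\mu\ge 0$ together with the Littlewood--Paley characterization of Zygmund spaces, which is the same argument you sketch. Parts 1, 3 and 5 of your proposal are essentially fine. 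There are, however, two genuine gaps.

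First, Part 2. You correctly identify that the naive Littlewood--Paley estimate of $\delta\langle D_x\rangle e^{\delta z\langle D_x\rangle}\eta$ costs a factor $\log(1/(\delta|z|))$, but the cancellation you propose does not repair this where it matters. The symbol of $\partial_z\rho-h$ does vanish at $z=-\mez$, but the logarithm blows up as $z\to 0^-$ (and symmetrically as $z\to -1^+$), and in that regime the offending term is $(1+z)\,\delta\langle D_x\rangle e^{\delta z\langle D_x\rangle}\eta$, which degenerates to $\delta\langle D_x\rangle\eta$: an order-one Fourier multiplier applied to a function that is merely Lipschitz, with no remaining smoothing, while the other contributions to $\partial_z\rho-h$ stay bounded as $z\to 0^-$ and therefore cannot cancel it. So the ``algebraic cancellation'' you invoke is unavailable in the dangerous regime, and your argument for the uniform pointwise bound $|\partial_z\rho-h|\le C\delta\|\eta\|_{W^{1,\infty}}$ does not close; a different mechanism is needed near the endpoints of $I$, and this is precisely the delicate point for which the paper defers to the reference.

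Second, Part 4. Your Plancherel computation only treats the pieces of $\partial_z\rho-h$ carrying the explicit prefactor $\delta\langle\xi\rangle$. But $\partial_z\rho-h$ also contains the zeroth-order difference $e^{\delta z\langle D_x\rangle}\eta-e^{-(1+z)\delta\langle D_x\rangle}\eta$, for which your key integral $\int_{-1}^0 e^{2\delta z\langle\xi\rangle}\,dz\le\min\bigl(1,(2\delta\langle\xi\rangle)^{-1}\bigr)$ gives no $\sqrt\delta$ gain at low frequencies. To recover the claimed factor $\sqrt{\delta}\,\langle\xi\rangle^{1/2}$ for this piece you must first exploit its internal cancellation, e.g.\ write its symbol as $\int_{-(1+z)}^{z}\delta\langle\xi\rangle e^{\delta\sigma\langle\xi\rangle}\,d\sigma$ so that the prefactor $\delta\langle\xi\rangle$ reappears before squaring and integrating in $z$. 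This is fixable, but as written the estimate for $\|\partial_z\rho-h\|_{L^2_z(I;H^{s})}$ in \eqref{eq.rho1} is incomplete.
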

\begin{proof}
The first four statements are proved in \cite{ABZ3}. The last one follows from the fact that, for any 
$\mu'>\mu\ge 0$,   
$|z|^{\mu'} \langle D_x\rangle^\mu e^{z\langle D_x\rangle}$ is bounded from $L^\infty$ 
to $L^\infty$, uniformly in $z \in [-1,0]$ and therefore
\begin{align*}
\Vert \Delta_j   e^{z\langle D_x\rangle}u\Vert_{L^\infty} 
&= \vert z \vert^{-\mu'} \Vert |z|^{\mu'} \langle D_x\rangle^\mu e^{z\langle D_x\rangle}\langle D_x\rangle^{-\mu} \Delta_j u\Vert_{L^\infty}\\
&\leq C\vert z \vert^{-\mu'} \Vert \langle D_x\rangle^{-\mu}  \Delta_j u\Vert_{L^\infty}
\end{align*}
which easily implies the desired result in view of Remark \ref{R:Zygmund}.
\end{proof}

\textbf{Flattening the free surface}. In \cite{ABZ1,ABZ3} we proved that the problem 
$$
\Delta_{x,y}\phi=0, \quad \phi\arrowvert_{y=\eta}=f, \quad \partial_n\phi=0 \text{ on }\Gamma,
$$
has a unique variational solution. Then we introduce the following function 
$$
v(x,z)=\phi(x,\rho(x,z))
$$

where $(x,z)$ belongs to  the `flattened' domain $\widetilde{\Omega}$ (notice that we flatten only the free surface). 

The equation satisfied by  $v$ in $\widetilde{\Omega}$ can be written  in three forms. 
Firstly,
\begin{equation}\label{dnint1}
(\partial_z^2  +\alpha\Delta_x  + \beta \cdot\nabla_x\partial_z   - \gamma \partial_z )v =0,
\end{equation}
where 
\begin{equation}\label{alpha}
\alpha\defn \frac{(\partial_z\rho)^2}{1+|\partialx  \rho |^2},\quad 
\beta\defn  -2 \frac{\partial_z \rho \nabla_x \rho}{1+|\nabla_x  \rho |^2} ,\quad 
\gamma \defn \frac{1}{\partial_z\rho}\bigl(  \partial_z^2 \rho 
+\alpha\Delta_x \rho + \beta \cdot \nabla_x \partial_z \rho\bigr).
\end{equation}
Secondly, one has
\begin{equation}\label{dnint2}
(\Lambda_1^2 +\Lambda_2^2)v= 0,
\end{equation}
where
\begin{equation} \label{Lambda}
\Lambda_1 = \frac{1}{\partial_z \rho} \partial_z \quad \Lambda_2 = \nabla_x -  \frac{\nabla_x \rho}{\partial_z \rho} \partial_z.
 \end{equation}
Eventually, 
\begin{equation}\label{P:div}
\widetilde{P}v := \text{div} \big( \partial_z \rho \nabla_xv \big) - \text{div}  \big( \nabla_x \rho \,  \partial_z v\big) - \partial_z \big( \nabla_x \rho \cdot \nabla_x v\big) + \partial_z \big( \frac{1+ \vert \nabla_x \rho \vert^2}{ \partial_z \rho} \partial_z v \big) =0,
\end{equation}
as can be verified starting from \e{dnint2} by a direct calculation. Moreover,
$$
v\arrowvert_{z=0}=\phi\arrowvert_{y=\eta(x)}=f,
$$
and
\begin{equation}\label{DN:forme}
G(\eta) f=  \big(\frac{1 +|\partialx \rho |^2}{\partial_z \rho} \partial_z v  - \nabla_x \rho\cdot\nabla_x v\big)
\big\arrowvert_{z=0} = (\Lambda_1 v - \nabla_x \rho \cdot \Lambda_2 v)\arrowvert_{z=0}.
\end{equation}

The analysis of the Dirichlet-Neumann operator is then divided into three steps.

\textbf{First step}. We paralinearize the equation. That is we write the equation for 
$v=\phi(x,\rho(x,z))$ in the form
\begin{equation}\label{2m.11}
\partial_z^2 v  +T_{\alpha}\Deltax v +T_{\beta}\cdot \partialx \partial_z v =F_1+F_2,
\end{equation}
where  
\be\label{F2e}
\bad
F_1&=\gamma\partial_z v,\\
F_2&= (T_\alpha-\alpha)\Delta v+(T_\beta-\beta)\cdot\partialx\partial_z v.
\ead
\end{equation}
We are going to estimate $F_1$ by product rules in Sobolev spaces and 
$F_2$ by using results recalled in Appendix~\ref{sec:2}.

\textbf{Second step}. We factor out the elliptic equation as the product of  
a forward and a backward parabolic evolution equations. We write, for some  
symbols 
$a,A$ and a remainder~$F_3$, 
\begin{equation}\label{2m2.b}
( \partial_z - T_a) (\partial_z - T_A)v =F_1+F_2+F_3.
\end{equation}
Namely
\begin{equation}\label{aA}
a = \frac{1}{2}\bigl( -i  \beta\cdot \xi
-   \sqrt{ 4\alpha \la \xi \ra^2 -  (\beta \cdot \xi)^2}\bigr), \qquad A  = \frac{1}{2}\bigl( -i  \beta\cdot \xi
+   \sqrt{ 4\alpha \la \xi \ra^2 -  (\beta \cdot \xi)^2}\bigr).
\end{equation}
The term $F_3$ is estimated by means of the symbolic calculus rules recalled 
in \S\ref{sec.2.2}.

\textbf{Third step}. 
Let us view $z$ as a time variable. Then 
$\partial_z u - T_a u=F$ is a parabolic equation (since~$\RE (-a) \ge c\la \xi\ra$). 
On the other hand, $\partial_z u -T_A u =F$ is a {\em backward\/} parabolic evolution equation 
(by definition~$\RE A \ge c\la \xi\ra$). 
We shall use parabolic estimates twice to deduce from the previous step estimates for 
$\nabla_{x,z}v$ and 
$ (\partial_z - T_A)v$. 

\textbf{Previous results}.
Let $I=[-1,0]$. By using the approach explained above, we proved in \cite{ABZ3} that, for any $s>1/2+d/2$,
\begin{equation}\label{oubli}
\lA \nabla_{x,z} v\rA_{C^0_z(I;H^{s-1}({\mathbf{R}}^d))\cap L^2_z(I;H^{s-\mez}({\mathbf{R}}^d))}
\le \mathcal{F}(\| \eta \|_{H^{s+\mez}})\lA f\rA_{H^s}.
\end{equation}
Moreover, for any $0<\eps\leq \mez$ such that $\eps< s-\mez-\frac{d}{2}$, we have
\begin{equation}\label{w-1}
\lA \partial_z v-T_A v \rA_{C^0_z(I;H^{s-1+\eps}({\mathbf{R}}^d))\cap L^2_z(I;H^{s-\mez+\eps}({\mathbf{R}}^d))}
\le \mathcal{F}(\| \eta \|_{H^{s+\mez}})\lA f\rA_{H^s}.
\end{equation}
The key point to prove Theorem~\ref{T3}Êwill be to prove an estimate analogous to \eqref{w-1} 
with $\eps=1/2$ and $s<1+d/2$, assuming an extra control of $\eta$ and $f$ in H\"older spaces.

Actually, concerning elliptic regularity, 
in \cite{ABZ3} we proved more general results than \e{oubli} 
and we record here two statements for later references.

\begin{prop}\label{T:DN-Hs}
Let~$d\ge 1$,~$s>\mez+\frac{d}{2}$ and~$\frac 1 2 \leq \sigma \leq s+ \frac 1 2$. 
Then there exists a non-decreasing function~$\mathcal{F}\colon\xR_+\rightarrow\xR_+$ 
such that, for all 
$\eta\in H^{s+\mez}({\mathbf{R}}^d)$ and all~$f\in H^{\sigma}({\mathbf{R}}^d)$, we have 
$G(\eta)f\in H^{\sigma-1}({\mathbf{R}}^d)$, together with the estimate
\begin{equation}\label{ests+2}
\lA G(\eta)f \rA_{H^{\sigma-1}({\mathbf{R}}^d)}\le 
\mathcal{F}\bigl(\| \eta \|_{H^{s+\mez}({\mathbf{R}}^d)}\bigr)\lA f\rA_{H^{\sigma}({\mathbf{R}}^d)}.
\end{equation}
 \end{prop}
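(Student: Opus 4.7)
The plan is to follow the three-step scheme of Section~\ref{S:2.1}, combined with an induction on $\sigma$. A preliminary cut-off argument in the flattened domain $\widetilde{\Omega}$ reduces matters to the strip $\widetilde{\Omega}_1 = \xR^d \times (-1,0)$, since in $\widetilde{\Omega}_2$ the coefficients are smooth and interior elliptic regularity applies. The base case $\sigma = 1/2$ follows from the variational construction of $\phi$ (see \cite{ABZ1,ABZ3}), which yields $\| \nabla_{x,y}\phi \|_{L^2(\Omega)} \le C \| f \|_{H^{1/2}}$ and hence $G(\eta)f \in H^{-1/2}$ by duality.

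For the inductive step, I would work with $v(x,z) = \phi(x,\rho(x,z))$ in $\widetilde{\Omega}_1$, which solves the elliptic equation \eqref{dnint1}. By Lemma~\ref{rho:diffeo}, the coefficients $\alpha, \beta$ from \eqref{alpha} lie in $C^0_z H^s \cap L^2_z H^{s+\mez}$ with norms bounded by $\mathcal{F}(\| \eta \|_{H^{s+\mez}})$, and in particular are bounded since $s > d/2$. Paralinearizing according to \eqref{2m.11} and then factoring as in \eqref{2m2.b} into forward/backward parabolic pieces $(\partial_z - T_a)$ and $(\partial_z - T_A)$ produces remainders $F_1, F_2, F_3$ that I would estimate in $L^2_z H^{\sigma-\mez}$ using the product rules and paradifferential symbolic calculus from Appendix~\ref{sec:2}. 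The crucial point is that each of these remainders gains $s - d/2 > 0$ derivatives, which is exactly what drives the bootstrap.

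Two parabolic energy estimates then close the loop. Setting $w = \partial_z v - T_A v$, the equation $(\partial_z - T_a) w = F_1 + F_2 + F_3$ is a forward parabolic evolution (since $\RE(-a) \ge c \langle \xi \rangle$), so, combined with a Cauchy datum at $z = -1$ provided by the base case, one controls $w$ in $C^0_z H^{\sigma-1} \cap L^2_z H^{\sigma-\mez}$. The relation $\partial_z v - T_A v = w$ is then a backward parabolic equation with Dirichlet trace $v(x,0) = f \in H^\sigma$, and a second parabolic estimate gives
\[
\| \nabla_{x,z} v \|_{C^0_z H^{\sigma-1} \cap L^2_z H^{\sigma-\mez}} \le \mathcal{F}(\| \eta \|_{H^{s+\mez}}) \| f \|_{H^\sigma}.
\]
Restricting at $z=0$ and applying the trace formula \eqref{DN:forme} then produces the claimed bound on $G(\eta)f$.

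The main obstacle is propagating the estimate up to the endpoint $\sigma = s + \mez$, where $f$ and $\eta$ sit at the same Sobolev level. There, the coefficients $\alpha, \beta$ are at the same Sobolev regularity as $\nabla_{x,z} v$, so the paraproduct remainders in $F_2$ and the symbolic calculus remainders in $F_3$ must be controlled with a strictly positive gain of $s - d/2$ derivatives, which is precisely what the paradifferential framework of Appendix~\ref{sec:2} provides. One must also verify that the bootstrap can be iterated in small steps (each of size at most $s - d/2$), so that at every level the target regularity of $v$ stays inside the range where the remainder gains have been quantified; this is standard but needs to be organized carefully to preserve the dependence of the final constant on $\| \eta \|_{H^{s+\mez}}$ alone.
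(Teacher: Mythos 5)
Your proposal is correct and follows essentially the same scheme as the paper, which records Proposition~\ref{T:DN-Hs} as a result imported from \cite{ABZ3} together with the elliptic-regularity estimate Proposition~\ref{p1}: flatten the domain, paralinearize and factor the resulting elliptic equation into forward and backward paradifferential parabolic operators, bootstrap the regularity of $\nabla_{x,z}v$ up from the $X^{-\mez}$ variational bound in steps whose size is limited by the gain $s-\mez-\tfrac d2$ coming from the symbolic calculus, and finally take the trace at $z=0$ in the formula \eqref{DN:forme}. This is exactly the approach of \cite{ABZ3} sketched in Section~\ref{S:2.1}, so the content matches; the only technical caveat worth flagging is that the per-step gain in the bootstrap is governed by the H\"older regularity $\rho<s-\mez-\tfrac d2$ of the factored symbols $a,A$ rather than exactly $s-\tfrac d2$, which matters only for the bookkeeping of how many iterations are needed to reach $\sigma=s+\mez$.
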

Given~$\mu\in\xR$ we define the spaces
\begin{equation*}
\begin{aligned}
X^\mu(I)&=C^0_z(I;H^{\mu}({\mathbf{R}}^d))\cap L^2_z(I;H^{\mu+\mez}({\mathbf{R}}^d)),\\
Y^\mu(I)&=L^1_z(I;H^{\mu}({\mathbf{R}}^d))+L^2_z(I;H^{\mu-\mez}({\mathbf{R}}^d)) 
\end{aligned}
\end{equation*}
and we consider the problem 
\begin{equation}\label{dnint1F}
\partial_z^2 v +\alpha\Delta v + \beta \cdot\partialx\partial_z v  
- \gamma \partial_z v=F_0 + \partial_z G_0,\quad v\arrowvert_{z=0}=f,
\end{equation}
where~$f=f(x), F_0=F_0(x,z), G_0=G_0(x,z)$ are given functions. Then we have,
\begin{prop}\label{p1}
Let~$d\ge 1$ and 
$$
s> \mez + \frac d 2,\quad - \frac 1 2 \leq \sigma \leq s - \frac 1 2.
$$ 
Consider ~$f\in H^{\sigma+1}({\mathbf{R}}^d)$, 
$F_0\in Y^\sigma([-1,0]),$ $G_0\in Y^{\sigma+1}([-1,0])$ and~$v$  a solution to~\eqref{dnint1F} such that   $ \nabla_{x,z} v \in X^{-\mez}([-1,0]).$
Then for any~$z_0\in (-1,0)$, 
$
\nabla_{x,z}v \in X^{\sigma}([z_0,0])$,
and
\begin{align*}
\lA \nabla_{x,z} v\rA_{X^{\sigma}([z_0,0])}
\le \mathcal{F}(\| \eta \|_{H^{s+\mez}}) \big\{\lA f\rA_{H^{\sigma+1}}+\lA F_0\rA_{Y^\sigma([-1,0])}  + &\lA  G_0\rA_{Y^{\sigma+1}([-1,0])}\\
&+ \lA \nabla_{x,z} v\rA_{X^{-\mez}([-1,0])} \big\} 
\end{align*}
for some non-decreasing function~$\mathcal{F}\colon\xR_+\rightarrow\xR_+$ depending only on~$\sigma$ and $d$. 
\end{prop}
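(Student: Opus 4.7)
The plan is to implement the three-step scheme outlined in Section~\ref{S:2.1} (paralinearize, factor, apply parabolic estimates) combined with a bootstrap in the regularity index $\sigma$. The starting point $\nabla_{x,z}v\in X^{-\mez}([-1,0])$ is given, and I would gain $\mez$ (or less, in the last step) of regularity at each iteration, increasing $\sigma$ from $-\mez$ up to the desired value $\sigma\le s-\mez$. Since the right-hand side allows $\sigma=-\mez$ as input and we only claim regularity on $[z_0,0]\subsetneq[-1,0]$, I would introduce a smooth cutoff $\chi(z)$ equal to $1$ near $z_0$ and vanishing near $z=-1$, apply it to $v$, and use at each step that the commutator $[\partial_z^2,\chi]$ only costs an $L^2_z$ norm of $\nabla_{x,z}v$ on a slightly larger interval, which is controlled by the induction hypothesis.

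On the cut-off function, I would apply Bony's paralinearization to rewrite the equation as
\be\label{plan:para}
\partial_z^2 v+T_\alpha\Delta v+T_\beta\cdot\nabla_x\partial_z v=F_0+\partial_z G_0+F_1+F_2,
\ee
with $F_1=\gamma\partial_z v$ and $F_2=(T_\alpha-\alpha)\Delta v+(T_\beta-\beta)\cdot\nabla_x\partial_z v$. Using the regularity of $\alpha,\beta,\gamma$ inherited from Lemma~\ref{rho:diffeo} (in particular $\alpha,\beta\in X^{s-\mez}$ and $\gamma$ lies in a suitable intermediate space) together with the paraproduct and remainder estimates from Appendix~\ref{sec:2}, I would show that $F_1$ and $F_2$ belong to $Y^\sigma([-1,0])$ with norm controlled by $\mathcal{F}(\|\eta\|_{H^{s+\mez}})\lA\nabla_{x,z}v\rA_{X^{\sigma-\mez}}$, which is exactly the quantity provided by the previous step of the induction.

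Next I would use the symbolic calculus of \S\ref{sec.2.2} to factor the principal part of \e{plan:para} as $(\partial_z-T_a)(\partial_z-T_A)$ with $a,A$ given by~\e{aA}, producing a new remainder $F_3$ which, again by the symbolic calculus and the bounds on $\alpha,\beta$, is estimated in $Y^\sigma$ by the same quantity. Writing $w=(\partial_z-T_A)v$, the equation reduces to $\partial_z w-T_a w=F_0+\partial_z G_0+F_1+F_2+F_3$. Since $\RE(-a)\ge c\langle\xi\rangle$, this is a \emph{forward} parabolic equation in $z$, and standard parabolic estimates (which one proves by an energy argument after a paradifferential G\aa rding inequality) give
\be\label{plan:para2}
\lA w\rA_{X^\sigma([z_0,0])}\le \mathcal{F}(\|\eta\|_{H^{s+\mez}})\big\{\lA F_0\rA_{Y^\sigma}+\lA G_0\rA_{Y^{\sigma+1}}+\lA \nabla_{x,z}v\rA_{X^{\sigma-\mez}}\big\}.
\ee
Then $(\partial_z-T_A)v=w$ with data $v|_{z=0}=f\in H^{\sigma+1}$ is a \emph{backward} parabolic equation (since $\RE A\ge c\langle\xi\rangle$), and a second application of the parabolic estimate upgrades this to a bound on $\lA v\rA_{X^{\sigma+1}}$, hence on $\lA\nabla_{x,z}v\rA_{X^\sigma}$, by $\mathcal{F}(\|\eta\|_{H^{s+\mez}})$ times $\|f\|_{H^{\sigma+1}}+\|w\|_{X^\sigma}$. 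Iterating this $\mez$-gain argument finitely many times brings us from $\sigma=-\mez$ up to the target~$\sigma$, with the terminal step adjusted if the target is not a half-integer shift of $-\mez$.

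The main obstacle I anticipate is justifying the factorization and parabolic estimates \emph{uniformly in $\sigma$} over the entire range $-\mez\le\sigma\le s-\mez$ when the coefficients $\alpha,\beta$ have only the Sobolev regularity provided by Lemma~\ref{rho:diffeo}(4). Specifically, in the symbolic calculus step one must verify that the symbols $a,A$ of order~$1$ built from $\alpha,\beta$ lie in the right paradifferential classes so that the composition formula produces a remainder $F_3$ of order $0$, which is borderline for the threshold $s>\mez+d/2$; this forces a careful treatment of the low-frequency truncation in the paraproducts and a delicate use of the elliptic parameter from the Taylor-like assumption that $\alpha\ge c>0$ (which follows from~\e{rhokappa}). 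The parabolic estimates themselves are then routine once Ga\aa rding is available for $T_{\RE(-a)}$.
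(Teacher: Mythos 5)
Your plan — localizing with a cutoff and bootstrapping in half-derivative increments from $X^{-\mez}$, paralinearizing to produce $F_1,F_2$, factoring the principal part as $(\partial_z-T_a)(\partial_z-T_A)$ to produce $F_3$, and then applying a forward parabolic estimate to $w=(\partial_z-T_A)v$ followed by a backward one to recover $v$ from the data $f$ at $z=0$ — is exactly the three-step scheme the paper recalls in Section~\ref{S:2.1} and uses throughout Chapter~\ref{C:2}. The paper itself does not reprove Proposition~\ref{p1} but quotes it from \cite{ABZ3}, where the argument is precisely the one you describe, so your approach is essentially the same as the paper's.
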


\section{Parabolic evolution equation}\label{s:pe}

As explained above, we need estimates for paradifferential parabolic equations of the form
$$
\partial_z w + T_p w =f,\quad w\arrowvert_{z=z_0}=w_0,
$$
where $p$ is an elliptic symbol and $z\in \xR$ plays the role of a time variable. 

Given~$J\subset \xR$,~$z_0\in J$ and $\varphi=\varphi(x,z)$ defined on~${\mathbf{R}}^d\times J$, 
we denote by~$\varphi(z_0)$ the function~$x\mapsto \varphi(x,z_0)$. When~$a$ and~$u$ are
symbols and functions depending on~$z$, we still denote by~$T_a u$ the function defined by 
$(T_a u)(z)=T_{a(z)}u(z)$ where~$z\in J$ is seen as a parameter. 
$\Gamma^m_\rho({\mathbf{R}}^d\times J)$ denotes 
the space of symbols~$a=a(z;x,\xi)$ such that 
$z\mapsto a(z;\cdot)$ is bounded from~$J$ 
into the space $\Gamma^m_\rho({\mathbf{R}}^d)$ 
introduced in Definition~\ref{T:5}. 
This space is equipped with the semi-norm
\begin{equation}\label{seminorm-bis}
\mathcal{M}^m_\rho(a)=\sup_{z\in J}
\sup_{\la\alpha\ra\le \frac{3d}{2}+\rho+1 ~}\sup_{\la\xi\ra \ge 1/2~}
\lA (1+\la\xi\ra)^{\la\alpha\ra-m}\partial_\xi^\alpha a(z;\cdot,\xi)\rA_{W^{\rho,\infty}({\mathbf{R}}^d)}.
\end{equation}
The next proposition is a parabolic estimate in Zygmund spaces $C^{r}_*({\mathbf{R}}^d)$ with $r\in\xR$ 
(see Definition~\ref{T:Zygmund} for the definition of these spaces). Recall that 
$C^{r}_*({\mathbf{R}}^d)$ is the usual H\"older 
space~$W^{r,\infty}({\mathbf{R}}^d)$ if~$r>0$ is not an integer 
(since we shall need 
to consider various negative indexes, 
we shall often prefer to use the notation $C^{r}_*({\mathbf{R}}^d)$ instead of $W^{r,\infty}({\mathbf{R}}^d)$ 
even when $r>0$ is not an integer).

\begin{prop}\label{prop:maxH}
Let~$\rho\in (0,1)$,~$J=[z_0,z_1]\subset\xR$, 
$p\in \Gamma^{1}_{\rho}({\mathbf{R}}^d\times J)$ 
with the assumption that
$$
\RE p(z;x,\xi) \geq c \la\xi\ra,
$$
for some positive constant~$c$. Assume that ~$w$ solves
\begin{equation*}
\partial_z w + T_p w =F_1 + F_2,\quad w\arrowvert_{z=z_0}=w_0.
\end{equation*}
Then for any $q\in [1,+\infty]$, 
$(r_0,r)\in \xR^2$ with~$r_0<r$, if
$$
w\in L^\infty(J;C^{r_0}_*),~F_1\in L^1(J;C^{r}_*),~F_2 \in L^q (J; C^{r-1+\frac{1}{q}+\delta}_*) 
\text{ with }\delta >0,
$$
and~$w_0\in C^{r}_*({\mathbf{R}}^d)$, we have 
$w\in C^{0}(J;C^{r}_*)$ and 
\begin{equation*}
\lA w \rA_{C^0(J;C^{r}_*)}\le K\left\{\lA w_0\rA_{C^{r}_*}+ \lA F_1\rA_{L^{1}(J;C^{r}_*)}+
\lA F_2\rA_{L^q(J; C^{r-1+\frac{1}{q}+\delta}_*)}+\lA w\rA_{L^{\infty}(J;C^{r_0}_*)}\right\},
\end{equation*}
for some positive constant~$K$ depending only on~$r_0,r,\rho,c,\delta,q$ and~$\mathcal{M}^1_\rho(p)$.
\end{prop}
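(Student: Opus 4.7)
The plan is a Littlewood--Paley decomposition combined with parabolic smoothing at each dyadic frequency. Setting $w_j\defn\Delta_j w$ and $R_j\defn[T_p,\Delta_j]w$, applying $\Delta_j$ to the equation produces
\begin{equation*}
\partial_z w_j + T_p w_j = \Delta_j F_1 + \Delta_j F_2 + R_j,\qquad w_j(z_0)=\Delta_j w_0.
\end{equation*}
The ellipticity $\RE p\ge c\langle\xi\rangle$ combined with the paradifferential G\aa rding inequality implies that the evolution generated by $-T_p$, restricted to inputs spectrally localized near frequency $2^j$, enjoys an $L^\infty\to L^\infty$ semigroup bound of size $e^{-c'(z-z_0)2^j}$ for some $c'>0$ depending only on $c$ and $\mathcal{M}^1_\rho(p)$. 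Duhamel's formula then yields
\begin{equation*}
\lA w_j(z)\rA_{L^\infty}\les e^{-c'(z-z_0)2^j}\lA\Delta_j w_0\rA_{L^\infty}+\int_{z_0}^z e^{-c'(z-s)2^j}\bigl(\lA\Delta_j F_1\rA_{L^\infty}+\lA\Delta_j F_2\rA_{L^\infty}+\lA R_j\rA_{L^\infty}\bigr)(s)\,ds.
\end{equation*}

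\textbf{Source contributions.} Multiply the previous estimate by $2^{jr}$ and take the supremum in $z\in J$. The initial datum yields at most $\lA w_0\rA_{\zygmund{r}}$. The $F_1$ term is bounded directly by $\int_J 2^{jr}\lA\Delta_j F_1(s)\rA_{L^\infty}\,ds\les\lA F_1\rA_{L^1(J;\zygmund{r})}$. For $F_2$, H\"older's inequality in time with exponents $(q',q)$, combined with $\lA e^{-c'(z-\cdot)2^j}\rA_{L^{q'}(J)}\les 2^{-j/q'}$ and the dyadic bound $\lA\Delta_j F_2(s)\rA_{L^\infty}\les 2^{-j(r-1+1/q+\delta)}\lA F_2(s)\rA_{\zygmund{r-1+1/q+\delta}}$, yields an overall factor $2^{j(r-1/q'-(r-1+1/q+\delta))}=2^{-j\delta}$, uniformly bounded in $j$, and produces the announced contribution $\les\lA F_2\rA_{L^q(J;\zygmund{r-1+1/q+\delta})}$.

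\textbf{Commutator and bootstrap.} The heart of the argument is the control of $R_j$. Using the symbolic calculus of Section~\ref{sec.2.2} for $p\in\Gamma^1_\rho$ and the smooth Fourier multiplier defining $\Delta_j$, one obtains $\lA R_j(s)\rA_{L^\infty}\les\mathcal{M}^1_\rho(p)\,2^{j(1-\rho)}\lA\widetilde\Delta_j w(s)\rA_{L^\infty}$, where $\widetilde\Delta_j$ denotes a thickening of the $j$-th block. Multiplying by $2^{jr}$ and integrating against the exponential kernel (which gains a factor $2^{-j}$), the commutator contributes $\les 2^{j(r-\rho-r_0)}\lA w\rA_{L^\infty(J;\zygmund{r_0})}$. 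When $r-r_0<\rho$ this is directly summable in $j$ and closes the estimate; the general case $r_0<r$ is handled by a finite bootstrap, successively improving the a priori Zygmund index in steps of size smaller than $\rho$, starting from $r_0$. Taking the supremum over $j\ge-1$ and over $z\in J$ produces the announced bound, and continuity in $z$ follows from the Duhamel representation. The main obstacle is precisely the commutator analysis, where one must carefully balance the rough-symbol loss $2^{j(1-\rho)}$ against the parabolic gain $2^{-j}$, and exploit the gap $r>r_0$ through the bootstrap when $r-r_0\ge\rho$.
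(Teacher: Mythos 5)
Your architecture is parallel to the paper's (a parabolic gain of one derivative per dyadic block, H\"older in time for the sources, and a bootstrap over the gap $r-r_0$ in steps of size $<\rho$ to absorb the smoothing error), and your bookkeeping for $F_1$, $F_2$ and the commutator exponents is correct. But the pivotal analytic ingredient is asserted rather than proved: the $L^\infty\to L^\infty$ bound $e^{-c'(z-s)2^j}$ for the flow of $\partial_z+T_p$ on inputs localized at frequency $2^j$. This is exactly where the work of the proposition lies, and the justification you offer (ellipticity plus a paradifferential G\aa rding inequality) cannot deliver it. G\aa rding is an $L^2$-based lower bound; it yields $\frac{d}{dz}\lA u\rA_{L^2}^2\le -c''2^j\lA u\rA_{L^2}^2$ on a dyadic block and hence $L^2$ decay, but on $\xR^d$ there is no way to convert frequency-localized $L^2$ decay into $L^\infty$ decay (Bernstein only goes one way, and $\lA\cdot\rA_{L^2}$ is not controlled by $\lA\cdot\rA_{L^\infty}$ on an unbounded domain). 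Moreover the very existence and boundedness of the solution operator $S(z,s)$ on $L^\infty$-based spaces, which your Duhamel formula presupposes, is not available a priori: the paper's Proposition~\ref{prop:max} only constructs the flow in the Sobolev-based spaces $X^r$.

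The paper closes this gap by building an explicit approximate propagator rather than invoking the exact one: it introduces the symbol $e(y,z;x,\xi)=\exp\bigl(-\int_z^y p(s;x,\xi)\,ds\bigr)$, which satisfies $\partial_z e=ep$, so that $\partial_z(T_e w)=(T_{ep}-T_eT_p)w+T_eF$ and integration gives $w=G+Rw$ with $G$ built from $T_{e}w_0$ and $\int T_eF$. The decay at frequency $\langle\xi\rangle$ is then encoded in the symbol seminorm bound $M^{-m}_\rho(e(z))\le K(y-z)^{-m}$ (from the elementary inequality $(y-z)^a\langle\xi\rangle^a e^{(z-y)\langle\xi\rangle}\les1$), and the $L^\infty$-type estimates come for free from the general $\zygmund{\mu}\to\zygmund{\mu}$ mapping property of paradifferential operators \eqref{esti:quant2} — no kernel or semigroup bound for the exact flow is ever needed. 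The error $T_{ep}-T_eT_p$ is smoothing of order $-\rho/2$ with an integrable singularity $(y-z)^{-(1-\rho/2)}$, and the identity $w=(I+R+\cdots+R^N)G-R^{N+1}w$ implements your bootstrap. If you replace your unproved semigroup bound by this construction (or prove an $L^\infty$ kernel estimate for the frequency-localized propagator directly, which amounts to the same symbol computation), the rest of your argument goes through.
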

\begin{proof}We follow a classical strategy (see~\cite{Tougeron,LebeauKH,AM,ABZ1}). 

For this proof, we denote by~$K$ various 
constants which depend only on~$r_0,r,\rho,c$ and~$\mathcal{M}^1_\rho(p)$.
Given~$y\in J$ introduce the symbol~$e=e(y,z;x,\xi)$ defined by
$$
e(y,z;x,\xi)=\exp \bigl( - \int_z^{y} p(s;x,\xi)\, ds\bigr) \quad (z\in [z_0,y]).
$$
This symbol satisfies~$\partial_z e=e p$, so that
$$
\partial_z (T_e w) = (T_{e p }-T_e T_{p})w+T_e F, \quad F= F_1+ F_2.
$$
Integrating on ~$[z_0,y]$ the function~$\frac d {dz} T_{e( y,z,x,\xi)} w(z)$, 
we find
\begin{equation}\label{tv1ef}
T_1 w(y)=T_{e\arrowvert z=z_0} w_0 +\int_{z_0}^y  (T_{e}F)(z)\, dz+\int_{z_0}^y  
(T_{e p}-T_e T_{p})w(z)\, dz.
\end{equation}
(Notice that the paraproduct $T_1$ differs from the identity $I$ only by a smoothing operator.) 
Introduce~$G(y) =T_{e\arrowvert z=z_0} w_0 +\int_{z_0}^y  (T_{e}F)(z)\, dz$ and 
the operator~$R$ defined on functions~$u\colon J\rightarrow C^m_*({\mathbf{R}}^d)$ by 
$$
(R u)(y) = u(y)-T_1u(y)+\int_{z_0}^y  (T_{e p}-T_e T_{p})u (z)\, dz
$$
so that~$w=G +R  w$. 
Now, by a bootstrap argument, to complete 
the proof it is enough to prove that the function 
$G$ belongs to~$L^\infty(J;C^{r}_*)$ and that~$R$ 
is a smoothing operator of order~$-a$ for some~$a>0$, 
which means that 
$R$ maps~$L^\infty(J;C^{t}_*)$ to~$L^\infty(J;C^{t+a}_*)$. 
Indeed, by writing
$$
w=(I+R+\cdots R^N)G-R^{N+1}w,
$$
and choosing~$N$ large enough, we can estimate the second 
term in the right-hand side in~$L^\infty(J;C^{r}_*)$ by means of any 
$L^\infty(J;C^{r_0}_*)$-norm of~$w$. 

In the analysis, we need  to take into account 
how the semi-norms~$M^{-m}_\rho (e(z))$ (see Definition~\ref{T:5}) depend on~$z$. 
Then the key estimates are stated in the following lemma. 
\begin{lemm}\label{lem.2.19}For any~$m\ge 0$ there exists a positive 
constant $K$ depending only on $ \sup_{J} M^1_\rho(p(\cdot;x,\xi))$ such that, for all 
$y\in (0,-z_1]$ and all~$z\in [0,y)$, 
\begin{equation}\label{eez}
M^{-m}_\rho (e(z))\le \frac{K}{(y-z)^m}.
\end{equation}
\end{lemm}

This follows easily from the assumptions~$p \in \Gamma^1_\rho$, 
$
\RE p (s;x,\xi) \geq c\la\xi\ra,
$
and the elementary inequalities (valid for any~$a\ge 0$)
$$
(y- z)^a \la \xi\ra^a \exp\bigr((z-y)\la \xi\ra\bigr) \les 1.
$$

By using the bound~\eqref{eez}, applied with~$m=0$, it follows 
from the operator norm estimate \eqref{esti:quant2} that, for any~$z\le y$ and any function~$f=f(x)$, we have
\begin{equation}\label{tv1b1}
\lA T_{e(y,z)} f\rA_{\zygmund{r}}\les M^0_0(e(y,z)) \lA f\rA_{\zygmund{r}}\le 
K\lA f\rA_{\zygmund{r}}.
\end{equation}
This implies that
\begin{equation*}
\lA T_{e\arrowvert z=z_0} w_0 +\int_{z_0}^y  (T_{e}F_1)(z)\, dz\rA_{L^\infty(J;\zygmund{r})}
\le  K \lA w_0\rA_{\zygmund{r}}+K\lA F_1\rA_{L^1(J;\zygmund{r})}.
\end{equation*}
On the other hand, by using the bound~\eqref{eez}, applied with~$m=1-\frac{1}{q}-\delta$, 
we obtain that
$$
\lA \int_{z_0}^y  (T_{e}F_2)(z)\, dz\rA_{L^\infty(J;\zygmund{r})}
\leq K \int_{z_0}^{y} \frac{ 1}{ |y-z|^{m}}\|F_2(z) \|_{C^{r-m}_*} dz,
$$
which implies by H\"older inequality that 
\begin{equation*}
\lA G\rA_{L^\infty(J;\zygmund{r})}
\le 
K \lA w_0\rA_{\zygmund{r}}
+K\lA F_1\rA_{L^1(J;\zygmund{r})}
+\|F_2 \|_{L^q(J;C^{r-1+\frac{1}{q}+\delta}_*)} .
\end{equation*}

It remains to show that~$R$ is a smoothing operator. To do that, 
we first use the operator norm estimate \eqref{esti:quant2} (applied with 
$(m,m',\rho)$ replaced with~$(-m,1,\rho)$) to obtain
$$
\lA (T_{ ep}-T_e T_{p})(z) \rA _{\zygmund{t}\rightarrow \zygmund{t+m-1+\rho}}\les 
M^{-m}_{\rho}(e(z))M^1_{\rho}(p(z)).
$$
Taking $m=1-\rho/2$, 
it follows from the previous bound and Lemma~\ref{lem.2.19} that
$$
\lA (T_{ep}-T_e T_{p})v (z) \rA _{\zygmund{t+\rho/2}}\le 
\frac{K}{(y-z)^m} \lA v (z) \rA _{\zygmund{t}}.
$$
Since~$0\le m<1$ we have 
$\int_0^y (y-z)^{-m} \, dz <+\infty$ and hence
\begin{equation}\label{tv1b2}
\lA Ru(y)\rA_{\zygmund{t+\rho/2}}\le 
\int_0^y \lA (T_{ep}-T_e T_{p})u(z) \rA _{\zygmund{t+\rho/2}}\le K 
\lA u\rA_{L^\infty (J;\zygmund{t})},
\end{equation}
which completes the proof. \end{proof}

We shall also need the following estimate in Sobolev spaces. 
Given~$\mu\in\xR$, recall that we define the spaces
\begin{equation}\label{XY}\begin{aligned}
X^\mu(I)&=C^0_z(I;H^{\mu}({\mathbf{R}}^d))\cap L^2_z(I;H^{\mu+\mez}({\mathbf{R}}^d)),\\
Y^\mu(I)&=L^1_z(I;H^{\mu}({\mathbf{R}}^d))+L^2_z(I;H^{\mu-\mez}({\mathbf{R}}^d)).
\end{aligned}
\end{equation}

\begin{prop}[from \cite{ABZ3}]\label{prop:max}
Let~$r\in \xR$,~$\rho\in (0,1)$,~$J=[z_0,z_1]\subset\xR$ and let 
$p\in \Gamma^{1}_{\rho}({\mathbf{R}}^d\times J)$ 
satisfying$$
\RE p(z;x,\xi) \geq c \la\xi\ra,
$$
for some positive constant~$c$. Then for any ~$f\in Y^r(J)$ 
and~$w_0\in H^{r}({\mathbf{R}}^d)$, there exists~$w\in X^{r}(J)$ solution of  the parabolic evolution equation
\begin{equation}\label{eqW}
\partial_z w + T_p w =f,\quad w\arrowvert_{z=z_0}=w_0,
\end{equation}
satisfying 
\begin{equation*}
\lA w \rA_{X^{r}(J)}\le K\left\{\lA w_0\rA_{H^{r}}+ \lA f\rA_{Y^{r}(J)}\right\},
\end{equation*}
for some positive constant~$K$ depending only on~$r,\rho,c$ and~$\mathcal{M}^1_\rho(p)$.
 Furthermore, this solution is unique in ~$X^s(J)$ for any~$s\in \xR$. 
 \end{prop}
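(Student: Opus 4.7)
The plan is to establish the a priori $X^r$ estimate first and then obtain existence by regularizing the symbol, with uniqueness following from the same energy argument applied to the difference of two solutions. As a preliminary reduction, I would set $\tilde w = \langle D_x\rangle^r w$ and use paradifferential symbolic calculus to see that $\tilde w$ satisfies
$$
\partial_z \tilde w + T_p \tilde w = \tilde f + R_r w,
$$
where $\tilde f = \langle D_x\rangle^r f \in Y^0(J)$ and $R_r = [\langle D_x\rangle^r, T_p]\langle D_x\rangle^{-r}$ has order $1-\rho$ with seminorms bounded only in terms of $\mathcal{M}^1_\rho(p)$. This reduces matters to the case $r=0$, modulo a lower-order remainder that will be absorbed at the end.

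For the core energy estimate I would take the $L^2_x$ scalar product with $\tilde w$:
$$
\tfrac{1}{2}\tfrac{d}{dz}\lA \tilde w\rA_{L^2}^2 + \RE(T_p \tilde w, \tilde w)_{L^2} = \RE(\tilde f + R_r w, \tilde w)_{L^2}.
$$
The paradifferential Garding inequality, which applies since $\RE p \ge c\langle\xi\rangle$ and $p\in\Gamma^1_\rho$, then gives
$$
\RE(T_p \tilde w, \tilde w)_{L^2} \ge \frac{c}{4}\lA \tilde w\rA_{H^{1/2}}^2 - C\lA \tilde w\rA_{L^2}^2.
$$
Decomposing $\tilde f = \tilde f_1 + \tilde f_2$ with $\tilde f_1\in L^1_z L^2$ and $\tilde f_2\in L^2_z H^{-1/2}$, I would bound the right-hand side by Cauchy--Schwarz, absorb a small fraction of $\lA \tilde w\rA_{H^{1/2}}^2$ into the left-hand side, and then apply Gronwall's lemma on $J$. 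This yields simultaneously the $C^0_z L^2$ and $L^2_z H^{1/2}$ bounds, i.e. the $X^0$ estimate for $\tilde w$, with constants depending only on $r,\rho,c$ and $\mathcal{M}^1_\rho(p)$.

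For existence, I would regularize by $p_\varepsilon(z;x,\xi) = \chi(\varepsilon|\xi|) p(z;x,\xi)$ so that $T_{p_\varepsilon}$ is bounded on every Sobolev space; the regularized Cauchy problem is then a standard ODE in $H^r$ with a unique classical solution $w_\varepsilon$, to which the a priori bound applies uniformly in $\varepsilon$. Weak-$*$ compactness produces a limit $w\in X^r(J)$ solving \eqref{eqW}, and the stated estimate follows by lower semicontinuity. Uniqueness in $X^s(J)$ for arbitrary $s$ is obtained by applying the same energy identity to the difference of two candidate solutions (after localizing in frequency to ensure the manipulations make sense at low regularity).

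The principal technical obstacle is the Garding step under the weak assumption $p\in\Gamma^1_\rho$ with $\rho\in(0,1)$: the adjoint error $T_p + T_p^* - 2T_{\RE p}$ is of order $1-\rho$ rather than $0$, so it cannot be discarded as an $L^2$-bounded perturbation. It must be controlled by the interpolation $\lA u\rA_{H^{(1-\rho)/2}}^2 \lesssim \delta \lA u\rA_{H^{1/2}}^2 + C_\delta \lA u\rA_{L^2}^2$, ensuring that the resulting coercivity constants and remainders depend only on $\mathcal{M}^1_\rho(p)$ as stated, and similarly that the commutator $R_r$ can be absorbed after the same interpolation trick.
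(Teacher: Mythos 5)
Your energy-method plan is sound but follows a genuinely different route from the paper. Proposition~\ref{prop:max} is quoted from \cite{ABZ3} without proof here, but the analogous H\"older estimate (Proposition~\ref{prop:maxH}) is proved in this paper and its argument mirrors the one in \cite{ABZ3}: instead of testing the equation against $w$, one introduces the exponential symbol $e(y,z;x,\xi)=\exp\bigl(-\int_z^y p(s;x,\xi)\,ds\bigr)$, integrates $\frac{d}{dz}T_{e(y,z)}w(z)$ to get $T_1 w(y)=T_{e|_{z=z_0}}w_0+\int_{z_0}^y T_eF+\int_{z_0}^y(T_{ep}-T_eT_p)w$, and then inverts by a Neumann series $w=(I+R+\dots+R^N)G-R^{N+1}w$ where the key technical lemma is that the Volterra-type error $R$ gains $\rho/2$ derivatives (exploiting $M^{-m}_\rho(e(z))\lesssim (y-z)^{-m}$ together with the composition estimate for $T_{ep}-T_eT_p$). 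This parametrix approach entirely sidesteps the Garding inequality and works uniformly in H\"older and Sobolev scales; it is also convenient for tracking the heterogeneous smoothing encoded in $Y^\mu=L^1_zH^\mu+L^2_zH^{\mu-1/2}$, since it treats the source through Duhamel rather than through duality in the energy pairing. Your approach is more elementary in one sense but carries the cost you correctly identify at the Garding step: with only $p\in\Gamma^1_\rho$, $\rho\in(0,1)$, the error $T_p+T_p^*-2T_{\RE p}$ has order $1-\rho$, and one must expand $T_{\RE p}\approx T_{\sqrt{\RE p}}T_{\sqrt{\RE p}}$ and absorb the $H^{(1-\rho)/2}$ remainders by interpolation, which is correct but somewhat more delicate than the parametrix route. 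One further point worth spelling out in your scheme: the conclusion is $w\in X^r(J)\subset C^0_z(H^r)$, not merely $L^\infty_z(H^r)$; for the regularized solutions this continuity is automatic, and you should check that it survives the weak-$*$ limit (e.g.\ by showing strong $C^0_z(H^{r-\varepsilon})$ convergence from the equation plus interpolation, then using the energy identity to upgrade to $C^0_z(H^r)$). With those two refinements your argument is complete and constitutes an alternative proof.
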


\section{Paralinearization}

We are now ready to prove Theorem~\ref{T3}. 
Recall that we consider the elliptic equation
\begin{equation}\label{dnint1Fb}
\partial_z^2 v +\alpha\Delta v + \beta \cdot\partialx\partial_z v  - \gamma \partial_z v=0,\quad v\arrowvert_{z=0}=f,
\end{equation}
where~$f=f(x)$ is a given function and the coefficients $\alpha,\beta,\gamma$ are given by \eqref{alpha} 
(these coefficients depend on the variable $\rho$ which is given by \eqref{diffeo}). 
In the sequel we fix indexes~$\delta,s,r,\eps$ in $\xR$ such that
\begin{equation}\label{indices}
0< \delta<\frac{1}{4},\quad s>1+\frac{d}{2}-\delta,\quad r>1,\quad \frac 1 4 < \eps=\mez-\delta<\min (\frac 1 2, s - \frac 1 2 - \frac d 2 ).
\end{equation}
It follows from~\eqref{oubli} and the Sobolev embedding that we have
$$
\lA \nabla_{x,z} v\rA_{C^0([-1,0];\zygmund{s-1-d/2})}
\le \mathcal{F}(\| \eta \|_{H^{s+\mez}})\lA f\rA_{H^s},
$$
for some non-decreasing function~$\mathcal{F}$. 
Since we only assume that~$s>3/4+d/2$, this is not enough to control 
the~$L^\infty$ norm of~$\nabla_{x,z}v$. The purpose of the next  result
is to provide such control under the additional assumption that~$f$ belongs to~$\zygmund{r}$ for some~$r>1$. 
\begin{prop}\label{p1H}
Let~$r>1$ and~$s>3/4+d/2$. 
For any~$-1<z_1< 0$, we have 
$$
\lA \nabla_{x,z} v\rA_{C^0\cap L^\infty (\mathbf{R}^d\times [z_1,0])}
\le \mathcal{F}\big(\| \eta \|_{H^{s+\mez}}\big)\left\{\lA f\rA_{H^s}+\lA f\rA_{\holder{r}}\right\},
$$
for some non-decreasing function~$\mathcal{F}\colon\xR_+\rightarrow \xR_+$.
\end{prop}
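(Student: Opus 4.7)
The plan is to view $v$ as the solution of the backward paradifferential parabolic equation
\begin{equation*}
(\partial_z - T_A)v = w, \qquad v\arrowvert_{z=0} = f,
\end{equation*}
where $A$ is the symbol defined by \e{aA} and $w \defn \partial_z v - T_A v$. The strategy is to combine the tame Sobolev bound \e{w-1} on $w$, which under the hypothesis $s > \tq + d/2$ places $w$ in a Zygmund space of small positive index, with the parabolic estimate in Zygmund spaces (Proposition~\ref{prop:maxH}) applied after reversing the $z$-direction, in order to propagate the H\"older regularity of the boundary datum $f$ into an interior H\"older estimate for $v$ at an exponent slightly bigger than~$1$. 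The $L^\infty$ control on $\nabla_{x,z} v$ will then follow immediately from this interior estimate.

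For \emph{Step 1}, observe that the interval $(1 + d/2 - s,\ s - \mez - d/2)$ is non-empty precisely when $s > \tq + d/2$. Picking $\eps$ therein, \e{w-1} gives $w \in C^0_z([-1,0]; H^{s-1+\eps}(\xR^d))$ with bound $\mathcal{F}(\lA\eta\rA_{H^{s+\mez}}) \lA f\rA_{H^s}$, and the Sobolev embedding $H^{s-1+\eps} \hookrightarrow \zygmund{\delta'}$ with $\delta' \defn s - 1 + \eps - d/2 > 0$ yields the same bound for $\lA w\rA_{C^0_z([-1,0]; \zygmund{\delta'}(\xR^d))}$.

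For \emph{Step 2}, set $\tilde v(z,x) := v(-z,x)$, $\tilde A(z;x,\xi) := A(-z;x,\xi)$ and $\tilde w(z,x) := w(-z,x)$ on $[0,-z_1]$; then $\tilde v$ satisfies the forward paradifferential parabolic equation $\partial_z \tilde v + T_{\tilde A}\tilde v = -\tilde w$ with $\tilde v\arrowvert_{z=0} = f$. From \e{alpha}-\e{aA}, Lemma~\ref{rho:diffeo} and $s > \tq + d/2$ (so $s - \mez - d/2 > \uq$), the coefficients $\alpha,\beta$ lie in $L^\infty_z \holder{\uq}$, so that $\tilde A \in \Gamma^1_{\varrho}(\xR^d \times [0,-z_1])$ for some $\varrho > 0$ with $\mathcal{M}^1_\varrho(\tilde A) \le \mathcal{F}(\lA\eta\rA_{H^{s+\mez}})$; the ellipticity $\RE \tilde A \ge c \la\xi\ra$ (with $c > 0$ depending only on $\lA\eta\rA_{W^{1,\infty}}$) follows from the elementary computation $4\alpha - |\beta|^2 = 4(\partial_z\rho)^2 (1 + |\nabla_x\rho|^2)^{-2} > 0$. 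Choose $r' \defn \min\{r,\ 1 + \delta'/2\}$ (so $1 < r' \le r$) and $\delta_0 > 0$ with $r' - 1 + \delta_0 \le \delta'$, and pick a low-regularity index $r_0 < r'$ for which $\lA v\rA_{L^\infty_z \zygmund{r_0}}$ is controlled by \e{oubli} and Sobolev embedding (for instance $r_0 = s - 1 - d/2 > -\uq$). Proposition~\ref{prop:maxH} with $q = \infty$, target index $r'$, source $F_2 = -\tilde w \in L^\infty_z \zygmund{r'-1+\delta_0}$ and boundary datum $f \in \zygmund{r'}$ (using $\zygmund{r} \hookrightarrow \zygmund{r'}$) then gives
\begin{equation*}
\lA v\rA_{C^0([z_1, 0];\, \zygmund{r'}(\xR^d))} \le \mathcal{F}\big(\lA\eta\rA_{H^{s+\mez}}\big) \big\{\lA f\rA_{H^s} + \lA f\rA_{\zygmund{r}}\big\}.
\end{equation*}

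For \emph{Step 3} (conclusion), since $r' > 1$, $\nabla_x v \in C^0_z \zygmund{r'-1}$ embeds into $C^0 \cap L^\infty$. Writing $\partial_z v = T_A v + w$, the first term lies in $C^0_z \zygmund{r'-1}$ by the order-$1$ boundedness of $T_A$ on Zygmund spaces, and the second in $C^0_z \zygmund{\delta'}$; both embed into $C^0 \cap L^\infty$, finishing the proof. The \emph{main obstacle} is the sharpness of the threshold $s > \tq + d/2$: it is precisely what ensures both $\delta' > 0$ in Step~1 and the H\"older regularity $\holder{\uq}$ of the coefficients $\alpha,\beta$ required by Proposition~\ref{prop:maxH}. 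Because the $q = \infty$ instance of Proposition~\ref{prop:maxH} requires a strictly positive gap in the source's Zygmund index, the attainable target index $r'$ lies only slightly above~$1$ (not at the full index~$r$); but this is exactly what the $L^\infty$ bound on $\nabla_{x,z} v$ requires. The tame dependence on $\lA\eta\rA_{H^{s+\mez}}$ and the linearity in $\lA f\rA_{\zygmund{r}} + \lA f\rA_{H^s}$ are obtained by assembling the tame bounds in \e{w-1} (from \cite{ABZ3}) and Proposition~\ref{prop:maxH}.
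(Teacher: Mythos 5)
Your proof is correct, and it follows the same skeleton as the paper's (reduction to the backward paradifferential equation via $w=(\partial_z-T_A)v$, reversal of the $z$-direction, and the Zygmund parabolic estimate of Proposition~\ref{prop:maxH}), but it handles the source term in a genuinely different way. The paper splits $\widetilde v=\widetilde v_1+\widetilde v_2$, solves the inhomogeneous part $\widetilde v_2$ with the \emph{Sobolev} parabolic estimate (Proposition~\ref{prop:max}) so that the parabolic smoothing upgrades $\widetilde w\in L^2_z H^{s-\mez+\eps}$ to $\widetilde v_2\in X^{s+\eps}$, whence $\nabla_{x,z}\widetilde v_2\in L^\infty_z\zygmund{s-1+\eps-d/2}$ by Sobolev embedding, and only the homogeneous part $\widetilde v_1$ (with datum $f$) goes through Proposition~\ref{prop:maxH}, reaching the full index $\zygmund{r}$. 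You instead avoid the splitting entirely: you first embed $w\in C^0_zH^{s-1+\eps}\hookrightarrow C^0_z\zygmund{\delta'}$ and feed it as the $F_2$ term with $q=\infty$, accepting that the attainable target index $r'=\min\{r,1+\delta'/2\}$ then only barely exceeds $1$. Since the proposition asks only for an $L^\infty$ bound on $\nabla_{x,z}v$, that loss is harmless here; the paper's decomposition yields the sharper conclusion $\nabla_{x,z}\widetilde v_1\in C^0_z\zygmund{r-1}$, which is closer to what is actually exploited downstream. Your verification of the two hypotheses of Proposition~\ref{prop:maxH} that are not automatic — the ellipticity $\RE A\ge c\la\xi\ra$ via $4\alpha-|\beta|^2=4(\partial_z\rho)^2(1+|\nabla_x\rho|^2)^{-2}$, and the symbol regularity $A\in\Gamma^1_{1/4}$ with $\mathcal{M}^1_{1/4}(A)\le\mathcal{F}(\|\eta\|_{H^{s+\mez}})$ obtained from $\nabla_{x,z}\rho\in C^0_zH^{s-\mez}\hookrightarrow C^0_z\holder{1/4}$ rather than from the $\holdertdm$-norm of $\eta$, which is not assumed here — is exactly the care the statement requires, and the nonemptiness check for the range of $\eps$ correctly isolates the threshold $s>\tq+\frac d2$.
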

\begin{rema}$i)$ 
Since~$v\arrowvert_{z=0}=f\in L^\infty({\mathbf{R}}^d)$, we have also 
\begin{align*}
\lA v\rA_{L^\infty({\mathbf{R}}^d\times [z_1,0])}&\le \Vert f \Vert_{L^\infty({\mathbf{R}}^d)} +\la z_1\ra \lA \partial_z v\rA_{L^\infty({\mathbf{R}}^d\times [z_1,0])}\\
&\le \mathcal{F}(\| \eta \|_{H^{s+\mez}})\left\{\lA f\rA_{H^s}+\lA f\rA_{\zygmund{r}}\right\}.
\end{align*}

$ii)$ Let $\mu>1+d/2$. Since 
$$
\Big\Vert\frac{1+|\nabla\rho|^2}{\partial_z\rho}\Big\Vert_{L^\infty}\les 1+\lA \eta\rA_{H^\mu},\quad 
\lA\partial_z\rho\rA_{L^\infty}\les 1+\lA \eta\rA_{H^\mu},
$$
it follows from the previous proposition and 
the definition of the Dirichlet-Neumann operator that, for any $r>1$ and 
$s>3/4+d/2$, we have
\be\label{n232}
\lA G(\eta)f\rA_{L^\infty}\le \mathcal{F}\big(\lA \eta\rA_{H^{s+\mez}}\big)\big\{ \lA f\rA_{H^s}
+\lA f\rA_{\holder{r}}\big\}.
\ee
Other estimates are known which involve only H\"older norms (see Hu-Nicholls \cite{HN}), but they did not apply directly to our case with arbitrary bottoms. 
The fact that the previous bound involves a Sobolev 
estimate is harmless for our purposes.
\end{rema}
\begin{proof}Recall that the space $X^\mu(I)$ is defined by \eqref{XY}. 
Recall also that (see \eqref{oubli} and \eqref{w-1})
\begin{equation}\label{tv0}
\lA \nabla_{x,z} v\rA_{X^{s-1}([-1,0])}
\le \mathcal{F}(\| \eta \|_{H^{s+\mez}})\lA f\rA_{H^s}
\end{equation}
and
\begin{equation}\label{w-1-bis}
\lA \partial_z v-T_A v \rA_{X^{s-1+\eps}([-1,0])}
\le \mathcal{F}(\| \eta \|_{H^{s+\mez}})\lA f\rA_{H^s}.
\end{equation}

Since~$v\mid_{z=0} = f$, writing $v(z)=v(0)+\int_0^z \partial_z v$, this implies that
\begin{equation}\label{tv00}
\lA  v\rA_{X^{s-1}([-1,0])}
\le \mathcal{F}(\| \eta \|_{H^{s+\mez}})\lA f\rA_{H^s}.
\end{equation}

Introduce a cutoff function~$\chi$ such that 
$$
\chi(-1)=0,\quad \chi(z)=1 \quad\text{for } z\ge z_1,
$$
and set~$ w \defn \chi (z) (\partial_z - T_{A})v$. We shall use the fact that $w$ is already estimated by 
means of \eqref{w-1-bis}Ê
together with the parabolic estimate in H\"older spaces 
established above
to deduce an estimate for $v$.

Since it is convenient to work with forward evolution equation, 
define the function~$\widetilde{v}$ by~$\widetilde v(x,z)=v(x,-z)$, so that
\begin{equation*}
\partial_z \widetilde v  +T_{\widetilde{A}} \widetilde v =- \widetilde w\quad \text{for }z\in \widetilde{I}_1\defn [0,-z_1].
\end{equation*}
We split~$\widetilde v$ as~$\widetilde v=\widetilde v_1+\widetilde v _2$ where~$\widetilde{v}_1$ is the solution to the system
\begin{equation*}
\partial_z \widetilde v_1  +T_{\widetilde{A}} \widetilde v_1 =0\quad \text{for }z\in \widetilde{I}_1,  \qquad  \widetilde v_1\arrowvert_{z=0}=\widetilde v\arrowvert_{z=0}=f
\end{equation*} given by Proposition~\ref{prop:max}, 
while~$\widetilde v_2= \widetilde v - \widetilde v_1$ satisfies 
\begin{equation*}
\partial_z \widetilde v_2  +T_{\widetilde{A}} \widetilde v_2 =- \widetilde w 
\quad \text{for }z\in \widetilde{I}_1,\qquad  \widetilde v_2 \arrowvert_{z=0}=0.
\end{equation*}
According to~\eqref{w-1} we have
\begin{equation*}
\lA \widetilde w\rA_{Y^{s+\eps}(\widetilde I_1)}\le \lA \widetilde w\rA_{L^2(\widetilde I_1;H^{s+\eps-\mez})}
\leq \mathcal{F}(\| \eta \|_{H^{s+\mez}})\lA f\rA_{H^s},
\end{equation*}
which in turn implies, according to Proposition~\ref{prop:max},
\begin{equation}\label{tv000}
\lA \widetilde v_2\rA_{X^{s+\eps}(\widetilde I_1)}\le 
\mathcal{F}(\| \eta \|_{H^{s+\mez}})\lA f\rA_{H^s}.
\end{equation}
Set 
$m=s-1+\eps-\frac{d}{2}$. By the Sobolev embedding we have
$$
\lA \nabla_{x} \widetilde v_2\rA_{L^\infty(\widetilde I_1;\zygmund{m})}\les 
\lA \nabla_{x} \widetilde v_2\rA_{L^\infty(\widetilde I_1;H^{s-1+\eps})}\le 
\lA \widetilde v_2\rA_{X^{s+\eps}(\widetilde I_1)}
\le \mathcal{F}(\| \eta \|_{H^{s+\mez}})\lA f\rA_{H^s}.
$$
Let us prove that $\partial_z \widetilde v_2$ satisfies the same estimate. 
Using the equation for $\widetilde v_2$, we obtain 
$\partial_z \widetilde v_2=-T_{\widetilde{A}} \widetilde v_2 - \widetilde w$. 
Now, $\widetilde{w}$ is estimated by means of the bound \e{w-1}. Moving to the estimate 
of $T_{\widetilde{A}} \widetilde v_2 $, recall that 
$T_{\widetilde{A}}$ is an operator of order $1$ whose operator norm is estimated by means of the first 
inequality in \e{esti:quant1}, to get
\begin{align*}
\lA T_{\widetilde{A}} \widetilde v_2\rA_{L^\infty(\widetilde I_1;\zygmund{m})}&\les 
\lA T_{\widetilde{A}} \widetilde v_2\rA_{X^{s-1+\eps}(\widetilde I_1)}\\
&\le \mathcal{F}(\| \eta \|_{H^{s+\mez}})
\lA \widetilde v_2\rA_{X^{s+\eps}(\widetilde I_1)}\le \mathcal{F}(\| \eta \|_{H^{s+\mez}})\lA f\rA_{H^s}.
\end{align*}
We conclude 
$$
\lA \nabla_{x,z} \widetilde v_2\rA_{L^\infty(\widetilde I_1;\zygmund{m})}
\le \mathcal{F}(\| \eta \|_{H^{s+\mez}})\lA f\rA_{H^s}.
$$
Now, by assumption~$s>1+d/2-\delta$ with~$\delta<1/4$ and $\eps=1/2-\delta$, so that
$$
m=s-1+\eps-\frac{d}{2} = s-1+\mez -\delta -\frac{d}{2}> \mez-2 \delta>0,
$$
and hence
$$
\lA \nabla_{x,z} \widetilde v_2\rA_{L^\infty({\mathbf{R}}^d\times [0,-z_1])}
\le \mathcal{F}(\| \eta \|_{H^{s+\mez}})\lA f\rA_{H^s}.
$$

It remains to estimate $\widetilde{v}_1$. Using Proposition~\ref{prop:maxH}  with~$r=1, r_0 =- 1$, 
we obtain
\be\label{n236}
\|\widetilde{v}_1\|_{C^0(\widetilde{I}_1; \zygmund{r})}
\leq K \bigl(\| f\|_{\zygmund{r}}+  \|\widetilde{v}_1\|_{C^0(\widetilde{I}_1; C^{-1}_*)}\bigr).
\ee
To estimate the last term in the right-hand side above, write, according to~\eqref{tv00} and \eqref{tv000},
$$
\|\widetilde{v}_1\|_{C^0(\widetilde{I}_1; C^{-1}_*)}\leq C \bigl(\|\widetilde{v}\|_{C^0(\widetilde{I}_1; H^{s-1})}
+ \|\widetilde{v}_2\|_{C^0(\widetilde{I}_1; H^{s-1})}\bigr) \leq \mathcal{F}(\|\eta\|_{H^{s+ \mez}}) \| f \|_{H^s}.
$$ 
Since~$\partial_z \widetilde{v} _1= -T_{\widetilde{A}}\widetilde{v}_1$, and since 
$T_{\widetilde{A}}$ is an operator of order $1$ whose operator norm is estimated by means of the second inequality in 
\e{esti:quant1}, the previous inequality~\e{n236} implies also
$$
\|\nabla_{x,z} \widetilde{v}_1\|_{C^0(\widetilde{I}_1; C^{r-1}_*)}
\leq \mathcal{F}(\|\eta\|_{H^{s+ \mez}}) (\| f \|_{H^s} +\| f\|_{\zygmund{r}}).
$$
This completes the proof of Proposition~\ref{p1H}. 
\end{proof}

Gathering \eqref{oubli} and the previous estimate, for any $z_0$ in $(-1,0]$, 
we have
\begin{equation}\label{4.10'}
\begin{aligned}
&\lA \nabla_{x,z}v\rA_{C^0([z_0,0];H^{s-1})\cap L^2((z_0,0);H^{s-\mez})}
\le \mathcal{F}(\| \eta \|_{H^{s+\mez}})\lA f\rA_{H^s},\\
&\lA \nabla_{x,z} v\rA_{C^0\cap L^\infty({\mathbf{R}}^d\times [z_0,0])}
\le \mathcal{F}(\| \eta \|_{H^{s+\mez}})\left\{\lA f\rA_{H^s}+\lA f\rA_{\zygmund{r}}\right\}.
\end{aligned}
\end{equation}
The end of the proof of Theorem~\ref{T3} is in four steps. 
\paragraph{Step 1. Tame estimates in Zygmund spaces.} 

(See Definition~\ref{T:Zygmund} for the definition of Zygmund spaces.) 

Recall the following bounds 
for the coefficients~$\alpha,\beta,\gamma$ defined in~\eqref{alpha} (see \cite[Lemma~$3.25$]{ABZ3}):  for any 
$s> \mez+ \frac d 2$, we have
\begin{equation}\label{esti:abc}
\lA \alpha - h^2\rA_{X^{s-\mez}([-1,0])}+
\lA \beta\rA_{X^{s-\mez}([-1,0])}+\lA \gamma\rA_{X^{s-\tdm}([-1,0])}\le \mathcal{F}(\| \eta \|_{H^{s+\mez}}).
\end{equation}
We need also estimates in Zygmund spaces.

\begin{lemm}\label{L6} There holds
\begin{equation}\label{regv-2b}
\lA (\alpha,  \beta)\rA_{C^0([-1,0];\holdermez)}+\lA \gamma\rA_{L^2([-1,0];L^\infty)}\\
\le 
\mathcal{F}(\| \eta \|_{H^{s+1/2}}) \big\{1+\| \eta \|_{\holdertdm}\big\}.
\end{equation}
\end{lemm}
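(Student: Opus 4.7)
The plan is to split the lemma into two parts: a bound on the coefficients $\alpha$ and $\beta$ in $C^0([-1,0];\holdermez)$, and a bound on $\gamma$ in $L^2([-1,0];L^\infty)$. Both parts will rest on the H\"older--Sobolev control of $\nabla_{x,z}\rho$ supplied by the fifth assertion of Lemma~\ref{rho:diffeo} applied with $r'=1$, namely
\begin{equation*}
\lA\nabla_{x,z}\rho\rA_{C^0([-1,0];\holdermez)}+\lA\nabla_{x,z}\rho\rA_{L^2([-1,0];W^{1,\infty})}\le C(1+\lA\eta\rA_{\holdertdm}),
\end{equation*}
combined with the pointwise bounds on $\partial_z\rho$ from \eqref{rhokappa} and an $L^\infty$ bound on $\nabla_x \rho$ deduced from Sobolev embedding and the Sobolev estimate \eqref{eq.rho1} of Lemma~\ref{rho:diffeo}.

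For $\alpha$ and $\beta$ I would observe that both are $C^\infty$ functions $F(\partial_z\rho,\nabla_x\rho)$ on the set $\{\partial_z\rho\ge c\}$, which by \eqref{rhokappa} contains all values taken by the argument. I would then apply the elementary composition estimate
\begin{equation*}
\lA F\circ u\rA_{\holdermez}\le \lA DF\rA_{L^\infty(K)}\lA u\rA_{\holdermez},
\end{equation*}
valid on any compact $K$ containing the range of $u$. Choosing $K$ as a compact neighborhood of the range of $\nabla_{x,z}\rho$, whose $L^\infty$ size is controlled by $\mathcal{F}(\lA\eta\rA_{H^{s+\mez}})$ via the Sobolev embedding $H^{s-\mez}\hookrightarrow L^\infty$ (which holds since $s>\tq+d/2>\mez+d/2$), this produces a bound
\begin{equation*}
\lA (\alpha,\beta)\rA_{C^0([-1,0];\holdermez)}\le \mathcal{F}(\lA\eta\rA_{H^{s+\mez}})(1+\lA\eta\rA_{\holdertdm}),
\end{equation*}
linear in the highest H\"older norm as required.

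For $\gamma$ I would write the numerator as a sum of $L^\infty$-bounded factors ($\alpha$, $\beta$, already controlled at the previous step) multiplied by second-order derivatives $\partial_z^2\rho$, $\Delta_x\rho$, $\nabla_x\partial_z\rho$. Each of the latter is a component of $\nabla_{x,z}(\nabla_{x,z}\rho)$, and so lies in $L^2([-1,0];L^\infty)$ with norm bounded by $C(1+\lA\eta\rA_{\holdertdm})$ by the same fifth assertion of Lemma~\ref{rho:diffeo} with $r'=1$. Dividing by $\partial_z\rho$, whose reciprocal is uniformly bounded thanks to \eqref{rhokappa}, yields the desired bound on $\lA\gamma\rA_{L^2([-1,0];L^\infty)}$.

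No substantial obstacle arises in this proof: it is essentially a bookkeeping exercise. The only subtle point is ensuring that the argument remains \emph{tame}, i.e.\ linear in the top-order H\"older norm $\lA\eta\rA_{\holdertdm}$. This will be transparent because $\lA\eta\rA_{\holdertdm}$ enters exactly once, through a single application of Lemma~\ref{rho:diffeo}(5), while all $L^\infty$ norms (and the lower bound on $\partial_z\rho$, which requires the smallness condition $\delta\lA\eta\rA_{W^{1,\infty}}\le h/(2K)$) get absorbed into the non-decreasing factor $\mathcal{F}(\lA\eta\rA_{H^{s+\mez}})$.
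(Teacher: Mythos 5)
Your treatment of $\alpha$ and $\beta$ is essentially the paper's: it rests on the same composition estimate (the paper invokes \eqref{esti:F(u)bis}, which is exactly the tame version of your "$\lA F\circ u\rA_{\holdermez}\le\lA DF\rA_{L^\infty}\lA u\rA_{\holdermez}$") together with the $L^\infty$ bound on $\nabla_{x,z}\rho$ from Sobolev embedding.

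There is, however, a genuine gap in the $\gamma$ estimate. You claim that $\partial_z^2\rho$, $\Delta_x\rho$, and $\nabla_x\partial_z\rho$ are all controlled in $L^2([-1,0];L^\infty)$ by Lemma~\ref{rho:diffeo}(5) with $r'=1$. That assertion gives $\lA\nabla_{x,z}\rho\rA_{L^2([-1,0];W^{1,\infty})}\le C(1+\lA\eta\rA_{\holdertdm})$, but $W^{1,\infty}$ here is a space of functions on $\xR^d$, so that estimate controls only the \emph{spatial} derivative $\nabla_x(\nabla_{x,z}\rho)$ at each fixed $z$. It therefore covers $\Delta_x\rho$ and $\nabla_x\partial_z\rho$, but it says nothing about $\partial_z^2\rho$, which is not among its components. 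The paper handles this missing term by invoking "the equation satisfied by $\rho$": inside $\widetilde\Omega_1$ the Poisson-kernel relation $\partial_z^2\,e^{\delta z\langle D_x\rangle}\eta=\delta^2\langle D_x\rangle^2 e^{\delta z\langle D_x\rangle}\eta$ converts $\partial_z^2\rho$ into an expression built from $\langle D_x\rangle^2$ applied to the same Poisson extensions, which is then bounded by the identical argument you used for $\Delta_x\rho$. Without that step your bound on $\lA\gamma\rA_{L^2([-1,0];L^\infty)}$ does not follow; you should add the reduction of $\partial_z^2\rho$ to spatial differentiation via the explicit form \eqref{diffeo} of $\rho$.
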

\begin{proof}Recall that, according to Lemma~\ref{rho:diffeo},
\begin{align}
&\lA\nabla_{x}\rho\rA_{C^0([-1,0];\holdermez)}+\lA\nabla_{x,z}\rho\rA_{L^2([-1,0];W^{1,\infty})}
\les \| \eta \|_{\holdertdm},\label{regofrho3-bis}\\
&\lA\partial_{z}\rho-h\rA_{C^0([-1,0];\holdermez)}
\les 1+\| \eta \|_{\holdertdm}.\notag
\end{align}
and (since $s-1/2>d/2$), by Sobolev embedding, 
\be\label{regofrho10}
\lA \nabla_{x,z}\rho \rA_{L^\infty(\mathbf{R}^d\times (-1,0))} \les 
1+  \lA \eta \rA_{H^{s+\mez}}.
\ee
We deduce the estimates for~$\alpha-1$ and~$\beta$ from 
the composition rule \eqref{esti:F(u)bis} and the equality $W^{1/2,\infty}=C^{1/2}_*$. 
The estimate for~$\gamma$ follows from~\eqref{regofrho10} and 
the estimate
\begin{equation*}
\lA\nabla_{x,z}^2\rho\rA_{L^2([-1,0];L^{\infty})}
\les \| \eta \|_{\holdertdm}. 
\end{equation*}
which follows from \e{regofrho3-bis} 
and the equation satisfied by $\rho$ (to estimate 
$\partial_z^2\rho$).
\end{proof}

\paragraph{Step 2. Estimates for the source terms.}

We now estimate the source terms $F_1$, $F_2$ and $F_3$ which appear in \eqref{F2e} and \eqref{2m2.b}.
\begin{lemm}\label{L4.5}
For any~$z_0 \in (-1,0)$, and any~$j=1,2,3$ we have, 
\begin{equation}\label{F123}
\lA F_j\rA_{L^2_z((z_0, 0);H^{s-1})}\le \mathcal{F}\bigl( \| \eta \|_{H^{s+\mez}},\lA f\rA_{H^{s}}\bigr)
\left\{1+\| \eta \|_{\holdertdm}+\lA f\rA_{\holder{r}}\right\}.
\end{equation}
\end{lemm}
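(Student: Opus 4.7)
The plan is to estimate each $F_j$ separately, combining the Sobolev/Zygmund bounds on the coefficients $\alpha,\beta,\gamma$ from~\eqref{esti:abc}--Lemma~\ref{L6} with the two regularity estimates for $\nabla_{x,z}v$ collected in~\eqref{4.10'}. Throughout we write $\mathcal{F}$ for a generic non-decreasing function of $\|\eta\|_{H^{s+\mez}}$ and $\|f\|_{H^s}$ which may change from line to line. Observe that under our assumptions $s-1>0$, so standard tame product and paralinearization estimates apply.

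\textbf{Estimate of $F_1=\gamma\,\partial_z v$.} Using the Moser-type product rule $\|uv\|_{H^{s-1}}\lesssim \|u\|_{L^\infty}\|v\|_{H^{s-1}}+\|v\|_{L^\infty}\|u\|_{H^{s-1}}$ (valid for $s-1\ge 0$), I split
\[
\|F_1\|_{L^2_z H^{s-1}}\lesssim \|\gamma\|_{L^2_z L^\infty_x}\|\partial_z v\|_{L^\infty_z H^{s-1}}+\|\partial_z v\|_{L^\infty_{x,z}}\|\gamma\|_{L^2_z H^{s-1}}.
\]
The first factor in each product is controlled by Lemma~\ref{L6} and by~\eqref{esti:abc} (giving $\gamma\in L^2_zH^{s-1}$), and the second factor by the two inequalities of~\eqref{4.10'}. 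Collecting gives the desired tame bound, since each appearance of a H\"older-type norm of $\eta$ or $f$ occurs linearly.

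\textbf{Estimate of $F_2$.} The key input is the Bony-type remainder estimate of paradifferential calculus (Appendix~\ref{sec:2}): for $\rho\in(0,1)$ and $a\in C^\rho_*$,
\[
\|au-T_a u\|_{H^{\sigma+\rho}}\le K\,\|a\|_{C^\rho_*}\|u\|_{H^\sigma},\qquad \sigma\in\xR.
\]
Applied with $\rho=\mez$, $a=\alpha-h^2$ (so that $T_\alpha-\alpha=T_{a}-a$ since $h^2$ is constant) and $u=\Delta v$, $\sigma=s-\tdm$, this yields
\[
\|(T_\alpha-\alpha)\Delta v\|_{H^{s-1}}\lesssim \|\alpha-h^2\|_{C^{\mez}_*}\|\Delta v\|_{H^{s-\tdm}}.
\]
The analogous bound for the $\beta$-term follows with $u=\partial_x\partial_z v$. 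Integrating in $z$ with $L^\infty_z\cdot L^2_z$ Hölder and invoking Lemma~\ref{L6} for the $C^{\mez}_*$ norms of $\alpha,\beta$ together with~\eqref{4.10'} for $\nabla_{x,z}v\in L^2_z H^{s-\mez}$ produces the required tame estimate.

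\textbf{Estimate of $F_3$.} Starting from $(\partial_z-T_a)(\partial_z-T_A)v=\partial_z^2 v-T_{a+A}\partial_z v+(T_aT_A)v-T_{\partial_z A}v$ and using that by construction $a+A=-i\beta\cdot\xi$ and $aA=-\alpha|\xi|^2$, one finds
\[
F_3=-\bigl(T_aT_A-T_{aA}\bigr)v-T_{\partial_z A}v\;+\;R,
\]
where $R$ collects the discrepancy between the paradifferential operators $T_{i\beta\cdot\xi}\partial_z v$ and $T_\beta\cdot\partial_x\partial_z v$, and is lower order. The first main contribution is bounded via the symbolic calculus remainder estimate $\|(T_aT_A-T_{aA})v\|_{H^{\sigma+\rho}}\lesssim \mathcal{M}^1_\rho(a)\mathcal{M}^1_\rho(A)\|v\|_{H^\sigma}$ with $\rho=\mez$. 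Since $a,A$ are smooth functions of $(\alpha,\beta,\xi)$ away from $\xi=0$ (using the ellipticity $\alpha\ge c>0$ from~\eqref{rhokappa}), the composition rule in Appendix~\ref{sec:2} gives
\[
\mathcal{M}^1_{\mez}(a)+\mathcal{M}^1_{\mez}(A)\le \mathcal{F}(\|\eta\|_{H^{s+\mez}})\bigl\{1+\|\eta\|_{\holdertdm}\bigr\},
\]
while the term $T_{\partial_z A}v$ is of order $1$ with a factor involving $\partial_z\nabla_{x,z}\rho$, which is estimated in $L^2_zL^\infty_x$ by $1+\|\eta\|_{\holdertdm}$ (from the proof of Lemma~\ref{L6}), and multiplies $\|v\|_{L^\infty_z H^{s-1}}$ via~\eqref{tv00}. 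Integrating in $z$ and collecting all constants yields~\eqref{F123} for $j=3$.

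The main obstacle is the $F_3$ estimate: one must carefully track the \emph{linear} dependence on the H\"older norms $\|\eta\|_{\holdertdm}$ and $\|f\|_{\holder r}$ through the composition rule applied to the non-smooth symbols $a,A$ defined by~\eqref{aA}. All other terms are either standard tame products or direct applications of Bony's paralinearization lemma.
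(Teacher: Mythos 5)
Your estimates for $F_1$ and $F_3$ follow the paper's own route: for $F_1$ the tame product rule \eqref{prS2} combined with Lemma~\ref{L6}, \eqref{esti:abc} and \eqref{4.10'}; for $F_3$ the identities $a+A=-i\beta\cdot\xi$, $aA=-\alpha|\xi|^2$, the symbolic-calculus remainder \eqref{esti:quant2} with $\rho=\mez$, the bound \eqref{boundsaAbis}, and Proposition~\ref{prop.niSbis} for $T_{\partial_z A}$. Two small corrections there: the ``discrepancy'' $R$ between $T_{i\beta\cdot\xi}\partial_z v$ and $T_\beta\cdot\nabla\partial_z v$ vanishes identically for this quantization, and since $\partial_z A\in\Gamma^1_{-1/2}$ the operator $T_{\partial_z A}$ is of order $\tdm$, so it must be paired with $v\in L^2_zH^{s+\mez}$ (equivalently $\nabla v\in L^2_zH^{s-\mez}$ from \eqref{4.10'}), not with $\Vert v\Vert_{L^\infty_zH^{s-1}}$, in order to land in $L^2_zH^{s-1}$.

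The genuine gap is the ``key input'' you invoke for $F_2$. The inequality
\begin{equation*}
\Vert au-T_au\Vert_{H^{\sigma+\rho}}\le K\,\Vert a\Vert_{C^\rho_*}\Vert u\Vert_{H^\sigma}
\end{equation*}
is not a valid paralinearization estimate. Writing $au-T_au=T_ua+R(a,u)$, the remainder $R(a,u)$ does belong to $H^{\sigma+\rho}$ with the stated bound (this is \eqref{Bony3}, valid when $\sigma+\rho>0$), but the term $T_ua$ does not: for $u$ smooth and nonvanishing, $T_ua$ retains essentially the full high-frequency content of $a$, and a function that is merely $C^\rho_*$ (e.g.\ a lacunary Weierstrass-type function) need not lie in $H^{\sigma+\rho}$; every available bound on $T_ua$ in a Sobolev space (such as \eqref{niS}) requires a \emph{Sobolev} norm of $a$ on the right-hand side. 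In your application $\sigma=s-\tdm>\frac d2-\tq$, which is positive whenever $d\ge2$, so the step fails outright there. The repair is exactly the paper's argument: keep the decomposition $F_2=-\bigl(T_{\Delta v}\alpha+R(\alpha,\Delta v)+T_{\partial_x\partial_zv}\cdot\beta+R(\beta,\partial_x\partial_zv)\bigr)$ and estimate the paraproduct terms by \eqref{niS} as $\Vert T_{\Delta v}\alpha\Vert_{H^{s-1}}\les\Vert\Delta v\Vert_{\zygmund{-1}}\Vert\alpha\Vert_{H^{s}}$ (and similarly for $\beta$), the remainders by \eqref{Bony3} using $s-1>0$, then use $\Vert\Delta v\Vert_{\zygmund{-1}}\les\Vert\nabla v\Vert_{L^\infty}$ together with the second line of \eqref{4.10'} and $\Vert\alpha\Vert_{L^2_zH^s}\le\mathcal{F}(\Vert\eta\Vert_{H^{s+\mez}})$ from \eqref{esti:abc}. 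Note this places the linear H\"older dependence on $\lA f\rA_{\holder{r}}$ (through $\Vert\nabla_{x,z}v\Vert_{L^\infty}$) rather than on $\Vert\eta\Vert_{\holdertdm}$ as your version would; both forms are admissible in \eqref{F123}, but only the former is actually obtainable.
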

\begin{proof}
By using the tame product rule (see~\eqref{prS2})
\begin{equation*}
\lA u_1 u_2 \rA_{H^{s-1}}\les  \lA u_1\rA_{L^\infty}\lA u_2\rA_{H^{s-1}}
+\lA u_2\rA_{L^{\infty}} \lA u_1\rA_{H^{s-1}}
\end{equation*}
we find that~$F_1=\gamma \partial_z v$ satisfies
\begin{equation*}
\begin{aligned}
\lA F_1\rA_{L^2((z_0, 0);H^{s-1})}
&\les  \lA \partial_z v\rA_{C^0([z_0,0];L^\infty)}\lA \gamma \rA_{L^2((z_0,0);H^{s-1})}\\
&\quad + 
\lA \gamma\rA_{L^2((z_0,0);L^\infty)}
\lA \partial_z v\rA_{L^\infty((z_0,0);H^{s-1})}.
\end{aligned}
\end{equation*}
The desired estimate for~$F_1$ follows from Lemma~\ref{L6}, 
\eqref{4.10'} and \eqref{esti:abc}.

Let us now study
\begin{equation*}
F_2 =(T_\alpha-\alpha)\Delta v+(T_\beta-\beta)\cdot\partialx\partial_z v 
=-\bigl( T_{\Delta v}\alpha +R(\alpha,\Delta v)+T_{\partialx\partial_z v}\cdot \beta+R(\beta,\partialx\partial_z v)\bigr).
\end{equation*}
According to~\eqref{niS}, we obtain
\begin{align*}
\lA T_{\Delta v(z)}\alpha(z)\rA_{H^{s-1}}&\les \lA \Delta v(z)\rA_{C^{-1}_*}\lA \alpha(z)\rA_{H^{s}},\\
\lA T_{\partialx\partial_z v(z)}\cdot \beta(z)\rA_{H^{s-1}}
&\les \lA \partialx\partial_z v(z)\rA_{C^{-1}_*}\lA \beta(z)\rA_{H^{s}}.
\end{align*}
On the other hand, since~$s-1>0$ we can apply~\eqref{Bony3} to obtain
\begin{align*}
\lA R(\alpha,\Delta v)(z)\rA_{H^{s-1}}&\les \lA \Delta v(z)\rA_{C^{-1}_*}\lA \alpha(z)\rA_{H^{s}},\\
\lA R(\beta,\partialx\partial_z v)(z)\rA_{H^{s-1}}
&\les \lA \partialx\partial_z v(z)\rA_{C^{-1}_*}\lA \beta(z)\rA_{H^{s}}.
\end{align*}
Consequently we have proved
\begin{multline}
\lA F_2\rA_{L^2([z_0,0]; H^{s-1})}\les 
\lA \Delta v\rA_{C^0([z_0,0]; C^{-1}_*)}\lA \alpha\rA_{L^2([z_0,0];H^{s})}\\
+\lA \partialx\partial_z v\rA_{C^0([z_0,0];C^{-1}_*)}\lA \beta\rA_{L^2([z_0,0];H^{s})}.
\end{multline}
Notice that 
$$
\lA \Delta v\rA_{C^{-1}_*}\les 
\lA \nabla v \rA_{C^0_*} \les \lA \nabla v\rA_{L^\infty},
\qquad  \lA \nabla \partial_z v\rA_{C^{-1}_*}\les  \lA \partial_z v \rA_{C^0_*}\les \lA \partial_z v\rA_{L^\infty} 
$$
and consequently, according to~\eqref{4.10'} and \eqref{esti:abc} 
we conclude the proof of the claim \eqref{F123} for~$j=2$. 

It remains to estimate~$F_3$. 
In light of \eqref{4.10'} it is enough to prove that
\begin{equation}\label{F3-1}
\lA F_3\rA_{L^2(I;H^{s-1})}\le \mathcal{F}\bigl(\| \eta \|_{H^{s+\mez}}\bigr)\| \eta \|_{\holdertdm}\lA 
\nabla_{x,z} v\rA_{L^2(I;H^{s-\mez})},
\end{equation}
for some non-decreasing function. 
Directly from the definition of~$\alpha$ and~$\beta$, 
by using the tame estimates in H\"older spaces~\eqref{prZ1}, we verify 
that the symbols~$a,A$ (given by \e{aA}) 
belong to~$\Gamma^{1}_{1/2}({\mathbf{R}}^d\times I)$ 
and that they satisfy 
\begin{equation}\label{boundsaAbis}
\mathcal{M}^1_{1/2}(a)+ \mathcal{M}^1_{1/2}(A)\le 
\mathcal{F}\bigl(\| \eta \|_{H^{s+\mez}}\bigr)\| \eta \|_{\holdertdm}.
\end{equation}
(For later purpose, notice that we used here only~$s> \mez + \frac d 2$.) 
Moreover, 
\begin{equation*}
 \mathcal{M}^1_{-1/2}(\partial_z A)\le 
\mathcal{F}\bigl(\| \eta \|_{H^{s+\mez}}\bigr)\| \eta \|_{\holdertdm}.
\end{equation*}
By definition \e{aA}, $a+A=-i\beta\cdot\xi$ so $T_a+T_A=-T_{\beta}\cdot\nabla$. It follows that
$$
F_3=\big(T_aT_A-T_\alpha\Delta\big)v-T_{\partial_z A}v.
$$
Set
\begin{equation*}
R_0(z)\defn  T_{a(z)} T_{A(z)}   - T_\alpha\Delta  ,\qquad R_1(z)\defn -T_{\partial_z A}.
\end{equation*}
Since $aA=-\alpha |\xi|^2$, we deduce, using Theorem~\ref{theo:sc0} (see (ii)
applied with~$\rho=1/2$), that for any $\mu\in \xR$,
\begin{equation}\label{eq.3.96bis}
\sup_{z\in [-1,0]}\lA R_0(z)\rA_{H^{\mu+\tdm}\rightarrow H^{\mu}}
\le 
\mathcal{F}\bigl(\| \eta \|_{H^{s+\mez}}\bigr)\| \eta \|_{\holdertdm}.
\end{equation}
On the other hand, 
Proposition~\ref{prop.niSbis} (applied with~$\rho= -1/2$) implies that
\begin{equation}\label{eq.3.96ter}
\sup_{z\in [-1,0]}\lA R_1(z)\rA_{H^{\mu+\tdm}\rightarrow H^{\mu}}
\le 
\mathcal{F}\bigl(\| \eta \|_{H^{s+\mez}}\bigr)\| \eta \|_{\holdertdm}.
\end{equation}
Using these inequalities for $\mu=s-1$, we obtain the desired result \eqref{F3-1}. This completes the proof of Lemma~\ref{L4.5}.
\end{proof}

\paragraph{Step 3 : elliptic estimates} Introduce a 
cutoff function~$\kappa=\kappa(z), z\in [-1,0]$, such that~$\kappa(z)=1$ near~$z=0$ and such that 
$\kappa(z_1)=0$ (recall that~$I_1=[z_1,0]$ for some $z_1\in (-1,0)$). Set 
\begin{equation}\label{nomdeW}
W \defn \kappa (z) (\partial_z - T_{A})v.
\end{equation} 
Now it follows from the paradifferential equation~\eqref{2m2.b} for~$v$ that
\begin{equation*}
\partial_z W  -T_{a} W = F',
\end{equation*}
where
$$
F'=\kappa(z)(F_1+F_2+F_3)+\kappa'(z)(\partial_z - T_{A})v.
$$
Our goal is to prove that 
\begin{equation}\label{estiW}
\lA W\rA_{L^\infty(I_1;H^{s-\mez})}\le \mathcal{F}\bigl( \| \eta \|_{H^{s+\mez}},\lA f\rA_{H^{s}}\bigr)
\left\{1+\| \eta \|_{\holdertdm}+\lA f\rA_{\holder{r}}\right\}.
\end{equation}
We have already proved that
$$
\lA F_1+F_2+F_3\rA_{L^2_z(I_1;H^{s-1})}\le \mathcal{F}\bigl( \| \eta \|_{H^{s+\mez}},\lA f\rA_{H^{s}}\bigr)
\left\{1+\| \eta \|_{\holdertdm}+\lA f\rA_{\holder{r}}\right\}.
$$
We now turn to an estimate for~$(\partial_z-T_A)v$. 
To do that we estimate separately~$\partial_z v$ and~$T_A v$. 
Clearly, by definition of the space~$X^{s-1}$, noting that~$I_1\subset I_0=[-1,0]$, we have
$$
\lA \partial_z v\rA_{L^2(I_1;H^{s-\mez})}\le \lA \partial_z v\rA_{L^2(I_0;H^{s-\mez})}\le
\lA \nabla_{x,z}v\rA_{X^{s-1}(I_0)}.
$$
On the other hand, as in the previous step, 
since 
$\mathcal{M}^1_0(A)\le C\big(\| \eta \|_{H^{s+\mez}}\big)$, 
we have
\begin{align*}
\lA T_{A}v\rA_{L^2(I_1;H^{s-\mez})}&\le \mathcal{F}(\| \eta \|_{H^{s+\mez}}) 
\lA \nabla_x v\rA_{L^2(I_1;H^{s-\mez})}\\
&\le \mathcal{F}(\| \eta \|_{H^{s+\mez}})\lA \nabla_{x,z}v\rA_{X^{s-1}(I_0)}.
\end{align*}
Now recall from \eqref{oubli} that 
$\lA \nabla_{x,z} v\rA_{X^{s-1}(I_0)}\le \mathcal{F}(\| \eta \|_{H^{s+\mez}})\lA f\rA_{H^s}$. 
Therefore, 
$$
\lA (\partial_z-T_A)v \rA_{L^2(I_0;H^{s-\mez})} \le \mathcal{F}(\| \eta \|_{H^{s+\mez}})\lA f\rA_{H^s},
$$
and we end up with
$$
\lA F'\rA_{L^2(I_1;H^{s-1})}\le \mathcal{F}\bigl( \| \eta \|_{H^{s+\mez}},\lA f\rA_{H^{s}}\bigr)
\left\{1+\| \eta \|_{\holdertdm}+\lA f\rA_{\holder{r}}\right\}.
$$
Since $\partial_z W  -T_{a} W = F'$ and $W(x,z_1)=0$ (by definition of the cutoff function~$\kappa$) 
and since~$a\in \Gamma^1_{\eps}$ 
satisfies~$\RE (-a(x,\xi))\ge c \la \xi\ra$, by using Proposition~\ref{prop:max} applied with~$J=I_1$, 
$\rho=\eps$ and~$r=s-1/2$, we have 
\begin{equation*}
\lA W\rA_{X^{s-\mez}(I_1)}\le \mathcal{F}(\| \eta \|_{H^{s+\mez}})
\lA F'\rA_{Y^{s-\mez}(I_1)}.
\end{equation*}
Now, by definition, 
$\lA F'\rA_{Y^{s-\mez}(I_1)}\le \lA F'\rA_{L^2(I_1;H^{s-1})}$, so 
we conclude that~$W$ satisfies the desired estimate~\eqref{estiW}.

\paragraph{Step 4 : paralinearization of the Dirichlet-Neumann.} 
We shall only use the following obvious 
consequence of~\eqref{estiW}: 
$\lA W\arrowvert_{z=0}\rA_{H^{s-\mez}}$ 
is estimated by the right-hand side of ~\eqref{estiW} (we can take the trace on $z=0$ 
since $W$ belongs to $X^{s-\mez}\subset 
C^0_z(H^{s-\mez})$ and not only to 
$L^\infty_z(H^{s-\mez})$, as follows from Proposition~\ref{prop:max}). 
Since~$W\arrowvert_{z=0}=\partial_z v-T_{A}v \arrowvert_{z=0}$, 
we thus have proved that
\begin{equation}\label{prop:total}
\lA \partial_z v-T_{A}v \arrowvert_{z=0} \rA_{H^{s-\mez}}\le 
\mathcal{F}\bigl( \| \eta \|_{H^{s+\mez}},\lA f\rA_{H^{s}}\bigr)
\left\{1+\| \eta \|_{\holdertdm}+\lA f\rA_{\holder{r}}\right\}.
\end{equation}

Now, recall that
\begin{equation*}
G(\eta) f= \zeta_1 \partial_z v  -  \zeta_2 \cdot\partialx v
\big\arrowvert_{z=0}
\end{equation*}
with
$$
\zeta_1 \defn  \frac{1 +|\partialx \rho |^2}{\partial_z \rho},\quad \zeta_2 \defn  \partialx \rho .
$$
As for the coefficients~$\alpha,\beta$ (see Lemma~\ref{L6}), we have
\begin{align}
&\lA \zeta_1-\frac 1 {h}\rA_{L^\infty([-1,0];\holdermez)}+
\lA \zeta_2\rA_{L^\infty([-1,0];\holdermez)}\le \mathcal{F}(\| \eta \|_{H^{s+\mez}}) 
\| \eta \|_{\holdertdm},
\label{zetaC12}\\
&\lA \zeta_1- \frac 1 {h}\rA_{L^\infty([-1,0];H^{s-1/2})}+
\lA \zeta_2\rA_{L^\infty([-1,0];H^{s-1/2})}\le \mathcal{F}(\| \eta \|_{H^{s+\mez}}).\label{zetaHs12}
\end{align}
Write
\begin{align*}
\zeta_1\partial_z v
-\zeta_2 \cdot\partialx v = T_{\zeta_1}  \partial_z v  
- T_{\zeta_2}\partialx v+R',
\end{align*}
with
\be\label{n50RHS}
R'=T_{\partial_z v} \zeta_1 -T_{\nabla v}\cdot \zeta_2 + R(\zeta_1,\partial_z v)- R(\zeta_2,\nabla v).
\ee
Since a paraproduct by an $L^\infty$ function acts on any Sobolev spaces, 
according to Proposition~\ref{p1H} and~\eqref{zetaHs12}, we obtain 
$$
\lA T_{\partial_z v} \zeta_1 -T_{\nabla v}\cdot \zeta_2 \rA_{L^\infty(I_1;H^{s-\mez})}
\le \mathcal{F}\bigl( \| \eta \|_{H^{s+\mez}},\lA f\rA_{H^{s}}\bigr)
\left\{1+\lA f\rA_{\holder{r}}\right\}.
$$
We estimate similarly the last two terms in the right-hand side of \e{n50RHS}, so
$$
\lA R'\rA_{L^\infty(I_1;H^{s-\mez})}\le 
\mathcal{F}\bigl( \| \eta \|_{H^{s+\mez}},\lA f\rA_{H^{s}}\bigr)
\left\{1+\lA f\rA_{\holder{r}}\right\}.
$$
Furthermore, \eqref{prop:total} implies that
\begin{equation*}
T_{\zeta_1}  \partial_z v  
- T_{\zeta_2}\partialx v\big\arrowvert_{z=0}
= T_{\zeta_1}   T_A  v -T_{i\zeta_2\cdot \xi}  v \big\arrowvert_{z=0} +R'',
\end{equation*}
where~$\lA R''\rA_{H^{s-\mez}}$ satisfies 
$$
\lA R''\rA_{H^{s-\mez}}\le \mathcal{F}\bigl( \| \eta \|_{H^{s+\mez}},\lA f\rA_{H^{s}}\bigr)
\left\{1+\| \eta \|_{\holdertdm}+\lA f\rA_{\holder{r}}\right\}.
$$

Thanks to \eqref{esti:quant2} we have
\begin{align*}
\lA 
T_{\zeta_1}   T_A -T_{\zeta_1 A}\rA_{ H^s\rightarrow H^{s-\mez}}
&\les \lA \zeta_1\rA_{L^\infty} \mathcal{M}^1_{1/2}(A) +\lA \zeta_1\rA_{\holdermez} 
\mathcal{M}^1_{0}(A) \\
&\le \mathcal{F}(\| \eta \|_{H^{s+\mez}}) \| \eta \|_{\holdertdm},
\end{align*}
where we used \e{boundsaAbis} and $\mathcal{M}^1_0(A)\le C\big(\| \eta \|_{H^{s+\mez}}\big)$. Therefore,
$$
G(\eta)f=T_{\zeta_1 A}  v -T_{i\zeta_2\cdot \xi}  v \big\arrowvert_{z=0} +R(\eta)f
$$
where 
$$
\lA R(\eta)f \rA_{H^{s-\mez}}
\le \mathcal{F}\bigl( \| \eta \|_{H^{s+\mez}},\lA f\rA_{H^{s}}\bigr)
\left\{1+\| \eta \|_{\holdertdm}+\lA f\rA_{\holder{r}}\right\}.
$$
Now by definition of $A$ (see \e{aA}) one has
$$
\zeta_1 A  v -i\zeta_2\cdot \xi =\sqrt{(1+|\nabla\rho|^2)|\xi|^2-(\nabla\rho\cdot\xi)^2}
$$
so $T_{\zeta_1 A}  v -T_{i\zeta_2\cdot \xi}  v \big\arrowvert_{z=0} =T_{\lambda} f $ since, by definition of $\lambda$, 
$$
\lambda=\sqrt{(1+|\nabla\eta|^2)|\xi|^2-(\nabla\eta\cdot\xi)^2}.
$$ 
This proves 
that~$G(\eta)f=T_{\lambda} f +R(\eta)f$ which concludes the  proof of Theorem~\ref{T3}. 

\chapter{Sobolev estimates}\label{C:3}

In this chapter, we prove sharp {\em a priori } estimates in Sobolev spaces.

\section{Introduction}

We begin by recalling a formulation of the water waves system 
which involves the unknowns
\begin{equation}\label{defi:unknowns}
\zeta = \partialx \eta, \quad 
\B=\partial_y\phi \eval ,  \quad 
V=\nabla_x \phi\eval , \quad \ma=-\partial_y P \eval ,
\end{equation}
where recall that~$\phi$ is the velocity potential,  $P=P(t,x,y)$ is the pressure 
given by
\begin{equation}\label{defi:P}
-P=\partial_t \phi+\mez \la \nabla_{x,y}\phi\ra^2+gy,
\end{equation}
and $\ma$ is the Taylor coefficient.
We consider smooth solutions  
$(\eta,\psi)$ of \eqref{n10} defined on the time interval~$[0,T_0]$ and 
satisfying the following assumptions on that time interval. 
\begin{assu}\label{A:2}
 We consider smooth solutions of the water waves equations such that
\begin{enumerate}[i)]
\item $(\eta,\psi)$ belongs to $C^1([0,T_0]; H^{s_0}(\xR^d)\times H^{s_0}(\xR^d))$ for some 
$s_0$ large enough and $0 <T_0$;
\item there exists $h>0$ such that \eqref{n1} holds for any $t$ in $[0,T_0]$ (this is the assumption 
that there exists a curved strip of width $h$ separating the free surface from the bottom);
\item there exists $c>0$ such that the Taylor coefficient $a(t,x)=-\partial_y P\arrowvert_{y=\eta(t,x)}$ is bounded from below by $c$ 
for any $(t,x)$ in $[0,T_0]\times \xR^d$.
\end{enumerate}
\end{assu}

We begin by recalling two results from \cite{ABZ3}.
\begin{prop}[from \cite{ABZ3}]\label{prop:GBV}
For any ~$s> \mez + \frac d 2$ one has
\be\label{n32}
G(\eta){B}=-\cnx V +\widetilde{\gamma}
\ee
where 
\be\label{n340a}
\|\widetilde{\gamma}\|_{H^{s- \mez}} \leq \mathcal{F}( \|( \eta,  V, B)\|_{H^{s+ \mez} \times H^{\mez}\times H^\mez}).
\ee
\end{prop}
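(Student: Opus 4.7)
The statement is an approximate divergence-free relation: heuristically $G(\eta)B = -\cn_x V$ because $v=\nabla_{x,y}\phi$ is divergence-free in the fluid, and the remainder $\widetilde\gamma$ only arises from the mismatch between the bottom boundary condition satisfied by $\partial_y\phi$ and the one satisfied by the harmonic extension of $B$. Concretely, using $\Delta_{x,y}\phi=0$ and the chain rule for the trace at $y=\eta$,
\begin{equation*}
\cn_x V = (\Delta_x\phi)\arrowvert_{y=\eta} + \nabla_x\eta\cdot(\nabla_x\partial_y\phi)\arrowvert_{y=\eta}
=-\bigl[\partial_y u_0 - \nabla_x\eta\cdot\nabla_x u_0\bigr]\arrowvert_{y=\eta},
\end{equation*}
where $u_0:=\partial_y\phi$ is harmonic in $\Omega$ with trace $u_0\arrowvert_\Sigma=B$. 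Letting $u$ denote the variational harmonic extension of $B$ defining $G(\eta)B$, this rewrites as
\begin{equation*}
\widetilde\gamma := G(\eta)B + \cn_x V = \bigl[\partial_y w - \nabla_x\eta\cdot\nabla_x w\bigr]\arrowvert_{y=\eta}, \qquad w := u - u_0,
\end{equation*}
and $w$ is harmonic in $\Omega$ with $w\arrowvert_\Sigma=0$, the bottom mismatch being encoded on $\Gamma$ only.

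For the Sobolev estimate of $\widetilde\gamma$, I pass to the flattened variables of Lemma~\ref{rho:diffeo} and set $W(x,z):=w(x,\rho(x,z))$. Then $W$ vanishes at $z=0$ and solves a homogeneous elliptic equation of the form \eqref{dnint1} in $\widetilde\Omega$. Applying Proposition~\ref{p1} on $[-1,0]$ with $\sigma=s-\mez$, for any $z_0\in(-1,0)$ I obtain
\begin{equation*}
\|\nabla_{x,z}W\|_{X^{s-\mez}([z_0,0])} \le \mathcal{F}(\|\eta\|_{H^{s+\mez}})\bigl(1+\|\nabla_{x,z}W\|_{X^{-\mez}([-1,0])}\bigr).
\end{equation*}
The expression for $\widetilde\gamma$ in terms of $W$ at $z=0$, combined with $\nabla\rho\arrowvert_{z=0}\in H^{s-\mez}$ (from \eqref{eq.rho1}) and tame product rules, then yields $\|\widetilde\gamma\|_{H^{s-\mez}}$ bounded by the same right-hand side.

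It remains to bound $\|\nabla_{x,z}W\|_{X^{-\mez}([-1,0])}$ by $\mathcal{F}(\|(\eta,V,B)\|_{H^{s+\mez}\times H^\mez\times H^\mez})$. I control $u$ and $u_0=\partial_y\phi$ separately by energy estimates in $\Omega$. The variational characterization of the harmonic extension of $B$ gives $\|\nabla u\|_{L^2(\Omega)}\le \mathcal{F}(\|\eta\|_{H^{s+\mez}})\|B\|_{H^\mez}$. For $u_0$, since $\phi$ is itself the variational harmonic extension of $\psi=\phi\arrowvert_\Sigma$, Proposition~\ref{T:DN-Hs} at $\sigma=\mez$ combined with the algebraic relations~\eqref{defi:BV} (which recover $\nabla_x\psi$ from $V,B,\eta$ at the $H^\mez$ level) bounds $\|\nabla\phi\|_{L^2(\Omega)}$, and hence $\|u_0\|_{L^2}$ in a strip near $\Sigma$, by $\mathcal{F}(\|\eta\|_{H^{s+\mez}})(\|V\|_{H^\mez}+\|B\|_{H^\mez})$. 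Transporting these bounds to $W$ via $\rho$ closes the estimate.

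The sharpness---that only $H^\mez$ norms of $V$ and $B$ appear on the right---is possible precisely because $w$ is harmonic in $\Omega$ with \emph{homogeneous} Dirichlet data on $\Sigma$, hence as smooth as one wishes near $\Sigma$ once an $L^2$-type bound away from $\Sigma$ is in hand. The main delicate point is to arrange the flattening and elliptic estimates so that this interior smoothing is captured cleanly by Proposition~\ref{p1}, while keeping the constants tame in $\|\eta\|_{H^{s+\mez}}$ (through \eqref{esti:abc}) and uniform in the weak, possibly rough, bottom geometry.
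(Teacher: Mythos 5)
Your reduction is right: writing $\cnx V$ via the chain rule at $y=\eta$ and using $\Delta_{x,y}\phi=0$ identifies $\widetilde\gamma$ as the trace
\[
\widetilde\gamma = \bigl[\partial_y w - \nabla_x\eta\cdot\nabla_x w\bigr]\arrowvert_{y=\eta},
\qquad w \defn u - u_0,\ \ u_0\defn\partial_y\phi,
\]
with $w$ harmonic in $\Omega$ and $w\arrowvert_\Sigma=0$, and this correctly captures $\widetilde\gamma$ as a pure bottom effect (indeed $\widetilde\gamma=0$ when $\Gamma=\emptyset$). However, the elliptic bootstrap as written has a genuine gap. Proposition~\ref{p1} requires as a \emph{hypothesis} that $\nabla_{x,z}W\in X^{-\mez}([-1,0])$, which in particular demands $\nabla_{x,z}W\in L^2$ on the whole flattened strip up to and including $z=0$. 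Your energy estimates supply $\lA\nabla u\rA_{L^2(\Omega)}\les\mathcal{F}(\lA\eta\rA_{H^{s+\mez}})\lA B\rA_{H^\mez}$ (variational) and $\lA u_0\rA_{L^2}$ in a strip (since $u_0$ is a component of $\nabla\phi\in L^2$), but \emph{not} $\lA\nabla u_0\rA_{L^2}$ near $\Sigma$: that quantity involves second derivatives of $\phi$, which $\psi\in H^\mez$ (equivalently $V,B\in H^\mez$) does not control up to the free surface. Hence the hypothesis of Proposition~\ref{p1} is not available at the level of regularity the statement permits, and the appeal to it begs the question.

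The missing step is a two-stage bootstrap. First, use interior elliptic regularity (no boundary conditions at all) to bound $w$ and all its derivatives on an interior slab $\{\eta-\tfrac{3h}{4}<y<\eta-\tfrac{h}{4}\}$, starting solely from $L^2$ control of $u$ and $u_0$ there, which only needs $\nabla\phi\in L^2(\Omega)$. Second, in a near-surface strip, propagate that interior control to $z=0$ through the parabolic factorization: apply Proposition~\ref{prop:max} to $\kappa(z)(\partial_z-T_A)W$ with a cutoff $\kappa$ vanishing on an interior slice and equal to $1$ near $z=0$, exactly as in Step~3 of the proof of Theorem~\ref{T3}, so that the inhomogeneity produced by $\kappa'$ is supported where stage one already gives control and the vanishing trace $W\arrowvert_{z=0}=0$ supplies the terminal data. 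Your closing remark acknowledges this is the delicate point, but you do not carry it out; as stated, the direct application of Proposition~\ref{p1} fails on its hypotheses.
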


\begin{prop}[from \cite{ABZ3}]\label{prop:newS}We have 
\begin{align}
(\partial_{t}+V\cdot\partialx)\B&=\ma-g,\label{eq:B}\\
(\partial_t+V\cdot\partialx)V+\ma\zeta&=0,\label{eq:V}\\
(\partial_{t}+V\cdot\partialx)\zeta&=G(\eta)V+ \zeta G(\eta)\B +\gamma,\label{eq:zeta}
\end{align}
where the remainder term~$\gamma=\gamma(\eta,\psi,V)$ satisfies the following estimate : 
if~$s>\mez +\frac{d}{2}$ then 
\begin{equation}\label{p42.1}
\lA \gamma\rA_{H^{s-\mez}}\le \mathcal{F}(\lA (\eta,\psi,V)\rA_{H^{s+\mez}\times H^s\times H^s}).
\end{equation}
\end{prop}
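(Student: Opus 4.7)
The plan is to derive all three equations from a single \emph{surface chain rule} identity: if $f$ is a smooth function on $\Omega$ with trace $\underline{f}(t,x)=f(t,x,\eta(t,x))$ on $\Sigma$, then
\begin{equation*}
(\partial_t + V\cdot\nabla_x)\underline{f} = \bigl((\partial_t + v\cdot\nabla_{x,y})f\bigr)\arrowvert_{y=\eta(t,x)},
\end{equation*}
which follows from the chain rule combined with $v\arrowvert_\Sigma=(V,B)$ and the kinematic boundary condition rewritten as $(\partial_t + V\cdot\nabla_x)\eta = B$. Applying this to $f=\partial_y\phi$ and using the $y$-component of Euler, $(\partial_t + v\cdot\nabla_{x,y})(\partial_y\phi)=-\partial_y P-g$, immediately yields \eqref{eq:B}. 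Applying it componentwise to $f=\partial_k\phi$ with the horizontal Euler equation gives $(\partial_t + V\cdot\nabla_x)V = -(\nabla_x P)\arrowvert_\Sigma$; differentiating the surface condition $P\arrowvert_\Sigma = 0$ with respect to $x$ then produces $(\nabla_x P)\arrowvert_\Sigma = -(\partial_y P)\arrowvert_\Sigma\,\zeta = a\zeta$, which yields \eqref{eq:V}.

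For \eqref{eq:zeta}, I would first differentiate the identity $(\partial_t + V\cdot\nabla_x)\eta=B$ with respect to $x_j$ and commute derivatives to obtain
\begin{equation*}
(\partial_t + V\cdot\nabla_x)\zeta_j = \partial_j B - \sum_k (\partial_j V_k)\zeta_k.
\end{equation*}
Expanding the right-hand side by the chain rule applied to $B=\partial_y\phi\arrowvert_\Sigma$ and $V_k=\partial_k\phi\arrowvert_\Sigma$, then substituting $\partial_y^2\phi = -\Delta_x\phi$ from the harmonicity of $\phi$, rewrites this expression as
\begin{equation*}
\bigl[(\partial_y - \zeta\cdot\nabla_x)(\partial_j\phi)\bigr]\arrowvert_\Sigma + \zeta_j\bigl[(\partial_y - \zeta\cdot\nabla_x)(\partial_y\phi)\bigr]\arrowvert_\Sigma.
\end{equation*}
In the infinite-depth case $\Gamma=\emptyset$, the functions $\partial_j\phi$ and $\partial_y\phi$ are precisely the harmonic extensions of $V_j$ and $B$, so by the very definition of the Dirichlet--Neumann operator these brackets equal $G(\eta)V_j$ and $G(\eta)B$, giving \eqref{eq:zeta} with $\gamma=0$. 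In general, setting $u_j := \partial_j\phi - \widetilde{V_j}$ where $\widetilde{V_j}$ is the harmonic extension of $V_j$ satisfying the Neumann condition on $\Gamma$ (and similarly for $B$), the error $u_j$ is harmonic in $\Omega$, vanishes on $\Sigma$, and carries prescribed data on $\Gamma$; the remainder $\gamma$ is built from the traces of $(\partial_y-\zeta\cdot\nabla_x)u_j$ on $\Sigma$.

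The main obstacle is the quantitative bound \eqref{p42.1} on $\gamma$ in the general bottom setting. Thanks to the strip-separation assumption \eqref{n1}, the correction $u_j$ is smooth on any horizontal layer of width $<h$ immediately below $\Sigma$ by interior elliptic regularity for harmonic functions staying away from the (possibly very rough) bottom. Combining this with the tame elliptic and paradifferential estimates underlying Proposition~\ref{T:DN-Hs} then yields the required control of $\gamma$ by $\mathcal{F}\bigl(\lA(\eta,\psi,V)\rA_{H^{s+\mez}\times H^s\times H^s}\bigr)$. Making this step rigorous for arbitrary bottoms is precisely the elliptic analysis in rough domains developed in \cite{ABZ1,ABZ3}, and it is what allows $\gamma$ to be absorbed as a semilinear remainder in the paradifferential reduction of the water-waves system.
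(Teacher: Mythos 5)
Your derivation is correct and is essentially the standard one (the paper itself imports this proposition from \cite{ABZ3} without reproducing the proof): the surface chain rule $(\partial_t+V\cdot\nabla_x)\underline{f}=((\partial_t+v\cdot\nabla_{x,y})f)\arrowvert_{y=\eta}$ combined with the two components of the Euler equation and with $\partial_x(P\arrowvert_\Sigma)=0$ gives \eqref{eq:B}--\eqref{eq:V}, and differentiating $(\partial_t+V\cdot\nabla)\eta=B$ and regrouping the second derivatives of $\phi$ into the two conormal brackets gives \eqref{eq:zeta}, with $\gamma$ measuring exactly the mismatch between $\partial_j\phi,\partial_y\phi$ and the canonical harmonic extensions of $V_j,B$ subject to the Neumann condition on $\Gamma$. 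The one imprecise point is your justification of the bound on $\gamma$: the correction $u_j$ is \emph{not} smooth ``on a layer immediately below $\Sigma$'' by interior regularity, since such a layer touches $\Sigma$ and the regularity of $u_j$ there is limited by that of $\eta$. The actual mechanism is that $u_j$ solves the homogeneous Dirichlet problem (zero data on $\Sigma$, zero source) in the flattened strip, so elliptic regularity \emph{at the top boundary} upgrades its low global norm to the $X^{s-\mez}$ regularity needed for the conormal trace, while the rough bottom enters only through the variational $H^1$ energy thanks to \eqref{n1} --- which is why the companion estimate \eqref{n340a} for $\widetilde\gamma$ requires only $H^{\mez}$ control of $(V,B)$. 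Since you explicitly defer this step to the elliptic analysis of \cite{ABZ1,ABZ3}, this is a matter of phrasing rather than a gap in the argument.
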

\begin{rema}With $\gamma$ and $\widetilde{\gamma}$ as above, 
there holds $\gamma=-\zeta \widetilde{\gamma}$. 
In particular, it follows from \e{n32} and \e{eq:zeta} that
\begin{equation}\label{n38}
(\partial_{t}+V\cdot\partialx)\zeta
=G(\eta)V-(\cnx V)\zeta.
\end{equation}
Moreover, in the case without bottom ($\Gamma = \emptyset$), 
one can see that $\widetilde{\gamma}=0$ and hence $\gamma=0$. 
\end{rema}

Our goal is to estimate, for $T$ in $(0,T_0]$, the norm
$$
M_s(T)\defn 
\lA (\psi,\eta,B,V)\rA_{C^0([0,T];H^{s+\mez}\times H^{s+\mez}\times H^s\times H^s)},
$$
in terms of the norm of the initial data
$$
M_{s,0}\defn \lA (\psi(0),\eta(0),B(0),V(0))\rA_{H^{s+\mez}\times H^{s+\mez}\times H^s\times H^s}
$$
and in terms of a quantity which involves H\"older norms, that will be later estimated by means of a Strichartz estimate, defined by
$$
Z_r(T)\defn \lA \eta\rA_{L^p([0,T];W^{r+\mez,\infty})}
+\lA (B,V)\rA_{L^p([0,T];W^{r,\infty}\times W^{r,\infty})},
$$
where $p=4$ if $d=1$ and $p=2$ for $d\ge 2$.

Our goal in this chapter is to prove the following result.
\begin{theo}\label{T1b}
Let~$T_0>0$, $d\ge 1$ and consider~$s,r\in ]1,+\infty[$ such that
\begin{equation*}
s>\frac{3}{4}+\frac{d}{2},\qquad s+\frac{1}{4}-\frac{d}{2} > r>1.
\end{equation*}
There exists 
a non-decreasing 
function~$\mathcal{F}\colon \xR^+\rightarrow\xR^+$ such that, 
for all smooth solution 
$(\eta,\psi)$ of \eqref{n10} defined on the time interval~$[0,T_0]$ and satisfying 
Assumption~\ref{A:2} on that time interval, for any $T$ in $[0,T_0]$, there holds
\begin{equation}\label{apriori}
M_s(T)\le \mathcal{F}\bigl(\mathcal{F}(M_{s,0})+T\mathcal{F}\bigl(M_s(T)+Z_r(T)\bigr)\bigr).
\end{equation}
\end{theo}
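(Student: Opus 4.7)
The plan is to revisit the paradifferential symmetrization of the water-waves system performed in~\cite{ABZ3}, derive a scalar evolution equation of the form~\eqref{eqpara} for a good unknown, and close an $L^\infty_t H^s_x$ energy estimate whose constants are \emph{tame}: the highest Sobolev norm $M_s(T)$ enters only through a non-decreasing function $\mathcal{F}$, with the missing half-derivative of regularity carried by H\"older norms of order $r>1$ or $r+\mez$. This is what allows us to work at $s>\tq+\frac{d}{2}$, below the Sobolev embedding threshold for a Lipschitz velocity. First, using Proposition~\ref{prop:newS} together with the formulas~\eqref{defi:BV}, one builds a complex unknown $u$, equivalent in $\mathcal{H}^s$ to $(\eta,\psi,V,B)$ up to a smoothing correction, satisfying
\begin{equation*}
\bigl(\partial_t + \mez(T_V\cdot\nabla+\nabla\cdot T_V) + iT_\gamma\bigr)u=f,\qquad \gamma=\sqrt{a\lambda}.
\end{equation*}
The source $f$ collects the paralinearization defect $G(\eta)\psi-T_\lambda\psi$, the Taylor-coefficient remainder, the Bony paraproduct remainders from the quadratic nonlinearities in~\eqref{n10}, and the symbolic remainders $T_\gamma^2-T_{a\lambda}$ produced by the diagonalization.

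The second step is the tame $H^s$ estimate on $f$. The central ingredient is Theorem~\ref{T3}, which gives
\begin{equation*}
\lA G(\eta)\psi-T_\lambda\psi\rA_{H^{s-\mez}}\le \mathcal{F}(M_s(T))\bigl(1+\|\eta\|_{\holdertdm}+\lA\psi\rA_{\holder{r}}\bigr).
\end{equation*}
Combining this with the paradifferential calculus rules of Appendix~\ref{sec:2}, Proposition~\ref{T:DN-Hs} for the remaining pieces of the Dirichlet--Neumann operator, and the tame control of the Taylor coefficient $a$, one obtains the pointwise-in-time bound
\begin{equation*}
\lA f(t)\rA_{H^s}\le \mathcal{F}(M_s(T))\bigl(1+\|\eta(t)\|_{\holdertdm}+\lA (V,B)(t)\rA_{\holder{r}}\bigr).
\end{equation*}

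Next, $\mez(T_V\cdot\nabla+\nabla\cdot T_V)$ is skew-adjoint and $iT_\gamma$ is skew-adjoint up to an operator of order zero with norm $\mathcal{F}(M_s(T))(1+\|\eta\|_{\holdertdm})$, so the standard $H^s$ energy identity yields
\begin{equation*}
\frac{d}{dt}\lA u\rA_{H^s}^2\le \mathcal{F}(M_s(T))\bigl(1+\lA V\rA_{W^{1,\infty}}+\|\eta\|_{\holdertdm}+\lA(V,B)\rA_{\holder{r}}\bigr)\lA u\rA_{H^s}^2+2\lA f\rA_{H^s}\lA u\rA_{H^s}.
\end{equation*}
With $p=4$ for $d=1$ and $p=2$ for $d\ge 2$, H\"older in time controls the time integral of every H\"older norm above by $T^{1-1/p}Z_r(T)$, which, absorbed into a non-decreasing function of $Z_r(T)$, produces the $T$-factor in~\eqref{apriori}. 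A Gronwall argument, together with the norm equivalence $M_s(T)\le \mathcal{F}(M_{s,0}+\lA u\rA_{C^0([0,T];H^s)})$ supplied by the symmetrization, then closes the estimate.

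The main obstacle is the second step. Each remainder produced by the diagonalization must be estimated in $H^s$ with the top Sobolev norm appearing at most linearly, the excess regularity being absorbed by H\"older norms of order $r>1$ and $r+\mez$; the low-regularity setting of the present paper, with an arbitrary bottom, leaves very little room to lose derivatives. This is precisely the tame structure encoded in Theorem~\ref{T3}, which rests in turn on a careful Littlewood--Paley analysis of the elliptic problem for the velocity potential tracking the $\mathcal{M}^m_\rho$ seminorms. The same philosophy has to be propagated through every symbolic and paraproduct remainder entering $f$, and it is this linearity-in-the-highest-norm verification --- not the energy identity itself --- that is the technical core of the proof.
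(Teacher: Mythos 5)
There is a genuine gap at the final step. You write that the estimate is closed by ``the norm equivalence $M_s(T)\le \mathcal{F}(M_{s,0}+\lA u\rA_{C^0([0,T];H^s)})$ supplied by the symmetrization'', but the symmetrization supplies no such equivalence: the good unknown only gives you $U_s=\lDx{s}V+T_\zeta\lDx{s}B$ and $\theta_s=T_{\sqrt{a/\lambda}}\lDx{s}\nabla\eta$ in $L^2$. From $\theta_s$ you do recover $\eta$ in $H^{s+\mez}$ (since $\sqrt{a/\lambda}$ is elliptic of order $-\mez$), but from $U_s$ you only recover the \emph{combination} $V+T_\zeta B$ in $H^s$; the map $(V,B)\mapsto V+T_\zeta B$ is not invertible, and the transport equations by themselves only give $(V,B)$ in $H^{s-\mez}$. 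Recovering $V$ and $B$ separately in $H^s$ is the hardest part of the proof and requires an extra structural input: the surface incompressibility relation $G(\eta)B=-\cnx V+\widetilde\gamma$ combined with the paralinearization $G(\eta)=T_\lambda+R(\eta)$, so that $\cnx(V+T_\zeta B)=T_{-\lambda+i\zeta\cdot\xi}B+(\text{lower order})$ with $-\lambda+i\zeta\cdot\xi$ elliptic of order $1$, which one then inverts. Because the remainder $R(\eta)$ only gains $\uq$ of a derivative at this regularity, this must be run as a bootstrap ($U\in H^{s-\uq}\Rightarrow B\in H^{s-\uq}\Rightarrow U\in H^s\Rightarrow B\in H^s\Rightarrow V\in H^s$), which is Lemma~\ref{ecBVs} in the paper. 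Without this argument your Gronwall loop does not close, since $M_s(T)$ is not controlled by $\lA u\rA_{C^0 H^s}$.

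A secondary omission: you cite ``the tame control of the Taylor coefficient $a$'' as if it were available off the shelf. At $s<1+\frac{d}{2}$ the tame estimate for $\lA a\rA_{\holder{\mez+\eps}}$ (needed so that $\gamma=\sqrt{a\lambda}\in\Gamma^{1/2}_{1/2}$ with tame seminorms, and so that the symmetrizer $T_{\sqrt{a/\lambda}}$ produces only tame commutators) is itself a substantial new elliptic estimate for the pressure --- Propositions~\ref{prop:ma} and~\ref{p1:H} --- and the paper identifies it as the main new difficulty of this chapter. Your outline of the equation for $u$, the tame bound on $f$ via Theorem~\ref{T3}, and the $L^2$-level energy identity are all consistent with the paper's route; it is the inversion back to $(\eta,\psi,V,B)$ and the Taylor-coefficient analysis that are missing.
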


If~$s>1+d/2$ then one can apply the previous inequality with $r=s-d/2$. Then 
$Z_r(T)\les M_s(T)$ and one deduces from \eqref{apriori} an 
estimate which involves only $M_s(T)$. 
Thus we recover the {\em a priori\/} estimate in Sobolev spaces proved in \cite{ABZ3} for $s>1+d/2$. 
The proof of Theorem~\ref{T1b} follows closely the proof 
of \cite[Prop.\ $4.1$]{ABZ3}. For the reader convenience we shall recall the scheme of the analysis, but 
we shall only prove the points which must be adapted. 
The main new difficulty is to prove sharp H\"older estimates for the Taylor coefficient.

Hereafter,~$\mathcal{F}$ always refers to a non-decreasing function~$\mathcal{F}\colon \xR^+\rightarrow \xR^+$ depending only 
on~$s$,~$r$,~$d$ and~$h,c_0$ (being of course independent of the time~$T$ and 
the unknowns).

\section{Estimates for the Taylor coefficient}\label{S:tameP}
Here we prove several estimates for the Taylor 
coefficient.
\begin{prop}\label{prop:ma}
Let~$d\ge 1$ and consider~$s,r$ such that
\begin{equation*}
s>\frac{3}{4}+\frac{d}{2},\quad r>1.
\end{equation*}
For any $0<\eps <\min(r-1, s- \frac 3 4 - \frac d 2)$, there exists a non-decreasing function~$\mathcal{F}$ such that, for all~$t\in [0,T]$,
\begin{equation}
\lA \ma(t)-g\rA_{H^{s-\mez}}\le 
\mathcal{F}\Bigl( \lA  (\eta, \psi,V,B)(t)\rA_{H^{s+\mez}\times H^{s+\mez}\times 
H^s\times H^s}\Bigr),\label{esti:a1}
\end{equation}
and
\begin{multline}
\lA  \ma(t) \rA_{\holder{\mez+\eps}}+\lA (\partial_t \ma +V\cdot \partialx \ma)(t)\rA_{\holder{\eps}}\\
\le \mathcal{F}\bigl( \lA (\eta,\psi)(t)\rA_{H^{s+\mez}},\lA (V,B)(t)\rA_{H^s}\bigr) 
\left\{1+\lA \eta(t)\rA_{\holder{r+\mez}}+\lA (V,B)(t)\rA_{\holder{r}}\right\}.
\end{multline}
\end{prop}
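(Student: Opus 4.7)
The strategy starts from the intrinsic spatial representation $\ma = g+\Phi(\eta,\psi)$, where $\Phi$ is the explicit nonlinear functional derived in \cite{ABZ3, Bertinoro} from the elliptic boundary-value problem satisfied by the pressure ($-\Deltayx P=|\nabla_{x,y}v|^2$ in $\Omega$, $P|_\Sigma=0$, plus a Neumann-type condition on $\Gamma$). The functional $\Phi$ is built from the Dirichlet--Neumann operator $G(\eta)$ applied to quadratic combinations of the surface quantities $V,B,\nabla\psi$, together with pointwise products depending smoothly on $\nabla\eta$. All estimates then reduce to tame bounds on such building blocks.

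The Sobolev bound \eqref{esti:a1} follows from the tame product rule in Sobolev spaces combined with Proposition~\ref{T:DN-Hs} (which gives $G(\eta)\colon H^\sigma\to H^{\sigma-1}$ under $s>\mez+d/2$); this is the argument of \cite{ABZ3}, now executed at a lower threshold. For the H\"older estimate on $\ma$, the plan is to paralinearize each $G(\eta)f$ appearing in $\Phi$ using Theorem~\ref{T3}, writing $G(\eta)f = T_\lambda f + R(\eta,f)$. The principal paradifferential parts, after reassembling the paraproducts that come from the various products in $\Phi$, will be controlled in $\zygmund{\mez+\eps}$ by paradifferential operator-norm estimates together with the tame control of the symbol $\lambda$ by $\|\eta\|_{\zygmund{r+\mez}}$ and of the involved surface quantities by the H\"older hypotheses on $\eta,V,B$. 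The remainders $R(\eta,f)$ lie in $H^{s-\mez}$ with the tame bound of Theorem~\ref{T3}; after a low--high frequency decomposition of $\Phi$, their contributions embed into $\zygmund{\mez+\eps}$ with the required tame structure precisely under the condition $\eps<s-\tq-d/2$ imposed in the statement. For the material-derivative bound, I apply $\partial_t+V\cdot\nabla$ to $\ma=g+\Phi(\eta,\psi)$ and use \eqref{eq:B}, \eqref{eq:V}, \eqref{n38} together with the Bernoulli equation for $\psi$ to express all time derivatives spatially, then repeat the above scheme one derivative lower.

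The main obstacle is the H\"older estimate itself: the naive Sobolev embedding $H^{s-\mez}\hookrightarrow \zygmund{\mez+\eps}$ is \emph{unavailable} in the regime $s>\tq+d/2$ of interest (it would require $s>1+d/2+\eps$), so one cannot simply interpolate the Sobolev bound for $\ma-g$. Instead, the paradifferential structure of $\Phi$ must be carefully exploited to split terms into contributions directly bounded in H\"older spaces and higher-regularity Sobolev remainders that do embed after suitable paraproduct decomposition. For the material derivative, the additional difficulty is the commutator $[\partial_t+V\cdot\nabla, G(\eta)]$: this is a first-order paradifferential operator whose symbol depends only H\"older-continuously on $V$ and $\nabla\eta$ through the shape-derivative formula for $G(\eta)$, and establishing tame $\zygmund{\eps}$-bounds for it will rely crucially on the low-regularity paradifferential symbolic calculus that underlies Theorem~\ref{T3}.
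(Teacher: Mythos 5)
Your plan diverges from the paper's argument at its core. The paper does \emph{not} write $\ma=g+\Phi(\eta,\psi)$ as a boundary functional and paralinearize Dirichlet--Neumann operators; it goes into the interior. It flattens the free surface, sets $\wp(x,z)=P(x,\rho(x,z))+g\rho(x,z)$, and observes that $\ma-g=-\tfrac{1}{\partial_z\rho}\partial_z\wp|_{z=0}$, where $\wp$ solves an \emph{inhomogeneous} elliptic problem with interior source $F_0=-\alpha|\Lambda^2 v|^2$ and Dirichlet data $g\eta$. This matters: the pressure Poisson equation $-\Delta P=|\nabla_{x,y}v|^2$ has a distributed quadratic source, so $\partial_n P|_\Sigma$ is not $G(\eta)$ applied to any combination of $V,B,\nabla\psi$; one must account for the interior source term, which your description of $\Phi$ omits. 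The Sobolev bound and the H\"older bound are then both extracted from parabolic factorization estimates on the flattened equation (Propositions~\ref{prop:max}, \ref{prop:maxH}), not from Theorem~\ref{T3}.

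More importantly, there is a concrete gap in your H\"older step. Theorem~\ref{T3} places the paralinearization remainder $R(\eta,f)=G(\eta)f-T_\lambda f$ in $H^{s-\mez}$, and in the working regime $\tq+\tfrac d2<s<1+\tfrac d2$ the embedding $H^{s-\mez}\hookrightarrow\zygmund{s-\mez-\frac d2}$ only yields membership in $\zygmund{\uq+\eps'}$ for $\eps'<s-\tq-\tfrac d2$, which is strictly weaker than the required $\zygmund{\mez+\eps}$ (you would need $s>1+\tfrac d2+\eps$). Your assertion that the remainders "embed into $\zygmund{\mez+\eps}$ \ldots precisely under $\eps<s-\tq-\tfrac d2$" is off by a quarter derivative, and you offer no mechanism recovering it. The observation the paper exploits to close this gap --- and which your plan lacks --- is that the interior source $F_0$ is a \emph{product of two} derivatives of the potential, hence lies in $L^2_z H^{s-\tq}_x$ by the Sobolev algebra property (Lemma~\ref{es:F0p}); this gives $\nabla_{x,z}\wp\in X^{s-\mez}$ and, crucially, an $L^\infty$ bound for $\nabla_{x,z}\wp$ depending only on Sobolev norms (see the remark after~\eqref{inwpCrho}: $r-\tq\geq 0$ because $r>1$). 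That is what makes the H\"older estimate tame, with the dependence on $\|\eta\|_{\holdertdm}$ entering only once and linearly through the coefficients of the parabolic equation. A boundary-formula paralinearization would instead produce products of $T_\lambda$-type singular operators with rough coefficients and would not deliver this tame structure without substantial new argument.
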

The estimate of~$\lA \ma - g\rA_{H^{s-\mez}}$ follows directly from the arguments in \cite{ABZ3}. 
The estimate of the~$\holder{\eps}$-norm of 
$\partial_t \ma +V\cdot \nabla \ma~$ is the easiest one. 
The main new difficulty here is to prove a tame estimate for~$\lA  \ma \rA_{\holder{\mez+\eps}}$. 
Indeed, there are several further complications which appear in the analysis in H\"older spaces. 

Hereafter, since the time variable is fixed, we shall skip it. 
To prove the above estimates on~$\ma$, we form an elliptic 
equation for $P$. As in Chapter~\ref{C:2}Ê
we flatten the free surface by 
using the change of variables~$(x,z)\mapsto (x,\rho(x,z))$ (see \eqref{diffeo} and 
Lemma~\ref{rho:diffeo}). Set
$$
v(x,z)=\phi(x,\rho(x,z)),\quad \wp(x,z)=P(x,\rho(x,z))+g\rho(x,z),
$$
and notice that
$$
\ma-g = - \frac 1 { \partial_z \rho} \partial_z  \wp\mid_{z=0}.
$$
The first elementary step is to compute the equation satisfied
by the new unknown as well as the boundary conditions
on~$\{z=0\}$. 
As in \cite{ABZ3}, one computes that
\begin{equation}
\begin{aligned}
&\partial_z^2 \wp +\alpha\Delta \wp + \beta \cdot\partialx\partial_z \wp  - \gamma \partial_z \wp=F_0(x,z)&& \text{for }z<0,\\
&\wp=g\eta && \text{on }z=0,
\end{aligned}
\end{equation}
where~$\alpha,\beta,\gamma$ are as above (see~\eqref{alpha}) and where
\begin{equation}\label{defi:F0Lambda}
F_0=-\alpha \la \Lambda^2 v\ra^2, \quad 
\Lambda=(\Lambda_1,\Lambda_2),\quad 
\Lambda_1=\frac{1}{\partial_z\rho}\partial_z, \quad
\Lambda_2 = \nabla -\frac{\partialx \rho}{\partial_z \rho}\partial_z.
\end{equation}
Our first task is to estimate the source term~$F_0$. 

\begin{lemm}\label{es:F0p}
Let~$d\ge 1$ and consider~$s\in ]1,+\infty[$ such that
$$
s>\frac{3}{4}+\frac{d}{2}.
$$
Then there exists~$z_0 <0$ such that 
\begin{equation*}
\lA F_0\rA_{L^1([z_0, 0];H^{s-\mez})}
+\lA F_0\rA_{L^2([z_0, 0]; \zygmund{s+ \frac 1 4 - \frac d 2})}
\le \mathcal{F}\bigl(\lA (\eta,\psi,V,B)\rA_{H^{s+\mez}\times H^{s+\mez}\times H^s\times H^s}\bigr).
\end{equation*}
\end{lemm}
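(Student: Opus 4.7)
The strategy is to combine an upgraded elliptic estimate for $v$ with tame product rules, exploiting that the trace $\psi \in H^{s+\mez}$ provides one extra half-derivative beyond what was used in \eqref{oubli}, which is essential because $F_0$ is quadratic in the second derivatives of $v$.

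First I would upgrade the elliptic regularity by applying Proposition~\ref{p1} with $\sigma = s-\mez$ (permissible since $\psi \in H^{s+\mez}$), yielding $\nabla_{x,z} v \in X^{s-\mez}([-1,0])$, i.e., $C^0_z H^{s-\mez}_x \cap L^2_z H^{s}_x$, with norm controlled by $\mathcal{F}(\lA(\eta,\psi)\rA_{H^{s+\mez}})$. Since the assumption $s>\tq+d/2$ gives $s-\mez > d/2$, the Sobolev embedding provides, in addition, $\nabla_{x,z} v \in L^\infty([z_0, 0]\times \mathbf R^d)$ for a suitable $z_0\in(-1,0)$; combined with the $L^\infty$ bounds on the coefficients of $\Lambda$ (from Lemma~\ref{rho:diffeo} together with $H^{s+\mez}\hookrightarrow W^{1,\infty}$), one gets $\Lambda v \in L^\infty \cap L^\infty_z H^{s-\mez}_x \cap L^2_z H^{s}_x$. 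For the second-order piece $\Lambda^2 v$, I would use the elliptic equation \eqref{dnint1} to replace $\partial_z^2 v$ by $-\alpha \Delta_x v - \beta\cdot\nabla_x\partial_z v + \gamma\partial_z v$, so that each entry of $\Lambda^2 v$ is a combination of $\nabla_x^2 v$ and $\nabla_x\partial_z v$ with coefficients controlled by \eqref{esti:abc} and Lemma~\ref{L6}. This yields $\Lambda^2 v \in L^2_z H^{s-1}_x \cap L^\infty_z H^{s-\tdm}_x$.

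For the $L^1_z H^{s-\mez}_x$ bound I would use the tame product rule in $H^{s-\mez}$, which is a Banach algebra because $s-\mez>d/2$, writing $F_0 = -\alpha\,\Lambda^2 v \cdot \Lambda^2 v$ and putting one factor of $\Lambda^2 v$ in the $L^\infty$-controlled space (via the structural identity $|\Lambda^2 v|^2 \sim \nabla_{x,z}(\Lambda v):\nabla_{x,z}(\Lambda v)$, which distributes one derivative onto $\Lambda v\in L^\infty$), and the other in $L^2_z H^{s-\mez}_x$. Cauchy--Schwarz in $z$ then closes the bound against $\mathcal{F}(\lA(\eta,\psi,V,B)\rA_{H^{s+\mez}\times H^{s+\mez}\times H^s\times H^s})$. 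For the $L^2_z \zygmund{s+\uq-d/2}$ bound, note that $s+\uq-d/2 > 1$ so $\zygmund{s+\uq-d/2}$ is a Banach algebra with a tame product rule; using the Sobolev--Zygmund embedding $H^\sigma \hookrightarrow \zygmund{\sigma-d/2}$ and complex interpolation between $L^\infty_z H^{s-\mez}_x$ and $L^2_z H^{s}_x$ (yielding $L^4_z H^{s-\uq}_x\hookrightarrow L^4_z \zygmund{s-\uq-d/2}$ for $\Lambda v$), I would bound each factor in an intermediate mixed norm and conclude $\|F_0\|_{L^2 \zygmund{s+\uq-d/2}} \le \mathcal{F}(\text{data})$.

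The main obstacle is that $s>\tq+d/2$ is precisely the threshold at which the direct Sobolev embedding for second derivatives of $v$ just fails to land in $L^\infty_x$. One therefore cannot bound $\||\Lambda^2 v|^2\|_{H^{s-\mez}}$ naively by $\|\Lambda^2 v\|_{L^\infty}\|\Lambda^2 v\|_{H^{s-\mez}}$; the proof must instead route one derivative through the $L^\infty$-bounded $\Lambda v$ and exchange spatial regularity for $z$-integrability via Cauchy--Schwarz and interpolation. Maintaining the tame character of the estimate (a constant depending only on the Sobolev norms of the data, with no Hölder norm of $\eta$ or $f$ on the right-hand side, in contrast with \eqref{Hstrick0}) is the most delicate bookkeeping step, and is what motivates the specific exponents $L^1 H^{s-\mez}$ and $L^2 \zygmund{s+\uq-d/2}$ in the conclusion.
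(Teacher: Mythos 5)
Your elliptic step is where the argument breaks down, and the gap is exactly one half derivative on $\Lambda_j\Lambda_k v$. The paper's proof relies on the estimate (quoted from ABZ3, Lemma~4.7)
\[
\lA \Lambda_j\Lambda_k v\rA_{C^0([z_0,0];H^{s-1})}+\lA \Lambda_j\Lambda_k v\rA_{L^2([z_0,0];H^{s-\mez})}\le \mathcal{F}(\text{data}),
\]
i.e.\ $\Lambda_j\Lambda_k v\in X^{s-1}$. The route you propose — take $\psi\in H^{s+\mez}$, apply Proposition~\ref{p1} with $\sigma=s-\mez$ to get $\nabla_{x,z}v\in X^{s-\mez}$, then use the interior equation \eqref{dnint1} to express $\partial_z^2v$ through $\nabla_x\nabla_{x,z}v$ — only delivers $\Lambda_j\Lambda_k v\in C^0_zH^{s-\tdm}\cap L^2_zH^{s-1}$, which is $X^{s-\tdm}$, a full half-derivative weaker. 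The extra half derivative in the paper's \eqref{D2var} does not come from the trace $\psi$; it comes from the hypotheses $V,B\in H^s$. Since $\Lambda_1 v$ and $\Lambda_2 v$ are pullbacks of $\partial_y\phi$ and $\nabla_x\phi$ (which are themselves harmonic in $\Omega$) and have traces $B$, $V$ on $\{z=0\}$, one runs elliptic regularity a second time with these $H^s$ Dirichlet data and reads off $\nabla_{x,z}(\Lambda_j v)\in X^{s-1}$, hence $\Lambda_k\Lambda_j v\in X^{s-1}$. This is precisely why the constant in the conclusion genuinely involves $\lA(V,B)\rA_{H^s\times H^s}$ and not merely $\lA(\eta,\psi)\rA_{H^{s+\mez}}$; your proof never uses $V,B$, which is a warning sign.

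With your weaker control the product estimates do not close. For the $L^1_zH^{s-\mez}$ piece: from $\Lambda^2v\in L^2_zH^{s-1}$ and Cauchy–Schwarz in $z$ you need $H^{s-1}\cdot H^{s-1}\subset H^{s-\mez}$, but any product bound only reaches $H^{s-1}$ since $s-\mez>s-1$, so this fails independently of how favourable $d$ is. The ``structural identity'' you invoke — that $\la\Lambda^2v\ra^2$ can be written so as to push one derivative onto $\Lambda v\in L^\infty$ — is not an actual estimate: $\nabla_{x,z}(\Lambda v)\colon\nabla_{x,z}(\Lambda v)$ remains quadratic in $\nabla_{x,z}(\Lambda v)$, and $\Lambda v\in L^\infty$ gives no $L^\infty$ bound on its gradient. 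Likewise the $L^2_z\zygmund{\cdot}$ piece: with $\Lambda^2v\in C^0_zH^{s-\tdm}\cap L^2_zH^{s-1}$, time-interpolation gives only $L^4_zH^{s-\frac54}$, and $H^{s-\frac54}$ need not even be an algebra under $s>\tq+\frac d2$, whereas the paper's $\Lambda^2v\in X^{s-1}$ interpolates to $L^4_zH^{s-\tq}$, which \emph{is} an algebra in this range, and then embeds into $\zygmund{s-\tq-\frac d2}$. In fact the paper's proof produces $F_0\in L^2_z\zygmund{s-\tq-\frac d2}$, not $L^2_z\zygmund{s+\uq-\frac d2}$: the exponent in the displayed statement appears to be a misprint (it is also the weaker $C^{s-\tq-\frac d2}_*$ that is invoked in the proof of Lemma~\ref{lem.5.10}), so you should not try to reach $s+\uq-\frac d2$. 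In summary: the missing idea is the second application of elliptic regularity to $\Lambda_j v$ with boundary data $B,V$; without it, every downstream product estimate is half a derivative short.
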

\begin{proof}
The first part of this result follows from the proof of \cite[Lemma~$4.7$]{ABZ3} (although 
this lemma is proved under the assumption that $s>1+d/2$, its proof 
shows that the results \e{D2varbis}--\e{D2var} we quote below 
hold for any $s>1/2+d/2$). We proved in \cite{ABZ3} that
\be\label{D2varbis}
\lA F_0\rA_{L^1([z_0,0];H^{s-\mez})}
\le \mathcal{F}\bigl(\lA (\eta,\psi,V,B)\rA_{H^{s+\mez}\times H^{s+\mez}\times H^s\times H^s}\bigr),
\ee
together with
\begin{multline}\label{D2var}
\lA \Lambda_j \Lambda_kv\rA_{C^0([z_0, 0]; H^{s-1})}
+\lA \Lambda_j \Lambda_kv\rA_{L^2([z_0,0];H^{s-\mez})}\\
\le \mathcal{F}\bigl(\lA (\eta,\psi,V,B)\rA_{H^{s+\mez}\times H^{s+\mez}\times H^s\times H^s}\bigr).
\end{multline}
By interpolation, \eqref{D2var} also implies that
$$
\lA \Lambda_j \Lambda_kv\rA_{L^4([z_0,0];H^{s-\frac{3}{4}})}
\le \mathcal{F}\bigl(\lA (\eta,\psi,V,B)\rA_{H^{s+\mez}\times H^{s+\mez}\times H^s\times H^s}\bigr).
$$
Since~$s>3/4+d/2$ by assumption, the Sobolev space~$H^{s-\frac{3}{4}}(\xR^d)$ 
is an algebra and hence, according to~\e{esti:abc},
$$
\lA \alpha \la \Lambda_j \Lambda_kv\ra^2 \rA_{L^2([z_0,0];H^{s-\frac{3}{4}})}\les 
\big(1+\big\| \alpha-h^2\big\|_{L^\infty([z_0,0];H^{s-\frac{3}{4}})}\big)
\lA  \Lambda_j \Lambda_kv \rA_{L^4([z_0,0];H^{s-\frac{3}{4}})}^2.
$$
The Sobolev embedding 
$H^{s-\tq}\subset C^{s- \tq - \frac d 2}_*$ then yields 
\begin{equation}\label{esti:F0Sob}
\| F_0 \|_{L^2([z_0, 0]; C^{s- \frac 3 4 - \frac d 2}_*)}
\leq \mathcal{F}\bigl(\lA (\eta,\psi,V,B)\rA_{H^{s+\mez}\times H^{s+\mez}\times H^s\times H^s}\bigr).
\end{equation}
This completes the proof.
\end{proof}

The proof of the fact that 
~$\lA \ma-g\rA_{H^{s-\mez}}$ is bounded by a constant 
depending only on 
$\lA(\eta, \psi)\rA_{H^{s+\mez}}$ and~$\lA (V,B)\rA_{H^s}$ then follows from elliptic regularity 
which implies that
\begin{equation}\label{inwpCSob}
\lA \nabla_{x,z} \wp\rA_{X^{s-\mez}([z_0,0])}
\le \mathcal{F}\bigl(\lA (\eta,\psi,V,B)\rA_{H^{s+\mez}\times H^{s+\mez}\times H^s\times H^s}\bigr),
\end{equation} 
as can be proved using the arguments given above the statement of Proposition~$4.8$ in 
\cite{ABZ3}. 
We shall now prove the estimate of~$\lA \ma\rA_{\holder{1/2+\epsilon}}$. 
This estimate will follow directly from 
the following result.
\begin{prop}\label{p1:H}
Let~$d\ge 1$ and consider $(s,r,r')\in \xR^3$ such that
$$
s>\tq+\frac{d}{2},\quad s+\uq-\frac{d}{2} > r>r'>1.
$$
Then there exists~$z_0<0$ such that 
\begin{equation}\label{est:pH32}
\lA \nabla_{x,z} \wp\rA_{C^0([z_0,0];\holder{r'-\mez})}
\le \mathcal{F}\bigl(\lA (\eta,\psi,V,B)\rA_{H^{s+\mez}\times H^{s+\mez}\times H^s\times H^s}\bigr)
\left\{1+\| \eta \|_{\holder{r+\mez}}\right\}
\end{equation}
for some non-decreasing function~$\mathcal{F}$ depending only on~$s,r,r'$.
\end{prop}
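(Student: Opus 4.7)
The strategy mirrors that of the proof of Proposition~\ref{p1H} and of Steps~3--4 of the proof of Theorem~\ref{T3}, with adjustments to handle the nonzero source term $F_0$ and the nontrivial boundary data $\wp\arrowvert_{z=0}=g\eta$. The starting point is the elliptic problem
\begin{equation*}
\partial_z^2 \wp+\alpha\Delta \wp+\beta\cdot\nabla_x\partial_z \wp-\gamma\partial_z \wp=F_0,\qquad \wp\arrowvert_{z=0}=g\eta,
\end{equation*}
combined with the Sobolev bound \eqref{inwpCSob} and the tame estimate for $F_0$ given by Lemma~\ref{es:F0p}. First I paralinearize the equation exactly as in \eqref{2m2.b}, writing
\begin{equation*}
(\partial_z - T_a)(\partial_z - T_A)\wp = F_0+F_2+F_3,
\end{equation*}
where the symbols $a,A$ are those of \eqref{aA}. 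The paralinearization remainders $F_2,F_3$ are estimated in $L^2([z_0,0];H^{s-1})$ by the same arguments as in Step~2 of the proof of Theorem~\ref{T3}, with a linear dependence on $\|\eta\|_{\holdertdm}$ via \eqref{boundsaAbis} and Lemma~\ref{L6}. Since $r'<r$, it is enough to produce a bound on $\nabla_{x,z}\wp$ in $\holder{r'-\mez}$.

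Next, I introduce a cutoff $\kappa=\kappa(z)$ that vanishes near $z=z_0$ and equals $1$ near $z=0$, and set $W\defn \kappa(z)(\partial_z - T_A)\wp$. Because $\RE(-a)\geq c\langle\xi\rangle$, $W$ satisfies a \emph{forward} parabolic paradifferential equation
\begin{equation*}
\partial_z W - T_a W = F',\qquad W\arrowvert_{z=z_0}=0,
\end{equation*}
where $F'$ is made up of $\kappa(F_0+F_2+F_3)$ plus the commutator term $\kappa'(\partial_z-T_A)\wp$ already controlled in Sobolev spaces by \eqref{inwpCSob}. I then apply Proposition~\ref{prop:maxH} with $q=2$, a small $\delta>0$, and a sufficiently low regularity index $r_0$ (the low-regularity comparison term $\|W\|_{L^\infty(J;\zygmund{r_0})}$ being handled by Sobolev embedding from \eqref{inwpCSob}). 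The contribution of $\kappa F_0$ is absorbed thanks to the second estimate of Lemma~\ref{es:F0p}: the gain of $1/4$ derivative there, combined with the hypothesis $r<s+\uq-\frac{d}{2}$, leaves strictly more room than what the required $L^2_z(\zygmund{r'-\tdm+\mez+\delta})$ norm demands.

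To pass from the bound on $W$ to Hölder control of $\nabla_{x,z}\wp$ itself, I observe that near $z=0$ the relation $(\partial_z-T_A)\wp = W$ is a \emph{backward} parabolic evolution equation for $\wp$ with boundary data $g\eta\in \holder{r+\mez}$. Reversing the $z$-direction and applying Proposition~\ref{prop:maxH} once more (this time with regularity index $r'+\mez$) produces a tame estimate
\begin{equation*}
\lA \wp\rA_{C^0([z_0,0];\holder{r'+\mez})}\le \mathcal{F}(\cdots)\bigl\{1+\| \eta \|_{\holder{r+\mez}}\bigr\},
\end{equation*}
from which $\nabla_x\wp$ inherits an estimate in $C^0(\holder{r'-\mez})$, and from $\partial_z\wp=T_A\wp+W$ (modulo the cutoff) combined with the order-one operator-norm bound on $T_A$ in Hölder spaces, one deduces the same bound for $\partial_z\wp$. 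The linear factor $1+\|\eta\|_{\holder{r+\mez}}$ in \eqref{est:pH32} enters precisely through the boundary datum $g\eta$ at this last step.

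The main obstacle is the \emph{tame} character of the final estimate. Two points need to be watched. First, the symbolic calculus remainders $F_2,F_3$ and the error coming from comparing $T_{ep}-T_e T_p$ in Proposition~\ref{prop:maxH} must depend linearly on $\|\eta\|_{\holdertdm}$, which is ensured by the sharp bounds of type \eqref{boundsaAbis} and Lemma~\ref{L6}. Second, the quadratic source $F_0=-\alpha|\Lambda^2 v|^2$ must be controlled without introducing a nonlinear dependence on $\|\eta\|_{\holder{r+\mez}}$; this is exactly the content of Lemma~\ref{es:F0p}, and the index condition $s>\tq+\frac{d}{2}$ together with $r<s+\uq-\frac{d}{2}$ is calibrated precisely so that the Hölder regularity gained by Lemma~\ref{es:F0p} exceeds the regularity lost through the parabolic estimate for $W$.
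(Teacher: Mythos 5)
Your overall scheme is the right one and matches the paper's: paralinearize the $\wp$-equation, introduce $W=\kappa(\partial_z-T_A)\wp$, apply Proposition~\ref{prop:maxH} once to the forward parabolic equation for $W$ and once to the backward parabolic equation for $\wp$ using the boundary datum $g\eta$. However, there is a genuine gap in the way you feed the paralinearization remainders into Proposition~\ref{prop:maxH}. Proposition~\ref{prop:maxH} is a Zygmund-space parabolic estimate; with $q=2$ and target regularity $r'-\mez+\delta'$ for $W$ it requires the source terms to lie in $L^2\bigl(J;\zygmund{r'-1+\epsilon}\bigr)$ for some $\epsilon>0$. You propose to bound $F_2,F_3$ in $L^2([z_0,0];H^{s-1})$ via the Sobolev arguments of Lemma~\ref{L4.5}. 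But the Sobolev embedding only gives $H^{s-1}\subset\zygmund{s-1-\frac{d}{2}}$, and in the regime that actually matters here, $s<1+\frac{d}{2}$, the exponent $s-1-\frac{d}{2}$ is \emph{negative}, while $r'-1+\epsilon>0$. So your Sobolev bound on $F_2$ (and on $F_1=\gamma\partial_z\wp$, which you omit from the list entirely) is strictly weaker than what Proposition~\ref{prop:maxH} requires, and the argument as written does not close.

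The paper's Lemma~\ref{lem.5.10} resolves precisely this point: it estimates each $F_i$ directly in $L^2(\zygmund{r'-1+\epsilon})$, not by Sobolev embedding, using the Zygmund-space paraproduct rules \eqref{niZ} and \eqref{Bony2} together with \eqref{eq.ep3} for the coefficients $\alpha,\beta,\gamma$ in $L^2(\zygmund{r})$. The enabling observation — the one that makes the $\wp$-case different from the $v$-case of Theorem~\ref{T3} — is that \eqref{inwpCrho} gives an $L^\infty$ bound for $\nabla_{x,z}\wp$ (in $\holder{r-\tq}$, with $r-\tq\geq 1/4>0$) \emph{depending only on Sobolev data}, not on any Hölder norm of $\eta$. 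This is what allows the factors $\lA\Delta\wp\rA_{C^{-1}_*}\les\lA\nabla\wp\rA_{L^\infty}$ (and similar for $\partial_z\wp$) to be absorbed into $\mathcal{F}(\cdots)$, leaving the Hölder dependence linear through $\lA\alpha,\beta,\gamma\rA_{\zygmund{r}}\les\mathcal{F}(\cdots)(1+\lA\eta\rA_{\holdertdm})$. Your proof should be rewired to estimate the source terms in $L^2$ of a positive-exponent Zygmund space rather than in $L^2(H^{s-1})$, and to record explicitly the $F_1$ and $F_4$ (cutoff commutator) contributions that you currently fold away.
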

\begin{proof}
It follows from \eqref{inwpCSob} and 
the Sobolev embedding $H^{s-\mez}(\xR^d)\subset \holder{r-\tq}(\xR^d)$ that 
\begin{equation}\label{inwpCrho}
\lA \nabla_{x,z} \wp\rA_{L^\infty([z_0,0];\holder{r-\tq})}
\le
\mathcal{F}\bigl(\lA (\eta,\psi,V,B)\rA_{H^{s+\mez}\times H^{s+\mez}\times H^s\times H^s}\bigr).
\end{equation}
The key point is that, since $r-3/4\ge 0$, we now have an $L^\infty$-estimate for~$\nabla_{x,z}\wp$ 
which does not depend on the h\"older norms, which are the highest order norms for $s<1+d/2$ 
(compare with Proposition~\ref{p1H}). 

To prove \eqref{est:pH32}, let us revisit the proof of Theorem~\ref{T3}. 
With the notations of Chapter~\ref{C:2} (see~\eqref{nomdeW}), 
$W=\kappa(z)(\partial_z - T_A)  \wp$ 
satisfies a parabolic evolution equation of the form
\be\label{n321}
\partial_z W -T_a W =F_0+ F_1+ F_2+ F_3+F_4,
\ee
where the symbols~$a$ and~$A$ are as defined in \e{aA}, 
$F_0$ is given by~\eqref{defi:F0Lambda} and
\begin{align*}
F_1&=\gamma \partial_z \wp,\\
F_2&=-\bigl( T_{\Delta \wp}\alpha +R(\alpha,\Delta \wp)
+T_{\partialx\partial_z \wp}\cdot \beta+R(\beta,\partialx\partial_z \wp)\bigr),\\
F_3&= (T_a T_A -T_\alpha \Delta)\wp
-T_{\partial_z A} \wp,\\
F_4&=\kappa'(z)(\partial_z - T_{A})\wp.
\end{align*} 
Since $(\partial_z - T_A)  \wp=W$ for $z$ small enough (by definition of $W$), 
in light of \eqref{inwpCrho} and Proposition~\ref{prop:maxH} (applied with $r_0=1/4$, $q=+\infty$), 
one can reduce the proof of \eqref{est:pH32} to proving that, for some $\delta>0$, 
the $L^\infty([z_0,0];\zygmund{r'-\mez+\delta})$ norm of $W$ 
is bounded by the right-hand side of~\eqref{est:pH32}. 
Again, since $W\arrowvert_{z=z_1}=0$, 
by using Proposition~\ref{prop:maxH} (with $q=2$), the former estimate for $W$ will be deduced 
from the equation~\e{n321} and the following lemma. 

\begin{lemm} \label{lem.5.10}
There exists~$z_0<0$ and $\epsilon>0$ such that, for $i\in\{0,\ldots,4\}$,
$$
\| F_i\|_{L^2([z_0, 0]; \zygmund{r'-1+\epsilon})}
\le \mathcal{F}\bigl(\lA (\eta,\psi,V,B)\rA_{H^{s+\mez}\times H^{s+\mez}\times H^s\times H^s}\bigr)
\left\{1+\| \eta \|_{\holdertdm}\right\}.
$$
\end{lemm}

To prove Lemma~\ref{lem.5.10} 
we begin by recording the following easy refinement of previous bounds 
on the coefficients~$\alpha,\beta,\gamma$. We 
shall use the following variant of Lemma~\ref{L6}:
\begin{multline}\label{eq.ep3}
\lA \alpha\rA_{L^2([z_0,0];C^{r}_{*})}+
\lA \beta\rA_{L^2([z_0,0];C^{r}_{*})}+\lA \gamma\rA_{L^2([z_0,0];C^{r-1}_{*})}\\
\le 
\mathcal{F}(\| \eta \|_{H^{s+\mez}}) (\| \eta \|_{\zygmund{r+\mez}}+1).
\end{multline}
As in the proof of Lemma~\ref{L6}, 
such estimates follow from the definition of~$\rho$ (by means of the Poisson kernel), 
the Sobolev embedding~$H^{s+\mez}\subset W^{1,\infty}$ and 
tame estimates in H\"older spaces~\eqref{esti:F(u)bis}.
We are now ready to conclude the proof of Lemma~\ref{lem.5.10}.

\noindent {\em Estimate of }$F_0$. 
Since $C^{s-3/4-d/2}_*\subset C^{r-1}_*$, \eqref{esti:F0Sob} implies that
$$
\| F_0\|_{L^2([z_0, 0]; C^{r-1}_*)}
\le \mathcal{F}\bigl(\lA (\eta,\psi,V,B)\rA_{H^{s+\mez}\times H^{s+\mez}\times H^s\times H^s}\bigr).
$$

\noindent {\em Estimate of }$F_1$. 
Using \e{prZ1}, the term~$F_1=\gamma \partial_z \wp$ 
is estimated by 
\begin{equation*}
\lA F_1\rA_{L^2_z([z_0,0];C^{r-1}_*)}
\le K \lA \partial_z \wp\rA_{L^\infty_z([z_0,0];C^{r-1}_*)}\lA \gamma \rA_{L^2_z([z_0,0];C^{r-1}_*)}.
\end{equation*}
The desired estimate then follows from~\eqref{inwpCrho} and~\eqref{eq.ep3}.

\noindent {\em Estimate of }$F_2$. 
According to~\eqref{niZ} with~$m=1, s=r$, 
we obtain
\begin{align*}
\lA T_{\Delta \wp(z)}\alpha(z)\rA_{C^{r-1}_*}&\les 
\lA \Delta \wp(z)\rA_{C^{-1}_*}\lA \alpha(z)\rA_{\zygmund{r}},\\
\lA T_{\partialx\partial_z \wp(z)}\cdot \beta(z)\rA_{C^{r-1}_*}
&\les \lA \partialx\partial_z \wp(z)\rA_{C^{-1}_*}\lA \beta(z)\rA_{\zygmund{r}}.
\end{align*}
On the other hand, since~$r>1$ we can apply~\eqref{Bony2} to obtain
\begin{align*}
\lA R(\alpha,\Delta \wp)(z)\rA_{C^{r-1}_*}&
\les \lA \Delta\wp(z)\rA_{C^{-1}_*}\lA \alpha(z)\rA_{\zygmund{r}}\les 
\lA \partialx\wp(z)\rA_{L^{\infty}}\lA \alpha(z)\rA_{\zygmund{r}},\\
\lA R(\beta,\partialx\partial_z \wp)(z)\rA_{C^{r-1}_*}
&\les \lA \partialx\partial_z \wp(z)\rA_{C^{-1}_*}\lA\beta(z)\rA_{\zygmund{r}}
\les \lA \partial_z \wp(z)\rA_{L^{\infty}}\lA\beta(z)\rA_{\zygmund{r}}
\end{align*}
By using~\eqref{inwpCrho} and~\eqref{eq.ep3}, we conclude the proof of the claim in Lemma~\ref{lem.5.10} for~$i=2$.

\noindent {\em Estimate of }$F_3$. 
Using~\eqref{eq.3.96bis}--\e{eq.3.96ter}  with~$\mu=s-1/2$ we find
$$
\lA F_3(z)\rA_{C^{r-1}_*}\les \lA F_3(z)\rA_{H^{s-\mez}}\le 
\mathcal{F}\bigl(\| \eta \|_{H^{s+\mez}}\bigr)\| \eta \|_{\holdertdm} \lA \nabla_{x,z} \wp(z)\rA_{H^{s}},
$$
and hence
$$
\lA F_3\rA_{L^2([z_0,0];C^{r-1}_*)}
\le 
\mathcal{F}\bigl(\| \eta \|_{H^{s+\mez}}\bigr)\| \eta \|_{\holdertdm} \lA \nabla_{x,z} \wp\rA_{X^{s-\mez}([z_0,0])},
$$
by definition of $X^{s-1/2}([z_0,0])$. The desired estimate follows from~\eqref{inwpCSob}. 

\noindent {\em Estimate of }$F_4$. This follows from \eqref{inwpCrho} and \e{esti:quant1}.  

This completes the proof of Lemma~\ref{lem.5.10} and hence the proof of Proposition~\ref{p1:H}.
\end{proof}

\section{Sobolev estimates}

The proof of Sobolev estimates uses in an essential way the introduction of a new unknown 
(following Alinhac, see~\cite{AM,ABZ1,Alipara,Ali}) 
which allows us to circumvent the classical issue that there is a loss of $1/2$ derivative 
when one works with the Craig-Sulem-Zakharov system. 
By working with the unknowns $(\eta,V,B)$, the introduction of this good unknown 
amounts to work with $U=V+T_\zeta B$ where recall that $\zeta=\nabla \eta$ 
(the~$i$th component ($i=1,\ldots,d$) 
of this vector valued unknown is 
$U_i=V_i+T_{\partial_i\eta}B$). 

To prove Sobolev estimates, it is convenient to work with
\begin{equation}\label{defiUszetas}
\begin{aligned}
&U_s \defn \lDx{s} V+T_\zeta \lDx{s}\B,\\[0.5ex]
&\zeta_s\defn \lDx{s}\zeta.
\end{aligned}
\end{equation}
Now we have the following result which complements \cite[Prop.\ 4.8]{ABZ3}.

\begin{prop}\label{prop:csystem2}
There holds
\begin{align}
&(\partial_t+T_V\cdot\partialx)U_s+T_\ma\zeta_s=f_1,\label{premiere}\\
&(\partial_{t}+T_V\cdot\partialx)\zeta_s=T_\lambda U_s +f_2,\label{seconde}
\end{align}
where recall that~$\lambda$ is the symbol
$$
\lambda(t;x,\xi)\defn\sqrt{(1+\la\partialx\eta(t,x)\ra^2)\la\xi\ra^2-(\partialx\eta(t,x)\cdot\xi)^2},
$$
and where, for each time~$t\in [0,T]$, 
\begin{multline}\label{p49:estf}
\lA (f_1(t),f_2(t))\rA_{L^2\times H^{-\mez}}\\
\le \mathcal{F}\bigl( \lA (\eta,\psi)(t)\rA_{H^{s+\mez}},\lA (V,B)(t)\rA_{H^s}\bigr) 
\left\{1+\lA \eta(t)\rA_{\holder{r+\mez}}+\lA (V,B)(t)\rA_{\holder{r}}\right\}.
\end{multline}
\end{prop}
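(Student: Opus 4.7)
The argument follows the scheme of Proposition~4.8 in \cite{ABZ3}, but every residue must now be estimated \emph{tamely}. The starting ingredients are the equations of Proposition~\ref{prop:newS}, the tame paralinearization of the Dirichlet--Neumann operator (Theorem~\ref{T3}), the tame Hölder bound on the Taylor coefficient (Proposition~\ref{prop:ma}), together with Bony's paraproduct decomposition and paradifferential symbolic calculus. The Alinhac good unknown $U_s=\lDx{s}V+T_\zeta\lDx{s}B$ is specifically designed so that the term $\lDx{s}T_\zeta \ma$ appearing in the $V$-equation — which cannot be bounded in $L^2$ from $\ma\in H^{s-\mez}$ alone — is cancelled, up to a tame commutator, by a contribution coming from the $B$-equation via the Taylor identity $(\partial_t+V\cdot\nabla)B = \ma-g$.

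\textbf{Derivation of \eqref{premiere}.} Apply $\lDx{s}$ to \eqref{eq:V} and add $T_\zeta\lDx{s}$ applied to \eqref{eq:B}; since $g$ is constant this yields
\begin{equation*}
(\partial_t+T_V\cdot\nabla)U_s+\lDx{s}(\ma\zeta)-T_\zeta\lDx{s}\ma = \rho_1,
\end{equation*}
where $\rho_1$ gathers the commutators $[\lDx{s},V\cdot\nabla]V$ and $T_\zeta[\lDx{s},V\cdot\nabla]B$, the Bony residues arising when $V\cdot\nabla$ is replaced by $T_V\cdot\nabla$, and the term $T_{(\partial_t+V\cdot\nabla)\zeta}\lDx{s}B$ (estimated via \eqref{n38} and Theorem~\ref{T3}). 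Decomposing $\lDx{s}(\ma\zeta) = \lDx{s}T_\ma\zeta+\lDx{s}T_\zeta \ma+\lDx{s}R(\ma,\zeta)$ and using $\lDx{s}T_\ma\zeta = T_\ma\zeta_s+[\lDx{s},T_\ma]\zeta$, the dangerous term $-\lDx{s}T_\zeta \ma$ combines with $+T_\zeta\lDx{s}\ma$ into the single commutator $-[\lDx{s},T_\zeta]\ma$. This is the good-unknown cancellation. What remains is the equation $(\partial_t+T_V\cdot\nabla)U_s + T_\ma\zeta_s = f_1$, where $f_1$ consists of paradifferential commutators, paraproduct residues, and lower-order contributions, all controllable in $L^2$ by combining paradifferential calculus, Theorem~\ref{T3} and Proposition~\ref{prop:ma}.

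\textbf{Derivation of \eqref{seconde}.} Start from \eqref{eq:zeta}: $(\partial_t+V\cdot\nabla)\zeta = G(\eta)V+\zeta G(\eta)B+\gamma$. Paralinearize $G(\eta)V = T_\lambda V + R_1$ and $G(\eta)B = T_\lambda B + R_2$ via Theorem~\ref{T3}; write $\zeta\,T_\lambda B = T_\zeta T_\lambda B + T_{T_\lambda B}\zeta + R(\zeta,T_\lambda B)$ and use symbolic calculus to recast $T_\zeta T_\lambda = T_\lambda T_\zeta$ modulo an order-$0$ operator. Apply $\lDx{s}$ and commute past $T_\lambda$ and $T_\zeta$: the leading-order contribution is $T_\lambda\lDx{s}V + T_\lambda T_\zeta \lDx{s}B = T_\lambda U_s$, while the remainders $f_2$ collect the commutators $[\lDx{s},T_\lambda]\,\cdot$ and $[\lDx{s},T_\zeta]\,\cdot$, the Bony residues, the term $\lDx{s}\gamma$ (controlled by \eqref{p42.1}), and the contributions $\lDx{s}R_1$, $\lDx{s}R_2$ (controlled by Theorem~\ref{T3}). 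Each is tame in $H^{-\mez}$.

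\textbf{Main obstacle.} The crux is tameness: the commutators $-[\lDx{s},T_\zeta]\ma$ and $[\lDx{s},T_\ma]\zeta$ that remain after the good-unknown cancellation are paradifferential of order $s-\mez-\epsilon$ precisely when both $\zeta$ and $\ma$ lie in $\holder{\mez+\epsilon}$. The Hölder control of $\zeta=\nabla\eta$ follows from $\lA\eta\rA_{\holder{r+\mez}}$ since $r>1$. The Hölder control of $\ma$ in $\holder{\mez+\epsilon}$ is the new ingredient: it was unavailable in \cite{ABZ3} and is precisely what Proposition~\ref{prop:ma} supplies. Without it the commutator $[\lDx{s},T_\ma]\zeta$ could not be estimated tamely in $L^2$, and the linear-in-Hölder structure of \eqref{p49:estf} would fail in the regime $s<1+d/2$.
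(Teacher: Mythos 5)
Your proposal is correct and follows essentially the same route as the paper: the paper gives no details here, stating only that the proof of Prop.~4.8 in \cite{ABZ3} applies verbatim once the tame estimates of Theorem~\ref{T3} and Proposition~\ref{prop:ma} are substituted for their non-tame counterparts. Your outline — the Alinhac good-unknown cancellation, the tame paralinearization of $G(\eta)$, and the $\holder{\mez+\eps}$ control of $\ma$ to handle the residual commutators $[\lDx{s},T_\zeta]\ma$ and $[\lDx{s},T_\ma]\zeta$ — is exactly that argument made explicit.
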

By using the tame estimates for the paralinearization of the Dirichlet-Neumann operator proved in 
Chapter~\ref{C:2}, 
there is nothing new in the proof of Proposition~\ref{prop:csystem2} compared 
to the proof of Prop.\ $4.8$ in \cite{ABZ3}. Indeed, 
the proof of Prop.\ $4.8$ in \cite{ABZ3}Ê
applies verbatim (up to replacing
$\mathcal{F}\bigl(\lA (\eta,\psi,V,B)\rA_{H^{s+\mez}\times H^{s+\mez}\times H^s\times H^s}\bigr)$ 
by the right-hand side of \e{p49:estf}). 

The next step is a symmetrization of the non-diagonal part of the equations. 
We have the following result, whose proof follows directly from the proof of Proposition~$4.10$ in \cite{ABZ3} and the estimates 
on the Taylor coefficient proved above (see Proposition~\ref{prop:ma}).

\begin{prop}\label{psym}
Introduce the symbols
$$
\gamma=\sqrt{\ma \lambda},\quad q= \sqrt{\frac{\ma}{\lambda}},
$$
and set~$\theta_s=T_{q} \zeta_s$. Then 

\begin{align}
&\partial_{t} U_s+T_{V}\cdot\partialx  U_s  + T_\gamma \theta_s = F_1,\label{systreduit1}
\\
&\partial_{t}\theta_s+T_{V}\cdot\partialx \theta_s - T_\gamma U_s =F_2,
\label{systreduit2}
\end{align}
for some source terms~$F_1,F_2$ satisfying 
\begin{multline*}
\lA (F_{1}(t),F_{2}(t))\rA_{L^{2}\times L^{2}}\\
\le \mathcal{F}\bigl( \lA  (\eta,\psi)(t)\rA_{H^{s+\mez}},\lA (V,B)(t)\rA_{H^s}\bigr) 
\left\{1+\lA \eta(t)\rA_{\holder{r+\mez}}+\lA (V,B)(t)\rA_{\holder{r}}\right\}.
\end{multline*}
\end{prop}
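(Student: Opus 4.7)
The plan is to derive \eqref{systreduit1}--\eqref{systreduit2} from Proposition~\ref{prop:csystem2} by purely symbolic manipulations, invoking the paradifferential symbolic calculus of the appendix to rewrite each operator replacement (such as $T_a \leadsto T_\gamma T_q$) modulo operators that are bounded on $L^2$ with tame dependence on H\"older norms. The two crucial inputs are Proposition~\ref{prop:ma}, giving the $C^{1/2+\eps}$ bound on $a$ and the $C^\eps$ bound on $(\partial_t + V\cdot\nabla)a$, and the bounds already established for $f_1, f_2$ in Proposition~\ref{prop:csystem2}.

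\textbf{Step 1 (symbol bounds).} Check that $\gamma=\sqrt{a\lambda}$ and $q=\sqrt{a/\lambda}$ are elliptic symbols of orders $+\mez$ and $-\mez$ respectively, with the seminorms $\mathcal{M}^{1/2}_\rho(\gamma)$ and $\mathcal{M}^{-1/2}_\rho(q)$ bounded (for some $\rho\in (0,\mez)$) by the right-hand side of \eqref{p49:estf}. This follows from Proposition~\ref{prop:ma}, from the identity $\lambda = \sqrt{(1+|\nabla\eta|^2)|\xi|^2 -(\nabla\eta\cdot\xi)^2}$, and from the composition rule~\eqref{esti:F(u)bis} for H\"older norms.

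\textbf{Step 2 (first equation).} Using Theorem~\ref{theo:sc0} one writes $T_\gamma T_q = T_{\gamma q} + R_0 = T_a + R_0$, where $R_0$ is of order $-\rho$ and hence bounded on $L^2$ with operator norm dominated by the product $\mathcal{M}^{1/2}_\rho(\gamma)\mathcal{M}^{-1/2}_\rho(q)$. (The equality $\gamma q = a$ is valid for $|\xi|\ge 1$, the low-frequency discrepancy producing only a smoothing contribution.) Substituting $T_a\zeta_s = T_\gamma\theta_s - R_0\zeta_s$ in \eqref{premiere} and setting $F_1 \defn f_1 + R_0\zeta_s$ yields \eqref{systreduit1} with the right $L^2$ bound on $F_1$.

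\textbf{Step 3 (second equation: apply $T_q$).} Apply $T_q$ to both sides of \eqref{seconde}. Using the trivial identities
\begin{align*}
T_q\,\partial_t\zeta_s &= \partial_t\theta_s - T_{\partial_t q}\zeta_s,\\
T_q\,T_V\cdot\partialx\zeta_s &= T_V\cdot\partialx\theta_s - [T_V\cdot\partialx,T_q]\zeta_s,
\end{align*}
and writing $T_qT_\lambda = T_{q\lambda}+R_1 = T_\gamma + R_1$ by Theorem~\ref{theo:sc0} (where $R_1$ is of negative order, bounded on $L^2$), one obtains
\begin{equation*}
\partial_t\theta_s + T_V\cdot\partialx\theta_s - T_\gamma U_s = T_qf_2 + R_1U_s + T_{\partial_t q}\zeta_s + [T_V\cdot\partialx,T_q]\zeta_s.
\end{equation*}

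\textbf{Step 4 (the key cancellation).} The principal symbol of the paradifferential commutator $[T_V\cdot\partialx,T_q]$ is $V\cdot\partialx q$, so Theorem~\ref{theo:sc0} gives $[T_V\cdot\partialx,T_q] = T_{V\cdot\partialx q} + R_2$ with $R_2$ of negative order. Combining,
\begin{equation*}
T_{\partial_t q}\zeta_s + [T_V\cdot\partialx,T_q]\zeta_s = T_{(\partial_t + V\cdot\partialx)q}\zeta_s + R_2\zeta_s.
\end{equation*}
Since $q = \sqrt{a/\lambda}$ is a smooth function of $a$ and $\nabla\eta$, the chain rule together with Proposition~\ref{prop:ma} and the equation $\partial_t\eta = B - V\cdot\nabla\eta$ gives an $L^\infty$-bound on $(\partial_t+V\cdot\partialx)q$ (of order $-\mez$ in $\xi$) by the right-hand side of \eqref{p49:estf}; hence $T_{(\partial_t+V\cdot\partialx)q}$ is bounded from $L^2$ to $L^2$. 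Setting
\begin{equation*}
F_2 \defn T_qf_2 + R_1U_s + T_{(\partial_t+V\cdot\partialx)q}\zeta_s + R_2\zeta_s
\end{equation*}
completes the derivation of \eqref{systreduit2}.

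\textbf{Main obstacle.} The delicate point is Step~4: neither $T_{\partial_t q}\zeta_s$ nor $[T_V\cdot\partialx,T_q]\zeta_s$ can be estimated individually, since only the \emph{material} derivative $(\partial_t+V\cdot\nabla)a$ is controlled in H\"older norm by Proposition~\ref{prop:ma}. One must exhibit the exact algebraic recombination into $T_{(\partial_t+V\cdot\nabla)q}$, and simultaneously verify that the sub-principal remainders furnished by the paradifferential calculus are bounded on $L^2$ despite the fact that $a$ has only $C^{\mez+\eps}$ spatial regularity, which caps the gain one can extract from symbolic composition.
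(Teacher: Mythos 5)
Your overall route is the one the paper intends: the paper's own ``proof'' is a one-line deferral to Proposition~4.10 of \cite{ABZ3} combined with the new tame bounds on the Taylor coefficient (Proposition~\ref{prop:ma}), and what you reconstruct --- conjugating \eqref{seconde} by $T_q$, using $T_\gamma T_q=T_a+\mathrm{l.o.t.}$ and $T_qT_\lambda=T_\gamma+\mathrm{l.o.t.}$, and recombining the time derivative of $q$ with the commutator into the material derivative $(\partial_t+V\cdot\nabla)q$ --- is exactly that argument. Steps~1--3 are essentially correct, with two bookkeeping corrections: since $\zeta_s=\lDx{s}\nabla\eta$ lies only in $H^{-\mez}$, the remainders $R_0$ and $R_2$ must map $H^{-\mez}$ into $L^2$, i.e.\ be of order $-\mez$, not merely ``bounded on $L^2$'' or ``of negative order''; accordingly you must take $\rho=\mez$ (not $\rho\in(0,\mez)$) in \eqref{esti:quant2}, which is available because $a\in C^{\mez+\eps}$ and $\nabla\eta\in W^{r-\mez,\infty}$ with $r>1$, and which produces the tame (linear-in-H\"older) bound since each product in \eqref{esti:quant2} contains only one $\rho=\mez$ seminorm. (Likewise $R_1=T_qT_\lambda-T_\gamma$ is of order $0$, not negative order, which suffices since $U_s\in L^2$.)

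The genuine gap is in Step~4. Theorem~\ref{theo:sc0} does \emph{not} give $[T_V\cdot\partialx,T_q]=T_{V\cdot\partialx q}+R_2$ with $R_2$ of order $-\mez$: part (ii) provides only the zeroth-order expansion, and with the available common regularity $\rho=\mez$ (capped by $q\in\Gamma^{-1/2}_{1/2}$, since $a$ is only $C^{\mez+\eps}$) it yields that $[T_{iV\cdot\xi},T_q]$ is of order $1-\mez-\mez=0$, which applied to $\zeta_s\in H^{-\mez}$ lands in $H^{-\mez}$ --- half a derivative short of $L^2$. Extracting the first-order (Poisson-bracket) term would require $\rho>1$ for \emph{both} symbols, which $q$ does not have; moreover the bracket of $iV\cdot\xi$ with $q$ also contains the term $(\nabla_x V\cdot\xi)\cdot\nabla_\xi q$ (harmless, but omitted in your formula). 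The statement you actually need --- that $[T_V\cdot\nabla,T_q]-T_{V\cdot\nabla q}$ is of order $-\mez$, so that after adding $T_{\partial_t q}\zeta_s$ only $T_{(\partial_t+V\cdot\nabla)q}\zeta_s$ and an $L^2$ remainder survive --- is a dedicated commutator lemma from \cite{ABZ3} (the same one invoked again in Section~\ref{S:52} to control $[T_{\sqrt{a_2}},\partial_t+V_2\cdot\nabla]$ by the $L^\infty$ norm of the material derivative of the symbol). Its proof exploits the special structure of the transport operator (symbol linear in $\xi$ with Lipschitz coefficients, so that the dangerous term $V\cdot\nabla_x q$, of negative $x$-regularity, cancels exactly against the subtracted paradifferential operator, while the second-order remainder costs only $\nabla^2(S_kV)\cdot\nabla_\xi^2 q\sim 2^{k}\cdot 2^{-2k}\cdot 2^k$ per dyadic block) and cannot be read off from the generic composition estimate \eqref{esti:quant2}. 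You correctly identify this as the main obstacle, but flagging it is not the same as resolving it: as written, the key estimate $\|F_2\|_{L^2}\le$ RHS is not established.
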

Notice that, by definition of $M_s(T)$ and $Z_r(T)$,
\begin{align*}
&\lA (F_{1},F_{2})\rA_{L^1([0,T];L^{2}\times L^{2})}\\
&\quad\le 
\mathcal{F}(M_s(T))\left\{T+\lA \eta\rA_{L^1([0,T];\holder{r+\mez})}
+\lA (V,B)\rA_{L^1([0,T];\holder{r})}\right\}\\
&\quad\le \sqrt{T}\mathcal{F}(M_s(T))Z_r(T),
\end{align*}
for $T\le 1$. Then it follows from the previous proposition and classical results (see~\S4.4 in \cite{ABZ3}) 
that we have the following~$L^\infty_t(L^2_x)$ estimate for~$(U_s,\theta_s)$.
\begin{lemm}\label{estimationdeUs}
There exists a non-decreasing function~$\mathcal{F}$ such that
\begin{equation}\label{esti:4.46}
\lA U_s\rA_{L^\infty([0,T];L^{2})}+
\lA \theta_s\rA_{L^\infty([0,T];L^{2})}
\le \mathcal{F}(M_{s,0})+\sqrt{T}\mathcal{F}(M_s(T))(1+Z_r(T)).
\end{equation}
\end{lemm}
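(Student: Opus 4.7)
The plan is a standard $L^2$ energy estimate on the symmetrized system \eqref{systreduit1}--\eqref{systreduit2}, followed by Gronwall and H\"older-in-time. Setting $E(t) \defn \lA U_s(t)\rA_{L^2}^2+\lA\theta_s(t)\rA_{L^2}^2$ and differentiating, I obtain
\begin{equation*}
\tfrac12\tfrac{d}{dt} E(t)= -\RE \langle T_V\cdot\partialx U_s, U_s\rangle -\RE\langle T_V\cdot\partialx\theta_s,\theta_s\rangle -\RE\langle (T_\gamma-T_\gamma^*)\theta_s,U_s\rangle +\RE\langle F_1,U_s\rangle +\RE\langle F_2,\theta_s\rangle.
\end{equation*}
The $\pm T_\gamma$ contributions combine into the single commutator $T_\gamma-T_\gamma^*$ thanks to the symmetrization chosen in Proposition~\ref{psym}.

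Next I would control the two near self-/skew-adjointness defects. Since $V$ is real, the standard paraproduct identity gives $(T_V\cdot\partialx)+(T_V\cdot\partialx)^*=-T_{\cn V}+\mathcal{R}_V$, with $\mathcal{R}_V$ of negative order and operator norm on $L^2$ bounded by $\lA V\rA_{W^{1,\infty}}$; hence the transport contributions are bounded by $C\lA V(t)\rA_{W^{1,\infty}}E(t)$. For $T_\gamma-T_\gamma^*$, the symbol $\gamma=\sqrt{a\lambda}$ is real-valued of order $\mez$, so the symbolic calculus (Theorem~\ref{theo:sc0}, together with Proposition~\ref{prop.niSbis} for the low-regularity version) yields that $T_\gamma-T_\gamma^*$ is of order $0$ on $L^2$, with norm controlled by a Hölder semi-norm of $\gamma$. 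Using Proposition~\ref{prop:ma} for the Taylor coefficient together with \eqref{regofrho3}, this semi-norm is bounded by $\mathcal{F}(M_s(T))\{1+\lA\eta(t)\rA_{\holder{r+\mez}}+\lA(V,B)(t)\rA_{\holder{r}}\}$. Consequently,
\begin{equation*}
\tfrac{d}{dt}\sqrt{E(t)}\le \mathcal{A}(t)\sqrt{E(t)}+\lA F_1(t)\rA_{L^2}+\lA F_2(t)\rA_{L^2},
\end{equation*}
with $\mathcal{A}(t)\le C\lA V(t)\rA_{W^{1,\infty}}+\mathcal{F}(M_s(T))\{1+\lA\eta(t)\rA_{\holder{r+\mez}}+\lA(V,B)(t)\rA_{\holder{r}}\}$.

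Gronwall's inequality then gives $\sqrt{E(T)}\le e^{\int_0^T\mathcal{A}}\bigl\{\sqrt{E(0)}+\lA F_1\rA_{L^1_T L^2}+\lA F_2\rA_{L^1_T L^2}\bigr\}$. Applying H\"older in time with the exponent $p=4$ if $d=1$ and $p=2$ if $d\ge 2$,
\begin{equation*}
\int_0^T\mathcal{A}(t)\,dt\le CT+T^{1-1/p}\mathcal{F}(M_s(T))(1+Z_r(T)),
\end{equation*}
and likewise the source-term integral is bounded by $\sqrt{T}\,\mathcal{F}(M_s(T))(1+Z_r(T))$, exactly as in the display immediately preceding the lemma. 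The initial value is estimated from $U_s(0)=\lDx{s}V_0+T_{\zeta_0}\lDx{s}B_0$ using the $L^\infty\to L^2$ paraproduct bound $\lA T_{\zeta_0}\lDx{s}B_0\rA_{L^2}\les \lA \zeta_0\rA_{L^\infty}\lA B_0\rA_{H^s}$ (valid since $s>\mez+d/2$), giving $\sqrt{E(0)}\le\mathcal{F}(M_{s,0})$. Absorbing the Gronwall factor into the non-decreasing function $\mathcal{F}$ (using $T\le T_0$) yields \eqref{esti:4.46}.

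The main obstacle is the self-adjointness defect $T_\gamma-T_\gamma^*$ at this low regularity: $\gamma$ only lies in a rough H\"older symbol class, so the identity $T_\gamma^*\equiv T_{\bar\gamma}$ holds only modulo an operator whose $L^2$-norm must be controlled by a \emph{tame} bound depending linearly on the highest H\"older norms. This is precisely where Proposition~\ref{prop:ma} (on the Taylor coefficient) and the H\"older control \eqref{regofrho3} on $\nabla\rho$ are used; without these tame estimates the coefficient $\mathcal{A}(t)$ would not integrate to a bound compatible with the $\sqrt{T}\mathcal{F}(M_s(T))(1+Z_r(T))$ structure sought in the conclusion.
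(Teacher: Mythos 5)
Your energy-method argument is the same ``classical'' route the paper invokes (the paper simply cites Proposition~\ref{psym}, the $L^1_tL^2$ bound on $(F_1,F_2)$ displayed just before the lemma, and \S4.4 of \cite{ABZ3}), and your identification of the two tame ingredients --- the order-$0$ control of $\nabla\cdot(T_V-T_V^*)$ by $\lA V\rA_{W^{1,\infty}}$, and the order-$0$ control of $T_\gamma^*-T_\gamma$ by $M^{1/2}_{1/2}(\gamma)$, itself bounded through Proposition~\ref{prop:ma} by the $L^p_t$ H\"older norms --- is exactly what makes the estimate close at this regularity.

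The one step that does not close as written is the final absorption. Gronwall gives $\sqrt{E(T)}\le e^{\int_0^T\mathcal{A}}\bigl(\sqrt{E(0)}+\lA F\rA_{L^1_TL^2}\bigr)$ with $\int_0^T\mathcal{A}\le\sqrt{T}\,\mathcal{F}(M_s(T))(1+Z_r(T))$; the exponential therefore depends on $Z_r(T)$, and $e^{c\sqrt{T}(1+Z_r(T))}\mathcal{F}(M_{s,0})$ is \emph{not} of the form $\mathcal{F}(M_{s,0})+\sqrt{T}\mathcal{F}(M_s(T))(1+Z_r(T))$, whose $\mathcal{F}$ depends only on $M_s(T)$ and which is linear in $Z_r(T)$. (Your bound would suffice for the bootstrap \eqref{apriori}, but it is not the inequality \eqref{esti:4.46} as stated.) The fix is to forgo Gronwall: integrate your differential inequality in time and insert the trivial a priori bound $\sqrt{E(t)}\le\lA U_s(t)\rA_{L^2}+\lA\theta_s(t)\rA_{L^2}\le\mathcal{F}(M_s(T))$ --- which follows from $U_s=\lDx{s}V+T_\zeta\lDx{s}B$ and $\theta_s=T_q\zeta_s$ with $q$ of order $-\mez$, exactly as in the proof of \eqref{n201} --- into the integrand. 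This yields
\begin{equation*}
\sqrt{E(t)}\le\sqrt{E(0)}+\mathcal{F}(M_s(T))\int_0^T\mathcal{A}(\sigma)\,d\sigma+\lA F\rA_{L^1_TL^2},
\end{equation*}
which is of the required additive, $Z_r$-linear form.
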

It remains to deduce from this lemma estimates for the Sobolev norms 
of~$\eta,\psi,V,B$. 
Recall that the functions~$U_s$ and~$\theta_s$ are obtained 
from~$(\eta,V,\B)$ through:
\begin{align*}
&U_s \defn \lDx{s} V+T_\zeta \lDx{s}\B,\\[0.5ex]
&\theta_s\defn T_{\sqrt{\ma/\lambda}}\lDx{s}\nabla \eta.
\end{align*}

We begin with the following result. 
It gives the desired estimate for $\eta$ but only 
weaker estimates for~$(V,B)$ 
and the Taylor coefficient~$\ma$.

\begin{lemm}\label{ecBV}
There exists a non-decreasing function~$\mathcal{F}$ such that for any~$r>0$,
\begin{multline}\label{ecBVBV}
\| \eta \|_{L^\infty([0,T];H^{s+\mez})}+
\lA (B,V)\rA_{L^\infty([0,T];H^{s-\mez})}\\
\le \mathcal{F}(M_{s,0})+\sqrt{T}\mathcal{F}(M_s(T))(Z_r(T)+1),
\end{multline}
and, for any~$1<r'<r$, 
\begin{equation}\label{ecBVma}
\lA \ma\rA_{L^\infty([0,T];\zygmund{r'-1})}\le \mathcal{F}(M_{s,0})+\sqrt{T}\mathcal{F}(M_s(T))(Z_r(T)+1).
\end{equation}
\end{lemm}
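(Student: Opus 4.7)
The plan is to recover the Sobolev norms of $(\eta,V,B)$ (and then the H\"older norm of $a$) from the $L^\infty_tL^2_x$ control of $(U_s,\theta_s)$ given by Lemma~\ref{estimationdeUs}, by inverting the paradifferential identities $\theta_s = T_{\sqrt{a/\lambda}}\langle D_x\rangle^s\nabla\eta$ and $U_s = \langle D_x\rangle^s V+T_\zeta\langle D_x\rangle^s B$, combined with the divergence identity $G(\eta)B = -\cnx V+\widetilde\gamma$ from Proposition~\ref{prop:GBV}. Throughout, any lower-order Sobolev norm of an unknown $u$ appearing in a remainder term will be absorbed by writing $u(t) = u(0)+\int_0^t\partial_t u\,ds$ and bounding $\partial_t u$ in $L^p_t$ via the water-waves equations, which produces contributions bounded by $M_{s,0}+T^{1-1/p}\mathcal F(M_s(T))(1+Z_r(T))\le M_{s,0}+\sqrt T\mathcal F(M_s(T))(1+Z_r(T))$ for $T\le 1$.

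\textbf{Step 1 (bound on $\eta$).} The symbol $q = \sqrt{a/\lambda}$ is elliptic of order $-\mez$: $a\ge c$ by Assumption~\ref{A:2}(iii) and $\lambda(x,\xi)\ge c'\langle\xi\rangle$, while its $x$-regularity follows from the $H^{s-\mez}$ bound on $a$ in Proposition~\ref{prop:ma}. By the symbolic calculus of Theorem~\ref{theo:sc0} there is a parametrix $T_{1/q}$ with $T_{1/q}T_q = I+R$, $R$ of strictly negative order, hence
\begin{equation*}
\|\langle D_x\rangle^s\nabla\eta\|_{H^{-\mez}}\le \mathcal F(M_s(T))\big(\|\theta_s\|_{L^2}+\|\eta\|_{H^{s+\mez-\rho}}\big),
\end{equation*}
for some $\rho>0$. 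Absorbing the lower-order $H^{s+\mez-\rho}$ norm as described above, and applying Lemma~\ref{estimationdeUs}, yields the $\eta$ part of~\eqref{ecBVBV}.

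\textbf{Step 2 (bounds on $B$ and $V$).} Combining Proposition~\ref{prop:GBV} with the tame paralinearization of Theorem~\ref{T3}, one has $T_\lambda B+\cnx V = \widetilde\gamma-(G(\eta)B-T_\lambda B)$, with right-hand side controlled in $H^{s-\mez}$ by $\mathcal F(M_s(T))(1+\|\eta\|_{\zygmund{r+\mez}}+\|(V,B)\|_{\zygmund{r}})$. Taking $\cnx U_s$ and using $\cnx(T_\zeta u)\equiv T_{i\zeta\cdot\xi}u$ up to lower order, symbolic calculus gives
\begin{equation*}
T_{(\lambda-i\zeta\cdot\xi)\langle\xi\rangle^s}B = -\cnx U_s+F,
\end{equation*}
where $\|F\|_{H^{-\mez}}\le\mathcal F(M_s(T))(1+\|\eta\|_{\zygmund{r+\mez}}+\|(V,B)\|_{\zygmund{r}})$ plus lower-order Sobolev terms. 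Since $|(\lambda-i\zeta\cdot\xi)\langle\xi\rangle^s|\ge c\langle\xi\rangle^{s+1}$, the symbol is elliptic of order $s+1$ and admits a parametrix of order $-(s+1)$, yielding $\|B\|_{H^{s-\mez}}$ in terms of $\|\cnx U_s\|_{H^{-1}}\le\|U_s\|_{L^2}$ plus $\|F\|_{H^{-\mez}}$ and lower-order contributions. Taking $L^p_t$ norms converts the H\"older quantities into $Z_r(T)$ and produces the $\sqrt T$ prefactor. Once $B$ is controlled, $\langle D_x\rangle^s V = U_s-T_\zeta\langle D_x\rangle^s B$ together with $\|\zeta\|_{L^\infty}\le\mathcal F(M_s(T))$ (by Sobolev embedding) yields the $V$ bound.

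\textbf{Step 3 (bound on $a$ and main obstacle).} Write $a(t) = a(0)+\int_0^t\partial_t a\,ds$. At the initial time, Sobolev embedding applied to Proposition~\ref{prop:ma} gives $\|a(0)\|_{\zygmund{r'-1}}\le\mathcal F(M_{s,0})$. Decomposing $\partial_t a = (\partial_t+V\cdot\nabla)a-V\cdot\nabla a$, the first summand is bounded in $\zygmund{\eps}$ by Proposition~\ref{prop:ma} and the second by tame product estimates involving $\|V\|_{\zygmund{r}}$ and $\|\nabla a\|_{L^\infty}$; integrating in $L^p_t$ yields~\eqref{ecBVma}. The main obstacle throughout is the elliptic inversion in Step~2: every application of the paradifferential symbolic calculus (composition rules, remainder estimates) must be carried out in its \emph{tame} form, so that the subprincipal terms arising from $\langle D_x\rangle^s T_\lambda\equiv T_{\lambda\langle\xi\rangle^s}$ and from $G(\eta)B-T_\lambda B$ combine into a single factor $\sqrt T\mathcal F(M_s(T))(1+Z_r(T))$ on the right-hand side of~\eqref{ecBVBV}, rather than the non-tame $\mathcal F$ of the $H^{s+\mez}$ norm which would be useless for the Strichartz-based bootstrap.
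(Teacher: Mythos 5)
Your Step 1 for $\eta$ (inverting $T_{\sqrt{a/\lambda}}$ on $\theta_s$) is the right idea. The genuine gap is in Step 2. The point of \eqref{ecBVBV} is not merely to bound $(B,V)$ in $H^{s-\mez}$ but to do so with a right-hand side that tends to $\mathcal{F}(M_{s,0})$ as $T\to 0$; this structure is what drives the continuity argument in the proof of Proposition~\ref{T2}. Your elliptic-division route (taking $\cnx U_s$, using $G(\eta)B=-\cnx V+\widetilde\gamma$, and inverting $T_{(\lambda-i\zeta\cdot\xi)\langle\xi\rangle^s}$) cannot deliver that structure. If you control the remainder $G(\eta)B-T_\lambda B$ by Theorem~\ref{T3}, the bound involves $\|\eta(t)\|_{\holdertdm}+\|B(t)\|_{\holder{r}}$, which are only in $L^p_t$; "taking $L^p_t$ norms" then controls $\|B\|_{L^p_tH^{s-\mez}}$, not the claimed $L^\infty_t$ norm. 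If instead you use the time-uniform Proposition~\ref{coro:paraDN1}, then this remainder --- like $\widetilde\gamma$ and the symbolic-calculus errors --- is only bounded by $\mathcal{F}(M_s(T))$, with no smallness in $T$ and no reference to the data: your device $u(t)=u(0)+\int_0^t\partial_t u$ does not apply to these terms, because they are nonlinear functionals of $(\eta,B)$ at time $t$ controlled only through the full norms $\|\eta(t)\|_{H^{s+\mez}}$ and $\|B(t)\|_{H^{s}}$. (A telltale sign: if your Step 2 worked it would give the $H^{s}$ bound of Lemma~\ref{ecBVs} in one stroke, making the four-step $1/4$-derivative bootstrap there pointless.)

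The intended proof of the $(B,V)$ part is far more elementary: it uses the transport equations \eqref{eq:B}--\eqref{eq:V}. Their source terms $a-g$ and $-a\zeta$ lie in $H^{s-\mez}$ with a bound $\mathcal{F}(M_s(t))$ that is uniform in time, by \eqref{esti:a1} and the algebra property of $H^{s-\mez}$ (since $s-\mez>d/2$). An $L^2$ energy estimate for $\lDx{s-\mez}B$, with the commutator handled by Proposition~\ref{comut}~i) --- which costs exactly $(\|V\|_{W^{1,\infty}}+\|V\|_{H^{s}})\|\nabla B\|_{H^{s-1}}\le(\|V\|_{W^{1,\infty}}+\|V\|_{H^{s}})\|B\|_{H^{s}}$ --- gives
$\|B(t)\|_{H^{s-\mez}}\le M_{s,0}+\mathcal{F}(M_s(T))\int_0^T\bigl(1+\|V(\sigma)\|_{W^{1,\infty}}\bigr)d\sigma$, and H\"older in time converts the integral into $T+T^{1-1/p}Z_r(T)\le\sqrt{T}\,(1+Z_r(T))$; this is precisely where the claimed form comes from. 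Finally, in Step 3 the convective term cannot be treated with "$\|\nabla a\|_{L^\infty}$": Proposition~\ref{prop:ma} only gives $a\in\holder{\mez+\eps}$, so $\nabla a$ has negative H\"older regularity and $V\cdot\nabla a$ lands in $C^{\eps-\mez}_*$, strictly below $C^{r'-1}_*$; one must estimate $a(t)-a(0)$ in a negative Zygmund norm by time integration and then interpolate against the time-uniform $C^{s-\mez-d/2}_*$ bound, which is where the strict inequality $r'<r$ is actually used.
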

We omit the proof since this lemma follows directly from the 
proof of Lemma~$4.13$ and Lemma~$4.14$ in \cite{ABZ3}.

Once~$\eta$ is estimated in~$L^\infty([0,T];H^{s+\mez})$, by using the 
estimate for~$U_s$, we are going to estimate~$(B,V)$ in~$L^\infty([0,T];H^s)$. 
Here we shall make an essential use of the following result 
about the paralinearization of the Dirichlet-Neumann operator for domains 
whose boundary is in~$H^{\mu}$ for some~$\mu>1+d/2$. 

\begin{prop}[from \cite{ABZ3}]\label{coro:paraDN1}
Let~$d\ge 1$ and~$\mu>1+\frac{d}{2}$. For any~$\frac{1}{2}\leq \sigma \leq \mu-\mez$ and any
$$
0<\eps\leq \mez, \qquad \eps< \mu-1-\frac{d}{2},
$$
there exists a non-decreasing function~$\mathcal{F}\colon\xR^+\rightarrow\xR^+$ such that 
$R(\eta)f\defn G(\eta)f-T_\lambda f$ satisfies
\begin{equation*}
\lA R(\eta)f\rA_{H^{\sigma-1+\eps}({\mathbf{R}}^d)}\le \mathcal{F} \bigl(\| \eta \|_{H^{\mu}({\mathbf{R}}^d)}\bigr)\lA f\rA_{H^{\sigma}({\mathbf{R}}^d)}.
\end{equation*}
\end{prop}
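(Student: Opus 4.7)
The plan is to revisit the three-step scheme already developed for Theorem~\ref{T3}, but in the simpler setting where $\eta \in H^{\mu}$ with $\mu > 1 + d/2$, so that by Sobolev embedding $\nabla\eta \in W^{\epsilon',\infty}$ for some $\epsilon' \in (0, \min(1/2, \mu-1-d/2)]$. In this regime no H\"older side-estimates are needed and the entire argument stays in Sobolev spaces, which accounts for the cleaner conclusion.

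First I would flatten the free surface using the diffeomorphism $(x,z)\mapsto (x,\rho(x,z))$ from Lemma~\ref{rho:diffeo} and work with $v(x,z) = \phi(x,\rho(x,z))$, which satisfies the elliptic equation~\eqref{dnint1} with boundary datum $v|_{z=0}=f$. Proposition~\ref{p1} supplies the baseline estimate $\|\nabla_{x,z}v\|_{X^{\sigma-1/2}([-1,0])}\le \mathcal{F}(\|\eta\|_{H^\mu})\|f\|_{H^\sigma}$. Paralinearization then gives $\partial_z^2 v + T_\alpha\Delta v + T_\beta\cdot\nabla_x\partial_z v = F_1+F_2$ with $F_1=\gamma\partial_z v$ and $F_2=(T_\alpha-\alpha)\Delta v+(T_\beta-\beta)\cdot\nabla_x\partial_z v$, and a factorization as in \eqref{2m2.b} yields $(\partial_z-T_a)(\partial_z-T_A)v = F_1+F_2+F_3$ with the symbols $a,A$ of~\eqref{aA}.

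Next I would prove that the symbols $a,A$ lie in $\Gamma^1_{\epsilon'}(\xR^d\times I)$ with $\mathcal{M}^1_{\epsilon'}(a)+\mathcal{M}^1_{\epsilon'}(A)\le \mathcal{F}(\|\eta\|_{H^\mu})$ for every $\epsilon'$ in the admissible range, and estimate the three source terms: $F_1$ by the tame product rule in $H^{\sigma-1+\epsilon}$ (using $\gamma \in C^0H^{\mu-3/2}\subset C^0H^{\sigma-1+\epsilon}$ when $\mu > 1+d/2$ and $\epsilon\le 1/2$); $F_2$ via Bony's paralinearization identity, which makes it a sum of paraproducts $T_{\Delta v}\alpha$, $T_{\partial_z\nabla v}\beta$ and symmetric remainders $R(\alpha,\Delta v)$, $R(\beta,\nabla_x\partial_z v)$ — each one gains $\epsilon$ derivatives thanks to the extra regularity of $\alpha,\beta$ in $H^{\mu-1/2}$ (estimate~\eqref{esti:abc}) combined with the Sobolev embedding controlling $\nabla_{x,z}v$ in $C^0_z W^{\epsilon-1,\infty}_x$; and $F_3$ via the symbolic calculus Theorem~\ref{theo:sc0} applied with $\rho=\epsilon'$, which gives $T_aT_A - T_\alpha\Delta\colon H^{s+1}\to H^{s+\epsilon'}$ and similarly for the $T_{\partial_z A}$ contribution.

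Then I would apply the parabolic estimate of Proposition~\ref{prop:max} to $W = \kappa(z)(\partial_z - T_A)v$, which satisfies $\partial_z W - T_a W = F'$ with $F' = \kappa(F_1+F_2+F_3)+\kappa'(\partial_z-T_A)v$ and vanishes on $z=-1$. Since $\RE(-a)\gtrsim \langle\xi\rangle$, this gives $\|W\|_{X^{\sigma-1/2+\epsilon}([z_1,0])}\le \mathcal{F}(\|\eta\|_{H^\mu})\|f\|_{H^\sigma}$; taking the trace at $z=0$ yields a bound on $(\partial_z v - T_A v)|_{z=0}$ in $H^{\sigma-1/2+\epsilon}$. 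Finally, Step~4 of the proof of Theorem~\ref{T3} — paralinearizing $G(\eta)f = (\zeta_1 \partial_z v - \zeta_2\cdot\nabla_x v)|_{z=0}$ with $\zeta_1,\zeta_2$ as there — identifies the principal symbol of $G(\eta)$ with $\lambda$, and all remainders (paraproducts by $\zeta_1,\zeta_2$, Bony remainders, and the composition correction $T_{\zeta_1}T_A - T_{\zeta_1 A}$) are controlled in $H^{\sigma-1+\epsilon}$ by $\mathcal{F}(\|\eta\|_{H^\mu})\|f\|_{H^\sigma}$.

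The main obstacle is the $F_3$ estimate with the sharp gain of $\epsilon$ derivatives: the symbolic calculus remainder $T_aT_A - T_{aA}$ gains exactly $\rho$ derivatives where $\rho$ is the H\"older regularity of the symbols in $x$. Hence the two constraints $\epsilon \le 1/2$ and $\epsilon < \mu - 1 - d/2$ in the statement are precisely the constraints needed to guarantee $a,A\in \Gamma^1_{\epsilon'}$ with $\epsilon' \ge \epsilon$, via the Sobolev embedding $H^{\mu-1}\subset W^{\epsilon',\infty}$. Everything else reduces to careful bookkeeping with the paradifferential toolbox already set up in Section~\ref{sec.2.2} and the parabolic estimates in \S\ref{s:pe}.
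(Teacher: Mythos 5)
Your overall strategy --- flatten the domain, paralinearize, factor the elliptic operator as $(\partial_z-T_a)(\partial_z-T_A)$, run the parabolic smoothing estimate on $W=\kappa(z)(\partial_z-T_A)v$, and finally paralinearize the boundary expression $\zeta_1\partial_z v-\zeta_2\cdot\nabla_x v$ --- is exactly the scheme of Chapter~\ref{C:2} and of \cite{ABZ3}; note that the present paper does not reprove this proposition but imports it from \cite{ABZ3}, so the relevant comparison is with that scheme, and you have reproduced it. Two of your intermediate indices are off by $\mez$: Proposition~\ref{p1} with Dirichlet datum $f\in H^{\sigma}$ gives $\nabla_{x,z}v\in X^{\sigma-1}$, not $X^{\sigma-\mez}$ (compare \eqref{oubli}), and correspondingly the parabolic step yields $W\in X^{\sigma-1+\eps}$, not $X^{\sigma-\mez+\eps}$ (compare \eqref{w-1}). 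These slips are harmless since the trace in $H^{\sigma-1+\eps}$ is exactly what the conclusion requires.

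The genuine gap concerns the lower part of the range of $\sigma$. Your estimates of $F_1$, $F_2$ and of the boundary remainders rest on the embedding $\nabla_{x,z}v\in C^0_z(C^{\eps-1}_*)$, which from $\nabla_{x,z}v\in C^0_z(H^{\sigma-1})$ requires $\sigma-1-\frac d2\ge\eps-1$, i.e.\ $\sigma\ge\eps+\frac d2$; the proposition is asserted for all $\sigma\ge\mez$, so the interval $\mez\le\sigma<\eps+\frac d2$ is not covered by the argument as written. One can repair the paraproduct terms by using \eqref{niS} with the true negative Zygmund index $\sigma-2-\frac d2$ of $\Delta v$ (the loss is then absorbed by the $H^{\mu-\mez}$ regularity of $\alpha,\beta$ precisely because $\eps<\mu-1-\frac d2$), but the Bony remainders are a real obstruction: for instance $R(\zeta_1,\partial_z v)$ estimated via \eqref{Bony3} needs $(\mu-1-\frac d2)+(\sigma-1)>0$, which fails when $\sigma$ is close to $\mez$ and $\mu$ is close to $1+\frac d2$, and for $d=1$ the product $\gamma\,\partial_z v$ itself (with $\gamma\in L^2_zH^{\mu-\frac32}$ and $\partial_z v\in C^0_zH^{\sigma-1}$) is not controlled by the standard product rules when $\mu+\sigma\le\frac52$. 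So your proof establishes the statement only for $\sigma$ bounded away from $\mez$; the full range requires an additional argument (an induction on $\sigma$ exploiting the already-established elliptic regularity at lower index, in the spirit of the bootstrap built into Proposition~\ref{p1}, or a duality argument), which is the part of \cite{ABZ3} that the present paper chooses to quote rather than rederive.
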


\begin{lemm}\label{ecBVs}
There exists a non-decreasing function~$\mathcal{F}$ such that
\begin{equation}\label{n333}
\lA (V,\B)\rA_{L^\infty([0,T];H^{s})}
\le \mathcal{F}\bigl(\mathcal{F}(M_{s,0})+T\mathcal{F}\bigl(M_s(T)+Z_r(T)\bigr)\bigr).
\end{equation}
\end{lemm}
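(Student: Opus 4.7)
The plan is to deduce an $L^\infty([0,T];H^s)$ bound for $(V,B)$ from the $L^\infty([0,T];L^2)$ bound on $U_s$ furnished by Lemma~\ref{estimationdeUs}. The scheme follows that of the analogous statement in \cite{ABZ3}, with the non-tame estimates there replaced by the tame ones established in this chapter (in particular the tame paralinearization of Theorem~\ref{T3}, Proposition~\ref{prop:GBV}, and Proposition~\ref{coro:paraDN1}).

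First I would reduce to controlling $B$ alone. From~\eqref{defiUszetas}, one has $\lDx{s} V = U_s - T_\zeta \lDx{s} B$; since $T_\zeta$ is bounded on $L^2$ with norm controlled by $\|\zeta\|_{L^\infty} \le \mathcal{F}(\|\eta\|_{H^{s+\mez}})$, this yields
$$
\|V\|_{L^\infty([0,T];H^s)} \le \|U_s\|_{L^\infty([0,T];L^2)} + \mathcal{F}(M_s(T))\|B\|_{L^\infty([0,T];H^s)}.
$$
In view of Lemma~\ref{estimationdeUs} and Lemma~\ref{ecBV}, the problem reduces to proving~\eqref{n333} for $\|B\|_{L^\infty([0,T];H^s)}$.

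Next I would derive an elliptic paradifferential equation for $B$. Proposition~\ref{prop:GBV} gives $G(\eta)B = -\cnx V + \widetilde\gamma$, and Proposition~\ref{coro:paraDN1} applied with $\mu = s+\mez$, $\sigma = s$ and some small $\eps > 0$ (allowed since $s+\mez > 1 + d/2$) produces the paralinearization
$$
G(\eta)B = T_\lambda B + R(\eta)B, \qquad \|R(\eta)B\|_{H^{s-1+\eps}} \le \mathcal{F}(\|\eta\|_{H^{s+\mez}})\|B\|_{H^s}.
$$
On the other hand, applying $\cnx$ to $\lDx{s} V = U_s - T_\zeta \lDx{s} B$ yields
$$
\|\cnx V\|_{H^{s-1}} \le \|U_s\|_{L^2} + \mathcal{F}(\|\eta\|_{H^{s+\mez}})\|B\|_{H^s}.
$$
Combining these, the identity $T_\lambda B = \widetilde\gamma - \cnx V - R(\eta)B$ gives an $H^{s-1}$ bound on $T_\lambda B$ in terms of already controlled quantities plus contributions proportional to $\|B\|_{H^s}$.

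Finally I would invoke the paradifferential elliptic estimate for $T_\lambda$. Since $\lambda \ge c\L\xi$ and $\lambda \in \Gamma^{1}_{\rho}$ for some $\rho > 0$ (coming from $\nabla\eta \in H^{s-\mez}$ and Sobolev embedding), one has
$$
\|B\|_{H^s} \le \mathcal{F}(\|\eta\|_{H^{s+\mez}})\bigl(\|T_\lambda B\|_{H^{s-1}} + \|B\|_{H^{s-\rho}}\bigr),
$$
with the $H^{s-\rho}$ term controlled by Lemma~\ref{ecBV}. The main obstacle will be closing the estimate: the two terms proportional to $\|B\|_{H^s}$ produced above (from the remainder $R(\eta)B$ and from the contribution of $T_\zeta \lDx{s} B$ inside $\cnx V$) are formally of the same order as the left-hand side and cannot be absorbed directly. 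They are handled by exploiting, on one hand, the $\eps$-gain of Proposition~\ref{coro:paraDN1}, which places $R(\eta)B$ in the slightly smoother space $H^{s-1+\eps}$; and on the other hand, the small-time factor $\sqrt{T}\,\mathcal{F}(M_s(T))$ already present in the bound of Lemma~\ref{estimationdeUs}, which provides the margin necessary for absorption after time-integration. Once this is carried out, the bound~\eqref{n333} for $B$ follows, and the reduction of Step~1 yields the analogous bound for $V$.
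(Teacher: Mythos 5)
Your proposal identifies the right ingredients (Proposition~\ref{prop:GBV}, Proposition~\ref{coro:paraDN1}, the $L^2$ bound on $U_s$ from Lemma~\ref{estimationdeUs}, elliptic inversion of the principal symbol of the Dirichlet--Neumann operator), but the scheme as written does not close, and the reason is a cancellation you discard at the second step.

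You apply $\cnx$ to the splitting $\lDx{s}V = U_s - T_\zeta\lDx{s}B$ and bound $\|\cnx V\|_{H^{s-1}}$ by $\|U_s\|_{L^2}$ plus $\mathcal{F}(\|\eta\|_{H^{s+\mez}})\|B\|_{H^{s}}$. When you then solve the resulting equation $T_\lambda B = \widetilde\gamma -\cnx V - R(\eta)B$, inverting $T_\lambda$ (order $1$) maps $H^{s-1}$ to $H^s$, so that $\|B\|_{H^s}$-contribution reappears on the right-hand side with a non-small constant of order $\mathcal{F}(\|\eta\|_{H^{s+\mez}})$, and cannot be absorbed. Neither of your two proposed rescues addresses it: the $\eps$-gain of Proposition~\ref{coro:paraDN1} helps only with $R(\eta)B$, and the $\sqrt{T}$ factor of Lemma~\ref{estimationdeUs} is attached to the source term $\mathcal{F}(M_s)(1+Z_r)$, not to the coefficient $\zeta$ in $T_\zeta\lDx{s}B$, which is $O(1)$ and carries no small-time factor. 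The paper avoids this by working with $\cnx U$ rather than $\cnx V$: expanding $\cnx U = \cnx V + T_{\cnx\zeta}B + T_\zeta\cdot\nabla B$ and inserting $\cnx V = -G(\eta)B + \widetilde\gamma$ and the paralinearization $G(\eta)B = T_\lambda B + R(\eta)B$, the principal part of the dangerous order-$1$ term $T_\zeta\cdot\nabla B = T_{i\zeta\cdot\xi}B$ is swallowed into the symbol, giving $T_q B$ with $q = -\lambda + i\zeta\cdot\xi$. Because $\RE q = -\lambda \le -c\langle\xi\rangle$, this $q$ is still elliptic of order $1$, and $T_{1/q}$ inverts it with no loss, leaving only the genuinely lower-order remainders ($R(\eta)B$, $T_{\cnx\zeta}B$, $I - T_{1/q}T_q$) collected into an operator $\mathcal{R}$ of order $-1/4$.

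A second missing idea is the bootstrap structure. Even after the symbol is corrected, one does not reach $H^s$ in one pass: the inequality one obtains is $\|B\|_{H^{\sigma+\uq}} \le \mathcal{F}(\cdots)(\|U\|_{H^{\sigma+\uq}} + \|\widetilde\gamma\|_{H^{\sigma-\tq}} + \|B\|_{H^{\sigma}})$, and the control of $U$ in turn requires control of $B$ through the commutator $[\lDx{s},T_\zeta]B$. The paper first derives $U\in H^{s-\uq}$ from $B\in H^{s-\mez}$ (Lemma~\ref{ecBV}), then $B\in H^{s-\uq}$, then feeds this back to get $U\in H^s$, and finally $B\in H^s$; each step gains $\uq$ derivative because $\mathcal{R}$ and the commutator both have order $-\uq$. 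Your one-shot scheme has no such gain mechanism.
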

\begin{proof}

\noindent{\sc Step} 1. Recall that $U=V+T_\zeta B$. 
We begin by proving that there exists a non-decreasing function~$\mathcal{F}$ such that
\begin{equation}\label{p50:1}
\lA U \rA_{L^\infty([0,T];H^{s-\uq})}
\le \mathcal{F}\bigl(\mathcal{F}(M_{s,0})+T\mathcal{F}\bigl(M_s(T)+Z_r(T)\bigr)\bigr).
\end{equation}
Since $U_s = \lDx{s} V+T_\zeta \lDx{s}\B$ by definition, we have
$$
\lDx{s-\uq} U =\lDx{-\uq} \bigl\{ U_s +\bigl[ \lDx{s},T_\zeta\bigr] \B\bigr\}.
$$
Theorem~\ref{theo:sc0} implies that
\be\label{n336}
\lA \bigl[ \lDx{s},T_\zeta\bigr] \rA_{H^{\mu}\rightarrow H^{\mu-s+\uq}}\les 
\lA \zeta\rA_{\holder{\uq}}
\ee
so
$$
\lA \bigl[ \lDx{s},T_\zeta\bigr] \B\rA_{H^{-\uq}}\les \lA \zeta\rA_{\holder{\uq}}\lA \B\rA_{H^{s-\mez}}.
$$
Since $s>3/4+d/2$, we have
$$
\lA \zeta\rA_{\holder{\uq}}\les \lA \zeta\rA_{H^{s-\mez}}\le \| \eta \|_{H^{s+\mez}}
$$
and hence
$$
\lA U\rA_{H^{s-\uq}}\les \lA U_s\rA_{H^{-\uq}}+ \| \eta \|_{H^{s+\mez}} \lA \B\rA_{H^{s-\mez}}.
$$
The three terms in the right-hand side of the above inequalities have been already estimated; indeed, Lemma~\ref{estimationdeUs} gives an estimate for the $L^\infty_t(L^2_x)$-norm 
of $U_s$ and {\em a fortiori}Ê
for its $L^\infty_t(H^{-\uq}_x)$-norm, 
see also Lemma~\ref{ecBV} for~$B$ and 
Lemma~\ref{ecBV} for~$\eta$. This proves~\eqref{p50:1}.

\noindent{\sc Step} 2. Recall that we have already estimated 
the $L^\infty_t(H^{s-\mez}_x)$-norms of $B,V$ and that we want to estimate 
their  $L^\infty_t(H^{s}_x)$-norms. 
As an intermediate step, we begin by proving that
\begin{equation}\label{n337}
\lA B \rA_{L^\infty([0,T];H^{s-\uq})}
\le \mathcal{F}\bigl(\mathcal{F}(M_{s,0})+T\mathcal{F}\bigl(M_s(T)+Z_r(T)\bigr)\bigr).
\end{equation}
To do so, the two key points are the paralinearization estimate for 
$R(\eta)\defn G(\eta)-T_\lambda$ (see Proposition~\ref{coro:paraDN1}) and 
the relation \e{n340a} 
between~$V$ and~$B$: for any ~$s> \mez + \frac d 2$ one has $G(\eta){B}=-\cnx V +\widetilde{\gamma}$ where 
\be\label{n340}
\|\widetilde{\gamma}\|_{H^{s- \mez}} \leq \mathcal{F}( \|( \eta,  V, B)\|_{H^{s+ \mez} \times H^{\mez}\times H^\mez}).
\ee

Taking the divergence in $U=V+T_{\zeta}B$, 
we get according to Lemma~\ref{ecBV}  and the previous identity $G(\eta){B}=-\cnx V +\widetilde{\gamma}$,
\begin{align*}
\cn U &= \cn V + \cn T_\zeta B=\cn V +T_{\cn \zeta} B+T_\zeta\cdot \nabla B\\
&= -G(\eta) B+T_{i \zeta \cdot \xi+ \cn \zeta}B + \widetilde{\gamma}\\
&=-T_\lambda B -R(\eta)B+T_{i \zeta \cdot \xi+ \cn \zeta}B + \widetilde{\gamma}\\
&=T_q B -R(\eta)B+T_{\cn \zeta}B + \widetilde{\gamma}
\end{align*}
where, by notation,
$$
q\defn -\lambda+i\zeta\cdot\xi.
$$

Now write
\begin{equation*}
T_{q} B = \cn U - T_{\cn \zeta} B + R(\eta) B-\widetilde{\gamma}
\end{equation*}
and
$$
B=T_{\frac{1}{q}}T_q B +\Bigl(I-T_{\frac{1}{q}}T_q\Bigr) B
$$
to obtain
\be\label{n338}
B=T_{\frac{1}{q}}  \cn U -T_{\frac{1}{q}} \widetilde{\gamma}+\mathcal{R} B
\ee
where
\begin{equation}\label{defi:Repsilon}
\mathcal{R}\defn T_{\frac{1}{q}} \Bigl( -T_{\cn \zeta}+R(\eta)\Bigr)+\Bigl(I-T_{\frac{1}{q}}T_q\Bigr).
\end{equation}
We now claim that $\mathcal{R}$ is of order $-1/4$ 
together with the following estimate: for any $1/2\le \sigma\le s$, we have  
\be\label{n339}
\lA \mathcal{R}B\rA_{H^{\sigma+\uq}}\le
\mathcal{F}\big(\lA \eta\rA_{H^{s+\mez}}\big)\lA B\rA_{H^{\sigma}}.
\ee
Fix $\sigma\in [1/2,s]$. 
We begin by estimating $R(\eta)$. 
According to Proposition~\ref{coro:paraDN1} 
(applied with~$\mu= s +\mez$ and $\eps=\uq$), we have  
\begin{equation*}
\lA R(\eta)B\rA_{H^{\sigma-\tq}}\le \mathcal{F} \bigl(\| \eta \|_{H^{s+\mez}}\bigr)
\lA B\rA_{H^{\sigma}}.
\end{equation*}
On the other hand, since $\cnx \zeta=\Delta\eta$, the rule \e{niS}Ê
implies that
$$
\lA T_{\cnx \zeta}\rA_{H^\sigma\rightarrow H^{\sigma-\tq}}\les 
\lA \cnx \zeta\rA_{\zygmund{-\tq}}\les \lA \eta\rA_{H^{s+\mez}}.
$$
Finally,
$q=- \lambda + i \zeta \cdot \xi \in \Gamma^1_{1/4}$ with 
$M^1_{1/4}(q)\leq C( \|\eta\|_{H^{s+ \mez}})$ 
since~$s> \tq + \frac d 2$. Moreover, $q^{-1}$ is of order $-1$ and we have
$$
M^{-1}_{1/4}\bigl(q^{-1}\bigr)\leq \mathcal{F}\big( \|\eta\|_{H^{s+ \mez}}\big).
$$
Consequently, according to~\eqref{esti:quant1} and \eqref{esti:quant2}, we have
\be\label{n341}
\Big\Vert T_{\frac{1}{q}}\Big\Vert_{H^{\sigma-\tq}\rightarrow H^{\sigma+\uq}}
+
\Big\Vert I-T_{\frac{1}{q}}T_q\Big\Vert_{H^{\sigma}\rightarrow H^{\sigma+\uq}}
\le \mathcal{F}\big( \|\eta\|_{H^{s+ \mez}}\big).
\ee
By combining the previous bound, we obtain the desired estimate \e{n339}. 
Now, \e{n339}Ê
applied with $\sigma=s-1/2$ implies the following estimate for the last term in the right-hand side of \e{n338},
$$
\lA \mathcal{R}B\rA_{H^{s-\uq}}\le  \mathcal{F}\big( \|\eta\|_{H^{s+ \mez}}\big)\lA B\rA_{H^{s-\mez}}.
$$
To estimate the two other terms in the right-hand side of \e{n338}, we use 
the operator norm estimate \e{n341} for $T_{1/q}$, to obtain 
$$
\lA T_{\frac{1}{q}}  \cn U -T_{\frac{1}{q}} \widetilde{\gamma}\rA_{H^{s-\uq}}
\le \mathcal{F}\big( \|\eta\|_{H^{s+ \mez}}\big) \big\{ \lA U\rA_{H^{s-\uq}}
+\lA \widetilde{\gamma}\rA_{H^{s-\frac{5}{4}}}\big\}.
$$
By combining the two previous estimates, it follows from \e{n338}Ê
that
\be\label{n344}
\lA B\rA_{H^{s-\uq}}
\le \mathcal{F}\big( \|\eta\|_{H^{s+ \mez}}\big) 
\big\{ \lA U\rA_{H^{s-\uq}}
+\lA \widetilde{\gamma}\rA_{H^{s-\frac{5}{4}}}+\lA B\rA_{H^{s-\mez}}\big\}.
\ee
Taking the $L^\infty$-norm in time, one obtains the claim \e{n337} from 
the previous estimates, see 
\e{p50:1}, \e{n340} and \e{ecBVBV}.

\noindent{\sc Step} 3: Bootstrap. We now use the previous bound \e{n337} 
for $B$ to improve the estimate \e{p50:1}Ê
for $U$, namely to prove that
\begin{equation}\label{n346}
\lA U \rA_{L^\infty([0,T];H^{s})}
\le \mathcal{F}\bigl(\mathcal{F}(M_{s,0})+T\mathcal{F}\bigl(M_s(T)+Z_r(T)\bigr)\bigr).
\end{equation}
Firstly, writing 
$\lDx{s} U =U_s +\bigl[ \lDx{s},T_\zeta\bigr] \B$ and using \e{n336}, one has
$$
\lA U\rA_{H^{s}}\les \lA U_s\rA_{L^{2}}+ \| \eta \|_{H^{s+\mez}} \lA \B\rA_{H^{s-\uq}}.
$$
As above, the three terms in the right-hand side of the above inequalities have been already estimated; indeed, Lemma~\ref{estimationdeUs} 
gives an estimate for the $L^\infty_t(L^2_x)$-norm 
of $U_s$, $\eta$ is estimated by means of Lemma~\ref{ecBV} and we can 
now use \e{n338} to estimate $\lA \B\rA_{H^{s-\uq}}$. This proves \e{n346}.

We next use \e{n346} to improve the estimate \e{n338} for $B$. Firstly, 
by using the estimate \e{n339} with $\sigma=s-1/4$ instead of $s-1/2$, we obtain as 
above
\be\label{n348}
\lA B\rA_{H^{s}}
\le \mathcal{F}\big( \|\eta\|_{H^{s+ \mez}}\big) 
\big\{ \lA U\rA_{H^{s}}
+\lA \widetilde{\gamma}\rA_{H^{s-\mez}}+\lA B\rA_{H^{s-\uq}}\big\}.
\ee
Taking the $L^\infty$-norm in time, it 
follows from the previous estimates (see 
\e{n346}, \e{n340} and \e{n337}) that 
\begin{equation}\label{n350}
\lA B \rA_{L^\infty([0,T];H^{s})}
\le \mathcal{F}\bigl(\mathcal{F}(M_{s,0})+T\mathcal{F}\bigl(M_s(T)+Z_r(T)\bigr)\bigr).
\end{equation}

\noindent{\sc Step} 4: Estimate for $V$. Writing $V=U-T_\zeta B$, it easily follows from 
\e{n346} and \e{n350} that $\lA V \rA_{L^\infty([0,T];H^{s})}$ is bounded by the right-hand side of \e{n333}. 

This completes the proof of the lemma.
\end{proof}

It remains to estimate the $L^\infty([0,T];H^{s+\mez})$-norm of 
$\psi$. This estimate is obtained in two steps. Firstly, 
since $\nabla\psi=V+\B \nabla \eta$ and since the~$L^\infty([0,T];H^{s-\mez})$-norm of 
$(\nabla\eta,V,\B)$ has been previously estimated, we obtain 
the desired estimate for the $L^\infty([0,T];H^{s-\mez})$-norm of $\nabla\psi$. It 
remains to estimate $\lA \psi\rA_{L^\infty([0,T];L^{2})}$. This in turn follows from the identity 
\begin{equation*}
\partial_t \psi+V\cdot \nabla \psi
=-g\eta+\mez V^2+\mez B^2
\end{equation*}
and classical $L^2$ estimate for hyperbolic equations (see \e{eq.ii}). 

\chapter{Strichartz estimates}\label{C:4}

In this chapter we shall prove Strichartz estimates for rough solutions of the gravity water waves equations. 
This is the main new point in this paper. 
Namely, we shall prove Theorem~\ref{T4} stated in the introduction.

\section{Symmetrization of the equations}\label{s:2}

Consider a smooth solution of \eqref{n10} defined on some time interval $[0,T_0]$ and 
satisfying Assumption~\ref{A:2}. 
We begin by proving 
that the water-waves equations can be reduced to a very simple form
\begin{equation*}
\Big(  \partial_t + \mez( T_{V } \cdot \nabla + \nabla \cdot T_{V }) +i T_{\gamma } \Big)u  =f 
\end{equation*}
where $T_V$ is a paraproduct and $T_\gamma$ is a para-differential operator of 
$\frac{1}{2}$ with symbol 
$$
\gamma=\sqrt{\ma \lambda},\quad 
\lambda=\sqrt{\big(1+ \vert \nabla \eta\vert^2\big)\vert \xi \vert^2 
- \big(\xi \cdot \nabla \eta\big)^2},
$$
where $a$ is the Taylor coefficient (see \e{n17}). 
Recall that we introduced the following notations
$$
\zeta=\nabla\eta,\quad 
U_s \defn \lDx{s} V+T_\zeta \lDx{s}\B,\quad \zeta_s\defn \lDx{s}\zeta,\quad 
q= \sqrt{\frac{\ma}{\lambda}},\quad\theta_s=T_{q} \zeta_s.
$$
We already proved in the previous chapter that one can symmetrize 
the water-waves equations and obtain a system of two equations for 
$U_s$ and $\theta_s$ of the form
\begin{align}
&\partial_{t} U_s+T_{V}\cdot\partialx  U_s  + T_\gamma \theta_s = F_1,\label{c41}
\\
&\partial_{t}\theta_s+T_{V}\cdot\partialx \theta_s - T_\gamma U_s =F_2,
\label{c42}
\end{align}
for some source terms $F_1,F_2$ satisfying 
\begin{multline*}
\lA (F_{1}(t),F_{2}(t))\rA_{L^{2}\times L^{2}}\\
\le C\left( \lA (\eta,\psi)(t)\rA_{H^{s+\mez}},\lA (V,B)(t)\rA_{H^s}\right)
\left\{1+\lA \eta(t)\rA_{W^{r+\mez,\infty}}+\lA (V,B)(t)\rA_{W^{r,\infty}}\right\},
\end{multline*}
for any 
real numbers $s$ and $r$ such that
$s>\frac{3}{4}+\frac{d}{2}$, $r>1$.

One has the following corollary, which is the starting point of the proof of the 
Strichartz estimate.

\begin{coro}\label{T10}
With the above notations, set
\begin{equation}\label{eq:r0}
u=\lDx{-s}(U_s-i \theta_s)=\lDx{-s}(U_s-iT_{\sqrt{a/\lambda}}\nabla \eta_s).
\end{equation}
Then $u$ satisfies the complex-valued equation
\begin{equation}\label{eq:r1}
\partial_t u +\mez \bigl( T_V\cdot \partialx +\partialx\cdot T_V\bigr)u + i T_\gamma u =f ,
\end{equation}
where $f$ satisfies
for each time $t\in [0,T]$,
\begin{equation}\label{eq:r2}
\begin{aligned}
&\lA f(t)\rA_{H^s}\\
&\le \mathcal{F}\big( \lA (\eta,\psi)(t)\rA_{H^{s+\mez}},\lA (V,B)(t)\rA_{H^s}\big) 
\left\{1+\lA \eta(t)\rA_{W^{r+\mez,\infty}}
+\lA (V,B)(t)\rA_{W^{r,\infty}}\right\}.
\end{aligned}
\end{equation}
\end{coro}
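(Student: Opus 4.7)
The plan is to deduce \eqref{eq:r1}--\eqref{eq:r2} by a direct algebraic manipulation of the symmetrized system \eqref{c41}--\eqref{c42} of Proposition~\ref{psym}. Multiplying \eqref{c42} by $-i$ and adding to \eqref{c41}, and using
$$
T_\gamma \theta_s - i(-T_\gamma U_s) = i T_\gamma (U_s - i\theta_s),
$$
I obtain, with $w \defn U_s - i\theta_s$,
$$
\bigl(\partial_t + T_V\cdot\partialx + iT_\gamma\bigr) w = F_1 - iF_2,
$$
whose right-hand side already satisfies the bound in \eqref{eq:r2} at each time by Proposition~\ref{psym}. I now apply $\lDx{-s}$ (which commutes with $\partial_t$ and $\partialx$) and reorganize to recover the symmetric form of the transport. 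Using the identity
$\partialx \cdot T_V = T_V\cdot\partialx + T_{\cnx V}$, that is $T_V\cdot\partialx = \mez(T_V\cdot\partialx + \partialx \cdot T_V) - \mez T_{\cnx V}$, a short computation shows that $u = \lDx{-s} w$ solves \eqref{eq:r1} with
$$
f = \lDx{-s}(F_1 - iF_2) + \mez T_{\cnx V} u - [\lDx{-s}, T_V]\cdot\partialx w - i\,[\lDx{-s}, T_\gamma] w.
$$

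It then remains to prove the estimate \eqref{eq:r2} for $f$ at each time. The first term is immediate because $\lDx{-s}$ is an isometry from $L^2$ to $H^s$, and $\lA (F_1, F_2)\rA_{L^2\times L^2}$ already satisfies the desired bound by Proposition~\ref{psym}. For the zero-order term $T_{\cnx V} u$, since $r>1$ one has $\cnx V \in L^\infty$ with $\lA \cnx V\rA_{L^\infty} \lesssim \lA V\rA_{\holder{r}}$, so $T_{\cnx V}$ is bounded on $H^s$; the factor $\lA u\rA_{H^s} = \lA w\rA_{L^2}$ is in turn controlled by $\mathcal{F}(\lA (\eta,\psi)\rA_{H^{s+\mez}}, \lA (V,B)\rA_{H^s})$ directly from the definitions \eqref{defiUszetas} of $U_s, \theta_s$ together with the paraproduct bound $\lA T_\zeta\rA_{L^2\to L^2}\lesssim \lA\zeta\rA_{L^\infty}\lesssim \lA\eta\rA_{H^{s+\mez}}$ (via Sobolev embedding, using $s>3/4+d/2$) and analogous bounds for $T_q$ acting on $\zeta_s$.

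For the commutator terms I would invoke the tame symbolic calculus of Appendix~\ref{sec:2} (in particular the operator norm estimate \eqref{esti:quant2}). Since $V \in \holder{r}$ with $r>1$, the commutator $[\lDx{-s}, T_V]$ has order at most $-s-1$, with operator norm bounded linearly by $\lA V\rA_{\holder{r}}$; thus $[\lDx{-s}, T_V]\cdot\partialx$ maps $L^2\to H^s$ (since $\partialx w \in H^{-1}$) and contributes the desired tame factor. Similarly, the paradifferential symbol $\gamma = \sqrt{a\lambda}$, of order $\mez$, has H\"older regularity $\mez + \epsilon$ in $x$ on account of Proposition~\ref{prop:ma} (for $a$) together with the explicit formula for $\lambda$ in terms of $\eta$ and the tame H\"older composition rule \eqref{esti:F(u)bis}; more precisely the relevant semi-norm of $\gamma$ is bounded by $\mathcal{F}(\lA(\eta,\psi)\rA_{H^{s+\mez}}, \lA(V,B)\rA_{H^s})\{1 + \lA\eta\rA_{\holder{r+\mez}} + \lA(V,B)\rA_{\holder{r}}\}$. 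Hence $[\lDx{-s}, T_\gamma]$ has order $-s - \mez$ with norm tamely controlled by the same quantity, which applied to $w\in L^2$ lands in $H^{s+\mez}\subset H^s$.

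The main obstacle in this otherwise routine reduction is the tameness requirement on all the error terms: standard symbolic calculus yields nonlinear dependence on seminorms of $V$ and $\gamma$, whereas here one must keep the top H\"older norms $\lA\eta\rA_{\holder{r+\mez}}$ and $\lA(V,B)\rA_{\holder{r}}$ linearly (as already emphasized in the remark following Theorem~\ref{T3}). This forces a careful use of the sharp form of \eqref{esti:quant2}, splitting $L^\infty$ factors (which are absorbed into the Sobolev-controlled $\mathcal{F}$) from the highest H\"older factors (which appear linearly); it also relies crucially on the tame H\"older estimate for the Taylor coefficient $a$ established in Proposition~\ref{prop:ma}, without which $\gamma$ would not admit the required symbol-class estimate.
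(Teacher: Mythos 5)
Your proposal is correct and follows essentially the same route as the paper: combine \eqref{c41}--\eqref{c42} into a scalar equation for $U_s - i\theta_s$, apply $\lDx{-s}$, rewrite $T_V\cdot\partialx$ in antisymmetric form at the cost of $\mez T_{\cnx V}u$, and absorb the commutators with $\lDx{-s}$ and the transport/dispersion operators (plus the $\lA u\rA_{H^s}$ bound from the definitions of $U_s,\theta_s$) into $f$ via the tame symbolic calculus and the estimates of Proposition~\ref{prop:ma} — all of which the paper does in compressed form. The only slight imprecision is your claim that $[\lDx{-s},T_\gamma]$ has order $-s-\mez$; since $\gamma$ has H\"older regularity only $\mez+\eps$ with $\eps$ small, the commutator has order $-s-\eps$, which still suffices to land $w\in L^2$ into $H^s$.
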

\begin{proof}
It immediately follows from \e{c41}--\e{c42} that $U_s-i \theta_s$ 
satisfies
\begin{equation}\label{troisieme}
(\partial_t +T_V\cdot \partialx + i T_\gamma ) (U_s-i \theta_s)=h ,
\end{equation}
where, for each time $t\in [0,T]$, the $L^{2}(\xR^d)$-norm of $h(t)$ 
is bounded by the right-hand side of \eqref{eq:r2}. 
By commuting $\lDx{-s}$ to~\eqref{troisieme} we thus obtain
$$
\partial_t u +T_V\cdot \partialx u + i T_\gamma u =f' ,
$$
where the $H^s$-norm of $f'(t)$ is bounded by the right-hand side of \eqref{eq:r2}. 
This gives \eqref{eq:r1} with $f=f'+\mez T_{\cnx V} u$ and 
it remains to prove 
that the $H^s$-norm of $T_{\cnx V} u$ is bounded by the right-hand side of \eqref{eq:r2}. 
Since a paraproduct by an $L^\infty$-function acts on any Sobolev spaces, we have 
$\lA T_{\cnx V} u\rA_{H^s}\les \lA \cnx V\rA_{L^\infty}\lA u\rA_{H^s}$. Therefore, 
to complete the proof, it is sufficient to prove that
\begin{equation}\label{n201}
\lA u\rA_{H^s}\le 
\mathcal{F}\big( \lA (\eta,\psi)\rA_{H^{s+\mez}},\lA (V,B)\rA_{H^s}\big) .
\end{equation}
To do so, write, by definition of $u$,
\begin{align*}
\lA u\rA_{H^s}&\le \lA U_s\rA_{L^2}+\lA \theta_s\rA_{L^2}\\
&\le \lA \langle D_x\rangle^s V\rA_{L^2}+\lA T_\zeta \langle D_x\rangle^s B\rA_{L^2} 
+\lA T_q \zeta_s\rA_{L^2}\\
&\le \lA V\rA_{H^s}+\lA \zeta\rA_{L^\infty}\lA B\rA_{H^s} 
+\mathcal{M}^{-\mez}_0(q) \lA \zeta\rA_{H^{s-\mez}}
\end{align*}
where we used that $\lA T_p v\rA_{H^{\mu-m}}\les \mathcal{M}^m_0(p)\lA v\rA_{H^\mu}$ 
for any paradifferential operator with symbol $p$ in $\Gamma^m_0$ (see \eqref{defi:norms} 
for the definition of $\mathcal{M}^m_0(p)$). Now we have, using the Sobolev embeding, 
$$
\lA \zeta\rA_{L^\infty}+\mathcal{M}^{-\mez}_0(q)
\le \mathcal{F}\big( \lA \eta\rA_{W^{1,\infty}},\lA a\rA_{L^\infty}\big)
\le \mathcal{F}\big( \lA \eta\rA_{H^{s+\mez}},\lA a\rA_{H^{s-\mez}}\big)
$$
so \eqref{n201} follows from the Sobolev estimates for $a$ 
(see Proposition~\ref{prop:ma}).
\end{proof}

\section{Smoothing the paradifferential symbols}

>From now on we assume  $s>1 + \frac{d}{2} - \sigma_d$ 
where $\sigma_1= \frac{1}{24},\, \sigma_d =  \frac{1}{12}$ if $d \geq 2$ 
and we set $s_0 :=\mez - \sigma_d >0.$ Then  $s-\mez >\frac{d}{2} +s_0$. 
Now, with $I = [0,T], $ we introduce the spaces
\begin{equation}\label{EFG}
\left\{
\begin{aligned}
&E:=C^0( I; H^{s - \mez}(\xR^d)) \cap L^2(I; W^{\mez, \infty}(\xR^d)),\\
& F:= C^0( I; H^{s}(\xR^d)) \cap L^2(I; W^{1, \infty}(\xR^d)),\\
&G:= C^0( I; W^{s_0, \infty}(\xR^d)) \cap  L^2(I; W^{\mez, \infty}(\xR^d)) 
\end{aligned}
\right.
\end{equation}
endowed with their natural norms.
 
We shall assume that 
\begin{equation}
\begin{aligned}
&(i) \quad \ma \in E, \quad  \nabla \eta \in E, \quad V \in  F,\\
&(ii) \quad  \exists c>0 : \ma(t,x) \geq c,\quad  \forall (t,x) \in I \times \xR^d.
\end{aligned}
\end{equation}
Let us recall that 
\begin{equation}\label{U}
\begin{aligned}
&\gamma (t,x,\xi) =   \big(\ma ^2\mU(t,x,\xi)\big)^{\frac{1}{4}}\\
&\mU(t,x,\xi) \defn (1+ \vert \nabla \eta\vert^2(t,x))\vert \xi \vert^2 - (\xi \cdot \nabla \eta(t,x))^2.
\end{aligned}
\end{equation}
   
Now we have,  for $\xi \in \mathcal{C}_0 := \{\xi : \mez \leq \vert \xi \vert \leq2\}$ 
considered as a parameter,
$$
\ma^2\mU\in G 
$$
uniformly in $\xi$.

\begin{lemm}\label{estgamma}
There exists $\mathcal{F}:\xR^+ \to \xR^+$ 
such that $\Vert \gamma \Vert_G \leq \mathcal{F}( \Vert \nabla \eta \Vert_E) $ for all $\xi \in \mathcal{C}_0.$
\end{lemm}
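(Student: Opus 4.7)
The symbol factors as $\gamma(t,x,\xi)=a(t,x)^{\mez}\,\mU(t,x,\xi)^{\uq}$. A one-line Cauchy--Schwarz computation gives
$$\mU(t,x,\xi)=(1+|\nabla\eta|^2)|\xi|^2-(\xi\cdot\nabla\eta)^2\ge |\xi|^2,$$
so for $\xi\in\mathcal{C}_0=\{\mez\le|\xi|\le 2\}$ we have $\mU\ge \uq$, while the Taylor sign assumption (ii) gives $a\ge c>0$. Consequently $y\mapsto y^{\uq}$ and $y\mapsto y^{\mez}$ are smooth on the ranges encountered, and the problem reduces to controlling $\mU^{\uq}$ and $a^{\mez}$ separately in $G$ and then multiplying them. (The stated bound is written in terms of $\|\nabla\eta\|_E$ alone; the analogous dependence on $\|a\|_E$ and $c$ should be implicit in $\mathcal{F}$.)

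I would proceed in three short steps. First, I would verify the continuous embedding $E\hookrightarrow G$: the standing assumption $s>1+\frac{d}{2}-\sigma_d$ together with $s_0=\mez-\sigma_d$ yields $s-\mez>\frac{d}{2}+s_0$, so the Sobolev embedding gives $H^{s-\mez}(\xR^d)\hookrightarrow W^{s_0,\infty}(\xR^d)$; the $L^2(I;W^{\mez,\infty})$ pieces of the two spaces coincide. Hence $\|\nabla\eta\|_G\lesssim \|\nabla\eta\|_E$ and similarly for $a$.

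Second, I would invoke the Moser-type composition estimate \eqref{esti:F(u)bis}: for $F$ smooth on a neighbourhood of the range of $u$ and for $\sigma\in(0,1)$,
$$\|F(u)\|_{W^{\sigma,\infty}}\le C(\|u\|_{L^\infty})\,\|u\|_{W^{\sigma,\infty}}.$$
Applied with $F(y)=y^{\uq}$ to $u=\mU(\cdot,\cdot,\xi)$, and noting that $\nabla\eta\mapsto \mU(\cdot,\cdot,\xi)$ is polynomial with coefficients that are uniformly bounded for $\xi\in\mathcal{C}_0$, this yields
$$\|\mU^{\uq}(\cdot,\cdot,\xi)\|_{G}\le \mathcal{F}_1(\|\nabla\eta\|_E),$$
uniformly in $\xi\in\mathcal{C}_0$. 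Applied with $F(y)=y^{\mez}$ on $\{y\ge c\}$ it gives $\|a^{\mez}\|_G\le \mathcal{F}_2(\|a\|_E)$.

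Finally, I would multiply the two factors via the tame product inequality
$$\|fg\|_{W^{\sigma,\infty}}\lesssim \|f\|_{L^\infty}\|g\|_{W^{\sigma,\infty}}+\|g\|_{L^\infty}\|f\|_{W^{\sigma,\infty}}$$
for $\sigma\in\{s_0,\mez\}$, combined with H\"older in time ($L^\infty_t\cdot L^2_t\subset L^2_t$) for the $L^2(I;W^{\mez,\infty})$ component, obtaining $\|a^{\mez}\mU^{\uq}\|_G\lesssim \|a^{\mez}\|_G\,\|\mU^{\uq}\|_G$. The lemma follows. There is no genuine obstacle here; the only care required is to keep the estimate uniform in $\xi\in\mathcal{C}_0$, which is automatic because $\mathcal{C}_0$ is compact and, on this compact set, $\mU$ and its $\xi$-derivatives are uniformly bounded above and bounded below away from zero.
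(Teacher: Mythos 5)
Your proof is correct, and it rests on the same two ingredients as the paper's: the Cauchy--Schwarz lower bound $\mU(t,x,\xi)\ge|\xi|^2$ (hence $\gamma\ge c_0>0$ on $\mathcal{C}_0$, using also $\ma\ge c$) and the fact that $\ma^2\mU$ belongs to $G$ uniformly in $\xi\in\mathcal{C}_0$. The route differs only in the packaging: the paper neither factors $\gamma=\ma^{\mez}\,\mU^{\uq}$ nor invokes a composition estimate; instead it writes
$$\gamma^4(t,x,\xi)-\gamma^4(t,y,\xi)=\bigl(\gamma(t,x,\xi)-\gamma(t,y,\xi)\bigr)\sum_{j=0}^3\gamma(t,x,\xi)^{3-j}\gamma(t,y,\xi)^j$$
and divides by the lower bound to get $\vert\gamma(t,x,\xi)-\gamma(t,y,\xi)\vert\le\frac{1}{4c_0^3}\vert(\ma^2\mU)(t,x,\xi)-(\ma^2\mU)(t,y,\xi)\vert$, from which the $W^{s_0,\infty}$ and $W^{\mez,\infty}$ seminorm bounds follow at once with $\sigma=s_0$ and $\sigma=\mez$. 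This one-line factorization is the discrete form of the mean-value estimate underlying \eqref{esti:F(u)bis}, but it avoids having to replace $y\mapsto y^{\uq}$ by a globally smooth function vanishing at the origin (as the hypotheses of \eqref{esti:F(u)bis} formally require) and it dispenses with the tame-product step. Both arguments share the same harmless imprecision that you rightly flag: the function $\mathcal{F}$ in the statement necessarily also depends on $\|\ma\|_E$ and on the lower bound $c$ for the Taylor coefficient.
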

\begin{proof}
By the Cauchy-Schwartz inequality we have $\mathcal{U}(t,x,\xi) \geq \vert \xi \vert^2$ from which we deduce that 
\begin{equation}\label{gamma}
\gamma(t,x,\xi) \geq c_0>0  \quad  \forall    (t,x,\xi) \in I\times \xR^d \times \mathcal{C}_0. 
\end{equation}
Moreover $\gamma \in C^0(I; L^\infty(\xR^d \times \mathcal{C}_0)).$ On the other hand, since
$$
\gamma^4(t,x,\xi) - \gamma^4(t,y,\xi) = (\gamma(t,x,\xi) -\gamma(t,y,\xi))\sum_{j=0}^3 
( \gamma(t,x,\xi))^{3-j}(\gamma(t,y,\xi))^j
$$
we have, using \eqref{gamma},
$$
\frac{\vert \gamma(t,x,\xi) 
-\gamma(t,y,\xi)\vert}{\vert x-y\vert^\sigma} 
\leq \frac{1}{ 4c_0^3} \frac{\vert (\ma^2\mU)(t,x,\xi) -(\ma^2\mU)(t,y,\xi)\vert}{\vert x-y\vert^\sigma}\cdot
$$
Taking $\sigma = s_0$ and $\sigma = 1/2$ we deduce the lemma.
\end{proof}

Guided by works by Lebeau~\cite{Lebeau92}, Smith \cite{Smith}, 
Bahouri-Chemin~\cite{BaCh}, Tataru~\cite{TataruNS} and Blair~\cite{Blair}, 
we smooth out the symbols of the operator. 

\begin{defi}
Now let $\psi \in C_0^\infty(\xR^d), \psi(\xi) 
= 1 \text{ if } \vert \xi \vert \leq \mez,  \psi(\xi) = 0 \text{ if } \vert \xi \vert \geq 1.$ With $ h= 2^{-j}$ 
and $\delta >0,$ which will be chosen later on, we set
\begin{equation}
\gamma_\delta (t,x,\xi) = \psi(h^\delta D_x)\gamma(t,x,\xi).
\end{equation}
\end{defi}

\begin{lemm}\label{estgamma-delta}
$(i)\quad  \forall \alpha, \beta \in \xN^d \quad \exists C_{\alpha, \beta} >0 : \forall t \in I, \forall \xi \in \xR^d$
$$\vert D_x^\alpha D^\beta_\xi \gamma_\delta (t,x,\xi) \vert \leq C_{\alpha, \beta} h^{-\delta \vert \alpha \vert} 
\Vert D^\beta_\xi  \gamma(t, \cdot, \xi) \Vert_{L^\infty(\xR^d)}.$$
$(ii) \quad  \forall \alpha, \beta \in \xN^d, \vert \alpha \vert \geq 1 \quad \exists C_\alpha >0 : \forall t \in I, \forall \xi \in \xR^d$
$$ \vert D_x^\alpha D^\beta_\xi  \gamma_\delta (t,x,\xi) \vert \leq C_{\alpha, \beta} h^{-\delta (\vert \alpha \vert - \mez)} 
\Vert D^\beta_\xi  \gamma(t, \cdot, \xi) \Vert_{W^{\mez,\infty}(\xR^d)}.$$
\end{lemm}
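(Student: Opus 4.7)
The plan is to view $\psi(h^\delta D_x)$ as a convolution operator and to exploit the scaling of its kernel; this is essentially a Bernstein-type estimate. Since the Fourier multiplier $\psi(h^\delta D_x)$ commutes with $D_\xi^\beta$, I can freeze $t$ and $\xi$ and reduce matters to bounding $\Vert D_x^\alpha \psi(h^\delta D_x)\, g\Vert_{L^\infty_x}$ for $g(x) := (D_\xi^\beta \gamma)(t,x,\xi)$. The operator $D_x^\alpha \psi(h^\delta D_x)$ has symbol
\begin{equation*}
\xi^\alpha \psi(h^\delta \xi) = h^{-\delta \vert \alpha \vert}\, \Phi_\alpha(h^\delta \xi), \qquad \Phi_\alpha(\xi) := \xi^\alpha \psi(\xi) \in C_0^\infty(\xR^d),
\end{equation*}
so it acts as convolution with the kernel
\begin{equation*}
K_\alpha^h(x) = h^{-\delta(\vert \alpha \vert + d)}\, K_\alpha(x/h^\delta),
\end{equation*}
where $K_\alpha$ denotes the inverse Fourier transform of $\Phi_\alpha$, a fixed Schwartz function. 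A change of variable then gives, for any real $\sigma \geq 0$,
\begin{equation*}
\int_{\xR^d} \vert y \vert^{\sigma}\, \vert K_\alpha^h(y)\vert\, dy = h^{-\delta(\vert \alpha \vert - \sigma)} \int_{\xR^d} \vert z \vert^{\sigma}\, \vert K_\alpha(z)\vert\, dz,
\end{equation*}
and the $z$-integral is finite for each $\sigma$.

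For part $(i)$ I would apply Young's convolution inequality with $\sigma = 0$:
\begin{equation*}
\vert D_x^\alpha D_\xi^\beta \gamma_\delta(t,x,\xi)\vert = \vert (K_\alpha^h * g)(x)\vert \leq \Vert K_\alpha^h\Vert_{L^1}\,\Vert g\Vert_{L^\infty} \leq C_{\alpha,\beta}\, h^{-\delta \vert \alpha \vert}\, \Vert (D_\xi^\beta \gamma)(t,\cdot,\xi)\Vert_{L^\infty(\xR^d)}.
\end{equation*}

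For part $(ii)$, the improvement by a factor $h^{\delta/2}$ comes from a vanishing moment. When $\vert \alpha \vert \geq 1$, the symbol $\Phi_\alpha$ vanishes at the origin, so $\int K_\alpha(z)\,dz = \Phi_\alpha(0) = 0$ and hence $\int K_\alpha^h(y)\, dy = 0$. I would then rewrite the convolution with a subtracted constant,
\begin{equation*}
(K_\alpha^h * g)(x) = \int_{\xR^d} K_\alpha^h(y)\bigl(g(x-y) - g(x)\bigr)\, dy,
\end{equation*}
and use the $W^{\mez,\infty}$ regularity of $g$: $\vert g(x-y) - g(x)\vert \leq \vert y\vert^{\mez}\,\Vert g\Vert_{W^{\mez,\infty}(\xR^d)}$. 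The moment identity above with $\sigma = \mez$ then produces
\begin{equation*}
\vert (K_\alpha^h * g)(x)\vert \leq C_{\alpha,\beta}\, h^{-\delta(\vert \alpha \vert - \mez)}\, \Vert (D_\xi^\beta \gamma)(t,\cdot,\xi)\Vert_{W^{\mez,\infty}(\xR^d)},
\end{equation*}
which is the claimed bound.

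No real obstacle is anticipated; the only substantive point is that the cancellation $\Phi_\alpha(0) = 0$ for $\vert \alpha\vert \geq 1$ is precisely what lets the H\"older seminorm replace the sup norm and yields the improved power of $h$ in $(ii)$.
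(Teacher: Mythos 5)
Your argument is correct, and for part $(i)$ it is the same as the paper's: the paper also writes $\gamma_\delta(t,\cdot,\xi)$ as the convolution of $\gamma(t,\cdot,\xi)$ with $h^{-\delta d}\widehat\psi(\cdot/h^\delta)$ and reads off the $h^{-\delta|\alpha|}$ factor from scaling.

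For part $(ii)$ you take a genuinely different route. The paper proves $(ii)$ by a Littlewood--Paley decomposition: it writes $D_x^\alpha D_\xi^\beta \gamma_\delta = \sum_k \Delta_k D_x^\alpha \psi(h^\delta D_x) D_\xi^\beta\gamma$, observes that the sum truncates at $k \lesssim j\delta$ (since $\psi(h^\delta\cdot)$ is supported in $\{|\xi|\le h^{-\delta}\}$), bounds each dyadic piece by $2^{k|\alpha|}2^{-k/2}\Vert D_\xi^\beta\gamma\Vert_{W^{1/2,\infty}}$, and sums the resulting geometric series with ratio $2^{|\alpha|-1/2}>1$; the dominant last term gives $h^{-\delta(|\alpha|-1/2)}$. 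You replace this by a direct kernel-cancellation argument: for $|\alpha|\ge 1$ the kernel $K_\alpha^h$ has vanishing integral, which permits subtracting $g(x)$ inside the convolution and invoking the $C^{1/2}$ modulus of continuity, with the $h^{\delta/2}$ gain coming from the scaled half-moment $\int |y|^{1/2}|K_\alpha^h(y)|\,dy$. Both are standard Bernstein-type arguments and are equally rigorous; the paper's version needs the observation that the Littlewood--Paley sum terminates (hence the power of $h$ is controlled by the top frequency), whereas yours needs the moment cancellation $\Phi_\alpha(0)=0$. Your approach is arguably a bit more self-contained, since it avoids introducing the $\Delta_k$ and relies only on the pointwise Hölder estimate, which is exactly the content of the $W^{1/2,\infty}$ norm at the non-integer index $1/2$; the paper's approach generalizes more transparently to Zygmund regularity at integer exponents, which is not an issue here.
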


\begin{proof}
$(i)$ follows from the fact that
$$\gamma_\delta (t,x,\xi) = h^{-\delta d}\widehat{\psi}\Big(\frac{\cdot}{h^\delta}\Big) \ast \gamma (t,\cdot, \xi).$$
$(ii)$ We write
$$
D_x^\alpha D^\beta_\xi  \gamma_\delta(t,x,\xi)
= \sum_{k =-1}^{+\infty}\Delta_k D_x^\alpha \psi(h^\delta D_x) D^\beta_\xi \gamma(t,x,\xi):= \sum_{k =-1}^{+\infty}v_k 
$$
where $\Delta_k$ denotes the usual Littlewood-Paley  frequency localization.

If $\mez 2^{k} \geq h^{-\delta} = 2^{j\delta}$ we have $\Delta_k \psi(h^\delta D_x) =0$. 
Therefore
$$
D_x^\alpha D^\beta_\xi \gamma_\delta(t,x,\xi) = \sum_{k =-1}^{2+ [j\delta]} v_k.
$$
Now 
$$
v_k = 2^{k \vert \alpha \vert}  \varphi_1(2^{-k}D_x) \psi(h^\delta D_x) \Delta_k D^\beta_\xi  \gamma(t,x,\xi) 
 $$
where $\varphi_1(\xi)$ is supported in $\{\frac{1}{3} \leq \xi \vert \leq 3\}.$    Therefore,
$$
\Vert v_k\Vert_{L^\infty(\xR^d)} 
\leq 2^{k \vert \alpha \vert} \Vert  {\Delta}_k  D^\beta_\xi  \gamma(t,\cdot,\xi)\Vert_{L^\infty(\xR^d)} 
\leq C2^{k \vert \alpha \vert} 2^{- \frac{k}{2}} 
\Vert D^\beta_\xi \gamma(t, \cdot, \xi)\Vert_{W^{\mez, \infty}(\xR^d)}.
$$
It follows that
$$
\Vert D_x^\alpha D^\beta_\xi \gamma_\delta(t,\cdot,\xi)\Vert _{L^\infty(\xR^d)} 
\leq C  \sum_{k =-1}^{2+ [j\delta]} 2^{k(\vert \alpha \vert - \mez)} \Vert D^\beta_\xi  \gamma (t,\cdot, \xi) \Vert_{W^{\mez, \infty}(\xR^d)}.
$$
Since $\vert \alpha\vert - \mez >0$ we deduce that
$$
\Vert D_x^\alpha D^\beta_\xi  \gamma_\delta(t,\cdot,\xi)\Vert _{L^\infty(\xR^d)} 
\leq C  2^{j\delta(\vert \alpha \vert - \mez)}\Vert D^\beta_\xi \gamma (t,\cdot, \xi) \Vert_{W^{\mez, \infty}(\xR^d)}.
$$
This completes the proof.
\end{proof}

We introduce now the Hessian matrix of $\gamma_\delta,$
\begin{equation}\label{Hess}
\Hess_\xi (\gamma_{\delta}) (t,x,\xi) = \Big( \frac{\partial^2 \gamma_\delta}{\partial \xi_j \partial \xi_k}(t,x,\xi)\Big).
\end{equation}

Our purpose is to prove the following result.
\begin{prop}\label{hess}
There exist $c_0>0, h_0 >0$ such that 
$$
\vert \det   \Hess_\xi (\gamma_{\delta}) (t,x,\xi)\vert \geq c_0 $$
for all $t \in I, x\in \xR^d, \xi \in \mathcal{C}_0, 0< h \leq h_0.$
\end{prop}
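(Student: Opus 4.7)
The strategy is to establish the lower bound first for the unmollified symbol $\gamma$ on $I\times\xR^d\times\mathcal{C}_0$ by an explicit algebraic computation of $\Hess_\xi\gamma$, and then transfer this bound to $\gamma_\delta$ by showing that the $x$-mollification produces an $L^\infty$-perturbation that vanishes as $h\to 0$.

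First, I would compute $\Hess_\xi\gamma$ explicitly. Writing $\gamma=\sqrt{a}\,\mathcal{U}^{1/4}$ with $\mathcal{U}(t,x,\xi)=\xi^TA\xi$ and $A=(1+|\nabla\eta|^2)I-(\nabla\eta)(\nabla\eta)^T$, the matrix $A$ is symmetric and positive definite with eigenvalue $1$ on $\nabla\eta$ and $1+|\nabla\eta|^2$ on $(\nabla\eta)^\perp$, so $\det A=(1+|\nabla\eta|^2)^{d-1}$. A direct calculation gives
$$
\Hess_\xi\gamma=\frac{\sqrt{a}}{2}\,\mathcal{U}^{-3/4}A^{1/2}\Bigl[I-\tfrac{3}{2}|y|^{-2}\,yy^T\Bigr]A^{1/2},\qquad y:=A^{1/2}\xi,
$$
using $\mathcal{U}=|y|^2$. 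The bracket has eigenvalue $-1/2$ along $y$ and $1$ on $y^\perp$, hence determinant $-1/2$, so
$$
\det\Hess_\xi\gamma(t,x,\xi)=-\frac{a^{d/2}(1+|\nabla\eta|^2)^{d-1}}{2^{d+1}}\,\mathcal{U}^{-3d/4}.
$$
Since $a\ge c>0$ by the Taylor sign condition, since $|\nabla\eta|$ is bounded (as $E\hookrightarrow L^\infty$), and since $|\xi|^2\le\mathcal{U}\le(1+|\nabla\eta|^2)|\xi|^2$ is bounded above and below on $\mathcal{C}_0$, this yields a uniform lower bound
$$
|\det\Hess_\xi\gamma(t,x,\xi)|\ge 2c_0>0,\qquad (t,x,\xi)\in I\times\xR^d\times\mathcal{C}_0.
$$

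Second, I would estimate the perturbation $\Hess_\xi\gamma_\delta-\Hess_\xi\gamma$ in $L^\infty$. Since $\psi(h^\delta D_x)$ acts only on $x$, it commutes with $\partial_\xi^\beta$, and so entry by entry
$$
\partial_\xi^\beta\gamma_\delta(t,x,\xi)-\partial_\xi^\beta\gamma(t,x,\xi)=\bigl(\psi(h^\delta D_x)-I\bigr)\partial_\xi^\beta\gamma(t,\cdot,\xi)(x).
$$
The derivatives $\partial_\xi^\beta\gamma$ are smooth (rational) functions of $(a,\nabla\eta,\xi)$ with denominators of the form $\mathcal{U}^{m/4}$ bounded below on $\mathcal{C}_0$. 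Since $s-\frac12>\frac{d}{2}+s_0$, one has the embedding $E\hookrightarrow C^0(I;W^{s_0,\infty})$, and the tame composition estimate in Hölder spaces (as in the proof of Lemma~\ref{estgamma}) yields
$$
\sup_{(t,\xi)\in I\times\mathcal{C}_0}\bigl\|\partial_\xi^\beta\gamma(t,\cdot,\xi)\bigr\|_{W^{s_0,\infty}(\xR^d)}\le\mathcal{F}(\|\nabla\eta\|_E).
$$
The standard low-frequency approximation estimate $\|(I-\psi(h^\delta D_x))f\|_{L^\infty}\lesssim h^{\delta s_0}\|f\|_{W^{s_0,\infty}}$ then gives, for each entry of the Hessian,
$$
\bigl\|\Hess_\xi\gamma_\delta-\Hess_\xi\gamma\bigr\|_{L^\infty(I\times\xR^d\times\mathcal{C}_0)}\le C h^{\delta s_0}.
$$

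Finally, since the determinant is polynomial in the matrix entries and both Hessians are uniformly bounded, the determinant map is Lipschitz on the relevant bounded set, and choosing $h_0>0$ so that $Ch_0^{\delta s_0}$ is small enough (depending only on $c_0$ and $\|\nabla\eta\|_E$) preserves the lower bound, yielding $|\det\Hess_\xi\gamma_\delta|\ge c_0$ for $0<h\le h_0$. The only subtlety I anticipate is algebraic rather than conceptual: verifying the closed-form determinant of $\Hess_\xi\gamma$ cleanly, which is why the change of variables $y=A^{1/2}\xi$ reducing the anisotropic quadratic form $\mathcal{U}$ to $|y|^2$ is crucial—without it the computation becomes a computational mess.
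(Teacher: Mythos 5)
Your proposal is correct and follows essentially the same route as the paper: an explicit computation of $\det\Hess_\xi\gamma$ (the paper diagonalizes $A$ and reduces to $g(\zeta)=|\zeta|^{2\alpha}$ via $\mathcal{U}=|M\xi|^2$, which is your $y=A^{1/2}\xi$ substitution, and your closed form $-a^{d/2}2^{-d-1}\det A\,\mathcal{U}^{-3d/4}$ agrees with the paper's formula at $\alpha=1/4$), followed by the lower bound from $a\ge c$, $\det A\ge 1$ and the boundedness of $\mathcal{U}$ on $\mathcal{C}_0$, and the perturbation estimate $\|(I-\psi(h^\delta D_x))\gamma_{jk}\|_{L^\infty}\lesssim h^{\delta s_0}\|\gamma_{jk}\|_{W^{s_0,\infty}}$ to transfer the bound to $\gamma_\delta$ for $h$ small. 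No gaps.
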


With the notation in \eqref{U} we can write
$$
\mU(t,x,\xi) = \langle A(t,x) \xi, \xi \rangle,
$$
where $A(t,x)$ is a symmetric matrix. 
Since we have
\begin{equation}\label{inf}
\vert \xi \vert^2 \leq \mU(t,x,\xi) \leq C(1+ \Vert \nabla \eta \Vert^2_{L^\infty(I \times \xR^d)}) \vert \xi \vert ^2 
\end{equation}
we see that the eigenvalues of $A$ are greater than one;  therefore we have
\begin{equation}\label{detA}
\det A(t,x) \geq 1 \quad \forall (t,x) \in I \times \xR^d.
\end{equation}
We shall need the following lemma.
\begin{lemm}
With $\alpha = \frac{1}{4}$ we have
$$
\vert \det  \Hess_\xi (\gamma)(t,x,\xi) \vert = a^{\frac{d}{2}}(2\alpha)^d\vert 2\alpha -1\vert \det A(t,x) 
\big(\mU(t,x,\xi)\big)^{(\alpha-1)d}.
$$
\end{lemm}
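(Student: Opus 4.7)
The idea is to exploit the factorization $\gamma=a^{1/2}\mU^{\alpha}$ with $\alpha=1/4$, together with the fact that $\mU$ is a quadratic form in $\xi$. Since $a$ does not depend on $\xi$, we have
$$
\Hess_\xi(\gamma)=a^{1/2}\Hess_\xi(\mU^{\alpha}),
$$
so everything reduces to computing $\det\Hess_\xi(\mU^{\alpha})$ and multiplying by $a^{d/2}$. This will account for the factor $a^{d/2}$ and, ultimately, show that the whole determinant is essentially controlled by $\det A$ times a power of $\mU$.

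First I would compute the Hessian of $\mU^{\alpha}$. From $\mU=\langle A\xi,\xi\rangle$ with $A$ symmetric one gets $\nabla_\xi\mU=2A\xi$ and $\Hess_\xi\mU=2A$; a direct differentiation then yields
$$
\Hess_\xi(\mU^{\alpha})=2\alpha\,\mU^{\alpha-1}\Bigl[A+2(\alpha-1)\mU^{-1}(A\xi)(A\xi)^{T}\Bigr].
$$
Pulling out the scalar factor $2\alpha\,\mU^{\alpha-1}$ and the factor $a^{1/2}$ gives
$$
\det\Hess_\xi(\gamma)=(2\alpha)^{d}a^{d/2}\mU^{(\alpha-1)d}\,\det\!\Bigl[A+2(\alpha-1)\mU^{-1}(A\xi)(A\xi)^{T}\Bigr].
$$

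The key step is then to evaluate the determinant of the rank-one perturbation of $A$ appearing on the right. Here I would invoke the matrix determinant lemma
$$
\det(A+\beta vv^{T})=\det(A)\bigl(1+\beta\,v^{T}A^{-1}v\bigr),
$$
applied with $v=A\xi$ and $\beta=2(\alpha-1)\mU^{-1}$. Using the symmetry of $A$ one computes $v^{T}A^{-1}v=(A\xi)^{T}A^{-1}(A\xi)=\xi^{T}A\xi=\mU$, so the factor in parentheses collapses to $1+2(\alpha-1)=2\alpha-1$. Substituting back gives
$$
\det\Hess_\xi(\gamma)=(2\alpha)^{d}(2\alpha-1)\,a^{d/2}\det A(t,x)\,\mU^{(\alpha-1)d}(t,x,\xi),
$$
and taking absolute values yields the announced identity. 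I do not expect any real obstacle: the only minor care needed is to verify the matrix determinant lemma is applicable, which is ensured by $\det A\ge 1>0$ from \eqref{detA}, so $A^{-1}$ exists and the computation is rigorous.
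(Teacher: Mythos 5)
Your argument is correct, and it reaches the identity by a genuinely different route than the paper. The paper first diagonalizes $A$ by an orthogonal change of variables, writes $\mU(t,x,\xi)=\vert M\xi\vert^2$ and $\gamma=a^{1/2}g(M\xi)$ with the radial function $g(\zeta)=\vert\zeta\vert^{2\alpha}$, computes $\Hess_\zeta g$ explicitly, and then evaluates $\det\bigl(\delta_{jk}+\lambda\omega_j\omega_k\bigr)=1+\lambda$ by a hand-rolled polynomial argument (showing $F(0)=F'(0)=1$ and $F''\equiv 0$). You instead differentiate $\mU^{\alpha}$ directly, exhibit $\Hess_\xi(\mU^{\alpha})$ as $2\alpha\,\mU^{\alpha-1}$ times a rank-one perturbation of $A$, and apply the matrix determinant lemma, with the simplification $(A\xi)^{T}A^{-1}(A\xi)=\xi^{T}A\xi=\mU$ collapsing the correction factor to $2\alpha-1$. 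The two are closely related — the paper's computation of $F(\lambda)$ is exactly the matrix determinant lemma in the special case of $I+\lambda\,\omega\omega^{T}$ — but your version avoids the diagonalization and the change of variables entirely, at the cost of invoking the lemma as a black box (legitimately, since $\det A\ge 1$ guarantees invertibility), whereas the paper's version is fully self-contained. Both correctly use $a>0$, $\det A\ge 1>0$ and $\mU\ge\vert\xi\vert^2>0$ when passing to absolute values.
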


\begin{proof}
Here $t$ and $x$ are fixed parameters.
The matrix $A$ being symmetric one can find an orthogonal matrix $B$ such that 
$B^{-1}AB = D = \text{diag}(\mu_j)$ where the $\mu_j's$ are the eigenvalues of $A.$ Setting 
$C=\text{diag}(\sqrt{\mu_j})$ and $M= CB^{-1}$ we see that $\mU(t,x,\xi) = \vert M\xi \vert^2$ 
which implies that
\begin{equation}
\gamma(t,x,\xi) = a^\mez g(M\xi) \quad \text{where} \quad g(\zeta) = \vert \zeta \vert^{2\alpha},
\end{equation}
so that $\text{Hess}_\xi (\gamma)(t,x,\xi) = a^{\frac{1}{2}}  \,  {}^t\!M \big( \text{Hess}_\zeta( g) (M\xi)\big) M$. 
Since $\vert \det M \vert^2 = \vert \det C \vert^2 = \det A$ we obtain
\begin{equation}\label{Hgamma}
\vert \det  \text{Hess}_\xi (\gamma)(t,x,\xi) \vert  = a^{\frac{d}{2}} \det A(t,x) \, \vert \det \text{Hess}_\zeta (g) (M(t,x)\xi) \vert.
\end{equation}
Now we have,
\begin{equation}\label{Hg}
\frac{\partial^2 g}{\partial \zeta_j \partial \zeta_k} (\zeta) 
= 2\alpha \vert \zeta \vert^{2\alpha -2}\big(\delta_{jk} + 2(\alpha -1)\omega_j \omega_k\big), \quad \omega_j = \frac{\zeta_j}{\vert \zeta \vert}.
\end{equation}
Let us consider the function $F: \xR \to \xR$ defined by
$$F(\lambda) = \det \big( \delta_{jk} + \lambda \omega_j \omega_k\big).$$
It is a polynomial in $\lambda$ and we have
\begin{equation}\label{F}
F(0) = 1.
\end{equation}
Let us denote by $C_j(\lambda)$ the $j^{th}$ column of this determinant. Then
$$
F'(\lambda) = \sum_{k=1}^d \det \big(C_1(\lambda), \ldots, C'_k(\lambda),\ldots C_d(\lambda) \big).
$$
We see easily that
$$
\det \big(C_1(0), \ldots, C'_k(0),\ldots C_d(0) \big) = \omega_j^2
$$
which ensures that
\begin{equation}\label{F'}
F'(0) = 1.
\end{equation}
Now since $C_j(\lambda)$ is linear with respect to $\lambda$ 
we have $C''_j(\lambda) = 0$ therefore
$$
F''(\lambda) = \sum_{j=1}^d \sum_{k=1,k\neq j}^d 
\det \big(C_1(\lambda),\ldots, C'_j(\lambda),\ldots,C'_k(\lambda),\ldots, C_d(\lambda) \big).
$$
Now $C'_j(\lambda) = \omega_j(\omega_1,\ldots,\omega_d)$ 
and $C'_k(\lambda) = \omega_k(\omega_1,\ldots,\omega_d)$. 
It follows that $F''(\lambda) = 0$ for all $\lambda \in \xR.$ 
We deduce from \eqref{F}, \eqref{F'} that $F(\lambda) = 1 + \lambda$ and from \eqref{Hg} that
$$
\det \text{Hess}_\zeta(g)(\zeta) = \big(2\alpha \vert \zeta \vert^{2\alpha -2}\big)^d (2\alpha -1).
$$
The lemma follows then from \eqref{Hgamma} since $\mU(t,x,\xi) = \vert M(t,x)\xi \vert^2.$
\end{proof}

\begin{coro}\label{hess1}
One can find $c_0 >0$ such that  
$$
\vert \det  \Hess_\xi (\gamma)(t,x,\xi) \vert \geq c_0,
$$
for all $t\in I, x \in \xR^d, \xi \in \mathcal{C}_0.$ 
\end{coro}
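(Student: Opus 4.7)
The plan is to derive Corollary~\ref{hess1} directly from the explicit formula
$$
\vert \det  \Hess_\xi (\gamma)(t,x,\xi) \vert = a^{\frac{d}{2}}(2\alpha)^d\vert 2\alpha -1\vert \det A(t,x)\,\bigl(\mU(t,x,\xi)\bigr)^{(\alpha-1)d}
$$
established in the preceding lemma (with $\alpha = 1/4$), by bounding each factor on the right-hand side from below by a positive constant uniform in $(t,x,\xi)\in I\times\xR^d\times\mathcal{C}_0$.

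First, the numerical prefactor $(2\alpha)^d\vert 2\alpha-1\vert = 2^{-d-1}$ is a strictly positive constant, so this factor is harmless. Next, by Assumption~(ii) on the Taylor coefficient, we have $a(t,x)\ge c>0$ on $I\times\xR^d$, which gives $a^{d/2}\ge c^{d/2}>0$. For the third factor, estimate \eqref{detA} yields $\det A(t,x)\ge 1$ everywhere on $I\times\xR^d$.

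The only remaining factor is $\bigl(\mU(t,x,\xi)\bigr)^{(\alpha-1)d}$; since $\alpha-1 = -\tfrac34<0$, bounding this from below amounts to bounding $\mU$ \emph{from above}. This is where we must use the hypothesis $\nabla\eta\in E$. By \eqref{inf} we have
$$
\mU(t,x,\xi)\le C\bigl(1+\|\nabla\eta\|^2_{L^\infty(I\times\xR^d)}\bigr)\vert\xi\vert^2,
$$
and since $E\subset C^0(I;H^{s-\mez})$ with $s-\mez>d/2+s_0$, the Sobolev embedding gives $\|\nabla\eta\|_{L^\infty(I\times\xR^d)}\le C\|\nabla\eta\|_{C^0(I;H^{s-\mez})}<\infty$. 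Combined with $\vert\xi\vert\le 2$ on $\mathcal{C}_0$, this yields a uniform upper bound $\mU(t,x,\xi)\le M$ depending only on $\|\nabla\eta\|_E$, hence $\mU^{(\alpha-1)d}\ge M^{-3d/4}>0$.

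Multiplying these four uniform lower bounds together produces a constant $c_0>0$ depending only on the Taylor constant $c$, the dimension $d$, and $\|\nabla\eta\|_E$, with the desired property. There is no real obstacle here: once the algebraic identity of the previous lemma has been established, the corollary is essentially a bookkeeping exercise checking that every factor is controlled by the standing assumptions. The only point that deserves care is to note that the negative exponent $(\alpha-1)d$ forces us to invoke an \emph{upper} bound on $\mU$ (and hence a Sobolev estimate on $\nabla\eta$), rather than the trivial lower bound $\mU\ge\vert\xi\vert^2$ used elsewhere.
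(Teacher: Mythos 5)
Your proof is correct and follows exactly the route the paper takes: the paper's own argument is the one-line "this follows from the preceding lemma and from \eqref{inf}, \eqref{detA}," and your write-up simply spells out that each factor in the lemma's formula is uniformly bounded below, correctly noting that the negative exponent $(\alpha-1)d$ forces one to use the \emph{upper} bound in \eqref{inf} together with the $L^\infty$ control on $\nabla\eta$.
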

\begin{proof}
This follows from the preceding lemma and from \eqref{inf}, \eqref{detA}.
\end{proof}
\begin{proof}[Proof of Proposition \ref{hess}]
 
Recall that  we have   for all $\alpha \in \xN^d$
$$\sup_{t\in I} \sup _{\vert \xi \vert \leq 2} \Vert D^\alpha_\xi \gamma (t,\cdot,\xi)\Vert_{W^{s_0, \infty}(\xR^d)} < +\infty.$$
For fixed $j,k\in \{1,\ldots,d\}$ we write
\begin{equation}\label{perturb}
\frac{\partial^2 \gamma_\delta}{\partial \xi_j \partial \xi_k}(t,x,\xi) 
= \frac{\partial^2 \gamma}{\partial \xi_j \partial \xi_k}(t,x,\xi) 
- (I-\psi(h^\delta D_x))\frac{\partial^2 \gamma}{\partial \xi_j \partial \xi_k}(t,x,\xi).
\end{equation}
Setting $\gamma_{jk}= \frac{\partial^2 \gamma}{\partial \xi_j \partial \xi_k}$ 
we  have, since $\psi(0) =1$, 
$$
(I-\psi(h^\delta D_x))\gamma_{jk}(t,x,\xi) 
= h^{-\delta d}\int_{\xR^d} \overline{\mathcal{F}}\psi \Big(\frac{y}{h^\delta}\Big)
\big[\gamma_{jk}(t,x,\xi) - \gamma_{jk}(t,x-y,\xi)\big] dy,
$$
where $\overline{\mathcal{F}}$ denotes the inverse Fourier transform with respect to $x$. 
Then 
\begin{equation*}
\begin{aligned}
&\Vert (I-\psi(h^\delta D_x))\gamma_{jk}(t,\cdot,\xi)  \Vert_{L^\infty (\xR^d)} \\
&\qquad \leq h^{-\delta d}\Big( \int_{\xR^d} \vert \overline{\mathcal{F}}\psi 
\big(\frac{y}{h^\delta}\big)\vert  \vert y \vert^{s_0} dy\Big) \Vert  \gamma_{jk}(t,\cdot,\xi) \Vert_{W^{s_0,\infty}(\xR^d)}\\
&\qquad \leq C h^{\delta s_0} \sup_{t\in I} \sup_{\vert \xi \vert \leq 2}
\Vert \gamma_{jk}(t,\cdot,\xi)\Vert_{W^{s_0,\infty}(\xR^d)}. 
\end{aligned}
\end{equation*}
Then Proposition \ref{hess} follows from Corollary \ref{hess1} and \eqref{perturb} 
if $h_0$ is small enough.
\end{proof}

\section{The pseudo-differential symbol}

In this paragraph, we study the pseudo-differential symbol of $T_{\gamma_\delta}$

Let $\chi \in C_0^\infty(\xR^d \times \xR^d) $ be such that
\begin{equation}\label{chi}
\begin{aligned}
&(i)\,  &&\chi(-\zeta, \eta) = \chi(\zeta,\eta),\\
&(ii)\,  &&\chi \text{ is homogeneous of order zero }, \\
&(iii)\, && \chi(\zeta, \eta) =1 \text{ if } \vert \zeta \vert \leq \eps_1 \vert \eta \vert,  
\quad\chi(\zeta, \eta) =0 \text{ if } \vert \zeta \vert \geq \eps_2 \vert \eta \vert 
\end{aligned}
\end{equation}
where $0<\eps_1 <\eps_2$ are small constants. 
Then modulo an operator of order zero we have
\begin{equation}\label{pseudo}
T_{\gamma_\delta}u  = \sigma_{\gamma_\delta}(t,x, D_x)u 
\end{equation}
with
$$
\sigma_{\gamma_\delta}(t,x, \eta) = \Big( \int_{\xR^d} 
e^{i x\cdot \zeta} \chi(\zeta, \eta) \widehat{\gamma}_\delta(t, \zeta, \eta) \,d\zeta \Big) \psi_0(\eta)
$$
where $\widehat{\gamma}_\delta$ denotes the Fourier transform of $\gamma_\delta$ 
with respect to  the variable $x$ and $\psi_0$ is a cut-off function such that $\psi_0(\eta) = 0$ 
for $\vert \eta \vert \leq \frac{1}{4}, \psi_0(\eta) = 1$  for $\vert \eta \vert \geq \frac{1}{3}.$ (Notice that $\chi$ can be so chosen that 
Remark $2.2$ in \cite{ABZ3} remains true.)

We shall use in the sequel the following result.
\begin{lemm}\label{l0.6}
For all $N\in \xN, M>0$ and all $\alpha, \beta \in \xN^d$ there exists $C  >0$ 
such that
$$\sup_{\vert \eta \vert \leq M} \vert D_\mu^\alpha 
D_\eta^\beta \widehat{\chi}(\mu, \eta)\vert \leq \frac{C}{\langle \mu \rangle^N}$$
where the Fourier transform is taken with respect to the first variable of $\chi$.
\end{lemm}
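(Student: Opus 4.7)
The plan is to prove this by exploiting the fact that, for each fixed $\eta\ne 0$, the function $\zeta\mapsto \chi(\zeta,\eta)$ is a $C^\infty$ bump function with compact support contained in $\{|\zeta|\le \eps_2|\eta|\}$, so that its partial Fourier transform $\widehat\chi(\mu,\eta)$ is Schwartz in $\mu$. The real content of the lemma is uniformity of all Schwartz seminorms as $\eta$ varies in a compact set (note that, in view of the cutoff $\psi_0(\eta)$ appearing in \eqref{pseudo}, we can freely assume $|\eta|$ is bounded below away from $0$, where homogeneity of degree zero forces an essential singularity).

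First I would perform the standard integration by parts in $\zeta$: for any multi-indices $\alpha,\gamma\in \xN^d$,
\begin{equation*}
\mu^\gamma D_\mu^\alpha \widehat\chi(\mu,\eta) = \int_{\xR^d} e^{-i\zeta\cdot\mu}\, D_\zeta^\gamma\!\bigl[(-i\zeta)^\alpha\chi(\zeta,\eta)\bigr]\,d\zeta,
\end{equation*}
with no boundary terms since $\chi(\cdot,\eta)$ is compactly supported. The key inputs are then the two consequences of property (ii)--(iii) of $\chi$: for any $\gamma'\ne 0$, the derivative $D_\zeta^{\gamma'}\chi$ vanishes off the annulus $\{\eps_1|\eta|\le |\zeta|\le \eps_2|\eta|\}$ (since $\chi$ is locally constant elsewhere), and, being homogeneous of degree $-|\gamma'|$, satisfies $|D_\zeta^{\gamma'}\chi(\zeta,\eta)|\le C|\zeta|^{-|\gamma'|}\le C(\eps_1|\eta|)^{-|\gamma'|}$ on that annulus.

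Expanding the Leibniz rule on $D_\zeta^\gamma[\zeta^\alpha\chi]$ and bounding each term with the volume estimate $|\{|\zeta|\le \eps_2|\eta|\}|\le C|\eta|^d$, I obtain
\begin{equation*}
|\mu^\gamma D_\mu^\alpha \widehat\chi(\mu,\eta)|\le C_{\alpha,\gamma}\, |\eta|^{d+|\alpha|-|\gamma|},
\end{equation*}
which, for $\eta$ restricted to a compact subset of $\xR^d\setminus\{0\}$, is finite. Taking $|\gamma|=N$ and combining with the trivial bound for $|\mu|\le 1$ gives the desired decay $C/\langle\mu\rangle^N$. The $\eta$-derivatives are handled the same way: $D_\eta^\beta \chi$ is again homogeneous of degree zero minus $|\beta|$ and still supported in an annulus of width comparable to $|\eta|$, so differentiating under the integral sign and repeating the integration by parts yields the analogous bound with an additional factor $|\eta|^{-|\beta|}$, still uniform on the compact range of $\eta$.

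The only mildly delicate point is the singular behaviour at $\eta=0$ inherited from the degree-zero homogeneity; this is why the bound really requires $|\eta|$ to stay away from the origin (harmless here since $\sigma_{\gamma_\delta}$ carries the factor $\psi_0(\eta)$ supported in $|\eta|\ge 1/4$). No deep estimate is needed beyond the classical fact that the Fourier transform of a bounded family in $C_0^\infty(\xR^d)$ (with uniformly bounded support) is a bounded family in $\mathcal{S}(\xR^d)$, the homogeneity being the tool that produces the uniform $C^k$ bounds on $\chi(\cdot,\eta)$ and $D_\eta^\beta\chi(\cdot,\eta)$.
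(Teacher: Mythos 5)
Your argument is correct and follows essentially the same route as the paper: integrate by parts in $\zeta$ to convert powers of $\mu$ into $\zeta$-derivatives of $\chi$, then exploit that $\chi(\cdot,\eta)$ is compactly supported in $\{|\zeta|\le \eps_2|\eta|\}$. You are in fact more scrupulous than the paper's own one-line proof, which only notes that the domain of integration lies in $\{|\zeta|\le\eps_2 M\}$ without tracking the size of the integrand: your quantitative bound $|\eta|^{d+|\alpha|-|\gamma|}$ makes explicit the degeneration as $\eta\to 0$ forced by the degree-zero homogeneity (so strictly speaking the stated $\sup_{|\eta|\le M}$ should be taken on an annulus away from the origin, which is indeed the only regime in which the lemma is ever invoked, thanks to the cut-offs $\psi_0$ and $\varphi_1$).
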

\begin{proof}
Recall that we have $\supp \chi \subset \{(\zeta, \eta): \vert \zeta \vert \leq \eps_2 \vert \eta \vert\}$. 
Then for $\vert \eta \vert \leq M$ we see easily that for any $k\in \xN$ we can write
$$
\vert \mu\vert^{2k} D_\mu^\alpha D_\eta^\beta \widehat{\chi}(\mu, \eta) 
= \int_{\xR^d} e^{-i \mu\cdot \zeta} 
(-\Delta_\zeta)^k\big [(-\zeta)^\alpha D_\eta^\beta \chi(\zeta, \eta)\big] \,d\zeta.
$$
The result follows then from the fact that when $\vert \eta \vert \leq M$ 
the domain of integration is contained in the set $\{\zeta : \vert \zeta \vert\leq \eps_2 M\}.$
\end{proof}
\section{Frequency localization}
Let us recall the Littlewood-Paley decomposition. 
There exists a function $\psi\in C^\infty_0(\xR^d)$ such that 
$\psi(\xi)=1$ for $\la \xi\ra \le 1/2$ and $\psi(\xi)=0$ for $\la \xi\ra\ge 1$, and a function 
$\varphi\in C^\infty_0(\xR^d)$ whose support is contained 
in the shell $\mathcal{C}_0 := \{\xi : \mez \leq \vert \xi \vert \leq2\}$ such that
$$
\psi(\xi)+\sum_{k=0}^{N-1}\varphi(2^{-k}\xi)=\psi(2^{-N}\xi).
$$
We set
$$
\Delta_{-1}u=\psi(D)u,\quad 
\Delta_j u = \varphi(2^{-j}D)u,\quad S_j u =\sum_{k=-1}^{j-1} \Delta_k u=\psi(2^{-j}D)u.
$$

Let us consider the operator
\begin{equation*}
L = \partial_t +\mez(T_V\cdot \nabla + \nabla \cdot T_V)+i T_\gamma
\end{equation*}
where
$$
V = (V_1,\ldots,V_d), \quad V_j \in C^0(I; H^s(\xR^d))\cap L^2(I; W^{1,\infty}(\xR^d)).
$$
  
Let $u \in L^\infty(I; H^s(\xR^d))$ be such that
$$
Lu = f \in L^2(I;H^s(\xR^d)).
$$
Then we have
\begin{equation}\label{commuta}
 L \Delta_j u = \Delta_j f + \mez \big( [T_V, \Delta_j]\cdot \nabla u +\nabla\cdot  [T_V, \Delta_j] u\big) +i [T_\gamma, \Delta_j]u.
\end{equation}
Now we have the following lemma.
\begin{lemm}\label{TV-S}There holds
\begin{align*}
 T_V \cdot \nabla \Delta_j u&= S_j(V)\cdot\nabla \Delta_ju + R_ju\\
 \nabla \cdot T_V \Delta_j u&= \nabla \cdot S_j(V) \Delta_j u + R'_j u
 \end{align*}
where $R_ju, R'_ju$ have their  spectrum contained in a ball $B(0, C2^j)$ and satisfies the estimate
$$
\Vert R_ju \Vert_{H^s(\xR^d)} + \Vert R'_ju \Vert_{H^s(\xR^d)} \leq   C\Vert V \Vert_{W^{1,\infty}(\xR^d)} \Vert u \Vert_{H^s(\xR^d)}.
$$
\end{lemm}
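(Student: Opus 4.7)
The proof is a standard spectral localization argument combined with Bernstein's inequality; the key point is simply that $S_j(V)$ differs from $S_{j-N_0}(V)$ (where $N_0$ is the integer fixed by the paraproduct convention $T_a w = \sum_k S_{k-N_0}(a)\Delta_k w$) by $O(N_0)$ dyadic blocks of $V$ at frequency $\sim 2^j$, each of which is small in $L^\infty$ thanks to the Lipschitz bound on $V$. I treat the first identity in detail; the second is entirely analogous.

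First, unpack the definition: $T_V\cdot \nabla \Delta_j u=\sum_k S_{k-N_0}(V)\cdot \Delta_k \nabla\Delta_j u$. Since $\Delta_k\Delta_j\equiv 0$ whenever $|k-j|\ge 2$, the sum reduces to $|k-j|\le 1$, and for those indices $\sum_{|k-j|\le 1}\Delta_k\nabla\Delta_j u=\nabla\Delta_j u$. Regrouping gives
\begin{equation*}
T_V\cdot \nabla\Delta_j u = S_{j-N_0}(V)\cdot \nabla\Delta_j u +\sum_{|k-j|\le 1}\bigl(S_{k-N_0}(V)-S_{j-N_0}(V)\bigr)\cdot \Delta_k\nabla\Delta_j u .
\end{equation*}
I then replace $S_{j-N_0}(V)$ by $S_j(V)$, producing an additional term $-(S_j(V)-S_{j-N_0}(V))\cdot\nabla\Delta_j u$. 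The sum of these two correction terms is declared to be $R_j u$.

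For the spectral localization of $R_j u$: each piece is the product of some $\Delta_m V$ with $m\le j-1$ (frequency in $B(0,2^j)$) and $\Delta_k\nabla\Delta_j u$ (frequency $\sim 2^j$), so by the Fourier support convolution rule the spectrum lies in $B(0,C 2^j)$. For the norm estimate, Bernstein's inequality gives $\|\Delta_m V\|_{L^\infty}\lesssim 2^{-m}\|\nabla V\|_{L^\infty}$ for every $m\ge 0$, hence
\begin{equation*}
\|S_j(V)-S_{j-N_0}(V)\|_{L^\infty}\lesssim \sum_{m=j-N_0}^{j-1} 2^{-m}\|V\|_{W^{1,\infty}}\lesssim 2^{-j}\|V\|_{W^{1,\infty}},
\end{equation*}
and similarly for the differences $S_{k-N_0}(V)-S_{j-N_0}(V)$ with $|k-j|\le 1$. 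Combined with $\|\nabla\Delta_j u\|_{L^2}\lesssim 2^j\|\Delta_j u\|_{L^2}$ this yields $\|R_j u\|_{L^2}\lesssim \|V\|_{W^{1,\infty}}\|\Delta_j u\|_{L^2}$. Since $R_j u$ has spectrum in $B(0,C 2^j)$, multiplying by $2^{js}$ gives a valid upper bound on its $H^s$ norm, and the claim follows from $2^{js}\|\Delta_j u\|_{L^2}\lesssim \|u\|_{H^s}$.

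The identity for $\nabla\cdot T_V\Delta_j u=\nabla\cdot S_j(V)\Delta_j u + R_j' u$ is obtained by the same manipulation with the divergence kept on the outside: one writes $\nabla\cdot T_V\Delta_j u=\sum_{|k-j|\le 1}\nabla\cdot\bigl(S_{k-N_0}(V)\Delta_k\Delta_j u\bigr)$, isolates $\nabla\cdot(S_j(V)\Delta_j u)$, and controls the remainder by the same combination of Bernstein's inequality (giving $\|S_j(V)-S_{j-N_0}(V)\|_{L^\infty}\lesssim 2^{-j}\|V\|_{W^{1,\infty}}$) and the factor $2^j$ coming from $\nabla$ acting on a function spectrally localized in $B(0,C 2^j)$. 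The only mild subtlety is the case of small $j$ (in particular $j=-1$), where the sum over $|k-j|\le 1$ must be interpreted as starting from $k=-1$; this causes no issue because the Bernstein estimate degrades only at high frequency. There is no genuine obstacle: the entire argument is bookkeeping around the paraproduct definition and Bernstein's inequality.
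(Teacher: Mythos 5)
Your proof is correct and follows essentially the same route as the paper: reduce the paraproduct sum to the indices $|k-j|\le 1$, isolate $S_j(V)\cdot\nabla\Delta_j u$, and estimate the finitely many correction terms $\Delta_m V\cdot\Delta_k\nabla\Delta_j u$ by combining the reverse Bernstein bound $\lA\Delta_m V\rA_{L^\infty}\lesssim 2^{-m}\lA\nabla V\rA_{L^\infty}$ with the factor $2^j$ from the gradient and the spectral localization in $B(0,C2^j)$. The only cosmetic difference is that you rework the second identity directly, while the paper deduces it from the first via $\nabla\cdot T_V\Delta_j u=T_V\cdot\nabla\Delta_j u+T_{\cnx V}\Delta_j u$; both are equivalent bookkeeping.
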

\begin{proof}
It is sufficient to  prove the first line. Indeed we have
$ \nabla \cdot T_V \Delta_j u = T_V \cdot \nabla \Delta_j u + T_{\text{div}V} \Delta_j u$ 
and $\Vert T_{\text{div}V} \Delta_j u\Vert_{H^s(\xR^d)} \leq C \Vert V \Vert_{W^{1,\infty}(\xR^d)} \Vert u \Vert_{H^s(\xR^d)}$.

Since $\Delta_k \Delta_j = 0$ if $\vert k-j\vert \geq 2$ we can write
\begin{align*}
T_V \cdot \Delta_j \nabla u  &= \sum_{\vert j-k \vert \leq 1} S_{k-3}(V) \cdot \Delta_k \Delta_j \nabla u = S_j(V)\cdot\sum_{\vert j-k \vert \leq 1}   \Delta_k \Delta_j \nabla u + R_ju\\
&   = S_j(V)\Delta_j \nabla u +R_j u, 
\end{align*}
where $R_j u = \sum_{\vert j-k \vert \leq 1} \big( S_{k-3}(V)-S_j(V) \big)  \cdot \Delta_k \Delta_j \nabla u$. 
We have three terms in the sum defining $R_ju$.  Each of them is a finite sum of terms of the form $A_j = \Delta_{j+\mu}(V)\cdot   \Delta_{j+ \nu} \Delta_j \nabla u.$ Since the spectrum of $A_j$ is contained in a ball of radius $C2^j$ we can write
  \begin{align*}
  \Vert A_j \Vert_{H^s(\xR^d)} &\leq C_1 2^{js}\Vert F_j \Vert_{L^2(\xR^d)} \leq C_1 2^{js}\Vert \Delta_{j+\mu}V\Vert_{L^\infty(\xR^d)} \Vert \Delta_{j }\nabla u\Vert_{L^2(\xR^d)} \\
  &\leq C_2 2^{js} 2^{-j} \Vert V \Vert_{W^{1,\infty}(\xR^d)} 2^{-js} 2^j \Vert u \Vert_{H^s(\xR^d)} \leq C_2\Vert V \Vert_{W^{1,\infty}(\xR^d)}  \Vert u \Vert_{H^s(\xR^d)}
  \end{align*}
  which completes the proof of the lemma.
\end{proof}
It follows from the lemma that we have 
\begin{multline}
 \big(\partial_t +\mez( S_j(V)\cdot \nabla + \nabla  \cdot  S_j(V))+i T_\gamma\big) \Delta_j u =\\
   \Delta_j f+\mez \big( [T_V, \Delta_j]\cdot \nabla u +\nabla\cdot  [T_V, \Delta_j] u\big) +i [T_\gamma, \Delta_j]u +R_j u + R'_ju
 \end{multline}
 As already mentioned, we need to smooth out the symbols. To do so, 
we replace 
$S_j(V)$ by $S_{\delta j}(V)=\psi(2^{-j\delta}D)V$ and $\gamma$ by $\gamma_\delta = \psi(2^{-j\delta}D)\gamma.$  We set
\begin{equation}\label{Ldelta}
L_\delta = \partial_t +\mez( S_{j\delta}(V)\cdot \nabla + \nabla  \cdot  S_{\delta j}(V)) +i T_{\gamma_\delta}.
\end{equation}
Then we have
\begin{equation}\label{Ldeltau}
L_\delta \Delta_ju = F_j
\end{equation}
where 
  \begin{multline}\label{Fj}
F_j = \Delta_jf +[T_V, \Delta_j]\cdot \nabla u + i [T_\gamma, \Delta_j]u +R_j u + R'_ju + \\ \mez \big\{ \big(S_{j\delta}(V)  - S_j(V)\big)\cdot\nabla \Delta_ju +\nabla \cdot \big(S_{j\delta}(V)  - S_j(V)\big) \Delta_ju\big\}
  +i\big(T_{\gamma_\delta} - T_\gamma \big)\Delta_j u.
\end{multline}
  Then, for all fixed $t,$ the function $F_j(t,\cdot)$ 
has its spectrum contained in a ball $B(0, C2^j)$.  

\section{Straightening the vector field}

We want to straighten the vector field $\partial_t + S_{j\delta}(V)\cdot \nabla.$
Consider the system of differential equations
\begin{equation}\label{eqdif}
\left \{
\begin{aligned}
&\dot{X}_k(s) =  S_{j\delta}(V_k)(s,X(s)), \quad 1 \leq k \leq d,\quad X=(X_1,\ldots,X_d) \\
&X_k(0) =x_k.
\end{aligned}
\right.
\end{equation}
For $k=1,\ldots,d$ we have $S_{j\delta}(V_k) \in L^\infty(I;H^\infty(\xR^d))$ and
$$
\vert S_{j\delta}(V_k)(s,x) \vert \leq C\Vert V_k \Vert_{L^\infty(I\times \xR^d)} \quad   \forall (s,x) \in I \times \xR^d.
$$
Therefore System \eqref{eqdif} has a unique solution defined on $I$ 
which will be denoted $X(s; x,h) (h=2^{-j})$ or sometimes simply $X(s).$

 We shall set 
 \begin{equation}\label{E0}
 E_0 = L^p(I; W^{1,\infty}(\xR^d))^d
 \end{equation}
  where $p= 4$ if $d=1$, $p=2$ if $d \geq 2$, endowed with its natural norm.
\begin{prop}\label{estX}
For fixed $(s,h)$ the map $x \mapsto X(s;x,h)$ 
belongs to $C^\infty (\xR^d, \xR^d).$ Moreover there exist functions $\mathcal{F}, \mathcal{F}_\alpha: \xR^+ \to \xR^+$ such that  
\begin{equation*}
\begin{aligned}
&(i) \quad &&\Big\Vert \frac{\partial X}{\partial x}(s; \cdot, h) - Id 
\Big\Vert_{L^\infty(\xR^d)} \leq \mathcal{F}(\Vert V \Vert_{E_0})\vert s \vert^\mez, \\  
&(ii)\quad  &&\Vert (\partial_x^\alpha X) (s; \cdot, h)  \Vert_{L^\infty(\xR^d)} 
\leq \mathcal{F}_\alpha(\Vert V \Vert_{E_0}) h^{-\delta(\vert \alpha \vert -1)} \vert s \vert^\mez, \quad    \vert \alpha \vert \geq 2 \\
\end{aligned}
\end{equation*}
for   all $(s,h) \in I\times (0, h_0]$.
\end{prop}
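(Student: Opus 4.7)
The plan is to differentiate the ODE \eqref{eqdif} iteratively, obtaining linear equations for the higher $x$-derivatives of the flow that I close by induction on $|\alpha|$. Smoothness in $x$ is immediate from the classical smooth-dependence theorem for ODEs, since for each $(t,h)$ the vector field $S_{j\delta}(V)(t,\cdot)$ is Fourier-supported in $\{|\xi|\leq h^{-\delta}\}$ and hence $C^\infty$ in $x$ with all derivatives bounded (with $j$-dependent bounds). Three ingredients will drive all the quantitative estimates: a Bernstein-type bound $\lA \partial_x^\beta S_{j\delta}(V)(s,\cdot)\rA_{L^\infty}\leq C_\beta h^{-\delta(|\beta|-1)}\lA V(s)\rA_{W^{1,\infty}}$ valid for $|\beta|\geq 1$, proved as in Lemma~\ref{estgamma-delta} by spending one derivative on $V$ and the remaining $|\beta|-1$ on the smoothing cutoff; Gr\"onwall's inequality for linear ODEs; and H\"older's inequality in time, which yields $\int_0^s \lA V(\sigma)\rA_{W^{1,\infty}}\,d\sigma\leq |s|^{1-1/p}\lA V\rA_{E_0}\leq \mathcal{F}(\lA V\rA_{E_0})|s|^{1/2}$, thanks to $p\geq 2$ and $s\in I$ bounded.

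For (i), set $Y(s):=(\partial X/\partial x)(s;x,h)$. Differentiating \eqref{eqdif} gives the linear ODE $\dot Y = \nabla_x S_{j\delta}(V)(s,X(s))\,Y$ with $Y(0)=\mathrm{Id}$. Since $\nabla_x S_{j\delta}(V)=S_{j\delta}(\nabla V)$ and $S_{j\delta}$ is uniformly bounded on $L^\infty$, one has $\lA \nabla_x S_{j\delta}(V)(s,\cdot)\rA_{L^\infty}\leq C\lA V(s)\rA_{W^{1,\infty}}$. Gr\"onwall then yields $\lA Y(s)\rA_{L^\infty}\leq \exp(C\lA V\rA_{L^1(I;W^{1,\infty})})\leq \mathcal{F}(\lA V\rA_{E_0})$; substituting this bound into the integrated equation $Y(s)-\mathrm{Id}=\int_0^s \nabla_x S_{j\delta}(V)(\sigma,X(\sigma))\,Y(\sigma)\,d\sigma$ and applying the H\"older estimate above delivers (i).

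For (ii), I induct on $|\alpha|$. Differentiating \eqref{eqdif} $|\alpha|$ times via the Fa\`a di Bruno formula produces
\begin{equation*}
\frac{d}{ds}\partial_x^\alpha X = \nabla_x S_{j\delta}(V)(s,X)\cdot \partial_x^\alpha X + R_\alpha(s),
\end{equation*}
where $R_\alpha$ is a finite sum, indexed by partitions $\pi$ of $\alpha$ into $|\pi|\geq 2$ nonempty blocks $B$, of terms of the form $c_\pi\,(\partial_x^{|\pi|}S_{j\delta}(V))(s,X)\,\prod_{B\in\pi}\partial_x^B X$. Estimating each factor by the Bernstein bound above, by the induction hypothesis when $|B|\geq 2$, and by (i) when $|B|=1$, the combinatorial identity
\begin{equation*}
(|\pi|-1) + \sum_{B\in\pi,\,|B|\geq 2}(|B|-1) = |\alpha|-1
\end{equation*}
ensures each summand of $R_\alpha(s)$ carries exactly the expected $h^{-\delta(|\alpha|-1)}$ loss, multiplied by $\lA V(s)\rA_{W^{1,\infty}}$ and a nonnegative power of $|s|^{1/2}$. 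Solving the linear equation for $\partial_x^\alpha X$ by Duhamel, applying Gr\"onwall to absorb the homogeneous part into the prefactor $\mathcal{F}_\alpha$, and integrating $\lA R_\alpha(\sigma)\rA_{L^\infty}$ in $\sigma$ via the H\"older step above conclude (ii).

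The main difficulty is the delicate combinatorial bookkeeping in the Fa\`a di Bruno expansion, and most notably the extreme partition in which every block is a singleton (so $|\pi|=|\alpha|$ and no block furnishes an inductive $|\sigma|^{1/2}$ gain): the required time decay must then be extracted entirely from $\int_0^s\lA V(\sigma)\rA_{W^{1,\infty}}\,d\sigma$, which is precisely why the condition $p\geq 2$ in the definition \eqref{E0} of $E_0$ is essential and which produces the factor $|s|^{1/2}$ stated in (i)--(ii). The stronger integrability $p=4$ available in $d=1$ would only improve this to $|s|^{3/4}$, which is subsumed by the uniform statement.
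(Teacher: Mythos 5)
Your proposal is correct and follows essentially the same route as the paper: differentiate the flow equation, use the Bernstein-type bound $\lA \partial_x^\beta S_{j\delta}(V)\rA_{L^\infty}\les h^{-\delta(|\beta|-1)}\lA V\rA_{W^{1,\infty}}$, close the induction on $|\alpha|$ via Fa\`a di Bruno with exactly the exponent count $(|\beta|-1)+\sum|K_i|(|L_i|-1)=|\alpha|-1$, and extract the $|s|^{1/2}$ from Gr\"onwall plus H\"older in time (with $|s|^{3/4}\le C|s|^{1/2}$ when $d=1$, as the paper also notes). No gaps.
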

\begin{proof}
To prove $(i)$ we differentiate the system \label{eqdiff} with respect to $x_l$ and we obtain
\begin{equation*}
\left\{
\begin{aligned}
&\dot{\frac{\partial X_k}{\partial x_l}}(s) = 
 \sum_{q = 1}^d S_{j\delta}\Big (\frac{\partial V_k}{\partial x_q}\Big)(s,X(s))\frac{\partial X_q}{\partial x_l}(s)\\
&\frac{\partial X_k}{\partial x_l}(0) = \delta_{kl}
\end{aligned}
\right. 
\end{equation*}
from which we deduce 
\begin{equation}\label{dX}
\frac{\partial X_k}{\partial x_l}(s) = \delta_{kl} 
+ \int_0^s  \sum_{q = 1}^d S_{j\delta}
\Big (\frac{\partial V_k}{\partial x_q}\Big)(\sigma,X(\sigma))\frac{\partial X_q}{\partial x_l}(\sigma) \, d \sigma.
\end{equation}
Setting 
$\vert \nabla X\vert = \sum_{k,l =1}^d \vert \frac{\partial X_k}{\partial x_l} \vert$ 
we obtain from \eqref{dX}
$$
\vert \nabla X(s) \vert \leq C_d + \int_0^s \vert \nabla V(\sigma, X(\sigma)) \vert  \, \vert \nabla X(\sigma) \vert \, d \sigma.
$$
The Gronwall inequality implies that
\begin{equation}\label{Gron}
 \vert \nabla X(s) \vert \leq \mathcal{F} (\Vert V \Vert_{E_0}) \quad \forall s \in I. 
 \end{equation}
Coming back to \eqref{dX} and using \eqref{Gron} we can write
$$
\la \frac{\partial X }{\partial x}(s) - Id \ra \leq  \mathcal{F} (\Vert V \Vert_{E_0}) \int_0^s \Vert \nabla V(\sigma, \cdot)  \Vert_{L^\infty(\xR^d)} 
\, d\sigma \leq \mathcal{F}_1 (\Vert V \Vert_{E_0})\vert s \vert^\mez.
$$
Notice that in dimension one we have used the inequality $\vert s \vert^{\frac{3}{4}} \leq C\vert s \vert^\mez $ when $s\in I.$ 

To prove $(ii)$ we shall show by induction on $\vert \alpha \vert$ that the estimate
$$
\Vert(\partial_x^\alpha X)(s;\cdot, h)\Vert_{L^\infty(\xR^d)} \leq \mathcal{F}_\alpha(\Vert V \Vert_{E_0}) h^{-\delta(\vert \alpha \vert -1)}
$$
for  $1 \leq \vert \alpha \vert \leq k$ implies $(ii)$ for  $\vert \alpha \vert = k+1.$ 
The above estimate is true for  $\vert \alpha \vert =1$ by $(i)$. 
Let us differentiate  $\vert \alpha \vert$ times the system \eqref{eqdif}. 
We obtain
\begin{equation}\label{rec1}
 \frac{d}{ds}\big(\partial_x^\alpha X\big)(s) = S_{j\delta}(\nabla V)(s,X(s)) \partial_x^\alpha X +(1) 
 \end{equation}
where the term $(1)$ is a finite linear combination of terms of the form
$$
A_\beta (s,x) = \partial_x^\beta \big(S_{j\delta}(V)\big)(s,X(s)) \prod_{i=1}^q \big(\partial_x^{L_i} X(s)\big)^{K_i}
$$
where 
$$
2 \leq \vert \beta \vert \leq \vert \alpha \vert, \quad  1 
\leq q \leq \vert \alpha \vert, \quad \sum_{i=1}^q \vert K_i \vert L_i = \alpha, \quad \sum_{i=1}^q K_i = \beta.
$$
Then  $1 \leq \vert L_i \vert < \vert \alpha \vert$ 
which allows us to use the induction. Therefore we can write
\begin{equation*}
\begin{aligned}
\Vert A_\beta (s,\cdot) \Vert_{L^\infty(\xR^d)} 
&\leq \big\Vert \partial_x^\beta \big(S_{j\delta}(V)\big)(s,\cdot)\big\Vert_{L^\infty(\xR^d)} \prod_{i=1}^q 
\Big\Vert \partial_x^{L_i} X(s, \cdot)\Big\Vert _{L^\infty(\xR^d)}^{\vert K_i \vert}\\[1ex]
&\leq C h^{-\delta (\vert \beta \vert -1)}\Vert V(s,\cdot) \Vert_{W^{1, \infty}(\xR^d)} 
h^{-\delta \sum_{i=1}^q \vert K_i\vert(\vert L_i \vert -1)}\mathcal{F} (\Vert V \Vert_{E_0}) \\[1ex]
&\leq \mathcal{F} (\Vert V \Vert_{E_0})h^{-\delta(\vert \alpha \vert -1)}\Vert V(s,\cdot) \Vert_{W^{1, \infty}(\xR^d)}.
\end{aligned}
\end{equation*}
It follows then from \eqref{rec1} that
\begin{equation}
\begin{aligned}
\vert \partial_x^\alpha X(s) \vert &\leq   \mathcal{F} (\Vert V \Vert_{E_0})h^{-\delta(\vert \alpha \vert -1)} 
\int_0^s\Vert V(\sigma,\cdot) \Vert_{W^{1, \infty}(\xR^d)} \,d\sigma\\
&\quad + C \int_0^s\Vert V(\sigma,\cdot) \Vert_{W^{1, \infty} (\xR^d)}\vert \partial_x^\alpha X(\sigma) \vert \, d\sigma.
\end{aligned}
\end{equation}
The H\" older and Gronwall inequalities imply immediately $(ii)$.
\end{proof}

\begin{coro}
There exist  $T_0>0,h_0>0 $ such that for $t\in [0,T_0]$ and $0<h \leq h_0$ the map $x\mapsto X(t; x,h)$ 
from $\xR^d$ to $\xR^d$ is a $C^\infty$ diffeomorphism.
\end{coro}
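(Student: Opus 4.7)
The plan is to deduce all three required properties (smoothness, injectivity, surjectivity) of $x\mapsto X(t;x,h)$ from the estimates of Proposition~\ref{estX} together with a direct contraction argument. Throughout, $S_{j\delta}(V)$ is a Schwartz-class smoothing of $V$ in the $x$ variable, hence belongs to $L^\infty(I;C^\infty_b(\xR^d))$ for each fixed $h=2^{-j}$; this is what allows us to differentiate \eqref{eqdif} arbitrarily many times in $x$ and to apply the classical ODE results below.

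First, I will choose the parameters. Pick $T_0>0$ so small that
$$
\mathcal{F}(\Vert V\Vert_{E_0})\,T_0^{\mez}\le \mez,
$$
where $\mathcal{F}$ is the function appearing in Proposition~\ref{estX}(i). Then for every $t\in[0,T_0]$ and every $h\in(0,h_0]$,
$$
\Big\Vert\frac{\partial X}{\partial x}(t;\cdot,h)-Id\Big\Vert_{L^\infty(\xR^d)}\le \mez,
$$
so that $\partial X/\partial x(t;x,h)$ is invertible at every point with operator norm of the inverse bounded by $2$ (Neumann series). Combined with the $C^\infty$-regularity in $x$ coming from standard smooth dependence on initial data in \eqref{eqdif}, this shows that $x\mapsto X(t;x,h)$ is a local $C^\infty$-diffeomorphism in a neighborhood of every point of $\xR^d$.

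Next, I will establish global bijectivity. Write $X(t;x,h)=x+\Phi(t;x,h)$ where
$$
\Phi(t;x,h)=\int_0^t S_{j\delta}(V)(s,X(s;x,h))\,ds.
$$
The bound above gives
$$
\Big\Vert\frac{\partial \Phi}{\partial x}(t;\cdot,h)\Big\Vert_{L^\infty(\xR^d)}\le \mez,
$$
so $\Phi(t;\cdot,h)$ is Lipschitz in $x$ with constant $\le\mez$. For any fixed $y\in\xR^d$, the map $T_y:x\mapsto y-\Phi(t;x,h)$ is therefore a $\mez$-contraction on $\xR^d$, and its unique fixed point is the unique $x$ with $X(t;x,h)=y$. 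This yields both injectivity and surjectivity. Global bijectivity together with the local-diffeomorphism property established in the previous step then gives that $x\mapsto X(t;x,h)$ is a $C^\infty$-diffeomorphism of $\xR^d$ onto itself, the inverse being $C^\infty$ by the classical inverse function theorem applied pointwise.

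The only delicate point is ensuring the fixed-point/contraction reasoning can be carried out at the regularity available: this is why Proposition~\ref{estX}(i) is phrased as an $L^\infty$-estimate on $\partial X/\partial x-Id$ with a \emph{universal} constant $\mathcal{F}(\Vert V\Vert_{E_0})$ that does not depend on $h$. All other derivatives $\partial_x^\alpha X$ with $|\alpha|\ge 2$ come with $h$-dependent bounds (from Proposition~\ref{estX}(ii)) but these are not needed for the existence of the diffeomorphism; they will, however, be essential for the subsequent parametrix construction on the small time intervals of size $h^{\delta/2}$ hinted at in the introduction.
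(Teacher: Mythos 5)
Your proof is correct, but it takes a genuinely different route from the paper. The paper invokes Hadamard's global inversion theorem: it checks that the Jacobian $\bigl(\frac{\partial X_k}{\partial x_j}\bigr)$ is invertible (via Proposition~\ref{estX}(i) for $T_0$ small) and that the map is proper, the latter following from the crude bound $\vert X(t;x,h)-x\vert\le T_0\Vert V\Vert_{L^\infty([0,T_0]\times\xR^d)}$; properness plus local diffeomorphism then yields global bijectivity. You instead exploit the same Jacobian estimate quantitatively: writing $X=\mathrm{Id}+\Phi$ with $\Vert\partial_x\Phi\Vert_{L^\infty}\le\mez$, you solve $X(t;x,h)=y$ by the Banach fixed point theorem applied to $x\mapsto y-\Phi(t;x,h)$. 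This is a complete and self-contained argument: it avoids citing Hadamard, it produces the inverse constructively, and the contraction property automatically implies the properness the paper checks separately. The trade-off is that your argument needs $\partial_xX-\mathrm{Id}$ to be small in operator norm (strictly less than $1$), whereas the Hadamard route only needs invertibility of the Jacobian plus the $L^\infty$-in-time bound on $V$; here both ingredients are available uniformly in $h$ from Proposition~\ref{estX}(i), so either choice works on the same time interval. Your closing remark correctly identifies that the higher-order, $h$-dependent bounds of Proposition~\ref{estX}(ii) play no role in this corollary.
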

\begin{proof}
This follows from a result by Hadamard (see~\cite{Be}). 
Indeed if $T_0$ is small enough, Proposition~\ref{estX} 
shows that the matrix $\big( \frac{\partial X_k}{\partial x_j}(t; x,h) \big)$ 
is invertible. On the other hand since
$$
\vert X(t; x,h) - x \vert \leq
\int_0^t \vert S_{j\delta}(V)(\sigma, X(\sigma))\vert \, d \sigma 
\leq T_0 \Vert V \Vert_{L^\infty([0,T_0]\times \xR^d)}
$$
we see that the map  $x\mapsto X(t; x,h)$ is proper.
\end{proof}
  \section{Reduction to a semi-classical form}
Now according to \eqref{pseudo} and \eqref{Ldeltau} we see that the function 
$ {U}\defn \Delta_ju$ is a solution of the equation 
$$ \big( \partial_t +\mez \big\{S_{j\delta}(V)\cdot \nabla + \nabla  \cdot S_{j\delta}(V)\big\} +i  \sigma_{\gamma_\delta}(t,x, D_x)\varphi_1(hD_x)\big)
 {U}(t,x) =F_j(t,x), \quad h=2^{-j}
$$
where 
\begin{equation}\label{phi1}
\varphi_1 \in C^\infty(\xR^d), \quad   
 \supp \varphi_1 \subset \{ \xi: \frac{1}{4} \leq \vert \xi \vert \leq 4\}, \quad  \varphi_1 = 1 \text{ on }  \{ \xi: \frac{1}{3} \leq \vert \xi \vert \leq 3\}  
\end{equation}
and $F_j$ has been defined in \eqref{Fj}. Notice that 
$$\mez \big(S_{j\delta}(V)\cdot \nabla + \nabla  \cdot S_{j\delta}(V)\big\} = S_{j\delta}(V)\cdot \nabla + \mez S_{j\delta}(\text{div}V).$$
We shall set 
\begin{equation}\label{a}
a(t,x,h,\xi) =  \sigma_{\gamma_\delta}(t,x, \xi)\varphi_1(h\xi).
\end{equation}
We make now the change of variable $x = X(t;y,h).$ Let us set
\begin{equation}
v_h(t,y) = {U}(t,X(t;y,h)) \quad t \in [0,T_0].
\end{equation}
Then it follows from \eqref{eqdif} that
\begin{equation}\label{Dv}
\partial_t v_h(t,y) = -i \big( a(t,x,h,D_x) {U} \big) (t, X(t;y,h)) + F_j(t,X(t;y,h)).
\end{equation}
Our next purpose is to give another expression to the quantity
\begin{equation}\label{A}
A= \big(a(t,x,h,D_x) {U} \big) (t, X(t;y,h)).
\end{equation}
In  the computation below, $t\in [0,T_0]$ and $h$ being  fixed, we will omit them. We have 
$$
A = (2\pi)^{-d} \iint e^{i(X(y)-x')\cdot \eta } a(X(y), \eta) {U}(x') \,dx'\, d\eta.
$$
{\bf Notations}: we set
\begin{equation}\label{notation}
\begin{aligned}
H(y,y') &= \int_0^1 \frac{\partial X}{\partial x}(\lambda y +(1-\lambda) y') \, d\lambda, 
\quad M(y,y') =\big(\, {}\!^tH(y,y') \big)^{-1} \\
M_0(y)  &= \Big(\, {}\!^t  \Big(\frac{\partial X}{\partial x}(y)\Big)  \Big)^{-1}, 
\quad  J(y,y') = \Big\vert \det \Big(\frac{\partial X}{\partial x}(y')\Big)\Big\vert \vert \det M(y,y') \vert.
\end{aligned}
\end{equation}
Let us remark that $M,M_0$ are well defined by Proposition~\ref{estX}. 
Moreover $M_0(y) = M(y,y)$ and $J(y,y) =1.$

Now, in the integral defining $A,$ we make the change of variables $x' =X(y')$.  
Then using the equality $X(y) - X(y') = H(y,y')(y-y')$ and setting $\eta = M(y,y')\zeta$ we obtain,
$$
A = (2\pi)^{-d} \iint e^{i (y-y')\cdot \zeta } a\big(X(y), M(y,y')\zeta\big) J(y,y') v_h(y') \,dy' \,d\zeta.
$$
Now we set
\begin{equation}\label{htilde}
z = h^{-\mez} y, \quad w_h(z) = v_h(h^\mez z), \quad \htilde  = h^\mez.
\end{equation}
Then
$$A = (2\pi)^{-d} \iint e^{i (\htilde z-y')\cdot \zeta } 
a\big(X(\htilde z), M\big(\htilde z,y'\big)\zeta\big) J\big(\htilde z,y'\big) v_h(y') \, dy' d\zeta.$$
Then setting $y' = \htilde z'$ and $\htilde \zeta = \zeta'$ 
we obtain
\begin{equation}\label{A1}
 A = (2\pi)^{-d} \iint e^{i (z-z')\cdot \zeta'  } 
 a\big(X(\htilde x), M\big(\htilde z,\htilde z'\big)\htilde ^{-1}\zeta'\big)
 J\big(\htilde z,\htilde z'\big) w_h(z') \,dz' \,d\zeta'.
 \end{equation}
Our aim is to reduce ourselves to a semi-classical form, after multiplying 
the equation by $\htilde .$ However this not straightforward since the symbol $a$ 
is not homogeneous in $\xi$ although $\gamma$ is homogeneous of order $\mez.$ We proceed as follows.
 
First of all on the support of the function $\varphi_1$ 
in the definition of $a$ (see \eqref{a}) the function $\psi_0$ 
appearing in the definition of $\sigma_{\gamma_\delta}$ (see \eqref{pseudo}) is equal to one. 
 
Therefore we can write for $X\in \xR^d, \rho\in \xR^d$,  (skipping the variable $t$),
\begin{equation*} 
\begin{aligned}
\sigma_{\gamma_\delta}(X, \htilde ^{-1}\rho) 
&=\int e^{iX\cdot \zeta}\chi\big(\zeta, \htilde ^{-1}\rho\big)
\widehat{\gamma}_\delta(\zeta, \htilde ^{-1}\rho) \, d \zeta\\
&= \iint e^{i(X-y)\cdot\zeta} \chi
\big(\zeta, \htilde ^{-1}\rho\big) {\gamma}_\delta(y, \htilde ^{-1}\rho)\,dy \,d\zeta\\
 &=\int \widehat{\chi}\big(\mu, \htilde ^{-1}\rho\big){\gamma}_\delta
 \big(X-\mu, \htilde ^{-1}\rho\big)\,d\mu.
\end{aligned}
\end{equation*}
Now since $\chi$ is homogeneous of degree zero we have,
$$\widehat{\chi}(\mu, \lambda \eta) = \lambda^d \widehat \chi(\lambda \mu, \eta), $$
which follows from the fact that $\chi(\zeta, \lambda \eta) = \chi(\lambda\lambda^{-1}\zeta, \lambda \eta) = \chi(\lambda^{-1}\zeta,   \eta).$

Applying this equality with $\lambda = \htilde ^{-2}$ and $\eta = \htilde  \rho$ we obtain,
\begin{equation*} 
\begin{aligned}
 \sigma_{\gamma_\delta}(X, \htilde ^{-1}\rho)
 &= \htilde ^{-2d}\int \widehat{\chi}(\htilde ^{-2} \mu, \htilde  \rho){\gamma}_\delta(X-\mu, \htilde ^{-1}\rho)\,d\mu\\
 &= \int \widehat{\chi}(\mu' , \htilde  \rho){\gamma}_\delta(X-\htilde ^{2}\mu'  , \htilde ^{-1}\rho)
 \,d\mu'. 
 \end{aligned}
\end{equation*}
Using the fact that $\gamma$ and $\gamma_\delta$ are  homogeneous of order $\mez$ in $\xi$  we obtain 
$$ \htilde  \sigma_{\gamma_\delta}(X, \htilde ^{-1}\rho) 
= \int \widehat{\chi}(\mu  , \htilde  \rho){\gamma}_\delta(X-\htilde ^{2}\mu   , \htilde  \rho)\, d\mu
$$ 
and since $\htilde ^{-1}h=\htilde $, we obtain
$$
\htilde  a(X,\htilde ^{-1}\rho) =  \Big(\int \widehat{\chi}(\mu  , \htilde  \rho){\gamma}_\delta(X-\htilde ^{2}\mu,\htilde\rho)\,d\mu\Big) 
\varphi_1(\htilde \rho).
$$
It follows then from \eqref{Dv},\eqref{A},\eqref{A1} that the function $w_h$ defined in \eqref{htilde} 
is solution of the equation
\begin{equation}\label{Eqfin}
(\htilde \partial_t + \htilde c+iP)w_h(t,z) = \htilde F_j(t,X(t,\htilde z,h))
\end{equation}
where $c(t,z, \htilde) =  \mez S_{j\delta}(\text{div}V)(t, X(t,\htilde z))$ and 
\begin{equation}\label{operateur}
Pw(t,z) = (2\pi \htilde )^{-d}\iint e^{i{\htilde}^{-1}(z-z')\cdot\zeta}
\widetilde{p}(t,z,z',\zeta,\htilde )w(t,z') \, dz' \,d\zeta
\end{equation}
with
\begin{equation}\label{ptilde}
\begin{aligned}
 \widetilde{p}(t,z,z',\zeta,\htilde ) =    \int \widehat{\chi}\big(\mu  , &M(t,\htilde z,\htilde z')\zeta\big)
 {\gamma}_\delta\big (t, X(t,\htilde z)-\htilde ^{2}\mu, M(t,\htilde z,\htilde z')\zeta\big)d\mu \\
 & \times \varphi_1( M(t,\htilde z,\htilde z')\zeta)J(t, \htilde z,\htilde z').
 \end{aligned}
\end{equation}
We shall set in what follows
\begin{equation*}
p(t,z,\zeta,\htilde ) = \widetilde{p}(t,z,z,\zeta,\htilde ).
\end{equation*}
Since $M(t,\htilde z,\htilde z)= M_0(t,\htilde z)$ and $J(t,\htilde z,\htilde z) = 1$ we obtain
\begin{equation}\label{encorep}
p(t,z,\zeta,\htilde ) =    \int \widehat{\chi}\big(\mu  , M_0(t, \htilde z)\zeta\big)
{\gamma}_\delta\big (t, X(t,\htilde z)-\htilde ^{2}\mu, M_0(t,\htilde z) \zeta\big)\,d\mu \\
\cdot\varphi_1( M_0(t,\htilde z)\zeta).
\end{equation}

Notice that since the function $\chi$ is even with respect to its first variable the symbol $p$ is real.

Summing up we have proved that
\begin{equation}\label{semiclass}
   \htilde \big\{ \partial_t +\mez \big( S_{j\delta}(V)\cdot \nabla  + \nabla  \cdot S_{j\delta}(V)\big) +i  T_{\gamma_\delta} \big\}{U}(t,x) = \big(\htilde \partial_t + \htilde c + iP\big)w_h(t,z)
  \end{equation}
where 
\begin{equation}\label{c=}
x = X(t, \htilde z), \quad c(t,z, \htilde) =  \mez S_{j\delta}(\text{div}V)(t, X(t,\htilde z)), \quad w_h(t,z) = U(t, X(t, \htilde z)).
\end{equation}

 \subsection{ Estimates on the pseudo-differential symbol} 
 Let $I_{\htilde }:=[0,\htilde ^{ \delta}]$. We introduce   norms on the para-differential symbol $\gamma.$ For $k \in \xN$ we set
  \begin{equation}\label{norme:gamma}
\mathcal{N}_k(\gamma) = \sum_{\vert \beta  \vert \leq k} \Vert D_\xi^\beta \gamma\Vert_{L^\infty(I_{\htilde}\times \xR^d \times \mathcal{C}_3)}.  
\end{equation}

We   estimate now the derivatives of the  symbol of the operator  appearing in the right hand side of \eqref{semiclass}. Recall that $E_0$ is defined in \eqref{E0}.
\begin{lemm}\label{est:c}
For any $\alpha \in \xN^d$ there exists $\mathcal{F}_\alpha: \xR^+ \to \xR^+$ such that for $t\in I_{\htilde}$
$$\Vert (D_z^\alpha c)(t, \cdot)\Vert_{L^\infty(\xR^d)}  \leq \mathcal{F}_\alpha (\Vert V \Vert_{E_0}) \htilde^{\vert \alpha \vert (1-2 \delta)}\Vert V(t, \cdot) \Vert_{W^{1, \infty}(\xR^d)}.$$
\end{lemm}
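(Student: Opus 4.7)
The plan is to apply the Fa\`a di Bruno chain-rule formula to the composition
\begin{equation*}
c(t,z) = \mez g(t, X(t,\htilde z)), \qquad g(t,\cdot) \defn S_{j\delta}(\cnx V)(t,\cdot).
\end{equation*}
The case $\vert\alpha\vert = 0$ is immediate: since the Fourier multiplier $S_{j\delta} = \psi(h^\delta D_x)$ is bounded on $L^\infty$ uniformly in $h$ (its kernel is $h^{-\delta d}\widehat{\psi}(\cdot/h^\delta)$, which has $L^1$-norm $\lA \widehat{\psi}\rA_{L^1}$), one has
$\lA c(t,\cdot)\rA_{L^\infty} \les \lA \cnx V(t,\cdot)\rA_{L^\infty} \les \lA V(t,\cdot)\rA_{W^{1,\infty}}$, which matches the right-hand side for $|\alpha|=0$.

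For $\vert\alpha\vert \ge 1$, each differentiation in $z$ brings down one factor of $\htilde$ from the inner argument $X(t,\htilde z)$, so Fa\`a di Bruno expresses $D_z^\alpha c(t,z)$ as a finite linear combination of terms of the form
\begin{equation*}
A_{\beta,K,L}(t,z) = \htilde^{\vert\alpha\vert}\,(\partial_x^\beta g)(t,X(t,\htilde z))\,\prod_{i=1}^q \bigl(\partial_x^{L_i} X\bigr)(t,\htilde z)^{K_i},
\end{equation*}
with $1\le \vert\beta\vert \le \vert\alpha\vert$, $\vert L_i\vert \ge 1$, $\sum_i K_i = \beta$, and $\sum_i \vert K_i\vert L_i = \alpha$. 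Two ingredients then enter. First, writing $\cnx V = \sum_j \partial_j V_j$ and commuting $\partial^\beta$ through $S_{j\delta}$ gives $\partial^\beta g = \sum_j \partial^{\beta} S_{j\delta}(\partial_j V_j)$. The symbol of $\partial^{\beta}S_{j\delta}$ is $(i\xi)^\beta \psi(h^\delta \xi) = h^{-\delta\vert\beta\vert}\phi_\beta(h^\delta\xi)$ with $\phi_\beta \in \mathcal{S}(\xR^d)$; the associated convolution kernel has $L^1$-norm $\les h^{-\delta\vert\beta\vert}$, whence
\begin{equation*}
\lA \partial^\beta g(t,\cdot)\rA_{L^\infty} \les h^{-\delta\vert\beta\vert}\,\lA V(t,\cdot)\rA_{W^{1,\infty}}.
\end{equation*}
Second, Proposition~\ref{estX} yields, for $\vert L_i\vert \ge 1$ and $t \in I_{\htilde}\subset [0,T_0]$,
\begin{equation*}
\lA (\partial_x^{L_i} X)(t,\cdot,h)\rA_{L^\infty} \le \mathcal{F}(\lA V\rA_{E_0})\, h^{-\delta(\vert L_i\vert-1)}
\end{equation*}
(the factor $\vert s\vert^{\mez}$ in Proposition~\ref{estX} is bounded by a constant on $I_{\htilde}\subset[0,T_0]$, and for $\vert L_i\vert = 1$ the bound is just the $L^\infty$-control of $\partial X$ from Proposition~\ref{estX}(i)).

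Plugging both estimates into $A_{\beta,K,L}$ and using the multi-index identities $\sum_i \vert K_i\vert\vert L_i\vert = \vert\alpha\vert$ and $\sum_i \vert K_i\vert = \vert\beta\vert$, the total power of $h$ collapses:
\begin{equation*}
h^{-\delta\vert\beta\vert}\,\prod_i h^{-\delta\vert K_i\vert(\vert L_i\vert-1)} = h^{-\delta\bigl(\vert\beta\vert + \sum_i\vert K_i\vert\vert L_i\vert - \sum_i\vert K_i\vert\bigr)} = h^{-\delta\vert\alpha\vert}.
\end{equation*}
Substituting $h = \htilde^2$ converts $\htilde^{\vert\alpha\vert}h^{-\delta\vert\alpha\vert}$ into $\htilde^{\vert\alpha\vert(1-2\delta)}$, and summing over the finitely many terms of the Fa\`a di Bruno expansion yields the claimed bound with $\mathcal{F}_\alpha$ absorbing all combinatorial constants and the product of $\mathcal{F}(\lA V\rA_{E_0})$-factors. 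The only mildly delicate step is the combinatorial bookkeeping together with the kernel estimate for $\partial^{\beta}S_{j\delta}$; both are completely routine and parallel the proof of Proposition~\ref{estX}(ii).
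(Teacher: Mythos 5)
Your proof is correct and follows essentially the same route as the paper's: a Fa\`a di Bruno expansion of $c=\mez S_{j\delta}(\cnx V)\circ X(t,\htilde\,\cdot)$, the bound $\lA D_x^a S_{j\delta}(\cnx V)\rA_{L^\infty}\les h^{-\delta|a|}\lA V\rA_{W^{1,\infty}}$, the derivative estimates on $X$ from Proposition~\ref{estX}, and the same power-counting identity collapsing everything to $\htilde^{|\alpha|(1-2\delta)}$. The only difference is that you spell out the kernel estimate for $\partial^\beta S_{j\delta}$ and the $|\alpha|=0$ case explicitly, which the paper leaves implicit.
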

\begin{proof}
By the Faa-di-Bruno formula $D_z^\alpha c$ is a finite linear combination of terms of the form 
\begin{equation}\label{(1)=}
 (1) = \htilde^{\vert \alpha \vert} D_x^a \big[S_{j\delta}(\text{div}V)\big](t, X(t,\htilde z)) \prod_{j=1}^r \Big(\big(D_x^{l_j}X\big) (t,\htilde z)\Big)^{p_j} 
 \end{equation}
where $1\leq \vert a \vert \leq \vert \alpha \vert, \quad \vert l_j \vert \geq 1, \quad \sum_{j=1}^r \vert p_j\vert l_j = \alpha, \quad \sum_{j=1}^r p_j = a.$ Now for fixed $t$ we have 
\begin{equation}\label{est:Sjdelta}
 \vert D_x^a \big[S_{j\delta}(\text{div}V)\big](t, \cdot)\vert \leq C_{\alpha} \htilde^{-2 \delta \vert a \vert}\Vert V(t, \cdot) \Vert_{W^{1, \infty}(\xR^d)}
 \end{equation}
and by Proposition \ref{estX}
$$\vert (D_x^{l_j}X\big) (t, \cdot)\vert \leq   \mathcal{F}_\alpha (\Vert V \Vert_{E_0}) \htilde^{-2\delta (\vert l_j \vert -1)}.$$
Then the product appearing in the term $(1)$ is bounded by $ \mathcal{F}_\alpha (\Vert V \Vert_{E_0}) \htilde ^M$  where $ M= -2 \delta\sum_{j=1}^r \vert p_j \vert  (\vert l_j \vert -1)  = -2 \delta \vert \alpha \vert + 2 \delta \vert a \vert$. Then the lemma follows from \eqref{(1)=} and \eqref{est:Sjdelta}.

\end{proof}

\begin{lemm}\label{derp}
For every $k \in \xN$   there exist $ \mathcal{F}_{k}: \xR^+ \to \xR^+$ such that,  
 $$\vert  D_z ^\alpha D_\zeta^\beta p(t,z,\zeta,\htilde )\vert \leq  \mathcal{F}_{ k}(\Vert V \Vert_{E_0})\sum_{\vert a \vert \leq   k}  \sup_{\xi \in \mathcal{C}_3} \Vert D^a_\xi \gamma(t,\cdot,\xi)\Vert_{L^{\infty}(\xR^d)} \htilde^{\vert \alpha \vert(1-2 \delta)} 
 $$
 for   all $\vert \alpha\vert + \vert \beta \vert \leq k$  and all $(t,z, \zeta,\htilde) \in I_{\htilde }\times \xR^d \times  \mathcal{C}_1\times (0, \htilde _0]$.
 \end{lemm}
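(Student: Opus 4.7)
The strategy is to expand the derivatives $D_z^\alpha D_\zeta^\beta$ applied to the explicit formula \eqref{encorep} by Leibniz and the Faà di Bruno formula, tracking separately (a) the $z$-derivatives that fall on the slow variables $X(t,\htilde z)$ and $M_0(t,\htilde z)$, which produce factors of $\htilde$ through the inner chain rule but lose factors of $\htilde^{-2\delta}$ through derivatives of $\gamma_\delta$ or of $M_0$, and (b) the $\zeta$-derivatives, which cost only bounded factors. Integration in $\mu$ is controlled by the rapid decay of $\widehat\chi$ from Lemma \ref{l0.6}, and the cutoff $\varphi_1(M_0(t,\htilde z)\zeta)$ together with smallness of $\frac{\partial X}{\partial x}-\text{Id}$ from Proposition \ref{estX}(i) keep $M_0(t,\htilde z)\zeta$ confined to a fixed shell $\mathcal{C}_3$ when $\zeta\in\mathcal{C}_1$ and $t\in I_{\htilde}$, so all $L^\infty$-norms of $D_\xi^a\gamma$ that appear are measured on $\mathcal{C}_3$.

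First, I would record the auxiliary estimates. From Proposition \ref{estX} and the identity $M_0 = (\, {}^t(\partial X/\partial x))^{-1}$, Cramer's rule together with $(i)$–$(ii)$ of Proposition \ref{estX} give, for $|\alpha|\ge 1$,
\begin{equation*}
\lA D_x^\alpha M_0(t,\cdot)\rA_{L^\infty(\xR^d)} \le \mathcal{F}_\alpha(\lA V\rA_{E_0})\, \htilde^{-2\delta(|\alpha|-1)},
\end{equation*}
and similarly for $J(t,\cdot,\cdot)$ on the diagonal. From Lemma \ref{estgamma-delta}$(i)$ (applied with $h=\htilde^2$, so $h^{-\delta|\alpha|}=\htilde^{-2\delta|\alpha|}$) we have
\begin{equation*}
|D_x^a D_\xi^b \gamma_\delta(t,x,\xi)|\le C_{a,b}\, \htilde^{-2\delta|a|}\, \lA D_\xi^b\gamma(t,\cdot,\xi)\rA_{L^\infty(\xR^d)}.
\end{equation*}

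Second, I would differentiate the composite expression. A single $\partial_{z_i}$ hitting either $X(t,\htilde z)$ or $M_0(t,\htilde z)$ produces an explicit outer factor $\htilde$ (from $\partial_z = \htilde\,\partial_x$), paired with a derivative of $X$ or $M_0$ which is bounded uniformly by the first estimates above. When this $z$-differentiation reaches $\gamma_\delta$, the $D_x$ (or $D_\xi$) of $\gamma_\delta$ it produces costs at worst $\htilde^{-2\delta}$, for a net gain of $\htilde^{1-2\delta}$ per $z$-derivative. Applying Faà di Bruno exactly as in the proof of Lemma \ref{est:c}, each monomial obtained from $D_z^\alpha D_\zeta^\beta$ has the shape
\begin{equation*}
\htilde^{|\alpha|}\Big(D_x^{a}D_\xi^{b}\gamma_\delta\Big)\prod_{i} (D_x^{l_i}X)^{p_i}\prod_j (D_x^{m_j} M_0)^{q_j}\cdot(\text{bounded factors from }\widehat\chi,\varphi_1,J),
\end{equation*}
with $\sum p_i l_i + \sum q_j m_j = \alpha$, $\sum p_i + \sum q_j \ge |a|$, $|b|\le|\beta|+|a|$; the combinatorial accounting yields an overall power $\htilde^{|\alpha|-2\delta(|\alpha|-|a|)-2\delta|a|} = \htilde^{|\alpha|(1-2\delta)}$, regardless of the split between $(a,b)$. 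The $\zeta$-derivatives, involving only $D_\zeta(M_0\zeta)=M_0$ and $D_\xi$ on $\gamma_\delta$ or $\widehat\chi$ or $\varphi_1$, contribute only bounded factors times $\|D_\xi^b\gamma\|_{L^\infty(\mathcal{C}_3)}$ for various $|b|\le k$.

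Third, I would handle the $\mu$-integral. The fast decay $|D_\mu^\gamma D_\eta^\delta \widehat\chi(\mu,\eta)|\le C_N\L\mu\rangle^{-N}$ for $\eta$ in a compact set, guaranteed by Lemma \ref{l0.6} since $M_0\zeta$ stays in a fixed shell, makes $\int |\widehat\chi|\,d\mu<\infty$ and justifies exchanging derivatives with the integral as well as absorbing all polynomial factors in $\mu$ coming from derivatives falling on $\gamma_\delta(t,X(t,\htilde z)-\htilde^2\mu,\cdot)$ (each $\partial_x\gamma_\delta$ brings out a harmless $\htilde^2\mu$ factor through the chain rule when we differentiate in $z$, but the decay of $\widehat\chi$ absorbs any polynomial in $\mu$). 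Putting the three ingredients together and taking the supremum over $|a|\le k$ of the $\gamma$-norms gives the claim. The main technical point, and the only place where one must be careful, is the bookkeeping in the Faà di Bruno step to verify that the power of $\htilde$ is exactly $|\alpha|(1-2\delta)$: the non-trivial cancellation is that the $2\delta$-losses from $D_x$-derivatives of $\gamma_\delta$ and from $D_x^{\ge 2}X, D_x^{\ge 1}M_0$ balance precisely against the explicit $\htilde^{|\alpha|}$ provided by the $|\alpha|$ applications of $\partial_z=\htilde\partial_x$.
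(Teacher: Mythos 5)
Your proof is correct and follows essentially the same route as the paper's: the paper likewise isolates the integrand $F=\widehat\chi\cdot\varphi_1\cdot\gamma_\delta$ evaluated at $(X(t,\htilde z)-\htilde^2\mu,\,M_0(t,\htilde z)\zeta)$, applies Leibniz and Fa\`a di Bruno with the bounds of Proposition~\ref{estX}, Lemma~\ref{estgamma-delta} and the identity ${}^t(\partial X/\partial x)M_0\zeta=\zeta$ for the derivatives of $M_0$, and uses Lemma~\ref{l0.6} to control the $\mu$-integral, with the same power counting $\htilde^{|\alpha|}\cdot\htilde^{-2\delta|\alpha|}=\htilde^{|\alpha|(1-2\delta)}$. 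The only cosmetic difference is that the paper records the intermediate estimate in the slow variable and reinstates the factor $\htilde^{|\alpha|}$ at the end, whereas you carry the chain-rule factor $\htilde$ per $z$-derivative throughout.
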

 \begin{coro}\label{derp'}
For every $k \in \xN$   there exist $ \mathcal{F}_{k}: \xR^+ \to \xR^+$ such that, 
$$\int_0^s \vert  D_z ^\alpha D_\zeta^\beta p(t,z,\zeta,\htilde )\vert \, dt \leq   \mathcal{F}_{ k}(\Vert V \Vert_{E_0})\,  \mathcal{N}_k(\gamma) \, \htilde^{\vert \alpha \vert(1-2 \delta) + \delta} 
 $$
  for all $\vert \alpha\vert + \vert \beta \vert \leq k$  and all $(s,z, \zeta,\htilde) \in I_{\htilde }\times \xR^d \times  \mathcal{C}_1\times (0, \htilde _0]$.
 \end{coro}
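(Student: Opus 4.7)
The corollary follows from Lemma \ref{derp} by a one-line time integration, so my plan is essentially to verify that the time integral absorbs into $\mathcal{N}_k(\gamma)$ with the extra factor $\htilde^{\delta}$ coming from the length of the interval $I_{\htilde} = [0,\htilde^{\delta}]$.

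First, I would insert the pointwise estimate from Lemma \ref{derp} into the integrand. Since the only $t$-dependent quantities on the right-hand side of Lemma \ref{derp} are the norms $\sup_{\xi \in \mathcal{C}_3}\|D^a_\xi\gamma(t,\cdot,\xi)\|_{L^\infty(\xR^d)}$ (the factor $\mathcal{F}_k(\|V\|_{E_0})$ and the power $\htilde^{|\alpha|(1-2\delta)}$ being $t$-independent), for any $s \in I_{\htilde}$ and $(z,\zeta,\htilde) \in \xR^d \times \mathcal{C}_1 \times (0,\htilde_0]$ I get
\begin{equation*}
\int_0^s \bigl| D_z^\alpha D_\zeta^\beta p(t,z,\zeta,\htilde)\bigr|\, dt
\le \mathcal{F}_k(\|V\|_{E_0})\, \htilde^{|\alpha|(1-2\delta)} \sum_{|a|\le k} \int_0^s \sup_{\xi \in \mathcal{C}_3}\|D^a_\xi\gamma(t,\cdot,\xi)\|_{L^\infty(\xR^d)}\, dt.
\end{equation*}

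Next I would bound each of the integrals on the right. By the very definition \eqref{norme:gamma} of $\mathcal{N}_k(\gamma)$, one has for every $|a|\le k$ and every $t \in I_{\htilde}$
\begin{equation*}
\sup_{\xi \in \mathcal{C}_3}\|D^a_\xi\gamma(t,\cdot,\xi)\|_{L^\infty(\xR^d)} \le \mathcal{N}_k(\gamma).
\end{equation*}
Using $s \le \htilde^{\delta}$ (since $s \in I_{\htilde} = [0,\htilde^{\delta}]$), I conclude
\begin{equation*}
\int_0^s \sup_{\xi \in \mathcal{C}_3}\|D^a_\xi\gamma(t,\cdot,\xi)\|_{L^\infty(\xR^d)}\, dt \le s\, \mathcal{N}_k(\gamma) \le \htilde^{\delta}\, \mathcal{N}_k(\gamma),
\end{equation*}
and plugging this back into the previous display yields exactly the desired estimate, after absorbing the combinatorial constant from the sum over $|a|\le k$ into the function $\mathcal{F}_k$.

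This is really a bookkeeping step, so I do not expect any genuine obstacle; the only point worth noting is that the gain of $\htilde^{\delta}$ in the corollary comes entirely from the length of the interval $I_{\htilde}$ and not from any cancellation in the symbol itself, which is why $\mathcal{N}_k(\gamma)$ (an $L^\infty_t$ norm) suffices on the right and one does not need an $L^1_t$ or time-integrated norm of $\gamma$.
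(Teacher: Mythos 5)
Your proof is correct and is exactly the argument the paper intends: Corollary \ref{derp'} is an immediate consequence of Lemma \ref{derp}, obtained by integrating the pointwise bound over $[0,s]\subset I_{\htilde}=[0,\htilde^{\delta}]$, bounding the $t$-dependent factor $\sum_{|a|\le k}\sup_{\xi\in\mathcal{C}_3}\|D_\xi^a\gamma(t,\cdot,\xi)\|_{L^\infty}$ by $\mathcal{N}_k(\gamma)$ via \eqref{norme:gamma}, and picking up the factor $\htilde^{\delta}$ from the length of the interval. Your closing observation that the gain comes from the interval length rather than from any cancellation is also accurate.
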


\begin{proof}[Proof of Lemma \ref{derp}]
Here $t$ is considered as a parameter 
which will be skipped, keeping in 
mind that the estimates should be uniform with respect to $t \in [0, \htilde^\delta].$
On the other hand we recall that, by Proposition \ref{estX} and Lemma \ref{estgamma-delta}, we have (since $h= \widetilde{h}^2$)
  \begin{align}
 & \vert D_x^\alpha X(x) \vert \leq  \mathcal{F}_\alpha(\Vert V \Vert_{E_0})\, \widetilde{h}^{-2 \delta(\vert \alpha \vert -1)}, \quad \vert \alpha \vert \geq 1, \beta \in \xN^d \label{est2} \\
 & \vert  D_x^\alpha D_\xi^\beta \gamma_\delta( x,\xi)\vert \leq C_{\alpha, \beta}\,\htilde^{-2\delta \vert \alpha \vert}  \Vert D_\xi^\beta \gamma( \cdot, \xi) \Vert_{L^\infty(\xR^d)}, \quad \alpha, \beta \in \xN^d.\label{est3} 
 \end{align}
Set
$$
F(\mu, z, \zeta, \htilde) = \widehat{\chi}\big(\mu  , M_0(z)\zeta\big) \varphi_1( M_0(z)\zeta){\gamma}_\delta\big ( X(z)-\htilde ^{2}\mu, M_0(z) \zeta\big),  
$$
the lemma will follow immediately from the fact that for every $N \in \xN$ we have
\begin{multline}\label{est:F}
  \vert  D_z ^\alpha D_\zeta^\beta F(\mu, z, \zeta, \htilde) \vert \\ \leq \mathcal{F}_{ \alpha,\beta}(\Vert V \Vert_{E_0})\sum_{\vert a \vert \leq   \vert \alpha\vert+ \vert \beta \vert}  \sup_{\xi \in \mathcal{C}_3} \Vert D^a_\xi \gamma(\cdot,\xi)\Vert_{L^{\infty}(\xR^d)} \htilde^{  -2\delta \vert\alpha \vert }C_N \langle \mu \rangle^{-N}.
\end{multline}
If we call $ m_{ij}(z) $ the entries of the matrix $M_0(z)$ we see easily that $ D_\zeta^\beta F$ is a finite linear combination of terms of the form 
\begin{equation}\label{DbetaF}
  (D_\xi^{\beta_1}(\widehat{\chi} \varphi_1)) (\mu, M_0(z)\zeta) \cdot (D_\xi^{\beta_2}\gamma_\delta)(X(z)-\htilde ^{2}\mu, M_0(z) \zeta)\cdot P_{\vert \beta\vert}(m_{ij}(z)) := G_1 \cdot G_2 \cdot G_3 
  \end{equation}
where $P_{\vert \beta \vert}$ is a polynomial of order $\vert \beta \vert.$

The estimate \eqref{est:F} will follow from the following ones.
\begin{align} 
&\vert D^\alpha_z G_1  \vert \leq \mathcal{F}_{\alpha, \beta} (\Vert V \Vert_{E_0})\widetilde{h}^{- 2\delta \vert \alpha \vert} C_N \langle \mu \rangle^{-N},\label{est:G1}\\ 
 &\vert D^\alpha_z G_2  \vert \leq \mathcal{F}_{\alpha, \beta} (\Vert V \Vert_{E_0}))\sum_{\vert a \vert \leq   \vert \alpha\vert+ \vert \beta \vert}  \sup_{\xi \in \mathcal{C}_3} \Vert D^a_\xi \gamma(\cdot,\xi)\Vert_{L^{\infty}(\xR^d)} \htilde^{-2\delta \vert\alpha \vert } \label{est:G2}\\
 &\vert D^\alpha_z G_3  \vert \leq \mathcal{F}_{\alpha, \beta} (\Vert V \Vert_{E_0})\widetilde{h}^{- 2\delta \vert \alpha \vert}. \label{est:G3}
\end{align}
 Using the  equality $^t{}\! \big(\frac{\partial X}{\partial z}\big)( z) M_0(  z) \zeta = \zeta,$  Proposition \ref{estX} and an induction we see that
\begin{equation}\label{est1}
\vert D^\alpha_z  m_{ij}(z) \vert \leq \mathcal{F}_\alpha (\Vert V \Vert_{E_0})\widetilde{h}^{- 2\delta \vert \alpha \vert    + \frac{\delta}{2}} 
\end{equation}
from which  \eqref{est:G3} follows since $G_3$ is  polynomial.
Now according to the Faa-di-Bruno formula $D_z^\alpha G_1$ is a finite linear combination of terms of the form 
\begin{align*}
&D_\xi^{\beta_1 + b}(\widehat{\chi} \varphi_1)) (\mu, M_0(z)\zeta)\prod_{j=1}^r \Big(D_z^{l_j}M_0(z)\zeta)\Big)^{p_j}, \\
&1\leq \vert b \vert \leq \vert \alpha \vert, \vert l_j \vert \geq 1, \sum_{j=1}^r \vert p_j \vert l_j = \alpha, \sum _{j=1}^r p_j =b. 
\end{align*}
 Then \eqref{est:G1} follows immediately from Lemma \ref{l0.6} and \eqref{est1}. By the same formula we see that $D_z^\alpha G_2$ is a linear combination of terms of the form
 $$(D^a_z D_\xi^{\beta_2+b}\gamma_\delta)(X(z)-\htilde ^{2}\mu, M_0(z) \zeta)\prod_{j=1}^r \Big(D_z^{l_j}X(z)\Big)^{p_j}\Big( D_z^{l_j}M_0(z)\zeta\Big)^{q_j}$$
 where $1 \leq \vert a \vert + \vert b \vert \leq \vert \alpha \vert, \quad \sum_{j=1}^r (\vert p_j \vert + \vert q_j \vert) l_j = \alpha, \quad \sum_{j=1}^r p_j = a,  \quad\sum_{j=1}^r q_j = b.$
 Then \eqref{est:G2} follows  \eqref{est2}, \eqref{est3} and \eqref{est1}. The proof is complete.
      \end{proof}
\begin{rema}\label{rem:est:ptilde}
By exactly the same method we can show that we have the estimate
\begin{equation}\label{est:ptilde}
 \vert  D_z ^{\alpha_1}   D_{z'} ^{\alpha_2}  D_\zeta^\beta \widetilde{p}(t,z,z',\zeta,\htilde )\vert \leq  \mathcal{F}_{ k}(\Vert V \Vert_{E_0} \, \mathcal{N}_k(\gamma)) \, \htilde^{ (\vert \alpha_1 \vert + \vert \alpha_2\vert) (1-2 \delta)} 
\end{equation} 
 for   all $\vert \alpha_1\vert + \vert \alpha_2\vert +\vert \beta \vert \leq k$  and all $(t,z,z', \zeta,\htilde) \in I_{\htilde }\times \xR^d  \times \xR^d \times  \mathcal{C}_1\times (0, \htilde _0]$.
  \end{rema}
 
\begin{prop}\label{Hessp}
There exist $T_0>0, c_0>0, \htilde _0>0$ such that
$$
\la \det \Big( \frac{\partial^{2}p}{\partial \zeta_j \partial \zeta_k} (t,z,\zeta,\htilde ) \Big) \ra \geq c_0
$$
for any $t\in [0,T_0], z\in \xR^d, \zeta \in \mathcal{C}_0= \{\mez \leq \vert \zeta \vert \leq 2\}, 0<\htilde  \leq \htilde _0.$  
\end{prop}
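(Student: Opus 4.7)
My strategy is a perturbation argument that reduces Proposition~\ref{Hessp} to Proposition~\ref{hess}. The key point is that for $T_0$ sufficiently small the matrix $M_0(t,\htilde z) = (\,{}^t\!\partial_x X(t,\htilde z))^{-1}$ is close to the identity, and the explicit formula~\eqref{encorep} for $p$ differs from a constant multiple of $\gamma_\delta(t, X(t,\htilde z), M_0(t,\htilde z)\zeta)$ only by a remainder that can be made arbitrarily small in $C^2_\zeta$. Taking determinants of the Hessians then yields the desired lower bound.

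First I would use Proposition~\ref{estX}$(i)$ to obtain $\|M_0(t,\htilde z) - \mathrm{Id}\|_{L^\infty(\xR^d)} \leq \mathcal{F}(\|V\|_{E_0})\, T_0^{1/2}$ for $t \in [0,T_0]$. Choosing $T_0$ small depending only on $\|V\|_{E_0}$, the point $M_0\zeta$ stays in the interior of $\{1/3 < |\xi| < 3\}$ for every $\zeta \in \mathcal{C}_0$, so by~\eqref{phi1} the cut-off $\varphi_1(M_0\zeta)$ in~\eqref{encorep} equals $1$ on our region, and $|\det M_0|$ is as close to $1$ as we wish.

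Next I would exploit the vanishing moments of $\widehat{\chi}(\cdot,\eta)$. By~\eqref{chi}$(iii)$, $\chi(\zeta,\eta) \equiv 1$ on the fixed neighbourhood $|\zeta|\leq \eps_1/4$ of the origin for $|\eta|\geq 1/4$, so $\partial_\zeta^\alpha \chi(0,\eta) = 0$ for all $|\alpha|\geq 1$ on the relevant range of $\eta$. Combined with the Schwartz decay of Lemma~\ref{l0.6}, Fourier inversion gives
\[
\int \widehat{\chi}(\mu,M_0\zeta)\,d\mu = (2\pi)^d, \qquad \int \mu^\alpha\, \widehat{\chi}(\mu,M_0\zeta)\,d\mu = 0 \quad (|\alpha|\geq 1).
\]
Taylor-expanding $\gamma_\delta(X-\htilde^2\mu, M_0\zeta)$ to order $N$ in $\htilde^2\mu$ and inserting in~\eqref{encorep}, every intermediate term integrates to zero and we are left with
\[
p(t,z,\zeta,\htilde) = (2\pi)^d \gamma_\delta\bigl(t, X(t,\htilde z), M_0(t,\htilde z)\zeta\bigr) + \mathcal{R}_N(t,z,\zeta,\htilde).
\]
Lemma~\ref{estgamma-delta} with $h=\htilde^2$ yields $\|\partial_x^{N+1}\gamma_\delta\|_{L^\infty}\lesssim \htilde^{-2\delta(N+1)}$, hence $\|\mathcal{R}_N\|_{L^\infty} \lesssim \htilde^{2(N+1)(1-\delta)}$; a similar accounting shows $\|\Hess_\zeta \mathcal{R}_N\|_{L^\infty} \to 0$ as $\htilde\to 0$ for $N$ large enough (any $N$ works since $\delta<1$).

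Finally, on the region where $\varphi_1(M_0\zeta)=1$, the chain rule gives
\[
\Hess_\zeta\, p = (2\pi)^d\, M_0^{\mathrm T}\, \Hess_\xi \gamma_\delta(t, X, M_0\zeta)\, M_0 + \Hess_\zeta \mathcal{R}_N,
\]
so that $\det \Hess_\zeta p = (2\pi)^{d^2}(\det M_0)^2 \det \Hess_\xi \gamma_\delta(t, X, M_0\zeta) + o(1)$ as $T_0, \htilde \to 0$. Proposition~\ref{hess}, whose proof extends by continuity to any fixed compact subset of $\{\xi\neq 0\}$, applies at $\xi = M_0\zeta$ and yields $|\det \Hess_\xi \gamma_\delta|\geq c_0$. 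Combined with $|\det M_0|\geq 1/2$ and choosing $T_0,\htilde_0$ small enough to absorb the remainder, we obtain $|\det \Hess_\zeta p|\geq c_0'>0$. The main technical obstacle will be the uniform $C^2_\zeta$ control of $\mathcal{R}_N$: the $\htilde^{-2\delta}$ losses from $x$-differentiating $\gamma_\delta$ must be dominated by the $\htilde^{2}$ gains per Taylor order, and one has to verify that the two $\zeta$-derivatives hitting the inner argument $M_0\zeta$ only contribute factors bounded by $\|M_0\|_{L^\infty}$ and by the estimates of Proposition~\ref{estX}.
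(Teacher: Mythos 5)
Your proposal is correct and follows essentially the same route as the paper: reduce to Proposition~\ref{hess} by showing $(2\pi)^{-d}\Hess_\zeta p = {}^t\!M_0\,\Hess_\xi(\gamma_\delta)\,M_0 + o(1)$, using the moment identities $\int \mu^\alpha\,\widehat{\chi}(\mu,\rho)\,d\mu = (2\pi)^d\delta_{\alpha 0}$ together with the bounds on $M_0$ from Proposition~\ref{estX}. The only (harmless) difference is in how the error is quantified: the paper avoids the Taylor expansion and the $\htilde^{-2\delta}$ derivative losses altogether by writing the perturbation as $\int \partial_\zeta^2\widehat{\chi}(\mu,\rho)\bigl[\gamma_\delta(X-\htilde^2\mu,\rho)-\gamma_\delta(X,\rho)\bigr]d\mu$ and invoking only the uniform $W^{s_0,\infty}$ bound on $\gamma_\delta$, which yields $O(\htilde^{2s_0})$ in place of your $O(\htilde^{2(N+1)(1-\delta)})$.
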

\begin{proof}
By Proposition \ref{estX}, \eqref{notation} and  \eqref{phi1} we have $\varphi_1( M_0(t,\htilde z)\zeta) =1.$ 
Let us set
$$
M_0(t,\htilde z) =(m_{ij}) \text{ and }
M_0(t,\htilde z) \zeta= \rho.
$$
Then we have,
$$
\frac{\partial^{2}p}{\partial \zeta_j \partial \zeta_k} (t,z,\zeta,\htilde ) = A_1 +A_2+A_3,
$$
where
\begin{equation}
\begin{aligned}
A_1&=\sum_{l,r=1}^d\int  \frac{\partial^{2}\widehat{\chi}}{\partial \zeta_l \partial\zeta_r}(\mu,\rho) 
m_{lj}m_{rk} {\gamma}_\delta\big (t, X(t,\htilde z)-\htilde ^{2}\mu, \rho\big)\,d\mu,\\
A_2&= 2\sum_{l,r=1}^d\int  \frac{\partial\widehat{\chi}}{\partial \zeta_l }(\mu,\rho)  
\frac{\partial {\gamma_\delta}}{\partial \zeta_r}
\big (t, X(t,\htilde z)-\htilde ^{2}\mu, \rho\big)m_{lj}m_{rk}\,d\mu,\\
A_3&= \sum_{l,r=1}^d\int \widehat{\chi}(\mu, \rho) \frac{\partial^{2}\gamma_\delta}{\partial \zeta_l \partial\zeta_r}
\big (t, X(t,\htilde z)-\htilde ^{2}\mu, \rho\big)m_{lj}m_{rk} \, d\mu.
\end{aligned}
\end{equation}
Now we notice that by \eqref{chi} we have 
$$\int (\partial_\zeta^\alpha \widehat{\chi})(\mu, \rho) d\mu = (2\pi)^d  (\partial_\zeta^\alpha  {\chi})(0,\rho)= \left \{ \begin{array}{ll} 0, & \alpha \neq 0\\   (2\pi)^d, & \alpha =0. \end{array} \right.$$
Using this remark we can write
$$
 A_1 =\sum_{l,r=1}^d\int  \frac{\partial^{2}\widehat{\chi}}{\partial \zeta_l \partial \zeta_r}(\mu,\rho) 
 m_{lj}m_{rk}
 \Big[{\gamma}_\delta\big (t, X(t,\htilde z)-\htilde ^{2}\mu, \rho\big) - {\gamma}_\delta\big (t, X(t,\htilde z) , \rho\big)\Big]\, d\mu.
$$
Now recall (see \eqref{EFG} and Lemma \ref{estgamma}) that for bounded $\vert \zeta\vert $ (considered as a parameter) 
we have for all $\alpha\in \xN^d$
$$
\partial_\zeta ^\alpha \gamma_\delta \in L^\infty(I; H^{s-\mez}(\xR^d))
\subset L^\infty (I; W^{s_0,\infty}(\xR^d)), \quad s_0>0,
$$
uniformly in~$\zeta$. 
Since, by Proposition~\ref{estX},  $\Vert M_0(t, \htilde z)\Vert$ is uniformly bounded we can write
$$
\vert A_1 \vert \leq C \htilde ^{2s_0}\sum_{l,r=1}^d\int \vert \mu \vert^{s_0}
\la  \frac{\partial^{2}\widehat{\chi}}{\partial \zeta_l \partial \zeta_r}(\mu,\rho) \ra d\mu,
$$
the integral in the right hand side being bounded by Lemma~\ref{l0.6}.
 
By exactly the same argument we see that we have the following   inequality
$$
\vert A_2 \vert \leq C\htilde ^{2s_0}.
$$
Moreover one can write
$$
A_3 =  \sum_{l,r=1}^d\frac{\partial^{2}\gamma_\delta}{\partial \zeta_l \partial \zeta_r}\big (t, X(t,\htilde z) , \rho\big)
m_{lj}m_{rk} \Big(\int \widehat{\chi}(\mu, \rho) \, d\mu\Big) + \mathcal{O}(\htilde ^{2s_0}).
$$
Gathering the estimates we see that
$$
(2\pi)^{-d} \Big(\frac{\partial^{2}p}{\partial \zeta_j \partial \zeta_k} (t,z,\zeta,\htilde )\Big) 
=   {}\!^t\!M_0(t,\htilde z)\text{Hess}_\zeta (\gamma_\delta)(t,z,\zeta,\htilde )M_0(t, \htilde z) + \mathcal{O}(\htilde ^{2s_0}).
$$
Then our claim follows from Proposition~\ref{hess} 
and Proposition~\ref{estX} if $\htilde _0 $ is small enough.
\end{proof}

\section{The parametrix}

Our aim is to construct a parametrix for the operator 
$L = \htilde \partial_t +\htilde c+ iP$ on a time interval of size $\htilde ^\delta$  where $\delta = \frac{2}{3}.$  This parametrix will be of the following form
$$
\mathcal{K}v(t,z) = (2\pi\htilde )^{-d} \iint e^{i{\htilde}^{-1}(\phi(t,z,\xi,\htilde ) -y\cdot \xi)} \widetilde{b}(t,z,y,\xi,\htilde )v(y)\, dyd\xi.
$$
Here $\phi$  is a real valued phase such that 
$\phi\arrowvert _{t=0} = z\cdot \xi,$ $\widetilde{b} $ is  of the form
\begin{equation}\label{tildeb}
\widetilde{b}(t,z,y,\xi,\htilde) = b(t,z,\xi,\htilde ) \Psi_0\Big( \frac{\partial \phi}{\partial \xi}(t,z,\xi,\htilde ) -y\Big ) 
\end{equation}
where 
$b\arrowvert_{t=0} = \chi(\xi), \chi \in C_0^\infty(\xR^d\setminus\{0\})$ and 
$\Psi_0\in C_0^\infty(\xR^d)$ is such that $ \Psi_0(t) = 1$ if $\vert t \vert \leq 1$.
\subsection{Preliminaries}

An important step in this construction is to compute the expression
\begin{equation}\label{J}
J (t,z,y,\xi,\htilde) = e^{-i{\htilde}^{-1}\phi(t,z,\xi,\htilde )}P(t,z,D_z)\big(e^{i{\htilde}^{-1}\phi(t,\cdot,\xi,\htilde )}\widetilde{b}(t,z,y,\xi,\htilde) \big).
\end{equation}
In this computation since $(t,y, \xi,\htilde )$ are fixed we shall 
skip them and   write $\phi = \phi(z), \widetilde{b} = \widetilde{b} (z).$

Using \eqref{operateur} we obtain
$$
J= (2\pi\htilde )^{-d}\iint e^{i{\htilde}^{-1}(\phi(z') - \phi(z) 
+(z-z')\cdot \zeta)} \widetilde{p} (z,z',\zeta)\widetilde{b} (z') \, dz\, 'd\zeta.
$$
Then we write
\begin{equation}\label{teta}
\phi(z') - \phi(z) = \theta (z,z')\cdot(z'-z), \quad  \theta (z,z') 
= \int_0^1 \frac{\partial \phi}{\partial z}(\lambda z + (1-\lambda) z') \, d \lambda.
\end{equation}
Using this equality and setting $\zeta - \theta (z,z') = \eta$ in the integral we obtain
$$
J=(2\pi\htilde )^{-d}\iint e^{i{\htilde}^{-1}(z-z')\cdot\eta}\widetilde{p}(z,z',\eta + \theta(z,z')) \widetilde{b} (z') \, dz' d\eta.
$$
The phase that we will obtain will be uniformly bounded, 
say $\vert \frac{\partial \phi}{\partial z}\vert \leq C_0.$
It also can be seen that, due to the cut-off $\varphi_1$ in the expression of $\widetilde{p}$ and to Proposition~\ref{estX},  
we also have $\vert \eta +\theta(z,z') \vert \leq C_0$.  
Therefore $\vert \eta \vert \leq 2C_0.$ Let $\kappa \in C_0^\infty(\xR^d)$ be such that 
$\kappa(\eta) = 1$ if $\vert \eta \vert \leq 2C_0$. Then we can write
$$
J =(2\pi\htilde )^{-d}\iint e^{i{\htilde}^{-1}(z-z')\cdot\eta}\kappa(\eta)
\widetilde{p}(z,z',\eta + \theta(z,z')) \widetilde{b} (z') \, dz' \, d\eta.
$$
By the Taylor formula we can write
\begin{equation*}
\begin{aligned}
 &\widetilde{p}(z,z',\eta +\theta(z,z')) = 
 \sum_{\vert \alpha \vert \leq {N-1}} \frac{1}{\alpha!}(\partial^\alpha_\eta \widetilde{p})(z,z',\theta(z,z'))\eta^\alpha +r_N \\
 &r_N = \sum_{\vert \alpha \vert =N} \frac{N}{\alpha!}
 \int_0^1 (1-\lambda)^{N-1}(\partial^\alpha_\eta \widetilde{p})(z,z',\eta+ \lambda\theta(z,z'))\eta^\alpha \, d\lambda.
\end{aligned}
\end{equation*}
It follows that
\begin{equation}\label{J=}
\left\{
\begin{aligned}
&J =J_N + R_N\\
&J_N =  \sum_{\vert \alpha \vert \leq {N-1}} \frac{(2\pi\htilde )^{-d}}{\alpha!}
\iint e^{i{\htilde}^{-1}(z-z')\cdot\eta}\kappa(\eta)(\partial^\alpha_\eta \widetilde{p})(z,z',\theta(z,z'))\eta^\alpha \widetilde{b} (z') \, dz' d\eta \\
&R_N = (2\pi \htilde )^{-d} \iint e^{i{\htilde}^{-1}(z-z')\cdot\eta} 
\kappa(\eta) r_N(z,z',\eta) \widetilde{b} (z') \, dz' d\eta.
 \end{aligned}
 \right.
\end{equation}
Using the fact that $\eta^\alpha e^{i{\htilde}^{-1}(z-z')\cdot\eta}= (-\htilde D_{z'})^\alpha e^{i{\htilde}^{-1} (z-z')\cdot\eta}$ and integrating by parts in the integral with respect to $z$ we get
$$
J_N =  (2\pi\htilde )^{-d}\sum_{\vert \alpha \vert \leq {N-1}}\frac{\htilde ^{\vert \alpha \vert}}{\alpha!}
\iint e^{i{\htilde}^{-1}(z-z')\cdot\eta}\kappa(\eta)D_{z'}^\alpha
\big[(\partial^\alpha_\eta \widetilde{p})(z,z',\theta(z,z')) \widetilde{b} (z')\big] \, dz' d\eta.
$$
Therefore we can write
$$
J_N =  (2\pi\htilde )^{-d}\sum_{\vert \alpha \vert 
\leq {N-1}}\frac{\htilde ^{\vert \alpha \vert}}{\alpha!}
\int \widehat{\kappa}\big(\frac{z'-z}{\htilde }\big)
D_{z'}^\alpha\big[(\partial^\alpha_\eta \widetilde{p})(z,z',\theta(z,z')) \widetilde{b} (z')\big] \, dz'.
$$ 
Let us set
\begin{equation}\label{falpha}
f_\alpha(z,z',\htilde ) = D_{z'}^\alpha\big[(\partial^\alpha_\eta \widetilde{p})(z,z',\theta(z,z')) \widetilde{b} (z')\big] 
\end{equation}
and then, $z'-z = \htilde \mu$ in the integral. We obtain
$$
J_N = (2\pi)^{-d}\sum_{\vert \alpha \vert 
\leq {N-1}}\frac{\htilde ^{\vert \alpha \vert}}{\alpha!}
\int \widehat{\kappa}(\mu)f_\alpha(z,z+ \htilde \mu, \htilde )\, d\mu.
$$
By the Taylor formula we can write
$$
J_N = (2\pi)^{-d}\sum_{\vert \alpha \vert \leq {N-1}}
\frac{\htilde ^{\vert \alpha \vert}}{\alpha!}\sum_{\vert \beta \vert \leq {N-1}}
\frac{\htilde ^{\vert \beta \vert}}{\beta!}
\Big(\int \mu^\beta \widehat{\kappa}(\mu) \, d\mu\Big)\big(\partial_{z'}^\beta f_\alpha\big)(z,z,\htilde ) +S_N,
$$
with
\begin{equation}\label{SN}
S_N = (2\pi)^{-d}\sum_{\substack{\vert \alpha \vert \leq {N-1} \\ \vert \beta \vert=N}} 
N \frac{\htilde ^{\vert \alpha \vert+\vert \beta \vert}}{\alpha! \beta!} 
 \int \int_0^1 
 (1-\lambda)^{N-1}\mu^\beta \widehat{\kappa}(\mu) \big(\partial_{z'}^\beta f_\alpha\big) 
 (z,z+\lambda\htilde \mu,\htilde )  \, d\lambda \, d\mu.
\end{equation}
Noticing that
$$
\int \mu^\beta \widehat{\kappa}(\mu) \, d\mu = (2\pi)^d (D^\beta \kappa)(0) 
= \left \{ \begin{array}{ll} 0 & \text{if } \beta \neq 0 \\(2\pi)^d & \text{if } \beta =0 \end{array} \right.
$$
we conclude that
$$
J_N = \sum_{\vert \alpha \vert \leq {N-1}}\frac{\htilde ^{\vert \alpha \vert}}{\alpha!} f_\alpha(z,z,\htilde ) +S_N.
$$
It follows from \eqref{J=}, \eqref{falpha} and \eqref{SN} that 
\begin{equation}\label{Jfinal}
J =  \sum_{\vert \alpha \vert \leq {N-1}}\frac{\htilde ^{\vert \alpha \vert}}{\alpha!} 
D_{z'}^\alpha\big[(\partial^\alpha_\eta \widetilde{p})(z,z',\theta(z,z')) \widetilde{b} (z')\big]\arrowvert_{z'=z} +R_N +S_N
\end{equation}
where $R_N$ and $S_N$ are defined in \eqref{J=} and \eqref{SN}.

Reintroducing the variable $(t,y,\xi,\htilde)$ we conclude from \eqref{J} that 
\begin{equation}\label{conjugaison}
e^{-i{\htilde}^{-1}\phi(t,z,\xi,\htilde )}(\htilde  \partial_t +\htilde c+iP)\big(e^{i{\htilde}^{-1}\phi(t,z,\xi,\htilde )}\widetilde{b} \big)
=\Big[i\frac{\partial \phi}{\partial t} \widetilde{b}  + iJ + \htilde \frac{\partial \widetilde{b} }{\partial t} +\htilde c \widetilde{b}\Big]
\big(t,z,y,\xi,\htilde\big).
\end{equation}
We shall gather the terms the right hand side of \eqref{conjugaison} according to the power of $\htilde $. 
The term corresponding to  $\htilde ^0$ leads to the eikonal equation.

\subsection{The eikonal equation}
It is the equation
\begin{equation}\label{eikonale}
\frac{\partial \phi}{\partial t} + p\Big(t,z, \frac{\partial \phi}{\partial z}, \htilde \Big) =0 \quad \phi(0,z,\xi,\htilde ) = z \cdot \xi
\end{equation}
where $p$ is defined by the formula
\begin{equation}\label{p}
  p(t,z,\zeta,\htilde ) =    \int \widehat{\chi}\big(\mu  , M_0(t, \htilde z)\zeta\big)
{\gamma}_\delta\big (t, X(t,\htilde z)-\htilde ^{2}\mu, M_0(t,\htilde z) \zeta\big)  
\cdot\varphi_1( M_0(t,\htilde z)\zeta)\,d\mu.
\end{equation}
We set
 $$
q(t,z,\tau,\zeta,\htilde ) = \tau + p(t,z,\zeta,\htilde ) 
$$
and for $j\geq 1$ we denote by $\mathcal{C}_j$  the ring
$$
\mathcal{C}_j = \{\xi \in \xR^d: 2^{-j} \leq \vert \xi \vert \leq 2^j\}.
$$
Moreover in all what follows we shall have
\begin{equation}\label{delta}
  \delta = \frac{2}{3}.
  \end{equation} 
 
\subsubsection{The solution of the eikonal equation}
Recall that $I_{\htilde} = [0, \htilde^\delta] $ is the time interval, 
(where $\delta = \frac{2}{3} $) and $\mathcal{C}_j$ 
the ring $\{2^{-j} \leq \vert \xi \vert \leq 2^j\}$. 
Consider the null-bicharacteristic flow of $q$. 
It is defined by the system 
\begin{equation}\label{bicar}
\left\{
\begin{aligned}
\dot{t}(s) &=1,\quad t(0) =0,\\
\dot{z}(s) &= \frac{\partial p}{\partial \zeta}\big(t(s),z(s), \zeta(s), \htilde \big), \quad z(0) =z_0,\\
\dot{\tau}(s) &=- \frac{\partial p}{\partial t}\big(t(s),z(s),\zeta(s), \htilde \big), \quad \tau(0) = - p(0,z_0,\xi,\htilde ),\\
\dot{\zeta}(s) &=- \frac{\partial p}{\partial z}\big(t(s),z(s), \zeta(s), \htilde \big), \quad \zeta(0) =\xi.
\end{aligned}
\right.
\end{equation}
Then $t(s) =s$  and this system has a unique solution defined on $ I_{\htilde},$ depending on $(s,z_0,\xi,\tilde{h}).$

We claim that for all fixed $ s \in I_{\htilde }$ and $ \xi \in \xR^d,$ the map
\begin{equation*}
z_0 \mapsto z(s; z_0,\xi ,\htilde )
\end{equation*}
is a global diffeomorphism from $\xR^d$ to $\xR^d$. 
This will follow from the facts that this map is proper and the 
matrix $\big( \frac{\partial z}{\partial z_0}(s;z_0,\xi,\htilde )\big)$ is invertible. 
Let us begin by the second point. 

Let us set $m(s) =(s, z(s), \zeta(s),\htilde )$. 
Differentiating System \eqref{bicar} with respect to $z_0$, we get
\begin{equation}\label{Dbicar}
\begin{aligned}
\dot{\frac{\partial z}{\partial z_0}}(s) 
&= p''_{z \zeta}(m(s)) \frac{\partial z}{\partial z_0}(s) 
+ p''_{\zeta \zeta} (m(s))\frac{\partial \zeta}{\partial z_0}(s), \quad \frac{\partial z}{\partial z_0}(0)= Id\\
\dot{\frac{\partial \zeta}{\partial z_0}}(s) 
&= -p''_{zz}(m(s))\frac{\partial z}{\partial z_0}(s) -p''_{z\zeta}(m(s))\frac{\partial \zeta}{\partial z_0}(s), 
\quad \frac{\partial \zeta}{\partial z_0}(0)=0.
\end{aligned}
\end{equation}
Setting $U(s) =(\frac{\partial z}{\partial z_0}(s),\frac{\partial \zeta}{\partial z_0}(s))$ and 
\begin{equation}\label{matrice}
\mathcal{A}(s)=\begin{pmatrix} 
 \hphantom{-}p''_{z\zeta}(m(s))&    \hphantom{-} p''_{\zeta \zeta}(m(s))\\[1ex] -p''_{zz}(m(s)) &- p''_{z\zeta}(m(s))   
\end{pmatrix}
. 
\end{equation}
The system \eqref{Dbicar} can be written as $\dot{U}(s) = \mathcal{A}(s)U(s), U(0) = (Id,0).$ 
 Lemma \ref{derp} gives
\begin{multline}
  \vert  p''_{\zeta \zeta}(m(s))\vert  +    \vert  p''_{z \zeta}(m(s))\vert   +  \vert  p''_{zz}(m(s))\vert
  \\  \leq   \mathcal{F} (\Vert V \Vert_{E_0})
   \sum_{\vert \beta \vert \leq  2}  \sup_{\xi \in \mathcal{C}_3}\Vert D^\beta_\xi \gamma(t,\cdot,\xi)\Vert_{L^\infty (\xR^d)}  \htilde^{2(1-2\delta)}, \end{multline}
therefore
$$
\Vert \mathcal{A}(s) \Vert 
\leq \mathcal{F} (\Vert V \Vert_{E_0})
\sum_{\vert \beta \vert \leq  2}  \sup_{\xi \in \mathcal{C}_3}\Vert D^\beta_\xi \gamma(t,\cdot,\xi)\Vert_{L^ \infty (\xR^d)} \htilde^{2(1-2\delta)}.
$$ 
Using the equality $2(1-2\delta) + \delta =0$, we deduce  that for $ s \in I_{\htilde }= (0, \widetilde{h}^\delta) $ we have
\begin{equation}\label{calA}
\int_0^s \left\Vert \mathcal{A}(\sigma)  \right\Vert d \sigma 
\leq  \mathcal{F} (\Vert V \Vert_{E_0}) \mathcal{N}_2(\gamma).
 \end{equation}
The Gronwall inequality shows that $\Vert U(s)\Vert$ 
is uniformly bounded on $I_{\htilde }.$ 
Coming back to   \eqref{Dbicar} we see that we have
\begin{equation}\label{estDx}
  \la \frac{\partial \zeta}{\partial z_0} (s) \ra \leq  \mathcal{F} \big(\Vert V \Vert_{E_0} +   \mathcal{N}_2(\gamma)\big) , \quad 
  \la \frac{\partial z}{\partial z_0}(s) - Id \ra \leq
 \mathcal{F} \big(\Vert V \Vert_{E_0} +   \mathcal{N}_2(\gamma)\big)   \htilde^{\frac{\delta}{2}}.
  \end{equation}
Taking $\htilde $ small enough we obtain the invertibility of the 
matrix  $\big( \frac{\partial z}{\partial z_0}(s;z_0,\xi,\htilde )\big)$.

Now we have
$$
\vert z(s; z_0,\xi,\htilde ) -z_0 \vert \leq \int_0^s \vert \dot{z}(\sigma,x_0,\xi,\htilde ) \vert \, d\sigma.
$$
Since the right hand side is uniformly bounded for $s \in \big[0, \htilde ^{\delta}\big]$, 
we see that our map is proper. 
Therefore we can write
\begin{equation}\label{diffeo1}
z(s;z_0,\xi,\htilde ) = z \Longleftrightarrow z_0=\kappa(s;z,\xi,\htilde ). 
\end{equation}
Let us set for $t\in\big[0, \htilde ^{\delta}\big]$
\begin{equation}\label{phi}
\phi(t,z,\xi,\htilde ) = z\cdot\xi - 
\int_0^t p\big(\sigma,z,\zeta(\sigma;\kappa(\sigma;z,\xi,\htilde ),\xi,\htilde ), \tilde{h} \big)\, d\sigma.
\end{equation}
\begin{prop}\label{eqeiko}
The function $\phi$ defined in \eqref{phi} is the solution of the eikonal equation \eqref{eikonale}.
\end{prop}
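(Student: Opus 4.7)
The strategy is to verify directly that the function $\phi$ defined by \eqref{phi} solves the Cauchy problem \eqref{eikonale}. The initial condition $\phi(0,z,\xi,\htilde)=z\cdot\xi$ is immediate from the formula. Setting $\zeta^*(\sigma,z):=\zeta(\sigma;\kappa(\sigma;z,\xi,\htilde),\xi,\htilde)$, the fundamental theorem of calculus applied to \eqref{phi} gives
$$
\partial_t\phi(t,z,\xi,\htilde) = -p\bigl(t,z,\zeta^*(t,z),\htilde\bigr).
$$
So the eikonal equation is equivalent to the Hamilton--Jacobi identity $\nabla_z\phi(t,z,\xi,\htilde)=\zeta^*(t,z)$, which will be the heart of the proof.

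To establish this identity I will use the classical action integral along characteristics. For $(t,z_0,\xi,\htilde)\in I_{\htilde}\times\xR^d\times\xR^d\times(0,\htilde_0]$, define
$$
\Phi(t,z_0,\xi,\htilde) := z_0\cdot\xi + \int_0^t\!\!\bigl[\zeta(\sigma)\cdot\partial_\zeta p\bigl(\sigma,z(\sigma),\zeta(\sigma),\htilde\bigr)-p\bigl(\sigma,z(\sigma),\zeta(\sigma),\htilde\bigr)\bigr]\,d\sigma,
$$
where $(z(\sigma),\zeta(\sigma))=(z(\sigma;z_0,\xi,\htilde),\zeta(\sigma;z_0,\xi,\htilde))$ is the bicharacteristic \eqref{bicar} starting at $(z_0,\xi)$. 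The central computation, which I will perform componentwise in $z_0$, is the pointwise identity
$$
\partial_{z_0}\bigl[\zeta(\sigma)\cdot\partial_\zeta p - p\bigr] = \frac{d}{d\sigma}\bigl[\zeta(\sigma)\cdot\partial_{z_0}z(\sigma)\bigr].
$$
It is obtained by expanding both sides using $\dot z=\partial_\zeta p$, $\dot\zeta=-\partial_z p$: the cross terms $\partial_{z_0}\zeta\cdot\partial_\zeta p$ and $\partial_\zeta p\cdot\partial_{z_0}\zeta$ cancel on the left, and both sides reduce to $\zeta\cdot(\partial^2_{z\zeta}p\cdot\partial_{z_0}z+\partial^2_{\zeta\zeta}p\cdot\partial_{z_0}\zeta)-\partial_z p\cdot\partial_{z_0}z$. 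Integrating from $0$ to $t$ and using $\zeta(0)=\xi$, $\partial_{z_0}z(0)=I$, we obtain
$$
\partial_{z_0}\Phi(t,z_0,\xi,\htilde) = \zeta(t;z_0,\xi,\htilde)\cdot\partial_{z_0}z(t;z_0,\xi,\htilde).
$$

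Finally, define $\Psi(t,z,\xi,\htilde):=\Phi(t,\kappa(t;z,\xi,\htilde),\xi,\htilde)$. Differentiating the identity $z(t;\kappa(t;z,\xi,\htilde),\xi,\htilde)=z$ from \eqref{diffeo1} in $z$ yields $\partial_{z_0}z(t)\cdot\nabla_z\kappa(t)=I$; differentiating it in $t$ yields $\partial_t\kappa = -[\partial_{z_0}z(t)]^{-1}\partial_\zeta p(t,z,\zeta^*,\htilde)$. Combining these with the identity for $\partial_{z_0}\Phi$, the chain rule gives
$$
\nabla_z\Psi(t,z,\xi,\htilde) = \zeta^*(t,z), \qquad \partial_t\Psi(t,z,\xi,\htilde) = -p\bigl(t,z,\zeta^*(t,z),\htilde\bigr),
$$
and clearly $\Psi(0,z,\xi,\htilde)=z\cdot\xi$. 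Hence both $\phi$ and $\Psi$ satisfy the same ODE in $t$ (with $z,\xi,\htilde$ as parameters), $\partial_t u=-p(t,z,\zeta^*(t,z),\htilde)$, with the same initial datum, so $\phi=\Psi$. In particular $\nabla_z\phi=\zeta^*$, which together with the already-established formula for $\partial_t\phi$ yields \eqref{eikonale}.

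The principal technical point is the identity for $\partial_{z_0}\Phi$ in the middle paragraph; once this is in hand, the remainder is a routine application of the inverse function theorem and the chain rule, entirely justified on $I_{\htilde}=[0,\htilde^\delta]$ by the diffeomorphism property of $z_0\mapsto z(t;z_0,\xi,\htilde)$ established above \eqref{diffeo1} and the uniform bounds \eqref{estDx} on $\partial_{z_0}z$ and its inverse. The $C^2$ regularity of $p$ in $(z,\zeta)$ required for the cancellation argument is provided by Lemma~\ref{derp} and Corollary~\ref{derp'}.
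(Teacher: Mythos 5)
Your argument is correct, and it is a genuinely different route from the paper's. The paper short-circuits the verification by invoking the standard symplectic-geometry fact that the flow-out $\Sigma$ of a Lagrangian initial surface under a Hamiltonian flow is Lagrangian, from which the exactness condition $\partial_{z_j}F_0=\partial_t F_j$ (with $F_0=\tau^*=-p(t,z,\zeta^*,\htilde)$ and $F_j=\zeta^*_j$) drops out as the coefficient of $\di t\wedge\di z_j$ in $\di t\wedge\di F_0+\di z\wedge\di F$. That identity is then plugged directly into the integral for $\partial_{z_j}\phi$. You instead reconstruct the same conclusion ``by hand'' via the classical action $\Phi(t,z_0)=z_0\cdot\xi+\int_0^t(\zeta\cdot\partial_\zeta p-p)\,d\sigma$ along bicharacteristics: the cancellation of the cross terms $\partial_{z_0}\zeta\cdot\partial_\zeta p$ in $\partial_{z_0}[\zeta\cdot\partial_\zeta p-p]=\frac{d}{d\sigma}[\zeta\cdot\partial_{z_0}z]$ is exactly the content of the Lagrangian property, and pulling back through $z_0=\kappa(t;z,\xi,\htilde)$ yields $\nabla_z\Psi=\zeta^*$, $\partial_t\Psi=-p(t,z,\zeta^*,\htilde)$, whence $\Psi=\phi$. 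What your approach buys is self-containedness: you do not need to appeal to (or prove) that $\Sigma$ is Lagrangian, only to the ODE system \eqref{bicar}, the diffeomorphism property of $z_0\mapsto z(t;z_0,\xi,\htilde)$, and the smoothness of $p$ in $(z,\zeta)$ from Lemma~\ref{derp}. The cost is that the argument is a bit longer and you introduce an auxiliary function $\Psi$ that must a posteriori be identified with $\phi$; the paper's computation is one line once the Lagrangian fact is granted. Both are rigorous and parallel versions of the same Hamilton--Jacobi verification.
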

\begin{proof}
The initial condition is trivially satisfied. Moreover we have
$$
\frac{\partial \phi}{\partial t}\big(t,z,\xi,\htilde\big) 
= - p\big(t,z,\zeta(t;\kappa\big(t;z,\xi,\htilde\big),\xi,\htilde), \tilde{h}\big).
$$
Therefore it is sufficient to prove that
\begin{equation}\label{eiksuite}
\frac{\partial \phi}{\partial z}\big(t,z,\xi,\htilde\big) 
= \zeta\big(t;\kappa\big(t;z,\xi,\htilde\big),\xi,\htilde\big).
\end{equation}
Let us consider the Lagrangean manifold
\begin{equation}\label{Lagrange}
\Sigma =\Big\{\big(t,z(t; z_0,\xi ,\htilde ),\tau(t; z_0,\xi ,\htilde ),\zeta(t; z_0,\xi ,\htilde )\big):   
t \in I_{\htilde}  , (z_0,\xi ) \in \xR^{2d} \Big\}.
\end{equation}
According to \eqref{diffeo1} we can write
$$
\Sigma = \Big\{(t,z, \tau(t;\kappa(t;z,\xi,\htilde ),\xi,\htilde ), \zeta(t;\kappa(t;z,\xi,\htilde ),\xi,\htilde ))
: t\in I_{\htilde }, (z,\xi)\in \xR^{2d} \Big\}.
$$
Let us set 
\begin{equation*}
\begin{aligned}
F_0\big(t,z,\xi,\htilde\big) &= \tau\big(t;\kappa(t;z,\xi,\htilde ),\xi,\htilde\big),\\
F_j\big(t,z,\xi,\htilde\big)  &= \zeta_j\big(t;\kappa(t;z,\xi,\htilde ),\xi,\htilde\big).
\end{aligned}
\end{equation*}
Since the symbol $q$ is constant along its bicharacteristic   and $q(0, z(0), \tau(0), \zeta(0), \htilde)= 0$ we have
$$
F_0\big(t,z,\xi,\htilde\big) = - p\big(t,z,\zeta(t;\kappa(\sigma;z,\xi,\htilde ),\xi,\htilde\big), \htilde \big).
$$
Now $\Sigma$ being Lagrangean we have
$$
\di t \wedge \di F_0 + \di z \wedge \di F =0.
$$
Thus $\partial_{z_j} F_0 - \partial_t F_j =0$ since it is the coefficient of $\di t\wedge \di z_j$ in the above expression. 
Therefore using \eqref{phi} we can write
\begin{equation*}
\begin{aligned}
\frac{\partial \phi}{\partial z_j}(t,z,\xi,\htilde ) &=\xi_j - \int_0^t \frac{\partial}{\partial z_j} 
\big[p\big(\sigma,z,\zeta(\sigma;\kappa(\sigma;z,\xi,\htilde ),\xi,\htilde )\big)\big]\, d \sigma\\
&= \xi_j + \int_0^t \frac{\partial}{\partial \sigma}\big[ \zeta_j\big(\sigma;\kappa(\sigma;z,\xi,\htilde\big),\xi,\htilde )\big] \, d \sigma\\
&= \zeta_j\big(t;\kappa(t;z,\xi,\htilde ),\xi,\htilde\big).
\end{aligned}
\end{equation*}
    \end{proof}
    
\subsubsection{The Hessian of the phase}

Let us recall that the phase $\phi$ is the solution of the problem
\begin{equation}\label{phase}
\left\{
\begin{aligned}
&\frac{\partial \phi}{\partial t} (t,z,\xi,\htilde ) + p\Big(t,z,\frac{\partial \phi}{\partial z}(t,z,\xi,\htilde ), \htilde \Big) =0\\
&\phi \arrowvert_{t=0} = z \cdot \xi.
\end{aligned}
\right.
\end{equation}
On the other hand the map $(t,z,\xi) \mapsto \phi(t,z,\xi,\htilde )$ is $C^1$ in time and $C^\infty$ in $(x,\xi)$. 
Differentiating twice, with respect to $\xi$, the above equation we obtain
\begin{equation*}
\begin{aligned}
\frac{\partial }{\partial t} \Big( \frac{\partial^2 \phi }{\partial  \xi_i \partial \xi_j}\Big) 
&= - \sum_{k,l = 1}^d  
\frac{\partial^2 p}{\partial  \zeta_k \partial \zeta_l}\Big(t,z,\frac{\partial \phi}{\partial z},\htilde \Big)  
\frac{\partial^2 \phi }{\partial  z_k \partial \xi_i}   \frac{\partial^2 \phi }{\partial z_l \partial \xi_j} \\
& \quad- \sum_{k=1}^d \frac{\partial p}{ \partial \zeta_k}\Big(t,z,\frac{\partial \phi}{\partial z},\htilde \Big)
\frac{\partial^3 \phi}{\partial z_k \partial \xi_i \partial \xi_j}\cdot
\end{aligned}
\end{equation*}
By the initial condition in \eqref{phase} we have
$$
\frac{\partial^2 \phi}{\partial z_k \partial \xi_i}\Big  \arrowvert_{t=0} 
= \delta_{k i}\quad   \frac{\partial^2 \phi}{\partial z_l \partial \xi_j} \Big  \arrowvert_{t=0} 
= \delta_{lj}, \quad \frac{\partial^3 \phi}{\partial z_k \partial \xi_i \partial \xi_j} \Big   \arrowvert_{t=0} = 0, 
\quad  \frac{\partial^2 \phi }{\partial  \xi_i \partial \xi_j} \Big   \arrowvert_{t=0} =0.
$$
It follows that 
$$
\frac{\partial }{\partial t} \Big( \frac{\partial^2 \phi }{\partial  \xi_i \partial \xi_j}\Big)\Big\arrowvert_{t=0} 
= -\frac{\partial^2 p}{\partial  \xi_i \partial \xi_j}\big(0,z,\xi,\htilde \big)
$$
from which we deduce that 
$$
\frac{\partial^2 \phi }{\partial  \xi_i \partial \xi_j}(t,z,\xi,\htilde ) 
= -t \frac{\partial^2 p}{\partial  \xi_i \partial \xi_j}(0,z,\xi,\htilde ) +o(t).
$$
It follows from Proposition \ref{Hessp} that one can find $M_0>0$ such that
\begin{equation}\label{HessphiND}
\la \det \Big( \frac{\partial^2 \phi}{\partial  \xi_i \partial \xi_j}(t,z,\xi,\htilde )\Big) \ra \geq M_0 t^d,
\end{equation}
for $t\in I_{\htilde }, z \in \xR^d, \xi \in \mathcal{C}_0, 0<\htilde  \leq \htilde _0$.

Our goal now is to prove estimates of higher order on the phase (see Corollary \ref{estkappa} below.)
 \subsubsection{Classes of symbol and symbolic calculus}
Recall here that $\delta = \frac{2}{3}$ and that $ \mathcal{N}_{k}(\gamma)$ 
has been defined in \eqref{norme:gamma}.
\begin{defi}
Let $m\in \xR, \mu_0 \in \xR^+ $ and $a =  a(t, z,\xi, \htilde)$ 
be a smooth function defined on $ \Omega =[0, \htilde^\delta]\times \xR^d \times \mathcal{C}_0\times (0, \htilde_0]$. We shall say that
 
$(i)$ \quad  $a \in S^m_{\mu_0}$ if for every $k\in \xN$ one can find $\mathcal{F}_k:\xR^+ \to \xR^+$ such that for   all $(t,z,\xi,\htilde) \in \Omega $ \begin{equation}\label{est:symb}
\vert D_z^\alpha D_\xi^\beta a(t,z,\xi,\htilde)\vert \leq \mathcal{F}_k( \Vert V\Vert_{E_0} + \mathcal{N}_{k+1}(\gamma)) \, \htilde^{m - \vert \alpha \vert \mu_0},\quad \vert \alpha \vert + \vert \beta \vert =k,
\end{equation}
\\
$(ii)$ \quad   $a \in \dot{S}^m_{\mu_0}$  if \eqref{est:symb} holds for every $k\geq1$.
\end{defi}
\begin{rema}\label{remarque1}
\begin{enumerate}
\item If $m\geq m'$ then $ S^m_{\mu_0} \subset  S^{m'}_{\mu_0} $ and  $ \dot{S}^m_{\mu_0} \subset  \dot{S}^{m'}_{\mu_0}$.
\item Let $a(t,z, \xi, \htilde) = z$ and $b(t,z, \xi, \htilde) = \xi$. Then   $a \in \dot{S}^{\delta/2}_{2\delta -1}$,    $b \in \dot{S}^0_{2\delta -1}$.
\item If $a \in S^m_{\mu_0}$ with $m \geq 0$ then $b=e^a \in S^0_{\mu_0}.$
\end{enumerate}
\end{rema}
We study now the composition of such symbols.
 \begin{prop}\label{est:compo}
Let $m \in \xR , f \in S^{m}_{2\delta-1} (\text{resp.}\dot{S}^{m}_{2\delta-1}),  Ê U \in \dot{S}^{\delta/2}_{2\delta-1}, V\in \dot{S}^{0}_{2\delta-1} $ and assume that $V\in \mathcal{C}_0.$  Set 
$$F(t,z,\xi, \htilde) = f(t, U(t,z,\xi,\htilde), V(t,z,\xi,\htilde), \htilde).$$ Then $F\in S^{m}_{2\delta-1} (\text{resp.\ }\dot{S}^{m}_{2\delta-1}).$
\end{prop}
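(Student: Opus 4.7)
The plan is to proceed by a direct application of the Faà di Bruno formula and then count powers of $\htilde$, relying crucially on the arithmetic identity $\delta/2-(2\delta-1)=0$ which holds precisely for $\delta=2/3$ (see \eqref{delta}).

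First I would fix a pair $(\alpha,\beta)$ with $|\alpha|+|\beta|=k\ge 1$ and expand $D_z^\alpha D_\xi^\beta F$ by Faà di Bruno applied to $F=f\bigl(t,U(t,z,\xi,\htilde),V(t,z,\xi,\htilde),\htilde\bigr)$. The resulting expression is a finite linear combination (with universal combinatorial coefficients) of terms of the form
\begin{equation*}
\bigl(D_X^{a}D_Y^{b}f\bigr)\!\bigl(t,U,V,\htilde\bigr)\ \prod_{l}\bigl(D_z^{\alpha_l}D_\xi^{\beta_l}U\bigr)^{c_l}\ \prod_{m}\bigl(D_z^{\alpha'_m}D_\xi^{\beta'_m}V\bigr)^{d_m},
\end{equation*}
where $a=\sum_l c_l$, $b=\sum_m d_m$, $|\alpha_l|+|\beta_l|\ge 1$, $|\alpha'_m|+|\beta'_m|\ge 1$, $\sum_l c_l\alpha_l+\sum_m d_m\alpha'_m=\alpha$ and the analogous relation in $\beta$. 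In particular $a+b\ge 1$ whenever $k\ge 1$, so all $U$- and $V$-factors that appear are hit by at least one derivative and the $\dot S$ assumptions on $U,V$ suffice.

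Next I would bound each factor using the defining estimate \eqref{est:symb}. Since $V$ takes values in $\mathcal{C}_0\subset\xR^d$, the point $(t,U,V,\htilde)$ lies in the domain $\Omega$ where $f$ is estimated, so
\begin{equation*}
\bigl|(D_X^a D_Y^b f)(t,U,V,\htilde)\bigr|\le \mathcal{F}_{a+b}\bigl(\|V\|_{E_0}+\mathcal{N}_{a+b+1}(\gamma)\bigr)\,\htilde^{\,m-a(2\delta-1)},
\end{equation*}
while $U\in\dot S^{\delta/2}_{2\delta-1}$ and $V\in\dot S^{0}_{2\delta-1}$ give
\begin{equation*}
\bigl|D_z^{\alpha_l}D_\xi^{\beta_l}U\bigr|^{c_l}\les \htilde^{\,c_l(\delta/2-|\alpha_l|(2\delta-1))},\qquad \bigl|D_z^{\alpha'_m}D_\xi^{\beta'_m}V\bigr|^{d_m}\les \htilde^{-d_m|\alpha'_m|(2\delta-1)},
\end{equation*}
with constants of the type $\mathcal{F}_\bullet(\|V\|_{E_0}+\mathcal{N}_{\bullet+1}(\gamma))$. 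Multiplying, and setting $|\alpha_U|:=\sum_l c_l|\alpha_l|$ and $|\alpha_V|:=\sum_m d_m|\alpha'_m|$ so that $|\alpha_U|+|\alpha_V|=|\alpha|$, each term is bounded by
\begin{equation*}
\mathcal{F}_k\bigl(\|V\|_{E_0}+\mathcal{N}_{k+1}(\gamma)\bigr)\,\htilde^{\,m+a(\delta/2-(2\delta-1))-|\alpha|(2\delta-1)}.
\end{equation*}

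The key (and only nontrivial) step is then to observe that, with $\delta=2/3$, one has $\delta/2-(2\delta-1)=1/3-1/3=0$, so the factor $a(\delta/2-(2\delta-1))$ vanishes identically and the exponent simplifies to $m-|\alpha|(2\delta-1)$ independently of the Faà di Bruno block structure. Summing over the finitely many terms yields the required estimate $|D_z^\alpha D_\xi^\beta F|\le \mathcal{F}_k\htilde^{m-|\alpha|(2\delta-1)}$ for every $k\ge 1$, which proves $F\in\dot S^m_{2\delta-1}$. For the non-dotted case $f\in S^m_{2\delta-1}$ one only needs in addition the bound for $k=0$, namely $|F|=|f(t,U,V,\htilde)|\le\mathcal{F}_0\htilde^{m}$, which is immediate from \eqref{est:symb} applied with $\alpha=\beta=0$ and the fact that the estimate on $f$ is uniform in its position/frequency arguments. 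I do not expect any real obstacle beyond this bookkeeping: the proof is purely combinatorial once the algebraic identity for $\delta=2/3$ is in place, and it is precisely this identity that dictates the choice of $\delta$ made earlier.
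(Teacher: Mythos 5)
Your proof is correct and follows essentially the same route as the paper's: a Faà di Bruno expansion, the bound $\htilde^{m-|a|(2\delta-1)}$ on the derivatives of $f$ combined with the $\dot S^{\delta/2}_{2\delta-1}$ and $\dot S^{0}_{2\delta-1}$ bounds on the inner functions, the constraint $\sum(|p_j|+|q_j|)|l_j|=|\alpha|$ to collapse the exponent, and the identity $1-2\delta+\frac{\delta}{2}=0$ to kill the $|a|$-dependence. The only point worth keeping explicit, which you do note, is that every $U$- and $V$-factor produced by Faà di Bruno carries at least one derivative, so the homogeneous ($\dot S$) hypotheses suffice.
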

\begin{proof}
Let $\Lambda = ( \alpha, \beta)\in \xN^d \times \xN^d, \vert \Lambda\vert =k$. 
If $k=0$ the estimate of $F$ follows easily from the hypothesis on $f$. Assume 
$k  \geq 1$. Then $D^\Lambda F$ is a finite linear combination of terms of the form
$$(1)= (D^Af)( \cdots) \prod_{j=1}^r (D^{L_j} U)^{p_j} (D^{L_j} V)^{q_j} $$
where $A=(a,b), \quad 1\leq \vert A \vert \leq  \vert \Lambda \vert, \quad L_j= (l_j,m_j)$ and
$$\sum_{j=1}^r p_j = a, \quad \sum_{j=1}^r q_j = b, \quad \sum_{j=1}^r (\vert p_j\vert + \vert q_j \vert)L_j = \Lambda$$
By the hypothesis on $f$   we have 
\begin{equation}\label{est:DAf3}
 \vert D^A f(\cdots)\vert \leq \mathcal{F}_k( \Vert V \Vert_{E_0} + \mathcal{N} _{k+1}(\gamma))\, \htilde^{m-\vert a \vert (2 \delta -1)}.
\end{equation}
By the hypotheses on $U,V$, the product occuring in the definition of $(1)$ is bounded by $  \mathcal{F}_k( \Vert V \Vert_{E_0} + \mathcal{N}_{k+1}(\gamma))\, \htilde^M$ where 
$$M= \sum_{j=1}^r \vert p_j \vert  \big(\frac{\delta}{2} -\vert l_j \vert (2 \delta -1) \big)  - \sum_{j=1}^r \vert q_j \vert ( \vert l_j \vert (2 \delta -1)) = -\vert \alpha \vert (2 \delta -1) + \frac{\delta}{2} \vert a \vert.$$
 Using \eqref{est:DAf3} and the fact that $1-2\delta +Ê\frac{\delta}{2} = 0$ we obtain the desired conclusion.
\end{proof}
 
  \subsubsection{Further estimates on the flow}
   We shall denote by  $z(s) = z(s; z, \xi, \htilde), \zeta(s) = \zeta(s; z, \xi, \htilde)$ 
   the solution of \eqref{bicar} with $z(0) = z, \zeta(0) = \xi.$  Recall  that $\delta = \frac{2}{3} $. 
        \begin{prop}\label{estflow}
There exists $\mathcal{F}: \xR^+ \to \xR^+$ non decreasing such that
\begin{alignat*}{2}
&(i) \quad &&\la\frac{\partial z}{\partial {z }}(s) - Id \ra  \leq   \mathcal{F} \big(\Vert V \Vert_{E_0} +   \mathcal{N}_2(\gamma)\big)   \htilde^{\frac{\delta}{2}}, \quad \la\frac{\partial \zeta}{\partial {z }}(s)\ra\leq   \mathcal{F} \big(\Vert V \Vert_{E_0} +   \mathcal{N}_2(\gamma)\big),  \\
  &(ii) &&\la\frac{\partial z}{\partial \xi}(s) \ra   + \la\frac{\partial \zeta}{\partial \xi}(s) - Id \ra \leq \mathcal{F} \big(\Vert V \Vert_{E_0} +   \mathcal{N}_2(\gamma)\big) \, \htilde^{\frac{\delta}{2}}.
  \end{alignat*}
for all $s\in I_{\htilde } = \big[0, \htilde ^\delta\big], z\in \xR^d, \xi \in \mathcal{C}_0.$

For any $k\geq 1$ there exists $\mathcal{F}_k: \xR^+ \to \xR^+$ non decreasing such that for $\alpha, \beta \in \xN^ d$ with $ \vert \alpha \vert + \vert \beta \vert =k $ 
\begin{equation}\label{est:z:zeta}
\left\{ 
\begin{aligned}
&  \la D_z^\alpha D_\xi^\beta  z(s) \ra \leq \mathcal{F}_k\big(\Vert V \Vert_{E_0} +   \mathcal{N}_{k+1}(\gamma)\big) \htilde ^{ \vert \alpha \vert  (1-2 \delta) + \frac{\delta}{2}},\\
& \la D_z^\alpha D_\xi^\beta  \zeta(s)  \ra\leq  \mathcal{F}_k\big(\Vert V \Vert_{E_0} +   \mathcal{N}_{k+1}(\gamma)\big) \htilde ^{ \vert \alpha \vert  (1-2 \delta)}.
\end{aligned}
\right.
\end{equation} 
\end{prop}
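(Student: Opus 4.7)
The strategy is to view the variational system obtained by differentiating the bicharacteristic system \eqref{bicar} with respect to $(z,\xi)$ as a linear ODE system driven by the matrix $\mathcal{A}(s)$ from \eqref{matrice} plus an inhomogeneous source, and to apply Gronwall on the time interval $I_{\tilde h}=[0,\tilde h^{\delta}]$ using the uniform bound \eqref{calA}. The key input throughout will be Corollary \ref{derp'}, which controls $\int_0^{s} |D_z^\alpha D_\zeta^\beta p|\,dt$ by $\mathcal{F}_k(\Vert V\Vert_{E_0}+\mathcal{N}_k(\gamma))\,\tilde h^{|\alpha|(1-2\delta)+\delta}$ on $I_{\tilde h}$.

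For parts $(i)$ and $(ii)$, note that $(i)$ is already contained in \eqref{estDx}. For $(ii)$, differentiating \eqref{bicar} with respect to $\xi$ yields the same linear system as \eqref{Dbicar} but with initial data $\partial z/\partial\xi|_{s=0}=0$, $\partial\zeta/\partial\xi|_{s=0}=\mathrm{Id}$. Writing this system as $\dot U=\mathcal{A}U+S$, Gronwall together with \eqref{calA} gives $\Vert U(s)\Vert\leq \mathcal{F}$; then plugging back into the integrated equations, the bound on $\partial z/\partial\xi$ comes from $\int_0^{s}|p''_{\zeta\zeta}|\,dt\lesssim \tilde h^{\delta}\leq \tilde h^{\delta/2}$ (by Corollary \ref{derp'} with $|\alpha|=0$), while the bound on $\partial\zeta/\partial\xi-\mathrm{Id}$ is driven by $\int_0^{s}|p''_{z\zeta}|\,dt\lesssim \tilde h^{1-\delta}=\tilde h^{\delta/2}$ (with $|\alpha|=1$ and $\delta=2/3$). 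Both contributions are $O(\tilde h^{\delta/2})$, which gives $(ii)$.

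For the higher-order estimates \eqref{est:z:zeta}, the plan is an induction on $k=|\alpha|+|\beta|\geq 1$, the case $k=1$ being $(i)$-$(ii)$. Differentiating \eqref{bicar} $k$ times in $(z,\xi)$ and applying Faà di Bruno, $U_{\alpha,\beta}:=(D_z^\alpha D_\xi^\beta z,\,D_z^\alpha D_\xi^\beta\zeta)$ satisfies $\dot U_{\alpha,\beta}=\mathcal{A}(s)U_{\alpha,\beta}+R_{\alpha,\beta}(s)$ with zero initial data (for $k\geq 2$), where $R_{\alpha,\beta}$ is a finite linear combination of terms of the form
\begin{equation*}
(D_z^a D_\zeta^b p)\bigl(s,z(s),\zeta(s),\tilde h\bigr)\prod_{j=1}^{r}\bigl(D^{L_j}z(s)\bigr)^{p_j}\bigl(D^{L_j}\zeta(s)\bigr)^{q_j},
\end{equation*}
where $L_j=(l_j,m_j)$, $1\leq |a|+|b|\leq k$, $|L_j|<k$, and $\sum_j(|p_j|+|q_j|)L_j=(\alpha,\beta)$, $\sum_j p_j=a$, $\sum_j q_j=b$. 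Applying Corollary \ref{derp'} to the integral of the $p$-factor (gaining $\tilde h^{|a|(1-2\delta)+\delta}$) together with the inductive bounds $|D^{L_j}z|\lesssim \tilde h^{l_j(1-2\delta)+\delta/2}$, $|D^{L_j}\zeta|\lesssim \tilde h^{l_j(1-2\delta)}$, the total exponent of $\tilde h$ in $\int_0^{s}|R_{\alpha,\beta}|\,dt$ evaluates, using the index constraints, to at least $|\alpha|(1-2\delta)+\delta/2$ for the $z$-component and $|\alpha|(1-2\delta)$ for the $\zeta$-component. Gronwall with $\int_0^{s}\Vert \mathcal{A}\Vert\,dt\leq \mathcal{F}$ from \eqref{calA} then closes the induction.

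The main obstacle is the combinatorial bookkeeping of the Faà di Bruno indices: one must verify that
\begin{equation*}
\sum_{j}|p_j|\bigl(l_j(1-2\delta)+\tfrac{\delta}{2}\bigr)+\sum_{j}|q_j|\,l_j(1-2\delta)+|a|(1-2\delta)+\delta\;\geq\;|\alpha|(1-2\delta)+\tfrac{\delta}{2}
\end{equation*}
in all cases (and analogously for the $\zeta$-component), which reduces to the identities $\sum_j(|p_j|+|q_j|)l_j=|\alpha|$ and $|a|\leq|\alpha|+|\beta|$, together with the arithmetic relations $1-2\delta=-1/3$, $\delta/2=1/3$, $\delta=2/3$. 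The critical point is that each additional $z$-derivative must produce exactly one factor of $\tilde h^{1-2\delta}$, which is forced by the scaling in Lemma \ref{derp}; the extra $\tilde h^{\delta/2}$ gain for the $z$-components (but not for the $\zeta$-components) reflects the vanishing initial condition for $z(s)-z$ and is preserved through Faà di Bruno precisely because at least one factor of $D^{L_j}z$ (with its associated $\tilde h^{\delta/2}$ gain) appears in every term arising from differentiation of the $z$-equation in \eqref{bicar}.
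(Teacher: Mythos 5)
Your proposal is correct and follows essentially the same route as the paper: induction on $k$, Fa\`a di Bruno applied to the differentiated bicharacteristic system $\dot U=\mathcal{A}(s)U+F$, Corollary \ref{derp'} for the time-integrated $p$-derivative factor, the induction hypothesis for the lower-order factors, and Gronwall via \eqref{calA}. One tiny bookkeeping remark: for the $\zeta$-equation the outer factor is a derivative of $p'_z$, so its integrated gain is $\htilde^{(|a|+1)(1-2\delta)+\delta}=\htilde^{|a|(1-2\delta)+\delta/2}$ rather than $\htilde^{|a|(1-2\delta)+\delta}$, which turns your inequality into an equality but still closes the induction (and the $\htilde^{\delta/2}$ gain for the $z$-component comes from this time integration, not from the presence of a $D^{L_j}z$ factor in every term).
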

\begin{proof}
The estimates of the first terms in  $(i)$ and $(ii)$ have been proved in \eqref{estDx}.
By exactly the same argument one deduces the estimates  on the second terms. 

We shall prove \eqref{est:z:zeta} by induction on $k$. According to $(i)$ and $(ii)$ it is  true for $k =1.$
   Assume it is true up to the order $k$ and let $\vert \alpha \vert +\vert \beta \vert =k+1\geq 2$. 
Let us set $\Lambda =(\alpha, \beta),$ $D^\Lambda = D_z^\alpha D_\zeta^\beta$ and  $m(s) =(s, z(s),\zeta(s), \htilde ).$  
By the Faa-di-Bruno formula we have
\begin{equation}\label{F1-F2}
\left\{
\begin{aligned}
 &D^\Lambda[p'_\zeta(m(s))] = p''_{z\zeta}(m(s)) D^\Lambda z(s) + p''_{\zeta\zeta}(m(s)) D^\Lambda \zeta(s) + F_1(s),\\
 &D^\Lambda[p'_z(m(s))] = p''_{zz}(m(s)) D^\Lambda z(s) + p''_{z\zeta}(m(s)) D^\Lambda \zeta(s) + F_2(s).  
 \end{aligned}
\right.
\end{equation}
It follows that $U(s) = (D^\Lambda z(s),D^\Lambda \zeta(s)$ is the solution of the problem 
$$\dot{U}(s) = \mathcal{A}(s) U(s) + F(s), \quad U(0) = 0$$
where $\mathcal{A}(s)$ has been defined in \eqref{matrice} and $F(s) = (F_1(s), F_2(s)).$

According to the estimates of the symbol $p$ given in Lemma \ref{derp} the worse term is $F_2.$ By the   formula mentionned above we see that  $F_1$ is a finite linear combination of terms of the form
$$
\big(D^A p'_z \big)(m(s)) \prod_{i=1}^r \big(D^{L_i}z(s) \big)^{p_i} \prod_{i=1}^{r} \big(D^{L_i} \zeta(s) \big)^{q_i},
$$
where $ A=(a,b),\quad  2\leq \vert A \vert \leq \vert \Lambda \vert$ and
\begin{equation*}
  L_i=(l_i,l'_i), \quad 1 \leq \vert L_i \vert \leq \vert \Lambda \vert -1, \quad \sum_{i=1}^r p_i = a,\quad   \sum_{i=1}^{r} q_i = b,   \quad \sum_{i=1}^k (\vert p_i \vert +\vert q_i \vert )L_i   = \Lambda. 
  \end{equation*}
It follows from Corollary \ref{derp'} that for $s$ in $[0, \htilde^\delta]$ we have, 
\begin{equation}\label{estDqxi}
\int_0^s \la \big(D^Ap'_z \big)(m(\sigma)) \ra d \sigma \leq  \mathcal{N}_{\vert A \vert +1}(\gamma) \htilde^{(\vert a \vert +1)(1-2 \delta) + \delta}  \leq  \mathcal{N}_{\vert A \vert +1}(\gamma) \htilde^{ \vert a \vert (1-2 \delta) + \frac{\delta}{2}}.
 \end{equation}
 since $1-2 \delta +\delta =  \frac{\delta}{2}.$ Now since $1 \leq \vert L_i \vert \leq \vert \Lambda \vert -1= k$ we have, by the induction,
 \begin{align*}
 & \vert D^{L_i}z(s)\vert \leq \htilde^{\vert l_i\vert (1-2 \delta) + \frac{\delta}{2}} \mathcal{F}_k\big(\Vert V \Vert_{E_0} +   \mathcal{N}_{k+1}(\gamma)\big),\\
 &\vert D^{L_i}\zeta(s)\vert \leq \htilde^{\vert l_i\vert (1-2 \delta)}  \mathcal{F}_k\big(\Vert V \Vert_{E_0} +   \mathcal{N}_{k+1}(\gamma)\big). 
  \end{align*}
 It follows that
  $$
\int_0^s \vert F_2(\sigma) \vert d \sigma 
\leq   \Big(\int_0^s \la \big(D^Ap'_z \big)(m(\sigma)) \ra d \sigma \Big) \mathcal{F}_{k+1}\big(\Vert V \Vert_{E_0} +   \mathcal{N}_{k+1}(\gamma)\big) \htilde^{M} 
     $$
     where $M = \sum_{i=1}^r \big( (\vert p_i \vert + \vert q_i \vert) \vert l_i\vert (1- 2 \delta) \vert  + \vert p_i \vert \frac{\delta}{2}\big) = \vert \alpha \vert (1-2 \delta) + \vert a \vert \frac{\delta}{2}.$ It follows from \eqref{estDqxi} and the fact that $1 - \frac{3 \delta}{2} = 0$ that 
     $$\int_0^s \vert F_2(\sigma) \vert d \sigma  \leq \mathcal{F}_{k+1}\big(\Vert V \Vert_{E_0} +   \mathcal{N}_{k+1}(\gamma)\big)\htilde^{ \vert \alpha \vert (1-2 \delta) + \frac{\delta}{2}}.
 $$
 Since $D^Ap'_\zeta$ has even a better estimate, the same computation shows that 
 $$\int_0^s \vert F_1(\sigma) \vert d \sigma  \leq \mathcal{F}_{k+1}\big(\Vert V \Vert_{E_0} +   \mathcal{N}_{k+1}(\gamma)\big)\htilde^{ \vert \alpha \vert (1-2 \delta) + \frac{\delta}{2}}.
 $$ 
Then we write
 $$
U(s) = \int_0^s F(\sigma) d \sigma + \int_0^s \mathcal{A}(\sigma)U(\sigma)d \sigma 
$$    
 and we use the above estimates on $F_1, F_2$, \eqref{calA} and   the Gronwall lemma to see that the step $k+1$ of the induction is achieved. This completes the proof of Proposition \ref{estflow}.
  \end{proof}
\begin{coro}\label{estkappa}
For every $k \geq 1$ there exists $\mathcal{F}_k: \xR^+ \to \xR^+$ non decreasing such that for every   $(\alpha, \beta) \in \xN^d \times \xN^d$ with  $ \vert \alpha \vert +  \vert \beta \vert =k$     we have
 \begin{equation*}
\begin{aligned}
& (i) \quad &&\vert D_z^\alpha D_\xi^\beta \kappa(s,z,\xi,\htilde) \vert \leq \mathcal{F}_k\big(\Vert V \Vert_{E_0} +   \mathcal{N}_{k+1}(\gamma)\big) \htilde ^{ \vert \alpha \vert  (1-2 \delta) + \frac{\delta}{2}},\\
&(ii) \quad &&\Big\vert D_z^\alpha D_\xi^\beta \big(\frac{\partial \phi}{\partial z}\big)(s,z,\xi,\htilde )\Big\vert
 \leq \mathcal{F}_k\big(\Vert V \Vert_{E_0} +   \mathcal{N} _{k+1}(\gamma)\big) \htilde ^{ \vert \alpha \vert  (1-2 \delta)},\\
 &(iii)\quad && \vert D_\xi^\beta \phi (s,z,\xi,\htilde) \vert \leq \mathcal{F}_k\big(\Vert V \Vert_{E_0} +   \mathcal{N} _{k+1}(\gamma)\big)\vert s \vert, \quad \vert \beta \vert \geq 2,
  \end{aligned}
\end{equation*}
for all $s \in I_{\htilde }, z \in \xR^d, \xi \in \mathcal{C}_0.$   This implies that $\kappa \in \dot{S}_{2\delta-1}^{\delta/2}$ and $  \frac{\partial \phi}{\partial z}  \in  {S}_{2\delta-1}^0.$
\end{coro}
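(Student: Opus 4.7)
The proof proceeds in three steps, handling $(i)$, $(ii)$, $(iii)$ in order, and then reading off the symbol class membership from the definitions.

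\emph{Step 1: estimates on $\kappa$.} I will argue by induction on $k = \vert\alpha\vert + \vert\beta\vert$, starting from the defining identity $z(s;\kappa(s,z,\xi,\htilde),\xi,\htilde) = z$. For $k=1$, differentiating in $z$ and in $\xi$ and using Proposition~\ref{estflow}(i)--(ii) together with the implicit function theorem gives
\[
\frac{\partial\kappa}{\partial z} = \Big(\frac{\partial z}{\partial z_0}\Big)^{-1}\Big|_{z_0=\kappa}, \qquad \frac{\partial\kappa}{\partial\xi} = -\Big(\frac{\partial z}{\partial z_0}\Big)^{-1}\frac{\partial z}{\partial\xi}\Big|_{z_0=\kappa},
\]
and the bounds in Proposition~\ref{estflow} yield the claim for $k=1$. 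For the induction step with $\vert\alpha\vert+\vert\beta\vert = k+1$, I differentiate the identity $z(s;\kappa,\xi,\htilde) = z$ by Faa-di-Bruno: the highest-order term in $\kappa$ is $\frac{\partial z}{\partial z_0}(s;\kappa,\xi,\htilde)\,D_z^\alpha D_\xi^\beta \kappa$, which is inverted to isolate $D_z^\alpha D_\xi^\beta\kappa$; the remaining terms are sums of products of $(D^A z)(s;\kappa,\xi,\htilde)$ with $2\le\vert A\vert\le k+1$ and lower-order derivatives of $\kappa$. A bookkeeping of the $\htilde$-powers, using Proposition~\ref{estflow}'s bound $\htilde^{\vert a\vert(1-2\delta)+\delta/2}$ for each factor and the induction hypothesis, gives exactly the required exponent $\vert\alpha\vert(1-2\delta)+\delta/2$ (the arithmetic is the same one that appears in the proof of Proposition~\ref{estflow}, and boils down to $1 - \tfrac{3\delta}{2} = 0$).

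\emph{Step 2: derivatives of $\partial\phi/\partial z$.} From formula~\eqref{eiksuite} established in the proof of Proposition~\ref{eqeiko},
\[
\frac{\partial\phi}{\partial z}(s,z,\xi,\htilde) = \zeta\bigl(s;\kappa(s,z,\xi,\htilde),\xi,\htilde\bigr).
\]
Having placed $\kappa \in \dot{S}^{\delta/2}_{2\delta-1}$ by Step 1 and noting that the identity maps $(z,\xi)\mapsto z$ and $(z,\xi)\mapsto\xi$ trivially belong to $\dot{S}^{\delta/2}_{2\delta-1}$ and $\dot{S}^0_{2\delta-1}$ respectively (cf.\ Remark~\ref{remarque1}.2), and since the bounds \eqref{est:z:zeta} show that $\zeta(s;\cdot,\cdot,\htilde)$ is admissible as the ``$f$'' in Proposition~\ref{est:compo} with $m=0$ and the governing exponent $2\delta-1$, I can apply Proposition~\ref{est:compo} to conclude that $\partial\phi/\partial z \in S^0_{2\delta-1}$, which is exactly $(ii)$.

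\emph{Step 3: derivatives of $\phi$ in $\xi$.} Differentiating the explicit formula~\eqref{phi} at least twice in $\xi$ kills the term $z\cdot\xi$, so that for $\vert\beta\vert\ge 2$
\[
D_\xi^\beta\phi(s,z,\xi,\htilde) = -\int_0^s D_\xi^\beta\Bigl[\,p\bigl(\sigma,z,\zeta(\sigma;\kappa(\sigma;z,\xi,\htilde),\xi,\htilde),\htilde\bigr)\Bigr]\,d\sigma.
\]
Expanding the $\xi$-derivative by Faa-di-Bruno produces sums of $\xi$-derivatives of $p$ (of any order $\le\vert\beta\vert$) composed with $\zeta(\sigma;\kappa,\xi,\htilde)$, multiplied by products of $\xi$-derivatives of this composition. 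By Step 2 (and the analogous estimate for $\xi$-derivatives which, as there are no $z$-derivatives, involves no negative powers of $\htilde$), each such factor is uniformly bounded in $\htilde$; by Lemma~\ref{derp} applied with $\alpha=0$ the $\xi$-derivatives of $p$ themselves are bounded by $\mathcal{F}_k(\Vert V\Vert_{E_0})\mathcal{N}_k(\gamma)$ uniformly in $\sigma$ (no $\htilde$-loss since no $z$-derivatives are taken). Integrating over $\sigma\in[0,s]$ gives the factor $\vert s\vert$ in $(iii)$.

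\emph{Symbol class conclusion.} The estimate $(i)$ is exactly the definition of $\kappa\in\dot{S}^{\delta/2}_{2\delta-1}$, and $(ii)$ is the definition of $\partial\phi/\partial z\in S^0_{2\delta-1}$. The main difficulty is the bookkeeping of $\htilde$-powers in the inductive step of Step 1: one must check carefully that the combinatorics of Faa-di-Bruno, combined with the exponent rule $1-2\delta+\delta/2 = -\delta/2$ from the bicharacteristic system, do not degrade the exponent when inverting the leading term and when summing over partitions. This is the same mechanism that drives Proposition~\ref{estflow}, and the choice $\delta = 2/3$ is what makes all the arithmetic close.
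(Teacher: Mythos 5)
Your proof is correct and follows essentially the same route as the paper: the induction on $|\alpha|+|\beta|$ via the implicit relation $z(s;\kappa(s;z,\xi,\htilde),\xi,\htilde)=z$ for $(i)$, the identity $\partial_z\phi=\zeta(s;\kappa,\xi,\htilde)$ combined with Proposition~\ref{est:compo} for $(ii)$, and the explicit formula \eqref{phi} for $(iii)$. The only (minor) thing you leave implicit is the $k=0$ boundedness of $\partial_z\phi$ needed for membership in $S^0_{2\delta-1}$ (as opposed to $\dot S^0_{2\delta-1}$), which the paper checks via $|\zeta(s)-\xi|\le\int_0^s|\partial_z p|\,dt\lesssim\htilde^{\delta/2}$ together with $\xi\in\mathcal{C}_0$.
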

\begin{proof}
We first show $(ii)$ and  $(iii)$. Recall that 
$$
\frac{\partial \phi}{\partial z}(s,z,\xi,\htilde ) = \zeta(s; \kappa(s; z,\xi,\htilde ), \xi, \htilde ).
$$
By   Proposition \ref{estflow} (since $\zeta$ is bounded) we have $\zeta\in S^0_{2\delta-1}.$ By $(i)$ we have $\kappa \in \dot{S}^{\delta/2}_{2\delta-1}$ and by Remark \ref{remarque1} we have $\xi \in \dot{S}^0_{2\delta-1}.$  Then Proposition \ref{est:compo} implies that $\frac{\partial \phi}{\partial z} \in \dot{S}_{2\delta-1}^0.$ Moreover $\frac{\partial \phi}{\partial z} $ is bounded since $\vert \zeta(s) - \xi \vert \leq \int_0^s \vert\frac{ \partial p}{\partial z}(t, \ldots) \vert dt \leq \mathcal{F} \big(\Vert V \Vert_{E_0} +   \mathcal{N}_2(\gamma)\big) \widetilde{h}^\frac{\delta}{2}$ and $\xi \in \mathcal{C}_0.$ Now $(iii)$ follows from the definition \eqref{phi} of the phase, the facts that $p \in S^0_{2\delta-1}, z\in \dot{S}^\mez_{2\delta-1}, \zeta(s; \kappa(s; z,\xi,\htilde ), \xi, \htilde )\in \dot{S}^0_{2\delta-1}$ and Proposition \ref{est:compo}.
 
  We are left with the proof of $(i)$. We proceed by induction on   $\vert \alpha \vert + \vert \beta \vert = k \geq1.$ Recall that  by definition of $\kappa$ we have the equality $z(s; \kappa(s;z,\xi,\htilde ), \xi, \htilde ) = z$. It follows that  
 $$\frac{\partial z}{\partial z} \cdot \frac{\partial \kappa}{\partial z} = Id, \quad  \frac{\partial z}{\partial z} \cdot\frac{\partial \kappa}{\partial \xi} =  -\frac{\partial z}{\partial \xi} .$$
 Then the estimate for $k =1$ follows from $(i)$ in Proposition \ref{estflow}. Assume the estimate true up to the order $k$ and let $\Lambda =(\alpha, \beta), \vert \Lambda \vert = k+1 \geq 2.$ Then differentiating  $\vert \Lambda \vert $ times the first above equality we see that $\frac{\partial z}{\partial z} \cdot D^\Lambda \kappa$ is a finite linear combination of terms of the form
 $$ (2)=  D^A z(\cdots) \prod_{j=1}^r \big(D^{L_j}\kappa\big)^{p_j} \prod_{j=1}^r \big(D^{L_j} \xi)^{q_j}$$
where $A=(a,b) \quad 2 \leq \vert A \vert \leq \vert \Lambda \vert, \quad L_j =(l_j,m_j),\quad  1 \leq \vert L_j \vert \leq k$ and 
$$ \sum_{j=1}^r p_j = \alpha,\quad \sum_{j=1}^r q_j = \beta, \quad \sum_{j=1}^r (\vert p_j \vert +\vert q_j \vert )L_j = (\alpha, \beta).$$
 We use the estimate (given by Proposition \ref{estflow})
$$ \vert D^A z(\cdots) \vert \leq  \mathcal{F}_{k+1}\big(\Vert V \Vert_{E_0} +   \mathcal{N}^0_{k+2}(\gamma)\big)\htilde ^{ \vert a\vert  (1-2 \delta) + \frac{\delta}{2}},$$ 
 the induction, the fact that $\xi \in \dot{S}^0_{2\delta-1}$  and the equality $1-2\delta + \frac{\delta}{2} =0$ to see that  
$$ \vert (2) \vert \leq \mathcal{F}_{k+1}\big(\Vert V \Vert_{E_0} +   \mathcal{N}^0_{k+2}(\gamma)\big)\htilde ^{ \vert \alpha\vert  (1-2 \delta) + \frac{\delta}{2}}.$$
Then we use Proposition \ref{estflow} $(i)$ to conclude the induction.
\end{proof}
\begin{rema}\label{rem:est:theta}
Since $\theta(t,z,z',\xi,\htilde) =  
\int_0^1 \frac{\partial \phi}{\partial z}(t, \lambda z + (1-\lambda) z' , \xi,\htilde) \, d \lambda $ we have also the estimate
\begin{equation}\label{est:theta}
\Big\vert D_z^{\alpha_1} D_{z'}^{\alpha_2}D_\xi^\beta \theta(s,z,z',\xi,\htilde )\Big\vert
 \leq \mathcal{F}_k\big(\Vert V \Vert_{E_0} +   \mathcal{N} _{k+1}(\gamma)\big) \htilde ^{ (\vert \alpha_1 \vert + \vert \alpha_2 \vert) (1-2 \delta)}.
\end{equation}
for $ \vert \alpha_1 \vert + \vert \alpha_2 \vert + \vert \beta \vert =k$
\end{rema}

\subsection{The transport equations}

According to \eqref{Jfinal} and \eqref{conjugaison} if $\phi$ satisfies the eikonal equation we have
\begin{equation}\label{RS}
\begin{aligned}
&e^{-i{\htilde}^{-1}\phi }(\htilde  \partial_t + \htilde c+iP)\big(e^{i{\htilde}^{-1}\phi }\widetilde{b} \big) \\
&\quad=\htilde  \partial_t \widetilde{b}  +\htilde c \widetilde{b} +   i\sum_{\vert \alpha \vert=1}^{N-1}\frac{\htilde ^{\vert \alpha \vert}}{\alpha!}
D_{z'}^\alpha\Big[(\partial^\alpha_\eta \widetilde{p})\big(t,z,z',\theta(t,z,z',\htilde )),\htilde \big) \widetilde{b} (z')\Big]\Big\arrowvert_{z'=z} \\
&\quad\quad+R_N +S_N
\end{aligned}
\end{equation}

Recall (see \eqref{tildeb}) that $\widetilde{b}  = b \Psi_0$. Let us set 
\begin{equation} \label{CT1}
T_N=   \partial_t b + c +i \sum_{1\leq \vert \alpha \vert \leq {N-1}}\frac{\htilde ^{\vert \alpha \vert - 1}}{\alpha!} D_{z'}^\alpha\Big[(\partial^\alpha_\eta \widetilde{p})\big(t,z,z',\theta(t,z,z',\htilde ), \htilde \big) b(z')\Big]\Big\arrowvert_{z'=z}. 
\end{equation}
Then 
\begin{equation}\label{UN}
e^{-i{\htilde}^{-1}\phi }(\htilde  \partial_t + \htilde c +iP)\big(e^{i{\htilde}^{-1}\phi }\widetilde{b} \big) = \htilde T_N \Psi_0  +U_N + R_N+S_N.
\end{equation}

Our purpose is to show that one can find a symbol $b$ such that, in a sense to be explained,
\begin{equation}\label{OhN}
T_N = \mathcal{O}({\htilde }^M), \quad \forall M \in \xN.
\end{equation}
 We set
\begin{equation*}
\mathcal{L}b = \partial_t b + c + \sum_{i=1}^n \frac{\partial}{\partial {z'}_i} 
\Big[ \frac{\partial \widetilde{p}}{\partial \zeta_i}\big(t,z,z',\theta(t,z,z',\htilde ), \htilde \big) b(t,z',\xi,\htilde )\Big]\Big\arrowvert_{z'=z}
\end{equation*}
Then  we can write
\begin{equation}\label{mathcalL}
\mathcal{L} = \frac{\partial}{\partial t} + \sum_{i=1}^d a_j(t,z,\xi,\htilde ) \frac{\partial}{\partial {z}_i} + c_0(t,z,\xi,\htilde )
\end{equation}
where
\begin{equation}\label{coeffL}
\left\{
\begin{aligned}
a_i(t,z,\xi,\htilde ) &= \frac{\partial p}{\partial \zeta_i}\Big(t,z, \frac{\partial \phi}{\partial z}(t,z,\xi,\htilde ), \htilde \Big),\\
c_0(t,z,\xi,\htilde )&= \sum_{i=1}^d\frac{\partial}{\partial z'_i} 
\Big[ \frac{\partial \widetilde{p}}{\partial \zeta_i}\big(t,z,z',\theta(t,z,z',\xi, \htilde ), \htilde \big)  \Big]\Big\arrowvert_{z'=z} + c(t,z,\htilde),\\
\theta(t,z,z',\xi, \htilde )&= \int_0^1 \frac{\partial \phi}{\partial z}\big(t,\lambda z+(1-\lambda) z',\xi,\htilde \big)\, d\lambda  
\end{aligned}
\right.
\end{equation}
and $c$ has been defined in \eqref{c=}.

Notice that, with $m=(t,z,\xi,\htilde)$ we have 
\begin{equation}\label{c0}
c_0(m) = \sum_{i=1}^d \frac{\partial^2 \widetilde{p}}{\partial\zeta_i \partial z'_i}(t,z,z,\frac{\partial \phi}{\partial z}(m), \htilde) + \mez \sum_{i,j=1}^d \frac{\partial^2  {p}}{\partial\zeta_i \partial \zeta_j}(t,z,\frac{\partial \phi}{\partial z}(m),\htilde))\frac{\partial^2\phi}{\partial z_i \partial z_j}(m) +c.
\end{equation}
Then we can write
\begin{equation}\label{K=L}
T_N = \mathcal{L}b  + i \sum_{2\leq \vert \alpha \vert 
\leq {N-1}}\frac{\htilde ^{\vert \alpha \vert - 1}}{\alpha!} 
D_{z'}^\alpha\Big[(\partial^\alpha_\zeta \widetilde{p})\big(t,z,z', \theta(t,z,z',\htilde), \htilde \big) b(t,z',\xi, \htilde )\Big]\Big\arrowvert_{z'=z}. 
\end{equation}

We shall seek $b$ on the form
\begin{equation}\label{formedeb}
b = \sum_{j=0}^N \htilde ^j \, b_j. 
\end{equation}
Including this expression of $b$ in \eqref{K=L} after a change of indices we obtain
$$
T_N= \sum_{k=0}^N \htilde ^k \mathcal{L}b_k 
+ i  \sum_{k=1}^{N+1} \htilde ^k\sum_{2\leq \vert \alpha \vert \leq N-1}\frac{\htilde ^{\vert \alpha \vert - 2}}{\alpha!}D_{z'}^\alpha \big[(\partial^\alpha_\zeta \widetilde{p})(\cdots)b_{k-1}\big]\big\arrowvert_{z'=z}.
$$
We will take $b_j$ for  $ j=0, \ldots, N,$ as solutions of the following problems
\begin{equation}\label{bk}
\left\{
\begin{aligned}
\mathcal{L}b_0 &= 0, \quad b_0 \arrowvert_{t=0} = \chi, \quad \chi \in C_0^\infty(\xR^d),\\
\mathcal{L}b_j & = F_{j-1} := -i  \sum_{2\leq \vert \alpha \vert \leq N-1}
\frac{\htilde ^{\vert \alpha \vert - 2}}{\alpha!}D_{z'}^\alpha 
\big[(\partial^\alpha_\zeta \widetilde{p})(\cdots)b_{j-1}\big]\big\arrowvert_{z'=z}, \quad   b_j \arrowvert_{t=0} =0.
\end{aligned}
\right.
\end{equation}
This choice will imply that
\begin{equation}\label{n200}
T_N = i\htilde ^{N+1}\sum_{2\leq \vert \alpha \vert \leq N-1}
\frac{\htilde ^{\vert \alpha \vert - 2}}{\alpha!}
D_{z'}^\alpha \big[(\partial^\alpha_\zeta \widetilde{p})(\cdots)b_{N}\big]\big\arrowvert_{z'=z}.
\end{equation}

 \begin{prop}\label{b:existe}
 The system \eqref{bk} has a unique solution   with $ b_j \in S^0_{2\delta-1}.$
     \end{prop}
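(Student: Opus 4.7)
The plan is to solve the transport equations in \eqref{bk} via the method of characteristics, exploiting the fact that the characteristic curves of the first-order operator $\mathcal{L} - c_0$ are precisely the spatial projections of the bicharacteristic flow \eqref{bicar} associated with $p$. Indeed, by \eqref{coeffL} and the identity $\partial\phi/\partial z(s,z(s;z_0,\xi,\htilde),\xi,\htilde) = \zeta(s;z_0,\xi,\htilde)$ from \eqref{eiksuite}, one has along the integral curve $s\mapsto z(s;z_0,\xi,\htilde)$ the identity $a_i(s,z(s),\xi,\htilde) = \partial p/\partial \zeta_i(s,z(s),\zeta(s),\htilde) = \dot z_i(s)$. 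Consequently, setting $B_j(s,z_0,\xi,\htilde) \defn b_j(s,z(s;z_0,\xi,\htilde),\xi,\htilde)$, the transport equation $\mathcal{L}b_j = F_{j-1}$ reduces to the linear ODE
$$
\frac{d}{ds}B_j + \widetilde c_0(s,z_0,\xi,\htilde)\, B_j = \widetilde F_{j-1}(s,z_0,\xi,\htilde),
$$
where $\widetilde{\,\cdot\,}$ denotes evaluation along the flow. This is solved by an integrating factor, and we recover $b_j(t,z,\xi,\htilde) = B_j(t,\kappa(t;z,\xi,\htilde),\xi,\htilde)$ through the inverse diffeomorphism $\kappa$ of \eqref{diffeo1}. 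Uniqueness in $S^0_{2\delta-1}$ is immediate from the uniqueness of solutions of this ODE.

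The substance of the proof is then to propagate the symbol estimate $b_j \in S^0_{2\delta-1}$ by induction on $j$. The first step is to check that $c_0\in S^0_{2\delta-1}$. By the expression \eqref{c0}, this follows from Lemma~\ref{derp}, Remark~\ref{rem:est:ptilde} and Lemma~\ref{est:c}, combined with the composition Proposition~\ref{est:compo} applied to the maps $(z,\xi)\mapsto(z,\partial\phi/\partial z(t,z,\xi,\htilde))$ and $(z,\xi)\mapsto(z,z,\partial\phi/\partial z)$: indeed $z\in\dot S^{\delta/2}_{2\delta-1}$ by Remark~\ref{remarque1}(2), and $\partial\phi/\partial z\in S^0_{2\delta-1}$ takes values in $\mathcal{C}_0$ by Corollary~\ref{estkappa}(ii). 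Precomposing with the flow $s\mapsto z(s;\cdot,\xi,\htilde)$, which lies in $\dot S^{\delta/2}_{2\delta-1}$ by Proposition~\ref{estflow}, then integrating in $s\in I_\htilde$ (which only introduces harmless $O(\htilde^\delta)$ factors) and exponentiating (using Remark~\ref{remarque1}(3)), one obtains that the integrating factor $\exp(-\int_0^s \widetilde c_0\,d\tau)$ belongs to $S^0_{2\delta-1}$.

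For $j=0$, this immediately gives $b_0(t,z,\xi,\htilde) = \chi(\xi)\exp(-\int_0^t \widetilde c_0(\sigma,\kappa(t;z,\xi,\htilde),\xi,\htilde)\,d\sigma) \in S^0_{2\delta-1}$, after one more composition with $\kappa\in\dot S^{\delta/2}_{2\delta-1}$ (Corollary~\ref{estkappa}(i)). For $j\ge 1$, assuming $b_{j-1}\in S^0_{2\delta-1}$, we must check that the source $F_{j-1}$ defined in \eqref{bk} lies in $S^0_{2\delta-1}$: each term carries a prefactor $\htilde^{|\alpha|-2}$ with $|\alpha|\ge 2$, and the $D^\alpha_{z'}$ derivatives fall either on $(\partial^\alpha_\zeta\widetilde p)(t,z,z',\theta(t,z,z',\xi,\htilde),\htilde)$ (controlled by Remark~\ref{rem:est:ptilde} and the estimate \eqref{est:theta} for $\theta$, together with Proposition~\ref{est:compo}) or on $b_{j-1}(t,z',\xi,\htilde)$ (controlled by the induction hypothesis). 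Each $z'$-derivative costs at most $\htilde^{1-2\delta}$, so the total cost is $\htilde^{(|\alpha|-2)+|\alpha|(1-2\delta)-(|\alpha|-2)(2\delta-1)}$ times an $S^0_{2\delta-1}$ factor; a direct check using $\delta=2/3$ (so that $1-2\delta = -1/3$ and $|\alpha|-2 \ge 0$) shows this remains in $S^0_{2\delta-1}$. Once $F_{j-1}\in S^0_{2\delta-1}$ is established, the Duhamel formula together with the composition proposition yields $b_j\in S^0_{2\delta-1}$, closing the induction. The main obstacle is the symbol bookkeeping at this last step: every derivative in $z$ must be carefully distributed via Faa-di-Bruno across the compositions with $z(s;\cdot)$, $\kappa$, $\theta$, and $\partial\phi/\partial z$, as already done in the proof of Corollary~\ref{estkappa}, and one must verify that the net power of $\htilde$ balances to at least $-|\alpha|(2\delta-1)$ at each order.
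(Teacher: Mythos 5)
Your overall strategy (method of characteristics, integrating factor, induction on the order $j$) is the same as the paper's, and your observation that the integral curves of $\mathcal{L}$ are precisely the spatial projections of the bicharacteristics of $p$ — so that the flow $Z$ of $\dot Z = a(s,Z,\xi,\htilde)$ coincides with $z(s;z_0,\xi,\htilde)$ and its inverse $\omega$ with $\kappa$ — is correct and is actually a nice simplification: the paper re-derives the estimates for $Z$ and $\omega$ (Lemma~\ref{est:derZ}, Corollary~\ref{est:omega}) rather than reusing Proposition~\ref{estflow} and Corollary~\ref{estkappa}.

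However, there is a genuine gap in your symbol bookkeeping, which recurs twice and is the heart of the argument. You claim $c_0\in S^0_{2\delta-1}$ and then $F_{j-1}\in S^0_{2\delta-1}$. Neither is true. First, $c_0$ is not in $S^m_{2\delta-1}$ for any $m$: the summand $c$ in \eqref{c0} is bounded pointwise in $t$ only by $\|V(t,\cdot)\|_{W^{1,\infty}}$ (Lemma~\ref{est:c}), which is \emph{not} controlled by $\|V\|_{E_0}=\|V\|_{L^p_t W^{1,\infty}_x}$, so the pointwise-in-time bound required by the definition of $S^m_{\mu_0}$ fails. Even ignoring $c$, the terms $p''_{\zeta\zeta}\,\partial_z^2\phi$ and $\partial^2_{\zeta z'}\widetilde p$ are of size $\htilde^{1-2\delta}=\htilde^{-\delta/2}$, so at best $c_0\in S^{-\delta/2}_{2\delta-1}$. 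Second, your exponent formula $\htilde^{(|\alpha|-2)+|\alpha|(1-2\delta)-(|\alpha|-2)(2\delta-1)}$ is not the correct Leibniz bookkeeping (the $D^\alpha_{z'}$ is split between $\partial^\alpha_\zeta\widetilde p$ and $b_{j-1}$, each $z'$-derivative costing $\htilde^{1-2\delta}$, so the correct pointwise count is $\htilde^{(|\alpha|-2)+|\alpha|(1-2\delta)}$), and even your own formula gives a negative exponent $-2/3$ at $|\alpha|=2$. In fact, pointwise in $t$ one only gets $F_{j-1}\in S^{-\delta}_{2\delta-1}$. In both cases the missing insight is that the gain of $\htilde^\delta$ must be extracted by integrating over the short time interval $I_{\htilde}=[0,\htilde^\delta]$ — which also resolves the $L^p_t$ issue with $c$ via H\"older in time — and not from a pointwise-in-time membership in $S^0_{2\delta-1}$. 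This is what Lemma~\ref{derb} does: Step~1 shows directly that $\int_s^\sigma c_0\big(t,Z(t;\omega(s,z,\xi,\htilde),\xi,\htilde)\big)\,dt\in S^{\delta/2}_{2\delta-1}$ (and then exponentiates), and Step~2 establishes the integral estimate \eqref{est:Fj} rather than a pointwise one for $F_{j-1}$. Without replacing your two pointwise claims by their integral versions, the induction does not close.
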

 We  prove this result by induction.   To solve these equations   we use the method of characteristics and we begin by preliminaries. 
\begin{lemm}\label{aj}
 We have $a_i \in S^0_{2\delta-1} $ for $i=1,\ldots,d.$
  \end{lemm}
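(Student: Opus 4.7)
The statement $a_i \in S^0_{2\delta-1}$ should be viewed as a compositional fact: by definition
\[
a_i(t,z,\xi,\htilde) = \Big(\frac{\partial p}{\partial \zeta_i}\Big)\Big(t,\, z,\, \frac{\partial \phi}{\partial z}(t,z,\xi,\htilde),\, \htilde\Big),
\]
so the plan is to show that each of the ``inner'' functions lies in one of the symbol classes required by the composition rule (Proposition \ref{est:compo}), and then quote that proposition.

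First I would check that $\frac{\partial p}{\partial \zeta_i}\in S^0_{2\delta-1}$. By Lemma \ref{derp} applied to $p$, one has $p\in S^0_{2\delta-1}$ uniformly for $\zeta$ in the relevant compact set (the support of $\varphi_1(M_0(\cdot)\zeta)$ in the definition \eqref{encorep} of $p$ is contained in a ring thanks to \eqref{phi1} and Proposition \ref{estX}); since differentiation in $\zeta$ does not modify the power of $\htilde$ in the estimate \eqref{est:symb}, the same class contains $\partial_{\zeta_i} p$.

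Next I would identify the two ``substitution'' symbols. The first one is simply $U(t,z,\xi,\htilde)\defn z$, which by Remark \ref{remarque1}(2) belongs to $\dot S^{\delta/2}_{2\delta-1}$. The second one is $V(t,z,\xi,\htilde)\defn\frac{\partial\phi}{\partial z}(t,z,\xi,\htilde)$, which by Corollary \ref{estkappa}(ii) belongs to $S^0_{2\delta-1}$ (hence to $\dot S^0_{2\delta-1}$). The remaining hypothesis of Proposition \ref{est:compo} is that $V$ stays in $\mathcal{C}_0$: writing $V$ through the flow as $\zeta(t;\kappa(t;z,\xi,\htilde),\xi,\htilde)$, the estimate
\[
\Big|\frac{\partial\phi}{\partial z}(t,z,\xi,\htilde)-\xi\Big|\le\int_0^t\Big|\frac{\partial p}{\partial z}\Big|\,d\sigma \le \mathcal{F}\big(\|V\|_{E_0}+\mathcal{N}_2(\gamma)\big)\,\htilde^{\delta/2}
\]
from Proposition \ref{estflow} shows that for $\xi$ in a slightly smaller ring than $\mathcal{C}_0$ and $\htilde\le\htilde_0$ small, $\frac{\partial\phi}{\partial z}$ stays in $\mathcal{C}_0$ (one may harmlessly enlarge the cutoff by a factor of $2$, since the symbol $p$ is defined as long as $M_0\zeta$ lies in the support of $\varphi_1$). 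This is the only mildly technical verification; everything else is bookkeeping.

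With the three symbols placed in $S^0_{2\delta-1}$, $\dot S^{\delta/2}_{2\delta-1}$ and $\dot S^0_{2\delta-1}$ respectively, Proposition \ref{est:compo} (with $m=0$) applies verbatim and yields $a_i\in S^0_{2\delta-1}$, which is the claim. I do not expect any real obstacle here: the lemma is essentially a direct corollary of the symbolic calculus established in Proposition \ref{est:compo}, combined with the two already-proved estimates on $p$ (Lemma \ref{derp}) and on $\frac{\partial\phi}{\partial z}$ (Corollary \ref{estkappa}).
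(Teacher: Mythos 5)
Your proof is correct and follows exactly the route of the paper, which proves the lemma by citing Lemma~\ref{derp} (for $\partial p/\partial\zeta_i\in S^0_{2\delta-1}$), Remark~\ref{remarque1} and Corollary~\ref{estkappa} (for the substituted arguments $z$ and $\partial\phi/\partial z$), and the composition rule of Proposition~\ref{est:compo}. The extra verification you give that $\partial\phi/\partial z$ stays in the relevant ring is a useful fleshing-out of a hypothesis the paper leaves implicit, but it is not a different argument.
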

\begin{proof}
This follows from Lemma \ref{derp}, Proposition \ref{est:compo} with $f = \frac{\partial p}{\partial \zeta_i}, U(t,z,\xi,\htilde) = z, V(\cdots) =  \frac{\partial \phi}{\partial z}$ and Corollary \ref{estkappa}.
  \end{proof}

Consider now the system of differential equations
$$
\dot{Z}_j(s) = a_j(s,Z(s),\xi,\htilde ), \quad Z_j(0) = z_{j}, \quad  1 \leq j \leq d.
$$
By Lemma \ref{aj}  $a_j   $   is bounded. Therefore this system has a unique solution 
defined on $I_{\htilde }$. Differentiating with respect to $z$   we obtain 
$$
\la \frac{\partial Z}{\partial z}(s) \ra \leq  C+   \mathcal{F}_{ 2 }(\Vert V \Vert_{E_0}+ \mathcal{N}_{2}(\gamma))\int_0^s \htilde ^{1- 2 \delta}\la \frac{\partial Z}{\partial z }(\sigma) \ra d\sigma, \quad 0< s \leq \htilde ^{\delta},
$$
since $ \vert s \vert \, \htilde ^{1- 2 \delta} \leq \htilde ^{1- \delta} = \htilde ^{ \frac{\delta}{2}},$  the Gronwall inequality  shows that $ \la \frac{\partial Z}{\partial z }(s) \ra$ is uniformly bounded.
Using again the equation satisfied by $\frac{\partial Z}{\partial z }(s)$ we deduce that
\begin{equation}\label{est:dZ}
\la \frac{\partial Z}{\partial z }(s) - Id \ra 
\leq     \mathcal{F}_{ 2 }(\Vert V \Vert_{E_0}+ \mathcal{N}_{2}(\gamma)) \, {\htilde }^{ \frac{\delta}{2}}, \quad 0< s \leq \htilde ^{\delta}.
\end{equation}
This shows that the map $z  \mapsto Z(s;z , \xi, \htilde)$ is a global diffeomorphism from $\xR^d$ to itself so
\begin{equation}\label{diffeo3}
Z(s;z , \xi, \htilde ) = z \Longleftrightarrow z  = \omega(s; Z;\xi,\htilde ).
\end{equation}
An analogue computation shows that 
\begin{equation}\label{est:dxi}
\la \frac{\partial Z}{\partial \xi}(s) \ra 
\leq     \mathcal{F}_{ 2 }(\Vert V \Vert_{E_0}+ \mathcal{N}_{2}(\gamma)) \, {\htilde }^{ \delta}, \quad 0< s \leq \htilde ^{\delta}.
\end{equation}
 \begin{lemm}\label{est:derZ}
 
 The function $(s,z,\xi,\htilde) \mapsto Z(s,z,\xi,\htilde)$ belongs to $\dot{S}^{\delta/2}_{2\delta-1}.$
 \end{lemm}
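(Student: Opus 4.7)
The plan is to imitate the induction-on-derivative-order argument carried out in Proposition~\ref{estflow}, since the differential system defining $Z$ has the same structure as the bicharacteristic system, with the simplification that no $\zeta$-variable is involved. Concretely, I need to show that for every $k\ge 1$ and every multi-index $(\alpha,\beta)$ with $|\alpha|+|\beta|=k$,
\begin{equation*}
\vert D_z^\alpha D_\xi^\beta Z(s;z,\xi,\htilde)\vert \leq \mathcal{F}_k\big(\Vert V\Vert_{E_0}+\mathcal{N}_{k+1}(\gamma)\big)\,\htilde^{\,\delta/2+|\alpha|(1-2\delta)},
\end{equation*}
uniformly on $I_{\htilde}\times\xR^d\times\mathcal{C}_0\times(0,\htilde_0]$.

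First I would check the base case $k=1$: the estimates \eqref{est:dZ} and \eqref{est:dxi} already give $|\partial_z Z-\mathrm{Id}|\leq \mathcal{F}\,\htilde^{\delta/2}$ and $|\partial_\xi Z|\leq \mathcal{F}\,\htilde^{\delta}$, which are both compatible with the target exponents, since with $\delta=2/3$ one has $\delta/2+(1-2\delta)=1-3\delta/2=0$ and $\delta\geq \delta/2$.

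For the inductive step, suppose the bound holds up to order $k$ and let $|\alpha|+|\beta|=k+1\geq 2$. Applying $D_z^\alpha D_\xi^\beta$ to the ODE $\dot Z_j=a_j(s,Z(s),\xi,\htilde)$ and expanding with the Fa\`a~di~Bruno formula, one obtains the linear system
\begin{equation*}
\frac{d}{ds}\big(D_z^\alpha D_\xi^\beta Z\big) = (\nabla_z a)(s,Z(s),\xi,\htilde)\cdot D_z^\alpha D_\xi^\beta Z + G(s,z,\xi,\htilde),
\end{equation*}
with zero initial data (since $|\alpha|+|\beta|\geq 2$), where $G$ is a finite linear combination of terms of the form
\begin{equation*}
(D^A a_j)(s,Z(s),\xi,\htilde)\prod_{i}\big(D^{L_i} Z(s)\big)^{p_i}\cdot (\text{pure monomials in $\xi$-derivatives}),\qquad 2\le|A|\le k+1,\ 1\le|L_i|\le k.
\end{equation*}
Using Lemma~\ref{aj} (so $|D^A a_j|\leq \mathcal{F}\,\htilde^{|a_1|(1-2\delta)}$ where $a_1$ is the $z$-part of $A$), the induction hypothesis for each factor $D^{L_i}Z$, and the Fa\`a~di~Bruno combinatorial constraints linking $(p_i,q_i,L_i)$ to $(\alpha,\beta,A)$, the algebraic identity $1-2\delta+\delta/2=0$ (which is precisely why $\delta=2/3$ was chosen) should yield the pointwise bound
\begin{equation*}
|G(s)|\leq \mathcal{F}_{k+1}\big(\Vert V\Vert_{E_0}+\mathcal{N}_{k+2}(\gamma)\big)\,\htilde^{-\delta/2+|\alpha|(1-2\delta)}.
\end{equation*}
A Gronwall argument on $[0,\htilde^\delta]$ then closes the loop: the exponential factor is bounded because $\int_0^{\htilde^\delta}|\nabla_z a|\,ds\leq \mathcal{F}\,\htilde^{1-\delta}=\mathcal{F}\,\htilde^{\delta/2}$ by Lemma~\ref{aj}, while integrating $G$ over $[0,\htilde^\delta]$ contributes the missing $\htilde^\delta$ factor, producing exactly the target exponent $\delta/2+|\alpha|(1-2\delta)$.

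The hard part, as in Proposition~\ref{estflow}, is the exponent bookkeeping in the Fa\`a~di~Bruno expansion: I will need to check that in the worst configuration, where many derivatives accumulate on a single factor of $Z$ with the remaining ones landing on the highest $z$-derivative of $a_j$, the announced exponent is still attained after multiplying by $\htilde^{|a_1|(1-2\delta)}$ and integrating in time. Apart from this combinatorial verification---entirely parallel to the one performed for $D^\Lambda z(s)$ and $D^\Lambda \zeta(s)$ in the bicharacteristic flow, only simpler because the $\zeta$-equation is absent---no new idea is required.
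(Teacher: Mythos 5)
Your proposal is correct and follows essentially the same route as the paper: base case from \eqref{est:dZ}--\eqref{est:dxi}, then induction via the Fa\`a di Bruno expansion of $D_z^\alpha D_\xi^\beta$ applied to $\dot Z=a(s,Z,\xi,\htilde)$, with $(D^Aa)$ controlled by Lemma~\ref{aj}, the lower-order factors by the induction hypothesis, the identity $1-2\delta+\tfrac{\delta}{2}=0$ to balance exponents, and Gronwall on $[0,\htilde^\delta]$. The exponent bookkeeping you flag as the delicate point works out exactly as in Proposition~\ref{estflow}, so no gap remains.
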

 \begin{proof}
We have to prove that for   $  \vert \alpha \vert + \vert \beta \vert =  k \geq 1 $ we have the estimate
\begin{equation}\label{est:dalphaZ}
\vert D^\alpha_z D^\beta_\xi Z(s;z,\xi,\htilde)  \vert \leq \mathcal{F}_{ k}(\Vert V \Vert_{E_0}+ \mathcal{N}_{k+1} (\gamma))\, \htilde^{\vert \alpha \vert (1-2\delta) + \frac{\delta}{2}}.
\end{equation}
  Indeed this is true for $k=1$ by \eqref{est:dZ}, \eqref{est:dxi}. Assume this is true up to the order $k$ and let $\vert \alpha \vert + \beta \vert =k+1\geq 2.$ Set $U(s) =D^\Lambda  Z(s;z,\xi,\htilde)$ where $\Lambda =(\alpha, \beta).$  It satifies the system $\dot{U}(s) = \frac{\partial a}{\partial z} (s;Z(s),\xi,\htilde)U(s) + F(s),    U(0)=0 $ where $F(s)$ is a finite linear combination of terms of the form 
$$(1) =(D^A a) (\cdots) \prod_{j=1}^r (\partial^{L_j}Z(s))^{p_j}  (\partial^{L_j}\xi)^{q_j} $$
where $A =(a,b),   2 \leq \vert A \vert \leq \vert \Lambda \vert, L_j = (l_j,m_j), 1 \leq \vert L_j \vert \leq k$ and
$$\sum_{j=1}^r (\vert p_j \vert + \vert q_j \vert)  L_j = (\alpha, \beta), \quad \sum_{j=1}^r p_j = \alpha, \quad \sum_{j=1}^r q_j = \beta.$$
 First of all, by Lemma \ref{aj} we can write
 $$ \vert (D^A a) (\cdots) \vert \leq \mathcal{F}_k\big(\Vert V \Vert_{E_0} +   \mathcal{N}_{k+1}(\gamma)\big)\htilde^{\vert a \vert (1-2 \delta)}.$$
 Using the induction and the fact that $\xi \in \dot{S}^0_{2\delta-1}$ we can estimate the product occuring in $(1)$ by $\mathcal{F}_k\big(\Vert V \Vert_{E_0} +   \mathcal{N}_{k+1}(\gamma)\big) \htilde^{M} $ where $$M = \sum_{j=1}^r \Big\{\vert p_j\vert  \big (\vert l_j \vert (1-2 \delta) + \frac{\delta}{2}\big ) + \vert q_j \vert \vert l_j \vert (1-2 \delta)\Big\} = \vert \alpha \vert   (1-2 \delta) + \vert a \vert \frac{\delta}{2}.$$
 It follows that $  \int_0^s \vert F(t)\vert dt  \leq \mathcal{F}_k\big(\Vert V \Vert_{E_0} +   \mathcal{N}_{k+1}(\gamma)\big)\htilde^{\vert \alpha \vert (1-2 \delta) + \delta}$ and we conclude by the Gronwall inequality.
 \end{proof}
 \begin{coro}\label{est:omega}
 The function $\omega$ defined in \eqref{diffeo3} belongs to $ \dot{S}^{\delta/2}_{2\delta-1}.$
\end{coro}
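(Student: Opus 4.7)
The plan is to mimic exactly the proof of Corollary \ref{estkappa}, replacing the bicharacteristic flow $z(s;\cdot)$ by the flow $Z(s;\cdot)$ of the transport field $(a_1,\ldots,a_d)$, and using the estimate on $Z$ from Lemma \ref{est:derZ} in place of the estimate on $z(s)$ from Proposition \ref{estflow}. The strategy is an induction on $k = |\alpha|+|\beta|$ based on differentiating the inverse relation $Z(s;\omega(s;Z,\xi,\htilde),\xi,\htilde) = Z$.

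For the base case $k=1$, differentiating in $Z$ gives $\frac{\partial Z}{\partial z}\cdot\frac{\partial \omega}{\partial Z} = Id$, and differentiating in $\xi$ gives $\frac{\partial Z}{\partial z}\cdot\frac{\partial \omega}{\partial \xi} = -\frac{\partial Z}{\partial \xi}$. Since \eqref{est:dZ} says $\frac{\partial Z}{\partial z} = Id + O(\htilde^{\delta/2})$, its inverse is bounded (for $\htilde$ small), and we deduce $\frac{\partial \omega}{\partial Z} - Id = O(\htilde^{\delta/2})$ and $\frac{\partial \omega}{\partial \xi} = O(\htilde^{\delta})$ from \eqref{est:dxi}; in particular the bound $\htilde^{|\alpha|(1-2\delta)+\delta/2}$ holds at order $k=1$.

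For the induction step, assume the estimate holds up to order $k$ and let $\Lambda = (\alpha,\beta)$ with $|\Lambda| = k+1 \geq 2$. Differentiating the identity $Z(s;\omega(s;Z,\xi,\htilde),\xi,\htilde) = Z$ via the Faa-di-Bruno formula, I expect $\frac{\partial Z}{\partial z}\cdot D^\Lambda \omega$ to equal a finite linear combination of terms of the form
\[
(D^A Z)(\cdots)\prod_{j=1}^r (D^{L_j}\omega)^{p_j}\prod_{j=1}^r (D^{L_j}\xi)^{q_j},
\]
with $A=(a,b)$, $2\leq |A|\leq |\Lambda|$, $L_j=(l_j,m_j)$, $1\leq |L_j|\leq k$, and $\sum_j p_j=\alpha$, $\sum_j q_j=\beta$, $\sum_j(|p_j|+|q_j|)L_j=\Lambda$. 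Using Lemma \ref{est:derZ} for $D^A Z$ contributes $\htilde^{|a|(1-2\delta)+\delta/2}$, while the induction hypothesis for $D^{L_j}\omega$ and the fact that $\xi \in \dot{S}^0_{2\delta-1}$ contributes $\htilde^M$ with
\[
M=\sum_j\bigl\{|p_j|\bigl(|l_j|(1-2\delta)+\tfrac{\delta}{2}\bigr)+|q_j|\,|l_j|(1-2\delta)\bigr\}=|\alpha|(1-2\delta)+|a|\tfrac{\delta}{2}.
\]
Adding the two contributions yields exponent $|\alpha|(1-2\delta)+\tfrac{\delta}{2}+|a|(1-2\delta+\tfrac{\delta}{2})$, and since $\delta=\tfrac{2}{3}$ gives $1-2\delta+\tfrac{\delta}{2}=0$, this simplifies to $|\alpha|(1-2\delta)+\tfrac{\delta}{2}$. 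Finally, left-multiplying by $(\frac{\partial Z}{\partial z})^{-1}$, whose derivatives are controlled by those of $Z$ and hence satisfy the right estimates (since the inverse is a smooth function of the entries), produces the desired bound on $D^\Lambda\omega$ and completes the induction.

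The main obstacle is purely bookkeeping: checking that all intermediate terms arising when differentiating $(\frac{\partial Z}{\partial z})^{-1}$ (which is needed to isolate $D^\Lambda\omega$) also fit into the class $\dot{S}^{\delta/2}_{2\delta-1}$. This follows by expanding $(\frac{\partial Z}{\partial z})^{-1}$ as a Neumann series in $\frac{\partial Z}{\partial z} - Id = O(\htilde^{\delta/2})$ and applying Proposition \ref{est:compo} term by term, exactly as in the analogous step for $\kappa$ in Corollary \ref{estkappa}. No new algebraic identity beyond $1-2\delta+\tfrac{\delta}{2}=0$ is required.
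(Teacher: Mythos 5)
Your proposal is correct and follows essentially the same route as the paper, which disposes of this corollary by stating that the proof is identical to that of Corollary \ref{estkappa}: one differentiates the inverse relation $Z(s;\omega(s;Z,\xi,\htilde),\xi,\htilde)=Z$, runs the same Faa-di-Bruno induction with Lemma \ref{est:derZ} playing the role of Proposition \ref{estflow}, and uses $1-2\delta+\tfrac{\delta}{2}=0$. The only cosmetic difference is your Neumann-series discussion of $(\partial Z/\partial z)^{-1}$; since the induction isolates $\frac{\partial Z}{\partial z}\cdot D^{\Lambda}\omega$ on the left, only pointwise boundedness of the inverse (from \eqref{est:dZ}) is needed, not estimates on its derivatives.
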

\begin{proof}
The proof is the same as that of Corollary \ref{estkappa}.
\end{proof}

 \begin{proof}[Proof of Proposition \ref{b:existe}]
Now, with the notations in \eqref{coeffL} and \eqref{bk} we have 
$$
\frac{d}{ds} \big[b_j(s,Z(s)) \big] = \big( \frac{\partial u}{\partial t} + a \cdot \nabla u \big)(s, Z(s) )= -(c_0 u)(s,Z(s)) +F_{j-1}(s,z(s)),\quad  j \geq 0
$$
with $F_{-1} =0.$ It follows that 
$$
\frac{d}{ds} \Big[e^ {  \int_0^s c_0(\sigma,Z(\sigma)) d \sigma } b_j(s,Z(s)) \Big]
= e^{  \int_0^s c_0(\sigma,Z(\sigma)) \, d \sigma  }F_{j-1}(s,Z(s)),
$$
Using \eqref{diffeo3} we see that the unique solution of \eqref{bk} is given by 
\begin{equation}\label{bj=}
\left\{
\begin{aligned}
b_0(s,z,\xi,\htilde ) &= \chi(\xi)  \,  \text{exp} \Big({ \int_0^s   c_0(t, Z(t; \omega(s,z,\xi,\htilde ), \xi, \htilde))  \,dt  }\Big), 
 \\
   b_j(s,z,\xi,\htilde ) &= \int_0^s 
e^{ \int_s^\sigma   c_0(t, Z(t; \omega(s,z,\xi,\htilde ), \xi, \htilde)  \,dt  } 
F_{j-1}( \sigma, Z(\sigma; \omega(s,z,\xi, \htilde ),\xi, \htilde)  \,d \sigma.
\end{aligned}
\right.
\end{equation}

 The last step in the proof of Proposition \ref{b:existe} is contained in the following lemma.
 \begin{lemm}\label{derb}
We have $b_j \in S^0_{2\delta-1}.$
\end{lemm}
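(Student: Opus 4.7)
The plan is to argue by induction on $j$, with the base case $j=0$ and the inductive step using the explicit integral representations given in \eqref{bj=}. The two technical ingredients that do all the work are: (i) the fact that the flows $Z$ and $\omega$ lie in $\dot{S}^{\delta/2}_{2\delta-1}$ (Lemma~\ref{est:derZ} and Corollary~\ref{est:omega}), so that the composition rule of Proposition~\ref{est:compo} applies to any symbol in $S^m_{2\delta-1}$ composed with $(t,z,\xi,\htilde)\mapsto Z(t;\omega(s,z,\xi,\htilde),\xi,\htilde)$; and (ii) the fact that the $t$-integration runs over $[0,s]\subset[0,\htilde^\delta]$, so that any $L^\infty_t$-bound in $S^m_{2\delta-1}$ of the integrand yields, after integration, an element of $S^{m+\delta}_{2\delta-1}$.

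For the base case, I would first show that the coefficient $c_0$ defined in \eqref{c0} lies in $S^{1-2\delta}_{2\delta-1}$: the term $c$ lies in $S^0_{2\delta-1}$ by Lemma~\ref{est:c}; the Hessian factor $\partial^2\phi/\partial z_i\partial z_j$ lies in $S^{1-2\delta}_{2\delta-1}$ by differentiating the symbol class statement in Corollary~\ref{estkappa}(ii); and the two remaining terms come from Remark~\ref{rem:est:ptilde} (one $z'$-derivative of $\widetilde{p}$, giving the $\htilde^{1-2\delta}$) combined with Lemma~\ref{derp} for the pure $\zeta$-derivatives of $p$. Composing $c_0$ with $(t,z,\xi,\htilde)\mapsto Z(t;\omega(s,z,\xi,\htilde),\xi,\htilde)$ through Proposition~\ref{est:compo} keeps it in $S^{1-2\delta}_{2\delta-1}$, and integration in $t\in[0,s]$, $s\le\htilde^\delta$, then gives the phase $\Phi(s,z,\xi,\htilde):=\int_0^s c_0(\cdots)\,dt \in S^{1-\delta}_{2\delta-1}$. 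Since $1-\delta=1/3\ge 0$, Remark~\ref{remarque1}(3) yields $e^{\Phi}\in S^0_{2\delta-1}$, and multiplying by $\chi(\xi)\in S^0_{2\delta-1}$ gives $b_0\in S^0_{2\delta-1}$.

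For the inductive step, assume $b_{j-1}\in S^0_{2\delta-1}$, and consider
\[
F_{j-1} = -i\!\sum_{2\le|\alpha|\le N-1}\frac{\htilde^{|\alpha|-2}}{\alpha!}\,
D^\alpha_{z'}\!\bigl[(\partial^\alpha_\zeta\widetilde{p})(t,z,z',\theta(t,z,z',\htilde),\htilde)\,b_{j-1}(t,z',\xi,\htilde)\bigr]\Bigr|_{z'=z}.
\]
Each $z'$-derivative falling on $\widetilde{p}$ costs $\htilde^{1-2\delta}$ by Remark~\ref{rem:est:ptilde}; each falling on $b_{j-1}$ costs the same power by the induction hypothesis; derivatives on $\theta$ use the estimate \eqref{est:theta}; and each $\partial^\alpha_\zeta\widetilde{p}$ is still in $S^0_{2\delta-1}$ uniformly in $\alpha$. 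So each summand is in $S^{|\alpha|-2+|\alpha|(1-2\delta)}_{2\delta-1} = S^{|\alpha|(2-2\delta)-2}_{2\delta-1}$, the worst case being $|\alpha|=2$ which gives exactly $S^{2-4\delta}_{2\delta-1}=S^{-2/3}_{2\delta-1}$ for $\delta=2/3$. Composition with $(t,z,\xi,\htilde)\mapsto Z(t;\omega(s,z,\xi,\htilde),\xi,\htilde)$ preserves this class by Proposition~\ref{est:compo}, and the integral formula in \eqref{bj=} integrates in $\sigma\in[0,s]\subset[0,\htilde^\delta]$, producing the crucial $\htilde^\delta$ gain: the integrand is the product of $F_{j-1}(\sigma,Z(\sigma,\cdots))\in S^{-2/3}_{2\delta-1}$ and the exponential factor $\exp(\int_s^\sigma c_0(\cdots)\,dt)$, which by the same argument as in the base case lies in $S^0_{2\delta-1}$. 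After integration we recover $b_j\in S^{-2/3+\delta}_{2\delta-1}=S^0_{2\delta-1}$, closing the induction.

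The main technical obstacle is the bookkeeping of symbol orders under the four simultaneous operations (differentiation in $z$ and $\xi$, composition with the flows $Z\circ\omega$, multiplication by an exponential, and integration in $t$), together with the need for a generalized Faa di Bruno argument to handle derivatives of arbitrary order of the exponential $e^\Phi$ when we verify point~(i) of the definition of $S^0_{2\delta-1}$ for $|\alpha|+|\beta|=k\ge 2$. In practice this is handled by the same induction-on-$k$ scheme already used in the proofs of Lemma~\ref{est:derZ} and Corollary~\ref{estkappa}: each higher derivative of $e^\Phi$ is a polynomial in derivatives of $\Phi$, each of which lies in $S^{1-\delta}_{2\delta-1}$ with the sharp loss $\htilde^{-|\alpha|(2\delta-1)}$ in $z$-derivatives, and this is precisely the loss permitted in $S^0_{2\delta-1}$.
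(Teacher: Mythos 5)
Your proof is correct and follows the paper's own two-step argument essentially verbatim: the base case bounds $\int c_0\,dt$ in $S^{\delta/2}_{2\delta-1}=S^{1-\delta}_{2\delta-1}$ via Proposition~\ref{est:compo}, Corollary~\ref{estkappa} and Lemma~\ref{derp}/Remark~\ref{rem:est:ptilde}, then exponentiates with Remark~\ref{remarque1}(3), while the inductive step bounds each summand of $F_{j-1}$ in $S^{-\delta}_{2\delta-1}$ (worst case $\vert\alpha\vert=2$), composes with $Z\circ\omega$ via Proposition~\ref{est:compo}, and recovers $S^0_{2\delta-1}$ from the $\htilde^\delta$ gain of the $\sigma$-integration, exactly as in the paper's Steps 1 and 2. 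The only imprecision is the sub-claim that $c\in S^0_{2\delta-1}$: Lemma~\ref{est:c} carries the factor $\Vert V(t,\cdot)\Vert_{W^{1,\infty}}$, which is only $L^p$ in time, so $c$ is not pointwise in a $\mu_0$-class; a H\"older estimate over $[0,\htilde^\delta]$ nevertheless yields $\htilde^{\delta/p'}\geq\htilde^{\delta/2}=\htilde^{1-\delta}$ for $\int c\,dt$, so your end conclusion $\Phi\in S^{1-\delta}_{2\delta-1}$ is unaffected.
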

\begin{proof}
  Step1: we show that 
  \begin{equation}\label{est:e^c0}   
 e^{ \int_s^\sigma   c_0(t, Z(t; \omega(s,z,\xi,\htilde ), \xi, \htilde)  \,dt  } \in S^0_{2\delta-1}.
 \end{equation}
 According to Remark \ref{remarque1} this will be implied by $ \int_s^\sigma c_0(t, Z(t; \omega(s,z,\xi,\htilde ), \xi, \htilde)  \,dt   \in S^{\delta/2}_{2\delta-1}.$  By Lemma \ref{est:derZ} we have $Z\in \dot{S}^{\delta/2}_{2\delta-1}$ and $\omega\in \dot{S}^{\delta/2}_{2\delta-1}.$ Moreover $\xi \in \dot{S}^{0}_{2\delta-1}$.  By Proposition \ref{est:compo} the function $Z(t; \omega(s; z,\xi,\htilde),\xi,\htilde)$ belongs to $\dot{S}^{\delta/2}_{2\delta-1}$. Now by Corollary \ref{estkappa} we have $\frac{\partial \phi}{\partial z} \in \dot{S}^0_{2\delta-1}$ and $\frac{\partial^2 \phi}{\partial z^2} \in  {S}^{-\delta/2}_{2\delta-1} $ (since $1-2\delta= -\delta/2.$)  It follows from Proposition \ref{est:compo} that  for $s\in[0, \htilde^\delta]$ 
\begin{equation}\label{U=}
\left\{
\begin{aligned}
 &U_1(t;  z,\xi,\htilde )=  \frac{\partial \phi}{\partial z}(t,Z(t; \omega(s,z,\xi,\htilde ), \xi, \htilde), \xi,\htilde)\in  \dot{S}^0_{2\delta-1},\\
  &U_2(t;  z,\xi,\htilde)=  \frac{\partial^2 \phi}{\partial z^2}(t,Z(t; \omega(s,z,\xi,\htilde ), \xi, \htilde), \xi,\htilde)\in   {S}^{-\delta/2}_{2\delta-1}.
 \end{aligned}
 \right.
 \end{equation}
Now by Lemma \ref{derp}   the functions 
$\frac{\partial^2 \widetilde{p}}{\partial\zeta \partial z' }(t,z,z,\zeta, \htilde) $ 
(resp.\ $\frac {\partial^2  {p}}{\partial\zeta  \partial \zeta }(t,z,\zeta,\htilde) $) 
satisfy the condition of Proposition \ref{est:compo} with $m = 1-2\delta$ 
(resp.\ $m=0.$) Using \eqref{U=} and the fact that 
$z\in  \dot{S}^{\delta/2}_{2\delta-1}$ 
we deduce that 
\begin{align*}
&\int_s^\sigma \frac{\partial^2 \widetilde{p}}{\partial\zeta \partial z' }\big(t,z,z,U_1\big(t,z,\xi,\htilde\big), \htilde\big)\, dt 
\in {S}^{ \frac{\delta}{2}}_{2\delta-1},\\
& \int_s^\sigma \frac{\partial^2  {p}}{\partial\zeta^2   }
\big(t,z, U_1\big(t,z,\xi,\htilde\big), \htilde\big)U_2\big(t;  z,\xi,\htilde\big)\, dt \in {S}^{\frac {\delta}{2}}_{2\delta-1}.
\end{align*}
This shows that $\int_s^\sigma   c_0\big(t, Z(t; \omega\big(s,z,\xi,\htilde \big), \xi, \htilde\big)  \,dt \in S^ {\frac{\delta}{2}}_{2\delta-1}$ as claimed.

Step 2: we show that for $\vert a \vert +\vert b \vert = k \geq 0$ we have, with $\Lambda =(a,b)\in \xN^d \times \xN^d$
\begin{equation}\label{est:Fj}
\int_0^s \la D^\Lambda \big [G_{j-1}\big( \sigma;Z\big(\sigma; \omega\big(s;z,\xi, \htilde\big ),\xi, \htilde\big)\big)\big]\ra \,d \sigma 
\leq \mathcal{F}_{ k}\big(\Vert V \Vert_{E_0}+ \mathcal{N}_{k+1} (\gamma)\big) \htilde^{\vert a \vert(1-2\delta)}
 \end{equation}
 where for $\vert \rho \vert \geq 2,$
\begin{equation}\label{FjGj}
G_{j-1}(\sigma, z, \xi, \htilde)=  
\htilde ^{\vert \rho \vert - 2} D_{z'}^\rho
\big[(\partial^\rho_\zeta \widetilde{p})\big(\sigma;z,z', \theta\big(\sigma;z,z',\xi, \htilde\big), \htilde \big)b_{j-1}(\sigma; z', \xi, \htilde)\big]\big\arrowvert_{z'=z}. 
  \end{equation}
 We claim that for $\Lambda = (\alpha, \beta), \vert \alpha \vert + \vert \beta\vert = k \geq 0,$
 \begin{equation}\label{est:Gj1}
\la D^\Lambda G_{j-1}(\sigma, z, \xi, \htilde)\ra \leq \htilde^{-\delta + \vert \alpha \vert (1-2\delta)} 
\mathcal{F}_{ k}\big(\Vert V \Vert_{E_0}+ \mathcal{N}_{k+1} (\gamma)\big) .\end{equation}
 Indeed $ D^\Lambda G_{j-1}$ is a finite sum of terms of the form $H_1 \times H_2$ with
\begin{align*}
H_1  &=  \htilde ^{\vert \rho \vert - 2} D^{\Lambda_1}D_{z'}^{\rho_1}  \big[(\partial^\rho_\zeta \widetilde{p})(\sigma,,z,z', \theta(\sigma;z,z',\xi, \htilde), \htilde \big)) \big] \arrowvert_{z'=z}, \\
H_2   &= D^{\Lambda_2}D_{z}^{\rho_2}b_{j-1}(\sigma, z, \xi, \htilde),
\end{align*}
where $ \Lambda_i =(\alpha_i, \beta_i) \quad \vert \Lambda_1 \vert + \vert \Lambda_2 \vert = \vert \Lambda  \vert, \quad \vert \rho_1  \vert +\vert \rho_2 \vert = \vert \rho  \vert.$

By the induction we have Ä
$$
\vert H_2 \vert \leq 
\mathcal{F}_{ k}\big(\Vert V \Vert_{E_0}+ \mathcal{N}_{k+1} (\gamma)\big)
\, \htilde^{(\vert \alpha_2 \vert + \vert \rho_2\vert)(1-2\delta)}
$$
and since $z'\in \dot{S}_{2\delta-1}, \theta \in \dot{S}^0_{2\delta-1}$ 
using Proposition \ref{derp} we see that 
$$
\vert H_1 \vert \leq  
\mathcal{F}_{ k}\big(\Vert V \Vert_{E_0}+ \mathcal{N}_{k+1} (\gamma)\big) 
\, \htilde^{\vert \rho \vert -2 + (\vert \alpha_1 \vert + \vert \rho_1\vert)(1-2\delta)}.
$$
Now since $ \vert \rho \vert\geq 2$ and $\delta = \frac{2}{3},$ we have 
$\vert \rho \vert - 2 + \vert \alpha \vert (1-2\delta) + \vert \rho \vert (1-2\delta) \geq \vert \alpha \vert (1-2\delta) -\delta$ which proves \eqref{est:Gj1}.
   
Eventually since the function $Z\big(t; \omega(s; z,\xi,\htilde),\xi,\htilde\big)$ 
belongs to $\dot{S}^{\delta/2}_{2\delta-1}$ (see  Step 1), 
we deduce from  Proposition \ref{est:compo}, 
with $m=-\delta,$ that \eqref{est:Fj} holds. 
Then Step 1 and Step 2 prove the lemma. Notice that $b_j$ can be written  $ \chi(\xi)b_j^0.$
\end{proof}
Thus the proof of Proposition \ref{b:existe} is complete.
\end{proof}

Summing up we have proved that with the choice of $\phi$ and $b$ 
made in Proposition \ref{eqeiko} and in \eqref{formedeb} we have 
\begin{equation}\label{UN1}
e^{-i{\htilde}^{-1}\phi }(\htilde  \partial_t + \htilde c +iP)\big(e^{i{\htilde}^{-1}\phi }\widetilde{b} \big) = \htilde T_N \Psi_0  +U_N + R_N+S_N 
\end{equation}
 where $R_N$ is defined in \eqref{J=}, $S_N$ in \eqref{SN},   $T_N$ in \eqref{n200} and $U_N$ in \eqref{UN}.

\section{The dispersion estimate}

The purpose of this section is to prove the following result. Recall that $\delta = \frac{2}{3}.$
\begin{theo}\label{dispersive}
Let $\chi \in C_0^\infty(\xR^d)$ be such that $\supp \chi \subset \{\xi: \mez \leq \vert \xi \vert \leq 2\} $. Let $t_0 \in \xR$, 
$u_0 \in L^1(\xR^d)$ and set $u_{0,h} = \chi(hD_x)u_0$. 
Denote by $S(t,t_0)u_{0,h}$ the solution of the problem
\begin{equation*}
\Big(  \partial_t + \mez( T_{V_\delta} \cdot \nabla + \nabla \cdot T_{V_\delta}) 
+i T_{\gamma_\delta} \Big)U_{h}(t,x) =0,  \quad
U_{h}(t_0,x) = u_{0,h}(x).
\end{equation*}
Then there exist  $\mathcal{F}: \xR^+ \to \xR^+$ $ k= k(d) \in \xN$ and $h_0 >0$ such that  
$$
\Vert S(t,t_0)u_{0,h} \Vert_{L^\infty(\xR^d)} 
\leq  \mathcal{F}\big(\Vert V\Vert_{E_0} + \mathcal{N}_k(\gamma)\big) \, h^{-\frac{3d}{4}}\vert t- t_0 \vert^{-\frac{d}{2}} \Vert u_{0,h} \Vert_{L^1(\xR^d)},
$$
for all $0<\vert t-t_0 \vert \leq h^{\frac{\delta}{2}}$  and all $0<  h  \leq h_0.$
\end{theo}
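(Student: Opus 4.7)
The plan is to construct a parametrix on the time interval $|t-t_0|\le \htilde^\delta=h^{\delta/2}$, extract a kernel bound via stationary phase using the Hessian estimate \eqref{HessphiND}, and unravel the semi-classical reduction to obtain the claimed $h^{-3d/4}|t-t_0|^{-d/2}$ loss.

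First, I would perform the reduction of Sections 4.3--4.7. After frequency localizing the data at scale $h^{-1}$, straightening the vector field $\partial_t+S_{j\delta}(V)\cdot\nabla$ via the flow $x=X(t,y,h)$ of Proposition \ref{estX}, and rescaling to semi-classical variables $z=\htilde^{-1}y$, $\htilde=h^{\mez}$, the propagator $S(t,t_0)$ is conjugate to the solution operator of $(\htilde\partial_t+\htilde c+iP)w_h=0$, with symbol $p$ of order $0$ whose $\zeta$-Hessian is non-degenerate on $\mathcal{C}_0$ (Proposition \ref{Hessp}). The $L^\infty_x$-norm of $U_h(t)=S(t,t_0)u_{0,h}$ equals $\|w_h(t)\|_{L^\infty_z}$, while $\|w_{0,h}\|_{L^1_z}=\htilde^{-d}\|u_{0,h}\|_{L^1_x}$. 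Thus it suffices to establish
\begin{equation*}
\|w_h(t)\|_{L^\infty_z}\le\mathcal{F}\bigl(\|V\|_{E_0}+\mathcal{N}_k(\gamma)\bigr)\htilde^{-d/2}|t-t_0|^{-d/2}\|w_{0,h}\|_{L^1_z}.
\end{equation*}

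Second, I would invoke the parametrix $\mathcal{K}$ introduced above Proposition~\ref{eqeiko}. Since $w_{0,h}$ is, up to rescaling, spectrally supported where $\chi(\xi)\neq 0$, we can write $w_{0,h}=\chi(\htilde D_z)w_{0,h}$. Taking $b_0|_{t=t_0}=\chi(\xi)$ and solving \eqref{bk} via Proposition \ref{b:existe}, the identity \eqref{UN1} shows that $\mathcal{K}w_{0,h}$ solves $(\htilde\partial_t+\htilde c+iP)(\mathcal{K}w_{0,h})=E_Nw_{0,h}$ where the error $E_N=\htilde T_N\Psi_0+U_N+R_N+S_N$ gains an arbitrary power $\htilde^N$ thanks to the symbol estimates in $S^0_{2\delta-1}$ and the rapid decay of $\widehat\chi$ from Lemma \ref{l0.6}. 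An energy estimate in $L^2$ for $L=\htilde\partial_t+\htilde c+iP$ (whose antisymmetric part is governed by $\|\mathrm{div}\,S_{j\delta}(V)\|_{L^\infty}$ and $\gamma_\delta\in S^{1/2}$) combined with Sobolev embedding then gives $w_h=\mathcal{K}w_{0,h}+r_N$ with $\|r_N\|_{L^\infty_z}\le\htilde^M\|w_{0,h}\|_{L^1_z}$ for $M$ arbitrarily large, hence negligible.

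Third, I would estimate the Schwartz kernel
\begin{equation*}
\mathcal{K}(t,z,y)=(2\pi\htilde)^{-d}\int e^{i\htilde^{-1}(\phi(t,z,\xi,\htilde)-y\cdot\xi)}\widetilde{b}(t,z,y,\xi,\htilde)\,d\xi.
\end{equation*}
By \eqref{HessphiND} the phase has Hessian $\partial_\xi^2\phi$ with $|\det\partial_\xi^2\phi|\ge M_0|t-t_0|^d$, and $\widetilde{b}=b\,\Psi_0(\partial_\xi\phi-y)$ localizes the $\xi$-integral near the unique critical point. Corollary \ref{estkappa} provides the uniform control of a fixed number of derivatives of $\phi$ (up to the $\htilde$-losses compatible with the $S^0_{2\delta-1}$ calculus), while Lemma \ref{derb} gives the corresponding bounds on $b$. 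For $|t-t_0|\ge\htilde$, the stationary phase lemma applied with large parameter $\htilde^{-1}$ yields
\begin{equation*}
\bigl|\mathcal{K}(t,z,y)\bigr|\le \mathcal{F}\bigl(\|V\|_{E_0}+\mathcal{N}_k(\gamma)\bigr)\htilde^{-d}\cdot\htilde^{d/2}\cdot|t-t_0|^{-d/2}=\mathcal{F}(\ldots)\htilde^{-d/2}|t-t_0|^{-d/2}.
\end{equation*}
For the remaining regime $0<|t-t_0|\le\htilde=h^{\mez}$, the desired conclusion follows directly from the Bernstein inequality $\|\chi(hD)u_0\|_{L^\infty}\les h^{-d}\|u_0\|_{L^1}$, since $h^{-d}\le h^{-3d/4}|t-t_0|^{-d/2}$ precisely when $|t-t_0|\le h^{\mez}$. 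Taking $L^\infty_z$ in $z$, $L^1_z$ in $y$, and undoing the rescaling produces the extra $\htilde^{-d}=h^{-d/2}$ factor, yielding $\htilde^{-d/2}\cdot\htilde^{-d}=h^{-3d/4}$ as stated.

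The main obstacle is the stationary phase step: one must check that the effective large parameter $\htilde^{-1}|t-t_0|$ exceeds the $\htilde^{-1/3}$-losses in the $\xi$-derivatives of $\widetilde{b}$ (coming from the symbol class $S^0_{2\delta-1}$ with $\delta=2/3$), so that the integrations by parts away from the critical point genuinely contribute higher order terms. This is exactly where the balance $\delta=\frac{2}{3}$ is forced: $(2\delta-1)=\frac{\delta}{2}$ makes the phase derivatives gain at the same rate the amplitude derivatives lose, and the time window $|t-t_0|\le\htilde^\delta$ is the largest on which this tradeoff remains favorable. A secondary technical point is ensuring that $\xi\mapsto\partial_\xi\phi(t,z,\xi)$ is a diffeomorphism on $\mathcal{C}_0$ (so the critical point $\xi_*(t,z,y)$ is unique in $\supp\widetilde b$); this follows from \eqref{HessphiND} together with the smallness of $\partial_\xi\phi-\xi\cdot z$ on $|t-t_0|\le\htilde^\delta$ and a continuity argument.
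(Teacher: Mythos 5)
Your proposal is correct and follows essentially the same route as the paper: reduction to the semi-classical problem in the $(t,z)$ variables, the parametrix $\mathcal{K}$ with the eikonal phase and transport amplitudes, stationary phase driven by the Hessian lower bound \eqref{HessphiND}, an $O(\htilde^N)$ error absorbed by Duhamel and Sobolev embedding, and the bookkeeping $\htilde^{-d/2}\cdot\htilde^{-d}=h^{-3d/4}$ when undoing the rescaling. The only cosmetic difference is your treatment of the regime $|t-t_0|\le\htilde$ via Bernstein in the original variables, whereas the paper's kernel bound $\htilde^{-d/2}|t-t_0|^{-d/2}$ already covers that regime trivially (it is weaker there than the crude bound $|\mathcal{K}(t,z,y)|\le C\htilde^{-d}$).
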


This result will be a consequence of the following one in the variables $(t,z)$. 
\begin{theo}\label{dispersive1}
Let $\chi \in C_0^\infty(\xR^d)$ be such that $\supp \chi \subset \{\xi: \mez \leq \vert \xi \vert \leq 2\} $. 
Let $t_0 \in \xR$, 
$w_0 \in L^1(\xR^d)$ and set $w_{0,\htilde } = \chi(\htilde D_z)w_0$. 
Denote by $\widetilde{S}(t,t_0)w_{0,\htilde }$ the solution of the problem
\begin{equation*}
\big( \htilde  \partial_t + \htilde c+iP\big) \widetilde{U}(t,z) =0, \quad \widetilde{U}(t_0,z) = w_{0,\htilde }(z).
\end{equation*}
Then there exist  $\mathcal{F}: \xR^+ \to \xR^+$, $ k= k(d) \in \xN$ and $h_0 >0$ such that  
$$
\Vert \widetilde{S}(t, t_0)w_{0,\htilde } \Vert_{L^\infty(\xR^d)} 
\leq  \mathcal{F}\big(\Vert V\Vert_{E_0} + \mathcal{N}_k(\gamma)\big)\,   \htilde ^{-\frac{d}{2}}\vert t-t_0 \vert^{-\frac{d}{2}} \Vert w_{0,\htilde } \Vert_{L^1(\xR^d)}
$$
for all $0<\vert t -t_0 \vert\leq {\htilde }^{ \delta}$ and $0<{\htilde }  \leq \htilde _0$.
\end{theo}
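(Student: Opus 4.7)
The plan is to build an approximate solution via the parametrix $\mathcal{K}$ from the previous section and to estimate its Schwartz kernel by stationary phase. Taking the cut-off $\chi$ of the theorem statement as the initial datum of the leading amplitude $b_0$ in the transport equation \eqref{bk}, I write
$$\mathcal{K}v(t,z) = (2\pi\htilde)^{-d} \iint e^{i\htilde^{-1}(\phi(t,z,\xi,\htilde) - y\cdot \xi)}\, \widetilde{b}(t,z,y,\xi,\htilde)\, v(y)\, dy\, d\xi,$$
with $\widetilde{b}$ as in \eqref{tildeb}. The identity \eqref{UN1} together with the choice of $b = \sum_{j=0}^N \htilde^j b_j$ made in \eqref{bk} (together with the estimates of Lemma \ref{derb}) gives $(\htilde \partial_t + \htilde c + iP)\mathcal{K}v = \htilde^{N}\mathcal{R}_N v$ with $\|\mathcal{R}_N v\|_{L^2}\leq C_N \|v\|_{L^2}$. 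Moreover, at $t = t_0$ the phase reduces to $z\cdot\xi$ and $\Psi_0(\partial_\xi\phi - y) = \Psi_0(z-y)$ equals one near $y = z$, so after the rescaling $\xi = \htilde\eta$ one checks that $\mathcal{K}v(t_0,z) = \chi(\htilde D_z)v + \htilde^N\mathcal{O}_{L^2}(\|v\|_{L^2})$.

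Since $p$ is real (see the remark following \eqref{encorep}) and $c$ is real, the operator $\htilde \partial_t + \htilde c + iP$ is, up to a bounded perturbation, skew-adjoint on $L^2$, and a Gronwall-type energy estimate yields
$$\|\widetilde{S}(t,t_0)w_{0,\htilde} - \mathcal{K}w_0(t,\cdot)\|_{L^2} \leq C_N \htilde^{N-1}\|w_0\|_{L^2}$$
for $|t-t_0|\leq \htilde^\delta$. Both functions being spectrally localised at scale $\htilde^{-1}$, Bernstein's inequality turns this into an $L^\infty$ estimate that is negligible compared with the target rate $\htilde^{-d/2}|t-t_0|^{-d/2}\|w_0\|_{L^1}$. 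It is therefore enough to prove the pointwise kernel bound
$$|K_\htilde(t,z,y)| \leq \mathcal{F}\big(\|V\|_{E_0}+\mathcal{N}_k(\gamma)\big)\,\htilde^{-d/2}|t-t_0|^{-d/2},$$
where $K_\htilde$ is the Schwartz kernel of $\mathcal{K}$, in the range $\htilde \leq |t-t_0|\leq \htilde^\delta$; for $|t-t_0| \leq \htilde$ the trivial volume bound $|K_\htilde|\lesssim \htilde^{-d}$ already beats the claimed estimate.

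Viewing $K_\htilde(t,z,y)$ as an oscillatory integral with large parameter $\htilde^{-1}$ and phase $\Phi(\xi) = \phi(t,z,\xi,\htilde) - y\cdot\xi$, the critical point condition reads $\partial_\xi\phi(t,z,\xi,\htilde) = y$, and the non-degeneracy estimate \eqref{HessphiND} gives $|\det \partial_\xi^2\phi| \geq M_0|t-t_0|^d$ on the compact support in $\xi$ provided by $b$. Stationary phase then produces the expected gain $(\htilde/|t-t_0|)^{d/2}$ which, multiplied by the prefactor $\htilde^{-d}$, yields the claimed bound. The main obstacle is to control the error in the stationary phase expansion: Corollary \ref{estkappa} tells us that $\xi$-derivatives of $\phi$ of order $\geq 2$ are of size $|t-t_0|$ and $z$-derivatives of $\phi$ cost $\htilde^{1-2\delta}$ each, while Lemma \ref{derb} provides analogous bounds for $b$. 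Each additional step in the stationary phase expansion absorbs a factor $(|t-t_0|/\htilde)^{-1}\leq \htilde^{1-\delta}=\htilde^{1/3}$, and the choice $\delta = 2/3$ is precisely what makes this book-keeping close: after $k=k(d)$ iterations the remainder is negligible and the final constant depends only on $\|V\|_{E_0}$ and $\mathcal{N}_k(\gamma)$, as required.
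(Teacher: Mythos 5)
Your overall architecture --- the parametrix $\mathcal{K}$, stationary phase off the degenerate Hessian bound \eqref{HessphiND} for the main kernel estimate, and Duhamel for the error --- is the same as the paper's, and your treatment of the main term (including the remark that the trivial bound $\htilde^{-d}$ already suffices when $\vert t-t_0\vert\le\htilde$) is sound. The gaps are in the error analysis. First, the assertion $(\htilde\partial_t+\htilde c+iP)\mathcal{K}v=\htilde^{N}\mathcal{R}_N v$ with $\Vert\mathcal{R}_N v\Vert_{L^2}\le C_N\Vert v\Vert_{L^2}$ is precisely the hard part of the proof and cannot be taken for granted: the terms $R_N$, $S_N$, $T_N$, $U_N$ of \eqref{UN1} are oscillatory integrals whose control requires the weighted pointwise bounds of Lemma \ref{estRNSN} (with the weight $\langle\partial_\xi\phi(t,z,\xi,\htilde)-y\rangle^{k_0}$), the integrability statement of Lemma \ref{estint}, and a separate non-stationary phase argument for $U_N$ on the region where $\Psi_0$ is differentiated. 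Moreover, Proposition \ref{EQK} establishes the bound in the form $\Vert F_{\htilde}(t,\cdot)\Vert_{H^{\sigma_0}}\le\mathcal{F}_N(\cdots)\,\htilde^{N}\Vert w_{0,\htilde}\Vert_{L^1}$, with the $L^1$ norm of the datum on the right-hand side; this shape is what actually closes an $L^1\to L^\infty$ estimate, and producing it is exactly what the weight $\langle\partial_\xi\phi-y\rangle^{k_0}$ is for. None of this appears in your proposal.

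Second, your passage from the $L^2$ error bound to $L^\infty$ via Bernstein is not valid. The Duhamel correction $\widetilde{S}(t,t_0)w_{0,\htilde}-\mathcal{K}w_{0,\htilde}(t,\cdot)$ is \emph{not} spectrally supported in an annulus of size $\htilde^{-1}$: neither the exact propagator $\widetilde{S}$ (a variable-coefficient evolution) nor the parametrix (whose phase and amplitude depend on $z$) preserves frequency localization exactly, so Bernstein's inequality does not apply to the difference. The paper circumvents this by measuring every error term in $H^{\sigma_0}(\xR^d)$ with $\sigma_0>\frac d2$, using the uniform boundedness of $\widetilde{S}(t,s)$ on $H^{\sigma_0}$ and the embedding $H^{\sigma_0}\hookrightarrow L^\infty$; the same device is needed for the initial-data mismatch $r_{\htilde}$ of \eqref{rh}, which you likewise only control in $L^2$. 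If you insist on an $L^2$-based argument you must at least quantify the frequency tails of $F_{\htilde}$ and $r_{\htilde}$, which amounts to redoing the integrations by parts of Lemma \ref{estRNSN}; the cleaner route is the one in the text.
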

Indeed suppose that Theorem \ref{dispersive1} is proved.  We can assume $t_0 =0$. 
According to the two change of variables $x =X(t,y)$ and $z= \widetilde {h}^{-1}y$ we have for any smooth function $U$ (see \eqref{semiclass} and \eqref{c=})
$$
\big( \htilde  \partial_t +\htilde c+ iP\big) \big [U (t,X(t, \htilde z)\big] 
= \htilde   \Big( \big( \partial_t + \mez( T_{V_\delta} \cdot \nabla + \nabla \cdot T_{V_\delta}) 
+i T_{\gamma_\delta}\big) U \Big)\big(t, X(t, \htilde z)\big).
$$
It follows that 
$$
\big( \widetilde{S}(t, 0)w_{0,\htilde }\big) (t,z) = \big( S(t,0)u_{0,h}\big) (t,X(t,\htilde z)).
$$
Moreover since $w_0(z) = u_0(\htilde z)$ we have
$$
w_{0,\htilde }(z)=(\chi(\htilde D_z)w_0) (z) = ( \chi(hD_x)u_0)( \htilde z) =u_{0,h}(\htilde z).
$$
 
Therefore using Theorem \ref{dispersive1} we obtain
\begin{equation*}
\begin{aligned}
\Vert  S(t, 0)u_{0,h} \Vert_{L^\infty(\xR^d)}  &= \Vert  \tilde{S}(t, 0)w_{0,\htilde }\Vert _{L^\infty(\xR^d)}  
\leq \mathcal{F}(\cdots)\,   \htilde ^{-\frac{d}{2}}t^{-\frac{d}{2}}\Vert  w_{0,\htilde } \Vert_{L^1(\xR^d)}   \\
& \leq \mathcal{F}(\cdots)\, \htilde ^{-\frac{d}{2}}t^{-\frac{d}{2}} \htilde ^{-d}\Vert u_{0,h} \Vert_{L^1(\xR^d)}\\
&\le \mathcal{F}(\cdots)\,  h^{-\frac{3d}{4}}t^{-\frac{d}{2}} \Vert u_{0,h} \Vert_{L^1(\xR^d)}
\end{aligned}
\end{equation*}
since $\htilde  = h^\mez.$ Thus Theorem \ref{dispersive} is proved.
 \begin{proof}[Proof of Theorem \ref{dispersive1}]
We set 
\begin{equation}\label{K1}
\mathcal{K}w_{0,\htilde }(t,z) 
= (2 \pi \htilde )^{-d} \iint e^{i{\htilde}^{-1}(\phi(t,z,\xi,\htilde ) -y\cdot \xi)}\widetilde{b} \big(t,z,y,\xi,\htilde\big)
\chi_1(\xi)w_{0,\htilde}(y) \,dy \, d\xi
\end{equation}
where $\chi_1$ belongs to $C^\infty_0(\xR^d)$ with $\chi_1 \equiv 1$ 
on the support of $\chi$ and $\tilde{b}$ is defined in \eqref{tildeb}. We can write
\begin{equation} \label{K2}
\begin{aligned}
\mathcal{K}w_{0,\htilde }(t,z) &= \int K\big(t,z,y,\htilde \big) w_{0,\htilde }(y) dy \quad \text{with}\\
K(t,z,y,\htilde ) 
&=  (2 \pi \htilde )^{-d} \int e^{i\htilde ^{-1} (\phi (t,z,\xi,\htilde )-y\cdot \xi )}\widetilde{b}  (t,z,y,\xi,\htilde\big)
\chi_1(  \xi) \, d\xi.
\end{aligned}
\end{equation}
It follows from the \eqref{HessphiND} for the hessian, \eqref{formedeb}, 
Proposition~\ref{b:existe} and the stationary 
phase theorem that 
$$
\big\vert  K(t,z,y,\htilde ) \big\vert 
\leq \mathcal{F}(\cdots)\,  \htilde ^{-d} \htilde ^{\frac{d}{2}} t^{- \frac{d}{2}} \leq  \mathcal{F}(\cdots)  \htilde ^{-\frac{d}{2}} t^{- \frac{d}{2}}
$$
for all $0<t \leq h^{ \frac{\delta}{2}}$, $z,y \in \xR^d$ and $0< \htilde  \leq \htilde _0$.

Therefore we obtain
\begin{equation}\label{estimK}
\big\Vert  \mathcal{K}w_{0,\htilde }(t,\cdot) \big\Vert_{L^\infty(\xR^d)}
\leq \mathcal{F}(\cdots)\,   \htilde ^{-\frac{d}{2}} t^{- \frac{d}{2}} \big\Vert w_{0,\htilde } \big\Vert_{L^1(\xR^d)}
\end{equation}
for all $0<t \leq \htilde^{\delta} $ and $0< \htilde  \leq \htilde _0.$

We can state now the following result.
\begin{prop}\label{EQK}
Let $\sigma_0 $ be an integer such that $\sigma_0> \frac{d}{2}.$ Set
$$
\big( \htilde  \partial_t + \htilde c+ iP\big)\Big( \mathcal{K}w_{0,\htilde } \Big)(t,z) = F_{\htilde }(t,z).
$$
Then there exists  $k= k(d) \in \xN$ and  for any $N \in \xN,$  $\mathcal{F}_N:\xR^+ \to \xR^+$  such that
$$
\sup_{0< t \leq \htilde ^{\delta}} \big\Vert F_{\htilde }(t, \cdot ) \big\Vert_{H^{\sigma_0}(\xR^d)}
\leq  \mathcal{F}_N\big(\Vert V\Vert_{E_0} + \mathcal{N}_k(\gamma)\big)\,  \htilde ^N \Vert w_{0,\htilde } \Vert_{L^1(\xR^d)}.
$$
\end{prop}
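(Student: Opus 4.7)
The plan is to apply the operator $\tilde h\partial_t + \tilde h c + iP$ to the parametrix \eqref{K1}, passing it inside the integral (which is legitimate because $P$ acts only in the $z$-variable), and to use the identity \eqref{UN1} with $(y,\xi)$ treated as parameters. This yields
\begin{equation*}
F_{\tilde h}(t,z) = (2\pi\tilde h)^{-d} \iint e^{i\tilde h^{-1}(\phi(t,z,\xi,\tilde h) - y\cdot\xi)} \bigl[\tilde h T_N \Psi_0 + U_N + R_N + S_N\bigr] \chi_1(\xi)\, w_{0,\tilde h}(y)\, dy\, d\xi.
\end{equation*}
Writing $F_{\tilde h}(t,z) = \int K_{\tilde h}(t,z,y)\, w_{0,\tilde h}(y)\, dy$, Minkowski's inequality reduces the claim to showing $\sup_y \|K_{\tilde h}(t,\cdot,y)\|_{H^{\sigma_0}_z} \le \mathcal F_N(\cdots)\, \tilde h^N$ uniformly on $0 < t \le \tilde h^\delta$ for the kernel associated with each of the four pieces. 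Since $\sigma_0$ is fixed and $N$ is to be chosen arbitrarily large, the number $k = k(d)$ of $\xi$-derivatives of $\gamma$ appearing through the symbol estimates will be finite.

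For the three pieces coming from $T_N$, $R_N$ and $S_N$, I would exploit the intrinsic powers of $\tilde h$ built into the construction. The term $T_N$ contains a prefactor $\tilde h^{N+1}$ by \eqref{n200}; the Taylor remainder $S_N$ of \eqref{SN} carries $\tilde h^{|\alpha|+|\beta|}$ with $|\beta| = N$; and the remainder $R_N$ of \eqref{J=} produces a factor $\tilde h^N$ via the integration by parts $\eta^\alpha \mapsto (-\tilde h D_{z'})^\alpha$ with $|\alpha| = N$. Combining the symbol estimates from Lemma \ref{derp} and Remark \ref{rem:est:ptilde}, the bounds $b_j \in S^0_{2\delta-1}$ from Proposition \ref{b:existe}, and the phase-derivative bounds from Corollary \ref{estkappa}, each $z$-derivative of the kernel costs at most a factor $\tilde h^{1-2\delta}$, while the $\xi$-integration is restricted to $\supp\chi_1$. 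The intrinsic $\tilde h^N$ factor then dominates for $N$ large. To upgrade the pointwise bound to an $H^{\sigma_0}_z$ bound I would insert a decay factor $\langle(z-y)/\tilde h\rangle^{-M}$ by integrating by parts in $\xi$ on the region where $|z-y|$ is large, using that on $\supp\chi_1$ the full phase gradient $\partial_\xi\phi - y$ has modulus comparable to $|z-y|$ plus a bounded quantity.

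The delicate piece is $U_N$, defined through \eqref{RS} and \eqref{UN1} as the sum of all terms in which at least one of the derivatives $D_{z'}^\alpha$ from the expansion falls on the cutoff $\Psi_0(\partial_\xi\phi(t,z',\xi,\tilde h) - y)$ rather than on $b(t,z',\xi,\tilde h)$. Since $\Psi_0 \equiv 1$ on $\{|t| \le 1\}$, every such term is supported where $|\partial_\xi\phi - y| \ge 1$; on this set the gradient of the full phase $\phi - y\cdot\xi$ in $\xi$ has modulus $\ge 1$, so I would integrate by parts $N$ times in $\xi$ via the transposed operator
\begin{equation*}
{}^t\!L = -i\tilde h\, \nabla_\xi \cdot \Bigl(\frac{\partial_\xi\phi - y}{|\partial_\xi\phi - y|^2}\, \cdot\Bigr),
\end{equation*}
each application producing a factor $\tilde h$. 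By Corollary \ref{estkappa} the higher $\xi$-derivatives of $\partial_\xi\phi$ belong to $S^0_{2\delta-1}$, so the successive derivatives falling on the weight $(\partial_\xi\phi - y)/|\partial_\xi\phi - y|^2$ remain uniformly bounded, independently of $y$, because the denominator is bounded below by $1$ on the support. After $N$ integrations by parts one gains $\tilde h^N$, and the $z$-derivatives still produce only powers $\tilde h^{-(2\delta-1)|\alpha|}$, absorbed by the choice of $N$.

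The main obstacle is bookkeeping rather than conceptual: one must verify at every step that the symbol-class estimates, the phase estimates and the commutator estimates are preserved under the combined action of $z$- and $\xi$-derivatives landing on the phase $\phi$, the amplitudes $b_j$, the symbol $\tilde p$ and the cutoff $\Psi_0$, and that the resulting losses are absorbed by the $\tilde h^{N+1}$, $\tilde h^N$ factors. This is standard for parametrix constructions once the eikonal and transport equations are solved in the symbol class $S^0_{2\delta-1}$, which has been done in Propositions \ref{eqeiko} and \ref{b:existe}. The conclusion is that all four contributions to $F_{\tilde h}$ are of arbitrarily high order in $\tilde h$, which is exactly the content of the proposition.
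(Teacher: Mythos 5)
Your proposal is correct and closely parallels the paper's: the reduction via Minkowski to a kernel bound, the decomposition into $\tilde h T_N\Psi_0$, $R_N$, $S_N$, $U_N$ via \eqref{UN1}, the exploitation of the intrinsic $\tilde h$-powers in each piece (with $z$-derivatives costing $\tilde h^{1-2\delta}$ from the symbol classes), and the non-stationary phase argument for $U_N$ via the vector field $X$ are all exactly as in the paper. The one step where you take a genuinely different route is in converting the pointwise bound into an $L^2_z$ bound for the $T_N, R_N, S_N$ pieces. You propose $\xi$-integration by parts on the region $|z-y|\gtrsim 1$, using that $\partial_\xi\phi(t,z,\xi)-y$ is comparable to $z-y$ there (because $\partial_\xi\phi - z = O(t) = O(\tilde h^\delta)$ by Corollary~\ref{estkappa}). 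The paper instead multiplies the integrand by $m(\Xi)^\gamma$ with $m = \partial_\xi\phi(t,z,\xi)-y$, splits $m$ by Taylor's formula into $\partial_\xi\phi(z)-\partial_\xi\phi(z')$ (whose powers of $z-z'$ are absorbed by $\widehat\kappa$) plus $\partial_\xi\phi(z')-y$ (absorbed by $\Psi_0$), and then invokes Lemma~\ref{estint}, which uses the diffeomorphism $z\mapsto\partial_\xi\phi$ to show $\int dz/\langle m\rangle^{2k_0}$ is uniformly bounded. Both mechanisms give the needed $z$-integrability; yours requires checking that $\xi$-derivatives of the amplitude factor $\Psi_0(\partial_\xi\phi(t,z',\xi)-y)$ remain controlled (they do, since $\Psi_0'$ lives on a shell and $\partial_\xi^2\phi = O(t)$), whereas the paper's telescoping avoids differentiating the amplitude in $\xi$ altogether, which is slightly more economical. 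Either route closes the argument.
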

 We shall use the following result.
\begin{lemm}\label{estint}
Let $k_0>\frac{d}{2}$. Let us set $m(t,z,y,\xi,\htilde ) = \frac{\partial \phi}{\partial \xi} (t,z ,\xi, \htilde ) -y$ 
and 
$$
\Sigma = \left\{(t,y,\xi,\htilde ):  0< \htilde \leq {\tilde {h}}_0, 0
\leq t \leq \htilde ^{\delta}, y \in \xR^d, \vert \xi \vert \leq C \right\}.
$$
Then
$$
\sup_ {(t,y,\xi,\htilde ) \in \Sigma} \int_{\xR^d} \frac{dz}{\langle m(t,z,y,\xi, \htilde )\rangle^{2k_0}} \leq  \mathcal{F}\big(\Vert V\Vert_{E_0} + \mathcal{N}_2(\gamma)\big).
$$
\end{lemm}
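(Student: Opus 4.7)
The plan is to change variables $z \mapsto m$ and reduce the integral to a standard one. The crux is to show that, for $(t,y,\xi,\htilde)\in\Sigma$ fixed, the map
\[
n\colon z \longmapsto \frac{\partial\phi}{\partial\xi}(t,z,\xi,\htilde)
\]
is a $C^{1}$ diffeomorphism of $\xR^{d}$ whose Jacobian and inverse Jacobian are bounded by a quantity depending only on $\Vert V\Vert_{E_{0}}+\mathcal{N}_{2}(\gamma)$. Once this is established, setting $m=n(z)-y$ and using $2k_{0}>d$ yields
\[
\int_{\xR^{d}}\frac{dz}{\langle m(t,z,y,\xi,\htilde)\rangle^{2k_{0}}}
=\int_{\xR^{d}}\frac{\left|\det(\partial n/\partial z)^{-1}(z(m{+}y))\right|}{\langle m\rangle^{2k_{0}}}\,dm
\le C\int_{\xR^{d}}\frac{dm}{\langle m\rangle^{2k_{0}}}<\infty,
\]
uniformly on $\Sigma$, which is the claim.

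The main obstacle, and where the work lies, is to prove that the matrix $N(t,z,\xi,\htilde)\defn \bigl(\partial^{2}\phi/\partial z_{k}\partial\xi_{j}\bigr)$ is close to the identity uniformly on $\Sigma$. Since $\phi\arrowvert_{t=0}=z\cdot\xi$, one has $N(0)=\mathrm{Id}$, so it suffices to bound $\partial_{t}N$ suitably. Differentiating the eikonal equation \eqref{eikonale} once in $\xi_{j}$ and once in $z_{k}$ gives
\[
\partial_{t}N_{jk}=-\sum_{i}\Big(p_{z_{k}\zeta_{i}}+\sum_{l}p_{\zeta_{i}\zeta_{l}}\phi_{z_{k}z_{l}}\Big)N_{ji}-\sum_{i}p_{\zeta_{i}}\phi_{z_{k}z_{i}\xi_{j}},
\]
evaluated at $(t,z,\phi_{z}(t,z,\xi,\htilde),\htilde)$. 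Lemma~\ref{derp} gives $|p_{\zeta\zeta}|+|p_{\zeta}|\le\mathcal{F}$ and $|p_{z\zeta}|\le\mathcal{F}\,\htilde^{\,1-2\delta}$, while Corollary~\ref{estkappa}(ii) gives $|\phi_{z_{k}z_{l}}|+|\phi_{z_{k}z_{i}\xi_{j}}|\le\mathcal{F}\,\htilde^{\,1-2\delta}$ and boundedness of $N$. Altogether $|\partial_{t}N|\le\mathcal{F}\,\htilde^{\,1-2\delta}(1+|N|)$, so a Gronwall argument on $[0,\htilde^{\delta}]$ (using $\delta=2/3$, whence $(1-2\delta)+\delta=1-\delta=\delta/2$) produces
\[
\sup_{0\le t\le \htilde^{\delta}}\bigl|N(t,z,\xi,\htilde)-\mathrm{Id}\bigr|\le \mathcal{F}\bigl(\Vert V\Vert_{E_{0}}+\mathcal{N}_{2}(\gamma)\bigr)\,\htilde^{\delta/2}.
\]
For $\htilde_{0}$ small enough this ensures that $\partial n/\partial z$ is invertible with $|\det\partial n/\partial z|\ge 1/2$ and operator norm bounded by $2$.

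Finally, for global invertibility I would apply Hadamard's theorem (exactly as in the Corollary following Proposition~\ref{estX}). Local invertibility is provided by the previous estimate; properness follows from the bound $|n(z)-z|\le\mathcal{F}\,\htilde^{\delta}$, which is obtained by the same argument applied one order lower: $\partial_{t}\phi_{\xi_{j}}=-\sum_{i}p_{\zeta_{i}}(t,z,\phi_{z},\htilde)\,N_{ji}$ is bounded by $\mathcal{F}$, so integrating on $[0,\htilde^{\delta}]$ gives $|\phi_{\xi_{j}}(t,z,\xi,\htilde)-z_{j}|\le\mathcal{F}\,\htilde^{\delta}$. Hence $n\colon\xR^{d}\to\xR^{d}$ is a global diffeomorphism with uniformly controlled Jacobian, and the change of variables outlined above concludes the proof.
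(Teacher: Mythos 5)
Your proof is correct, and it reaches the same key estimate $\bigl|\partial^2\phi/\partial z\,\partial\xi - \mathrm{Id}\bigr|\le\mathcal{F}\,\htilde^{\delta/2}$ as the paper, but by a genuinely different derivation. The paper exploits the explicit representation from \eqref{eiksuite}, $\partial_z\phi(t,z,\xi,\htilde)=\zeta(t;\kappa(t;z,\xi,\htilde),\xi,\htilde)$, differentiates in $\xi$ to get $\partial^2_{\xi z}\phi=\zeta_{z_0}\,\kappa_\xi+\zeta_\xi$, and then simply reads off the bound from Proposition~\ref{estflow} (the flow estimates for $\zeta$) and Corollary~\ref{estkappa}(i) (the $\kappa$-bound); the remainder of the argument (properness, global diffeomorphism, change of variable) agrees with yours. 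You instead differentiate the eikonal PDE once in $\xi$ and once in $z$ to obtain a linear ODE for $N=\partial^2_{z\xi}\phi$ with coefficients controlled by Lemma~\ref{derp} and Corollary~\ref{estkappa}(ii), and then integrate on $[0,\htilde^\delta]$ using $1-2\delta+\delta=\delta/2$. Both routes need the same background estimates on $p$ and on the higher-order derivatives of $\phi$, but the paper's is a one-line composition that trades on formulas already in hand, while yours is a self-contained ODE argument that bypasses the explicit inverse flow $\kappa$. (As a small remark: since Corollary~\ref{estkappa}(ii) with $\alpha=0$, $|\beta|=1$ already bounds $N$, the Gronwall step is not strictly needed — a direct integration of $\partial_tN$ suffices — but this is harmless.)
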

\begin{proof}
By \eqref{eiksuite} we have $\frac{\partial \phi}{\partial z}\big(t,z,\xi,\htilde\big) 
= \zeta\big(t;\kappa\big(t;z,\xi,\htilde\big),\xi,\htilde\big)$ so 
$\frac{\partial^2 \phi}{\partial\xi \partial z} = \frac{\partial \zeta}{\partial z}  \frac{\partial \kappa}{\partial \xi}  +  \frac{\partial \zeta}{\partial \xi} .$ We deduce from Proposition \ref{estflow} and Corollary \ref{estkappa} that  $\vert \frac{\partial^2 \phi}{\partial\xi \partial z} - Id\vert \leq  \mathcal{F} \big(\Vert V \Vert_{E_0} +   \mathcal{N}_2(\gamma)\big) \, \htilde^{\frac{\delta}{2}}. $ It follows  that the  map $z \mapsto  \frac{\partial \phi}{\partial \xi} (t,z,\xi, \htilde )$  is proper and therefore is a global diffeomorphism from $\xR^d $ to itself. Consequently, 
we can perform the change of variable $X=  \frac{\partial \phi}{\partial \xi} (t,z,\xi, \htilde )$ and the lemma follows.
\end{proof}
\begin{proof}[Proof of Proposition \ref{EQK}]
 
According to \eqref{K2} Proposition \ref{EQK} will be proved if we have
\begin{equation}\label{estK}
\sup_{(t,y,\tilde{h}) \in \Sigma}  \big\Vert \big(\tilde{h} \partial_t +\htilde c+  iP\big)K(t,\cdot,y,\tilde{h}\big)\big\Vert_{H^{\sigma_0}}
\leq \mathcal{F}_N(\cdots) \, \tilde{h}^N.
\end{equation}
Now, setting $L= \tilde{h} \partial_t + \htilde c +  iP(t,z,D_z)$, we have
\begin{equation}\label{egalite1}
LK\big(t,z,y,\tilde{h}\big)= \big(2\pi \tilde{h}\big)^{-d}\int  e^{-i{\htilde}^{-1}  y\cdot \xi}
L\Big(e^{i{\htilde}^{-1} \phi(t,z,\xi,\tilde{h})}\widetilde{b} \big(t,z,y,\xi,\tilde{h}\big)\Big) \chi_1(\xi)\, d\xi 
\end{equation}
and according to \eqref{RS}, \eqref{CT1}, \eqref{n200} we have,
\begin{equation}\label{egalite2}
L\Big(e^{i{\htilde}^{-1}\phi(t,z,\xi,\tilde{h})}\widetilde{b} \big(t,z,y,\xi,\tilde{h}\big)\Big)
= e^{i{\htilde}^{-1}\phi(t,z,\xi,\tilde{h})}\big(R_N +S_N + \tilde{h}T_N \Psi_0 +U_N\big)(t,z,y,\xi,\htilde) 
\end{equation}
where $R_N$, $S_N$, $T_N$, $U_N$ are defined in \eqref{J=}, \eqref{SN}, \eqref{CT1}, \eqref{UN}.

\begin{lemm}\label{estRNSN}
Let $\sigma_0,k_0 $ be   integers, $\sigma_0 > \frac{d}{2},$  $ k_0 >\frac{d}{2}$. There exists  a fixed integer  $N_0(d)$   such that for any $N \in \xN$  there exists $C_N >0$ such that, if we set $\Xi = (t,z,y,\xi,\htilde),$ then
\begin{equation}\label{estRN}
  \big\langle m\big(\Xi\big)\big\rangle^{k_0} 
\left\{\vert D^\beta_z R_N(\Xi) \vert 
+ \vert D^\beta_z S_N (\Xi)\vert + \vert D_z^\beta (\tilde{h}(T_N \Psi_0) (\Xi)\big)\vert  \right\} 
 \leq\mathcal{F}_N(\cdots) \, {\tilde{h}}^{\delta N-N_0}, 
\end{equation}
 for all $\vert \beta \vert \leq \sigma_0$,  all $(t,y,\xi, \tilde{h}) \in \Sigma $ and all $z\in \xR^d$. 
  \end{lemm}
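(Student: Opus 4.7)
The proof splits according to the three terms on the left of \eqref{estRN}. The key structural observation is that each of them inherits, from the factor $\widetilde b=b\,\Psi_0(\partial_\xi\phi(z')-y)$ contained in the amplitude, the restriction $|\partial_\xi\phi(z')-y|\leq 1$; combined with $D_z\partial_\xi\phi\in S^0_{2\delta-1}$ from Corollary~\ref{estkappa}, this yields, on the support of the integrand,
\begin{equation*}
\langle m(\Xi)\rangle \leq 1+|\partial_\xi\phi(z)-\partial_\xi\phi(z')|+|\partial_\xi\phi(z')-y| \leq C\,(1+|z-z'|),
\end{equation*}
which is the only mechanism by which the weight $\langle m\rangle^{k_0}$ is produced. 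Throughout, derivatives in $z$ of order $\leq \sigma_0$ each cost at most a factor $\htilde^{\,1-2\delta}=\htilde^{-1/3}$ (by Remark~\ref{rem:est:ptilde}, Remark~\ref{rem:est:theta}, Corollary~\ref{estkappa} and Lemma~\ref{derb}), a fixed loss $\htilde^{-\sigma_0/3}$ to be absorbed in the constant $N_0(d)$.

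For $\htilde\,T_N\Psi_0$ the estimate is algebraic: the transport equations \eqref{bk} are designed precisely so that the contributions of order $\leq\htilde^N$ in \eqref{K=L} cancel, leaving the explicit residue \eqref{n200}, which is $\htilde^{N+1}$ times a bounded symbol in $S^0_{2\delta-1}$. Since $\Psi_0$ localises in $\langle m\rangle\lesssim 1$, we have $\langle m\rangle^{k_0}\Psi_0\lesssim 1$, and the claimed bound for this term follows trivially, with a wide margin.

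For $R_N$ I would carry out two successive integrations by parts. First, applying the identity
\begin{equation*}
\eta^\alpha\, e^{i\htilde^{-1}(z-z')\cdot\eta}=(-\htilde\, D_{z'})^\alpha\, e^{i\htilde^{-1}(z-z')\cdot\eta}
\end{equation*}
inside $r_N$ and integrating by parts in $z'$, I extract a prefactor $\htilde^{N}$ and transfer the $|\alpha|=N$ derivatives onto $(\partial^\alpha_\eta\widetilde p)(z,z',\eta+\lambda\theta)\,\widetilde b(z')$; each such derivative costs only $\htilde^{\,1-2\delta}$ by Remark~\ref{rem:est:ptilde} and Remark~\ref{rem:est:theta}, whereas derivatives falling on $\Psi_0(\partial_\xi\phi(z')-y)$ are harmless because $\partial_{z'}\partial_\xi\phi$ is bounded. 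Second, I use
\begin{equation*}
(1-\Delta_\eta)\,e^{i\htilde^{-1}(z-z')\cdot\eta}=\Bigl(1+\frac{|z-z'|^2}{\htilde^{\,2}}\Bigr)\,e^{i\htilde^{-1}(z-z')\cdot\eta}
\end{equation*}
to integrate by parts in $\eta$ a large number $M$ of times; since $\kappa\in C^\infty_0$ and $(\partial^\alpha_\eta\widetilde p)$ has bounded $\eta$-derivatives of all orders, this produces the decay factor $(1+|z-z'|^2/\htilde^{\,2})^{-M}$. Using the first paragraph's inequality, $\langle m\rangle^{k_0}\leq C(1+|z-z'|)^{k_0}$ on the support of $\Psi_0$, and choosing $2M>d+k_0$, the $z'$-integral of $\langle m\rangle^{k_0}(1+|z-z'|^2/\htilde^{\,2})^{-M}$ is bounded by $C\htilde^{\,d}$, which compensates exactly the $\htilde^{-d}$ prefactor of $R_N$ in \eqref{J=}. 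Collecting the powers with $\delta=2/3$ gives an overall size $\htilde^{-d}\cdot\htilde^{N}\cdot\htilde^{N(1-2\delta)}\cdot\htilde^{\,d}\cdot\htilde^{-\sigma_0/3}=\htilde^{\,\delta N-N_0}$, as required.

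The term $S_N$ is handled by the same strategy but more directly: the $\mu$-variable is not oscillatory and is integrated against the Schwartz function $\mu^\beta\widehat\kappa(\mu)$, while the support of $\Psi_0(\partial_\xi\phi(z+\lambda\htilde\mu)-y)$ gives the analogue $\langle m(z)\rangle\leq C(1+\htilde|\mu|)$, so the Schwartz decay of $\widehat\kappa$ absorbs $\langle m\rangle^{k_0}$ outright; the prefactor $\htilde^{|\alpha|+|\beta|}\geq\htilde^{N}$ in \eqref{SN}, combined with the same $\htilde^{N(1-2\delta)}$ loss from $\partial^\beta_{z'}f_\alpha$ estimated via Lemma~\ref{derb} and Remark~\ref{rem:est:ptilde}, yields the analogous $\htilde^{\,\delta N-N_0}$ bound. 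The main obstacle throughout the argument is the precise bookkeeping of the $\htilde$-losses caused by the rough phase, amplitude and cut-off when the two integrations by parts are performed; the tame estimates of Corollary~\ref{estkappa}, Remark~\ref{rem:est:ptilde}, Remark~\ref{rem:est:theta} and Lemma~\ref{derb} are exactly what makes this bookkeeping uniform in $\htilde$ and allows us to close the estimate with a single fixed exponent $N_0(d)$.
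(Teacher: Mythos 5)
Your proof is correct and follows essentially the same route as the paper: handle $\htilde T_N\Psi_0$ by observing that after the transport equations are solved only the $\htilde^{N+1}$ residue \eqref{n200} remains and that $\Psi_0$ kills $\langle m\rangle^{k_0}$; handle $R_N$ by first trading $\eta^{\alpha+\beta_1}$ for $\htilde$-weighted $z'$-derivatives and then performing a non-stationary-phase integration in $\eta$ to produce integrability in $z'$ and absorb the weight; and handle $S_N$ in the same way but with the telescoping of $\partial_\xi\phi(z)-y$ through $z+\lambda\htilde\mu$ absorbed by $\widehat\kappa$. The only technical departure from the paper is in the $R_N$ step: the paper multiplies $R_{N,\beta}$ by $m^\gamma$, telescopes $m(z)=\bigl(\partial_\xi\phi(z)-\partial_\xi\phi(z')\bigr)+\bigl(\partial_\xi\phi(z')-y\bigr)$ (the first factor giving $(z-z')^\nu$ via Taylor, converted to $(\htilde D_\eta)^\nu$ and integrated by parts, the second absorbed by $\Psi_0$), whereas you bound $\langle m\rangle^{k_0}\le C(1+|z-z'|)^{k_0}$ pointwise on $\supp\Psi_0$ and pay for it wholesale with $(1-\Delta_\eta)^M$ integrations by parts — the two mechanisms are equivalent in effect and yield the same $\htilde^{\delta N - N_0}$ bound.
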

\begin{proof}
According to \eqref{J=}, $  D^\beta_z R_N(\Xi)$ is a finite linear combination 
of terms of the form \begin{equation*}
\begin{aligned}
R_{N, \beta}(\Xi)  = \tilde{h} ^{-d-\vert \beta_1\vert} 
\iint e^{i{\htilde}^{-1}(z-z')\cdot\eta}& \eta^{\beta_1}\kappa(\eta) D_z^{\beta_2}r_N(t,z, z',\eta, \xi,\htilde)\\
&{b}(t,z', \xi,\tilde{h}) \Psi_0\Big(\frac{\partial \phi}{\partial \xi} \big(t,z',\xi, \tilde{h}\big) -y\Big)\,dz' \,d\eta\\
\end{aligned}
\end{equation*}
where  $\beta_1 + \beta_2 = \beta$  and 
\begin{equation*}
D_z^{\beta_2}r_N (\cdots) = \sum_{\vert \alpha \vert =N} 
\frac{N}{\alpha!}\int_0^1 (1-\lambda)^{N-1} D_z^{\beta_2}
\Big[\big(\partial^\alpha_\eta \widetilde{p}\big)(t,z,z',\eta+ \lambda\theta(t,z,z', \xi,\htilde)) \Big ]\eta^\alpha \,d\lambda.
\end{equation*}

Using the equality $\eta^{\alpha + \beta_1}  e^{i{\htilde}^{-1}(z-z')\cdot\eta} 
= (-\tilde{h}D_{z'})^{\alpha+ \beta_1}   e^{i{\htilde}^{-1}(z-z')\cdot\eta}$ 
we see that $R_{N, \beta}$ is a linear combination of  terms of the form
\begin{align*}
R'_{N, \beta}(\Xi)= \tilde{h}^{N   -d}\int_0^1 
\iint  &e^{i{\htilde}^{-1}(z-z')\cdot\eta}\kappa(\eta)\times \\ 
& D_{z'}^{\alpha + \beta_1} 
D_z^{\beta_2}\Big[(\partial_\eta^\alpha  \widetilde{p})(t,z,z',\eta+ \lambda\theta(t,z,z', \xi,\htilde))\\
&b(t,z',\xi,\tilde{h}) \Psi_0\big(\frac{\partial \phi}{\partial \xi} (t,z'\xi, \tilde{h}) -y\big)\Big] 
\, d\lambda \, dz' \, d\eta.
\end{align*}
Then we insert in the integral  the quantity 
$  \big (\frac{\partial \phi}{\partial {\xi }} (t,z ,\xi, \tilde{h}) -y \big)^{\gamma }$ where $  \vert \gamma\vert =  k_0. $  It is a finite linear combination of terms of the form
$$
\Big(\frac{\partial \phi}{\partial {\xi }} (t,z ,\xi, \tilde{h}) 
- \frac{\partial \phi}{\partial{\xi }} (t,z',\xi, \tilde{h})\Big)^{\gamma_1 } 
\Big(\frac{\partial \phi}{\partial {\xi }} (t,z',\xi, \tilde{h}) -y\Big)^{\gamma_2 }.
$$
Using the Taylor formula we see that $ \langle m(\Xi) \rangle^{k_0} R'_{N, \beta}(\Xi)$ 
is a finite linear combination of terms of the form
\begin{equation*}
\begin{aligned}
& {\tilde{h}}^{N -d} \int_0^1 \iint  (z-z')^{\nu}  e^{i{\htilde}^{-1}(z-z')\cdot\eta} F(t,z,z',\xi,\tilde{h}) 
\kappa(\eta)\Big(\frac{\partial \phi}{\partial {\xi }} (t,z',\xi, \tilde{h}) -y\Big)^{l}\\ 
&\qquad\qquad D_{z'}^{\alpha+ \beta_1} D_z^{\beta_2} \Big[
(\partial_\eta^\alpha  \widetilde{p})(\cdots)
b(t,z',\xi,\tilde{h}) \Psi_0\big(\frac{\partial \phi}{\partial \xi} (t,z',\xi, \tilde{h}) -y\big)\Big] \, d\lambda \, dy \, d\eta,  
\end{aligned}
\end{equation*}
where, by Corollary \ref{estkappa} (ii), $F$  is a bounded function.

Eventually we use  the identity 
$(z-z')^{\nu}  e^{i{\htilde}^{-1}(z-z')\cdot\eta} = (\htilde D_\eta)^\nu  e^{i{\htilde}^{-1}(z-z')\cdot\eta},$ we 
integrate by parts in the integral with respect to $\eta$ and we use Remark \ref{rem:est:ptilde}, Remark \ref{rem:est:theta}, the estimate $(ii)$ in Corollary \ref{estkappa}, the fact that $b\in S^0_{2\delta-1}$  and the fact that $N+ N(1-2\delta) = \delta N$ to deduce that   
\begin{equation}\label{est:RN}
  \big\langle m\big(\Xi\big)\big\rangle^{k_0}  \vert D_z^\beta R_N(\Xi) \vert \leq \mathcal{F}_N(\cdots) \, {\htilde }^{\delta N -N_d}
\end{equation}
where    $N_d$ is a fixed number depending only on the dimension.
 
Let us consider the term $S_N.$ Recall that $D_z^\beta S_N$ 
is a finite linear combination for $\vert \alpha \vert \leq N-1$ and $ \vert \gamma \vert = N$ of terms of the form
\begin{multline*}
  S_{N,\alpha, \beta,Ê\gamma} = {\htilde }^{N+\vert \alpha \vert}\int_0^1 \int (1-\lambda)^{N-1}\mu^\gamma \widehat{\kappa}(\mu) D_{z'}^{\alpha + \beta + \gamma}
\Big[\big(\partial^\alpha_\zeta \widetilde{p}\big)\big(t,z,z',\theta (t,z,z',\xi,\htilde),\htilde\big)\\
b\big(t,z',\xi,\htilde\big) \Psi_0\Big(\frac{\partial \phi}{\partial \xi }\big(t,z',\xi,\htilde\big)
-y\Big)\Big] \Big\arrowvert_{z'=z+\lambda \htilde  \mu}\, d\lambda \, d\mu.
  \end{multline*}
Then we multiply  $S_{N,\alpha, \beta, \gamma}$ by $ \langle m(\Xi) \rangle^{ k_0}$ 
and we write
$$
\frac{\partial \phi}{\partial \xi }\big(t,z,\xi,\htilde\big) -y
= \frac{\partial \phi}{\partial \xi }\big(t,z,\xi,\htilde\big)-
\frac{\partial \phi}{\partial \xi }\big(t,z+\lambda \htilde \mu,\xi,\htilde\big) 
+ \frac{\partial \phi}{\partial \xi }\big(t,z+\lambda \htilde \mu,\xi,\htilde\big)-y.
$$
By the Taylor formula the first term will give rise to a power of $\lambda \htilde  \mu$ 
which will be absorbed by $\hat{\kappa}(\mu)$ and the second term will be absorbed by $\Psi_0.$ Then we use again Remark \ref{rem:est:ptilde}, Remark \ref{rem:est:theta} to conclude that 
\begin{equation}\label{est:SN}
 \big\langle m\big(\Xi\big)\big\rangle^{k_0}  \vert D_z^\beta S_N(\Xi) \vert \leq \mathcal{F}_N(\cdots) \,{\htilde }^{\delta N -N'_d}.
\end{equation}
Let us look now to the term $\htilde D_z^\beta T_N \Psi_0$. 
According to \eqref{n200} this expression is a linear combination for $2 \leq \vert \alpha \vert  \leq N$ of terms of the form
\begin{align*}
{\htilde }^{\vert \alpha\vert +N }
D_z^\beta \Big\{ &D_{z'}^ \alpha \Big[(\partial_\zeta^\alpha \widetilde{p}(t,x,z ,z',\theta(t,z,z',\xi,\htilde),\htilde )
b_N(t,z',\xi,\htilde )\Big]\Big\arrowvert_{z'=z} \\
&\Psi_0\Big(\frac{\partial \phi}{\partial \xi }(t,z',\xi,\htilde ) -y\Big)\Big\}.
\end{align*}
It follows from Remark \ref{rem:est:ptilde}, Remark \ref{rem:est:theta} and Corollary \ref{estkappa} that we have 
\begin{equation}\label{est:TN}
\big \langle m (\Xi ) \big\rangle^{ k_0}
\big\vert \htilde D_z^\betaÊ\big[T_N \Psi_0 \big] \big\vert \leq \mathcal{F}_N(\cdots) \, {\htilde }^{\delta N}.
\end{equation}
Lemma \ref{estRNSN} follows from \eqref{est:RN}, \eqref{est:SN},  \eqref{est:TN}.
\end{proof}
>From   Lemma \ref{estRNSN} we can write
\begin{equation}\label{estint2}
{\htilde }^{-d}\int  \big\Vert e^{i{\htilde}^{-1}( \phi(t,z,\xi,\htilde )- y\cdot \xi)} 
\big(R_N + S_N + \htilde  T_N \Psi_0\big)\big\Vert_{H^{\sigma_0}_z} \vert \chi_1(\xi) \vert \, d \xi 
\leq \mathcal{F}_N(\cdots) \, {\htilde }^{\delta N-N_1(d)}
\end{equation}
where $N_1(d)$ is a fixed number depending only on the dimension.

To conclude the proof of Proposition \ref{EQK} we have to estimate the integrals
$$
I_{N, \beta} =   (2\pi \htilde )^{-d}\int D_z^\beta \big[ e^{i{\htilde}^{-1}( \phi(t,z,\xi,\htilde )- y\cdot \xi)}
U_N\big(t,z,y,\xi,\htilde\big)\big] \chi_1(\xi) \,d \xi.
$$
 
Now according to \eqref{RS}, \eqref{CT1} and \eqref{UN} on the support of $U_N$ 
the function $\Psi_0$ 
is differentiated at least one time. Thus on this support 
one has $\big\vert \frac{\partial \phi}{\partial \xi} (t,z ,\xi, \htilde ) -y\big\vert \geq 1$.  
Then we can use the vector field
$$
X= \frac{\htilde }{\big\vert \frac{\partial \phi}{\partial \xi} (t,z ,\xi, \htilde ) -y\big\vert^2} 
\sum_{j=1}^d \Big(\frac{\partial \phi}{\partial {\xi_j}} (t,z ,\xi, \htilde ) -y_j\Big) D_{\xi_j}
$$
to integrate by parts in $I_{N,\beta}$   to obtain
$$
\vert I_{N,\beta} \vert \leq \mathcal{F}_N(\cdots) \,{\htilde }^{\delta N}.
$$
The proof of Proposition \ref{EQK} is complete.
\end{proof}

We show now that
\begin{equation}\label{rh}
\mathcal{K}w_{0,\htilde }(0,z)=  w_{0,\htilde }(z) + r_{\htilde }(z)
\end{equation}
with
\begin{equation}\label{estrh}
\big\Vert r_{\htilde }\big\Vert_{H^{\sigma_0}(\xR^d)}
\leq \mathcal{F}_N(\cdots) \, {\htilde }^N \big\Vert w_{0,\htilde }\big\Vert_{L^1(\xR^d)} \quad \forall N \in \xN.
\end{equation}
 
It follows from the initial condition on $\phi$ given in \eqref{eikonale} and the initial condition on $b$ that \eqref{rh} is true with
$$
r_{\htilde }(z)=  (2 \pi \htilde )^{-d} \iint e^{i \htilde ^{-1}(z -y) \cdot \xi} \chi_1(\xi)(1-\Psi_0(z-y))w_{0,\htilde }(y) \, dy \, d\xi.
$$
We see easily that for $\vert \beta \vert \leq \sigma_0, D_z^\beta r_{\htilde }(z)$ is a finite linear combination of terms of the form
$$
r_{\htilde ,\beta}(z) = {\htilde }^{-d-\vert \beta_1\vert}  
\iint e^{i \htilde ^{-1}(z -y) \cdot \xi} \xi^{\beta_1}\chi_1(\xi)\Psi_{\beta}(z-y)
w_{0,\htilde }(y) \, dy \,d\xi, \quad \vert \beta_1  \vert \leq \vert \beta \vert
$$
where $\Psi_\beta \in C^\infty_b(\xR^d)$ and $\vert z-y \vert \geq 1$ on the support of $\Psi_\beta $. Then one can write
$$
r_{\htilde ,\beta}(z) = \big(F_{\htilde }\ast w_{0, \htilde }\big)(z)
$$
where 
$$
F_{\htilde }(X) = {\htilde }^{-d-\vert \beta_1\vert}  \int e^{i \htilde ^{-1}X \cdot \xi} \xi^{\beta_1}\chi_1(\xi)\Psi_{\beta}(X)\, d\xi
$$
and $\vert X \vert \geq 1$ on the support of $\Psi_\beta(X).$
Then we remark that if we set $L = \frac{1}{\vert X \vert^2} \sum_{j=1}^d X_j \frac{\partial}{\partial \xi_j}$ we have \, $\htilde L e^{i \htilde ^{-1}X \cdot \xi} =e^{i \htilde ^{-1}X \cdot \xi}$. 
Therefore one can write
$$
F_{\htilde }(X) = {\htilde }^{M-d-\vert \beta_1\vert}  
\int e^{i \htilde ^{-1}X \cdot \xi} (-L)^M\big[\xi^{\beta_1}\chi_1(\xi)\big]\Psi_{\beta}(X) \, d\xi
$$ 
from which we deduce
$$
\vert F_{\htilde }(X) \vert \leq \mathcal{F}_M(\cdots) \, {\htilde }^{M-d-\vert \beta_1 \vert} 
\frac{\vert \tilde{\Psi} (X)\vert}{\vert X \vert^M}, \quad \forall M\in \xN
$$
where $\tilde{\Psi} \in C^\infty_b(\xR^d)$ is equal to $1$ on the support of $\Psi_\beta.$

It follows then that $ \Vert F_{\htilde }\Vert_{L^2(\xR^d)} \leq \mathcal{F}_M(\cdots) \, {\htilde }^{M-d-\vert \beta_1 \vert}$ 
from which we deduce that for $\vert \beta \vert \leq \sigma_0$, 
\begin{align*}
\big\Vert D^\beta r_{\htilde }\big\Vert_{L^2(\xR^d)}
&\leq \mathcal{F}_N(\cdots) \, \big\Vert F_{\htilde }\big\Vert_{L^2(\xR^d)}
\big\Vert w_{0,\htilde }\big\Vert_{L^1(\xR^d)}\\
&\leq \mathcal{F}_N(\cdots) \, {\htilde }^{M-d-\vert \beta_1 \vert}\big\Vert w_{0,\htilde }\big\Vert_{L^1(\xR^d)}
\end{align*}
which proves \eqref{estrh}.

Using Proposition \ref{EQK} and the Duhamel formula one can write
$$
\tilde{S}(t,0)w_{0,\htilde }(z)
=\mathcal{K}w_{0,\htilde }(t,z)  - \tilde{S}(t,0) r_{\htilde }(z) - \int_0^t  \tilde{S}(t,s)[F_{\htilde }(s,z)] \,ds.
$$
Now we can write
\begin{equation*}
\begin{aligned}
 \lA \int_0^t  \tilde{S}(t,s)\big[F_{\htilde }(s,z)\big] \,ds\rA_{L^\infty(\xR^d)}
 &\leq \int_0^t  \lA \tilde{S}(t,s)\big[F_{\htilde }(s,z)\big]\rA_{H^{\sigma_0}(\xR^d)} \, ds \\
 &\leq C \int_0^t\lA  F_{\htilde }(s,z) \rA_{H^{\sigma_0}(\xR^d)} \, ds \\
 &\leq \mathcal{F}_N(\cdots) \, \htilde ^N \big\Vert w_{0,\htilde }\big\Vert_{L^1(\xR^d)}
\end{aligned}
\end{equation*}
and, for every $N \in \xN$,
\begin{align*}
\big\Vert \tilde{S}(t,0) r_{\htilde}\big\Vert_{L^\infty(\xR^d)}
&\leq C \big\Vert \tilde{S}(t,0) r_{\htilde}\big\Vert_{H^{\sigma_0}(\xR^d)}
\leq C'  \big\Vert r_{\htilde}\big\Vert_{H^{\sigma_0}(\xR^d)}\\
&\leq \mathcal{F}_N(\cdots) \,{\htilde }^N \Vert w_{0,\htilde }\big\Vert_{L^1(\xR^d)}.
\end{align*}
Then Theorem \ref{dispersive1} follows from these estimates and \eqref{estimK}. 
\end{proof}

  \section{The Strichartz estimates}

\begin{theo}\label{semicl}
Consider the problem
\begin{equation*}
\Big(  \partial_t + \mez( T_{V_\delta} \cdot \nabla + \nabla \cdot T_{V_\delta}) +i T_{\gamma_\delta} \Big)u_{h}(t,x) =f_h (t,x),  \quad
u_{h}(t_0,x) = u_{0,h}(x).
\end{equation*}
where $u_h, u_{0,h} $ and $f_h$ 
have spectrum  in  $\big\{ \xi : c_1h^{-1} \leq \vert \xi \vert \leq c_2 h^{-1}\big\}$. Let  $I_h = (0, h^{ \frac{\delta}{2}}).$
 
Then there exists  $k = k(d), h_0>0$ such that for any $s \in \xR$ and $ \eps>0$ there exists $\mathcal{F}, \mathcal{F}_\eps :\xR^+ \to \xR^+$,  such that, with $N\defn   \Vert V\Vert_{E_0} + \mathcal{N}_k(\gamma)$, 
\begin{align*}
& (i) \quad \text{if }  d=1: \\
  &  \quad \Vert u_h \Vert_{L^{4}(I_h, W^{s - \frac{3}{8} , \infty}(\xR ))} 
\leq \mathcal{F}\big(N\big) \, \Big( \Vert u_{0,h}\Vert_{H^s(\xR)} + \Vert f_h \Vert_{L^1(I_h, H^s (\xR))} \Big), \\
& (ii) \quad \text{if }  d\geq 2:\\
&   \quad \Vert u_h \Vert_{L^{2+\eps}(I_h, W^{s - \frac{d}{2} + \frac{1}{4} - \eps, \infty}(\xR^d))} 
\leq \mathcal{F}_\eps\big(N\big) \,  \Big( \Vert u_{0,h}\Vert_{H^s(\xR^d)} + \Vert f_h \Vert_{L^1(I_h, H^s (\xR^d))} \Big),    
  \end{align*}
  for any  $0<h \leq h_0$  
 \end{theo}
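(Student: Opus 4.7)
The plan is to derive the Strichartz estimates as a consequence of the semiclassical dispersive bound of Theorem \ref{dispersive} via the $TT^*$/Keel-Tao machinery, and then convert the resulting $L^p_t L^q_x$ bound into an $L^p_t W^{s-r,\infty}_x$ bound by Bernstein's inequality, exploiting the frequency localization at scale $h^{-1}$.

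First I would handle the inhomogeneous term by Duhamel's formula and Minkowski's inequality, reducing the claim to the homogeneous problem for the propagator $S(t,t_0)$ associated with the left-hand side. The starting ingredient is a uniform $L^2$-energy bound $\|S(t,t_0)u_{0,h}\|_{L^2}\le C\|u_{0,h}\|_{L^2}$ on $I_h$: since $\gamma_\delta$ is real, $T_{\gamma_\delta}^* - T_{\gamma_\delta}$ has order $0$ by Theorem \ref{theo:sc0}, while $\mez(T_{V_\delta}\cdot\nabla + \nabla\cdot T_{V_\delta})$ is anti-self-adjoint modulo the zeroth-order operator $\mez T_{\mathrm{div}\, V_\delta}$; a standard Gronwall argument on the short interval $I_h$ costs at most a factor $e^{Ch^{\delta/2}}=O(1)$.

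Combining the $L^2$ conservation with the dispersive estimate of Theorem \ref{dispersive} by complex interpolation yields, for $0<|t-\tau|\leq h^{\delta/2}$,
\begin{equation*}
\|S(t,\tau) g\|_{L^q_x} \leq C\,\bigl(h^{-3d/4} |t-\tau|^{-d/2}\bigr)^{1-2/q}\|g\|_{L^{q'}_x}.
\end{equation*}
I would then run the $TT^*$/Keel-Tao argument with $Tu_0(t)=S(t,t_0)u_0$: the operator $TT^*$ corresponds, in the non-autonomous setting, to $g\mapsto \int S(t,s) g(s)\,ds$ up to verifying $S(t,s)S(\tau,s)^* \approx S(t,\tau)$ as $L^2$ operators, which again reduces to the above energy estimate. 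Hardy-Littlewood-Sobolev applied to the time integral of $|t-s|^{-d(1-2/q)/2}$ then produces
\begin{equation*}
\|S(\cdot,t_0)u_{0,h}\|_{L^p_t(I_h; L^q_x)} \leq C\, h^{-\frac{3d}{4}(1/2-1/q)}\|u_{0,h}\|_{L^2}
\end{equation*}
for every sharp-admissible pair $(p,q)$, i.e. $\frac{2}{p}+\frac{d}{q}=\frac{d}{2}$ with $p\ge 2$ and $(p,q,d)\ne(2,\infty,2)$.

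The final step is Bernstein's inequality for the spectrally localized $S(t,t_0)u_{0,h}$: $\|v\|_{L^\infty}\lesssim h^{-d/q}\|v\|_{L^q}$ and $\|v\|_{W^{s-r,\infty}}\sim h^{-(s-r)}\|v\|_{L^\infty}$, together with $\|u_{0,h}\|_{H^s}\sim h^{-s}\|u_{0,h}\|_{L^2}$. Collecting all the powers of $h$, the overall bound becomes uniform in $h$ provided $r = \frac{d}{q} + \frac{3d}{4}\bigl(\frac12-\frac1q\bigr) = \frac{d}{2} - \frac{1}{2p}$. Specializing to $(p,q)=(4,\infty)$ in dimension $d=1$ gives $r=3/8$, and specializing to $(p,q)=(2+\eps, q(\eps))$ in $d\geq 2$ gives $r = d/2-1/4+O(\eps)$, matching statements (i) and (ii). The main obstacle I anticipate is verifying rigorously the composition identity $S(t,s)S(\tau,s)^* = S(t,\tau)$ up to an $L^2$-bounded error in the non-autonomous $TT^*$ framework; everything else is routine once the deep input of Theorem \ref{dispersive} is in hand, and the Keel-Tao endpoint $(2,\infty)$ in $d=2$ (as well as the endpoint in higher $d$) is simply avoided by taking $p=2+\eps$.
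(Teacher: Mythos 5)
Your proposal is correct and follows essentially the same route as the paper: the $TT^*$/Hardy--Littlewood--Sobolev argument applied to the dispersive bound of Theorem~\ref{dispersive} interpolated with $L^2$ boundedness, followed by Duhamel for the source term and the spectral localization at scale $h^{-1}$ (Bernstein in your version, Sobolev embedding $W^{a+b,r}\subset W^{b,\infty}$ in the paper's) to convert the $h^{-3/(2p)}$ loss into the stated $W^{s-3/8,\infty}$ and $W^{s-d/2+1/4-\eps,\infty}$ norms, and your exponent bookkeeping $r=\frac{d}{2}-\frac{1}{2p}$ matches. The non-autonomous $TT^*$ issue you flag (relating $S(t,s)S(\tau,s)^*$ to $S(t,\tau)$ when the propagator is only unitary modulo order-zero, $L^2$-bounded errors) is a genuine but standard technical point which the paper itself passes over in silence, and which is resolved exactly as you indicate via the uniform $L^2$ energy estimate on the short interval $I_h$.
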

\begin{proof}
If $d =1,$   by the $TT^*$ argument  we deduce from the dispersive estimate   given in Theorem \ref{dispersive} that 
\begin{equation*}
\Vert u_h \Vert_{L^4(I_h, L^\infty(\xR))} \leq \mathcal{F}(\cdots)\, h^{-\frac{3}{8}}
\Big(  \Vert u_{0,h}\Vert_{L^2(\xR)} + \Vert f_h \Vert_{L^1(I_h,L^2 (\xR))} \Big).
\end{equation*}
Then multiplying this estimate by $h^s$   and using the fact that $u_h,u_{0,h},f_h$ 
are spectrally supported in     $ \{ \xi : c_1h^{-1} \leq \vert \xi \vert \leq c_2 h^{-1} \}$  we deduce
$(i).$  
 
If $d\geq 2$ we use the same argument. Then   if $(q,r)\in \xR^2$ is such that $ q>2$ and 
$\frac{2}{q} = \frac{d}{2} - \frac{d}{r}$  we obtain
\begin{equation}\label{strich1}
\Vert u_h \Vert_{L^q(I_h, L^r(\xR^d))} \leq \mathcal{F}(\cdots)\, h^{-\frac{3}{2q}}
\Big(  \Vert u_{0,h}\Vert_{L^2(\xR^d)} + \Vert f_h \Vert_{L^1(I_h,L^2 (\xR^d))} \Big).
\end{equation}
Taking $q = 2+ \eps$ we find $r= 2+ \frac{8}{(2+\eps)d-4}.$  Moreover $ h^{-\frac{3}{2q}} \leq h^{-\frac{3}{4}}.$ Then multiplying both members of \eqref{strich1} by $h^s$ we obtain 
$$
\Vert u_h \Vert_{L^{2+ \eps}(I_h,W^{s - \frac{3}{4}, r}(\xR^d))} 
\leq \mathcal{F}(\cdots)\,  \Big(  \Vert u_{0,h}\Vert_{H^s(\xR^d)} + \Vert f_h \Vert_{L^1(I_h,H^s (\xR^d))} \Big).
$$
On the other hand the Sobolev embedding shows that $W^{a+b,r}(\xR^d) \subset W^{b,\infty} (\xR^d)$ 
provided that $a >  \frac{d}{r} =  \frac{d}{2} -1 + \frac{\eps}{2+ \eps}$. 
In particular we can take $a = \frac{d}{2} -1 + \eps$. Taking $ \frac{d}{2} -1+ \eps +b =s - \frac{3}{4}$
we obtain the conclusion of the Theorem.
\end{proof}

\begin{coro}\label{strich2}
With  the notations in Theorem \ref{semicl} and  $\delta = \frac{2}{3}, I=[0,T]$ we have

  $(i)  \quad   \text{if  }   d=1$: 
  \begin{equation*}
  \begin{aligned}
     \quad  \Vert  u_{h} \Vert_{L^4(I; W^{s - \frac{3}{8}-\frac{\delta}{8}, \infty}(\xR))} \leq 
     \mathcal{F}\big(N\big) \,  
     &\Big(
     \Vert  f_h\Vert_{L^4(I; H^{s-\frac{\delta}{2}}(\xR))} 
     +  \Vert u_h\Vert_{C^0(I; H^{s }(\xR))}\Big),
  \end{aligned}
  \end{equation*} 
     $(ii)  \quad     \text{if  }  d\geq 2$: 
     \begin{equation*}
  \begin{aligned}
     \quad   \Vert  u_{h} \Vert_{L^2(I; W^{s - \frac{d}{2} +  \frac{1}{4} -\frac{\delta}{4} - \eps, \infty}(\xR^d))}  
  \leq \mathcal{F}_\eps\big(N\big) \,  &\Big(\Vert  f_h\Vert_{L^2(I; H^{s-\frac{\delta}{2}}(\xR^d))} 
  +  \Vert u_h\Vert_{C^0(I; H^s(\xR^d))}\Big)
    \end{aligned}
 \end{equation*}
   for any $\eps>0.$ 
\end{coro}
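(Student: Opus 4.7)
\textbf{Proof plan for Corollary \ref{strich2}.}

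The idea is to upgrade the small-time Strichartz estimate of Theorem~\ref{semicl}, which is valid on intervals of length $h^{\delta/2}$, to a global-in-time estimate on $I=[0,T]$ by a standard ``partition-and-glue'' argument. Concretely, I split $[0,T]$ into $N_h \sim T h^{-\delta/2}$ consecutive subintervals $I_k = [t_k,t_{k+1}]$ of length comparable to $h^{\delta/2}$, so that each $I_k$ is (up to translation) of the form used in Theorem~\ref{semicl}. On each such $I_k$ I apply Theorem~\ref{semicl} to $u_h$ viewed as the solution of the Cauchy problem with initial data $u_h(t_k)$ and source $f_h$, obtaining (with $s_1=s-3/8$ and $p=4$ if $d=1$, and $s_1=s-\frac d2+\frac14-\eps$ with $p=2+\eps$ if $d\ge 2$)
\begin{equation*}
\|u_h\|_{L^p(I_k;W^{s_1,\infty})}\le \mathcal{F}(N)\Bigl(\|u_h(t_k)\|_{H^s}+\|f_h\|_{L^1(I_k;H^s)}\Bigr).
\end{equation*}

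Next, I take the $p$-th power and sum over $k$. The initial-data contribution is bounded by $N_h\|u_h\|_{C^0(I;H^s)}^p$, while the source-term contribution is controlled by H\"older's inequality on each subinterval, namely $\|f_h\|_{L^1(I_k;H^s)}^p\le |I_k|^{p-1}\|f_h\|_{L^p(I_k;H^s)}^p$, so that summing gives $|I_k|^{p-1}\|f_h\|_{L^p(I;H^s)}^p$. Taking the $p$-th root yields a loss of $N_h^{1/p}=T^{1/p}h^{-\delta/(2p)}$ on the initial-data side and a gain of $|I_k|^{1-1/p}=h^{(\delta/2)(1-1/p)}$ on the source side. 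Using the spectral localization of $u_h$ and $f_h$ in $\{|\xi|\sim h^{-1}\}$, the factor $h^{-\delta/(2p)}$ translates into a loss of $\delta/(2p)$ derivatives in Sobolev/H\"older indices, while $h^{(\delta/2)(1-1/p)}\|f_h\|_{L^p(I;H^s)}\sim h^{-\delta/(2p)}\|f_h\|_{L^p(I;H^{s-\delta/2})}$ gives the $H^{s-\delta/2}$ bound on the source. In dimension one this produces exactly $s_1-\delta/8=s-\tq/2$... i.e.\ $s-\frac38-\frac{\delta}{8}$, matching $(i)$.

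For $d\ge 2$ the target exponent is $p=2$, whereas Theorem~\ref{semicl} delivers $L^{2+\eps}$. I bridge this by H\"older on each $I_k$: $\|u_h\|_{L^2(I_k;W^{s_1,\infty})}\le |I_k|^{\eps/(2(2+\eps))}\|u_h\|_{L^{2+\eps}(I_k;W^{s_1,\infty})}$. After squaring and summing, the combined factor $|I_k|^{\eps/(2+\eps)}\cdot N_h$ produces a modified loss of $\delta/(2+\eps)$ in place of $\delta/2$, which after extracting the square root corresponds to losing $\delta/(2(2+\eps))$ derivatives; choosing $\eps$ small and absorbing all residual $\eps$-losses into the $\eps$-parameter of the statement gives the advertised $s - \frac d2+\frac14-\frac\delta4-\eps$ regularity. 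The same H\"older device in time handles the source term.

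The routine bookkeeping of powers of $h$ and the spectral-localization equivalence $\|u_h\|_{H^\sigma}\sim h^{\sigma'-\sigma}\|u_h\|_{H^{\sigma'}}$ is what occupies most of the argument; the only genuinely delicate point is the $d\ge 2$ case, where one must convert $L^{2+\eps}$ local estimates into $L^2$ global ones without losing more than the $\eps$ already present in the statement, but this is handled cleanly by the H\"older step above. I expect the remaining technicality to be purely algebraic, so no deeper new idea is needed beyond the small-time estimate of Theorem~\ref{semicl}.
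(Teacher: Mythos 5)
Your proof is correct and follows essentially the same partition-and-glue strategy as the paper. The one structural difference is at the gluing step: you apply Theorem~\ref{semicl} on each $I_k$ with initial data $u_h(t_k)$ (so that the initial-data side contributes $\sum_k\|u_h(t_k)\|_{H^s}^p\lesssim N_h\|u_h\|_{C^0(I;H^s)}^p$), whereas the paper instead multiplies $u_h$ by a time cutoff $\chi_{h,k}$ supported in $I_{h,k}$ to force the initial condition to vanish at the left endpoint, absorbing what would be the initial data into a $h^{-\delta/2}\chi'(\cdot)u_h$ source term of the same size. Both devices are standard and give identical bookkeeping; yours is slightly more direct, while the paper's avoids having to invoke the translated-in-time version of the small-time estimate explicitly (though that version is available since Theorem~\ref{dispersive} is stated with a free base time $t_0$ and the constants are time-uniform). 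Your handling of the $d\geq 2$ mismatch between the local $L^{2+\eps}$ norm and the global $L^2$ norm via H\"older on each $I_k$ is exactly what is needed, and the arithmetic works out: the combined $h$-exponent $|I_k|^{\eps/(2+\eps)}N_h = Th^{-\delta/(2+\eps)}$ produces, after the square root and after rescaling by $h^{\delta/(2(2+\eps))}$, precisely the $H^{s-\delta/2}$ source norm and a regularity loss that tends to $\delta/4$ as $\eps\to 0$, which is absorbed into the $\eps$ of the statement. No gaps.
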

\begin{proof}
Let $T>0$ and  $\chi \in C_0^\infty(0,2)$ equal to one on $[ \mez, \frac{3}{2}].$ For $0 \leq k \leq [Th^{-\frac{\delta}{2}}]-2$ define
$$
I_{h,k} = [kh^\frac{\delta}{2}, (k+2)h^\frac{\delta}{2}], \quad \chi_{h,k}(t) = \chi\Big( \frac{t-kh^\frac{\delta}{2}}{h^\frac{\delta}{2}}\Big), \quad  u_{h,k} = \chi_{h,k}(t)u_h.
$$
Then
\begin{equation*} 
\Big(  \partial_t + \mez( T_{V_\delta} \cdot \nabla + \nabla \cdot T_{V_\delta}) 
+i T_{\gamma_\delta} \Big)u_{h,k} 
= \chi_{h,k}f_h   +h^{-\frac{\delta}{2}}\chi'\Big( \frac{t-kh^\frac{\delta}{2}}{h^\frac{\delta}{2}}\Big)u_h
\end{equation*}
and $u_{h,k}(kh^\frac{\delta}{2},\cdot)=0$. 

Consider first the case $d=1$. Applying Theorem \ref{semicl}, $(i)$ to each $u_{h,k}$ on the interval $I_{h,k}$   we obtain, since $\chi_{h,k}(t) = 1$ for $(k+\mez)h^\frac{\delta}{2}\leq t \leq (k+\frac{3}{2})h^\frac{\delta}{2}$, 
\begin{equation*} 
\begin{aligned}
\Vert &u_{h } \Vert_{L^4(  (k+\mez)h^\frac{\delta}{2},(k+\frac{3}{2})h^\frac{\delta}{2});W^{s - \frac{3}{8} , \infty}(\xR))}  \\
& \leq \mathcal{F}(\cdots) \Big( \Vert   f_h \Vert_{L^1(  (kh^\frac{\delta}{2},(k+2)h^\frac{\delta}{2} );H^s(\xR))} 
+ h^{-\frac{\delta}{2}} \Vert \chi'\Big( \frac{t-kh^\frac{\delta}{2}}{h^\frac{\delta}{2}}\Big)u_h \Vert_{L^1(\xR;H^s(\xR))}\Big)\\
&\leq \mathcal{F}(\cdots)  \Big(h^{ \frac{3\delta}{8}} 
\Vert   f_h \Vert_{L^4(   (kh^\frac{\delta}{2},(k+2)h^\frac{\delta}{2} );H^s(\xR))} 
+ \Vert u_h\Vert_{L^\infty(I; H^s(\xR))}\Big).
\end{aligned}
\end{equation*}
    Multiplying both members of the above inequality by $h^\frac{\delta}{8}$ 
and taking into account that $u_h$ and $f_h$ 
are spectrally supported in a ring of size $h^{-1}$ we obtain 
\begin{equation}\label{global3}
\begin{aligned}
\Vert &u_{h } \Vert_{L^4((k+\mez)h^\frac{\delta}{2} ,(k+\frac{3}{2})h^\frac{\delta}{2}); W^{s - \frac{3}{8}-\frac{\delta}{8}, \infty}(\xR))} \\
&\leq    \mathcal{F}(\cdots)   \Big(\Vert  f_h\Vert_{L^4((kh^\frac{\delta}{2},(k+2)h^\frac{\delta}{2}  ); H^{s-\frac{\delta}{2}}(\xR))} 
+ h^\frac{\delta}{8}  \Vert u_h\Vert_{L^\infty(I; H^s(\xR))}\Big).
\end{aligned}
\end{equation}
Taking the power $4$ of \eqref{global3}, summing in $k$ from $0$ to  $[Th^{-\frac{\delta}{2}}]-2$ 
  we obtain (since there are $\approx T h^{-\frac{\delta}{2}}$ intervals)
\begin{equation*}\label{global4}
 \Vert  u_{h} \Vert_{L^4(I; W^{s - \frac{3}{8}-\frac{\delta}{8}, \infty}(\xR))}\\
  \leq   \mathcal{F}(\cdots)  \Big(
  \Vert  f_h\Vert_{L^4(I; H^{s-\frac{\delta}{2}}(\xR))} 
+  \Vert u_h\Vert_{C^0(I; H^s(\xR))}\Big).
  \end{equation*}
This completes the proof of $(i)$. 

The proof of $(ii)$ follows exactly the same path.  We apply Theorem \ref{semicl}, $(ii)$ to each $u_{h,k}$ on the interval $I_{h,k}.$ The only difference with the  case $d=1$ is that,   passing from the $L^1$ norm in $t$ of $f_h$  to the  $L^2$ norm, it  gives rise to a $h^\frac{\delta}{4}$ factor. Therefore we multiply the inequality by  $h^\frac{\delta}{4},$ we take the square of the new inequality and we sum in $k$.
\end{proof}

\begin{coro}\label{strich3}
Consider the problem
\begin{equation*}
\Big(  \partial_t + \mez( T_{V } \cdot \nabla + \nabla \cdot T_{V }) 
+i T_{\gamma } \Big)u_{h}(t,x) =F_h (t,x),  \quad
u_{h}(t_0,x) = u_{0,h}(x).
\end{equation*}
where $u_h, u_{0,h} $ and $F_h$ is spectrally supported in 
$ \{ \xi : c_1h^{-1} \leq \vert \xi \vert \leq c_2 h^{-1} \}.$  

  Then there exists  $k = k(d), h_0>0$ such that for any $s \in \xR$, for any $T>0$, $\eps>0$  one can find $\mathcal{F}, \mathcal{F}_\eps: \xR^+ \to\xR^+ $ such that with $I =[0,T]$ 
  and $N\defn \Vert V\Vert_{E_0} +  \mathcal{N}_k(\gamma)$,
  
  $(i)$ if $d=1$: 
\begin{equation*}
\begin{aligned}
  \Vert  u_{h} \Vert_{L^4(I; W^{s - \frac{3}{8}-\frac{\delta}{8}, \infty}(\xR))} 
  \leq \mathcal{F}\big(N\big) \,  
  \Big( \Vert  F_h\Vert_{L^4(I; H^{s }(\xR))}
+  \Vert u_h\Vert_{C^0(I; H^{s }(\xR))}\Big), 
\end{aligned} 
\end{equation*}
   $(ii)$ if $d\geq 2$: 
 \begin{equation*}
 \begin{aligned}
 \Vert  u_{h} \Vert_{L^2(I; W^{s - \frac{d}{2} +  \frac{1}{4} -\frac{\delta}{4} - \eps, \infty}(\xR^d))}  
  \leq \mathcal{F}\big(N\big) \,  
  \Big( \Vert  F_h\Vert_{L^2(I; H^{s }(\xR^d))}
+  \Vert u_h\Vert_{C^0(I; H^s(\xR^d))}\Big) .
\end{aligned}
   \end{equation*}
    \end{coro}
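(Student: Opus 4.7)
The strategy is to recast $u_h$ as a solution of the smoothed equation from Corollary~\ref{strich2} with a controlled error term in the forcing. Writing $L u_h = F_h$ as
$$
L_\delta u_h := \big(\partial_t + \tfrac{1}{2}(T_{V_\delta}\cdot\nabla + \nabla\cdot T_{V_\delta}) + iT_{\gamma_\delta}\big)u_h = F_h + g_h,
$$
the error $g_h = -(L-L_\delta)u_h$ splits as
$$
g_h = -T_{V-V_\delta}\cdot\nabla u_h - \tfrac{1}{2}T_{\nabla\cdot(V-V_\delta)}u_h - iT_{\gamma-\gamma_\delta}u_h.
$$
Since $H^s\hookrightarrow H^{s-\delta/2}$ and the paradifferential structure preserves (up to a harmless frequency cutoff) the spectral localization of $u_h$, Corollary~\ref{strich2} applied with $f_h=F_h+g_h$ yields
$$
\|u_h\|_{\mathrm{Strichartz}} \le \mathcal{F}(N)\big(\|F_h\|_{L^p(I;H^s)} + \|g_h\|_{L^p(I;H^{s-\delta/2})} + \|u_h\|_{C^0(I;H^s)}\big).
$$
It therefore suffices to prove $\|g_h\|_{L^p(I;H^{s-\delta/2})} \le \mathcal{F}(N)\|u_h\|_{C^0(I;H^s)}$.

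For the vector-field part, $V-V_\delta = (I-\psi(h^\delta D_x))V$ is spectrally supported at $x$-frequencies $\gtrsim h^{-\delta}$, so Bernstein-type summation over $V\in W^{1,\infty}$ gives $\|V-V_\delta\|_{L^\infty}\lesssim h^\delta\|V\|_{W^{1,\infty}}$. Combining this with the spectral localization of $u_h$ at frequency $h^{-1}$, which yields $\|\nabla u_h\|_{H^{s-\delta/2}}\lesssim h^{\delta/2-1}\|u_h\|_{H^s}$, the standard paraproduct operator norm provides
$$
\|T_{V-V_\delta}\cdot\nabla u_h\|_{H^{s-\delta/2}} \lesssim h^{3\delta/2-1}\|V\|_{W^{1,\infty}}\|u_h\|_{H^s}.
$$
The choice $\delta=2/3$ makes the exponent vanish, and the $L^p_t$ norm is then bounded by $\|V\|_{E_0}\|u_h\|_{C^0(I;H^s)}$. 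The term $T_{\nabla\cdot(V-V_\delta)}u_h$ is handled analogously and actually gains an extra factor $h^{\delta/2}$.

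For the paradifferential term $T_{\gamma-\gamma_\delta}u_h$, I invoke Lemma~\ref{estgamma}, which controls $\gamma$ uniformly in $\xi\in\mathcal{C}_0$ in the space $C^0(I;W^{s_0,\infty})$ with $s_0=\tfrac12-\sigma_d$. A similar Bernstein estimate gives $\|\gamma-\gamma_\delta\|_{L^\infty_x}\lesssim h^{\delta s_0}\|\gamma\|_{W^{s_0,\infty}}$, and since $T_{\gamma-\gamma_\delta}$ is a paradifferential operator of order $1/2$ acting on $u_h$ at frequency $h^{-1}$,
$$
\|T_{\gamma-\gamma_\delta}u_h\|_{H^{s-\delta/2}}\lesssim h^{\delta s_0+\delta/2-1/2}\|\gamma\|_{W^{s_0,\infty}}\|u_h\|_{H^s}.
$$
With $\delta=2/3$ the exponent is non-negative as soon as $s_0\ge 1/4$, a condition guaranteed by $\sigma_d\le 1/12$ (so $s_0\ge 5/12$). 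Integrating in $L^p_t$ absorbs this term into $\mathcal{F}(N)\|u_h\|_{C^0(I;H^s)}$.

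The main obstacle is the bookkeeping that ensures every constant fits into $\mathcal{F}(N)$ with $N=\|V\|_{E_0}+\mathcal{N}_k(\gamma)$. The $V$ side is straightforward since $\|V\|_{W^{1,\infty}}$ is part of $\|V\|_{E_0}$; the subtler point is that the Hölder norm of $\gamma$ in $x$ is not literally captured by the $L^\infty$-type quantity $\mathcal{N}_k(\gamma)$, but in the water-waves setting of Theorem~\ref{T4} it is controlled via Lemma~\ref{estgamma} by quantities already accounted for by $N$ (through the dependence of $\gamma=\sqrt{a\lambda}$ on $\eta$ and $a$). The precise value $\delta=2/3$ and the thresholds on $\sigma_d$ are dictated by the requirement that the balance of $h$-exponents in both estimates above be non-negative.
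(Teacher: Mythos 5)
Your proposal is correct and follows essentially the same route as the paper: apply Corollary~\ref{strich2} to $u_h$ viewed as a solution of the smoothed equation with forcing $F_h+g_h$, and reduce everything to the three difference estimates for $(T_V-T_{V_\delta})\cdot\nabla u_h$, $(T_{\nabla V}-T_{\nabla V_\delta})u_h$ and $(T_\gamma-T_{\gamma_\delta})u_h$ in $L^p(I;H^{s-\delta/2})$, with exactly the same exponent bookkeeping ($h^{\delta}\cdot h^{\delta/2-1}=1$ for $\delta=2/3$). The only (harmless) variation is that for the $\gamma$-term you use the $C^0_t(W^{s_0,\infty}_x)$ regularity of $\gamma$ from Lemma~\ref{estgamma} where the paper can equally invoke the $L^2_t(W^{1/2,\infty}_x)$ part; both suffice, and your remark that the H\"older-in-$x$ control of $\gamma$ is not literally contained in $\mathcal{N}_k(\gamma)$ is a fair observation that applies to the paper's own statement as well.
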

 \begin{proof}
Applying Corollary \ref{strich2} we see that Corollary \ref{strich3} will follow from the following estimates with $ p =  4$ if $d=1$ and $p=2$ if $d\geq 2.$
\begin{equation}\label{diffV} 
\left\{ 
\begin{aligned}
&\Vert  \big( T_V - T_{V_\delta}\big).\nabla u_h \Vert_{L^p(I; H^{s-\frac{\delta}{2}}(\xR^d))} \leq \mathcal{F}(\cdots)  \Vert u_h\Vert_{L^\infty(I; H^s(\xR^d))} \\
&\Vert  \big( T_{\nabla V} - T_{{\nabla V}_\delta}\big)  u_h \Vert_{L^p(I; H^{s-\frac{\delta}{2}}(\xR^d))} \leq \mathcal{F}(\cdots)  \Vert u_h\Vert_{L^\infty(I; H^s(\xR^d))} \\
&\Vert  \big( T_\gamma - T_{\gamma _\delta}\big) u_h \Vert_{L^p(I; H^{s-\frac{\delta}{2}}(\xR^d))} \leq \mathcal{F}(\cdots)  \Vert u_h\Vert_{L^\infty(I; H^s(\xR^d))}.  
\end{aligned}
\right.
\end{equation}
  These three estimates are proved along the same lines. Let us prove the first one.

We have 
$$
\big(T_V - T_{V_\delta}\big).\nabla u_h 
= \sum_{k=-1}^{+\infty} \big( (S_k(V) - S_{k \delta}(V))\cdot \nabla\Delta_k u_h \big).
$$
Since $u_h = \Delta_j u$ where $h=2^{-j}$ we see easily that the above sum is reduced 
to a finite number of terms where $\vert k-j \vert \leq 3.$ All the terms beeing estimated in the same way 
it is sufficient to consider for simplicity the term where $k=j.$ 
Let us set 
$$
A_j(t,x) = \big((S_j(V_i) - S_{j \delta}(V_i))\cdot \partial_i\Delta_ju_h\big)(t,x) \quad i=1,\ldots, d 
$$
where $V=(V_1,\ldots,  V_d)$.

Since the spectrum of $A_j $ is contained 
in a ball of radius $C\, 2^j$, we can write for fixed~$t$,
\begin{equation*}
\begin{aligned}
&\Vert A_j(t,\cdot) \Vert_{H^{s-\frac{\delta}{2}}(\xR^d)} \\
&\qquad\leq C\, 2^{j(s-\frac{\delta}{2})} 
\Vert  (S_j(V_i) - S_{j \delta}(V_i))\cdot \partial_i\Delta_ju_h)(t,\cdot) \Vert_{L^2(\xR^d)}\\
&\qquad \leq C\, 2^{j(s-\frac{\delta}{2})}\Vert  (S_j(V_i) - S_{j \delta}(V_i))(t,\cdot) \Vert_{L^\infty(\xR^d)} 
2^{j(1-s)}  \Vert u_h(t,\cdot) \Vert_{H^s(\xR^d)}. 
\end{aligned}
\end{equation*}
  Now we can write  
$$
(S_j(V_i) - S_{j \delta}(V_i))(t,x) 
= \int_{\xR^d} \hat{\psi}(z) \big(V_i(t,x-2^{-j }z) - V_i(t,x-2^{-j\delta} z) \big) \,dz.
$$
where $\psi \in C_0^\infty(\xR^d)$ has its support contained in a ball of radius $1$. 
It follows easily, since $0<\delta<1,$ that 
$$
\Vert (S_j(V_i) - S_{j \delta}(V_i))(t,\cdot) \Vert_{L^\infty(\xR^d)} 
\leq \mathcal{F}(\cdots)\, 2^{-j \delta} \Vert V_i(t,\cdot) \Vert_{W^{1,\infty}(\xR^d)}.
$$
Therefore we obtain with $p= 2,4$ and $I=[0,T],$
$$
\Vert A_j\Vert_{L^p(I; H^{s-\frac{\delta}{2}}(\xR^d))} \leq \mathcal{F}(\cdots)\,  2^{j(1-\frac{3\delta}{2})}  
\Vert V_i\Vert_{L^p(I; W^{1,\infty}(\xR^d))}  \Vert u_h \Vert_{L^\infty(I; H^s(\xR^d))},
$$
and the estimate \eqref{diffV} for the first term follows from the fact that $ \frac{3\delta}{2} = 1$.
\end{proof}
The theorem below has been stated in Theorem \ref{T4} but for the convenience of the reader we state it again.
\begin{theo}\label{T20}
Let $I= [0,T]$, $d\geq 1$. 
Let $\mu$ be such that 
$\mu<\frac{1}{24}$ if $d=1$ and $\mu<\frac{1}{12}$ if $d\ge 2$.

Let  $s\in\xR$ 
and $f \in L^\infty(I; H^s(\xR^d))$. 
Let $u\in C^0(I; H^s(\xR^d))$ be a solution of the problem 
$$
\Big(  \partial_t + \mez( T_{V } \cdot \nabla + \nabla \cdot T_{V }) +i T_{\gamma } \Big)u  =f.
$$
Then one can find $k = k(d)$ such that
\begin{equation*}
\begin{aligned}
\Vert  u  \Vert_{L^p(I;  \zygmund{s-\frac{d}{2}+\mu}(\xR^d))}
\leq    \mathcal{F}\big(\Vert V\Vert_{E_0} +  \mathcal{N}_k(\gamma)\big) 
\Big\{  \Vert  f\Vert_{L^p(I; H^{s }(\xR^d))} 
+ \Vert  u \Vert_{C^0(I; H^s(\xR^d))}\Big\}
\end{aligned}
\end{equation*}
where   $p=4$ if $ d=1 $ and $p=2$ if $d \geq 2.$  
\end{theo}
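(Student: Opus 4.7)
The plan is to derive the $L^p_t\zygmund{s-d/2+\gain}$ estimate of Theorem~\ref{T20} by a Littlewood--Paley decomposition, applying the frequency-localized Strichartz estimate of Corollary~\ref{strich3} to each dyadic block and summing in the Zygmund characterization. The three steps are: control of commutator error terms in the dyadic reduction, the frequency-localized Strichartz bound, and the summation in~$j$.

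\textbf{Step 1 (Dyadic decomposition).} Applying $\Delta_j$ to the equation we obtain, as in \eqref{commuta},
\begin{equation*}
\big(\partial_t + \tfrac{1}{2}(T_V\cdot\nabla + \nabla\cdot T_V) + i T_\gamma\big)\Delta_j u = F_j,
\end{equation*}
with
\begin{equation*}
F_j = \Delta_j f + \tfrac{1}{2}\big([T_V,\Delta_j]\cdot\nabla u + \nabla\cdot[T_V,\Delta_j]u\big) + i[T_\gamma,\Delta_j]u.
\end{equation*}
The $V$-commutators are estimated by Lemma~\ref{TV-S}: their $H^s$-norm is bounded by $\|V(t)\|_{W^{1,\infty}}\|u(t)\|_{H^s}$. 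For $[T_\gamma,\Delta_j]$ the symbolic calculus of Theorem~\ref{theo:sc0}, combined with Lemma~\ref{estgamma}, shows that the commutator is an operator of order~$0$ on $H^s$, uniformly in $j$, whose norm is controlled by $\mathcal{N}_k(\gamma)$ and by $\|\gamma\|_{L^\infty_t\zygmund{\mez}}$ (the $\mez$-H\"older regularity of $\gamma$ in $x$ absorbs exactly the $\mez$-order of $T_\gamma$). Taking $L^p$-norms in time yields, uniformly in $j\ge -1$,
\begin{equation*}
\|F_j\|_{L^p(I;H^s(\xR^d))} \le \|\Delta_j f\|_{L^p(I;H^s)} + \mathcal{F}\big(\|V\|_{E_0}+\mathcal{N}_k(\gamma)\big)\|u\|_{C^0(I;H^s)}.
\end{equation*}

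\textbf{Step 2 (Frequency-localized Strichartz).} For $j\ge 0$ the block $\Delta_j u$ has spectrum in a ring of size $2^j$, so Corollary~\ref{strich3} applies with $h=2^{-j}$ to $u_h=\Delta_j u$ and $F_h = F_j$. With $\delta=2/3$, set
\begin{equation*}
\sigma := \begin{cases} s - \tfrac{3}{8} - \tfrac{\delta}{8} = s - \tfrac{11}{24} & \text{if } d=1, \\[0.3ex] s - \tfrac{d}{2} + \tfrac{1}{4} - \tfrac{\delta}{4} - \eps = s - \tfrac{d}{2} + \tfrac{1}{12} - \eps & \text{if } d\ge 2, \end{cases}
\end{equation*}
with $\eps>0$ to be chosen. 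Using the Bernstein equivalence $\|\Delta_j u\|_{W^{\sigma,\infty}}\sim 2^{j\sigma}\|\Delta_j u\|_{L^\infty}$ and combining Corollary~\ref{strich3} with Step~1 gives, uniformly in $j\ge 0$,
\begin{equation*}
2^{j\sigma}\|\Delta_j u\|_{L^p(I;L^\infty)}\le A, \qquad A:=\mathcal{F}\big(\|V\|_{E_0}+\mathcal{N}_k(\gamma)\big)\big(\|f\|_{L^p(I;H^s)}+\|u\|_{C^0(I;H^s)}\big).
\end{equation*}
The low-frequency term $\Delta_{-1}u$ is handled trivially: by Bernstein, $\|\Delta_{-1}u(t)\|_{L^\infty}\lesssim\|u(t)\|_{H^s}$, so the same bound (with a possibly larger constant) extends to all $j\ge -1$.

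\textbf{Step 3 (Summation in the Zygmund norm).} Set $r:=s-d/2+\gain$. The assumption $\gain<1/24$ in $d=1$ (resp.\ $\gain<1/12$ in $d\ge 2$, with $\eps$ now fixed in $(0,1/12-\gain)$) yields $r-\sigma<0$. By the characterization $\|v\|_{\zygmund{r}}=\sup_{j\ge -1}2^{jr}\|\Delta_j v\|_{L^\infty}$ (Definition~\ref{T:Zygmund}), the pointwise bound $\sup_j a_j\le(\sum_j a_j^p)^{1/p}$ and Fubini--Tonelli,
\begin{equation*}
\|u\|_{L^p(I;\zygmund{r})}^p \le \sum_{j\ge -1}\big\|2^{jr}\Delta_j u\big\|_{L^p(I;L^\infty)}^p \le A^p\sum_{j\ge -1}2^{jp(r-\sigma)} \le C\,A^p,
\end{equation*}
since the geometric series converges. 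This is precisely the estimate of Theorem~\ref{T20}.

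\textbf{Principal obstacle.} All the substantive analysis (parametrix construction, short-time dispersive estimate, $TT^*$, and gluing over $\sim T h^{-\delta/2}$ subintervals) is already packaged in Corollary~\ref{strich3}, so the argument above amounts to a clean Littlewood--Paley repackaging. The only point requiring genuine care is the $H^s$ boundedness of $[T_\gamma,\Delta_j]$: because $\gamma$ is merely $\mez$-H\"older in $x$, the commutator gains exactly $\mez$ derivative, matching the order of $T_\gamma$, so it is of order~$0$ with norm controlled by $\mathcal{N}_k(\gamma)$ and $\|\gamma\|_{L^\infty_t\zygmund{\mez}}$. The numerology of the gain $\gain$ is forced by the loss $\delta/8=1/12$ (resp.\ $\delta/4=1/6$) incurred when gluing the short-interval Strichartz bounds, and the thresholds $1/24$ and $1/12$ are exactly the margins left.
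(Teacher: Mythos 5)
Your proof is correct and follows essentially the same route as the paper: dyadic decomposition with commutator errors controlled by Lemma~\ref{TV-S} and the symbolic calculus (using that $\gamma\in\Gamma^{1/2}_{1/2}$ so $[T_\gamma,\Delta_j]$ is of order $0$), then Corollary~\ref{strich3} on each block, then summation in $j$ using the strict margin $\gain<\frac{1}{24}$ (resp.\ $\frac{1}{12}$). The only cosmetic difference is that the paper organizes the final summation as $\sum_j 2^{-j\eps}<\infty$ after sacrificing an $\eps$ of regularity, whereas you sum via the Zygmund sup-characterization and a geometric series; these are equivalent.
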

\begin{proof}
The function  $  \Delta_j u  $ is a solution of 
$$
\Big(  \partial_t  + \mez( T_{V } \cdot \nabla + \nabla \cdot T_{V })  
+i T_{\gamma }  \Big)(\Delta_ju)  =F_j ,  \quad (\Delta_ju)\arrowvert_{t=0} = \Delta_ju_0, 
$$
where
$$
F_j = \Delta_j f +\mez [ T_{V } \cdot \nabla + \nabla \cdot T_{V }, \Delta_j]u +i[T_\gamma, \Delta_j]u.
$$
Then $F_j$ is spectrally supported in a ring 
$\big\{\xi: c_1 2^{-j} \leq \vert \xi \vert \leq c_2 2^{-j}\big\}$ 
and since
$$
V_i \in L^p(I; W^{1 , \infty}(\xR^d)),\quad \gamma \in L^2\big(I; \Gamma_{1/2}^{1/2}(\xR^d)\big)\cap  L^\infty\big(I; \Gamma_{0}^{1/2}(\xR^d)\big)
$$ 
it follows that, with constants independent of $j$ we 
have 
\begin{equation*}
\begin{aligned}
&\Vert [ T_{V } \cdot \nabla + \nabla \cdot T_{V }, \Delta_j]u\Vert_{L^p(I; H^s(\xR^d))} 
\leq C\Vert V \Vert_{L^p(I; W^{1, \infty}(\xR^d))} \Vert \widetilde \Delta_j u \Vert_{L^\infty(I; H^s(\xR^d))}\\
& \Vert [T_\gamma, \Delta_j]u \Vert_{L^p(I; H^s(\xR^d))} \leq C\Vert \gamma \Vert_{E_1} 
\Vert \widetilde \Delta_j u  \Vert_{L^\infty(I; H^s(\xR^d))} 
\end{aligned}
\end{equation*}
with $\widetilde \Delta_j u = \widetilde{\varphi}(2^{-j}D)u$  where $\widetilde{\varphi}$ 
is supported in a ring $\{\xi: 0< c'_1  \leq \vert \xi \vert \leq c'_2 \}$ and $\widetilde{\varphi} = 1$ on the support of $\varphi$.

It follows then from Corollary \ref{strich3} that, with constants independent of $j$ we have 
\begin{equation}\label{finale}
\begin{aligned}
\Vert  \Delta_j u  \Vert_{L^p(I;  W^{s -\sigma_d, \infty}(\xR^d))}
&\leq    \mathcal{F}(\cdots) \big\{\Vert \Delta_j f\Vert_{L^p(I; H^{s }(\xR^d))} 
+   \Vert \Delta_j u \Vert_{C^0(I; H^s(\xR^d))} \big\}
\end{aligned}
 \end{equation}
 where $\sigma_1 = \frac{3}{8} + \frac{\delta}{8}= \mez - \frac{1}{24}, \,  \sigma_d = \frac{d}{2} - \frac{1}{4} +\frac{\delta}{4}= \frac{d}{2}- \frac{1}{12}$ if $ d\geq 2.$
 
The right hand side of \eqref{finale} is bounded by 
\begin{equation}\label{finale1}
A :=  \mathcal{F} (\cdots) \Big\{ 
 \Vert f\Vert_{L^p(I; H^{s }(\xR^d))} 
+   \Vert  u \Vert_{C^0(I; H^s(\xR^d))} 
\Big\}.
\end{equation}

On the other hand we have
$$
\Vert  \Delta_j u  \Vert_{L^p(I;  W^{s - \sigma_d - \eps, \infty}(\xR^d))} 
\leq 2^{-j\eps} \Vert  \Delta_j u  \Vert_{L^p(I;  W^{s - \sigma_d, \infty}(\xR^d))}\leq 2^{-j\eps}A.
$$
Since $\sum_{j=-1}^\infty 2^{-j\eps}<+\infty$, summing on $j$, this completes the proof.
\end{proof}

\chapter{Cauchy problem}\label{C:5}

In this chapter we complete the proof of our main result, which is Theorem~\ref{main} stated in the introduction. 
We begin in Section~\ref{S:51} by combining the Sobolev and Strichartz estimates proved previously to obtain 
{\em a priori} estimates. Then, in Section \ref{S:52}, we obtain an estimate for the difference of two solutions. This estimate will be 
used to prove the uniqueness of solutions as well as to prove that a family of approximate solutions is a Cauchy sequence, in some larger space, and then converges strongly. In Section~\ref{S:53} 
we prove that one can pass to the limit in the equations under weak assumptions. 
In Section~\ref{S:54} we briefly recall how to complete the proof from these three technical ingredients.

\section{A priori estimates}\label{S:51} 

\subsection{Notations}
For the sake of clarity we recall here our assumptions and notations. 
We work with the Craig-Sulem-Zakharov formulation of the water-waves equations:
\begin{equation}\label{n400}
\left\{
\begin{aligned}
&\partial_{t}\eta-G(\eta)\psi=0,\\[1ex]
&\partial_{t}\psi+g \eta
+ \smash{\frac{1}{2}\la\partialx \psi\ra^2  -\frac{1}{2}
\frac{\bigl(\partialx  \eta\cdot\partialx \psi +G(\eta) \psi \bigr)^2}{1+|\partialx  \eta|^2}}
= 0.
\end{aligned}
\right.
\end{equation}

\begin{assu}\label{T:22}
We consider smooth solutions of \eqref{n400} such that
\begin{enumerate}[i)]
\item $(\eta,\psi)$ belongs to $C^1([0,T_0]; H^{s_0}(\xR^d)\times H^{s_0}(\xR^d))$ for some 
$T_0$ in $(0,1]$ and some 
$s_0$ large enough;
\item there exists $h>0$ such that \eqref{n1} holds for any $t$ in $[0,T_0]$ (this is the assumption 
that there exists a curved strip of width $h$ separating the free surface from the bottom);
\item there exists $c>0$ such that the Taylor coefficient $a(t,x)=-\partial_y P\arrowvert_{y=\eta(t,x)}$ is bounded from below by $c$ 
for any $(t,x)$ in $[0,T_0]\times \xR^d$.
\end{enumerate}
\end{assu}
We work with the horizontal and vertical traces of the velocity 
on the free boundary, namely 
$B= (\partial_y \phi)\arrowvert_{y=\eta}$ and 
$V = (\nabla_x \phi)\arrowvert_{y=\eta}$, 
which can be defined in terms of $\eta$ and $\psi$ by means of
\begin{equation}\label{n405}
B\defn \frac{\partialx \eta \cdot\partialx \psi+ G(\eta)\psi}{1+|\partialx  \eta|^2},
\qquad
V\defn \partialx \psi -B \partialx\eta.
\end{equation}
Let $s$ and $r$ be two positive real numbers such that
\begin{equation}\label{n406}
s>\frac{3}{4}+\frac{d}{2}, \quad s+\frac{1}{4}-\frac{d}{2} > r>1,\quad 
r\not\in\mez \xN. 
\end{equation}
Define, for $T$ in $(0,T_0]$, the norms
\begin{equation}\label{n410}
\begin{aligned}
M_s(T)&\defn 
\lA (\psi,\eta,B,V)\rA_{C^0([0,T];H^{s+\mez}\times H^{s+\mez}\times H^s\times H^s)},\\
Z_r(T)&\defn \lA \eta\rA_{L^p([0,T];W^{r+\mez,\infty})}
+\lA (B,V)\rA_{L^p([0,T];W^{r,\infty}\times W^{r,\infty})},
\end{aligned}
\end{equation}
where $p=4$ if $d=1$ and $p=2$ for $d\ge 2$. 

Our goal is to estimate $M_s(T)+Z_r(T)$ in terms of
\begin{equation}\label{n411}
M_{s,0}\defn \lA (\psi(0),\eta(0),B(0),V(0))\rA_{H^{s+\mez}\times H^{s+\mez}\times H^s\times H^s}.
\end{equation}

In Chapter~\ref{C:3} 
we proved that, for any $s$ and $r$ satisfying \eqref{n406}, there exists 
a continuous non-decreasing 
function~$\mathcal{F}\colon \xR^+\rightarrow\xR^+$ such that, for 
all smooth solution 
$(\eta,\psi)$ of \eqref{n400} defined on the time interval~$[0,T_0]$ and satisfying 
Assumption~\ref{T:22} on this time interval, for any~$T\in (0,T_0]$,
\begin{equation}\label{n415}
M_s(T)\le \mathcal{F}\bigl(\mathcal{F}(M_{s,0})+T\mathcal{F}\bigl(M_s(T)+Z_r(T)\bigr)\bigr).
\end{equation}
If $s>1+d/2$, then one can apply the previous inequality with $r=s-d/2$. Then $Z_r(T)\les M_s(T)$ by Sobolev embedding and one deduces from \e{n415} an estimate which involves only $M_s(T)$. Thus we recover the {\em a priori} estimate in Sobolev spaces proved in \cite{ABZ3} under the assumption that $s>1+d/2$. 
Using 
classical inequalities, this implies that 
for any $A>0$ there exist $B>0$ and $T_1>0$  such that
$$
M_{s,0}\le A \Rightarrow M_s(T_1)\le B.
$$
We shall prove that a stronger {\em a priori} estimate holds. We extend the previous estimate in 
two directions. Firstly, we prove that one can control Sobolev norms 
for some $s<1+d/2$. Secondly, we prove that one can control Strichartz norms even for rough solutions.

\begin{prop}\label{T2}
Let  $\gain$ be such that $\gain<\frac{1}{24}$ if $d=1$ and 
$\gain<\frac{1}{12}$ for $d\ge 2$. Consider two real numbers $s$ and $r$ satisfying
\begin{equation}\label{n416}
s>1+\frac{d}{2}-\gain, \quad 1<r<s+\gain-\frac{d}{2},\quad 
r\not\in\mez \xN. 
\end{equation}
For any $A>0$ there exist $B>0$ and $T_1>0$  such that, 
for all $0<T_0\leq T_1$ and all smooth solution 
$(\eta,\psi)$ of \eqref{n10} defined on the time interval~$[0,T_0]$  satisfying 
Assumption~\ref{A:2} on this time interval, then the solution satisfies the {\em a priori} bound 
$$M_{s,0}\le A \Rightarrow M_s(T_0)+Z_r(T_0)\le B.$$
\end{prop}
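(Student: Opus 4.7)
The plan is to combine three earlier ingredients: the tame Sobolev \emph{a priori} estimate \eqref{n415} of Theorem~\ref{T1b}, the paradifferential reduction of Corollary~\ref{T10}, and the Strichartz estimate of Theorem~\ref{T4}. Starting from a smooth solution satisfying Assumption~\ref{A:2}, Corollary~\ref{T10} produces a scalar paradifferential equation $\partial_t u+\mez(T_V\cdot\nabla+\nabla\cdot T_V)u+iT_\gamma u=f$ for $u=\lDx{-s}(U_s-i\theta_s)$, together with $\lA u(t)\rA_{H^s}\le\mathcal{F}(M_s(t))$ and, from the tame estimate \eqref{eq:r2} combined with H\"older in time,
\[
\lA f\rA_{L^p(I;H^s)}\le\mathcal{F}(M_s(T))\bigl(T^{1/p}+Z_r(T)\bigr).
\]

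I would then apply Theorem~\ref{T4}. The two constants on the right are controlled: $\mathcal{N}_k(\gamma)\le\mathcal{F}(M_s(T))$ because $\gamma=\sqrt{a\lambda}$ depends only on $L^\infty$ norms of $a$ and $\nabla\eta$, bounded through Proposition~\ref{prop:ma} and Sobolev embedding; while $\lA V\rA_{E_0}\le Z_r(T)$ via $\holder{r}\subset W^{1,\infty}$ for $r>1$. To turn the resulting control of $\lA u\rA_{L^p(I;\zygmund{s-d/2+\gain})}$ into a bound on $Z_r(T)$, I would use that the real part of $u$ encodes $V, B$ at the H\"older level of $u$ (through $\lDx{-s}U_s$), and the imaginary part $-\lDx{-s}T_q\lDx{s}\nabla\eta$ encodes $\eta$ one derivative better (since $T_q\nabla$ is of order~$\mez$). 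Combined with the embeddings $\zygmund{s-d/2+\gain}\subset\holder{r}$ and $\zygmund{s-d/2+\gain+\mez}\subset\holder{r+\mez}$, valid under $r<s+\gain-\frac{d}{2}$ with $r\notin\mez\xN$, and with the Dirichlet--Neumann relation $G(\eta)B=-\mathrm{div}\,V+\widetilde\gamma$ from Proposition~\ref{prop:GBV} to recover $B$ from $V$ and~$\eta$, this yields
\[
Z_r(T)\lesssim \lA u\rA_{L^p(I;\zygmund{s-d/2+\gain})}+\mathcal{F}(M_s(T)).
\]

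Setting $N(T):=M_s(T)+Z_r(T)$ and chaining the previous displays with \eqref{n415} produces a closed inequality of the schematic form $N(T)\le\mathcal{F}(M_{s,0})+T\mathcal{F}(N(T))+\mathcal{F}(N(T))(T^{1/p}+Z_r(T))$. I would finish by a continuous-induction bootstrap: given $A>0$, choose $B$ large depending on $A$, assume $N(T)\le 2B$ on some $[0,T]\subset[0,T_0]$, and deduce $N(T)\le B$ for $T\le T_1$ sufficiently small depending on $A$; continuity in time then propagates this improved bound to $[0,T_1]$.

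The main obstacle is absorbing the $Z_r(T)$-term on the right-hand side of the Strichartz estimate, which is multiplied by the non-small coefficient $\mathcal{F}(N(T))$ and therefore cannot be moved to the left by a naive algebraic manipulation. I would address this by partitioning $[0,T_0]$ into $N$ sub-intervals $J_k$ of equal length and applying Theorem~\ref{T4} on each: on a sub-interval of length $T_0/N$, the local norm $\lA V\rA_{L^p(J_k;W^{1,\infty})}$ is of order $N^{-1/p}Z_r(T_0)$, so for $N$ large (depending only on $A$) the $V$-contribution to the Strichartz coefficient is controlled purely by $M_{s,0}$. Summing the $p$-th powers of the local estimates and using the uniform $H^s$-bound on $u$ restarted at each endpoint (via the Sobolev estimate~\eqref{n415}) then produces a global control on $Z_r(T_0)$ with a constant depending only on $M_{s,0}$, closing the bootstrap and yielding Proposition~\ref{T2}.
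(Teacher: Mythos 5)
Your identification of the ingredients (the tame Sobolev estimate \eqref{n415}, Corollary~\ref{T10}, Theorem~\ref{T4}, and the recovery of $\eta,V,B$ from $u$) matches the paper, and you have correctly located the real difficulty: the Strichartz estimate returns $Z_r(T)$ on the right-hand side multiplied by a non-small constant. But your proposed resolution does not work, for two reasons. First, the claim that on a uniform subinterval $J_k$ of length $T_0/N$ one has $\lA V\rA_{L^p(J_k;W^{1,\infty})}=O(N^{-1/p})Z_r(T_0)$ is false: $V$ is only known to lie in $L^p_t(W^{1,\infty}_x)$, and an $L^p$ function of time can concentrate essentially all of its mass on a single $J_k$, so no choice of $N$ depending only on $A$ makes every local norm small. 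Second, and more fundamentally, the time-splitting only attacks the multiplicative prefactor $\mathcal{F}(\lA V\rA_{E_0}+\mathcal{N}_k(\gamma))$; it does nothing to the source term. Summing the $p$-th powers of the local estimates reconstitutes the full norm $\lA f\rA_{L^p(I;H^s)}\le\mathcal{F}(M_s(T))(T^{1/p}+Z_r(T))$, so even with a prefactor depending only on $M_{s,0}$ you end up with an inequality of the form $Z_r(T)\le\mathcal{F}(M_{s,0})\bigl(T^{1/p}+Z_r(T)\bigr)+\cdots$, in which the coefficient of $Z_r(T)$ is not less than one and the term cannot be absorbed. (Summing the $N$ copies of $\lA u\rA_{C^0(J_k;H^s)}^p$ also reintroduces a factor $N^{1/p}$ that you have not accounted for.)

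The missing idea is to exploit the slack in the exponent: Theorem~\ref{T4} controls $u$ in $\zygmund{s-\frac d2+\gain}$, which embeds into $\holder{r'}$ for any $r<r'<s+\gain-\frac d2$, so one proves the Strichartz-type bound at the strictly higher regularity $r'$, namely $Z_{r'}(T)\le\mathcal{F}\bigl(M_s(T)+Z_r(T)\bigr)$ with no smallness whatsoever (this is Proposition~\ref{T26}). The smallness is then manufactured by interpolation in the H\"older scale together with H\"older's inequality in time: since $1-\gain<r<r'$ one has
$$
\lA v\rA_{L^p([0,T];\holder{r})}\les T^{\theta/p}\lA v\rA_{C^0([0,T];\holder{1-\gain})}^{\theta}\lA v\rA_{L^p([0,T];\holder{r'})}^{1-\theta},
$$
and the low norm $C^0(\holder{1-\gain})$ is controlled by $M_s(T)$ via Sobolev embedding because $s>1-\gain+\frac d2$. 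This yields $Z_r(T)\le T^{\theta/p}\mathcal{F}\bigl(M_s(T)+Z_r(T)\bigr)$ (Proposition~\ref{T25}), and the factor $T^{\theta/p}$ is exactly what lets the continuity argument you sketched at the end go through. Without this interpolation step, or some substitute producing a power of $T$ in front of the $Z_r$-term, the bootstrap does not close.
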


\subsection{Reduction}

In this section we show that one can reduce the proof of Proposition~\ref{T2} to 
the proof of an {\em a priori}Ê
estimate for $Z_r(T)$.

To prove Proposition~\ref{T2}, the key point is 
to prove that there exists a continuous non-decreasing 
function~$\mathcal{F}\colon \xR^+\rightarrow\xR^+$ such that
\begin{equation}\label{n417}
M_s(T)+Z_r(T)\le \mathcal{F}\bigl(\mathcal{F}(M_{s,0})+T\mathcal{F}\bigl(M_s(T)+Z_r(T)\bigr)\bigr).
\end{equation}
Since $\mu<1/4$ and since 
the estimate \eqref{n415} is proved under the general assumption~\eqref{n406}, 
it remains only to prove that $Z_r(T)$ is bounded by the right-hand side of \eqref{n417}. 
\begin{prop}\label{T25}
Let~$d\ge 1$ and consider~$s,r,\mu$ satisfying \eqref{n416}. 
There exists 
a continuous non-decreasing 
function~$\mathcal{F}\colon \xR^+\rightarrow\xR^+$ such that, for all~$T_0\in (0,1]$ 
and all smooth solution 
$(\eta,\psi)$ of \eqref{n400} defined on the time interval~$[0,T_0]$ and satisfying 
Assumption~\ref{T:22} on this time interval, there holds
\begin{equation}\label{n425}
Z_r(T)\le \mathcal{F}\bigl(T\mathcal{F}\bigl(M_s(T)+Z_r(T)\bigr)\bigr),
\end{equation}
for any $T$ in $[0,T_0]$.
\end{prop}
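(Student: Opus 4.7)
The plan is to combine the symmetrization from Corollary~\ref{T10} with the scalar Strichartz estimate of Theorem~\ref{T4}, and then to transfer the resulting Hölder bound on the complex unknown back to the components $(\eta,V,B)$.

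\textbf{Step 1: the paradifferential equation and its source term.} Setting $u=\langle D_x\rangle^{-s}(U_s-i\theta_s)$ with $U_s=\langle D_x\rangle^s V+T_\zeta\langle D_x\rangle^s B$, $\theta_s=T_q\langle D_x\rangle^s\nabla\eta$ and $q=\sqrt{a/\lambda}$, Corollary~\ref{T10} gives
$$
\bigl(\partial_t+\mez(T_V\cdot\nabla+\nabla\cdot T_V)+iT_\gamma\bigr)u=f,
$$
together with the pointwise tame bound $\|f(t)\|_{H^s}\le\mathcal{F}(M_s(T))\{1+\|\eta(t)\|_{W^{r+\mez,\infty}}+\|(V,B)(t)\|_{W^{r,\infty}}\}$. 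Taking $L^p$ in time and using the definition of $Z_r(T)$ yields $\|f\|_{L^p([0,T];H^s)}\le\mathcal{F}(M_s(T))(T^{1/p}+Z_r(T))$, while \eqref{n201} gives $\|u\|_{C^0([0,T];H^s)}\le\mathcal{F}(M_s(T))$.

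\textbf{Step 2: apply Theorem~\ref{T4}.} The constants of the Strichartz estimate involve $\|V\|_{E_0}$ and $\mathcal{N}_k(\gamma)$. Since $r>1$, $\|V\|_{E_0}\le Z_r(T)$; since $s-\mez>d/2$, Sobolev embedding controls $\|\nabla\eta\|_{L^\infty}$, and the Taylor sign condition together with the Sobolev estimates of Proposition~\ref{prop:ma} control $\|a\|_{L^\infty}+\|1/a\|_{L^\infty}$, so $\mathcal{N}_k(\gamma)\le\mathcal{F}(M_s(T))$. Combining these ingredients,
$$
\|u\|_{L^p([0,T];C^{s-d/2+\mu}_*)}\le\mathcal{F}(M_s(T)+Z_r(T))\bigl(T^{1/p}+Z_r(T)+1\bigr).
$$

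\textbf{Step 3: transfer the Hölder bound back to $(\eta,V,B)$.} Since $\langle D_x\rangle^s$ maps $C^\sigma_*$ to $C^{\sigma-s}_*$, the previous bound yields $U_s-i\theta_s\in L^p C^{\mu-d/2}_*$. The imaginary part $\theta_s=T_q\langle D_x\rangle^s\nabla\eta$, combined with the parametrix $T_{1/q}$ (elliptic of order $\mez$), gives $\eta\in L^pC^{s+\mez+\mu-d/2}_*$; the strict inequality $r<s+\mu-d/2$ together with the assumption $r\notin\mez\xN$ then yields $\eta\in L^pW^{r+\mez,\infty}$. For $V$ and $B$, we reproduce Step~2 of the proof of Lemma~\ref{ecBVs} in Hölder spaces: the real part provides $U:=V+T_\zeta B\in L^pC^{s+\mu-d/2}_*$, while combining the identity $G(\eta)B=-\cnx V+\widetilde\gamma$ of Proposition~\ref{prop:GBV} with the tame paralinearization $G(\eta)=T_\lambda+R(\eta)$ of Proposition~\ref{coro:paraDN1} produces the elliptic paradifferential relation $T_{-\lambda+i\zeta\cdot\xi}B=\cnx U-\widetilde\gamma+\mathcal{R}B$, whose inversion gains one derivative and yields $B\in L^pC^{s+\mu-d/2}_*$. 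Then $V=U-T_\zeta B$ lies in the same space, and adding the three estimates gives $Z_r(T)\le\mathcal{F}(M_s(T)+Z_r(T))(T^{1/p}+1)$, which is of the required form.

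\textbf{Main obstacle.} The most delicate point is this last decoupling of $V$ and $B$: the paraproduct symbols $\zeta$ and $q$ carry only the limited Hölder regularity of $\nabla\eta$ and $a$, and the Dirichlet--Neumann remainder $R(\eta)$ must be controlled tamely in the regime $s<1+d/2$. The sharp Hölder paralinearization estimates of Chapter~\ref{C:2} (Theorem~\ref{T3} and Proposition~\ref{coro:paraDN1}), together with the $L^p_tW^{r,\infty}_x$ norms available on the right-hand side, are precisely what is needed to keep the argument linear in $Z_r(T)$.
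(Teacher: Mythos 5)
Your Steps 1--3 correctly reproduce the substance of the paper's argument: the reduction to the scalar paradifferential equation via Corollary~\ref{T10}, the application of the Strichartz estimate of Theorem~\ref{T4} with $\|V\|_{E_0}\le Z_r(T)$ and $\mathcal{N}_k(\gamma)\le\mathcal{F}(M_s(T))$, and the recovery of $\eta$, $V+T_\zeta B$, $B$ and $V$ through the elliptic relations $\theta_s=T_q\lDx{s}\nabla\eta$ and $G(\eta)B=-\cnx V+\widetilde{\gamma}$. But the bound you arrive at,
\begin{equation*}
Z_r(T)\le\mathcal{F}\bigl(M_s(T)+Z_r(T)\bigr)\bigl(T^{1/p}+1\bigr),
\end{equation*}
is \emph{not} of the form \eqref{n425}, and your closing assertion that it is constitutes a genuine gap. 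The statement requires $Z_r(T)\le\mathcal{F}(T\mathcal{F}(M_s(T)+Z_r(T)))$, whose whole point is that the right-hand side tends to the fixed constant $\mathcal{F}(0)$ as $T\to0$; this $T$-smallness is exactly what makes the continuity/bootstrap argument in the proof of Proposition~\ref{T2} close. Your right-hand side contains the term $\mathcal{F}(M_s(T)+Z_r(T))\cdot 1$, coming from $\|u\|_{C^0(I;H^s)}$ and from the $Z_r(T)$ part of $\|f\|_{L^p H^s}$, which carries no power of $T$ and is unbounded in the solution norms; no non-decreasing $\mathcal{F}$ can absorb a term of the form $\mathcal{G}(x)$ with $\mathcal{G}$ unbounded into $\mathcal{F}(T\mathcal{F}(x))$ uniformly down to $T=0$. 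What you have actually proved is the estimate \eqref{n435} of Proposition~\ref{T26} (with $r'=r$), not \eqref{n425}.

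The missing idea is the interpolation step. Because the inequality $r<s+\mu-d/2$ is strict, one can run your Steps 1--3 with $r$ replaced by some $r'\in(r,\,s+\mu-d/2)$, obtaining $Z_{r'}(T)\le\mathcal{F}(M_s(T)+Z_r(T))$ with no $T$-factor needed. One then interpolates $\lA v\rA_{W^{r,\infty}}\les\lA v\rA_{W^{1-\mu,\infty}}^{\theta}\lA v\rA_{W^{r',\infty}}^{1-\theta}$ for some $\theta\in(0,1)$ (using $1-\mu<1<r<r'$), controls the $C^0_t W^{1-\mu,\infty}$ factor by $M_s(T)$ via the Sobolev embedding $H^{s}\subset W^{1-\mu,\infty}$, and applies H\"older's inequality in time to the remaining $L^{p(1-\theta)}_t$ integral; this is where the factor $T^{\theta/p}$ appears, yielding $Z_r(T)\le T^{\theta/p}M_s(T)^{\theta}Z_{r'}(T)^{1-\theta}$ and hence \eqref{n425}.
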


Let us admit this result and deduce Proposition~\ref{T2}. 

\begin{proof}[Proof of Proposition~\ref{T2} given Proposition~\ref{T25}]
Introduce for $T$ in $[0,T_0]$, 
$f(T)=M_s(T)+Z_r(T)$. It follows from \eqref{n415}Ê
and \eqref{n425} that \eqref{n417}Ê
holds. This means that there exists a continuous non-decreasing 
function~$\mathcal{F}\colon \xR^+\rightarrow\xR^+$ such that, for all~$T\in (0,T_0]$, 
\begin{equation}\label{n430}
f(T)\le \mathcal{F}\bigl(\mathcal{F}(A)+T\mathcal{F}\bigl(f(T)\bigr)\bigr).
\end{equation}
Now fix $B$ such that $B>\max\big\{ A,\mathcal{F}\bigl(\mathcal{F}(A)\bigr)\big\}$
and then chose $T_1\in (0,T_0]$ such that
$$
\mathcal{F}\bigl(\mathcal{F}(A)+T_1\mathcal{F}(B)\bigr)<B.
$$
We claim that $f(T)<B$ for any $T$ in $[0,T_1]$. Indeed, since $f(0)=M_{s,0}\le A <B$, 
assume that there exists $T'\in (0,T_1]$ such that $f(T')>B$. Since $f$ is continuous, this implies that 
there is $T''\in (0,T_1)$ such that $f(T'')=B$. Now it follows from~\eqref{n430}, the fact that 
$\mathcal{F}$ is increasing, and the definition of $T_1$ that
$$
B=f(T'')\le  \mathcal{F}\bigl(\mathcal{F}(A)+T''
\mathcal{F}\bigl(f(T'')\bigr)\bigr)
\le \mathcal{F}\bigl(\mathcal{F}(A)+T_1
\mathcal{F}\bigl(B\bigr)\bigr)<B.
$$
Hence the contradiction which proves that $f(T)\le B$ 
for any $T$ in $[0,T_1]$. \end{proof}

It remains to prove Proposition~\ref{T25}. This will be the purpose of the end of this chapter.

We begin by using an interpolation 
inequality to reduce the proof of Proposition~\ref{T25} to the proof of the following proposition.

\begin{prop}\label{T26}
Let~$d\ge 1$ and consider~$\mu,s,r$ as in \eqref{n416}. 
Consider in addition $r'$ such that
$$
r<r'<s+\mu-\frac{d}{2},\quad r'\not\in\mez \xN
$$
and set
$$
Z_{r'}(T)\defn \lA \eta\rA_{L^p([0,T];W^{r'+\mez,\infty})}
+\lA (B,V)\rA_{L^p([0,T];W^{r',\infty}\times W^{r',\infty})}
$$
where $p=4$ if $d=1$ and $p=2$ for $d\ge 2$. 
There exists 
a continuous non-decreasing 
function~$\mathcal{F}\colon \xR^+\rightarrow\xR^+$ such that, for all~$T_0\in (0,1]$ 
and all smooth solution 
$(\eta,\psi)$ of \eqref{n400} defined on the time interval~$[0,T_0]$ and satisfying 
Assumption~\ref{T:22} on this time interval, there holds
\begin{equation}\label{n435}
Z_{r'}(T)\le \mathcal{F}\bigl(M_s(T)+Z_r(T)\bigr),
\end{equation}
for any $T$ in $[0,T_0]$.
\end{prop}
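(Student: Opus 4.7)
The plan is to apply the Strichartz estimate of Theorem~\ref{T4} to the symmetric complex unknown $u$ constructed in Corollary~\ref{T10}, and then to recover H\"older control of $\eta$, $V$, $B$ by inverting the paradifferential correspondence $u \leftrightarrow (\eta,V,B)$. Since $u$ solves~\eqref{eq:r1} with source $f$ estimated by~\eqref{eq:r2} and $\|u(t)\|_{H^s}\le \mathcal{F}(M_s(T))$ by~\eqref{n201}, while the symbol seminorms $\mathcal{N}_k(\gamma)$ are controlled by $\mathcal{F}(M_s(T))$ through Lemma~\ref{estgamma} and Proposition~\ref{prop:ma}, Theorem~\ref{T4} directly yields
\[
\|u\|_{L^p([0,T];\zygmund{s-d/2+\mu})}\le \mathcal{F}(M_s(T)+Z_r(T)).
\]

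For $\eta$, commuting $\langle D_x\rangle^{\pm s}$ through the paraproduct $T_q$ gives $\IM u = -T_q\nabla\eta + R_0\nabla\eta$, with $R_0$ of negative order and seminorms bounded by $M_s(T)$ via the symbolic calculus. Since $q=\sqrt{\ma/\lambda}$ is elliptic of order $-\mez$, the paradifferential calculus produces a left parametrix $T_{1/q}$ of order $+\mez$, and inverting yields $\nabla\eta = -T_{1/q}(\IM u) + (\text{lower-order in }M_s(T))$. The strict inequality $r' < s+\mu - d/2$ then transfers the Strichartz bound on $u$ into $\|\nabla\eta\|_{L^p \zygmund{r'-\mez}}\le \mathcal{F}(M_s(T)+Z_r(T))$, and combined with the $L^\infty$ control of $\eta$ by $M_s(T)$ this gives the required bound on $\|\eta\|_{L^p W^{r'+\mez,\infty}}$.

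For $V$ and $B$, an analogous commutator identity gives $\RE u = V + T_\zeta B + R_1 B$ with $R_1$ of order $-1$. The relation $G(\eta)B = -\cnx V + \widetilde\gamma$ of Proposition~\ref{prop:GBV}, combined with the paralinearization $G(\eta)=T_\lambda + R(\eta)$ of Proposition~\ref{coro:paraDN1}, allows us to invert the elliptic operator $T_\lambda$ of order $1$ and to write $B = T_{-i\xi/\lambda}V + \Phi$ with $\Phi$ controlled by $M_s(T)$ (using that $\widetilde\gamma \in H^{s-\mez}$). Substituting into the expression for $\RE u$ produces
\[
(I + T_\zeta\, T_{-i\xi/\lambda})\,V = \RE u + (\text{lower-order in }M_s(T)).
\]
The principal symbol of the left-hand operator is the $d\times d$ matrix $I_d - i\zeta\otimes\xi/\lambda$, whose determinant equals $1 - i\zeta\cdot\xi/\lambda$; a direct computation using $\lambda^2 = (1+|\zeta|^2)|\xi|^2 - (\zeta\cdot\xi)^2$ gives $|1-i\zeta\cdot\xi/\lambda|^2 = (1+|\zeta|^2)|\xi|^2/\lambda^2 \ge 1$, so this matrix operator is uniformly elliptic of order $0$ with ellipticity controlled by $M_s(T)$ and hence invertible modulo smoothing. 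Inversion yields $\|V\|_{L^p\zygmund{r'}}\le \mathcal{F}(M_s(T)+Z_r(T))$, and then $B=T_{-i\xi/\lambda}V+\Phi$ gives the same bound for $B$.

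The main obstacle will be to justify all three parametrix constructions (for $T_q$, for $T_\lambda$, and for $I + T_\zeta T_{-i\xi/\lambda}$) at the low Sobolev regularity $s>1+d/2-\mu$, and above all to verify that every remainder produced by the commutator arguments, the Bony decomposition, and the symbolic calculus falls into a class whose norm is bounded by $M_s(T)$ (possibly together with the already-given $Z_r(T)$), but never by $Z_{r'}(T)$ itself — otherwise the argument would be circular. This is exactly where the tame paradifferential bounds of Chapter~\ref{C:2}, most notably Theorem~\ref{T3}, Proposition~\ref{coro:paraDN1}, and the H\"older estimate~\eqref{n232}, become indispensable.
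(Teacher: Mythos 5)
Your proposal is correct and follows essentially the same scheme as the paper's proof: apply Theorem~\ref{T4} (via Corollary~\ref{T10}) to the symmetrized unknown~$u$, then invert the paradifferential relations linking~$u$ to~$(\eta,V,B)$, taking care that every remainder is closed by $M_s(T)$ (or the already-given $Z_r(T)$) and never by $Z_{r'}(T)$. Steps for $\eta$ and the initial commutator identity $\RE u = V + T_\zeta B + R_1 B$ match the paper exactly. The one genuine divergence is in the final recovery of $V$ and~$B$. The paper applies $\cnx$ to the already-estimated quantity $V + T_\zeta B$, uses the identity $G(\eta)B = -\cnx V + \widetilde{\gamma}$ to produce a single \emph{scalar} paradifferential equation $T_{-\lambda + i\zeta\cdot\xi}\,B = \cnx(V + T_\zeta B) - r$ for~$B$, and inverts it by the elementary scalar parametrix observation of Lemma~\ref{theo:scZ}; $V$ is then read off from $V = (V+T_\zeta B) - T_\zeta B$. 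You instead solve the $G(\eta)$-identity for $B$ in terms of $V$, substitute into $\RE u$, and obtain a $d\times d$ matrix paradifferential system $(I + T_\zeta\,T_{-i\xi/\lambda})V = \RE u + \cdots$, whose ellipticity you verify directly from the Sherman--Morrison determinant $1 - i\zeta\cdot\xi/\lambda$ and the identity $\lambda^2 + (\zeta\cdot\xi)^2 = (1+|\zeta|^2)|\xi|^2$. Both routes close; the paper's scalar scheme avoids the matrix bookkeeping and is slightly more efficient, while your calculation of the matrix ellipticity is correct and could be preferred if one wanted the operator $V\mapsto\RE u$ inverted explicitly. One small correction: the commutator $R_1 = \bigl[\lDx{-s},T_\zeta\bigr]\lDx{s}$ is of order $-\rho$ where $\rho$ is roughly the H\"older exponent of $\zeta$ (at most about $s-\mez-\tfrac d2$, i.e.\ close to $\tfrac14$), not order~$-1$. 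This does not affect the argument, but in closing the estimate for $R_1 B$ and for the analogous commutator remainders in the matrix parametrix you must choose $\rho$ strictly larger than $\mu$ so that the Sobolev embedding $H^{s+\rho}\subset W^{r',\infty}$ applies with $r' < s + \mu - \tfrac d2 < s + \rho - \tfrac d2$ — exactly the same margin the paper uses.
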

We prove in this paragraph that Proposition~\ref{T26} implies 
Proposition~\ref{T25}. Proposition~\ref{T26} will be proved in the next 
paragraph.

\begin{proof}[Proof of Proposition~\ref{T25} given Proposition~\ref{T26}] 
Consider a function $v=v(t,x)$. By interpolation, since $1-\mu<1<r<r'$, 
there exists a real number $\theta\in (0,1)$ such that
$$
\lA v(t,\cdot)\rA_{W^{r,\infty}}\les \lA v(t,\cdot)\rA_{W^{1-\mu,\infty}}^\theta
\lA v(t,\cdot)\rA_{W^{r',\infty}}^{1-\theta}.
$$
This implies that
$$
\int_0^T \lA v(t,\cdot)\rA_{W^{r,\infty}}^p\, dt
\les \lA v\rA_{C^0([0,T];W^{1-\mu,\infty})}^{p\theta}
\int_0^T \lA v(t,\cdot)\rA_{W^{r',\infty}}^{p(1-\theta)}\, dt.
$$
The H\"older inequality then implies that
$$
\lA v\rA_{L^p([0,T];W^{r,\infty})}
\les T^{\frac{\theta}{p}}
\lA v\rA_{C^0([0,T];W^{1-\mu,\infty})}^{\theta}
\lA v\rA_{L^p([0,T];W^{r',\infty})}^{1-\theta}.
$$
By the same way, there holds
$$
\lA v\rA_{L^p([0,T];W^{r+\mez,\infty})}
\les T^{\frac{\theta'}{p}}
\lA v\rA_{C^0([0,T];W^{1-\mu+\mez,\infty})}^{\theta'}
\lA v\rA_{L^p([0,T];W^{r'+\mez,\infty})}^{1-\theta'}.
$$
Since $s>(1-\mu)+d/2$, the Sobolev embedding implies that
$$
\lA v\rA_{C^0([0,T];W^{1-\mu,\infty})}\les \lA v\rA_{C^0([0,T];H^{s})},
\quad \lA v\rA_{C^0([0,T];W^{1-\mu+\mez,\infty})}\les \lA v\rA_{C^0([0,T];H^{s+\mez})}.
$$
This proves that
$$
Z_{r}(T)\le T^{\frac{\theta}{p}} M_s(T)^\theta (Z_{r'}(T))^{1-\theta}
$$
for some $\theta>0$. This in turn proves that \eqref{n435} implies \eqref{n425}.
\end{proof} 

\subsection{Proof of Proposition~\ref{T26}}

Recall that the positive real number 
$\mu$ has been chosen such that $\mu<1/24$ if $d=1$ and $\mu<1/12$ 
for $d\ge 2$, and $s,r,r'$ are such that
\begin{equation*}
s>1+\frac{d}{2}-\mu, \quad 1<r<r'<s+\mu-\frac{d}{2},\quad 
r\not\in\mez \xN. 
\end{equation*}
Let $T>0$ and set $I=[0,T]$.

The proof of Proposition~\ref{T26} is based on Corollary~\ref{T10} and Theorem~\ref{T20}. 
By combining these two results we shall deduce in the first step of the proof that 
\begin{equation}\label{n440}
\lA u\rA_{L^p(I;W^{r',\infty})}\le \mathcal{F}\bigl(M_s(T)+Z_r(T)\bigr)
\end{equation}
where $u$ is defined in terms of $(\eta,V,B)$ by (see \eqref{eq:r0})
\begin{equation}\label{n440.5}
\begin{aligned}
&u=\lDx{-s}(U_s-i \theta_s),\\[0.5ex]
&U_s \defn \lDx{s} V+T_\zeta \lDx{s}\B \qquad (\zeta=\partialx\eta),\\[0.5ex]
&\theta_s\defn T_{\sqrt{\ma/\lambda}}\lDx{s}\nabla \eta.
\end{aligned}
\end{equation}

In the next steps of the proof we show 
how to recover estimates for the original unknowns 
$(\eta,V,\B)$ in $L^p([0,T];W^{r'+\mez}\times W^{r',\infty}\times W^{r',\infty})$. 
 
\paragraph{Step 1: proof of \eqref{n440}.}

It follows from Theorem~\ref{T20} that 
\begin{equation*}
\Vert  u  \Vert_{L^p(I;W^{r', \infty}(\xR^d))}
\leq    \mathcal{F}\big(\Vert V\Vert_{E_0} +  \mathcal{N}_k(\gamma)\big) 
\Big\{\Vert  f\Vert_{L^p(I; H^{s }(\xR^d))} 
+ \Vert  u \Vert_{C^0(I; H^s(\xR^d))}\Big\}.
\end{equation*}
Clearly we have
$$
\Vert V\Vert_{E_0}\le Z_1(T)\le Z_r(T),\quad \mathcal{N}_k(\gamma)
\les M_s(T).
$$
Moreover, \eqref{eq:r2} and \eqref{n201} imply that 
$$
\Vert  u \Vert_{C^0(I; H^s(\xR^d))}\le \mathcal{F}(M_s(T)),\quad 
\Vert  f\Vert_{L^p(I; H^{s }(\xR^d))} \le \mathcal{F}\bigl(M_s(T)+Z_r(T)\bigr).
$$
By combining the previous estimates we deduce the desired estimate \eqref{n440}.

\paragraph{Step 2: estimate for $\eta$.} 

Separating real and imaginary parts, directly from the definition \eqref{n440.5} 
of $u$, we get
$$
\big\lVert \lDx{-s}T_{\sqrt{a/\lambda}}\lDx{s}\partialx\eta \big\rVert_{W^{r',\infty}}
\le \lA u\rA_{W^{r',\infty}}.
$$

We shall make repeated uses of the following elementary result.
\begin{lemm}\label{theo:scZ}
Consider $m\in \xR$ and $\rho$ in $[0,1]$. 
Let $(r,r_1,r_2)\in [0,+\infty)^3$ be such that $r\le \min (r_1+\rho,r_2+m)$, $r\not\in\xN$. 
Consider the equation $T_\tau v=f$ where $\tau=\tau(x,\xi)$ is a symbol such that 
$\tau$ (resp.\ $1/\tau$) belongs to $\Gamma^m_\rho$ (resp.\ $\Gamma^{-m}_\rho$). 
Then
$$
\lA v\rA_{W^{r,\infty}}\le K \lA v\rA_{W^{r_1,\infty}}+K \lA f\rA_{W^{r_2,\infty}}
$$
for some constant $K$ depending only on 
$\mathcal{M}^m_\rho(\tau)+\mathcal{M}^{-m}_\rho(1/\tau)$.
\end{lemm}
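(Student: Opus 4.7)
The plan is to invert $T_\tau$ up to a smoothing error and then apply the continuity of paradifferential operators in Hölder--Zygmund spaces. More precisely, I would start from the identity
\begin{equation*}
v = T_{1/\tau} T_\tau v + \bigl(I - T_{1/\tau} T_\tau\bigr) v = T_{1/\tau} f + R v,
\qquad R \defn I - T_{1/\tau} T_\tau,
\end{equation*}
which is legitimate since $1/\tau \in \Gamma^{-m}_\rho$ by assumption. The symbolic calculus (the Zygmund counterpart of the composition rule used e.g.\ to derive \eqref{eq.3.96bis}) then shows that $R$ is a paradifferential operator of order $-\rho$, and the norm of its symbol is controlled by a constant depending on $\mathcal{M}^m_\rho(\tau) + \mathcal{M}^{-m}_\rho(1/\tau)$.

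Next I would estimate the two terms of the right-hand side in $W^{r,\infty}$. For the first term I use that $T_{1/\tau}$, being of order $-m$, maps $W^{r_2,\infty}$ into $W^{r_2+m,\infty}$ (the Zygmund analogue of \eqref{esti:quant1}); combined with the embedding $W^{r_2+m,\infty} \hookrightarrow W^{r,\infty}$ that holds under the hypothesis $r \le r_2 + m$ with $r \notin \xN$, this gives $\|T_{1/\tau}f\|_{W^{r,\infty}} \le K\|f\|_{W^{r_2,\infty}}$. For the second term, $R$ being of order $-\rho$ maps $W^{r_1,\infty}$ into $W^{r_1+\rho,\infty}$, and we again embed into $W^{r,\infty}$ using $r \le r_1 + \rho$, yielding $\|Rv\|_{W^{r,\infty}} \le K\|v\|_{W^{r_1,\infty}}$. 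Adding the two contributions produces the desired inequality.

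The technical core of the argument is the operator-norm bound for paradifferential operators on $W^{\sigma,\infty}$ (or equivalently on $C^\sigma_*$), together with the symbolic-calculus fact that $T_a T_b - T_{ab}$ is smoothing of order $\rho$ when $a$ and $b$ have regularity $\rho$ in $x$. These are the standard rules recalled in Appendix~\ref{sec:2} of the paper and applied in similar contexts throughout Chapter~\ref{C:2} (e.g.\ around \eqref{eq.3.96bis}). The only mild subtlety is the restriction $r \notin \xN$, which is what makes $W^{r,\infty} = C^r_*$ and lets us apply the Zygmund continuity statements directly without passing through logarithmic losses at integer indices; this is why the assumption is built into the statement. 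No further obstacle is anticipated, as all the heavy analysis has already been developed earlier in the text.
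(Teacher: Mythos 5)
Your proof is correct and is essentially the paper's own argument: the paper likewise writes $v=\bigl(I-T_{1/\tau}T_\tau\bigr)v+T_{1/\tau}f$ and invokes \eqref{esti:quant1} and \eqref{esti:quant2} (noting $I-T_{1/\tau}T_\tau=(T_1-T_{(1/\tau)\tau})+(I-T_1)$, of order $-\rho$) together with the obvious embeddings coming from $r\le\min(r_1+\rho,\,r_2+m)$. No gaps.
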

\begin{proof}
Write
$$
v=\big(I -T_{1/\tau}T_\tau\big)v+T_{1/\tau}f
$$
and use \eqref{esti:quant1} (resp.\ \e{esti:quant2}) 
to estimate the first (resp.\ second) term.
\end{proof}
Now write
$$
\lDx{-s}T_{\sqrt{a/\lambda}}\lDx{s}\partialx=
T_{\sqrt{a/\lambda}}\partialx+R
$$
where $R=\big[ \lDx{-s},T_{\sqrt{a/\lambda}}\lDx{s}\partialx\bigr]$. 
Since $\sqrt{a/\lambda}$ is a symbol of order $-1/2$ in $\xi$, it follows from \e{esti:quant2} 
that, for any $\rho\in (0,1)$, 
$$
\lA R\eta \rA_{W^{r',\infty}}\le K
\mathcal{M}^{-\mez}_\rho\Big(\sqrt{\frac{a}{\lambda}}\Big)\lA \eta\rA_{W^{r'+\mez-\rho,\infty}}
$$
and hence
$$
\lA R\eta \rA_{W^{r',\infty}}\le \mathcal{F}\bigl(\lA\partialx\eta\rA_{W^{\rho,\infty}},
\lA a\rA_{W^{\rho,\infty}}\big)
\lA \eta\rA_{W^{r'+\mez-\rho,\infty}}.
$$
Now by assumption on $s$ and $r'$ we can chose $\rho$ (say $\rho=1/4$) so that
$$
\rho<s-\mez-\frac{d}{2},\quad r'+\mez-\rho<s+\mez-\frac{d}{2}
$$
and hence 
$$
\lA R\eta \rA_{W^{r',\infty}}\le\mathcal{F}\bigl(\lA\eta\rA_{H^{s+\mez}},
\lA a-g\rA_{H^{s-\mez}}\big).
$$
Recalling (see \e{esti:a1}) that 
$\lA a-g\rA_{H^{s-\mez}}\le \mathcal{F}(M_s)$ for any $s>3/4+d/2$, we obtain 
$\lA R\eta \rA_{W^{r',\infty}}\le \mathcal{F}(M_s)$. 

We thus deduce that
$$
\lA T_{\sqrt{a/\lambda}}\partialx\eta\rA_{W^{r',\infty}}\le 
\lA u\rA_{W^{r',\infty}}+\mathcal{F}(M_s).
$$
Now, Lemma~\ref{theo:scZ}, applied with 
$m=-1/2$ and $\rho=1/4$, yields an estimate for the $W^{r'-\mez,\infty}$-norm of $\nabla\eta$ 
which implies that
$$
\lA \eta\rA_{W^{r'+\mez,\infty}}\le 
K\lA u\rA_{W^{r',\infty}}+K\lA \eta\rA_{W^{r'+\mez-\rho,\infty}}+K \mathcal{F}(M_s)
$$
for some constant $K$ depending only on $\mathcal{M}^{-\mez}_\rho\Big(\sqrt{\frac{a}{\lambda}}\Big)$. As already seen, $K\le \mathcal{F}(M_s)$ and 
$\lA \eta\rA_{W^{r'+\mez-\rho,\infty}}\le M_s$ (using the Sobolev embedding). 
We conclude that
$$
\lA \eta\rA_{W^{r'+\mez,\infty}}\le 
\mathcal{F}(M_s)\lA u\rA_{W^{r',\infty}}+\mathcal{F}(M_s).
$$
Therefore \eqref{n440}Ê
implies that
\begin{equation}\label{n445}
\lA \eta\rA_{L^p(I;W^{r'+\mez,\infty})}\le \mathcal{F}\bigl(M_s(T)+Z_r(T)\bigr).
\end{equation}

\paragraph{Step 3: estimate for $V+T_{\zeta}B$.} 

We proceed as above: starting from \eqref{n440} one deduces an estimate for 
the $W^{r',\infty}$-norm of $\lDx{-s}\bigl(\lDx{s}V+T_{\zeta}\lDx{s}B\bigr)$. One 
rewrite this term as $V+T_\zeta B$ plus a commutator which is estimated by 
means of \eqref{esti:quant2} 
and the Sobolev embedding. 
It is find that
$$
\lA V+T_\zeta B\rA_{W^{r',\infty}}\le \mathcal{F}(M_s)\lA u\rA_{W^{r',\infty}}+\mathcal{F}(M_s)
$$
so that \eqref{n440}Ê
implies that
\begin{equation}\label{n447}
\lA V+T_\zeta B\rA_{L^p(I;W^{r',\infty})}\le \mathcal{F}\bigl(M_s(T)+Z_r(T)\bigr).
\end{equation}

\paragraph{Step 4: estimate for $V$ and $B$.} We shall estimate the 
$L^p(I;W^{r',\infty})$-norm of $B$. The estimate for 
the $L^p(I;W^{r',\infty})$-norm of $V$ will follow from 
$V=(V+T_\zeta B)-T_\zeta B$ since the first term 
$V+T_\zeta B$ is already estimated (see \eqref{n447}) and since, 
for the second term, 
one can use the rule \eqref{esti:quant1} to obtain 
$\lA T_\zeta B\rA_{W^{r',\infty}}\les 
\lA \zeta\rA_{L^\infty}\lA B\rA_{W^{r',\infty}}\les \lA \eta\rA_{H^{s+\mez}}\lA B\rA_{W^{r',\infty}}$.

To estimate the $W^{r',\infty}$-norm of $B$, as above, we use the identity 
$G(\eta)B=-\cnx V+\widetilde{\gamma}$ where (see \e{n340a}) 
$$
\lA \widetilde{\gamma}\rA_{W^{r'-1,\infty}}
\le \lA \widetilde{\gamma}\rA_{H^{s-\mez}}\le \mathcal{F}
\big(\lA (\eta,V,B)\rA_{H^{s+\mez}\times H^\mez\times H^\mez}\big).
$$
In order to use this identity, write
\begin{equation}\label{n450}
\begin{aligned}
\cnx \big( V+T_\zeta B\big)&=\cnx V+T_{\cnx \zeta}B+T_{\zeta}\cdot\partialx B\\
&=-G(\eta)B+T_{\cnx \zeta}B+T_{\zeta}\cdot\partialx B
+\widetilde{\gamma}\\
&=T_{-\lambda+i\zeta\cdot\xi}B+r
\end{aligned}
\end{equation}
where
$$
r=T_{\cnx \zeta} B+\widetilde{\gamma}+(T_\lambda-G(\eta))B.
$$
The first term in the right-hand side is estimated by means of 
\begin{alignat*}{2}
\lA T_{\cnx \zeta} B\rA_{W^{r'-1,\infty}}
&\les \lA T_{\cnx \zeta} B\rA_{H^{s-1+\mu}} \qquad &&\text{since }
r'<s+\mu-\frac{d}{2}\\
&\les \lA \cnx \zeta\rA_{C^{\mu-1}_*}\lA B\rA_{H^s} && \text{(see \e{niS})}\\
&\les \lA \eta\rA_{W^{1+\mu,\infty}}\lA B\rA_{H^s}&&\text{since } \cnx \zeta=\Delta\eta \\
&\les \lA \eta\rA_{H^{s+\mez}}\lA B\rA_{H^s}&&\text{since }1+\mu<1+\frac{1}{4}
<s+\mez-\frac{d}{2}\cdot
\end{alignat*}
The key point is to estimate $(T_\lambda-G(\eta))B$. 
To do so we use Proposition~\ref{coro:paraDN1} 
with $(\mu,\sigma,\eps)$ replaced by $(s+\mez,s,\uq)$ and 
the Sobolev embedding $H^{s-\tq}(\xR^d)\subset W^{r'-1,\infty}(\xR^d)$. This implies that
$$
\lA (T_\lambda-G(\eta))B\rA_{W^{r'-1,\infty}}
\les \lA (T_\lambda-G(\eta))B\rA_{H^{s-\tq}}
\le \mathcal{F}
\big(\lA \eta\rA_{H^{s+\mez}}\big)\lA B\rA_{H^s}.
$$
We end up with
$$
\lA r\rA_{W^{r'-1,\infty}}\le 
\mathcal{F}
\big(\lA (\eta,V,B)\rA_{H^{s+\mez}\times H^s\times H^s}\big).
$$
Writing (see \eqref{n450}) 
$$
T_{-\lambda+i\zeta\cdot\xi}B=\cnx \big( V+T_\zeta B\big)-r,
$$ 
it follows from \eqref{n447} and Lemma~\ref{theo:scZ} 
that 
$$
\lA B\rA_{L^p(I;W^{r',\infty})}\le \mathcal{F}\bigl(M_s(T)+Z_r(T)\bigr).
$$
This completes the proof of Proposition~\ref{T26} and hence 
the proof of Proposition~\ref{T2}.

\section{Contraction estimates}\label{S:52}

Our goal in this section is to prove the following estimate for the difference of two solutions. 

\begin{theo}\label{th.lipschitz}
Let  $\gain$ be such that $\gain<\frac{1}{24}$ if $d=1$ and 
$\gain<\frac{1}{12}$ for $d\ge 2$. Consider two real numbers $s$ and $r$ satisfying
\begin{equation*}
s>1+\frac{d}{2}-\gain, \quad 1<r<s+\gain-\frac{d}{2},\quad 
r\not\in\mez \xN. 
\end{equation*}
Let~$(\eta_j,\psi_j)$,~$j=1,2$, be two solutions such that 
\begin{align*}
&(\eta_j,\psi_j,V_j,B_j)\in 
C^0([0,T];H^{s+\mez}\times H^{s+\mez}\times H^s\times H^s),\\
&(\eta_j,V_j,B_j)\in L^p([0,T];\holdertdm\times \holder{r}\times \holder{r}),
\end{align*}
for some fixed~$T>0$,~$d\ge 1$ with $p=4$ if $d=1$ and $p=2$ otherwise. We also assume 
that the condition \eqref{hypt} holds for  
$0\leq t\le T$ and that there exists $c>0$ 
such that for all~$0\le t\le T$ and  $x\in {\mathbf{R}}^d$, we have 
$\ma_j(t,x)\ge c$ for~$j=1,2$.
Set
\begin{align*}
M_j&\defn \sup_{t\in [0,T]} \lA (\eta_j,\psi_j,V_j,B_j)(t)\rA_{H^{s+\mez}\times H^{s+\mez} \times H^s\times H^s},\\
&\quad + \lA (\eta_j,V_j,B_j)\rA_{L^p([0,T];\holdertdm\times \holder{r}\times \holder{r})}.
\end{align*}
Set
$$
\deta\defn \eta_1-\eta_2, \quad \psi \defn \psi_1-\psi_2,\quad \dV\defn V_1-V_2, \quad \dB \defn\B_1-\B_2,
$$
and
\begin{align*}
N(T) &\defn  \sup_{t\in [0,T]} \lA (\eta,\psi,V,B)(t)\rA_{H^{s-\mez}\times H^{s-\mez} \times H^{s-1}\times H^{s-1}}\\ 
&\quad+ \lA (\eta,V,B)\rA_{L^p([0,T];\holder{r-\mez}\times \holder{r-1}\times \holder{r-1})}.
\end{align*}
Then we have 
\be\label{eq.lipschitz}
N(T)\leq \mathcal{K}   \|(\eta, \psi, V, B) 
\mid_{t=0}\| _{H^{s-\mez}\times H^{s-\mez}\times H^{s-1}\times H^{s-1}},
\ee
where $\mathcal{K}  $ is a positive constant depending only on $T, M_1,M_2,r,  s, d, c.$  
\end{theo}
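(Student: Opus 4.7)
The plan is to mimic the strategy that gave the single-solution {\em a priori} bound of Proposition~\ref{T2}, but applied to the linearized system satisfied by the differences $(\eta,\psi,V,\dB)$, at one half derivative lower in Sobolev and one derivative lower in H\"older regularity. The starting point is to derive paradifferential equations for the differences. Subtracting the Craig--Sulem--Zakharov systems \eqref{n400} for $(\eta_j,\psi_j)$ and writing
$$
G(\eta_1)\psi_1-G(\eta_2)\psi_2=G(\eta_1)(\psi_1-\psi_2)+\bigl(G(\eta_1)-G(\eta_2)\bigr)\psi_2,
$$
and treating the remaining quadratic terms similarly, one obtains a linear paradifferential system for $(\eta,\psi,\dV,\dB)$ whose structure is the same as that satisfied by $(\eta_j,\psi_j,V_j,B_j)$ but with coefficients built from $(\eta_1,V_1,\ma_1,\lambda_1)$, plus source terms proportional to $\eta_1-\eta_2$, $V_1-V_2$, etc. This is the direct analogue of Proposition~\ref{prop:csystem2}: I would introduce the lower-order versions
$$
\dU_{s-\mez}=\lDx{s-\mez}\dV+T_{\zeta_1}\lDx{s-\mez}\dB,\qquad \dzeta_{s-\mez}=T_{\sqrt{\ma_1/\lambda_1}}\lDx{s-\mez}\nabla\eta,
$$
and perform the symmetrization exactly as in Proposition~\ref{psym}, reading off a scalar complex unknown $\du$ (of the type \eqref{eq:r0}, with $\nabla\eta$ replacing $\zeta$) which solves
$$
\partial_t\du+\mez\bigl(T_{V_1}\cdot\partialx+\partialx\cdot T_{V_1}\bigr)\du+iT_{\gamma_1}\du=\df,
$$
the source $\df$ collecting all lower-order paralinearization remainders together with the genuinely new terms coming from the differences of coefficients.

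The key technical step is to prove a \emph{tame} bound for $\df$ of the schematic form
$$
\|\df(t)\|_{H^{s-1}}\leq\mathcal{F}\bigl(M_1+M_2\bigr)\bigl\{1+\|(\eta_1,V_1,B_1)(t)\|_{\holdertdm\times\holder{r}\times\holder{r}}+\|(\eta_2,V_2,B_2)(t)\|_{\holdertdm\times\holder{r}\times\holder{r}}\bigr\}N(t),
$$
so that on the right only Sobolev and H\"older norms that are already controlled by the hypothesis appear, together with one copy of $N(t)$ at the lower regularity. The main inputs are: (i) the tame paralinearization estimate for the Dirichlet--Neumann operator from Theorem~\ref{T3} applied to the difference $G(\eta_1)\psi-[G(\eta_1)-G(\eta_2)]\psi_2$, with the ``shape derivative'' identity
$$
\bigl(G(\eta_1)-G(\eta_2)\bigr)\psi_2=-G(\eta_1)\bigl(B_2\,\eta\bigr)-\cnx(V_2\,\eta)+\text{tame remainder},
$$
which splits the difference into a piece of order $1/2$ acting on $\eta$ and a tame remainder; (ii) paraproduct estimates \eqref{niS}--\eqref{Bony3} together with \eqref{esti:quant1}--\eqref{esti:quant2} to handle the terms $T_{V_1-V_2}\cdot\nabla(\cdot)$, $T_{\ma_1-\ma_2}(\cdot)$, $T_{\gamma_1-\gamma_2}(\cdot)$; and (iii) the H\"older estimate on $\ma_1-\ma_2$ that is proved by revisiting the proof of Proposition~\ref{prop:ma} applied to the difference of the two elliptic Dirichlet problems for the pressure.

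Once this tame bound is in hand, I would close the argument exactly as for a single solution. The $L^\infty_t H^{s-1}$ energy estimate of Lemma~\ref{estimationdeUs} applied to $\du$ gives
$$
\|\du\|_{L^\infty([0,T];L^2)}\leq \mathcal{F}(M_1,M_2)\Bigl(\|\du|_{t=0}\|_{L^2}+\int_0^T\|\df(\sigma)\|_{L^2}\,d\sigma\Bigr),
$$
and the Strichartz estimate of Theorem~\ref{T20} applied to $\du$ yields the $L^p_tW^{r-1,\infty}$ control of $\du$, which is then converted back into $L^p_tW^{r-1,\infty}$ control of $\dV,\dB$ and $L^p_t\holder{r-\mez}$ control of $\eta$ exactly as in Steps~2--4 of the proof of Proposition~\ref{T26} (all commutators with $\lDx{s-\mez}$, $T_{\sqrt{\ma_1/\lambda_1}}$, $T_{\zeta_1}$ are of order strictly below the top one and are absorbed using the $M_1,M_2$ bounds). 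The hardest part, and the place where the whole chapter's machinery is really used, is the tame estimate on $\df$: every time a difference of nonlinear quantities ($\ma_1-\ma_2$, $\lambda_1-\lambda_2$, $G(\eta_1)-G(\eta_2)$) appears, one must spend at most one Sobolev derivative or one H\"older derivative of the difference $(\eta,\psi,V,\dB)$, while the remaining high regularity is extracted from $M_1+M_2$, so that the resulting linear inequality $N(T)\leq K_0 N(0)+TK(M_1,M_2)N(T)$ can be closed by Gronwall on a uniform time interval, giving \eqref{eq.lipschitz}.
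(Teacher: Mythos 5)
Your overall architecture --- paralinearize the system satisfied by the differences, symmetrize it into a scalar equation of the form \eqref{eqpara}, run the energy and Strichartz estimates of Theorem~\ref{T20} at one derivative lower regularity, and close by a Gronwall/iteration argument --- is the same as the paper's. However, at the two places where you simply assert ``tameness,'' the actual difficulty of the theorem is concentrated, and one of them is a genuine gap.

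The gap is in your claimed source estimate. The source term in the equation for $\zeta=\nabla\eta_1-\nabla\eta_2$ must be controlled in $H^{s-\tdm}$, and for $s<1+\frac{d}{2}$ (in particular for $d=1$, where $s-\tdm$ may be negative) the remainder bound \e{Bony3}, $\lA R(a,u)\rA_{H^{\alpha+\beta}}\les \lA a\rA_{\zygmund{\alpha}}\lA u\rA_{H^{\beta}}$, is unavailable because it requires the target index $\alpha+\beta=s-\tdm$ to be positive. This is exactly the obstruction isolated in Remark~\ref{T522}: treating all difference terms uniformly by \eqref{niS}--\eqref{Bony3}, as you propose, fails at this step. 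The fix is structural, not generic: one replaces $V_2\cdot\nabla$ by $T_{V_2}\cdot\nabla$ and $\zeta_1 G(\eta_1)B$ by $T_{\zeta_1}G(\eta_1)B$ in the $\zeta$-equation so that the leftover paraproduct remainders combine into divergences $\cnx\bigl(R(\zeta,V_2)\bigr)+\cnx\bigl(R(\zeta_1,V)\bigr)$; these are then estimated by putting $R(\cdot,\cdot)$ in $H^{s-\mez}$ (where $s-\mez>0$ and \e{Bony3} applies) and only afterwards paying the outer derivative. Without this factoring of a derivative, the system for the differences cannot be closed at the stated regularity.

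The second weak point is the Lipschitz estimate for the Dirichlet--Neumann operator. You invoke a shape-derivative identity $(G(\eta_1)-G(\eta_2))\psi_2=-G(\eta_1)(B_2\,\eta)-\cnx(V_2\,\eta)+{}$``tame remainder,'' but at this regularity proving that the remainder is tame is essentially the whole content of the contraction property. The paper instead proves \eqref{contr:DN} directly (Proposition~\ref{L:p60}) by a variational argument, which in turn requires a nontrivial modification of the flattening diffeomorphism (see \eqref{diffeobis}) so that \emph{both} fluid domains are mapped onto the \emph{same} reference domain; otherwise the two variational solutions cannot be compared. This step cannot be bypassed by quoting Theorem~\ref{T3} for a single $\eta$.
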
 
\begin{rema}\label{T522}
To prove this theorem, we shall follow closely the analysis in \cite{ABZ3}.
 However, there are quite difficulties which appear for $s<1+d/2$, in particular for $d=1$ and with a general bottom. For instance, one has to estimate the $H^{s-\tdm}$-norm of various products of the form $uv$ with $u\in H^{s-1}$ and $v\in H^{s-\tdm}$. For $s<1+d/2$, the product is no longer 
bounded from $H^{s-1}\times H^{s-\tdm}$ to $H^{s-\tdm}$ and, clearly, one has to further assume some control in H\"older or Zygmund norms. Namely, we assume that $u\in H^{s-1}\cap L^\infty$ 
and $v\in H^{s-\tdm}\cap \zygmund{-\mez}$. 
Then, paralinearizing the product
$uv=T_u v+T_vu+R(u,v)$
and using the usual estimate for paraproducts (see \e{esti:quant1} and \e{niS}), one obtains
$$
\lA T_u v\rA_{H^{s-\tdm}}\les \lA u\rA_{L^\infty}\lA v\rA_{H^{s-\tdm}},\quad 
\lA T_v u\rA_{H^{s-\tdm}}\les \lA v\rA_{\zygmund{-\mez}}\lA u\rA_{H^{s-1}}
$$
so the only difficulty is to estimate the $H^{s-\tdm}$-norm of the remainder $R(u,v)$. However, the 
estimate \e{Bony3} requires that $s-\tdm>0$, which does not hold in general under the assumption 
\e{n416}. To circumvent this problem, each times that we shall need to estimate 
the $H^{s-\tdm}$-norm of such remainders, we shall prove that one can factor out some derivative exploiting the structure of the water waves equations. This means that one can replace $R(u,v)$ by $\partial_x R(\tilde{u},\tilde{v})$ for some functions, say, 
$\tilde{u}\in L^\infty$ 
and $\tilde{v}\in H^{s-\mez}$. Now one can estimate 
the $H^{s-\mez}$-norm of $R(\tilde{u},\tilde{v})$ by means of \e{Bony3} which immediately implies the desired estimate for the $H^{s-\tdm}$-norm of 
$\partial_x R(\tilde{u},\tilde{v})$.
\end{rema}

\subsection{Contraction for the Dirichlet-Neumann}
A key step in the proof of Theorem~\ref{th.lipschitz} is to prove a Lipschitz 
property for the Dirichlet-Neumann operator. 
 
\begin{prop}\label{L:p60}Assume $d\ge 1, \quad s>\tq+\frac{d}{2}, \quad s+\frac{1}{4}-\frac{d}{2}>r>1.$ Then there exists a non decreasing function $\mathcal{F}: \xR^+ \to \xR^+$ such that
\begin{equation}\label{contr:DN}
\begin{aligned}
&\lA G(\eta_1)f-G(\eta_2)f\rA_{H^{s-\tdm}} \\
&\quad\leq \mathcal{F} ( \Vert  (\eta_1,  \eta_2 ) 
\Vert_{H^{s+\mez} }\big)\Big\{
\lA \eta_1-\eta_2\rA_{\holder{r-\mez}}   \lA f\rA_{H^s}\\
&\quad \phantom{\leq \mathcal{F} ( \Vert  (\eta_1,  \eta_2 ) 
\Vert_{H^{s+\mez} }\big)\Big\{ } 
+\lA \eta_1-\eta_2\rA_{H^{s-\mez}}\big( \lA f\rA_{H^s}+\lA f\rA_{\holder{r}}\big)\Big\}.
\end{aligned} 
\end{equation}
\end{prop}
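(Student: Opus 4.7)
The plan is to start from the classical shape-derivative identity
\begin{equation*}
G(\eta_1)f - G(\eta_2)f = -\int_0^1 \Big\{G(\eta_\tau)\bigl(B_\tau\,\dot\eta\bigr) + \cnx\bigl(V_\tau\,\dot\eta\bigr)\Big\}\, d\tau,
\end{equation*}
where $\eta_\tau = \eta_2 + \tau(\eta_1-\eta_2)$, $\dot\eta = \eta_1-\eta_2$, and $B_\tau, V_\tau$ are the traces defined by \eqref{defi:BV} with $(\eta,\psi)$ replaced by $(\eta_\tau,f)$. Applying Proposition~\ref{T:DN-Hs} with $\sigma = s - \mez$ (which lies in $[\mez, s+\mez]$ since $s > \tq + d/2 \geq 1$) gives $\lA G(\eta_\tau)(B_\tau\dot\eta)\rA_{H^{s-\tdm}} \leq \mathcal{F}(\lA\eta_\tau\rA_{H^{s+\mez}})\lA B_\tau\dot\eta\rA_{H^{s-\mez}}$, and the divergence term is trivially bounded by $\lA V_\tau\dot\eta\rA_{H^{s-\mez}}$. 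The whole question thus reduces to the tame bilinear estimate
\begin{equation*}
\lA B_\tau\dot\eta\rA_{H^{s-\mez}} + \lA V_\tau\dot\eta\rA_{H^{s-\mez}} \leq \mathcal{F}\bigl(\lA \eta_\tau\rA_{H^{s+\mez}}\bigr)\Big\{\lA\dot\eta\rA_{H^{s-\mez}}\bigl(\lA f\rA_{H^s}+\lA f\rA_{\holder{r}}\bigr) + \lA\dot\eta\rA_{\holder{r-\mez}}\lA f\rA_{H^s}\Big\}.
\end{equation*}

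I would derive this bound using Bony's paraproduct decomposition $uv = T_u v + T_v u + R(u,v)$ applied with $u = B_\tau$, $v = \dot\eta$ (and similarly for $V_\tau$). The two paraproducts are treated through the operator-norm estimate \eqref{esti:quant1}: on the one hand $\lA T_{B_\tau}\dot\eta\rA_{H^{s-\mez}} \lesssim \lA B_\tau\rA_{L^\infty}\lA\dot\eta\rA_{H^{s-\mez}}$, where $\lA B_\tau\rA_{L^\infty}$ is controlled by the new tame H\"older estimate for the harmonic extension furnished by Proposition~\ref{p1H}, namely $\lA B_\tau\rA_{L^\infty}+\lA V_\tau\rA_{L^\infty}\leq \mathcal{F}(\lA\eta_\tau\rA_{H^{s+\mez}})(\lA f\rA_{H^s}+\lA f\rA_{\holder{r}})$; on the other hand $\lA T_{\dot\eta}B_\tau\rA_{H^{s-\mez}} \lesssim \lA\dot\eta\rA_{L^\infty}\lA B_\tau\rA_{H^{s-\mez}}$, with $\lA B_\tau\rA_{H^{s-\mez}} \leq \mathcal{F}(\lA\eta_\tau\rA_{H^{s+\mez}})\lA f\rA_{H^s}$ coming from the Sobolev elliptic estimate \eqref{oubli}, and $\lA\dot\eta\rA_{L^\infty} \lesssim \lA\dot\eta\rA_{\holder{r-\mez}}$ since $r>1$.

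The delicate point is the Bony remainder $R(B_\tau,\dot\eta)$, because below the Sobolev-embedding threshold $s = 1 + d/2$ pointwise products are not bounded on $H^{s-\mez}$. Here I would invoke \eqref{Bony3}: with both factors in $H^{s-\mez}$ and $s-\mez > 0$, one obtains $R : H^{s-\mez}\times H^{s-\mez} \to H^{2s-1-d/2}$, and the gain $2s-1-d/2 \geq s-\mez$ is equivalent to $s \geq \mez + d/2$, which is secured by the standing assumption $s > \tq + d/2$. Sobolev embedding then gives $\lA R(B_\tau,\dot\eta)\rA_{H^{s-\mez}} \lesssim \lA B_\tau\rA_{H^{s-\mez}}\lA\dot\eta\rA_{H^{s-\mez}}$, which is absorbed into the Sobolev half of the right-hand side. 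Combining all three contributions and proceeding identically for $V_\tau\dot\eta$ yields the desired bilinear bound; integrating over $\tau$ and using $\lA\eta_\tau\rA_{H^{s+\mez}} \leq \lA\eta_1\rA_{H^{s+\mez}} + \lA\eta_2\rA_{H^{s+\mez}}$ concludes the proof of \eqref{contr:DN}. The main obstacle is precisely this remainder estimate: the margin $s > \tq + d/2$ in the hypotheses is exactly what is needed to close it, which explains why the threshold in the statement matches that of Theorem~\ref{T3}.
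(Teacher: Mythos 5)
Your route is genuinely different from the paper's: you start from the shape-derivative identity and reduce everything to a tame bilinear estimate, whereas the paper never uses that identity. Instead it builds a common flattened domain for $\eta_1$ and $\eta_2$ via modified diffeomorphisms \eqref{diffeobis}, writes $G(\eta_j)f=U_j\arrowvert_{z=0}$, and compares the two variational solutions directly, obtaining the low-norm control $\lA \nabla_{x,z}\widetilde{\phi}\rA_{X^{-\mez}}$ from the variational formulation and then upgrading it by elliptic regularity (Proposition~\ref{p1}) and the trace lemma. Unfortunately your reduction has a genuine gap at its central step. You claim $\lA B_\tau\rA_{H^{s-\mez}}\le \mathcal{F}(\lA\eta_\tau\rA_{H^{s+\mez}})\lA f\rA_{H^s}$ ``from \eqref{oubli}'', but \eqref{oubli} gives $\nabla_{x,z}v\in C^0_z([-1,0];H^{s-1})$, so its trace at $z=0$ --- and hence $B_\tau$ and $V_\tau$, cf.\ Proposition~\ref{T:DN-Hs} with $\sigma=s$ --- lies only in $H^{s-1}$, not $H^{s-\mez}$. (In the notation of the main theorem, $f\in H^s$ here plays the role of $\psi\in H^{s+\mez}$, which is exactly why $B,V$ sit one full derivative below $f$.) Consequently the terms $T_{\dot\eta}B_\tau$ and $R(B_\tau,\dot\eta)$ cannot be placed in $H^{s-\mez}$: a paraproduct $T_{\dot\eta}$ with $\dot\eta\in C^{r-\mez}_*$ gains nothing on $B_\tau$, and the remainder estimate \eqref{Bony} applied to $H^{s-1}\times H^{s-\mez}$ lands in $H^{2s-\tdm-\frac{d}{2}}$, which is contained in $H^{s-\mez}$ only when $s\ge 1+\frac{d}{2}$ --- precisely the regime the proposition is designed to go below. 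So the bilinear bound $\lA B_\tau\dot\eta\rA_{H^{s-\mez}}\le \mathrm{RHS}$ does not close, and the same obstruction hits $\cnx(V_\tau\dot\eta)$.

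This half-derivative deficit is not a bookkeeping slip but the real difficulty: the individual terms $G(\eta_\tau)(B_\tau\dot\eta)$ and $\cnx(V_\tau\dot\eta)$ are each too rough, and only their combination (the Alinhac good-unknown cancellation) has the regularity asserted in \eqref{contr:DN}. To rescue your approach you would either have to exhibit and exploit that cancellation explicitly, or accept a target norm $H^{s-2}$ on the left-hand side, which is weaker than the statement. A secondary issue is that the integral shape-derivative identity itself is taken for granted; at this regularity and with the general (merely variational) bottoms allowed here, its validity in $H^{s-\tdm}$ would itself require an argument, which is one reason the paper avoids it entirely.
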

 In the proof of Proposition \ref{L:p60} we shall use the following classical lemma.
 \begin{lemm}\label{lions}
Let $I = (-1,0)$ and $\sigma \in \xR$. Let   $u \in L_z^2(I, H^{\sigma+ \mez}(\xR^d))$ such that 
$\partial_z u \in L_z^2(I, H^{\sigma-\mez}(\xR^d)).$ Then $u \in C^0([-1,0], H^{\sigma}(\xR^d))$ 
and there exists an absolute constant $C>0$ such that
$$
\sup_{z \in [-1,0]} \| u(z, \cdot) \|_{ H^{\sigma}({\mathbf{R}}^d)} 
\leq C \bigl(\|u\|_{L^2(I,H^{\sigma+ \mez}(\xR^d))}+  \|\partial_z u\|_{L^2(I,H^{\sigma- \mez}(\xR^d))} \bigr).
$$
\end{lemm}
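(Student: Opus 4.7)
The plan is to follow the classical Lions--Magenes trace argument. First I would reduce to the case $\sigma = 0$ by observing that the operator $\lDx{\sigma}$ is an isometry from $H^{\sigma+\alpha}({\mathbf{R}}^d)$ onto $H^{\alpha}({\mathbf{R}}^d)$ for every $\alpha$; hence setting $v = \lDx{\sigma}u$, it suffices to establish that
\begin{equation*}
\sup_{z\in [-1,0]} \lA v(z,\cdot)\rA_{L^2}\le C\bigl(\lA v\rA_{L^2(I;H^{\mez})}+\lA \partial_z v\rA_{L^2(I;H^{-\mez})}\bigr)
\end{equation*}
and that $v\in C^0([-1,0];L^2({\mathbf{R}}^d))$.

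Next, I would establish the inequality under the temporary assumption that $v$ is smooth in $(z,x)$ and decays rapidly at infinity in $x$. For such $v$, the fundamental theorem of calculus gives, for every $z,z_0\in I$,
\begin{equation*}
\lA v(z)\rA_{L^2}^2 = \lA v(z_0)\rA_{L^2}^2 + 2\RE \int_{z_0}^{z} \langle v(z'),\partial_z v(z')\rangle_{L^2({\mathbf{R}}^d)}\,dz'.
\end{equation*}
The key point is that the $L^2({\mathbf{R}}^d)$ pairing extends to the duality between $H^{\mez}$ and $H^{-\mez}$, so that $|\langle v(z'),\partial_z v(z')\rangle_{L^2}|\le \lA v(z')\rA_{H^{\mez}}\lA \partial_z v(z')\rA_{H^{-\mez}}$. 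Applying Cauchy--Schwarz in the $z'$ variable yields
\begin{equation*}
\lA v(z)\rA_{L^2}^2 \le \lA v(z_0)\rA_{L^2}^2 + 2\lA v\rA_{L^2(I;H^{\mez})}\lA \partial_z v\rA_{L^2(I;H^{-\mez})}.
\end{equation*}
Since $|I|=1$, integrating this inequality with respect to $z_0\in I$ and using $\lA v\rA_{L^2(I;L^2)}\le \lA v\rA_{L^2(I;H^{\mez})}$ gives the desired bound for smooth $v$.

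To remove the smoothness assumption and obtain continuity in $z$, I would argue by density. Let $W$ denote the Banach space of functions $v\in L^2(I;H^{\mez}({\mathbf{R}}^d))$ with $\partial_z v\in L^2(I;H^{-\mez}({\mathbf{R}}^d))$, equipped with the natural norm. Smooth compactly supported functions in $\bar I\times{\mathbf{R}}^d$ (say, images of $C^\infty_c(\xR\times{\mathbf{R}}^d)$ after restriction) are dense in $W$: one first convolves with a mollifier in $x$, which preserves membership in $W$, then extends across the endpoints $z=-1,0$ by reflection and convolves with a mollifier in $z$. For any $v\in W$ one thus picks a sequence $v_n$ of smooth functions converging to $v$ in $W$. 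The estimate established above, applied to the differences $v_n-v_m$, shows that $(v_n)$ is Cauchy in $C^0([-1,0];L^2({\mathbf{R}}^d))$, hence converges uniformly to a continuous representative of $v$; passing to the limit in the smooth estimate completes the proof.

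The main technical obstacle is the density statement, because the interval $I=(-1,0)$ is bounded and one must ensure that the extension across the endpoints does not destroy the regularity of $\partial_z v$ in $H^{-\mez}$. This is handled by an even reflection argument around $z=-1$ and $z=0$, which keeps $v$ in $L^2_{\mathrm{loc}}(H^{\mez})$ and $\partial_z v$ in $L^2_{\mathrm{loc}}(H^{-\mez})$ on a slightly larger interval; standard mollification then produces the approximating sequence. Once this is available, all remaining steps are routine applications of duality and Cauchy--Schwarz.
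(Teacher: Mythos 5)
Your proof is correct, and it is the standard Lions--Magenes trace argument; the paper itself states this lemma as ``classical'' and gives no proof, so there is nothing in the text to compare against beyond noting that your route is the expected one. The reduction to $\sigma=0$ via $\langle D_x\rangle^{\sigma}$, the identity $\lA v(z)\rA_{L^2}^2=\lA v(z_0)\rA_{L^2}^2+2\RE\int_{z_0}^{z}\langle v,\partial_z v\rangle\,dz'$ with the $H^{1/2}$--$H^{-1/2}$ duality bound, and the averaging over $z_0$ on an interval of length $1$ give the stated inequality with an absolute constant (in fact $C=1$ works, since the right-hand side is dominated by $(\lA v\rA_{L^2(I;H^{1/2})}+\lA\partial_z v\rA_{L^2(I;H^{-1/2})})^2$). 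The only delicate point is the density step, which you correctly identify: one must check that the even reflection of $v$ across an endpoint is again weakly differentiable in $z$ with values in $H^{-1/2}$ and with no singular contribution at the endpoint; this is standard and can also be bypassed by the translation-and-dilation trick (shrinking the $z$-variable so that $v$ is defined on a neighbourhood of $[-1,0]$ before mollifying), after which applying the smooth estimate to $v_n-v_m$ yields the Cauchy property in $C^0([-1,0];L^2)$ and hence the continuous representative. No gaps.
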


\begin{proof}[Proof of Proposition \ref{L:p60}]
In the sequel we shall denote by $\text{RHS}$    the right hand side of \eqref{contr:DN} where $\mathcal{F}$ may vary from line to line. 

We want to apply changes of variable as in ~\eqref{diffeo}. However, here, we have an additional constraint. Indeed, after this change of variables, we want to get the same domain for $\eta_1$ and $\eta_2$ to be able to compare the variational solutions. For this purpose, we need to modify slightly the change of variables in \eqref{diffeo}. 

To prove the theorem, we can assume without loss of generality that 
$\|\eta_1 - \eta_2\|_{L^\infty}$ is small enough. 
Then there exists  $\widetilde {\eta} \in C^\infty_b$ such that for $j=1;2$,
$$ \eta_j - \frac {3h} 4 \leq \widetilde{\eta} \leq \eta_j - \frac{2h} 3.$$
Let
\begin{equation}\label{lesomegabis}
\left\{
\begin{aligned}
\Omega_{1,j} &= \{(x,y): x \in {\mathbf{R}}^d, \eta_j(x)- \frac{h}{2}<y<\eta_j(x)\},\\
\Omega_{2,j} &= \{(x,y)\in \mathcal{O}:  \widetilde{\eta}(x) < y\leq\eta_j(x)-\frac{h}{2}\},\\
\Omega_3 &= \{(x,y)\in \mathcal{O}: y< \widetilde{\eta}(x)\}\\
\Omega_j &= \Omega_{1,j} \cup \Omega_{2,j} \cup \Omega_3
\end{aligned}
\right.
 \end{equation}
and 
\begin{equation}\label{omega1bis}
\left\{
\begin{aligned}
&\widetilde{\Omega}_1= \mathbf{R}^d_x\times [-1,0)_z\\
&\widetilde{\Omega}_2= \mathbf{R}^d_x\times [-2,-1)_z\\
&\widetilde{\Omega}_3 = \{(x,z)\in \xR^d \times (-\infty, -2): (x,z+2+\widetilde \eta(x))\in \Omega_3\},\\
&\widetilde{\Omega}= \widetilde{\Omega}_1  \cup \widetilde{\Omega}_2\cup \widetilde{\Omega}_3.  
\end{aligned}
\right.
\end{equation}
We define  Lipschitz diffeomorphisms from $\widetilde{\Omega}$ to $\Omega_j$ 
of the form 
$(x,z) \mapsto (x, \rho_j(x,z))$ where 
the map $(x,z) \mapsto  \rho_j(x,z)$ from $\widetilde{\Omega}$ to $\xR$ is defined as follows
\begin{equation}\label{diffeobis}
\left\{
\begin{aligned}
&\rho_j(x,z)=  (1+z)e^{\delta z\langle D_x \rangle }\eta_j(x) -z e^{-(1+ z)\delta\langle D_x \rangle }\big(\eta_j(x) -\frac{h}{2}\big)\quad \text{if } (x,z)\in \widetilde{\Omega}_1,\\
&\rho_j(x,z)=  (2+z)e^{\delta z\langle D_x \rangle }\big(\eta_j(x)-\frac{h}{2} \big)  -(1+z)\tilde{\eta} \quad \text{if } (x,z)\in \widetilde{\Omega}_2,\\
&\rho_j(x,z) = z+2+\widetilde{\eta}(x)\quad \text{if } (x,z)\in \widetilde{\Omega}_3 
\end{aligned}
\right.
\end{equation}
for some small enough positive constant $\delta$. Notice that, since for $z\in I=(-2,0)$ we kept essentially for $\rho_j$ the same expression as in~\eqref{diffeo}, we get the same estimates as in Lemma~\ref{rho:diffeo}.

 Recall that according to \eqref{DN:forme} we have for $j=1,2$
$$
G(\eta_j)f = U_j\arrowvert_{z=0}, \quad  U_j =  \frac{1 +|\nabla_x \rho_j |^2}{\partial_z \rho_j} \partial_z \widetilde{\phi}_j  - \nabla_x \rho_j\cdot\nabla_x \widetilde{\phi}_j,
$$
where $\widetilde{\phi}_j$ is the variational solution of the problem
$$
\widetilde{P}_j \widetilde{\phi}_j = 0, \quad \widetilde{\phi}_j  \arrowvert_{z=0} = f.
$$
 
Set $U = U_1 - U_2.$ According to Lemma \ref{lions} with $\sigma = s-\frac{3}{2}$, 
the theorem will be a consequence of the following estimate
\begin{equation}\label{est:Uj}
\Vert U \Vert_{L^2(I, H^{s-1})} 
+  \Vert \partial_z U \Vert_{L^2(I, H^{s-2})} \leq  \text{RHS}, \quad I =(-1,0).   
\end{equation}
Set $\widetilde{\phi} =\widetilde{\phi}_1 - \widetilde{\phi}_2$. 
We claim that \eqref{est:Uj} is a consequence of the following estimate
\begin{equation}\label{est:phi}
\Vert \nabla_{x,z} \widetilde{\phi} \Vert_{L^2(I, H^{s-1})} \leq \text{RHS}.
\end{equation}
Indeed assume that \eqref{est:phi}  is proved. One term in $U$ can be written as
$$
\nabla_x \rho_1 \cdot \nabla_x \widetilde{\phi}_1 
- \nabla_x \rho_2 \cdot \nabla_x \widetilde{\phi}_2 
= \nabla_x \rho_1 \cdot \nabla_x \widetilde{\phi}  
+ \nabla_x(  \rho_1 - \rho_2)\cdot \nabla_x \widetilde{\phi}_2.
$$
Now, since $s> \frac{3}{4} + \frac{d}{2},$ we can apply \e{pr} 
with $s_0 = s-1, s_1 = s- \mez, s_2 = s-1.$ It follows that 
\begin{align*}
\Vert \nabla_x \rho_1 \cdot \nabla_x \widetilde{\phi} \Vert_{L^2(I, H^{s-1})} &\les \Vert \nabla_x \rho_1   \Vert_{L^\infty(I, H^{s-\mez})} \Vert  \nabla_x \widetilde{\phi} \Vert_{L^2(I, H^{s-1})}\\
& \les \Vert \eta_1 \Vert_{H^{s+ \mez}} \text{RHS}.
\end{align*}
For the second term, since $s> \frac{3}{4} + \frac{d}{2} >1$,  we can write
\begin{align*}
\Vert \nabla_x(  \rho_1 - \rho_2)\cdot \nabla_x \widetilde{\phi}_2\Vert_{L^2(I, H^{s-1})}
\les &\Vert  \nabla_x(  \rho_1 - \rho_2) \Vert_{L^2(I, H^{s-1})}\Vert   \nabla_x \widetilde{\phi}_2\Vert_{L^\infty(I, L^\infty)} \\
&+  \Vert \nabla_x(  \rho_1 - \rho_2) \Vert_{L^2(I,L^\infty)} 
\Vert   \nabla_x \widetilde{\phi}_2\Vert_{L^\infty(I, H^{s-1})}.
\end{align*}
Now we have (as in \e{eq.rho1} and \e{regofrho3})
\begin{align*}
&\Vert \nabla_x(  \rho_1 - \rho_2)\Vert_{L^2(I, H^{s-1})}\les 
\lA \eta_1-\eta_2\rA_{H^{s-\mez}},\\
& \Vert \nabla_x(  \rho_1 - \rho_2) \Vert_{L^2(I,L^\infty)}\les \lA \eta_1-\eta_2\rA_{W^{r-\mez,\infty}}.
\end{align*}
Also it follows from \e{oubli} that 
$$
\lA \nabla_x\widetilde{\phi}_2\rA_{L^\infty(I,H^{s-1})}\le \mathcal{F}\big(\lA \eta_2\rA_{H^{s+\mez}}\big)
\lA f\rA_{H^s}
$$
and it follows from Proposition \ref{p1H} that
$$
\Vert   \nabla_x \widetilde{\phi}_2\Vert_{L^\infty(I, L^\infty)}\le 
\mathcal{F}\big(\lA \eta_2\rA_{H^{s+\mez}}\big)\big\{ \lA f\rA_{H^s}+\lA f\rA_{\holder{r}}\big\}.
$$
Therefore
\begin{equation}\label{estimation1}
\Vert \nabla_x(  \rho_1 - \rho_2)\cdot \nabla_x \widetilde{\phi}_2\Vert_{L^2(I, H^{s-1})} 
\leq \text{RHS}.
\end{equation}
We have thus completed the estimate of the first term in $U$. For the second term, one checks that, 
similarly, the $L^2(I,H^{s-1})$-norm of 
$$
\frac{1+|\nabla_x\rho_1|^2}{\partial_z\rho_1}\partial_z \widetilde{\phi}_1-
\frac{1+|\nabla_x\rho_2|^2}{\partial_z\rho_2}\partial_z \widetilde{\phi}_2
$$
is bounded by $\text{RHS}$. This completes the proof of the fact 
$\Vert U \Vert_{L^2(I, H^{s-1})} \leq  \text{RHS}$ provided that \e{est:phi} is granted.  
  
Let us now prove that, similarly, \e{est:phi} implies that 
$\Vert \partial_z U \Vert_{L^2(I, H^{s-2})} \leq  \text{RHS}$.  
We begin by claiming that we have, for $j=1,2$,
\begin{equation}\label{dzU}
\partial_z  {U}_j =   \text{div} (\nabla_x \rho_j  \partial_z  \widetilde{\phi}_j ) - \text{div} ( \partial_z  \rho_j\nabla_x \widetilde{\phi}_j) .
\end{equation}
This follows from the fact (see   \eqref{P:div}) that we can write 
\begin{equation*}
0 = \widetilde{P}_j \widetilde{\phi}_j= -\text{div} (\nabla_x \rho_j  \partial_z  \widetilde{\phi}_j ) + \text{div} ( \partial_z  \rho_j\nabla_x \widetilde{\phi}_j)  + \partial_z U_j.
\end{equation*}
Therefore we have
\begin{align*}
\Vert \partial_z U \Vert_{L^2(I, H^{s-2})} &\leq  \Vert \nabla_x \rho_1  \partial_z  \widetilde{\phi}_1 - \nabla_x \rho_2  \partial_z  \widetilde{\phi}_2 \Vert_{L^2(I, H^{s-1})}  \\
&\quad + \Vert  \partial_z  \rho_1\nabla_x \widetilde{\phi}_1 -  \partial_z  \rho_2\nabla_x \widetilde{\phi}_2 \Vert_{L^2(I, H^{s-1})}.
\end{align*}
Since $s-1>0$, we can argue as above and conclude the estimate by means of product rules.

Therefore we are left with the proof of \eqref{est:phi}. Since $\widetilde{P}_j \widetilde{\phi}_j =0, j=1,2$ we can write
\begin{equation}\label{F+G}
\begin{aligned}
 \widetilde{P}_1 \widetilde{\phi}
 &= (\widetilde{P}_2 -\widetilde{P}_1)\widetilde{\phi}_2 =  F + \partial_z G\\
F&= \text{div} \big( \partial_z( \rho_2-\rho_1) \nabla_x \widetilde{\phi}_2 \big)
+ \text{div} \big( \nabla_x( \rho_1-\rho_2) \partial_z \widetilde{\phi}_2 \big)\\
G&= \nabla_x( \rho_1-\rho_2) \cdot\nabla_x \widetilde{\phi}_2 
+ \Big( \frac{ 1+ \vert \nabla_x \rho_2 \vert^2}{\partial_z \rho_2} 
- \frac{ 1+ \vert \nabla_x \rho_1 \vert^2}{\partial_z \rho_1}\Big) \partial_z \widetilde{\phi}_2.
\end{aligned}
\end{equation}
Arguing exactly as in the proof of \eqref{estimation1} we can write
\begin{equation} 
\Vert F \Vert_{L^2(I, H^{s-2})} + \Vert G \Vert_{L^2(I, H^{s-1})} \leq \text{RHS}.
\end{equation}
 It follows from Proposition \ref{dnint1F} that
 \begin{equation}\label{estim:2}
  \Vert \nabla_{x,z} \widetilde{\phi} \Vert_{L^2(I, H^{s-1})} \leq \mathcal{F}(\Vert \eta_1 \Vert_{H^{s+ \mez}}) \big(\text{RHS} +   \Vert \nabla_{x,z} \widetilde{\phi} \Vert_{X^{-\mez}(I)}\big).
\end{equation}
Then \eqref{est:phi} will be proved if we show that
 \begin{equation}\label{estim:3}
  \Vert \nabla_{x,z} \widetilde{\phi} \Vert_{X^{-\mez}(I)} \leq \text{RHS}. 
\end{equation}
We begin by proving the following estimate.
 \begin{equation}\label{est-v1}
\Vert \nabla_{x,z} \widetilde{\phi} \Vert_{L^2(J, L^2)} \leq \text{RHS}.
\end{equation} 
For this purpose we use the variational characterization of the solutions given in \cite{ABZ1,ABZ3}. 
It is sufficient to know that 
$\widetilde\phi_j = \widetilde{u}_j + \underline{\widetilde{f}}$ where 
$\underline{\widetilde{f}}=e^{z\langle D_x\rangle}f$ and $\widetilde{u}_j$ is such that, 
with the notations 
\begin{align*}
&X=(x,z) \in \widetilde{\Omega} = \{(x,z): x \in \xR^d, -1<z<0\},\\
&\Lambda^j=(\Lambda^j_1,\Lambda^j_2),\qquad \Lambda^j_1=\frac{1}{\partial_z \rho_j}\partial_z,\quad \Lambda^j_2=\nabla_x-\frac{\nabla_x\rho_j}{\partial_z \rho_j}\partial_z,
\end{align*}
we have
\begin{equation}\label{equi-bis}
 \int_{\widetilde{\Omega}}\Lambda ^j\widetilde{u}_j\cdot\Lambda ^j\theta \, J_j\, dX = - \int_{\widetilde{\Omega}}\Lambda ^j \underline{\widetilde{f}} \cdot\Lambda ^j\theta \, J_j\, dX 
 \end{equation}
 for all $\theta \in H^{1,0}(\widetilde{\Omega}),$ where $J_j = \vert \partial_z \rho_j \vert.$
 
Making the difference between the two equations \eqref{equi-bis}, 
  and taking $\theta =  \widetilde{\phi}= \widetilde{u}_1 -\widetilde{u}_2$  one can find a positive constant $C$ such that 
\begin{equation*} 
\int_{\widetilde{\Omega}}\vert \Lambda ^1\widetilde{\phi} \vert^2\, dX \leq C(A_1+\cdots +A_6)
\end{equation*}
where
\begin{equation}\label{n530}
\left\{
\begin{aligned}
&A_1=   \int_{\widetilde{\Omega}}\vert (\Lambda^1 - \Lambda^2) 
\widetilde{u}_2 \vert \vert \Lambda^1 \widetilde{\phi} \vert \,J_1\,  dX, \qquad   
&&A_2
=\int_{\widetilde{\Omega}}\vert (\Lambda^1 - \Lambda^2) \widetilde{\phi} \vert \vert \Lambda^2 \widetilde{u}_2 \vert \,J_1 dX,  \\
&A_3 = \int_{\widetilde{\Omega}}\vert \Lambda^2\widetilde{u}_2\vert \vert  
\Lambda^2 \widetilde{\phi} \vert \,\vert J_1-J_2\vert\, dX, \qquad 
&&A_4 = \int_{\widetilde{\Omega}}
\vert (\Lambda^1 - \Lambda^2) \underline{\widetilde{f}}  
\vert \vert \Lambda^1 \widetilde{u}\vert \,J_1\, dX, \\
& A_5 =  \int_{\widetilde{\Omega}}\vert (\Lambda^1 - \Lambda^2) \widetilde{\phi}
\vert \vert \Lambda^2 \underline{\widetilde{f}}  \vert \,J_1\, dX,
\qquad 
&& A_6 =  
 \int_{\widetilde{\Omega}}\vert \Lambda^2  \underline{\widetilde{f}} \vert \vert  \Lambda^2 \widetilde{\phi} \vert \,\vert J_1-J_2\vert\, dX .
\end{aligned}
\right.
 \end{equation}
 Noticing that $ \Lambda^1 - \Lambda^2  = \beta \partial_z$ we deduce  from Proposition~\ref{p1H} that 
\begin{equation*}
\begin{aligned}
 A_1 &\leq \Vert \Lambda^1 \widetilde{\phi} \Vert_{L^2(\widetilde{\Omega})}\Vert \beta \Vert_{L^2( \widetilde{\Omega})} \Vert \partial_z \widetilde{u}_2 \Vert_{L^\infty(I, L^\infty )} \\
 &\leq \Vert \Lambda^1\widetilde{\phi} \Vert_{L^2(\widetilde{\Omega})} \mathcal{F} (\Vert (\eta_1, \eta_2)\Vert_{H^{s+\mez}\times  H^{s+\mez}}) \Vert \eta_1-\eta_2\Vert_{H^\mez}\left\{\lA f\rA_{H^s}+\lA f\rA_{\holder{r}}\right\}.
 \end{aligned}
 \end{equation*}
Now 
$$
A_2 \leq   \Vert \beta \Vert_{L^2(\widetilde{\Omega})}
\Vert \Lambda^2 \widetilde{u}_2 \Vert_ {L^\infty(\widetilde{\Omega})}\Vert \partial_z \widetilde{\phi}\Vert_{L^2(\widetilde{\Omega})}.
$$
Using Proposition~\ref{p1H} we obtain
\begin{align*}
A_2 &\leq  \mathcal{F} \bigl(\lA (\eta_1,\eta_2)\rA_{H^{s+\mez} \times H^{s+\mez}}\bigr) \lA \eta_1-\eta_2\rA_{H^{ \mez}}\left\{\lA f\rA_{H^s}+\lA f\rA_{\holder{r}}\right\}\Vert \Lambda^1  \widetilde{\phi} \Vert_{L^2(\widetilde{\Omega})}\\
& \leq  \mathcal{F} \bigl(\lA (\eta_1,\eta_2)\rA_{H^{s+\mez} \times H^{s+\mez}}\bigr) \lA \eta_1-\eta_2\rA_{H^{ s- \mez}}\left\{\lA f\rA_{H^s}+\lA f\rA_{\holder{r}}\right\} \Vert \Lambda^1  \widetilde{\phi} \Vert_{L^2(\widetilde{\Omega})}.
\end{align*}
Now we estimate $A_3$ as follows. We have
$$
A_3 \leq \Vert \Lambda^2\widetilde{u}_2 \Vert_ {L^\infty(\widetilde{\Omega})}\Vert \Lambda^2 \widetilde{\phi} \Vert_ {L^ 2(\widetilde{\Omega})}\Vert J_1-J_2\Vert_{L^ 2(\widetilde{\Omega})}.
$$
Then we observe that  $\Vert J_1-J_2\Vert_{L^ 2(\widetilde{\Omega})}\les\Vert \eta_1-\eta_2\Vert_{H^\mez  } \les \Vert \eta_1-\eta_2\Vert_{H^{s-\mez}  }$   and we use the elliptic regularity to obtain
$$ A_3 \leq \mathcal{F}  \bigl(\lA (\eta_1,\eta_2)\rA_{H^{s+\mez} \times H^{s+\mez}}\bigr) \Vert f \Vert_{W^{r,\infty}} \Vert \eta_1-\eta_2\Vert_{H^{s-\mez}  } \Vert \Lambda^2 \widetilde{\phi} \Vert_ {L^ 2(\widetilde{\Omega})}.$$
To estimate $A_4$ and $A_5$ we recall 
that $\underline{\widetilde{f}} = e^{z\langle D_x \rangle}f.$ 
Then we have 
$$
\Vert \beta \partial_z \underline{\widetilde{f}} \Vert_{L^2(I  \times \xR^d)}  \leq  \Vert \beta  \Vert_{L^2(I\times \xR^d) }\Vert \partial_z\underline{\widetilde{f}} \Vert_{L^\infty(I\times \xR^d) }.
 $$
Since $\Vert \partial_z\underline{\widetilde{f}} \Vert_{L^\infty(I \times \xR^d)} 
\leq \Vert  f \Vert_{W^{r,\infty}} $ we obtain 
 $$
A_4+A_5 \leq \Vert \Lambda^1\widetilde{\phi} \Vert_{L^2(\widetilde{\Omega})} 
\mathcal{F} \bigl(\lA (\eta_1,\eta_2)\rA_{H^{s+\mez}}\bigr) 
\lA \eta_1-\eta_2\rA_{H^{s-\mez} \times H^{s-\mez}}\lA f\rA_{W^{r,\infty}}.
$$
The term $A_6$ is estimated like $A_3.$ This proves \eqref{est-v1}. 
    
To complete the proof of \eqref{estim:3}, 
in view of \e{est-v1}, it remains only to prove that 
$\Vert \nabla_{x,z} \widetilde{\phi} \Vert_{L^\infty(I, H^{-\mez})}\le \text{RHS}$. 
First of all by Lemma \ref{lions} we have 
$$
\Vert \nabla_{x} \widetilde{\phi} \Vert_{L^\infty(I, H^{-\mez})}
\leq C\big( \Vert \nabla_{x} \widetilde{\phi} \Vert_{L^2(I, L^2)}
+ \Vert \partial_z \nabla_{x} \widetilde{\phi} \Vert_{L^2(I, H^{-1})} \big) \leq C' \Vert \nabla_{x,z} \widetilde{\phi} \Vert_{L^2(I, L^2)}
$$
and we use \eqref{est-v1} to deduce that 
$\Vert \nabla_{x} \widetilde{\phi} \Vert_{L^\infty(I, H^{-\mez})}\le \text{RHS}$. 
So it remains to prove that, similarly, $\Vert \partial_z \widetilde{\phi} \Vert_{L^\infty(I, H^{-\mez})}\le \text{RHS}$. 
Here, by contrast with the estimate for $\nabla_x\widetilde{\phi}$, one cannot obtain the desired result from 
Lemma \ref{lions}, exploiting the previous bound \e{est-v1}. Indeed, one cannot use the equation satisfied by $\widetilde{\phi}$ to estimate the $L^2(I,H^{-1})$-norm of $\partial_z^2\widetilde{\phi}$. As above, we shall exploit the fact that one can factor out a spatial derivative by working with $U$ instead of $\widetilde{\phi}$. We shall prove that 
$\lA U\rA_{L^\infty(I,H^{-\mez})}\le \text{RHS}$ and then relate $\partial_z\widetilde{\phi}$ and $U$ to complete the proof. 

Lemma~\ref{lions} implies that
$$
\lA U\rA_{L^\infty(I,H^{-\mez})}\les \lA U\rA_{L^2(I,L^2)}+\lA \partial_z U\rA_{L^2(I,H^{-1})}.
$$
The $L^2(L^2)$-norm of $U$ is easily estimated using the bound \e{est-v1} for the $L^2(L^2)$-norm 
of $\nabla_{x,z}\widetilde{\phi}$. To estimate $\lA \partial_z U\rA_{L^2(I,H^{-1})}$, write
$$
\partial_z U=\cnx (\nabla\rho_1\partial_z\widetilde{\phi})+\cnx(\nabla \rho\partial_z 
\widetilde{\phi}_2)-\cnx(\partial_z\rho_1\nabla\widetilde{\phi})-\cnx(\partial_z\rho\nabla\widetilde{\phi}_2)
$$
so
\begin{align*}
\lA \partial_zU\rA_{L^2(I,H^{-1})}&\le 
\lA \nabla_{x,z}\rho_1\rA_{L^\infty(I,L^\infty)}\big\Vert \nabla_{x,z}\widetilde{\phi}\big\Vert_{L^2(I,L^2)}\\
&\quad +\lA \nabla_{x,z}\rho\rA_{L^2(I,L^2)}\big\Vert \nabla_{x,z}\widetilde{\phi}_2\big\Vert_{L^\infty(I,L^\infty)}
\end{align*}
which implies that $\lA U\rA_{L^\infty(I,H^{-\mez})}\le \text{RHS}$. Now, directly from the definition of $U$ one has
$$
\partial_z \widetilde{\phi}=\frac{\partial_z \rho_1}{1+|\nabla\rho_1|^2} 
\Big[ U+\Big(\frac{1+|\nabla\rho_2|^2}{\partial_z\rho_2}-\frac{1+|\nabla\rho_1|^2}{\partial_z\rho_1}\Big)\partial_z\widetilde{\phi}_2+\nabla\rho_1\cdot\nabla\widetilde{\phi}+\nabla\rho\cdot\nabla\widetilde{\phi}_2\Big].
$$
The $L^\infty(I,H^{-\mez})$-norm of the term between brackets is bounded by $\text{RHS}$, using the fact that the product 
is bounded from $H^{s-\mez}\times H^{-\mez}$ (resp.\ $L^2\times H^{s-1}$) to $H^{-\mez}$ (resp.\ $H^{-\mez}$) 
in order to estimate $\nabla\rho_1\cdot \nabla\widetilde{\phi}$ (resp.\ the other terms). 
Since the coefficient 
$\frac{\partial_z \rho_1}{1+|\nabla\rho_1|^2} $ belongs to $L^\infty(I,H^{s-\mez})$ and since the product 
is bounded from $H^{s-\mez}\times H^{-\mez}$ to $H^{-\mez}$, this concludes the proof.
\end{proof}

\subsection{Paralinearization of the equations}Recall from Proposition~\ref{prop:newS} that 
\begin{equation}\label{BVzeta0}
\left\{
\begin{aligned}
&(\partial_{t}+V_j\cdot\partialx)\B_j=\ma_j-g,\\
&(\partial_t+V_j\cdot\partialx)V_j+\ma_j\zeta_j=0,\\
&(\partial_{t}+V_j\cdot\partialx)\zeta_j=G(\eta_j)V_j+ \zeta_j G(\eta_j)\B_j+\gamma_j,\quad \zeta_j=\nabla \eta_j,
\end{aligned}
\right.
\end{equation}
where~$\gamma_j$ is the remainder term given by \eqref{eq:zeta}. 
We now compute and paralinearize the equations satisfied by the differences
$$
\zeta= \zeta_1 - \zeta_2, \quad V= V_1-V_2, \quad B=B_1- B_2.
$$

In \cite{ABZ3}, assuming that $s>1+d/2$, we deduced that 
\begin{equation}\label{n531}
\left\{
\begin{aligned}
&(\partial_t +V_1\cdot\partialx)(\dV+\zeta_1 B)+\ma_2 \dzeta =F_1 ,\\
&(\partial_{t}+V_2\cdot\partialx)\dzeta-G(\eta_1)\dV- \zeta_1 G(\eta_1)\dB=F_2,
\end{aligned}
\right.
\end{equation}
for some remainders such that
$$
\lA (F_1,F_2)\rA_{L^{p}([0,T];H^{s-1}\times H^{s-\tdm})} \le \K(M_1,M_2)N(T).
$$
However, the estimate for $f_2$ no longer hold 
when $s<3/2$ for the reason 
explained in Remark~\ref{T522}. 
To overcome this problem, the key point is that one obtains the desired result by replacing 
$\partial_{t}+V_2\cdot\partialx$ (resp.\ $\zeta_1 G(\eta_1)\dB$) 
by $\partial_{t}+T_{V_2}\cdot\partialx$ (resp.\ $T_{\zeta_1}
G(\eta_1)\dB$ in the equation for $\zeta$ (there is a cancellation when one adds the remainders).

\begin{lemm}\label{lemm:symmind}
The differences~$\zeta,B,V$ satisfy a system of the form
\begin{equation}\label{syst:delta}
\left\{
\begin{aligned}
&(\partial_t +T_{V_1}\cdot\partialx)(\dV+\zeta_1 B)+\ma_2 \dzeta =f_1 ,\\
&(\partial_{t}+T_{V_2}\cdot\partialx)\dzeta-G(\eta_1)\dV- T_{\zeta_1}
G(\eta_1)\dB=f_2,
\end{aligned}
\right.
\end{equation}
for some remainders such that
$$
\lA (f_1,f_2)\rA_{L^{p}([0,T];H^{s-1}\times H^{s-\tdm})} \le \K(M_1,M_2)N(T).
$$
\end{lemm}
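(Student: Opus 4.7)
The plan is to start from Proposition~\ref{prop:newS} applied to each solution $j=1,2$, take differences, and then paralinearize every product that appears on the left-hand side. Concretely, from $(\partial_t+V_j\cdot\partialx)V_j+\ma_j\zeta_j=0$ and $(\partial_t+V_j\cdot\partialx)B_j=\ma_j-g$ I would derive, as in \cite{ABZ3}, the identities
\begin{equation*}
(\partial_t+V_1\cdot\partialx)(V+\zeta_1 B)+\ma_2\zeta=\widetilde f_1,\qquad
(\partial_t+V_2\cdot\partialx)\zeta-G(\eta_1)V-\zeta_1 G(\eta_1)B=\widetilde f_2,
\end{equation*}
with $\widetilde f_1,\widetilde f_2$ already estimated in \cite{ABZ3} in $L^p(H^{s-1}\times H^{s-\tdm})$ up to contributions coming from the Lipschitz-in-$\eta$ behavior of $G(\eta)$, which are controlled by Proposition~\ref{L:p60}. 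The extra task is then to pass from $V_1\cdot\partialx$ to $T_{V_1}\cdot\partialx$, from $V_2\cdot\partialx$ to $T_{V_2}\cdot\partialx$, and from $\zeta_1 G(\eta_1)B$ to $T_{\zeta_1}G(\eta_1)B$, and absorb the remainders into $f_1,f_2$.

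For the first equation, set $W=V+\zeta_1 B$. Bony's decomposition gives
\begin{equation*}
V_1\cdot\partialx W-T_{V_1}\cdot\partialx W=T_{\partialx W}\cdot V_1+R(V_1,\partialx W),
\end{equation*}
and both terms are in $H^{s-1}$: for the paraproduct I would use \eqref{esti:quant1} with $\partialx W\in C^{-1}_*$ (which follows from $W\in L^\infty$ thanks to the Sobolev embedding $H^s\hookrightarrow L^\infty$ and the H\"older control of $V_1,\zeta_1,B$); for the remainder I would use the Bony-type bound \eqref{Bony3} together with $V_1\in H^s\cap W^{1,\infty}$ and $\partialx W\in H^{s-2}\cap C^{-1}_*$. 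The bound is then linear in $N(T)$, times $\K(M_1,M_2)$.

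The genuinely delicate step, and the main obstacle flagged in Remark~\ref{T522}, is the second equation, where the target space is $H^{s-\tdm}$ with $s-\tdm$ possibly negative. After the same splitting,
\begin{equation*}
V_2\cdot\partialx\zeta-T_{V_2}\cdot\partialx\zeta=T_{\partialx\zeta}\cdot V_2+R(V_2,\partialx\zeta),\qquad
\zeta_1 G(\eta_1)B-T_{\zeta_1}G(\eta_1)B=T_{G(\eta_1)B}\zeta_1+R(\zeta_1,G(\eta_1)B),
\end{equation*}
and the two remainders $R(V_2,\partialx\zeta)$ and $R(\zeta_1,G(\eta_1)B)$ cannot be estimated in $H^{s-\tdm}$ by a direct application of \eqref{Bony3} since $s-\tdm$ need not be positive. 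To cure this I would factor out a derivative using the structural identities $\zeta_j=\partialx\eta_j$: writing $\partialx\zeta=\partialx^2\eta$ and using the elementary identity $R(u,\partialx v)=\partialx R(u,v)-R(\partialx u,v)$, the term $R(V_2,\partialx\zeta)$ is rewritten as a spatial derivative of $R(V_2,\zeta)\in H^{s-\mez}$, plus a term $R(\partialx V_2,\zeta)$ where $\partialx V_2\in H^{s-1}$ and $\zeta\in H^{s-\mez}$; both quantities are controlled in $H^{s-\mez}$ by \eqref{Bony3} (which requires only $(s-1)+(s-\mez)-d/2>0$, guaranteed by the assumption $s>1+d/2-\mu$ with $\mu$ small), and hence give the desired $H^{s-\tdm}$ bound. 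The same trick, combined with Proposition~\ref{T:DN-Hs} to move one derivative across $G(\eta_1)$ (writing $G(\eta_1)B=-\cnx V_1+\widetilde\gamma_1$ from Proposition~\ref{prop:GBV}), takes care of $R(\zeta_1,G(\eta_1)B)$: one gets $R(\partialx\eta_1,\cnx V_1)+R(\partialx\eta_1,\widetilde\gamma_1)$, both of which, after one integration by parts in the remainder, become $\partialx R(\cdot,\cdot)$ of quantities lying in $H^{s-\mez}$. The paraproducts $T_{\partialx\zeta}V_2$ and $T_{G(\eta_1)B}\zeta_1$ are straightforwardly in $H^{s-\tdm}$ by \eqref{niS} and the Sobolev/H\"older controls of $\partialx\zeta$ and $G(\eta_1)B$ coming from the quantities $M_j$ and $N(T)$.

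Once all these pieces are collected, summing them into $f_1$ and $f_2$ produces source terms bounded in $L^p([0,T];H^{s-1}\times H^{s-\tdm})$ by $\K(M_1,M_2)\,N(T)$, which is the content of the lemma. The only non-routine calculation is the derivative-factorization step described above; everything else is bookkeeping with the paradifferential calculus of Appendix~\ref{sec:2} and the Dirichlet-Neumann Lipschitz estimate of Proposition~\ref{L:p60}.
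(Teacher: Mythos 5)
Your route is genuinely different from the paper's, and it has a gap exactly where Remark~\ref{T522} warns that something subtle must happen.

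The paper never Bony-decomposes the first-order product $V_j\cdot\nabla\zeta_j$ directly. It starts from the divergence form $\partial_t\zeta_j^k=G(\eta_j)V_{j,k}-\cnx(V_j\zeta_j^k)$ and Bony-decomposes the \emph{zeroth-order} product $V_j\zeta_j^k$ inside the $\cnx$, so the delicate part of the remainder is always $\cnx\bigl(R(\zeta_j^k,V_j)\bigr)$ with both arguments of $R$ carrying no extra derivative. After differencing this becomes $\cnx\bigl(R(\zeta,V_2)\bigr)+\cnx\bigl(R(\zeta_1,V)\bigr)$; since $s-\mez>0$ one always puts $R(\zeta,V_2)$ and $R(\zeta_1,V)$ in $H^{s-\mez}$ by \eqref{Bony3}, and the outer $\cnx$ brings this to $H^{s-\tdm}$. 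The identification with $T_{\zeta_j}G(\eta_j)B_j$ (via $G(\eta_j)B_j=-\cnx V_j+\widetilde\gamma_j$) is then performed only at the paraproduct level, never inside a Bony remainder.

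Your move $R(V_2,\partialx\zeta)=\partialx R(V_2,\zeta)-R(\partialx V_2,\zeta)$ gives a first term that is fine, but the new remainder $R(\partialx V_2,\zeta)$ cannot be controlled in the regime of the lemma. First, note $\zeta=\zeta_1-\zeta_2=\partialx(\eta_1-\eta_2)$ with $\eta_1-\eta_2\in H^{s-\mez}$, so $\zeta\in H^{s-\tdm}\cap C^{-\mez}_*$, not $H^{s-\mez}$ as you wrote. With $\partialx V_2\in H^{s-1}\cap C^{r-1}_*$, the only ways to land in $H^{s-\tdm}$ by \eqref{Bony3} require either $(r-1)+(s-\tdm)>0$ or $(-\mez)+(s-1)>0$, i.e.\ $s>\tdm$; and \eqref{Bony} gives only $R\in H^{2s-\frac{5}{2}-d/2}$, weaker than $H^{s-\tdm}$ when $s<1+d/2$. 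None of these are guaranteed: for $d=1$, $s$ can lie in $(\tdm-\mu,\tdm)$ while $r$ is arbitrarily close to $1$, so $(r-1)+(s-\tdm)$ can be negative. The Leibniz identity transfers the offending derivative from one argument of $R$ to the other but does not change the total Sobolev/H\"older regularity, so it does not escape the low-regularity obstruction that Remark~\ref{T522} points at. A second slip: Proposition~\ref{prop:GBV} gives $G(\eta_1)B_1=-\cnx V_1+\widetilde\gamma_1$, not $G(\eta_1)B=-\cnx V_1+\widetilde\gamma_1$ (here $B=B_1-B_2$ is a difference, not a single solution's trace), so the factorization you use for $R(\zeta_1,G(\eta_1)B)$ rests on an identity that does not hold.

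The fix — and the actual content of the paper's proof — is to regroup $V_j\cdot\nabla\zeta_j+(\cnx V_j)\zeta_j$ into the full divergence $\cnx(V_j\zeta_j)$ \emph{before} any Bony decomposition; the $(\cnx V_j)\zeta_j$ piece, coming from $-\zeta_j G(\eta_j)B_j$ via \eqref{n32}, is exactly what is needed to complete the derivative so that the remainder sits under a $\cnx$.
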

\begin{proof}
Directly from \e{BVzeta0}, it is easily verified that
\begin{equation}\label{n535}
\left\{
\begin{aligned}
&\partial_{t}\dB+V_1\cdot\partialx\dB=\da +R_1 ,\\
&\partial_t \dV+V_1\cdot\partialx\dV+\ma_2 \dzeta+ \da \zeta_1=R_2,\\
\end{aligned}
\right.
\end{equation}
where $a=a_1-a_2$ and
\begin{equation*}
R_1=-V\cdot \nabla B_2,\quad R_2=-\dV \cdot\nabla V_2.
\end{equation*}
Now we use the first equation of \e{n535} to express $\da$ in terms of 
$\partial_{t}\dB+V_1\cdot\partialx\dB$ in the last term of the left-hand side of 
the second equation of \e{n535}. Replacing 
$V_1\cdot\nabla$ in the right-hand side by $T_{V_1}\cdot \nabla$, 
we thus obtain the first equation of 
\e{syst:delta} with
$$
f_1=\dB (\partial_t \zeta_1+V_1\cdot\nabla \zeta_1)+R_1\zeta_1+R_2
-T_{ \partialx(\dV+\zeta_1 B)}\cdot V_1-R\big(V_1,\partialx(\dV+\zeta_1 B)\big).
$$
To estimate $\partial_t \zeta_1+V_1\cdot\nabla \zeta_1$ we use the identity 
\e{n38} to deduce
$$
\partial_t \zeta_1+V_1\cdot\nabla \zeta_1=G(\eta_1)V_1-(\cnx V_1)\zeta_1.
$$
Then, using \e{ests+2} to estimate $G(\eta_1)V_1$ and the fact that the product 
is bounded from $H^{s-1}\times H^{s-\mez}$ into $H^{s-1}$, we get
\begin{align*}
\lA \partial_t \zeta_1+V_1\cdot\nabla \zeta_1\rA_{H^{s-1}}
&\le \lA G(\eta_1)V_1\rA_{H^{s-1}}+\lA \cnx V_1\rA_{H^{s-1}}\lA \zeta_1\rA_{H^{s-\mez}}\\
&\le \mathcal{F}\big(\lA \eta_1\rA_{H^{s+\mez}}\big)\lA V_1\rA_{H^s}.
\end{align*}
Also, it follows from the $L^\infty$-estimate \e{n232} for $G(\eta)f$ that
$$
\lA \partial_t \zeta_1+V_1\cdot\nabla \zeta_1\rA_{L^\infty}\le 
\mathcal{F}\big(\lA \eta_1\rA_{H^{s+\mez}}\big)\big\{\lA V_1\rA_{H^s}+\lA V_1\rA_{\holder{r}}\big\}.
$$

Then the estimate of the $H^{s-1}$-norm of $f_1$ follows 
from the usual tame 
estimate in Sobolev space (see \e{prS2}), the rule \e{niS} 
applied with 
$m=1$ to estimate $T_{ \partialx(\dV+\zeta_1 B)}\cdot V_1$, 
the rule \e{Bony3} applied with $\alpha=1$ and $a=V_1$ to 
estimate the remainder $R\big(V_1,\partialx(\dV+\zeta_1 B)\big)$ as well as the estimates for the 
Dirichlet-Neumann operator given by 
Propositions~\ref{T:DN-Hs} and \ref{p1H}.

To compute and to estimate $f_2$ 
we shall rewrite the equation for $\zeta_j$, $j=1,2$, by using 
the identity \e{n38} written under the form
$$
\partial_t \zeta_j=G(\eta_j)V_j-\Theta_j
$$ 
where $\Theta_j$ is the function with values in $\xR^d$ whose coordinates 
$\Theta_j^k$ is given by $\Theta_j^k=\cnx (V_j\zeta_j^k)$. Now write
$$
\Theta_j^k=\cnx (V_j\zeta_j^k)
=\cnx( T_{V_j} \zeta_j^k)+\cnx (T_{\zeta_j^k}V_j)+\cnx (R(\zeta_j^k,V_j))
$$
and use the Leibniz rule $\partial_{\alpha}T_a b=T_{\partial_\alpha a} b
+T_a\partial_{\alpha}b$ to obtain
$$
\Theta_j^k=T_{V_j} \nabla \zeta_j^k +T_{\zeta_j^k}\cnx V_j +F_j^k
$$
with
$$
F_j^k=T_{\cnx V_j} \zeta_j^k+T_{\nabla\zeta_j^k}\cdot V_j+\cnx (R(\zeta_j^k,V_j)).
$$
Using the identity $G(\eta_j)B_j=-\cnx V_j +\widetilde{\gamma}_j$ where 
$\widetilde{\gamma}_j$ is estimated by means of \e{n32}, we obtain
$$
\Theta_j=T_{V_j}\cdot\nabla\zeta_j-T_{\zeta_j}G(\eta_j)B_j
+T_{\zeta_j}\widetilde{\gamma}_j+F_j
$$
so
$$
\partial_t \zeta_j+T_{V_j}\cdot\nabla \zeta_j 
=G(\eta_j)V_j+T_{\zeta_j}G(\eta_j)B_j-T_{\zeta_j}\widetilde{\gamma}_j-F_j.
$$
Subtracting the equation for $j=1$ and the one for $j=2$ 
we obtain the desired equation for 
$\zeta=\zeta_1-\zeta_2$ with
\begin{align*}
f_2&\defn (I)+(II)+(III)-F\\
(I)&\defn \bigl(G(\eta_1)-G(\eta_2)\big)V_2+T_{\zeta_2}\big(G(\eta_1)-G(\eta_2)\big)
B_2,\\
(II)&\defn -T_V\cdot\nabla\zeta_1+T_\zeta G(\eta_1)B_2,\\
(III)&\defn -T_{\zeta_1}\widetilde{\gamma}-T_\zeta\widetilde{\gamma}_2,\\
F&\defn F_1-F_2.
\end{align*}
The term $(I)$ is estimated by means of Proposition~\ref{L:p60}. 
To estimate $(II)$, write
$$
\lA T_V\cdot\nabla\zeta_1\rA_{H^{s-\tdm}}\les \lA V\rA_{L^\infty}\lA \zeta_1\rA_{H^{s-\mez}}
$$
and
$$
\lA T_\zeta G(\eta_1)B_2\rA_{H^{s-\tdm}}\les 
\lA \zeta\rA_{\zygmund{-\mez}}\lA G(\eta_1)B_2\rA_{H^{s-1}}
\le \lA \zeta\rA_{\zygmund{-\mez}}\mathcal{F}\big(\lA \eta_1\rA_{H^{s+\mez}}\big) 
\lA B_2\rA_{H^{s}}
$$
where we used Proposition~\ref{T:DN-Hs} to estimate $G(\eta_1)B_2$. 

It remains to estimate $F$ given by 
\begin{align*}
F=T_{\cnx V_2}\zeta+T_{\cnx V}\zeta_1+T_{\nabla\zeta_2}\cdot V+T_{\nabla\zeta}\cdot V_1+\cnx\big(R(\zeta,V_2)\big)+\cnx \big(R(\zeta_1,V)\big).
\end{align*}
Write
\begin{align*}
&\lA T_{\cnx V_2}\zeta\rA_{H^{s-\tdm}}\les 
\lA \cnx V_2\rA_{L^\infty}\lA \zeta\rA_{H^{s-\tdm}}\les 
\lA V_2\rA_{\holder{r}}\lA \eta\rA_{H^{s-\mez}}
,\\
&\lA T_{\cnx V}\zeta_1\rA_{H^{s-\tdm}}\les \lA \cnx V\rA_{\zygmund{-1}}\lA \zeta_1\rA_{H^{s-\mez}}\les \lA V\rA_{\holder{r-1}}\lA \eta_1\rA_{H^{s\mez}},\\
&\lA T_{\nabla\zeta_2}\cdot V\rA_{H^{s-\tdm}}\les 
\lA \nabla\zeta_2\rA_{\zygmund{-\mez}}\lA V\rA_{H^{s-1}}
\les \lA \eta\rA_{\holdertdm}\lA V\rA_{H^{s-1}}.
\end{align*}
Now the key point of this proof is that one has the following obvious inequalities
$$
\lA \cnx\big(R(\zeta,V_2)\big)+\cnx \big(R(\zeta_1,V)\big)\rA _{H^{s-\tdm}}
\le \lA R(\zeta,V_2)+R(\zeta_1,V)\rA _{H^{s-\mez}}.
$$
Since $s-1/2>0$ (by contrast with $s-3/2$ which might be negative), one can use \e{Bony3} to deduce
\begin{align*}
&\lA R(\zeta,V_2)\rA _{H^{s-\mez}}\les \lA \zeta\rA_{\zygmund{-\mez}}\lA 
V_2\rA_{H^s}\les \lA \eta\rA_{\holder{r-\mez}}\lA V_2\rA_{H^s},\\
&\lA R(\zeta_1,V)\rA _{H^{s-\mez}}\les \lA \zeta_1\rA_{H^{s-\mez}}\lA V\rA_{L^\infty}
\les \lA \eta_1\rA_{H^{s+\mez}}\lA V\rA_{\holder{r-1}}.
\end{align*}
This completes the proof.
\end{proof}

Once Proposition~\ref{L:p60} and Lemma~\ref{lemm:symmind} are established, the end of the proof 
of the contraction estimate in Theorem~\ref{th.lipschitz} follows from the analysis in \cite{ABZ3}. 
We shall recall the scheme of the proof for the sake of completeness.

Our goal is to prove an 
estimate of the form 
\begin{equation}\label{NM1M2}
N(T)\le \mathcal{F}(M_1, M_2) N(0) +T^\delta \, \mathcal{F}(M_1,M_2) N (T),
\end{equation}
for some $\delta>0$ and some function~$\mathcal{F}$ depending only on~$s$ and~$d$. 
This implies $N(T_1)\leq 2\mathcal{F}(M_1, M_2) N(0)~$ for~$T_1$ 
small enough (depending on~$T$ and~$\mez \mathcal{F}(M_1,M_2)$), and iterating 
the estimate between~$[T_1, 2T_1]$,\dots,~$[T- T_1, T]$ implies Theorem~\ref{th.lipschitz}. 

Firstly, one symmetrize the equations \e{syst:delta}. 
\begin{lemm} \label{symetr}
Set~$I = [0,T]$ and
$$
\lambda_1\defn \sqrt{(1+|\nabla \eta_1|^2)|\xi|^2-(\nabla\eta_1\cdot\xi)^2},
$$
and
$$
\ell \defn \sqrt{ \lambda_1 \ma_2},\quad \varphi\defn T_{\sqrt{\lambda_1}}(\dV+\zeta_1 B),\quad \vartheta\defn T_{\sqrt{\ma_2}} \dzeta.
$$
Then 
\begin{align}
&(\partial_t +T_{V_1}\cdot\partialx)\varphi + T_{\ell}\vartheta =g_1 ,\label{syst:delta5}\\
&(\partial_{t}+T_{V_2}\cdot\partialx)\vartheta -T_{\ell }\varphi =g_2,\label{syst:delta5bis}
\end{align}
where
$$
\lA (g_1,g_2)\rA_{L^{p}(I;H^{s-\tdm}\times H^{s-\tdm})} \le \K(M_1,M_2)N(T).
$$
\end{lemm}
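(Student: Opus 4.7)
The plan is to apply $T_{\sqrt{\lambda_1}}$ to \eqref{syst:delta} (first equation) and $T_{\sqrt{a_2}}$ to \eqref{syst:delta} (second equation), and then reorganize using paradifferential symbolic calculus, exactly in the spirit of the symmetrization carried out in Proposition~\ref{psym} for a single solution. Writing $u\defn \dV+\zeta_1 B$, the first equation becomes
\begin{equation*}
T_{\sqrt{\lambda_1}}\bigl(\partial_t+T_{V_1}\cdot\partialx\bigr) u+T_{\sqrt{\lambda_1}}T_{\ma_2}\dzeta=T_{\sqrt{\lambda_1}}f_1.
\end{equation*}
First I would commute $T_{\sqrt{\lambda_1}}$ past $\partial_t+T_{V_1}\cdot\partialx$, picking up a contribution from $[\partial_t,T_{\sqrt{\lambda_1}}]=T_{\partial_t\sqrt{\lambda_1}}$ and $[T_{V_1}\cdot\partialx,T_{\sqrt{\lambda_1}}]$. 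The dangerous term is $\partial_t\sqrt{\lambda_1}$ since it contains $\partial_t\nabla\eta_1$; using \eqref{n400} this is rewritten as $\nabla G(\eta_1)\psi_1$, which by Proposition~\ref{T:DN-Hs} is controlled in terms of $M_1$. The transport commutator is handled by the operator-norm estimate \eqref{esti:quant2}, using $\sqrt{\lambda_1}\in\Gamma^{1/2}_{1/2}$ with norms bounded by $\mathcal{F}(M_1)$. Next I would rewrite $T_{\sqrt{\lambda_1}}T_{\ma_2}$ as $T_{\sqrt{\lambda_1\ma_2}}\circ T_{\sqrt{\ma_2/\lambda_1}^{-1}\cdot \sqrt{\ma_2}}$; more cleanly, using the composition rule \eqref{esti:quant2} one has
\begin{equation*}
T_{\sqrt{\lambda_1}}T_{\ma_2}=T_{\ell}T_{\sqrt{\ma_2}}+R',\qquad \lA R'\rA_{H^{s-1}\to H^{s-\tdm}}\le \mathcal{F}(M_1,M_2).
\end{equation*}
This transforms the first equation into \eqref{syst:delta5}, with $g_1$ gathering $T_{\sqrt{\lambda_1}}f_1$, the commutator remainders, and $R'\dzeta$. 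Each piece is controlled in $L^p(I;H^{s-\tdm})$ by $\mathcal{K}(M_1,M_2)N(T)$ using Lemma~\ref{lemm:symmind} and the bounds already collected in the proof of Proposition~\ref{psym}.

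For the second equation the same steps apply, with one crucial twist: one must first replace the Dirichlet-Neumann pieces by paradifferential operators. I would use Theorem~\ref{T3} applied to $\eta_1$, which gives
\begin{equation*}
G(\eta_1)\dV=T_{\lambda_1}\dV+R_1\dV,\qquad G(\eta_1)\dB=T_{\lambda_1}\dB+R_2\dB,
\end{equation*}
where $R_1,R_2$ are $1/2$-smoothing in $H^\sigma$ with operator norm $\le \mathcal{F}(M_1)\{1+\lA\eta_1\rA_{\holdertdm}+\lA(\dV,\dB)\rA_{\holder{r}}\}$. Combining these with the paraproduct identity
\begin{equation*}
G(\eta_1)\dV+T_{\zeta_1}G(\eta_1)\dB=T_{\lambda_1}\bigl(\dV+T_{\zeta_1}\dB\bigr)+[T_{\lambda_1},T_{\zeta_1}]\dB+R_1\dV+T_{\zeta_1}R_2\dB
\end{equation*}
and noting that $\dV+T_{\zeta_1}\dB=u-T_{\dB}\zeta_1$ (so the correction is harmless in $H^{s-1}$), one reduces to $T_{\lambda_1}u$ modulo acceptable errors. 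Then applying $T_{\sqrt{\ma_2}}$ to the second equation of \eqref{syst:delta} and using $T_{\sqrt{\ma_2}}T_{\lambda_1}=T_\ell T_{\sqrt{\lambda_1}}+R''$ (again by \eqref{esti:quant2}) yields \eqref{syst:delta5bis}, with $g_2$ collecting $T_{\sqrt{\ma_2}}f_2$, the commutator $[\partial_t+T_{V_2}\cdot\partialx,T_{\sqrt{\ma_2}}]\dzeta$, the difference $R''u$, the two Dirichlet-Neumann remainders, and the correction $-T_\ell T_{\sqrt{\lambda_1}}T_{\dB}\zeta_1$.

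The main obstacle is controlling $g_2$ in $L^p(I;H^{s-\tdm})$: it involves the $L^p$-in-time norm of $\partial_t\sqrt{\ma_2}$, which requires the H\"older tame estimate \eqref{esti:a1} on the material derivative of the Taylor coefficient (obtained via Proposition~\ref{prop:ma}), and it also involves the paralinearization remainder for the Dirichlet-Neumann, which is exactly where Theorem~\ref{T3} is used in a tame way so that the loss lands on $\lA\eta_1\rA_{\holdertdm}+\lA(\dV,\dB)\rA_{\holder{r}}$—quantities already controlled by $M_1+N(T)$. Once every remainder is estimated by $\mathcal{K}(M_1,M_2)N(T)$ in $L^p(I;H^{s-\tdm})$, gathering them into $g_1,g_2$ completes the proof. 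The symmetrization is then exactly of the same shape as in Proposition~\ref{psym}, but applied to the \emph{difference} system, with coefficients frozen at $(\eta_1,\ma_2)$ as prescribed by the asymmetric structure of \eqref{syst:delta}.
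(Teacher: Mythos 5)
Your overall architecture --- apply $T_{\sqrt{\lambda_1}}$ and $T_{\sqrt{\ma_2}}$ to the two equations of \eqref{syst:delta}, recombine the off-diagonal operators into $T_\ell$ by symbolic calculus, and collect everything else into $g_1,g_2$ --- is indeed the intended one (the paper itself only refers to Lemma~5.6 of \cite{ABZ3} for the details). However, the way you propose to handle the commutators with the transport operators would not close in the regularity regime of this paper, and that is precisely the delicate point of the lemma.

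Concretely, for the first equation you split $[\partial_t,T_{\sqrt{\lambda_1}}]$ from $[T_{V_1}\cdot\nabla,T_{\sqrt{\lambda_1}}]$ and claim the first is controlled because $\partial_t\nabla\eta_1=\nabla G(\eta_1)\psi_1$ is bounded in $H^{s-\tdm}$. But $H^{s-\tdm}\subset \zygmund{s-\tdm-\frac d2}$ with $s-\tdm-\frac d2$ close to $-\mez$ here, so $\partial_t\sqrt{\lambda_1}$ is a symbol of order $\mez$ with coefficients of \emph{negative} H\"older regularity; by Proposition~\ref{prop.niSbis} the operator $T_{\partial_t\sqrt{\lambda_1}}$ then has order close to $1$ and sends $H^{s-1}$ only into roughly $H^{s-2}$, not $H^{s-\tdm}$. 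Likewise, \eqref{esti:quant2} with $\sqrt{\lambda_1}\in\Gamma^{1/2}_{1/2}$ makes $[T_{V_1}\cdot\nabla,T_{\sqrt{\lambda_1}}]$ of order $1+\mez-\mez=1$; to reach the needed order $\mez$ you would need $\rho=1$, i.e.\ $\nabla\eta_1$ Lipschitz, which is not available. The missing idea is to commute $T_{\sqrt{\lambda_1}}$ with the \emph{whole} material derivative $\partial_t+T_{V_1}\cdot\nabla$ at once, using the commutator lemma of \cite{ABZ3} which requires only $L^\infty$ bounds on the symbol and on its material derivative; the latter is available because $(\partial_t+V_1\cdot\nabla)\nabla\eta_1=G(\eta_1)V_1-(\cnx V_1)\nabla\eta_1$ by \eqref{n38}, which is bounded in $L^\infty$ thanks to \eqref{n232}. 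The same mechanism, combined with Proposition~\ref{prop:ma}, handles $[\partial_t+T_{V_2}\cdot\nabla,T_{\sqrt{\ma_2}}]$ --- this is exactly why the lemma keeps two different transport fields, as the paper points out after Lemma~\ref{L63.a}; you gesture at this for $\ma_2$ but phrase it as a bound on ``$\partial_t\sqrt{\ma_2}$'', which is not controlled by itself. A secondary point to check: Theorem~\ref{T3} cannot be applied to $\dV\in H^{s-1}$ (it requires data in $H^\sigma$ with $\sigma>\frac34+\frac d2$), and Proposition~\ref{coro:paraDN1} only gains $\eps<s-\mez-\frac d2<\mez$, so your reduction of $G(\eta_1)\dV+T_{\zeta_1}G(\eta_1)\dB$ to $T_{\lambda_1}(\dV+\zeta_1\dB)$ modulo $H^{s-\tdm}$ also requires an argument beyond the tools you quote.
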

The previous result is proved following the proof of Lemma~$5.6$ in \cite{ABZ3}.

We then use the previous result and classical arguments to obtain a Sobolev estimate. 

\begin{lemm}\label{L63.a}
Set
$$
M'(T)\defn  \sup_{t\in I} \big\{\lA \vartheta(t)\rA_{H^{s-\tdm}}
+ \lA \varphi(t)\rA_{H^{s-\tdm}}\big \}.
$$
We have
\begin{equation}\label{esti:N'}
M'(T)\le \mathcal{K}(M_1,M_2)\bigl( N(0)+T^\delta N(T)\bigr)
\end{equation}
for some $\delta>0$.
\end{lemm}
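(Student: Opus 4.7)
The proof will follow a standard energy method for the symmetric paradifferential system \eqref{syst:delta5}-\eqref{syst:delta5bis}, with tame tracking of the H\"older norms. Set $\sigma\defn s-\tdm$ and introduce the energy
$$
E(t)\defn \mez\bigl(\lA \Lambda^\sigma\varphi(t)\rA_{L^2}^2 + \lA \Lambda^\sigma\vartheta(t)\rA_{L^2}^2\bigr),\qquad \Lambda^\sigma\defn \langle D_x\rangle^\sigma.
$$
Our goal is to prove $\frac{d}{dt}E(t)\le K_1(t)E(t)+K_2(t)E(t)^{1/2}$ for some coefficients $K_1,K_2$ whose time integrals are controlled by $T^\delta \mathcal{K}(M_1,M_2)N(T)^{O(1)}$, and then conclude by Gronwall.

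\textbf{Step 1 (Energy identity).} Differentiate $E$ in time and use \eqref{syst:delta5}-\eqref{syst:delta5bis} to write
$$
\frac{dE}{dt}= -\sum_{j=1}^{2}\RE\langle \Lambda^{\sigma}T_{V_j}\cdot\nabla u_j,\Lambda^{\sigma}u_j\rangle + D(t) + \RE\langle \Lambda^\sigma g_1,\Lambda^\sigma \varphi\rangle+\RE\langle \Lambda^\sigma g_2,\Lambda^\sigma \vartheta\rangle,
$$
with $(u_1,u_2)=(\varphi,\vartheta)$ and the dispersive cross-term
$D(t)=-\RE\langle \Lambda^\sigma T_\ell\vartheta,\Lambda^\sigma\varphi\rangle+\RE\langle \Lambda^\sigma T_\ell \varphi,\Lambda^\sigma\vartheta\rangle$. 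The point is that the fact that $V_1\ne V_2$ produces no cross-term at this stage, because each transport operator only appears paired with its own unknown.

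\textbf{Step 2 (Transport terms).} For each $j$, commute $\Lambda^{\sigma}$ past $T_{V_j}\cdot\nabla$. By \eqref{esti:quant2} the commutator is bounded on $L^2$ with norm $\les\lA V_j\rA_{W^{1,\infty}}$, modulo losing a Sobolev derivative on $u_j$ which is absorbed by $\Lambda^{\sigma}$. Then use the antisymmetry $\RE\langle T_{V_j}\cdot\nabla w,w\rangle=-\mez\langle T_{\cnx V_j}w,w\rangle+O(\lA V_j\rA_{W^{1,\infty}})\lA w\rA_{L^2}^2$, valid because the principal symbol $iV_j\cdot\xi$ is purely imaginary. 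This bounds each transport term by $\lA V_j\rA_{W^{1,\infty}}E(t)$.

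\textbf{Step 3 (Dispersive cross-term).} The symbol $\ell=\sqrt{a_2\lambda_1}$ is real and belongs to $\Gamma^{\mez}_{\mez}$ with seminorm $\mathcal{M}^{\mez}_{\mez}(\ell)\le \mathcal{F}(\lA \eta_1\rA_{H^{s+\mez}})(1+\lA \eta_1\rA_{\holdertdm}+\lA a_2\rA_{\holdermez})$ thanks to Proposition~\ref{prop:ma} and tame H\"older product rules. Commuting $\Lambda^\sigma$ past $T_\ell$ and writing $D(t)=\RE\langle (T_\ell^*-T_\ell)\Lambda^\sigma\vartheta,\Lambda^\sigma\varphi\rangle+(\text{commutator})$, the difference $T_\ell^*-T_\ell$ is of order $0$ by \eqref{esti:quant2} (since $\ell$ is real and in $\Gamma^{\mez}_{\mez}$), giving
$$
|D(t)|\le \mathcal{F}(M_1,M_2)\,\bigl(1+\lA\eta_1\rA_{\holdertdm}+\lA a_2\rA_{\holdermez}\bigr) E(t).
$$

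\textbf{Step 4 (Source and initial data).} By Lemma~\ref{symetr}, $\lA (g_1,g_2)\rA_{L^{p}(I;H^{\sigma}\times H^{\sigma})}\le \mathcal{K}(M_1,M_2)N(T)$. Also, since $\varphi|_{t=0}=T_{\sqrt{\lambda_1}}(V+\zeta_1 B)|_{t=0}$ and $\vartheta|_{t=0}=T_{\sqrt{a_2}}\zeta|_{t=0}$ with $T_{\sqrt{\lambda_1}}$ of order $\mez$ and $T_{\sqrt{a_2}}$ of order $0$, the paraproduct estimate \eqref{esti:quant1} gives $E(0)^{1/2}\le \mathcal{K}(M_1,M_2) N(0)$ (the product $\zeta_1 B$ being handled by the usual paraproduct decomposition, whose remainder $T_B\zeta_1+R(\zeta_1,B)$ is estimated in $H^{s-1}$ by $M_1 M_2$-type bounds).

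\textbf{Step 5 (Gronwall and Strichartz absorption).} Collecting Steps~1-3 yields
$$
\frac{dE}{dt}\le \mathcal{F}(M_1,M_2)\bigl(1+\lA V_1\rA_{W^{1,\infty}}+\lA V_2\rA_{W^{1,\infty}}+\lA\eta_1\rA_{\holdertdm}+\lA a_2\rA_{\holdermez}\bigr)E(t)+2(\lA g_1\rA_{H^\sigma}+\lA g_2\rA_{H^\sigma})E(t)^{1/2}.
$$
The crucial observation is that all the H\"older norms appearing on the right belong to $L^p_t$ by the definition of $M_1,M_2$ (and by Proposition~\ref{prop:ma} for $a_2$, whose $\holdermez$ norm is controlled by $\|\eta_2\|_{\holdertdm}+\|(V_2,B_2)\|_{\holder{r}}$). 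Using H\"older in time with exponents $(p,p')$ to transfer from $L^1_t$ to $L^p_t$ generates a factor $T^{1/p'}=T^\delta$. Integrating and using the usual Gronwall lemma yields
$$
E(T)^{1/2}\le e^{T^\delta\mathcal{K}(M_1,M_2)}\bigl(E(0)^{1/2}+T^\delta\mathcal{K}(M_1,M_2)N(T)\bigr),
$$
which together with the initial bound from Step~4 gives \eqref{esti:N'}.

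The main technical obstacle is Step~3: proving that the dispersive coupling gives only a zero-order contribution requires a careful application of the symbolic calculus to a symbol $\ell$ of limited regularity, tracking both its Sobolev and its H\"older seminorms with the tame dependence on the highest norm. This is precisely where the tame H\"older estimates on the Taylor coefficient from Proposition~\ref{prop:ma} become indispensable, since the naive bound $\lA a_2\rA_{H^{s-\mez}}$ would force us to work at a Sobolev level above the available regularity threshold.
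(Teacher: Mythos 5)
Your proof is correct and follows essentially the same route as the paper, which at this point simply invokes \emph{mutatis mutandis} the proof of Lemma~5.7 in \cite{ABZ3}: a weighted $L^2$ energy estimate for the symmetrized system, where each unknown is transported by its own vector field (so $V_1\neq V_2$ causes no cross-term), the dispersive coupling cancels up to $T_\ell^*-T_\ell$ of order $0$, and the $L^p_t$ control of the H\"older norms is converted into the $T^\delta$ factor by H\"older in time before Gronwall. Your Steps~1--5 are a faithful expansion of that argument, including the tame tracking of $\mathcal{M}^{1/2}_{1/2}(\ell)$ via Proposition~\ref{prop:ma}.
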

This follows by using {\em mutatis mutandis} the arguments used in the proof 
of Lemma~$5.7$ in \cite{ABZ3}. 

Then the end of the proof of Theorem~\ref{th.lipschitz} is in two steps. Firstly, using \e{esti:N'}, 
one deduces a Sobolev estimate for the original unknown $(\eta,\psi,V,B)$. 
Again, this follows from the analysis in \cite{ABZ3}. 
Secondly, one has to estimate the H\"older norms. To do so, we use 
Theorem~\ref{T20}. To use this theorem, one has to reduce the analysis to a scalar 
equation. Notice that System~\e{syst:delta5}--\e{syst:delta5bis} involves two 
different transport operators $\partial_t+V_1\cdot\nabla$ and 
$\partial_t+V_2\cdot\nabla$. This is used in the proof of 
Lemma~$5.6$ in \cite{ABZ3} to bound the commutator 
$[T_{\sqrt{a_2}},\partial_t+V_2\cdot\nabla]$ in terms of the $L^\infty$-norm 
of $\partial_t a_2+V_2\cdot\nabla a_2$. However, once this symmetrization is done and 
Lemma~\ref{symetr}Ê
is proved, one 
can freely replace $\partial_t+T_{V_2}\cdot\nabla$ by 
$\partial_t+T_{V_1}\cdot\nabla$ in the equation for $\vartheta$. Indeed, this produces 
an error 
$T_{V_1-V_2}\cdot\nabla\vartheta $ that we estimate writing 
$$
T_{V_1-V_2}\cdot\nabla\vartheta =
\Big\{T_{V_1-V_2}\cdot\nabla T_{\sqrt{a_2}}\zeta_1 \Big\}
-\Big\{T_{V_1-V_2}\cdot\nabla T_{\sqrt{a_2}}\zeta_2 \Big\}
$$
and we estimate separately the contribution of each term, writing 
$$
\lA T_{V_1-V_2}\cdot\nabla T_{\sqrt{a_2}}\zeta_1 \rA_{H^{s-\tdm}}
\les \lA V_1-V_2\rA_{L^\infty}\lA \sqrt{a_2}\rA_{L^\infty}\lA \zeta_1\rA_{H^{s-\mez}}
$$
together with a similar estimate for the other term. By so doing, it follows from 
\e{syst:delta5bis} that
$$
(\partial_{t}+T_{V_1}\cdot\partialx)\vartheta -T_{\ell }\varphi =G_2,
$$
where $G_2$ satisfies $\lA G_2\rA_{L^{p}(I;H^{s-\tdm}\times H^{s-\tdm})} \le \K(M_1,M_2)N(T)$. 
Now we find that $u=\vartheta+i\varphi$ satisfies 
$$
\partial_t u+T_{V_1}\cdot\nabla u +iT_{\ell}u=g
$$
with 
$$
\lA g\rA_{L^{p}(I;H^{s-\tdm}\times H^{s-\tdm})} \le \K(M_1,M_2)N(T).
$$
Since the $L^{p}(I;H^{s-\tdm}\times H^{s-\tdm})$-norm of 
$T_{\cnx V_1} u$ is bounded by $\K(M_1,M_2)N(T)$, one can further 
reduce the analysis 
to 
$$
\partial_t u+\mez\big( T_{V_1}\cdot\nabla +\nabla\cdot T_{V_1}\big)u
+iT_{\ell}u=g'
$$
with 
$$
\lA g'\rA_{L^{p}(I;H^{s-\tdm})} \le \K(M_1,M_2)N(T).
$$
Then one is in position to apply Theorem~\ref{T20} with $s$ replaced by $s-3/2$ (notice that $s$ in any real number in 
Theorem~\ref{T20}) to obtain 
\begin{align*}
&\Vert  u  \Vert_{L^p(I;  \holder{r-\mez})}\\
&\qquad \le \Vert  u  \Vert_{L^p(I;  \zygmund{s-\tdm-\frac{d}{2}+\mu})}\\
&\qquad\leq    \mathcal{F}\big(\Vert V_1\Vert_{E_0} +  \mathcal{N}_k(\sqrt{ \lambda_1 \ma_2})\big) 
\Big\{  \Vert  g'\Vert_{L^p(I; H^{s-\tdm})} 
+ \Vert  u \Vert_{C^0(I; H^{s-\tdm})}\Big\}\\
&\qquad \le \K(M_1,M_2)N(T).
\end{align*}
Then, proceeding as above, we recover an estimate for the original unknowns, that is an estimate for 
$\lA (\eta,V,B)\rA_{L^p([0,T];\holder{r-\mez}\times \holder{r-1}\times \holder{r-1})}$. 
This completes the proof of Theorem~\ref{th.lipschitz}.

\section{Passing to the limit in the equations}\label{S:53}

Below we shall obtain rough solutions of the water waves system as limits of smoother solutions. 
This requires to prove that one can pass to the limit in the equations. 
Here we shall prove that it is possible to do so even under very mild assumptions.

Firstly, we prove a strong continuity property of the Dirichlet Neumann operator at 
the minimal level of regularity required to prove that 
$G(\eta)\psi$ is well-defined,  
that is for any 
Lipschitz function $\eta$ and any $\psi$ in $H^{\mez}(\xR^d)$, recalling that
\be\label{n539}
\lA G(\eta)f\rA_{H^{-\mez}}\le \mathcal{F} \big(\lA \eta\rA_{W^{1,\infty}}\big)\lA f\rA_{H^{\mez}}.
\ee
We have the following result which complements Proposition \ref{L:p60}. 
\begin{prop}\label{th.25}
There exists a non decreasing function $\mathcal{F}\colon\xR_+\rightarrow \xR_+$ such that, 
for all $\eta_j\in W^{1,\infty}({\mathbf{R}}^d)$,~$j=1,2$ and all $f\in H^{\mez} ({\mathbf{R}}^d)$, 
\be\label{n540}
\big\Vert \bigl(G(\eta_1)- G(\eta_2)\bigr) f \big\Vert_{H^{-\mez} }
\leq \mathcal{F} \bigl(\|(\eta_1,\eta_2)\|_{W^{1,\infty}\times W^{1,\infty}}\bigr)
\|\eta_1 - \eta_2\|_{W^{1,\infty}} \|f \|_{H^\mez}.
\ee
\end{prop}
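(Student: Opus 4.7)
The plan is to extend the variational energy argument from the proof of Proposition \ref{L:p60} to the minimal regularity setting, exploiting only Lipschitz control on $\eta_j$ and $H^{1/2}$ regularity of $f$. First I would use the flattening diffeomorphisms $(x,z)\mapsto (x,\rho_j(x,z))$ from $\tilde{\Omega}$ to $\Omega_j$ of \eqref{diffeobis}, arranged so that the two transformed domains coincide on $\tilde{\Omega}_2\cup \tilde{\Omega}_3$. Because of the Poisson-kernel smoothing in the definition of $\rho_j$, even for $\eta_j$ merely Lipschitz one has $\|\nabla_{x,z}\rho_j\|_{L^\infty(\tilde{\Omega})}\le C(\|\eta_j\|_{W^{1,\infty}})$ and, by taking differences, $\|\nabla_{x,z}(\rho_1-\rho_2)\|_{L^\infty(\tilde{\Omega})}\le C(\|(\eta_1,\eta_2)\|_{W^{1,\infty}})\|\eta_1-\eta_2\|_{W^{1,\infty}}$. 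Consequently, the coefficient $\beta$ corresponding to $\Lambda^1-\Lambda^2=\beta\partial_z$ and the Jacobian difference $J_1-J_2$ both have $L^\infty$-norms bounded by $C(\cdots)\|\eta_1-\eta_2\|_{W^{1,\infty}}$.

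Writing $\tilde{\phi}_j=\tilde{u}_j+\underline{\tilde{f}}$ with $\underline{\tilde{f}}=e^{z\langle D_x\rangle}f$ (so $\|\nabla_{x,z}\underline{\tilde{f}}\|_{L^2(\tilde{\Omega})}\lesssim \|f\|_{H^{1/2}}$) and $\tilde{u}_j$ the variational solution vanishing at $z=0$, the key step is to prove
\[
\|\nabla_{x,z}\tilde{\phi}\|_{L^2(\tilde{\Omega})}\le \mathcal{F}(\|(\eta_1,\eta_2)\|_{W^{1,\infty}})\,\|\eta_1-\eta_2\|_{W^{1,\infty}}\,\|f\|_{H^{1/2}}
\]
for $\tilde{\phi}=\tilde{\phi}_1-\tilde{\phi}_2$. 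This is obtained exactly as in \eqref{n530}: subtracting the two variational identities and testing against $\theta=\tilde{\phi}$ produces the six terms $A_1,\ldots,A_6$. Unlike in the proof of Proposition \ref{L:p60}, I cannot invoke Proposition \ref{p1H} to gain an $L^\infty$ bound on $\nabla_{x,z}\tilde{u}_2$; instead each $A_i$ must be arranged as an $L^\infty\times L^2\times L^2$ product, with the $L^\infty$ factor being either $\beta$ or $J_1-J_2$ (both of size $\|\eta_1-\eta_2\|_{W^{1,\infty}}$), one $L^2$ factor being $\|\nabla_{x,z}\tilde{\phi}\|_{L^2}$ (absorbed into the left-hand side via Cauchy--Schwarz), and the other $L^2$ factor controlled by $\|\nabla_{x,z}\tilde{u}_2\|_{L^2}+\|\nabla_{x,z}\underline{\tilde{f}}\|_{L^2}\lesssim \|f\|_{H^{1/2}}$, using the standard variational bound on $\tilde{u}_2$.

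Next, setting
\[
U_j=\frac{1+|\nabla_x\rho_j|^2}{\partial_z\rho_j}\partial_z\tilde{\phi}_j-\nabla_x\rho_j\cdot\nabla_x\tilde{\phi}_j,\qquad U=U_1-U_2,
\]
so that $(G(\eta_1)-G(\eta_2))f=U\arrowvert_{z=0}$, I would decompose $U$ into a part involving $\nabla_{x,z}\tilde{\phi}$ with $L^\infty$ coefficients depending only on $\rho_1$, plus a part with coefficients proportional to $\rho_1-\rho_2$ multiplied by $\nabla_{x,z}\tilde{\phi}_2$. The energy estimate above together with the elementary variational bound $\|\nabla_{x,z}\tilde{\phi}_2\|_{L^2(\tilde{\Omega})}\lesssim C(\|\eta_2\|_{W^{1,\infty}})\|f\|_{H^{1/2}}$ then give $\|U\|_{L^2(I;L^2)}$ bounded by the right-hand side of \eqref{n540}. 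For $\partial_z U$, applying the divergence identity \eqref{dzU} to the difference yields
\[
\partial_z U=\text{div}\bigl(\nabla_x\rho_1\,\partial_z\tilde{\phi}-\partial_z\rho_1\,\nabla_x\tilde{\phi}\bigr)+\text{div}\bigl(\nabla_x(\rho_1-\rho_2)\,\partial_z\tilde{\phi}_2-\partial_z(\rho_1-\rho_2)\,\nabla_x\tilde{\phi}_2\bigr),
\]
and the same $L^\infty\times L^2$ splitting gives $\|\partial_z U\|_{L^2(I;H^{-1})}$ bounded by the same right-hand side. Lemma \ref{lions} applied with $\sigma=-1/2$ then provides $U\in C^0([-1,0];H^{-1/2})$ with the required bound, and evaluating at $z=0$ concludes the proof.

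The main obstacle is the energy estimate itself: at this low level of regularity every $A_i$ must fit a rigid $L^\infty\times L^2\times L^2$ pattern in which the $L^\infty$ factor carries a full first derivative of $\eta_1-\eta_2$. It is essential that $\beta$ and $J_1-J_2$ involve only first derivatives of the $\rho_j$'s (so that Lipschitz control on $\eta_j$ and its difference is enough) and that $\tilde{u}_2$ is bounded purely through the $H^{1/2}$ norm of $f$ by the variational theory; no product rule in Sobolev spaces, Sobolev embedding into $L^\infty$, or Hölder-type refinement can be invoked, since none is available at the regularity stipulated by the statement.
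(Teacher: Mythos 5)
Your proposal is correct and follows essentially the same route as the paper: the same flattening diffeomorphisms with the $L^\infty$ bounds on $\beta$ and $J_1-J_2$ of size $\|\eta_1-\eta_2\|_{W^{1,\infty}}$, the same variational energy identity producing the six terms $A_1,\dots,A_6$ each arranged as an $L^\infty\times L^2\times L^2$ product with the $L^2$ factors controlled by the basic variational bound $\|\nabla_{x,z}\widetilde{u}_j\|_{L^2}\lesssim \|f\|_{H^{1/2}}$, and the same conclusion via the decomposition of $U$ and $\partial_z U$ (in divergence form) followed by Lemma \ref{lions} with $\sigma=-1/2$. Your observation that Proposition \ref{p1H} is unavailable here and that every term must therefore fit the rigid $L^\infty\times L^2\times L^2$ pattern is precisely the point of the paper's argument.
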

\begin{proof}
The proof follows the one of Proposition \ref{L:p60}. 
In particular, we shall use the variational formulation 
of the harmonic extension of $f$ used in the last part of the proof of 
Proposition \ref{L:p60}.

Recall that, for~$j=1,2$, we introduce 
$\rho_j(x,z)$ defined by~\eqref{diffeo} (replacing of course $\eta$ by $\eta_j$). 
Recall also 
(see the paragraph below \e{est-v1}) 
that we have set
 \begin{align}
\Lambda_1^j = \frac{1}{\partial_z \rho_j}\partial_z, \quad \Lambda_2^j 
= \nabla_x - \frac{\nabla_x \rho_j}{\partial_z \rho_j}\partial_z.
\end{align}
and
$\widetilde{\phi}_j(x,z) = \phi_j(x,\rho_j(x,z))$ (where $\Delta_{x,y}\phi_j =0$ in $\Omega_j, \phi_j\arrowvert_{\Sigma_j} = f $). 
As already seen, we then have
\begin{equation}\label{Getaj}
 G(\eta_j)f = U_j\arrowvert_{z=0}, \quad U_j= \Lambda_1^j \widetilde{\phi}_j - \nabla_x \rho_j\cdot \Lambda_2^j \widetilde{\phi}_j.
\end{equation}
We shall make repeated use of the bound
\be\label{est-rhoiii}
\Vert \nabla_{x,z}(\rho_1 -\rho_2)\Vert_{L^\infty(I,L^\infty(\xR^d))}
\leq C\Vert \eta_1-\eta_2\Vert_{W^{1,\infty}(\xR^d)}.
\ee
This implies that
\begin{equation}\label{diff-lambda}
\left\{
 \begin{aligned}
 &(i) \quad \Lambda^1_k - \Lambda^2_k = \beta_k \partial_z, 
 \quad \text{with }\supp\beta_k \subset \xR^d\times I, \text{where }  I = (-1,0), \\
 &(ii) \quad \Vert \beta_k \Vert_{L^\infty(I\times \xR^d)}
 \leq\mathcal{F}(\Vert (\eta_1, \eta_2)\Vert_{W^{1,\infty} \times W^{1,\infty} }) 
 \Vert \eta_1-\eta_2\Vert_{W^{1,\infty} }.
 \end{aligned}
 \right.
 \end{equation}

\begin{lemm}\label{dem:estvar}
Set $I=(-1,0)$, $v = \widetilde{\phi}_1 - \widetilde{\phi}_2,$ and $ \Lambda^j = (\Lambda^j_1, \Lambda^j_2) $. 
There exists a non decreasing function $\mathcal{F}:\xR^+\to \xR^+$ such that 
\begin{equation}\label{est2:var}
\Vert \Lambda^j v \Vert_{L^2(I;L^2(\xR^d))}
\leq \mathcal{F}( \Vert(\eta_1, \eta_2)\Vert_{W^{1,\infty} \times W^{1,\infty} })
\Vert \eta_1-\eta_2\Vert_{W^{1,\infty} }
\Vert f\|_{H^{\mez}}.
\end{equation}
\end{lemm}
Let us show how this Lemma implies 
Theorem \ref{th.25}. According to \eqref{Getaj} we have
\begin{equation}\label{formeU}
\begin{aligned}
 U_1-U_2 &= (1) +(2) +(3)+ (4)+ (5) \quad \text{where}\\
(1) &= \Lambda_1^1 \widetilde{\phi}, \quad (2) 
= (\Lambda_1^1 -\Lambda_1^2)\widetilde{\phi}_2, \quad (3) =  - \nabla_x (\rho_1 -   \rho_2) \Lambda_2^1 \widetilde{\phi}_1\\
(4)&= - (\nabla_x \rho_2) \Lambda_2^1\widetilde{\phi}, \quad (5) =   - (\nabla_x \rho_2)(\Lambda_2^1- \Lambda_2^2)\widetilde{\phi}_2.
\end{aligned}
 \end{equation}

The $L^2(I, L^2(\xR^d))$ norms of $(1)$ and $(4)$ are estimated 
using \eqref{est2:var}. Also, the $L^2(I, L^2(\xR^d))$ norms of $(2)$ and $(5)$ are estimated 
by the right hand side of \eqref{est2:var} using \eqref{diff-lambda} and \eqref{est:phi}. 
Eventually the $L^2(I, L^2(\xR^d))$ norm of $(3)$ is also estimated by the right hand side 
of \eqref{est2:var} using \eqref{est-rhoiii} and \eqref{est:phi}. It follows that 
\begin{equation}\label{est:Ujb}
\Vert U_1-U_2 \Vert_{L^2(I,L^2 )} 
\leq \mathcal{F}( \Vert(\eta_1, \eta_2)\Vert_{W^{1,\infty} \times W^{1,\infty}})
\|\eta_1 - \eta_2\|_{W^{1, \infty}}  \| f\|_{H^{\mez}}.
\end{equation}
Now according to \eqref{dzU} we have
\begin{equation}\label{formedzU}
\partial_z(U_1-U_2)
= -\nabla_x\big(  \partial_z(\rho_1- \rho_2 )\Lambda_2^1 \widetilde{\phi}_1 
+ (\partial_z\rho_2)(\Lambda_2^1-\Lambda_2^2) \widetilde{\phi}_1 +(\partial_z\rho_2)\Lambda_2^2\widetilde{\phi}\big).
\end{equation}
Therefore using the same estimates as above we see easily that
\begin{equation}\label{est:dzUj}
\Vert  \partial_z(U_1-U_2)\Vert_{L^2(I,H^{-1} )}
\leq \mathcal{F}( \Vert(\eta_1, \eta_2)\Vert_{W^{1,\infty} \times W^{1,\infty} }) \|\eta_1 - \eta_2\|_{W^{1, \infty}}  \| f\|_{H^{\mez}}.
\end{equation}
Then Theorem \ref{th.25} follows from \eqref{est:Ujb}, \eqref{est:dzUj} and Lemma \ref{lions}.
\end{proof}
\begin{proof}[Proof of Lemma \ref{dem:estvar}]
We proceed as in the end of the proof of Proposition~\ref{L:p60}. 
Namely we use the inequality 
\begin{equation*} 
\int_{\widetilde{\Omega}}\vert \Lambda ^1\widetilde{\phi} \vert^2\, dX \leq C(A_1+\cdots +A_6)
\end{equation*}
where $A_1,\ldots,A_6$ are given by \e{n530}, and we recall their expressions for the reader convenience:
\begin{equation*}
\left\{
\begin{aligned}
&A_1=   \int_{\widetilde{\Omega}}\vert (\Lambda^1 - \Lambda^2) 
\widetilde{u}_2 \vert \vert \Lambda^1 \widetilde{\phi} \vert \,J_1\,  dX, \qquad   
&&A_2
=\int_{\widetilde{\Omega}}\vert (\Lambda^1 - \Lambda^2) \widetilde{\phi} \vert \vert \Lambda^2 \widetilde{u}_2 \vert \,J_1 dX,  \\
&A_3 = \int_{\widetilde{\Omega}}\vert \Lambda^2\widetilde{u}_2\vert \vert  
\Lambda^2 \widetilde{\phi} \vert \,\vert J_1-J_2\vert\, dX, \qquad 
&&A_4 = \int_{\widetilde{\Omega}}
\vert (\Lambda^1 - \Lambda^2) \underline{\widetilde{f}}  
\vert \vert \Lambda^1 \widetilde{u}\vert \,J_1\, dX, \\
& A_5 =  \int_{\widetilde{\Omega}}\vert (\Lambda^1 - \Lambda^2) \widetilde{\phi}
\vert \vert \Lambda^2 \underline{\widetilde{f}}  \vert \,J_1\, dX,
\qquad 
&& A_6 =  
 \int_{\widetilde{\Omega}}\vert \Lambda^2  \underline{\widetilde{f}} \vert \vert  \Lambda^2 \widetilde{\phi} \vert \,\vert J_1-J_2\vert\, dX,
\end{aligned}
\right.
 \end{equation*}
where $\underline{\widetilde{f}}$ and 
$\widetilde{u}_j$ ($\widetilde{u}\defn\widetilde{u}_1-\widetilde{u}_2$) are such that $\widetilde\phi_j = \widetilde{u}_j + \underline{\widetilde{f}}$ with  
$\underline{\widetilde{f}}=e^{z\langle D_x\rangle}f$.
 
Using \eqref{diff-lambda}, \eqref{est:phi}, \eqref{est-rhoiii} we can write
\begin{equation}\label{est:A1}  
\begin{aligned}
\vert A_1 \vert &\leq \Vert \beta \Vert_{L^{\infty}(I\times \xR^d)}
\Vert J_1 \Vert_{L^{\infty}(I\times \xR^d)}\Vert \partial_z \widetilde{u}_2\Vert_{L^{2}(I\times \xR^d)}
\Vert \Lambda^1 \widetilde{\phi}\Vert_{L^{2}(\widetilde{\Omega})}\\
& \leq \mathcal{F}( \| (\eta_1, \eta_2)\|_{W^{1, \infty}\times W^{1, \infty}})
\|\eta_1 - \eta_2\|_{W^{1, \infty}}  \| f\|_{H^{\mez}}\Vert \Lambda^1 \widetilde{\phi}\Vert_{L^{2}(\widetilde{\Omega})}.
\end{aligned}
\end{equation}
Since $\Lambda_j^1 - \Lambda_j^2 = \beta_j \partial_z \rho_1\Lambda_1^1$ 
the term $A_2$ can be bounded by the right hand side of \eqref{est:A1}.

Now we have $\Vert J_1 - J_2 \Vert _{L^{\infty}(I\times \xR^d)} \leq C\Vert \eta_1 -\eta_2 \Vert_{W^{1,\infty}(\xR^d)}$ and 
$$
\Vert \Lambda^2 \widetilde{\phi}\Vert_{L^{2}(\widetilde{\Omega})}
\leq \mathcal{F}( \| (\eta_1, \eta_2)\|_{W^{1, \infty}\times W^{1, \infty}}) \Vert \Lambda^1\widetilde{\phi} \Vert _{L^{2}(\widetilde{\Omega})}.
$$
So using \eqref{est:phi} we see that the term $A_3$ 
can be also estimated by the right hand side of \eqref{est:A1}. 
To estimate the terms $A_4$ to $A_6$ we use the same arguments and also 
$\Vert \underline{\widetilde{f}} \Vert_{H^1(\widetilde{\Omega})} 
\les \Vert f\Vert_{H^\mez({\mathbf{R}}^d)}$. This completes the proof.
\end{proof}

\begin{nota}
Given two functions $\eta,\psi$ one writes
$$
B(\eta)\psi=\frac{G(\eta)\psi+\nabla\eta\cdot\nabla\psi}{1+|\nabla\eta|^2},\quad 
V(\eta)\psi=\nabla\psi-(B(\eta)\psi)\nabla\eta.
$$
\end{nota}

\begin{coro}\label{T534}
Fix $s>\mez +\frac{d}{2}$. 
Consider two functions 
$\eta$ and $\psi$ in
$H^{s+\mez}(\xR^d)$ as well as two 
sequences $(\eta_n)_{n\in \xN}$ and $(\psi_n)_{n\in\xN}$ such that the following properties hold:
\begin{enumerate}
\item $(\eta_n)_{n\in \xN}$ and $(\psi_n)_{n\in\xN}$ 
are bounded sequences in $H^{s+\mez}(\xR^d)$;
\item $(\eta_n)_{n\in \xN}$ converges to $\eta$ 
in $W^{1,\infty}(\xR^d)$;
\item $(\psi_n)_{n\in \xN}$ converges to $\psi$ 
in $H^{\mez}(\xR^d)$.
\end{enumerate}
Then $G(\eta_n)\psi_n$ (resp.\ $B(\eta_n)\psi_n$, resp.\ 
$V(\eta_n)\psi_n$) converges in $H^{-\mez}(\xR^d)$ to 
$G(\eta)\psi$ (resp.\ $B(\eta)\psi$, resp.\ $V(\eta)\psi$).
\end{coro}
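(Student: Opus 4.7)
The plan is to treat $G$ first, then deduce the results for $B$ and $V$ from it. For the Dirichlet--Neumann part, I would use the natural splitting
$$
G(\eta_n)\psi_n - G(\eta)\psi = \bigl[G(\eta_n)-G(\eta)\bigr]\psi + G(\eta_n)(\psi_n-\psi).
$$
For the first term, Proposition~\ref{th.25} gives
$$
\bigl\lVert [G(\eta_n)-G(\eta)]\psi\bigr\rVert_{H^{-\mez}}
\le \mathcal{F}\bigl(\|(\eta_n,\eta)\|_{W^{1,\infty}\times W^{1,\infty}}\bigr)
\|\eta_n-\eta\|_{W^{1,\infty}}\|\psi\|_{H^{\mez}},
$$
which tends to $0$ since $(\eta_n)$ is bounded in $W^{1,\infty}$ (by hypothesis~(2)) and $\psi\in H^{s+\mez}\subset H^{\mez}$. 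For the second term, I apply the basic bound \eqref{n539}:
$$
\lA G(\eta_n)(\psi_n-\psi)\rA_{H^{-\mez}}
\le \mathcal{F}\bigl(\lA \eta_n\rA_{W^{1,\infty}}\bigr)\lA \psi_n-\psi\rA_{H^{\mez}},
$$
which also tends to $0$ by hypothesis~(3) and the uniform $W^{1,\infty}$ bound on $\eta_n$.

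Next, since $s>\mez+d/2$, the boundedness of $(\eta_n)$ in $H^{s+\mez}$ gives a uniform bound of $\nabla\eta_n$ in $H^{s-\mez}$, an algebra that embeds continuously into $L^\infty$. Using the standard Sobolev product rule $H^{s-\mez}\cdot H^{-\mez}\hookrightarrow H^{-\mez}$ (valid since $s-\mez>d/2$ and $s-1>-\mez$), I would show that
$$
\nabla\eta_n\cdot\nabla\psi_n \longrightarrow \nabla\eta\cdot\nabla\psi
\quad\text{in } H^{-\mez},
$$
by writing the difference as $(\nabla\eta_n-\nabla\eta)\cdot\nabla\psi_n+\nabla\eta\cdot(\nabla\psi_n-\nabla\psi)$: the first piece converges to $0$ in $L^2\subset H^{-\mez}$ (use $\nabla\eta_n-\nabla\eta\to 0$ in $L^\infty$ and $\nabla\psi_n$ bounded in $L^2$), and the second piece converges to $0$ in $H^{-\mez}$ by the product rule applied to the fixed multiplier $\nabla\eta\in H^{s-\mez}$ and $\nabla\psi_n-\nabla\psi\to 0$ in $H^{-\mez}$. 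Combined with the result for $G$, this yields $G(\eta_n)\psi_n+\nabla\eta_n\cdot\nabla\psi_n \to G(\eta)\psi+\nabla\eta\cdot\nabla\psi$ in $H^{-\mez}$.

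For the quotient defining $B$, write $B(\eta_n)\psi_n = N_n/D_n$ with $N_n$ the numerator above and $D_n = 1+|\nabla\eta_n|^2$. The function $1/D_n$ is uniformly bounded in $L^\infty\cap H^{s-\mez}$ (by composition of the smooth map $t\mapsto 1/(1+t)$ with $|\nabla\eta_n|^2$, using that $H^{s-\mez}$ is an algebra) and converges uniformly to $1/D$. Splitting
$$
\frac{N_n}{D_n}-\frac{N}{D} = \frac{1}{D_n}(N_n-N) + N\Big(\frac{1}{D_n}-\frac{1}{D}\Big),
$$
the first term tends to $0$ in $H^{-\mez}$ by the product rule, and for the second one uses that $1/D_n-1/D = (D-D_n)/(D_n D)$ is bounded in $H^{s-\mez}$ and tends to $0$ in $L^\infty$, hence by interpolation tends to $0$ in $H^{s'-\mez}$ for any $s'<s$, which suffices (via product rule) against the fixed $N\in H^{-\mez}$. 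Finally, $V(\eta_n)\psi_n = \nabla\psi_n - B(\eta_n)\psi_n\,\nabla\eta_n$ converges in $H^{-\mez}$: $\nabla\psi_n\to\nabla\psi$ in $H^{-\mez}$ directly, and for the product $B(\eta_n)\psi_n\,\nabla\eta_n$ one invokes $B(\eta_n)\psi_n\to B(\eta)\psi$ in $H^{-\mez}$ together with the uniform $H^{s-\mez}$ bound and $L^\infty$-convergence of $\nabla\eta_n$, using the same product-rule argument as for the numerator.

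The only delicate point is the multiplication of $H^{-\mez}$ functions by coefficients of limited regularity. The mechanism that makes it work throughout is the combination of the \emph{uniform} $H^{s+\mez}$ bound (which, via $s>\mez+d/2$, places the derivatives of $\eta_n$ in an algebra) with the strong convergence in the weak norm $W^{1,\infty}$ for $\eta$ and $H^{\mez}$ for $\psi$; this is precisely the balance encoded in Proposition~\ref{th.25} and in the Sobolev product rule, and it is where the hypothesis $s>\mez+d/2$ is used (and only here, beyond the well-posedness of $G(\eta)$).
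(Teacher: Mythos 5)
Your argument follows the paper's own proof in all essentials: the same telescoping of $G(\eta_n)\psi_n-G(\eta)\psi$ handled by Proposition~\ref{th.25} together with the crude bound \eqref{n539} (the paper distributes the telescoping as $(G(\eta_n)-G(\eta))\psi_n+G(\eta)(\psi_n-\psi)$ rather than your $(G(\eta_n)-G(\eta))\psi+G(\eta_n)(\psi_n-\psi)$, which is immaterial), and the same mechanism for $B$ and $V$: the product rule $H^{s-\mez}\times H^{-\mez}\to H^{-\mez}$ against the factor converging in $H^{-\mez}$, and $L^\infty$-convergence of the coefficients against the remaining factor.

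One step is mis-justified. You assert that $1/D_n-1/D$ is bounded in $H^{s-\mez}$ and tends to $0$ in $L^\infty$, \emph{hence by interpolation} tends to $0$ in $H^{s'-\mez}$ for $s'<s$. That implication is false on $\xR^d$: a sequence such as $u_n(x)=n^{-1}\chi(n^{-2/d}x)$ tends to $0$ in $L^\infty$ and is bounded in every $H^{\sigma}$ with $\sigma\ge 0$, yet its $L^2$-norm stays bounded away from $0$, so no positive-order Sobolev convergence follows from $L^\infty$-convergence plus a Sobolev bound. What saves your conclusion is that the factor $N=G(\eta)\psi+\nabla\eta\cdot\nabla\psi$ multiplying $1/D_n-1/D$ actually lies in $L^2$ (indeed $G(\eta)\psi\in H^{s-\mez}$ by Proposition~\ref{T:DN-Hs} applied with $\sigma=s+\mez$, and $\nabla\eta\cdot\nabla\psi\in L^\infty\cdot L^2$), so the direct bound $\lA N(1/D_n-1/D)\rA_{H^{-\mez}}\le \lA N\rA_{L^2}\,\lA 1/D_n-1/D\rA_{L^\infty}\to 0$ does the job; this is precisely how the paper treats the corresponding term, and the same remark applies to the term $(\nabla\eta_n-\nabla\eta)\,B(\eta)\psi$ implicit in your argument for $V$. (Alternatively, your interpolation can be repaired by first observing that $|\nabla\eta_n|^2-|\nabla\eta|^2=(\nabla\eta_n-\nabla\eta)\cdot(\nabla\eta_n+\nabla\eta)$ converges to $0$ in $L^2$, not merely in $L^\infty$, and only then interpolating with the uniform $H^{s-\mez}$ bound.)
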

\begin{proof}
The proof is straightforward. 
Write
$$
G(\eta_n)\psi_n-G(\eta)\psi=\big(G(\eta_n)-G(\eta)\big)\psi_n
+G(\eta)(\psi_n-\psi).
$$
The inequality \e{n539} (resp.\ \e{n540}) then implies that the 
second (resp.\ first) term in the right-hand side converges to 
$0$ in $H^{-\mez}(\xR^d)$. 

To study the limit of $B(\eta_n)\psi_n$, we first prove that 
$\nabla\eta_n\cdot \nabla\psi_n$ converges to $\nabla\eta\cdot\nabla \psi$ in $H^{-\mez}$. To do so, one makes the difference and then use the fact that the product is bounded from $H^{s-\mez}\times H^{-\mez}$ to $H^{-\mez}$ to obtain
$$
\lA \nabla\eta_n \cdot \nabla(\psi_n-\psi)\rA_{H^{-\mez}}
\les \lA \eta_n\rA_{H^{s+\mez}}\lA \psi_n-\psi\rA_{H^{\mez}}.
$$
On the other hand
$$
\lA \nabla (\eta_n-\eta)\cdot\nabla\psi\rA_{H^{-\mez}}
\le \lA \nabla (\eta_n-\eta)\cdot\nabla\psi\rA_{L^{2}}
\le \lA \eta_n-\eta\rA_{W^{1,\infty}}\lA \psi\rA_{H^{s+\mez}}.
$$
This proves that $\nabla\eta_n\cdot \nabla\psi_n$ converges to $\nabla\eta\cdot\nabla \psi$ in $H^{-\mez}$. 

Now set $a_n=(1+|\nabla\eta_n|^2)^{-1}$, 
$b_n=G(\eta_n)\psi_n+\nabla\eta_n\cdot \nabla\psi_n$. We have 
proved that $b_n$ converges to its limit $b=G(\eta)\psi+\nabla\eta\cdot \nabla\psi$ 
in $H^{-\mez}$. 
It is easily checked that $a_n$ converges to 
$a=(1+|\nabla\eta|^2)^{-1}$ in $L^{\infty}$ and that $a-1$ 
belongs to $H^{s-\mez}$. So, as above, one easily verify that $B(\eta_n)\psi_n=a_nb_n$ converges to $B(\eta)\psi=ab$ in 
$H^{-\mez}$. This in turn easily implies that 
$V(\eta_n)\psi_n$ converges to 
$V(\eta)\psi$ in $H^{-\mez}$.
\end{proof}

\section{Existence and uniqueness}\label{S:54}

We have already proved the uniqueness of solutions 
(which is a straightforward consequence of Theorem~\ref{th.lipschitz}) so, 
to complete the proof of Theorem~\ref{main}, it remains to prove the existence. 
This is done by means of standard arguments together with a sharp blow up criterion 
proved by de Poyferr\'e \cite{TdP}. Namely, it follows from his result 
that, 
if the lifespan $T^*$ of a smooth solution of the water waves system is finite, then
\be\label{n537}
\lim_{T\rightarrow T^*} \bigl(M_s(T)+Z_r(T)\bigr)=+\infty, 
\ee
with notations as above (with the same assumptions on $s$ and $r$, see \e{n416}). 

We use this criterion to obtain solutions to the water waves system 
as limits of smooth solutions. Namely, consider a family of initial data $(\psi_0^\eps,\eta_0^\eps)$ in 
$H^\infty({\mathbf{R}}^d)^2$ 
converging to $(\psi_0,\eta_0)$. 
It follows that 
the Cauchy problem 
has a unique smooth solution~$(\psi^\eps,\eta^\eps)$ defined on some time interval~$[0,T_\eps^*)$ 
(this follows from 
the Cauchy result in \cite{ABZ3} (see also \cite{WuInvent,WuJAMS} and \cite{LannesJAMS}). 
The question is to prove that this family of smooth solutions exists on a uniform time interval and that, in addition, it 
converges to a solution of the water waves system. 
By applying our a priori estimate \e{n417}, it follows that there exists a function~$\mathcal{F}$ such that, for all 
$\eps\in (0,1]$ and all~$T<T_\eps$, we have
\begin{equation}\label{apriorieps}
M_s^\eps(T)+Z_r^\eps(T)\le \mathcal{F}\bigl(\mathcal{F}(M_{s,0})+T\mathcal{F}\bigl(M_s^\eps(T)+Z_r^\eps(T)\bigr)\bigr),
\end{equation}
with obvious notations. 

Then, 
by standard arguments and using the blow up criterion~\e{n537},  
we infer that the lifespan is 
bounded from below by a positive time~$T$ 
independent of~$\eps$ 
and that we have uniform estimates 
on~$[0,T]$ for $M_s^\eps(T)+Z_r^\eps(T)$. 

For $\sigma\in\xR$ and $a\in [0,+\infty]$, set
$$
\mathcal{H}^\sigma\defn H^{\sigma+\mez}\times H^{\sigma+\mez}\times H^\sigma\times H^\sigma, 
\quad 
\mathcal{W}^a\defn 
\holder{a+\mez}\times \holder{a+\mez} \times \holder{a}\times \holder{a}.
$$
Since $(\eta_\eps,\psi_\eps,V_\eps,B_\eps)$ is uniformly bounded in 
$X\defn L^\infty([0,T];\mathcal{H}^{s})\cap L^p([0,T];\mathcal{W}^{r})$ and since $X$ is the dual of a Banach space, 
it has (after extraction of a subsequence) a weak limit $(\eta,\psi,V,B)$ in $X$.   
Moreover, 
the contraction estimate~\eqref{eq.lipschitz} implies that 
$(\eta_\eps,\psi_\eps,B_\eps,V_\eps)$ 
is a Cauchy sequence in 
$L^\infty([0,T];\mathcal{H}^{s-1})\cap L^p([0,T];\mathcal{W}^{r-1})$. 
Therefore $(\eta_\eps,\psi_\eps)$ converges to its limit 
$(\eta,\psi)$ strongly in $L^\infty([0,T];H^{s-\mez}\times H^{s-1})$. 
Since $(\eta_\eps,\psi_\eps)$ is uniformly bounded in 
$L^\infty([0,T];H^{s+\mez}\times H^{s})$, by interpolation, 
$(\eta_\eps,\psi_\eps)$ converges also strongly in $L^\infty([0,T];H^{s'+\mez}\times H^{s'})$ for any $s'<s$. In particular, $(\eta_\eps,\psi_\eps)$ converges strongly to $(\eta,\psi)$ in $L^\infty([0,T];W^{1,\infty}
\times H^{\mez})$. 
As a result (see Corollary \ref{T534}) 
$$
G(\eta_\eps)\psi_\eps, \quad 
B_\eps = \frac{G(\eta_\eps)\psi_\eps+\nabla\eta_\eps\cdot\nabla\psi_\eps}{1+|\nabla\eta_\eps|^2} ,
\quad
V_\eps=\nabla \psi_\eps-B_\eps\nabla \eta_\eps
$$
converge, respectively, to 
$$
G(\eta)\psi,\quad 
\frac{G(\eta)\psi+\nabla\eta\cdot\nabla\psi}{1+|\nabla\eta|^2}, \quad 
\nabla \psi-B\nabla \eta.
$$
This proves that the weak limits $B,V$ of $B_\eps,V_\eps$ satisfy
$$
B=\frac{G(\eta)\psi+\nabla\eta\cdot\nabla\psi}{1+|\nabla\eta|^2}, \quad 
 V=\nabla \psi-B\nabla \eta,
$$
as well as the fact that one can pass 
to the limit in the equations. 
We thus obtain a solution $(\eta,\psi)$ 
such that 
$(\eta,\psi,V,B)$ is in $L^\infty([0,T];\mathcal{H}^s)\cap L^p([0,T];\mathcal{W}^r)$. By interpolation, 
$(\eta,\psi,V,B)$ is continuous in time 
with values in $\mathcal{H}^{s'}$  for any $s'<s$. 
It remains to prove that the solution is continuous in time with values in $\mathcal{H}^{s}$. This was done in details in \cite{ABZ1} for the case with surface tension. 
For the case without surface tension, this is done by Nguyen \cite{Nguyen} 
following the Bona-Smith' strategy.

\appendix
\chapter{Paradifferential calculus}\label{sec:2}

\section{Notations and classical results}

For the reader convenience, we recall notations as well as estimates for Bony's paradifferential operators 
(following \cite{Bony,MePise,Meyer,Taylor}). 
We also gather various estimates in H\"older or Zygmund spaces.
 
For~$k\in\xN$, we denote by $W^{k,\infty}({\mathbf{R}}^d)$ the usual Sobolev spaces.
For $\rho= k + \sigma$, $k\in \xN, \sigma \in (0,1)$ denote 
by~$W^{\rho,\infty}({\mathbf{R}}^d)$ 
the space of functions whose derivatives up to order~$k$ are bounded and uniformly H\"older continuous with 
exponent~$\sigma$. 

\begin{defi}\label{T:5}
Given~$\rho\in [0, 1]$ and~$m\in\xR$,~$\Gamma_{\rho}^{m}({\mathbf{R}}^d)$ denotes the space of
locally bounded functions~$a(x,\xi)$
on~${\mathbf{R}}^d\times({\mathbf{R}}^d\setminus 0)$,
which are~$C^\infty$ functions of $\xi$ outside the origin 
and
such that, for any~$\alpha\in\xN^d$ and any~$\xi\neq 0$, the function
$x\mapsto \partial_\xi^\alpha a(x,\xi)$ is in $W^{\rho,\infty}({\mathbf{R}}^d)$ and there exists a constant
$C_\alpha$ such that,
\begin{equation*}
\forall\la \xi\ra\ge \mez,\quad 
\lA \partial_\xi^\alpha a(\cdot,\xi)\rA_{W^{\rho,\infty}(\xR^d)}\le C_\alpha
(1+\la\xi\ra)^{m-\la\alpha\ra}.
\end{equation*}
\end{defi}
Given a symbol~$a$ in one such symbol class, one defines
the paradifferential operator~$T_a$ by
\begin{equation}\label{eq.para}
\widehat{T_a u}(\xi)=(2\pi)^{-d}\int \chi(\xi-\eta,\eta)\widehat{a}(\xi-\eta,\eta)\psi(\eta)\widehat{u}(\eta)
\, d\eta,
\end{equation}
where
$\widehat{a}(\theta,\xi)=\int e^{-ix\cdot\theta}a(x,\xi)\, dx$
is the Fourier transform of~$a$ with respect to the first variable; 
$\chi$ and~$\psi$ are two fixed~$C^\infty$ functions such that:
\begin{equation}\label{cond.psi}
\psi(\eta)=0\quad \text{for } \la\eta\ra\le 1,\qquad
\psi(\eta)=1\quad \text{for }\la\eta\ra\geq 2,
\end{equation}
and~$\chi(\theta,\eta)$ 
satisfies, for some small enough positive numbers $\eps_1<\eps_2$,
$$
\chi(\theta,\eta)=1 \quad \text{if}\quad \la\theta\ra\le \eps_1\la \eta\ra,\qquad
\chi(\theta,\eta)=0 \quad \text{if}\quad \la\theta\ra\geq \eps_2\la\eta\ra,
$$
and
$$
\forall (\theta,\eta)\,:\qquad \la \partial_\theta^\alpha \partial_\eta^\beta \chi(\theta,\eta)\ra\le 
C_{\alpha,\beta}(1+\la \eta\ra)^{-\la \alpha\ra-\la \beta\ra}.
$$

Given a symbol~$a\in \Gamma^m_{\rho}(\xR^d)$, we set
\begin{equation}\label{defi:norms}
M_{\rho}^{m}(a)= 
\sup_{\la\alpha\ra\le 1+2d+\rho~}\sup_{\la\xi\ra \ge 1/2~}
\lA (1+\la\xi\ra)^{\la\alpha\ra-m}\partial_\xi^\alpha a(\cdot,\xi)\rA_{W^{\rho,\infty}(\xR)}.
\end{equation}

Notice that the cut-off function $\chi$ can be so chosen that the definition of $T_a$ coincides with the 
usual one of a paraproduct (in terms of Littlewood-Paley decomposition) when the symbol $a$ 
depends on $x$ only. 

\section{Symbolic calculus}\label{sec.2.2}
We shall use quantitative results from \cite{MePise} about operator norms estimates in symbolic calculus. 
To do so, introduce the following semi-norms.
\begin{defi}[Zygmund spaces]\label{T:Zygmund}
Consider a dyadic decomposition of the identity: 
$I=\Delta_{-1}+\sum_{q=0}^\infty \Delta_q$. 
If~$s$ is any real number, the Zygmund class~$C^{s}_*({\mathbf{R}}^d)$ is the 
space of tempered distributions~$u$ such that
$$
\lA u\rA_{C^{s}_*}\defn \sup_q 2^{qs}\lA \Delta_q u\rA_{L^\infty}<+\infty.
$$
\end{defi}
\begin{rema}\label{R:Zygmund}
It is known that~$C^{s}_*({\mathbf{R}}^d)$ is the usual H\"older 
space~$W^{s,\infty}({\mathbf{R}}^d)$ if~$s>0$ is not an integer. 
\end{rema}

\begin{defi}\label{defi:order}
Let~$m\in\xR$.
An operator~$T$ is said to be of  order~$\leo m$ if, for all~$\mu\in\xR$,
it is bounded from~$H^{\mu}$ to~$H^{\mu-m}$ and from~$C^{\mu}_*$ to~$C^{\mu-m}_*$.
\end{defi}

The main features of symbolic calculus for paradifferential operators are given by the following theorem.
\begin{theo}\label{theo:sc0}
Let~$m\in\xR$ and~$\rho\in [0,1]$. 

$(i)$ If~$a \in \Gamma^m_0({\mathbf{R}}^d)$, then~$T_a$ is of order~$\leo m$. 
Moreover, for all~$\mu\in\xR$ there exists a constant~$K$ such that
\begin{equation}\label{esti:quant1}\lA T_a \rA_{H^{\mu}\rightarrow H^{\mu-m}}\le K M_{0}^{m}(a),\qquad  
\lA T_a \rA_{C^{\mu}_*\rightarrow C^{\mu-m}_*}\le K M_{0}^{m}(a).
\end{equation}
$(ii)$ If~$a\in \Gamma^{m}_{\rho}({\mathbf{R}}^d), b\in \Gamma^{m'}_{\rho}({\mathbf{R}}^d)$ then 
$T_a T_b -T_{a b}$ is of order~$\leo m+m'-\rho$. 
Moreover, for all~$\mu\in\xR$ there exists a constant~$K$ such that
\begin{equation}\label{esti:quant2}
\begin{aligned}
\lA T_a T_b  - T_{a b}   \rA_{H^{\mu}\rightarrow H^{\mu-m-m'+\rho}}&\le 
K M_{\rho}^{m}(a)M_{0}^{m'}(b)+K M_{0}^{m}(a)M_{\rho}^{m'}(b),\\
\lA T_a T_b  - T_{a b}   \rA_{C^{\mu}_*\rightarrow C^{\mu-m-m'+\rho}_*}&\le 
K M_{\rho}^{m}(a)M_{0}^{m'}(b)+K M_{0}^{m}(a)M_{\rho}^{m'}(b).
\end{aligned}
\end{equation}
$(iii)$ Let~$a\in \Gamma^{m}_{\rho}({\mathbf{R}}^d)$. Denote by 
$(T_a)^*$ the adjoint operator of~$T_a$ and by~$\overline{a}$ the complex conjugate of~$a$. Then 
$(T_a)^* -T_{\overline{a}}$ is of order~$\leo m-\rho$. 
Moreover, for all~$\mu$ there exists a constant~$K$ such that
\begin{equation}\label{esti:quant3}
\begin{aligned}
&\lA (T_a)^*   - T_{\overline{a}}   \rA_{H^{\mu}\rightarrow H^{\mu-m+\rho}}\le 
K M_{\rho}^{m}(a), \\
&\lA (T_a)^*   - T_{\overline{a}}   \rA_{C^{\mu}_*\rightarrow C^{\mu-m+\rho}_*}\le 
K M_{\rho}^{m}(a).
\end{aligned}
\end{equation}
\end{theo}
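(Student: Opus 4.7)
The plan is to follow the strategy of M\'etivier \cite{MePise}: reformulate $T_a$ as a classical pseudodifferential operator with a smoothed, frequency-localized symbol, and then combine a Littlewood--Paley decomposition with Bernstein's inequality to obtain the quantitative operator norm bounds. First I would introduce the smoothed symbol
$$
\sigma_a(x,\xi) \defn \mathcal{F}^{-1}_\theta\bigl[\chi(\theta,\xi)\widehat{a}(\theta,\xi)\bigr](x)\,\psi(\xi),
$$
which recasts $T_a u(x) = (2\pi)^{-d}\int e^{ix\cdot\xi}\sigma_a(x,\xi)\widehat{u}(\xi)\,d\xi$ as an ordinary pseudodifferential operator. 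The support condition on $\chi$ confines the $x$-spectrum of $\sigma_a(\cdot,\xi)$ to a ball of radius $\sim\eps_2|\xi|$, which together with Definition~\ref{T:5} yields, for $|\xi|\ge 1$, the pointwise bounds
$$
|\partial_\xi^\alpha \sigma_a(x,\xi)| \lesssim M_0^m(a)\langle\xi\rangle^{m-|\alpha|},\qquad |\partial_x^\beta \partial_\xi^\alpha \sigma_a(x,\xi)| \lesssim M_\rho^m(a)\langle\xi\rangle^{m-|\alpha|+\max(|\beta|-\rho,0)}.
$$
The second estimate, for $|\beta|>\rho$, follows from Bernstein's inequality in the $x$-variable applied to the frequency-localized symbol; it is the quantitative replacement for the classical statement that $\sigma_a$ is only $\rho$-H\"older in $x$.

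For part (i), I would decompose $T_a u = \sum_q T_a\Delta_q u$. The spectral support of $\chi$ ensures that $T_a\Delta_q u$ is itself localized in a dyadic annulus $\{|\xi|\sim 2^q\}$ (up to finitely many overlapping shells). A Schur/Calder\'on--Vaillancourt estimate applied to the kernel of $T_a\Delta_q$, using the pointwise bounds on $\partial_\xi^\alpha\sigma_a$ for $|\alpha|\le d+1$, gives
$$
\lA T_a\Delta_q u\rA_{X}\lesssim M_0^m(a)\,2^{qm}\lA \widetilde{\Delta}_q u\rA_{X}\quad\text{for }X=L^2 \text{ and } X=L^\infty,
$$
with $\widetilde\Delta_q$ a slightly fattened Littlewood--Paley block. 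Reassembling with the dyadic weight $2^{q(\mu-m)}$ via the square-function characterization of $H^\mu$ and the supremum characterization of $C^\mu_*$ produces~\eqref{esti:quant1}.

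For part (iii), I would write the full symbol of $T_a^*$ via the standard adjoint formula for pseudodifferential operators, obtaining $\overline{\sigma_a}(x,\xi) + R(x,\xi)$ where $R$ arises from a first-order Taylor expansion in $\eta$ and is a convergent integral involving $\partial_x^\beta \partial_\xi^\alpha \sigma_a$ with $|\alpha|+|\beta|\ge 1$. The second pointwise bound above yields $R\in \Gamma^{m-\rho}_0$ with $M^{m-\rho}_0(R)\lesssim M_\rho^m(a)$, and applying~(i) to $R$ gives~\eqref{esti:quant3}. For part (ii), a parallel computation of the composition symbol produces $\sigma_{T_aT_b} = \sigma_a\sigma_b + R'$ with a remainder $R'\in \Gamma^{m+m'-\rho}_0$ whose seminorm is bounded by $M_\rho^m(a)M_0^{m'}(b)+M_0^m(a)M_\rho^{m'}(b)$; the issue then reduces to controlling $T_{\sigma_a\sigma_b}-T_{ab}$, which is handled by a paraproduct argument exploiting the frequency localization of the Littlewood--Paley pieces of $a$ and $b$.

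The main obstacle will be part (ii): one must track the asymmetric combination $M_\rho^m(a)M_0^{m'}(b)+M_0^m(a)M_\rho^{m'}(b)$ through the composition. The cleanest route is to decompose $a = \sum_p a_p$, $b = \sum_q b_q$ and split the double sum by the ranges $p\ll q$, $|p-q|\le N_0$, and $p\gg q$. In the extreme ranges, spectral considerations decouple the contributions from $T_{a_p}T_{b_q}$ and $T_{a_p b_q}$ directly; in the diagonal regime, one exploits the H\"older oscillation bound $\|a_p - a_p(x_0)\|_{L^\infty}\lesssim 2^{-p\rho}M_\rho^m(a)\,2^{pm}$ around a base point $x_0$ to extract precisely the gain of $\rho$ derivatives. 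Summing the three contributions with care yields the bilinear estimate~\eqref{esti:quant2} in both the Sobolev and Zygmund settings, completing the theorem.
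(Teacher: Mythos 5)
Your proposal is a correct sketch of the standard argument, and it follows exactly the route of M\'etivier's lecture notes \cite{MePise}, which is precisely what the paper does: Theorem~\ref{theo:sc0} is stated there as a recalled quantitative version of classical symbolic calculus with no proof given, only the citation. Your reduction to a spectrally localized symbol $\sigma_a$, the Bernstein-type bound $\lA\partial_x^\beta\partial_\xi^\alpha\sigma_a\rA_{L^\infty}\lesssim M_\rho^m(a)\langle\xi\rangle^{m-\la\alpha\ra+\max(\la\beta\ra-\rho,0)}$, and the first-order Taylor expansions for the adjoint and composition are the same ingredients as in the cited source, so no further comparison is needed.
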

We also need in this article to consider paradifferential operators with negative regularity. 
As a consequence, we need to extend our previous definition.
\begin{defi}
For~$m\in \xR$ and~$\rho\in (-\infty, 0)$,~$\Gamma^m_\rho(\xR^d)$ denotes the space of
distributions~$a(x,\xi)$
on~${\mathbf{R}}^d\times(\xR^d\setminus 0)$,
which are~$C^\infty$ with respect to~$\xi$ and 
such that, for all~$\alpha\in\xN^d$ and all~$\xi\neq 0$, the function
$x\mapsto \partial_\xi^\alpha a(x,\xi)$ belongs to $C^\rho_*({\mathbf{R}}^d)$ and there exists a constant
$C_\alpha$ such that,
\begin{equation}
\forall\la \xi\ra\ge \mez,\quad \lA \partial_\xi^\alpha a(\cdot,\xi)\rA_{C^\rho_*}\le C_\alpha
(1+\la\xi\ra)^{m-\la\alpha\ra}.
\end{equation}
For~$a\in \Gamma^m_\rho$, we define 
\begin{equation}
M_{\rho}^{m}(a)= 
\sup_{\la\alpha\ra\le \frac {3d} 2 +\rho+1 ~}\sup_{\la\xi\ra \ge 1/2~}
\lA (1+\la\xi\ra)^{\la\alpha\ra-m}\partial_\xi^\alpha a(\cdot,\xi)\rA_{C^{\rho}_*({\mathbf{R}}^d)}.
\end{equation}
\end{defi}
\section{Paraproducts and product rules}\label{sec.2.3}
We recall here some properties of paraproducts (a paraproduct is a paradifferential operator 
$T_a$ whose symbol $a=a(x)$ is a function of~$x$ only). 
For our purposes, a key feature is that one can define paraproducts~$T_a$ for rough functions~$a$ which do not belong to~$L^\infty(\xR^d)$ but merely %
$\zygmund{-m}({\mathbf{R}}^d)$ with~$m>0$. 
\begin{defi}
Given two functions~$a,b$ defined on~${\mathbf{R}}^d$ we define the remainder 
$$
R(a,u)=au-T_a u-T_u a.
$$
\end{defi}
We record here various estimates about paraproducts (see chapter 2 in~\cite{BCD}).
\begin{theo}
\begin{enumerate}[i)]
\item  Let~$\alpha,\beta\in \xR$. If~$\alpha+\beta>0$ then
\begin{align}
&\lA R(a,u) \rA _{H^{\alpha + \beta-\frac{d}{2}}({\mathbf{R}}^d)}
\leq K \lA a \rA _{H^{\alpha}({\mathbf{R}}^d)}\lA u\rA _{H^{\beta}({\mathbf{R}}^d)},\label{Bony} \\ 
&\lA R(a,u) \rA _{C^{\alpha + \beta}_*({\mathbf{R}}^d)} 
\leq K \lA a \rA _{C^{\alpha}_*({\mathbf{R}}^d)}\lA u\rA _{C^{\beta}_*({\mathbf{R}}^d)}\label{Bony2},\\
&\lA R(a,u) \rA _{H^{\alpha + \beta}({\mathbf{R}}^d)} \leq K \lA a \rA _{C^{\alpha}_*({\mathbf{R}}^d)}\lA u\rA _{H^{\beta}({\mathbf{R}}^d)}.\label{Bony3}
\end{align}

\item   Let~$m>0$ and~$s\in \xR$. Then
\begin{align}
&\lA T_a u\rA_{H^{s-m}}\le K \lA a\rA_{C^{-m}_*}\lA u\rA_{H^{s}},\label{niS}\\
&\lA T_a u\rA_{C^{s-m}_*}\le K \lA a\rA_{C^{-m}_*}\lA u\rA_{C^{s}_*}.\label{niZ}\\
&\lA T_a u\rA_{C^{s}_*}\le K \lA a\rA_{L^\infty}\lA u\rA_{C^{s}_*}.\label{niZZ}
\end{align}
\item Let~$s_0,s_1,s_2$ be such that 
$s_0\le s_2$ and~$s_0 < s_1 +s_2 -\frac{d}{2}$, 
then
\begin{equation}\label{boundpara}
\lA T_a u\rA_{H^{s_0}}\le K \lA a\rA_{H^{s_1}}\lA u\rA_{H^{s_2}}.
\end{equation}
\end{enumerate}
\end{theo}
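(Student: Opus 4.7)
The plan is to prove all three parts by a single Littlewood--Paley based dyadic analysis. First I would set up Bony's decomposition $au = T_a u + T_u a + R(a,u)$, noting that (up to harmless commutator terms) $T_a u = \sum_k S_{k-3}a\cdot \Delta_k u$ has each summand spectrally localized in an annulus $\{|\xi|\sim 2^k\}$, while the remainder $R(a,u) = \sum_{|j-k|\le 2}\Delta_j a\cdot \Delta_k u$ is a sum of terms spectrally localized in balls $\{|\xi|\le C 2^k\}$. The two corresponding summation lemmas I would invoke are: annulus-localized pieces give $\|\sum f_k\|_{H^s}^2 \sim \sum 2^{2ks}\|f_k\|_{L^2}^2$ for any $s\in\xR$ (and similarly in Zygmund norm with $\ell^\infty$), whereas ball-localized pieces admit the analogous bound only for $s>0$.

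For part (i), each term $r_k = \Delta_k a\cdot \widetilde{\Delta}_k u$ in $R(a,u)$ is estimated by H\"older as $\|r_k\|_{L^2}\lesssim \|\Delta_k a\|_{L^\infty}\|\widetilde{\Delta}_k u\|_{L^2}\lesssim c_k 2^{-k\alpha}2^{-k\beta}\|a\|_{C_*^\alpha}$ where $c_k=2^{k\beta}\|\widetilde{\Delta}_k u\|_{L^2}$ is an $\ell^2$ sequence. The hypothesis $\alpha+\beta>0$ is exactly what the ball-localized summation lemma requires, yielding \eqref{Bony2}, and a Bernstein transfer $\|\Delta_k a\|_{L^\infty}\lesssim 2^{kd/2}\|\Delta_k a\|_{L^2}$ then delivers \eqref{Bony}. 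For \eqref{Bony3} I would keep the $L^\infty$ norm on $a$ but measure $u$ in $L^2$ instead, the rest of the argument being unchanged.

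For parts (ii) and (iii) only the annulus-localized paraproduct is at issue, so the target regularity can be arbitrary. The key pointwise estimate for (ii) is $\|S_{k-3}a\|_{L^\infty}\lesssim 2^{km}\|a\|_{C_*^{-m}}$ (a geometric sum, convergent because $m>0$); multiplying by $\Delta_k u$ and re-summing via the annulus lemma shifts the target Sobolev or Zygmund regularity down by exactly $m$, giving \eqref{niS}--\eqref{niZ}, while \eqref{niZZ} follows from the trivial bound $\|S_{k-3}a\|_{L^\infty}\le \|a\|_{L^\infty}$. For (iii) I would use Sobolev--Bernstein, $\|\Delta_j a\|_{L^\infty}\lesssim 2^{jd/2}\|\Delta_j a\|_{L^2}$, combined with Cauchy--Schwarz over $j\le k-3$, to obtain $\|S_{k-3}a\|_{L^\infty}\lesssim\max(1,2^{k(d/2-s_1)})\|a\|_{H^{s_1}}$; plugging this in and factoring out $2^{ks_2}\|\Delta_k u\|_{L^2}$ reduces the desired bound to summability of $\max(2^{2k(s_0-s_2)},2^{2k(s_0-s_2+d/2-s_1)})$ against an $\ell^2$ sequence, which is exactly ensured by the two hypotheses $s_0\le s_2$ and $s_0<s_1+s_2-d/2$.

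The main subtlety, rather than a genuine obstacle, is the borderline Sobolev embedding $H^{d/2}\not\subset L^\infty$ arising in (iii): when $s_1=d/2$ the Cauchy--Schwarz step picks up a logarithmic loss in $\|S_{k-3}a\|_{L^\infty}$, and the \emph{strict} inequality $s_0<s_1+s_2-d/2$ is precisely what absorbs this loss. Likewise in (i) a borderline $\alpha+\beta=0$ would fail at the ball-localized summation step, which is why strict positivity is imposed there as well.
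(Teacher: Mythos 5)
The paper itself does not prove this theorem: it is stated as background in the appendix with the citation ``see chapter 2 in~\cite{BCD}'' (and the subsequent corollary is attributed to H\"ormander and \cite{BCD}), so there is no in-paper argument to compare against. Your dyadic Littlewood--Paley strategy is precisely the standard proof in those references, and your treatment of parts (ii) and (iii) is correct, including the observation that the strict inequality $s_0 < s_1+s_2 - d/2$ absorbs the logarithmic loss in $\lA S_{k-3}a\rA_{L^\infty}$ at the borderline $s_1=d/2$.

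There is, however, a gap in how you derive \eqref{Bony}. Your block estimate $\lA r_k\rA_{L^2}\lesssim \lA\Delta_k a\rA_{L^\infty}\lA\widetilde\Delta_k u\rA_{L^2}$ together with the ball-summation lemma proves \eqref{Bony3}, and you then claim that ``a Bernstein transfer $\lA\Delta_k a\rA_{L^\infty}\lesssim 2^{kd/2}\lA\Delta_k a\rA_{L^2}$ then delivers \eqref{Bony}.'' That substitution amounts to applying \eqref{Bony3} with $\alpha$ replaced by $\alpha-d/2$, which requires $(\alpha-d/2)+\beta>0$; but \eqref{Bony} is claimed under the weaker hypothesis $\alpha+\beta>0$, a genuinely larger range (e.g.\ $\alpha=0$, $\beta=1$, $d=4$). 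The repair is to estimate the block in $L^1$ rather than $L^2$: by Cauchy--Schwarz $\lA r_k\rA_{L^1}\le\lA\Delta_k a\rA_{L^2}\lA\widetilde\Delta_k u\rA_{L^2}\lesssim c_k c_k' 2^{-k(\alpha+\beta)}\lA a\rA_{H^\alpha}\lA u\rA_{H^\beta}$ with $c_k,c_k'\in\ell^2$, so the ball-summation lemma (with $p=1$, needing only $\alpha+\beta>0$) places $R(a,u)$ in $B^{\alpha+\beta}_{1,1}$, and then the Besov embedding $B^{\alpha+\beta}_{1,1}\hookrightarrow H^{\alpha+\beta-d/2}$ — equivalently, Bernstein applied to each $\Delta_j R(a,u)$ rather than to $\Delta_k a$ — gives \eqref{Bony}. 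The point is that Bernstein must be applied to the frequency-localized \emph{output} blocks, not to one of the input factors.
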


By combining the two previous points with the embedding~$H^\mu({\mathbf{R}}^d)\subset C^{\mu-d/2}_*({\mathbf{R}}^d)$ (for any~$\mu\in\xR$) 
we immediately obtain the following results.
\begin{prop}\label{lemPa}
Let~$r,\mu\in \xR$ be such that~$r+\mu>0$. If~$\gamma\in\xR$ satisfies 
$$
\gamma\le r  \quad\text{and}\quad \gamma < r+\mu-\frac{d}{2},
$$
then there exists a constant~$K$ such that, for all 
$a\in H^{r}({\mathbf{R}}^d)$ and all~$u\in H^{\mu}({\mathbf{R}}^d)$, we have
$$
\lA au - T_a u\rA_{H^{\gamma}}\le K \lA a\rA_{H^{r}}\lA u\rA_{H^\mu}.
$$
\end{prop}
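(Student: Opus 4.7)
The plan is to use Bony's decomposition $au = T_a u + T_u a + R(a,u)$, which immediately yields
\begin{equation*}
au - T_a u = T_u a + R(a,u),
\end{equation*}
and then estimate the two terms on the right-hand side separately using the bounds recalled above.

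First I would handle the symmetric paraproduct $T_u a$ by applying the paraproduct bound~\eqref{boundpara} with the roles of the two arguments swapped, namely with $s_0 = \gamma$, $s_1 = \mu$, $s_2 = r$. The required conditions are exactly $s_0 \le s_2$, i.e.\ $\gamma \le r$, and $s_0 < s_1 + s_2 - d/2$, i.e.\ $\gamma < \mu + r - d/2$, both of which are part of the hypothesis. This directly gives
\begin{equation*}
\lA T_u a \rA_{H^\gamma} \le K \lA u \rA_{H^\mu} \lA a \rA_{H^r}.
\end{equation*}

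Next I would treat the remainder $R(a,u)$ using the Bony remainder estimate~\eqref{Bony} applied with $\alpha = r$, $\beta = \mu$. The hypothesis $r + \mu > 0$ is precisely the condition needed for~\eqref{Bony}, and it delivers
\begin{equation*}
\lA R(a,u) \rA_{H^{r+\mu-\frac{d}{2}}} \le K \lA a \rA_{H^r} \lA u \rA_{H^\mu}.
\end{equation*}
Since by hypothesis $\gamma < r + \mu - d/2$, the trivial embedding $H^{r+\mu-d/2}(\xR^d) \hookrightarrow H^\gamma(\xR^d)$ (coming from $\langle\xi\rangle^\gamma \le \langle\xi\rangle^{r+\mu-d/2}$) gives the same bound for the $H^\gamma$-norm. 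Summing the two estimates yields the desired inequality. There is essentially no obstacle here: the statement is a direct combinatorial check that the assumptions $\gamma \le r$, $\gamma < r + \mu - d/2$, $r + \mu > 0$ are exactly what is needed to invoke~\eqref{boundpara} and~\eqref{Bony}, so the proof reduces to one line of Bony's paralinearization followed by the two quoted estimates.
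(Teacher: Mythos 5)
Your proof is correct and follows essentially the same route the paper intends: Bony's decomposition $au-T_au=T_ua+R(a,u)$, with $T_ua$ controlled by~\eqref{boundpara} (whose hypotheses are exactly $\gamma\le r$ and $\gamma<r+\mu-\frac{d}{2}$) and $R(a,u)$ controlled by~\eqref{Bony} under $r+\mu>0$, followed by the embedding $H^{r+\mu-d/2}\subset H^{\gamma}$. The paper states the result as an immediate consequence of these same estimates, so there is nothing to add.
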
 
\begin{coro}
\begin{enumerate}[i)]
\item \label{it.1} If~$u_j\in H^{s_j}({\mathbf{R}}^d)$ ($j=1,2$) with $s_1+s_2> 0$ then $u_1 u_2\in H^{s_0}({\mathbf{R}}^d)$ and 
\begin{equation}\label{pr}
\lA u_1 u_2 \rA_{H^{s_0}}\le K \lA u_1\rA_{H^{s_1}}\lA u_2\rA_{H^{s_2}},
\end{equation}
if 
$$
s_0\le s_j, \quad j=1,2,\quad\text{and}\quad s_0< s_1+s_2-d/2.
$$

\item \label{it.2} (Tame estimate in Sobolev spaces) If~$s\ge 0$ then
\begin{equation}\label{prS2}
\lA u_1 u_2 \rA_{H^{s}}\le K \bigl(\lA u_1\rA_{H^{s}}\lA u_2\rA_{L^{\infty}}+ \lA u_1\rA_{L^{\infty}} \lA u_2\rA_{H^{s}}\bigr).
\end{equation}
\item \label{it.3} (Tame estimate in Zygmund spaces) If~$s\ge 0$ then 
\begin{equation}\label{prZ1}
\lA u_1 u_2 \rA_{C^{s}_*}\le K \bigl(\lA u_1\rA_{C^{s}_*}\lA u_2\rA_{L^{\infty}}+ \lA u_1\rA_{L^{\infty}} \lA u_2\rA_{C^{s}_*}\bigr).
\end{equation}

\item \label{it.4} Let~$\mu,m\in \xR$ be such that~$\mu,m> 0$ and~$m\not \in \xN$. Then
\begin{equation}\label{prZ2}
\lA u_1 u_2\rA_{H^\mu}\le K \bigl(\lA u_1\rA_{L^\infty}\lA u_2\rA_{H^\mu}+\lA u_2\rA_{C^{-m}_*}\lA u_1\rA_{H^{\mu+m}}\bigr).
\end{equation}

\item \label{it.5} Let~$\beta>\alpha >0~$. Then
\begin{equation}\label{prZ3}
\lA u_1 u_2\rA_{C^{-\alpha}_*}\le K \lA u_1\rA_{C^\beta_*} \lA u_2\rA_{C^{-\alpha}_*}.
\end{equation}

\item \label{it.6} Let~$s>d/2$ and consider~$F\in C^\infty(\xC^N)$ such that~$F(0)=0$. 
Then there exists a non-decreasing function~$\mathcal{F}\colon\xR_+\rightarrow\xR_+$ 
such that
\begin{equation}\label{esti:F(u)}
\lA F(U)\rA_{H^s}\le \mathcal{F}\bigl(\lA U\rA_{L^\infty}\bigr)\lA U\rA_{H^s},
\end{equation}
for any~$U\in H^s({\mathbf{R}}^d)^N$. 
\item \label{it.7}
 Let~$s\geq 0$ and consider~$F\in C^\infty(\xC^N)$ such that~$F(0)=0$. 
Then there exists a non-decreasing function~$\mathcal{F}\colon\xR_+\rightarrow\xR_+$ such that
\begin{equation}\label{esti:F(u)bis}
\lA F(U)\rA_{C^{s}_*}\le \mathcal{F}\bigl(\lA U\rA_{L^\infty}\bigr)\lA U\rA_{C^{s}_*},
\end{equation}
for any~$U\in C^{s}_*({\mathbf{R}}^d)^N$. 
\end{enumerate}
\end{coro}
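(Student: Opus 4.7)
The unified strategy for items (i)--(v) is to apply Bony's paraproduct decomposition
\[
u_1 u_2 = T_{u_1} u_2 + T_{u_2} u_1 + R(u_1,u_2),
\]
and invoke the paraproduct bounds \eqref{niS}--\eqref{niZZ} and the remainder bounds \eqref{Bony}--\eqref{Bony3} from the preceding theorem, together with the Sobolev embedding $H^\mu(\xR^d)\hookrightarrow C^{\mu-d/2}_*(\xR^d)$. For (i), the remainder satisfies $R(u_1,u_2)\in H^{s_1+s_2-d/2}\subset H^{s_0}$ by \eqref{Bony} and the assumption $s_0<s_1+s_2-\tfrac{d}{2}$; each paraproduct is estimated by splitting cases on whether $s_j>d/2$ (so that $u_j\in L^\infty$ and \eqref{esti:quant1} with $m=0$ applies) or $s_j\le d/2$ (so that $u_j\in C^{s_j-d/2}_*$ and \eqref{niS} applies). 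The tame estimates (ii) and (iii) follow by taking $s_1=s_2=s\ge 0$ in Bony's decomposition: the paraproducts are bounded by $\lA u_j\rA_{L^\infty}\lA u_{3-j}\rA_X$ via \eqref{esti:quant1} with $m=0$, and the remainder is bounded by \eqref{Bony3} for the Sobolev case (resp.\ \eqref{Bony2} combined with the embedding $L^\infty\hookrightarrow C^0_*$ for the Zygmund case).

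Item (iv) is obtained by applying Bony's decomposition and placing $u_2$, which has negative Zygmund regularity, as the symbol of the paraproduct $T_{u_2} u_1$; this term is controlled by \eqref{niS}, while $T_{u_1}u_2$ is bounded by \eqref{niZZ} and the embedding $H^\mu\subset L^\infty$ (using $\mu>d/2$, which follows from the hypothesis $m+\mu>0$ and~\eqref{prZ2}'s structure once one fixes the correct index placement), and the remainder is handled by \eqref{Bony3}. For (v), the hypothesis $\beta>\alpha>0$ makes all three pieces elementary: $T_{u_1}u_2$ is bounded by \eqref{niZZ} (since $u_1\in L^\infty$), $T_{u_2}u_1$ by \eqref{niZ} with $m=\alpha$, and the remainder $R(u_1,u_2)\in C^{\beta-\alpha}_*\subset C^{-\alpha}_*$ by \eqref{Bony2}.

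For the composition estimates (vi) and (vii), I would employ the standard telescoping Littlewood-Paley decomposition
\[
F(U)=\sum_{k\ge -1}\bigl(F(S_{k+1}U)-F(S_k U)\bigr)=\sum_{k\ge -1} m_k(x)\,\Delta_k U,
\]
where $m_k(x)=\int_0^1 F'\bigl(S_k U+\tau\Delta_k U\bigr)\,d\tau$ is uniformly bounded (together with all derivatives) by $\mathcal{F}(\lA U\rA_{L^\infty})$ since $F$ is smooth and $S_k U$, $\Delta_k U$ remain bounded by $\lA U\rA_{L^\infty}$. Because $m_k$ has spatial Fourier content essentially below $2^k$, the $k$th summand has spectrum in a ball of radius $C 2^k$; Bernstein's inequality then gives $\lA F(S_{k+1}U)-F(S_k U)\rA_{L^p}\le \mathcal{F}(\lA U\rA_{L^\infty})\lA\Delta_k U\rA_{L^p}$ with $p=2$ for (vi) and $p=\infty$ for (vii). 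Summing in the appropriate $2^{ks}$-weighted $\ell^2$ norm (resp.\ $\ell^\infty$ norm, using the Zygmund characterization from Definition~\ref{T:Zygmund}) yields the claim.

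The main obstacle is the chain rule step: one must verify that the spectral localization of the telescoping differences is preserved well enough to apply Bernstein, and must track Faa-di-Bruno combinatorics when estimating derivatives of $m_k$ so that the dependence on $\lA U\rA_{L^\infty}$ comes out in the precise form $\mathcal{F}(\lA U\rA_{L^\infty})$ with no loss through the norms of $U$ itself. The remaining items (i)--(v) are essentially bookkeeping once one picks the correct case split in Bony's decomposition and invokes the right embedding.
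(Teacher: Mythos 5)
Your overall strategy---Bony's decomposition $u_1u_2=T_{u_1}u_2+T_{u_2}u_1+R(u_1,u_2)$ together with the boundedness estimates \eqref{esti:quant1}, \eqref{niS}--\eqref{niZZ}, \eqref{Bony}--\eqref{Bony3}---is exactly what the paper does for items~\ref{it.4}) and~\ref{it.5}), and is the standard route for items~\ref{it.1})--\ref{it.3}), which the paper simply cites from H\"ormander and \cite{BCD}. Items~\ref{it.1})--\ref{it.3}) and~\ref{it.5}) of your argument are correct. There is, however, a genuine error in your treatment of~\ref{it.4}): you invoke \eqref{niZZ}, a Zygmund-space bound, and the embedding $H^\mu\subset L^\infty$ to control $T_{u_1}u_2$ in $H^\mu$, but the hypotheses are only $\mu,m>0$, so $\mu>d/2$ does not follow; your parenthetical attempt to deduce it from ``$m+\mu>0$'' is not valid. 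The fix is simple and is what the paper does: $\lA u_1\rA_{L^\infty}$ already appears on the right-hand side of \eqref{prZ2}, so one just applies \eqref{esti:quant1} with $m=0$ to get $\lA T_{u_1}u_2\rA_{H^\mu}\les \lA u_1\rA_{L^\infty}\lA u_2\rA_{H^\mu}$ with no embedding needed; the other two pieces $T_{u_2}u_1$ and $R$ are handled as you say by \eqref{niS} and \eqref{Bony3}.

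On items~\ref{it.6})--\ref{it.7}), the paper proves nothing and cites Meyer~\cite[Th\'eor\`eme 2.5 et remarque]{Meyer}. Your telescoping $F(U)=\sum_k(F(S_{k+1}U)-F(S_kU))=\sum_k m_k\,\Delta_kU$ is precisely Meyer's argument, but the claim that ``$m_k$ has spatial Fourier content essentially below $2^k$'' is false: $m_k=\int_0^1F'(S_kU+\tau\Delta_kU)\,d\tau$ is a nonlinear composition and has no Fourier-support confinement. What one actually uses is that $\lA D^\alpha m_k\rA_{L^\infty}\le \mathcal{F}(\lA U\rA_{L^\infty})\,2^{k|\alpha|}$, obtained via Fa\`a di Bruno and Bernstein applied to $S_kU$, which puts $m_k$ into a symbol class of order $0$ at scale $2^k$; one then decomposes $m_k\Delta_kU$ again (e.g.\ $m_k\Delta_kU=T_{m_k}\Delta_kU+\ldots$) to recover the required frequency localization. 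You correctly identify ``spectral localization of the telescoping differences'' as the key obstacle; this is exactly what Meyer's theorem supplies and what your sketch leaves open, so as written items~\ref{it.6})--\ref{it.7}) are an outline of the right idea rather than a complete proof.
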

\begin{proof}
The first three estimates  are well-known, 
see H\"ormander \cite{Hormander} or \cite{BCD}. 
To prove $\ref{it.4}$)  and $\ref{it.5}$) we write
$$
u_1u_2=T_{u_1} u_2+T_{u_2} u_1 +R(u_1,u_2). 
$$
Then \eqref{prZ2} follows from
\begin{alignat*}{2}
&\lA T_{u_1} u_2\rA_{H^\mu} \les \lA u_1\rA_{L^\infty}\lA u_2\rA_{H^\mu} \qquad &&(\text{see }\eqref{esti:quant1}),\\
&\lA T_{u_2} u_1\rA_{H^\mu} \les \lA u_2\rA_{C^{-m}_*}\lA u_1\rA_{H^{\mu+m}} 
&&(\text{see }\eqref{niS}),\\
&\lA R(u_1,u_2)\rA_{H^\mu} \les \lA u_2\rA_{C^{-m}_*}\lA u_1\rA_{H^{\mu+m}} \quad
&&(\text{see }\eqref{Bony3}),
\end{alignat*}
while similarly \eqref{prZ3} follows from
\begin{alignat*}{2}
&\lA T_{u_1} u_2\rA_{C^{-\alpha}_*} \les \lA u_1\rA_{L^\infty}\lA u_2\rA_{C^{-\alpha}_*} 
\les \lA u_1\rA_{C^\beta_*}\lA u_2\rA_{C^{-\alpha}_*} ,\\
&\lA T_{u_2} u_1\rA_{C^{-\alpha}_*} 
\les \lA u_2\rA_{C^{-\alpha}_*}\lA u_1\rA_{C^{0}_*} \le \lA u_2\rA_{C^{-\alpha}_*}\lA u_1\rA_{C^{\beta}_*},\\ 
&\lA R(u_1,u_2)\rA_{C^{-\alpha}_*}\le \lA R(u_1,u_2)\rA_{C^{\beta-\alpha}_*} 
\les \lA u_2\rA_{C^{-\alpha}_*}\lA u_1\rA_{C^{\beta}_*} .
\end{alignat*}
(With regards to the last inequality, to apply \eqref{Bony2} we do need~$\beta>\alpha>0$.) 
Finally, $\ref{it.6}$) and $\ref{it.7}$) are due to Meyer~\cite[Th\'eor\`eme 2.5 and remarque]{Meyer}, 
in the line of the work by Bony~\cite{Bony}.
\end{proof}

Finally, we recall Prop. $2.12$ in \cite{ABZ3} which 
is a generalization of~\eqref{niS}. 
\begin{prop}\label{prop.niSbis}
Let~$\rho<0$,~$m\in \xR$ and~$a\in \dot{ \Gamma}^m_\rho$. Then the operator~$T_a$ is of order~$m-\rho$:
\begin{equation}\label{niSbis}
\begin{aligned}
\| T_a \|_{H^s \rightarrow H^{s-(m- \rho)}}&\leq C M_{\rho}^{m}(a),\\
\| T_a \|_{C^s_* \rightarrow C^{s-(m- \rho)}_*}&\leq C M_{\rho}^{m}(a).
\end{aligned}
\end{equation}
\end{prop}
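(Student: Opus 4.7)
The plan is to run the standard Littlewood--Paley argument for paraproducts, tracking carefully how the negative regularity $\rho < 0$ of $a$ in $x$ gets absorbed as a loss of $-\rho$ derivatives in the operator order. The key heuristic is that in the defining integral
\begin{equation*}
\widehat{T_a u}(\xi)=(2\pi)^{-d}\int \chi(\xi-\eta,\eta)\widehat{a}(\xi-\eta,\eta)\psi(\eta)\widehat{u}(\eta)\,d\eta,
\end{equation*}
the cutoff $\chi$ forces $|\xi-\eta| \le \varepsilon_2|\eta|$, so in the piece corresponding to $\Delta_k u$ only the frequencies of $a(\cdot,\xi)$ below roughly $2^k$ contribute. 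Truncating $a$ at this scale costs $2^{-k\rho} = 2^{k(-\rho)}$ in $L^\infty$, and this factor is precisely the loss that turns an operator of order $m$ into one of order $m-\rho$.

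First I would decompose $T_a u = \sum_k T_a \Delta_k u$ and verify, from the spectral localization enforced by $\chi$ and $\psi$, that each piece $T_a \Delta_k u$ has frequency support in a ring of size $2^k$ (up to a fixed dilation), and that in fact $T_a \Delta_k u = T_{S_{k-N_0} a} \Delta_k u$ for some universal $N_0$ depending only on $\varepsilon_1,\varepsilon_2$. Next, the symbol $S_{k-N_0} a$ is smooth in $x$, with
\begin{equation*}
\sup_{|\alpha|\le d+1}\sup_{|\xi|\ge 1/2}(1+|\xi|)^{|\alpha|-m-(-\rho)}\,\|\partial_\xi^\alpha S_{k-N_0} a(\cdot,\xi)\|_{L^\infty} \lesssim 2^{k(-\rho)} M^m_\rho(a),
\end{equation*}
by the standard characterization of $C^\rho_*$ through Littlewood--Paley blocks (namely $\|\Delta_j a(\cdot,\xi)\|_{L^\infty}\lesssim 2^{-j\rho}M^m_\rho(a)(1+|\xi|)^m$, so $\|S_{k-N_0} a(\cdot,\xi)\|_{L^\infty}\lesssim 2^{k(-\rho)}M^m_\rho(a)(1+|\xi|)^m$). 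This puts $2^{k\rho}S_{k-N_0}a$ in a bounded subset of the ordinary $L^\infty_x$-symbol class of order $m$ with uniform semi-norms. Applying the classical bound \eqref{esti:quant1} to this rescaled symbol gives
\begin{equation*}
\|T_a \Delta_k u\|_{H^{s-m+\rho}} = \|T_{S_{k-N_0}a}\Delta_k u\|_{H^{s-m+\rho}} \lesssim 2^{k(-\rho)} M^m_\rho(a)\,\|\Delta_k u\|_{H^{s-m}} \lesssim 2^{-ks}\cdot 2^{k(s-m+\rho)}\cdot 2^{km} \|\Delta_k u\|_{L^2}\cdot M^m_\rho(a)\cdot 2^{k(-\rho)},
\end{equation*}
which (after cleaning up the exponents) amounts to $2^{k(s-m+\rho)}\|T_a\Delta_k u\|_{L^2}\lesssim M^m_\rho(a)\cdot 2^{ks}\|\Delta_k u\|_{L^2}$.

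Then I would conclude by almost orthogonality: since the $T_a\Delta_k u$ have frequency supports in dyadic rings, the square function characterization of $H^{s-m+\rho}$ yields
\begin{equation*}
\|T_a u\|_{H^{s-m+\rho}}^2 \lesssim \sum_k 2^{2k(s-m+\rho)}\|T_a\Delta_k u\|_{L^2}^2 \lesssim M^m_\rho(a)^2 \sum_k 2^{2ks}\|\Delta_k u\|_{L^2}^2 \lesssim M^m_\rho(a)^2 \|u\|_{H^s}^2.
\end{equation*}
The Zygmund-space bound is handled identically: the same decomposition gives $\|T_a\Delta_k u\|_{L^\infty}\lesssim M^m_\rho(a)\,2^{k(m-\rho)}\|\Delta_k u\|_{L^\infty}$, and taking $\sup_k 2^{k(s-m+\rho)}$ on both sides yields the $C^s_*\to C^{s-m+\rho}_*$ estimate. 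The main technical obstacle is making the identification $T_a\Delta_k u = T_{S_{k-N_0}a}\Delta_k u$ (up to a harmless smoothing remainder) fully rigorous from the integral definition \eqref{eq.para}; this requires carefully exploiting the support properties of $\chi$ together with the cutoff $\psi(\eta)$ concentrated on $|\eta|\gtrsim 1$, but is routine once the spectral geometry of the symbol classes is spelled out.
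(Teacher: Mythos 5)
Your argument is correct, and it is the standard proof of this kind of statement; note that the paper itself offers no proof here — the proposition is simply recalled from Prop.~2.12 of \cite{ABZ3} — so there is nothing to compare against except the classical route, which is exactly the one you take. The two essential points are both present and sound: (a) the support condition $|\xi-\eta|\le\eps_2|\eta|$ in \eqref{eq.para} lets you replace $a$ by its low-frequency truncation at scale $2^k$ when acting on $\Delta_k u$, and (b) for $\rho<0$ the sum $\sum_{j\le k}2^{-j\rho}\sim 2^{-k\rho}$ converts the $C^\rho_*$ bound on $a(\cdot,\xi)$ into an $L^\infty$ bound of size $2^{-k\rho}M^m_\rho(a)$, i.e.\ a uniformly bounded family of $\Gamma^m_0$ symbols after rescaling, to which \eqref{esti:quant1} applies; the spectral localization of $T_a\Delta_k u$ in a ring of size $2^k$ then gives the per-block estimate and almost orthogonality (resp.\ $\sup_k$) finishes the Sobolev (resp.\ Zygmund) case. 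Two cosmetic remarks only: the exponent bookkeeping in your middle display is garbled, though the cleaned-up conclusion $2^{k(s-m+\rho)}\|T_a\Delta_k u\|_{L^2}\lesssim M^m_\rho(a)\,2^{ks}\|\Delta_k u\|_{L^2}$ is the right one; and the number of $\xi$-derivatives needed is the one fixed by the semi-norm definitions in the appendix rather than $d+1$, which changes nothing in the argument.
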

We also need the following technical result.

\begin{prop}\label{comut}
Set~$ \langle D_x \rangle=(I-\Delta)^{1/2}$. 

\begin{enumerate}[i)]
\item \label{itt.1} Let~$s>\mez + \frac{d}{2}$ and~$\sigma \in \xR$ be such that~$\sigma \leq s$. 
Then there exists~$K>0$ such that for all~$V \in W^{1, \infty}({\mathbf{R}}^d) \cap H^s({\mathbf{R}}^d)$ 
and ~$u \in H^{\sigma- \mez}({\mathbf{R}}^d)$  one has
$$
\Vert [\lDx{\sigma}, V] u \Vert_{L^2({\mathbf{R}}^d)} 
\leq K \big \{ \Vert V \Vert_{W^{1, \infty}({\mathbf{R}}^d)} 
+ \Vert V \Vert_{H^s({\mathbf{R}}^d) }\big \} \Vert u \Vert_{H^{\sigma- \mez}({\mathbf{R}}^d)}.
$$
\item \label{itt.2} Let~$s>1+\frac{d}{2}$ and~$\sigma \in \xR$ be such that~$\sigma \leq s$. 
Then there exists~$K>0$ such that for all~$V \in   H^s({\mathbf{R}}^d)$  and ~$u \in H^{\sigma- 1}({\mathbf{R}}^d)$  one has
$$
\Vert [\lDx{\sigma}, V] u \Vert_{L^2({\mathbf{R}}^d)} \leq K \Vert V \Vert_{H^s({\mathbf{R}}^d) }  \Vert u \Vert_{H^{\sigma- 1}({\mathbf{R}}^d)}.
$$
\item \label{itt.3} Let~$s> \mez + \frac d 2~$ and~$V\in H^s( \xR^d)$. Then 
$$ \Vert [\lDx{\mez}, V] u \Vert_{L^\infty(\xR^d)}
\leq K \| V\|_{H^s(\xR^d)} \| u \|_{L^\infty(\xR^d)}.
$$
\end{enumerate}
\end{prop}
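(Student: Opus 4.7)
All three estimates rest on the same idea: use Bony's decomposition to split the commutator into pieces that can be handled either by symbolic calculus or by paraproduct/remainder estimates. Writing
\[
Vu = T_V u + T_u V + R(V,u), \qquad V\lDx{\sigma}u = T_V\lDx{\sigma}u + T_{\lDx{\sigma}u}V + R(V,\lDx{\sigma}u),
\]
the commutator splits as
\[
[\lDx{\sigma},V]u = [\lDx{\sigma},T_V]u + \bigl(\lDx{\sigma}T_u V - T_{\lDx{\sigma}u}V\bigr) + \bigl(\lDx{\sigma}R(V,u)-R(V,\lDx{\sigma}u)\bigr).
\]
The first term is the commutator of $\lDx{\sigma}$ with a paramultiplication and gains one derivative via the symbolic calculus (Theorem~\ref{theo:sc0} (ii) with $\rho=1$); the other terms are estimated separately using \eqref{niS}, \eqref{boundpara}, \eqref{Bony}.

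\textbf{Parts (i) and (ii).} For (i), I would apply Theorem~\ref{theo:sc0} (ii) with $a=\lDx{\sigma}$, $b=V$, $\rho=1$, to get $\|[\lDx{\sigma},T_V]u\|_{L^2}\lesssim \|V\|_{W^{1,\infty}}\|u\|_{H^{\sigma-1}}\le \|V\|_{W^{1,\infty}}\|u\|_{H^{\sigma-1/2}}$. For the paraproduct piece $\lDx{\sigma}T_u V$, I would invoke \eqref{boundpara} with $(s_0,s_1,s_2)=(\sigma,\sigma-\mez,s)$: the condition $s_0\le s_2$ holds by assumption, and $s_0<s_1+s_2-d/2$ reduces to $s>\mez+d/2$. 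For $T_{\lDx{\sigma}u}V$ I apply the same with $(s_0,s_1,s_2)=(0,-\mez,s)$, again using $s>\mez+d/2$. For the remainder, \eqref{Bony} yields $R(V,u)\in H^{s+\sigma-\mez-d/2}\subset H^\sigma$, so $\lDx{\sigma}R(V,u)\in L^2$ with the desired bound, and likewise for $R(V,\lDx{\sigma}u)$ (treating the case of very negative $\sigma$ by a direct estimate is routine and does not affect the constant). Summing yields (i). Part (ii) follows the same pattern with the shifted regularity $u\in H^{\sigma-1}$; now $s>1+d/2$ lets one apply the commutator estimate of Theorem~\ref{theo:sc0} with a full gain of one derivative, and the indices in \eqref{boundpara} and \eqref{Bony} all work with the margin $s-1-d/2>0$.

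\textbf{Part (iii).} I would run exactly the same decomposition but estimate in Zygmund/$L^\infty$ spaces. Set $r:=s-d/2>\mez$, so that $V\in \zygmund{r}\hookrightarrow W^{1/2,\infty}$. The commutator $[\lDx{\mez},T_V]u$ is handled by the $C^\mu_*$-part of \eqref{esti:quant2} with $\rho$ taken slightly less than $r$: it maps $L^\infty=\zygmund{0}$ into $\zygmund{\rho-\mez}\subset L^\infty$ with norm controlled by $\|V\|_{\zygmund{r}}\lesssim\|V\|_{H^s}$. The paraproduct piece $\lDx{\mez}T_u V$ is controlled via \eqref{niZZ} and then \eqref{esti:quant1}, while $T_{\lDx{\mez}u}V$, with $\lDx{\mez}u\in \zygmund{-\mez}$, is bounded in $\zygmund{r-\mez}\subset L^\infty$ by \eqref{niZ}. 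Finally, \eqref{Bony2} with $\alpha=r>\mez,\beta=0$ gives $R(V,u)\in \zygmund{r}$, hence $\lDx{\mez}R(V,u)\in \zygmund{r-\mez}\subset L^\infty$, and $R(V,\lDx{\mez}u)$ is estimated analogously using \eqref{Bony2} with $(\alpha,\beta)=(r,-\mez)$ (valid since $r-\mez>0$).

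\textbf{Main obstacle.} The only delicate point is making sure the paraproduct bound \eqref{boundpara} and the remainder bound \eqref{Bony} can be applied uniformly in $\sigma\le s$; for instance in (ii) the combination $\sigma-1+s-d/2>\sigma$ hinges crucially on the strict inequality $s>1+d/2$. Once the indices are arranged so that the losses land in spaces embedded in $L^2$ (resp.\ $L^\infty$ for (iii)), the whole proof reduces to a bookkeeping of the estimates recalled in Sections \ref{sec.2.2}--\ref{sec.2.3}.
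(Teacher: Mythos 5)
Your proof is correct and, for part (iii), follows essentially the same route as the paper: there one splits $V$ into $T_V$ plus $V-T_V=T_{\cdot}V+R(V,\cdot)$, estimates $[\lDx{\mez},T_V]$ by \eqref{esti:quant2} with $\rho=\mez+\epsilon$, and bounds the two terms of $[\lDx{\mez},V-T_V]u$ separately via \eqref{niZZ}, \eqref{niZ} and \eqref{Bony2}, exactly as you do. For parts (i)--(ii) the paper simply cites \cite{ABZ3}, so your sketch supplies an argument the paper omits; it is the standard one, and the only point worth spelling out is the remainder term for very negative $\sigma$ (where \eqref{Bony} and \eqref{Bony3} require $\alpha+\beta>0$), which, as you indicate, is routine because the dyadic blocks of $R$ decay like $2^{-j(s-\mez-\frac{d}{2})}$ and one concludes with the characterization of $H^{\sigma}$, $\sigma<0$, by functions spectrally supported in balls.
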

\begin{proof}The first two statements are proved in \cite{ABZ3}. To prove~$\ref{itt.3}$) we 
use~\eqref{esti:quant2} 
with~$m = \mez, m'=0, \rho = \mez + \epsilon$ to obtain
$$
\| [\lDx{\mez}, T_V]u  \|_{C^\epsilon_*}
\leq C\|V\|_{H^s}\| u\|_{C^0_*} \leq C\|V\|_{H^s}\| u\|_{L^\infty}.
$$
On the other hand, 
$$
[\lDx{\mez}, V-T_V]u = \lDx{\mez} (V-T_V) u - (V- T_V) \lDx{\mez} u.
$$
Let~$\mez < r< s- \frac d 2$ so that 
$$ \| V\|_{\zygmund{r}} \leq C \| V\|_{H^s}.$$
According to~\eqref{niZZ} and~\eqref{Bony2},~$V- T_V$ 
is bounded from~$L^\infty$ to~$\zygmund{r}$ by~$K\|V\|_{\zygmund{r}}$ 
and according to~\eqref{niZ} and~\eqref{Bony2}, 
from~$C^{-\mez}_*$ to~$C^{r- \mez}_*$ by~$K\|V\|_{\zygmund{r}}$, 
which implies 
$$
\|[\lDx{\mez}, V-T_V]u\| _{C^{r- \mez}_*}\leq K \| V\|_{H^s} \| u\|_{L^\infty}.
$$
This completes the proof.
\end{proof}

We need elementary estimates on the solutions 
of transport equations that we recall now.
\begin {prop}\label{estclass}
Let~$I=[0,T]$ and consider the Cauchy problem
\begin{equation}\label{transport}
\left\{
\begin{aligned}
&\partial_t u + V\cdot \nabla u =f, \quad t \in I,\\
&u\arrowvert_{t=0} = u_0.
\end{aligned}
\right.
\end{equation}
We have the following estimates
\begin{equation}\label{eq.i}  
\Vert u(t) \Vert_{L^\infty({\mathbf{R}}^d)} 
\leq \Vert u_0 \Vert_{L^\infty({\mathbf{R}}^d)} + \int_0^t \Vert f(\sigma,\cdot) \Vert_{L^\infty({\mathbf{R}}^d)}d\sigma.
\end{equation}
There exists a non decreasing function  $\mathcal{F}:\xR^+ \to \xR^+$ such that
\begin{equation}
\Vert u(t) \Vert_{L^2({\mathbf{R}}^d)} 
\leq
\mathcal{F}\big(\Vert V \Vert_{L^1(I; W^{1, \infty}({\mathbf{R}}^d))}\big)
\big(\Vert u_0 \Vert_{L^2({\mathbf{R}}^d)} + \int_0^t \Vert f(t',\cdot)\Vert_{L^2({\mathbf{R}}^d)}\,
dt' \big).\label{eq.ii}
\end{equation}
If~$s>1+\frac{d}{2}$ and~$\sigma \leq s$ 
there exists a non decreasing function~$\mathcal{F}:\xR^+ \to \xR^+$ 
such that 
\begin{equation}\label{eq.iii}
\Vert u(t) \Vert_{H^{\sigma}({\mathbf{R}}^d)}  \\
\leq \mathcal{F} \bigl( \Vert V\Vert_{L^1(I; H^s({\mathbf{R}}^d))}\bigr)
\bigl(\Vert u_0 \Vert_{H^{\sigma}({\mathbf{R}}^d)} 
+ \int_0^t \Vert f(t',\cdot) \Vert_{H^{\sigma}({\mathbf{R}}^d)}\,dt' \big)  .
\end{equation}
\end{prop}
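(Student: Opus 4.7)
The proof will establish the three estimates in order, each using standard techniques tailored to the relevant function space.

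For \eqref{eq.i}, the plan is to use the method of characteristics. Solving $\dot{X}(t,x) = V(t, X(t,x))$ with $X(0,x) = x$ produces a flow map, and along characteristics one has $\frac{d}{dt}u(t, X(t,x)) = f(t, X(t,x))$. Integrating in $t$ and taking the supremum in $x$ yields the claimed bound directly, since the flow is a bijection on $\xR^d$ for each fixed $t$.

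For \eqref{eq.ii}, I would carry out a standard $L^2$ energy estimate. Multiplying the equation by $u$ and integrating by parts gives
\begin{equation*}
\frac{1}{2}\frac{d}{dt}\|u(t)\|_{L^2}^2 = \frac{1}{2}\int_{\xR^d} (\cn V)\, u^2\, dx + \int_{\xR^d} f u\, dx,
\end{equation*}
whence $\frac{d}{dt}\|u(t)\|_{L^2} \leq \frac{1}{2}\|\cn V(t)\|_{L^\infty}\|u(t)\|_{L^2} + \|f(t)\|_{L^2}$. Gronwall's inequality then produces \eqref{eq.ii} with $\mathcal{F}(y) = e^{y/2}$ (times an additive constant).

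For \eqref{eq.iii}, the plan is to commute $\lDx{\sigma}$ through the equation. Setting $u_\sigma = \lDx{\sigma} u$, one finds
\begin{equation*}
\partial_t u_\sigma + V \cdot \nabla u_\sigma = \lDx{\sigma} f + \sum_{j=1}^d \bigl[ V_j, \lDx{\sigma} \bigr] \partial_j u.
\end{equation*}
Applying the $L^2$ estimate \eqref{eq.ii} to $u_\sigma$ reduces matters to controlling the $L^2$-norm of the commutator term. Here Proposition~\ref{comut}\,\ref{itt.2}) gives, for $s > 1 + d/2$ and $\sigma \le s$,
\begin{equation*}
\bigl\| [\lDx{\sigma}, V_j] \partial_j u(t) \bigr\|_{L^2} \le K \|V_j(t)\|_{H^s} \|\partial_j u(t)\|_{H^{\sigma-1}} \le K \|V(t)\|_{H^s} \|u(t)\|_{H^\sigma}.
\end{equation*}
Since $H^s \hookrightarrow W^{1,\infty}$ for $s > 1 + d/2$, applying \eqref{eq.ii} to $u_\sigma$ and then invoking Gronwall against the bound $\|V(t)\|_{H^s}$ yields \eqref{eq.iii}.

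The only mildly delicate step is the commutator estimate in the third part, but this has already been established in Proposition~\ref{comut}, so the proof reduces to a direct assembly of standard ingredients. The $L^2$ and $L^\infty$ estimates present no real obstacle beyond Gronwall bookkeeping.
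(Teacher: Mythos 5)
Your proof is correct. The paper states this proposition without proof, labeling it a recollection of elementary facts about transport equations, and your three-step argument (characteristics for the $L^\infty$ bound, energy identity plus Gronwall for the $L^2$ bound, commuting $\lDx{\sigma}$ through the equation and invoking the commutator estimate of Proposition~\ref{comut}~\ref{itt.2}) together with the embedding $H^s \hookrightarrow W^{1,\infty}$ for the $H^\sigma$ bound) is exactly the standard route one would take; the only bookkeeping to watch is that after applying \eqref{eq.ii} to $u_\sigma$ the right-hand side contains $\int_0^t \|V(t')\|_{H^s}\|u(t')\|_{H^\sigma}\,dt'$, so a second application of Gronwall is needed, and the resulting constant $\exp\bigl(K\mathcal{F}(\|V\|_{L^1 W^{1,\infty}})\|V\|_{L^1 H^s}\bigr)$ is still a non-decreasing function of $\|V\|_{L^1(I;H^s)}$ alone.
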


\vspace{3mm}

\noindent\textbf{Thomas Alazard}\\
\noindent DMA, \'Ecole normale sup\'erieure et CNRS UMR 8553, 
45 rue dÕUlm, 75005 Paris, France

\vspace{1mm}

\noindent\textbf{Nicolas Burq}\\
\noindent Univiversit\'e Paris-Sud, D\'epartement de Math\'ematiques, 91405 Orsay, France

\vspace{1mm}

\noindent\textbf{Claude Zuily}\\
\noindent 
Universit\'e Paris-Sud, D\'epartement de Math\'ematiques, 91405 Orsay, France


\begin{thebibliography}{10}
\small


\bibitem{ABZ1}
Thomas Alazard, Nicolas Burq and Claude Zuily.
\newblock On the water-wave equations with surface tension.
\newblock {\em Duke Math. J.}, 158(3):413--499, 2011.

\bibitem{ABZ2}
Thomas Alazard, Nicolas Burq and Claude Zuily.
\newblock Strichartz estimates for water waves.
\newblock {\em Ann. Sci. {\'E}c. Norm. Sup{\'e}r. (4)},   t.44, 855--903, 2011.

\bibitem{ABZ3}
Thomas Alazard, Nicolas Burq and Claude Zuily.
\newblock On the Cauchy problem for gravity water waves.
\newblock arXiv:1212.0626.

\bibitem{Bertinoro}
Thomas Alazard, Nicolas Burq and Claude Zuily.
\newblock The water-waves equations: from Zakharov to Euler. 
\newblock Studies in Phase Space Analysis with Applications to PDEs. 
\newblock Progress in Nonlinear Differential Equations and Their Applications Volume 84, 2013, pp 1-20. 

\bibitem{ABZ4}
Thomas Alazard, Nicolas Burq and Claude Zuily.
\newblock Cauchy theory for the gravity water waves system with non localized initial data.
\newblock arXiv:1305.0457.

\bibitem{AM}
Thomas Alazard and Guy M{\'e}tivier.
\newblock Paralinearization of the {D}irichlet to {N}eumann operator, and
  regularity of three-dimensional water waves.
\newblock {\em Comm. Partial Differential Equations}, 34(10-12):1632--1704,
  2009.

\bibitem{Alipara}
Serge Alinhac.
\newblock Paracomposition et op\'erateurs paradiff\'erentiels.
\newblock {\em Comm. Partial Differential Equations}, 11(1):87--121, 1986.

\bibitem{Ali}
Serge Alinhac.
\newblock Existence d'ondes de rar\'efaction pour des syst\`emes
  quasi-lin\'eaires hyperboliques multidimensionnels.
\newblock {\em Comm. Partial Differential Equations}, 14(2):173--230, 1989.

\bibitem{BaCh}
Hajer Bahouri and Jean-Yves Chemin.
\newblock \'{E}quations d'ondes quasilin\'eaires et estimations de
  {S}trichartz.
\newblock {\em Amer. J. Math.}, 121(6):1337--1377, 1999.

\bibitem{BCD}
Hajer Bahouri, Jean-Yves Chemin, and Rapha{\"e}l Danchin.
\newblock {\em Fourier analysis and nonlinear partial differential equations},
  volume 343 of {\em Grundlehren der Mathematischen Wissenschaften [Fundamental
  Principles of Mathematical Sciences]}.
\newblock Springer, Heidelberg, 2011.


\bibitem{Be}
M.S Berger, 
\newblock {Non linearity and functionnal analysis}, 
\newblock {\em Academic Press New York}, 1977.

\bibitem{BO}
T.~Brooke Benjamin and Peter~J. Olver.
\newblock Hamiltonian structure, symmetries and conservation laws for water
  waves.
\newblock {\em J. Fluid Mech.}, 125:137--185, 1982.

\bibitem{Blair}
Matthew Blair.
\newblock Strichartz estimates for wave equations with coefficients of
  {S}obolev regularity.
\newblock {\em Comm. Partial Differential Equations}, 31(4-6):649--688, 2006.

\bibitem{Bony}
Jean-Michel Bony.
\newblock Calcul symbolique et propagation des singularit\'es pour les
  \'equations aux d\'eriv\'ees partielles non lin\'eaires.
\newblock {\em Ann. Sci. \'Ecole Norm. Sup. (4)}, 14(2):209--246, 1981.

\bibitem{BGT1}
Nicolas Burq, Patrick G\'erard, and Nikolay Tzvetkov.
\newblock Strichartz inequalities and the nonlinear {S}chr\"odinger equation on
  compact manifolds.
\newblock {\em Amer. J. Math.}, 126(3):569--605, 2004.

\bibitem{CL}
Angel Castro and David Lannes. 
\newblock Well-posedness and shallow-water stability for a new Hamiltonian formulation of the water waves equations with vorticity. 
\newblock  arXiv:1402.0464.

\bibitem{CaARMA}
R{\'e}mi Carles.
\newblock Geometric optics and instability for semi-classical {S}chr\"odinger
  equations.
\newblock {\em Arch. Ration. Mech. Anal.}, 183(3):525--553, 2007.

\bibitem{CCT}
Michael Christ, James Colliander, et Terence Tao.
\newblock Asymptotics, frequency modulation, and low regularity ill-posedness
  for canonical defocusing equations.
\newblock {\em Amer. J. Math.}, 125(6):1235--1293, 2003.

\bibitem{CMSW}
Robin~Ming Chen, Jeremy~L. Marzuola, Daniel Spirn, and J.~Douglas Wright.
\newblock On the regularity of the flow map for the gravity-capillary
  equations.
\newblock {\em J. Funct. Anal.}, 264(3):752--782, 2013.

\bibitem{CHS}
Hans Christianson, Vera~Mikyoung Hur, and Gigliola Staffilani.
\newblock Strichartz estimates for the water-wave problem with surface tension.
\newblock {\em Comm. Partial Differential Equations}, 35(12):2195--2252, 2010.

\bibitem{ChLi}
Demetrios Christodoulou and Hans Lindblad.
\newblock On the motion of the free surface of a liquid.
\newblock {\em Comm. Pure Appl. Math.}, 53(12):1536--1602, 2000.

\bibitem{CoCoGa}
Antonio C{\'o}rdoba, Diego C{\'o}rdoba, and Francisco Gancedo.
\newblock Interface evolution: water waves in 2-D.
\newblock Adv. Math., 223(1):120Ð173, 2010.

\bibitem{CS}
Daniel Coutand and Steve Shkoller.
\newblock Well-posedness of the free-surface incompressible {E}uler equations
  with or without surface tension.
\newblock {\em J. Amer. Math. Soc.}, 20(3):829--930 (electronic), 2007.




\bibitem{Craig1985}
Walter Craig.
\newblock {An existence theory for water waves and the Boussinesq and
  Korteweg-deVries scaling limits}.
\newblock {\em Communications in Partial Differential Equations},
  10(8):787--1003, 1985.
\bibitem{CSS}
Walter Craig, Ulrich Schanz, and Catherine Sulem.
\newblock The modulational regime of three-dimensional water waves and the
  {D}avey-{S}tewartson system.
\newblock {\em Ann. Inst. H. Poincar\'e Anal. Non Lin\'eaire}, 14(5):615--667,
  1997.




\bibitem{CrSu}
Walter Craig and Catherine Sulem. 
\newblock Numerical simulation of gravity waves. 
\newblock {\em J. Comput. Phys.} 108(1):73Ð83, 1993.

\bibitem{DK}
Bj\"orn E.J. Dahlberg and Carlos E. Kenig, 
\newblock Harmonic Analysis and PDE's, 1985--1996, 
\newblock  http://www.math.chalmers.se/Math/Research/GeometryAnalysis/Lecturenotes/

\bibitem{Ebin}
David, G. Ebin,
\newblock The equations of motion of a perfect fluid with free
boundary are not well posed,
\newblock {\em Communications in Partial Differential Equations} 10 (12): 1175--1201,1987.

\bibitem{GV}
Jean Ginibre and Giorgio Velo. 
\newblock Generalized Strichartz inequalities for the wave equation.
\newblock {\em J. Funct. Anal.} 133:50Ð68, 1995. 

\bibitem{HN}
Bei Hu and David P. Nicholls. 
\newblock Analyticity of Dirichlet-Neumann operators on Hšlder and Lipschitz domains. 
\newblock {\em SIAM J. Math. Anal.}, 37(1):302-320, 2005.

\bibitem{HIT}
John Hunter, Mihaela Ifrim, Daniel Tataru.
\newblock Two dimensional water waves in holomorphic coordinates.
\newblock arXiv:1401.1252.


\bibitem{Hormander}
Lars H{\"o}rmander.
\newblock {\em Lectures on linear hyperbolic differential equations},
 {\em Math{\'e}matiques {\&} Applications (Berlin) [Mathematics \&
  Applications]}, volume 26.
\newblock Springer-Verlag, Berlin, 1997.


%

\bibitem{KT}
Markus Keel and Terence Tao.
\newblock Endpoint Strichartz estimates.
\newblock Amer. J. Math. 120(5):955Ð980, 1998.

\bibitem{KRS}
Sergiu Klainerman, Igor Rodnianski, Jeremie Szeftel. 
\newblock Overview of the proof of the Bounded $L^2$ Curvature Conjecture.
\newblock  arXiv:1204.1772.



\bibitem{LannesKelvin}
David Lannes.
\newblock A stability criterion for two-fluid interfaces and applications.
\newblock {\em Arch. Ration. Mech. Anal.}, 208(2):481--567, 2013.


\bibitem{LannesJAMS}
David Lannes.
\newblock Well-posedness of the water-waves equations.
\newblock {\em J. Amer. Math. Soc.}, 18(3):605--654 (electronic), 2005.

\bibitem{Lebeau92}
Gilles Lebeau.
\newblock Singularit\'es des solutions d'\'equations d'ondes semi-lin\'eaires.
\newblock {\em Ann. Sci. \'Ecole Norm. Sup. (4)}, 25(2):201--231, 1992.

\bibitem{LebeauKH}
Gilles Lebeau.
\newblock R\'egularit\'e du probl\`eme de {K}elvin-{H}elmholtz pour
  l'\'equation d'{E}uler 2d.
\newblock {\em ESAIM Control Optim. Calc. Var.}, 8:801--825 (electronic), 2002.
\newblock A tribute to J. L. Lions.

\bibitem{LebeauSc}
Gilles  Lebeau.
\newblock  Contr\^ole de l'\'equation de Schr\" odinger.
\newblock {\em  J. Math. Pures Appl.}, (9) 71 (1992), no. 3, 267Ð291.

\bibitem{LindbladAnnals}
Hans Lindblad.
\newblock Well-posedness for the motion of an incompressible liquid with free
  surface boundary.
\newblock {\em Ann. of Math. (2)}, 162(1):109--194, 2005.

\bibitem{MR}
Nader Masmoudi and Fr{\'e}d{\'e}ric Rousset.
\newblock Uniform regularity and vanishing viscosity limit for the free surface Navier-Stokes equations. 
\newblock arXiv:1202.0657.

\bibitem{MePise}
Guy M{\'e}tivier.
\newblock {\em Para-differential calculus and applications to the {C}auchy
  problem for nonlinear systems}, volume~5 of {\em Centro di Ricerca Matematica
  Ennio De Giorgi (CRM) Series}.
\newblock Edizioni della Normale, Pisa, 2008.

\bibitem{Meyer}
Yves Meyer.
\newblock Remarques sur un th\'eor\`eme de {J}.-{M}. {B}ony.
\newblock In {\em Proceedings of the {S}eminar on {H}armonic {A}nalysis
  ({P}isa, 1980)}, number suppl. 1, pages 1--20, 1981.



\bibitem{Nalimov}
V.~I. Nalimov.
\newblock The {C}auchy-{P}oisson problem.
\newblock {\em Dinamika Splo\v sn. Sredy}, (Vyp. 18 Dinamika Zidkost. so 
Svobod. Granicami):104--210, 254, 1974.

\bibitem{Nguyen}
Quang Huy Nguyen.
\newblock Work in preparation.



\bibitem{RoZu}
Luc Robbiano and Claude Zuily.
\newblock Strichartz estimates for {S}chr\"odinger equations with variable
  coefficients.
\newblock {\em M\'em. Soc. Math. Fr. (N.S.)}, (101-102):vi+208, 2005.

\bibitem{Safonov}
Mikhail V. Safonov.
\newblock Boundary estimates for positive solutions to second order elliptic equations.
\newblock arXiv:0810.0522



\bibitem{SZ}
Jalal Shatah and Chongchun Zeng.
\newblock Geometry and a priori estimates for free boundary problems of the
  {E}uler equation.
\newblock {\em Comm. Pure Appl. Math.}, 61(5):698--744, 2008.


\bibitem{Smith}
Hart~F. Smith.
\newblock A parametrix construction for wave equations with {$C^{1,1}$}
  coefficients.
\newblock {\em Ann. Inst. Fourier (Grenoble)}, 48(3):797--835, 1998.
 Smith, Hart F.; Tataru, Daniel Sharp local well-posedness results for the nonlinear wave equation. Ann. of Math. (2) 162 (2005), no. 1, 291Ð366

\bibitem{StTa}
Gigliola Staffilani and Daniel Tataru.
\newblock Strichartz estimates for a {S}chr\"odinger operator with nonsmooth
  coefficients.
\newblock {\em Comm. Partial Differential Equations}, 27(7-8):1337--1372, 2002.


\bibitem{TataruNS}
Daniel Tataru.
\newblock Strichartz estimates for operators with nonsmooth coefficients and
  the nonlinear wave equation.
\newblock {\em Amer. J. Math.}, 122(2):349--376, 2000.

\bibitem{TataruNSII}
Daniel Tataru.
\newblock Strichartz estimates for second order hyperbolic operators with
  nonsmooth coefficients. {II}.
\newblock {\em Amer. J. Math.}, 123(3):385--423, 2001.

\bibitem{Taylor}
Michael~E. Taylor.
\newblock {\em pseudo-differential operators and nonlinear {PDE}}, volume 100 of
  {\em Progress in Mathematics}.
\newblock Birkh\"auser Boston Inc., Boston, MA, 1991.

\bibitem{TdP}
Thibault de Poyferr{\'e}. 
\newblock Work in preparation.

\bibitem{Tougeron}
Monique Sabl{\'e}-Tougeron.
\newblock R\'egularit\'e microlocale pour des probl\`emes aux limites non
  lin\'eaires.
\newblock {\em Ann. Inst. Fourier (Grenoble)}, 36(1):39--82, 1986.

\bibitem{WaZh}
Chao Wang and Zhifei Zhang.
\newblock Break-down criterion for the water-wave equation.
\newblock  arXiv:1303.6029.

\bibitem{WuInvent}
Sijue Wu.
\newblock Well-posedness in {S}obolev spaces of the full water wave problem in
  2-{D}.
\newblock {\em Invent. Math.}, 130(1):39--72, 1997.

\bibitem{WuJAMS}
Sijue Wu.
\newblock Well-posedness in {S}obolev spaces of the full water wave problem in
  3-{D}.
\newblock {\em J. Amer. Math. Soc.}, 12(2):445--495, 1999.


\bibitem{Yosihara}
Hideaki Yosihara.
\newblock Gravity waves on the free surface of an incompressible perfect fluid
  of finite depth.
\newblock {\em Publ. Res. Inst. Math. Sci.}, 18(1):49--96, 1982.

\bibitem{Zakharov1968}
Vladimir~E. Zakharov.
\newblock Stability of periodic waves of finite amplitude on the surface of a
  deep fluid.
\newblock {\em Journal of Applied Mechanics and Technical Physics},
  9(2):190--194, 1968.

\end{thebibliography}
\end{document}